\newtheorem{thm}{Theorem}[section]
\newtheorem{prop}[thm]{Proposition}
\newtheorem{conj}[thm]{Conjecture}
\newtheorem{cor}[thm]{Corollary}
\newtheorem{lem}[thm]{Lemma}
\theoremstyle{definition}
\newtheorem{define}[thm]{Definition}
\theoremstyle{remark}
\newtheorem{rem}[thm]{Remark}
\newtheorem{example}[thm]{Example}
 \newcommand{\fg}{\mathit{fg}}
\newcommand{\ve}[1]{\boldsymbol{\mathbf{#1}}}
\newcommand{\R}{\mathbb{R}}
\newcommand{\Z}{\mathbb{Z}}
\newcommand{\N}{\mathbb{N}}
\newcommand{\Q}{\mathbb{Q}}
\newcommand{\rmE}{\mathrm{E}}
\newcommand{\rmI}{\mathrm{I}}
\renewcommand{\d}{\partial}
\renewcommand{\subset}{\subseteq}
\renewcommand{\supset}{\supseteq}
\renewcommand{\tilde}{\widetilde}
\renewcommand{\bar}{\overline}
\newcommand{\iso}{\cong}
\DeclareMathOperator{\co}{{co}}
\DeclareMathOperator{\Aut}{{Aut}}
\DeclareMathOperator{\End}{{End}}
\DeclareMathOperator{\ev}{{ev}}
\DeclareMathOperator{\gr}{{gr}}
\DeclareMathOperator{\Hom}{{Hom}}
\DeclareMathOperator{\id}{{id}}
\DeclareMathOperator{\ind}{{ind}}
\DeclareMathOperator{\Int}{{int}}
\DeclareMathOperator{\MCG}{{MCG}}
\DeclareMathOperator{\Mod}{{mod}}
\DeclareMathOperator{\MOD}{{Mod}}
\DeclareMathOperator{\Mor}{{Mor}}
\DeclareMathOperator{\Spin}{{Spin}}
\DeclareMathOperator{\RoJ}{{RoJ}}
\DeclareMathOperator{\RiJ}{{RiJ}}
\DeclareMathOperator{\Span}{{Span}}
\DeclareMathOperator{\Sym}{{Sym}}
\DeclareMathOperator{\Tw}{{Tw}}
\newcommand{\lk}{\mathrm{lk}}
\newcommand{\bA}{\mathbb{A}}
\newcommand{\bB}{\mathbb{B}}
\newcommand{\bD}{\mathbb{D}}
\newcommand{\bE}{\mathbb{E}}
\newcommand{\bF}{\mathbb{F}}
\newcommand{\bH}{\mathbb{H}}
\newcommand{\bI}{\mathbb{I}}
\newcommand{\bO}{\mathbb{O}}
\newcommand{\bP}{\mathbb{P}}
\newcommand{\bT}{\mathbb{T}}
\newcommand{\bU}{\mathbb{U}}
\newcommand{\bX}{\mathbb{X}}
\newcommand{\cA}{\mathcal{A}}
\newcommand{\cB}{\mathcal{B}}
\newcommand{\cC}{\mathcal{C}}
\newcommand{\cD}{\mathcal{D}}
\newcommand{\cE}{\mathcal{E}}
\newcommand{\cF}{\mathcal{F}}
\newcommand{\cG}{\mathcal{G}}
\newcommand{\cH}{\mathcal{H}}
\newcommand{\cI}{\mathcal{I}}
\newcommand{\cJ}{\mathcal{J}}
\newcommand{\cK}{\mathcal{K}}
\newcommand{\cL}{\mathcal{L}}
\newcommand{\cM}{\mathcal{M}}
\newcommand{\cN}{\mathcal{N}}
\newcommand{\cO}{\mathcal{O}}
\newcommand{\cQ}{\mathcal{Q}}
\newcommand{\cR}{\mathcal{R}}
\newcommand{\cS}{\mathcal{S}}
\newcommand{\cT}{\mathcal{T}}
\newcommand{\cU}{\mathcal{U}}
\newcommand{\cV}{\mathcal{V}}
\newcommand{\cW}{\mathcal{W}}
\newcommand{\cX}{\mathcal{X}}
\newcommand{\cY}{\mathcal{Y}}
\newcommand{\cZ}{\mathcal{Z}}
\newcommand{\frA}{\mathfrak{A}}
\newcommand{\frC}{\mathfrak{C}}
\newcommand{\frM}{\mathfrak{M}}
\newcommand{\fra}{\mathfrak{a}}
\newcommand{\frc}{\mathfrak{c}}
\newcommand{\frm}{\mathfrak{m}}
\newcommand{\frs}{\mathfrak{s}}
\newcommand{\scA}{\mathscr{A}}
\newcommand{\scH}{\mathscr{H}}
\newcommand{\scK}{\mathscr{K}}
\newcommand{\scS}{\mathscr{S}}
\newcommand{\scT}{\mathscr{T}}
\newcommand{\scU}{\mathscr{U}}
\newcommand{\scV}{\mathscr{V}}
\newcommand{\scW}{\mathscr{W}}
\newcommand{\cCFL}{\mathcal{C\!F\!L}}
\newcommand{\cCFK}{\mathcal{C\hspace{-.5mm}F\hspace{-.3mm}K}}
\newcommand{\CF}{\mathit{CF}}
\newcommand{\HF}{\mathit{HF}}
\newcommand{\CFK}{\mathit{CFK}}
\newcommand{\HFL}{\mathit{HFL}}
\newcommand{\HLE}{\mathit{EHL}}
\newcommand\HFhat{\widehat{\HF}}
\newcommand{\PD}{\mathit{PD}}
\newcommand{\xs}{\ve{x}}
\newcommand{\ys}{\ve{y}}
\newcommand{\zs}{\ve{z}}
\newcommand{\ws}{\ve{w}}
\newcommand{\ps}{\ve{p}}
\newcommand{\qs}{\ve{q}}
\newcommand{\as}{\ve{\alpha}}
\newcommand{\bs}{\ve{\beta}}
\newcommand{\gs}{\ve{\gamma}}
\newcommand{\ds}{\ve{\delta}}
\renewcommand{\a}{\alpha}
\renewcommand{\b}{\beta}
\newcommand{\g}{\gamma}
\newcommand{\dt}{\delta}
\newcommand{\veps}{\varepsilon}
\newcommand{\app}{\mathrm{app}}
\newcommand{\red}{\mathrm{red}}
\DeclareMathOperator{\Cone}{{Cone}}
\DeclareMathOperator{\St}{{St}}
\numberwithin{equation}{section}
\newcommand{\ar}{\mathrm{a.r.}}
\newcommand{\llsquare}{[\hspace{-.5mm}[}
\newcommand{\rrsquare}{]\hspace{-.5mm}]}
\DeclareMathOperator{\LTS}{\mathrm{LTS}}
\DeclareMathOperator{\val}{val}
\newcommand{\cell}{\mathrm{cell}}
\newcommand{\hatbox}{\mathrel{\hat{\boxtimes}}}
\newcommand{\hatotimes}{\mathrel{\hat{\otimes}}}
\newcommand{\vecotimes}{\mathrel{\vec{\otimes}}}
\newcommand{\otimesstar}{\otimes^*}
\newcommand{\otimesshriek}{\otimes^!}
\title[Bordered link surgery]{Bordered manifolds with torus boundary and the link surgery formula}
\author{Ian Zemke}
\address{Department of Mathematics\\University of Oregon\\ Eugene, OR, USA}
\email{izemke@uoregon.edu}
\thanks{IZ was partially supported by NSF grants DMS-1703685 and 2204375 and a Sloan Fellowship.}
\begin{document}

\begin{abstract}
In this paper, we develop a theory of bordered $\HF^-$ using the link surgery formula of Manolescu and Ozsv\'{a}th. We interpret their link surgery complexes as type-$D$ modules over an associative algebra $\cK$, which we introduce. We prove a connected sum formula, which we interpret as an $A_\infty$-tensor product over our algebra $\cK$.  Topologically, this connected sum formula may be viewed as a formula for gluing along torus boundary components.  We compute several important examples.  We show that the dual knot formula of Hedden--Levine and Eftekhary may be interpreted as the $DA$-bimodule for a particular diffeomorphism of the torus. As another example, if $K_1$ and $K_2$ are knots in $S^3$, and $Y$ is obtained by gluing the complements of $K_1$ and $K_2$ together using an orientation reversing diffeomorphism of their boundaries, then our theory may be used to compute $\CF^-(Y)$ from $\CFK^\infty(K_1)$ and $\CFK^\infty(K_2)$. We additionally compute the type-$D$ modules for rationally framed solid tori. Our theory also computes the Heegaard Floer homology of all 3-manifolds which bound the plumbing of a tree of disk bundles over 2-spheres. In a subsequent article, we use this work to verify N\'{e}methi's conjecture about lattice homology.
\end{abstract}

\maketitle

\tableofcontents

\section{Introduction}

In this paper, we define several algebraic objects associated to compact, oriented 3-manifolds with parametrized torus boundary, and prove a gluing formula.  For the purposes of this paper, we use the following definition (compare \cite{LOTBordered}*{Section~1.1}):
\begin{define}
 A \emph{bordered manifold with torus boundary} consists of a compact and oriented manifold $M$ such that $\d M\iso \bT^2$ and $M$ is equipped with a choice of oriented basis $(\mu,\lambda)$ of $H_1(\d M)$.
\end{define}

If $M_1$ and $M_2$ are two bordered manifolds with torus boundaries, a natural gluing operation is to glue $\mu_1$ to $\mu_2$ and glue $\lambda_1$ to $-\lambda_2$. This is motivated by the following fact. Suppose that $M_1$ and $M_2$ are the complements of two oriented knots $K_1$ and $K_2$ in $S^3$, which are given integral framings $\lambda_1$ and $\lambda_2$. Let $\phi\colon \d M_1\to \d M_2$ denote the orientation reversing diffeomorphism which sends $\mu_1$ to $\mu_2$ and $\lambda_1$ to $-\lambda_2$.
Then there is a diffeomorphism
\[
M_1\cup_\phi M_2\iso S_{\lambda_1+\lambda_2}^3(K_1\# K_2).
\]
  This basic topological fact is reviewed in Appendix \ref{appendix}.

 The algebraic objects we define are reformulations of Manolescu and Ozsv\'{a}th's link surgery formula \cite{MOIntegerSurgery}, recast using the framework of type-$D$ and type-$A$ modules of Lipshitz, Ozsv\'{a}th and Thurston \cite{LOTBordered}.

 Using the above topological observation about surgeries on connected sums, our gluing formula amounts to a formula for the behavior of the link surgery formula under connected sums.

\subsection{The knot surgery algebra}

We begin by describing our associative algebra $\cK$, which is an algebra over the idempotent ring $\ve{I}=\ve{I}_0\oplus \ve{I}_1$. Here each $\ve{I}_i$ is a copy of $\bF=\Z/2\Z$. We define 
\[
\ve{I}_0\cdot \cK \cdot \ve{I}_0=\bF[\scU,\scV]\quad \text{and} \quad \ve{I}_1\cdot \cK\cdot \ve{I}_1=\bF[\scU,\scV,\scV^{-1}],
\] 
where $\bF[\scU,\scV]$ denotes a polynomial ring in two variables and $\bF[\scU,\scV,\scV^{-1}]$ is its localization at $\scV$. We set $\ve{I}_0\cdot \cK \cdot \ve{I}_1=\{0\}$.  There are two special algebra elements $\sigma,\tau\in \ve{I}_1\cdot \cK\cdot \ve{I}_0$, and we define
\[
\ve{I}_1\cdot \cK\cdot \ve{I}_0=\bF[\scU,\scV,\scV^{-1}]\otimes \langle \sigma\rangle \oplus 
\bF[\scU,\scV,\scV^{-1}]\otimes \langle \tau\rangle
\]
where $\langle \sigma\rangle$ and $\langle \tau\rangle$ denote copies of $\bF$. The elements $\sigma$ and $\tau$ satisfy the relations
\[
\sigma\scU=\scU\sigma\quad \text{and} \quad \sigma \scV=\scV \sigma
\]
\[
\tau  \scU=\scV^{-1} \tau\quad \text{and} \quad \tau  \scV=\scU\scV^2 \tau.
\]
We typically write $U$ for the product $\scU\scV$. 

The algebra $\cK$ is an associative algebra, i.e. we can think of it as an $A_\infty$-algebra which has vanishing actions $\mu_j$ for $j\neq 2$. Although seemingly asymmetric between $\scU$ and $\scV$, as well as between $\sigma$ and $\tau$, the above algebra admits a more symmetrical description. See Remark~\ref{rem:symmetry}. We work with an asymmetric presentation of the algebra $\cK$ because it mirrors an asymmetric construction of the Heegaard Floer surgery formulas which frequently simplifies computations.

\subsection{Surgery formulas as \texorpdfstring{$\cK$}{K}-modules}

 Ozsv\'{a}th and Szab\'{o} \cite{OSIntegerSurgeries} proved that if $K\subset S^3$ is a knot, then the Heegaard Floer homology groups $\HF^-(S^3_\lambda(K))$ are computable from the knot Floer complex of $K$ \cite{OSKnots} \cite{RasmussenKnots}. If $\lambda$ is an integral framing on $K$, their construction produced a mapping cone complex $\bX_\lambda(K)$, which satisfied
\[
H_*(\bX_\lambda(K))\iso \ve{\HF}^-(S_\lambda^3(K)).
\]
(The bold font denotes coefficients in the power series ring $\bF\llsquare U\rrsquare $).

Manolescu and Ozsv\'{a}th \cite{MOIntegerSurgery} extended this formula to links in $S^3$. They proved that if $L\subset S^3$ is a link with integral framing $\Lambda$, then there is a complex $\cC_{\Lambda}(L)$, defined using the link Floer homology of $L$  \cite{OSLinks}, such that
\[
H_*(\cC_{\Lambda}(L))\iso \ve{\HF}^-(S^3_{\Lambda}(L)).
\]

In Section~\ref{sec:mapping-cone-formula}, we describe how to naturally reinterpret the data of  Ozsv\'{a}th and Szab\'{o}'s mapping cone complex $\bX_\lambda(K)$  both in terms of a right type-$D$ module $\cX_\lambda(K)^\cK$ and a left type-$A$ module ${}_{\cK} \cX_\lambda(K)$. 

In Section~\ref{sec:link-surgery-A-infty}, we extend this interpretation to the link surgery formula. If $\ell$ is a positive integer, we define the $\ell$-component \emph{link algebra} to be
\[
 \cL_{\ell}=\otimes^{\ell}_{\bF} \cK=\cK\otimes_{\bF}\cdots \otimes_{\bF} \cK.
 \]
 To an $\ell$-component link $L\subset S^3$, we define in Section~\ref{sec:link-surgery-A-infty} a type-$D$ module
\[
\cX_{\Lambda}(L)^{\cL_\ell}.
\]

\begin{rem}
We think of the map $v$ in Ozsv\'{a}th and Szab\'{o}'s mapping cone formula as being encoded by the algebra element $\sigma\in \cK$. The map $h$ is encoded by the algebra element $\tau\in \cK$.
\end{rem}

The description of the link surgery formula given by Manolescu and Ozsv\'{a}th is an infinite direct product of free modules over $\bF\llsquare U_1,\dots, U_\ell\rrsquare $. In particular, it is very large and sometimes challenging to do direct computations with. On the other hand, the type-$D$ module $\cX_{\Lambda}(L)^{\cL_\ell}$ that we describe contains all of the algebraic information of the link surgery formula, but is always finite-dimensional over $\bF$. For example, the complement of the $0$-framed trefoil has the type-$D$ module shown in Figure~\ref{fig:trefoil}.

\begin{figure}[ht]
\begin{tikzcd}[labels=description, column sep=2cm]
& \xs^0
	\ar[dl, "\scV"]
	\ar[dr, "\scU"]
\\[-.2cm]
\ys^0 
	\ar[dr, "\scV^{-2}(U\sigma+\tau)"]
&& \zs^0
	\ar[dl, "\sigma+U \tau"]
\\[.5cm]
&\zs^1
\end{tikzcd}
\caption{The type-$D$ module for the $0$-framed trefoil complement. The superscript indicates the idempotent.}
\label{fig:trefoil}
\end{figure}

In our paper, the most fundamental object is the type-$D$ module $\cX_{\Lambda}(L)^{\cL_{\ell}}$. To change a type-$D$ algebra factor of $\cK$ to a type-$A$ algebra factor, we tensor with an algebraically defined ``identity bimodule''
\[
{}_{\cK| \cK} [\bI^{\Supset}].
\]
Ignoring completions,  ${}_{\cK| \cK} [\bI^{\Supset}]$ is a type-$A$ module over $\cK\otimes_{\bF} \cK$. (We write $\cK|\cK$ instead of $\cK\otimes_{\bF} \cK$ to indicate a specific behavior with respect to completions, which we call the \emph{split Alexander condition}).

The bimodule ${}_{\cK| \cK} [\bI^{\Supset}]$ extends to a type-$AA$ bimodule ${}_{\cK| \cK}[\bI^{\Supset}]_{\bF[U]}$, which is also useful to consider.
Unlike the type-$D$ modules $\cX_{\Lambda}(L)^{\cL_{\ell}}$, which are all finitely generated over $\bF$, the type-$A$ and $DA$ bimodules in our theory are usually not finitely generated.

Manolescu and Ozsv\'{a}th's link surgery formula $\cC_{\Lambda}(L)$ is recovered by tensoring our type-$D$ module $\cX_{\Lambda}(L)^{\cL_{\ell}}$ with $\ell$-copies of the 0-framed solid torus module, which we denote by ${}_{\cK} [\cD_0]_{\bF[U]}$. Here, a ``0-framed solid torus'' refers to the complement of a 0-framed unknot in $S^3$.

 Algebraic completions (e.g. direct products and power series coefficients) interact with the algebra $\cK$ in subtle and interesting ways. The algebra $\cK$ has a natural filtration by right ideals
 \[
J_0\supset J_1\supset \cdots.
 \]\
 This filtration equips $\cK$ with a topology where the subspaces $J_n$ form a basis of opens centered at 0. This equips $\cK$ with the structure of a \emph{linear topological chiral algebra}, which is important for our theory.  In Section~\ref{sec:Algebra-K}, we define algebraic categories of \emph{Alexander modules}, which consist of linear topological vector spaces with appropriately continuous actions of $\cK$. These are the categories where our link surgery modules live.

\subsection{Pairing theorems}

One of the central results of our paper is a connected sum formula for the Manolescu--Ozsv\'{a}th integer surgery formula. Suppose $L_1$ and $L_2$ are two links in $S^3$, which have framings $\Lambda_1$ and $\Lambda_2$, as well as distinguished components $K_1\subset L_1$ and $K_2\subset L_2$. In Section~\ref{sec:link-surgery-A-infty}, we describe how to compute $\cC_{\Lambda_1+\Lambda_2}(L_1\# L_2)$ in terms of $\cC_{\Lambda_1}(L_1)$ and $\cC_{\Lambda_2}(L_2)$. Here, $L_1\# L_2$ denotes the connected sum of $L_1$ and $L_2$ along $K_1$ and $K_2$, and $\Lambda_1+\Lambda_2$ denotes the framing where we sum the framings on $K_1$ and $K_2$ and leave the others unchanged. We will interpret this connected sum formula as an $A_\infty$ tensor product over the algebra $\cK$. We use the algebraic formalism of Lipshitz, Ozsv\'{a}th and Thurston \cite{LOTBordered} \cite{LOTBimodules} \cite{LOTDiagonals}, though our proof of the pairing theorem is independent of their proof in the standard bordered setting \cite{LOTBordered}.

Our formula is simplest to state when we are working with knots:

\begin{thm}\label{thm:intro-pairing-theorem}
 Suppose that $(Y_1,K_1)$ and $(Y_2,K_2)$ are two knots in integer homology 3-spheres with integral framings $\lambda_1$ and $\lambda_2$. Then
\[
\bX_{\lambda_1+\lambda_2}(Y_1\# Y_2, K_1\#K_2)\simeq \cX_{\lambda_1}(Y_1,K_1)^{\cK}\hatbox  {}_{\cK} \cX_{\lambda_2}(Y_2,K_2). 
\]
\end{thm}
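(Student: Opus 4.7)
The plan is to unpack the box tensor product on the right and show that it recovers the Manolescu--Ozsv\'{a}th mapping cone complex of the connected sum. Both sides are built from the knot Floer complex of $K_1\# K_2$, which by the K\"unneth formula decomposes as $\CFK^\infty(Y_1,K_1)\otimes \CFK^\infty(Y_2,K_2)$; the content of the theorem is that the surgery differentials agree under this identification.

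First I would recall the structure of $\cX_\lambda(Y,K)^\cK$ from Section~\ref{sec:mapping-cone-formula}: its generators are indexed by generators of $\CFK^\infty(Y,K)$ distributed across the two idempotents, with $\ve{I}_0$ carrying the ``$A$-side'' large surgery complexes and $\ve{I}_1$ carrying the ``$B$-side'' localization. The type-$D$ structure maps send $\scU,\scV$ to the internal $U,V$-actions, while $\sigma$ and $\tau$ encode the vertical and horizontal maps $v$ and $h$ of the Ozsv\'{a}th--Szab\'{o} mapping cone. The type-$A$ module ${}_\cK \cX_\lambda(Y,K)$ has the same underlying chain complex, with the $\mu_2$-actions of $\sigma$ and $\tau$ realized by these same $v$ and $h$ maps.

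Second, I would compute the box tensor product $\cX_{\lambda_1}(Y_1,K_1)^\cK\hatbox{}_\cK\cX_{\lambda_2}(Y_2,K_2)$ directly. Its generators in idempotent $(\ve{I}_i,\ve{I}_i)$ pair generators of each factor, and its differential combines internal $U,V,\partial$-actions with pairings of $\sigma$- and $\tau$-outputs of $\cX_{\lambda_1}$ with the corresponding $\mu_2$-actions in $\cX_{\lambda_2}$. Matched against the connected sum mapping cone, the $\sigma$-contribution produces the K\"unneth tensor of the vertical maps, while the $\tau$-contribution, taken together with the relations $\tau \scU=\scV^{-1}\tau$ and $\tau \scV=\scU\scV^2\tau$, produces exactly the correctly-shifted horizontal map on $\CFK^\infty(K_1\#K_2)$. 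The powers of $\scU$ and $\scV$ built into these relations are tailored to track the change of $\Spin^c$-structure across the connected sum under the K\"unneth identification of $\CFK^\infty$.

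The main obstacle will be verifying that the type-$A$ $\cK$-action on ${}_\cK \cX_\lambda(Y,K)$ satisfies the defining relations of $\cK$ on the nose, since $\cK$ is an ordinary associative algebra with strict relations rather than an honestly $A_\infty$-one. Concretely, one must show that the maps $v$ and $h$ interact with the $U$- and $V$-actions in the precise algebraic manner dictated by the $\tau$-relations; this follows from the identification of $v$ and $h$ as inclusions and projections on the large surgery complexes together with formal $\scU,\scV$-substitutions that record the change of Alexander grading. Once this strict compatibility is in place, identifying the box tensor product with $\bX_{\lambda_1+\lambda_2}(Y_1\#Y_2,K_1\#K_2)$ is a direct comparison of two identically-defined mapping cones.
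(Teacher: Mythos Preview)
Your proposal correctly carries out the algebraic unpacking of Section~\ref{sec:unpacking}: the box tensor product on the right-hand side is indeed a mapping cone whose idempotent-changing differential is $v_1\otimes v_2 + h_{\lambda_1}\otimes h_{\lambda_2}$, and the $\tau$-relations in $\cK$ are precisely what make $h_{\lambda_1}\otimes h_{\lambda_2}$ well-defined despite each factor being only $T$-skew-equivariant. This is exactly Lemma~\ref{lem:restatement-knots}.

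However, there is a genuine gap at the final step. You write that ``identifying the box tensor product with $\bX_{\lambda_1+\lambda_2}(Y_1\#Y_2,K_1\#K_2)$ is a direct comparison of two identically-defined mapping cones,'' but the two mapping cones are \emph{not} identically defined. The map $h_{\lambda_1+\lambda_2}^{(Y_1\#Y_2,K_1\#K_2)}$ in the Ozsv\'{a}th--Szab\'{o} complex is constructed from holomorphic polygon counts on a Heegaard diagram for $K_1\#K_2$; it is not \emph{a priori} the tensor product $h_{\lambda_1}\otimes h_{\lambda_2}$. The K\"unneth formula identifies the underlying groups $\cCFK(K_1\#K_2)\cong\cCFK(K_1)\otimes\cCFK(K_2)$ and the internal differentials, but says nothing about the surgery map $h$, which is built from a basepoint-moving diffeomorphism and change-of-diagram triangle maps. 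Proving that $h_{\lambda_1+\lambda_2}\simeq h_{\lambda_1}\otimes h_{\lambda_2}$ is the entire geometric content of the theorem: in the paper this is done in Section~\ref{sec:proof-pairing-thm} by constructing a basic system of Heegaard diagrams for the connected sum (Section~\ref{sec:basic-systems-connected-sums}) and then invoking the connected-sum formulas for hypercubes of attaching curves (Propositions~\ref{prop:disjoint-unions-hypercubes-main} and~\ref{prop:connected-sums}), which require substantial holomorphic curve analysis. Your proposal has reduced to the correct statement but has not supplied the argument that establishes it.
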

Here, $\hatbox $ denotes a completed version of Lipshitz, Ozsv\'{a}th and Thurston's box tensor product \cite{LOTBordered}.  There is also a version of the above theorem for links:
\begin{thm}\label{thm:pairing} Suppose that $L_1$ and $L_2$ are two links in $S^3$ with integral framings $\Lambda_1$ and $\Lambda_2$, and that we have two distinguished components $K_1\subset L_1$ and $K_2\subset L_2$. Then
\[
\cC_{\Lambda_1+\Lambda_2}(L_1\#L_2)\simeq \cX_{\Lambda_1}(L_1)^{\cK}\hatbox {}_{\cK} \cX_{\Lambda_2}(L_2).
\]
\end{thm}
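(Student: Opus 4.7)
The plan is to deduce Theorem~\ref{thm:pairing} from the knot analogue, Theorem~\ref{thm:intro-pairing-theorem}, which will be the heart of the matter; once the knot case is settled, the link case follows by bookkeeping with the extra tensor factors coming from the auxiliary components, which are untouched by the $\hatbox {}_{\cK}$ pairing at the distinguished pair. The topological input is the identification $S_{\lambda_1+\lambda_2}^3(K_1\#K_2) \cong M_1 \cup_\phi M_2$ reviewed in Appendix~\ref{appendix}. The algebraic input is the Ozsv\'ath--Szab\'o connected sum formula
\[
\CFK^\infty(K_1 \# K_2) \simeq \CFK^\infty(K_1) \otimes \CFK^\infty(K_2),
\]
enhanced to a statement about the full link Floer complex of $L_1 \# L_2$.

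To prove the knot case, I would first give a fully explicit description of ${}_{\cK}\cX_{\lambda_2}(Y_2, K_2)$ as a type-$A$ module and of $\cX_{\lambda_1}(Y_1, K_1)^{\cK}$ as a type-$D$ module, extracted directly from the construction of Section~\ref{sec:mapping-cone-formula}. The actions are governed by the two basic operations $v$ and $h$ of Ozsv\'ath--Szab\'o's mapping cone, encoded respectively by $\sigma$ and $\tau$ in $\cK$. Expanding the box tensor, the underlying $\bF$-vector space is identified with the generators of $\bX_{\lambda_1+\lambda_2}(Y_1\# Y_2, K_1\# K_2)$ via the K\"unneth formula. The key computation is then to check that the differential produced by the box tensor matches that of the mapping cone complex on the right. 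The internal pieces match by K\"unneth, while the cross-terms involving $v$ and $h$ match precisely because of the relations
\[
\sigma\scU = \scU\sigma, \quad \sigma\scV = \scV\sigma, \quad \tau\scU = \scV^{-1}\tau, \quad \tau\scV = \scU\scV^2\tau,
\]
which are engineered to reproduce the behavior of $v$ and $h$ under the Alexander-grading shifts that occur when one passes from $\CFK^\infty(K_i)$ to the various $A_s$ and $B_s$ truncations in the mapping cone. Summing over all internal and cross-terms yields exactly the $v$- and $h$-maps for $K_1\# K_2$, with the correct framing $\lambda_1+\lambda_2$.

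The main technical obstacle will be infinitude: the type-$A$ module ${}_{\cK}\cX_{\lambda_2}(Y_2,K_2)$ is not finitely generated, and the completed box tensor $\hatbox$ involves potentially infinite sums. Although $\cK$ is associative (so $m_j = 0$ for $j \neq 2$ on the algebra side), the higher operations $m_{1,j}$ on the type-$A$ module are nontrivial and must be carried along in the computation. Convergence of the sums depends on the filtration $\cdots \supset J_n \supset J_{n+1} \supset \cdots$ of $\cK$ by right ideals introduced in Section~\ref{sec:Algebra-K}; one must verify that the type-$A$ and type-$D$ modules entering the pairing are Alexander cobounded, so that the pairing lands in that category and the sums converge in the appropriate completion. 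A related subtlety is the asymmetry of the two idempotent components $\ve{I}_0$ and $\ve{I}_1$: keeping track of the $\scV^{-1}$ localization on the $\ve{I}_1$ side, and of the idempotent into which $\sigma$ and $\tau$ map, is necessary to reproduce the correct truncation structure in the Manolescu--Ozsv\'ath complex.

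Finally, to pass from the knot case to Theorem~\ref{thm:pairing} I would observe that the remaining $\ell_1 + \ell_2 - 2$ components of $L_1 \# L_2$ contribute additional $\cK$-factors on both sides which are not contracted by the pairing; these remain as extra type-$D$ or $\bF[U]$-module structure. The equivalence of type-$D$ modules over the leftover algebra $\cL_{\ell_1 + \ell_2 - 1}$ follows from the knot case applied componentwise at the distinguished pair, and tensoring with the $0$-framed solid torus modules ${}_{\cK}[\cD_0]_{\bF[U]}$ at the remaining components recovers the chain complex $\cC_{\Lambda_1+\Lambda_2}(L_1\# L_2)$ on the left-hand side, completing the proof.
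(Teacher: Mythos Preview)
Your proposal has a genuine gap at the step where you claim that ``summing over all internal and cross-terms yields exactly the $v$- and $h$-maps for $K_1\# K_2$.'' The algebraic relations in $\cK$ ensure that the box tensor product produces, for the $\tau$-weighted part, the map $h_{\lambda_1}^{K_1} \otimes h_{\lambda_2}^{K_2}$. But the left-hand side of the theorem involves $h_{\lambda_1+\lambda_2}^{K_1\#K_2}$, which is defined by a specific basepoint-moving construction on a Heegaard diagram for $K_1\#K_2$: one deletes $w$, pushes $z$ to $w$ along an arc, and counts holomorphic polygons. The K\"unneth formula for $\cCFK$ only identifies the complexes at the vertices of the hypercube and their internal differentials; it says nothing about why this geometrically defined map $h^{K_1\#K_2}$ should coincide with $h^{K_1}\otimes h^{K_2}$. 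That equality is exactly what is being asserted (see Lemma~\ref{lem:restatement-knots}), and it is not an algebraic consequence of the relations $\tau\scU=\scV^{-1}\tau$, $\tau\scV=\scU\scV^2\tau$ --- those relations merely encode the fact that each individual $h$-map is $T$-skew-equivariant.

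The paper's proof is accordingly geometric. One first constructs an explicit basic system of Heegaard diagrams for $L_1\#L_2$ by connect-summing basic systems for $L_1$ and $L_2$ (Section~\ref{sec:basic-systems-connected-sums}), arranging the arcs so that the relevant hypercubes of attaching curves are algebraically rigid and graded by the connected sum point. The core analytic input is then a pair of tensor-product formulas for pairing hypercubes of attaching curves on disjoint unions (Proposition~\ref{prop:disjoint-unions-hypercubes-main}) and on connected sums (Proposition~\ref{prop:connected-sums}); the latter requires a delicate degeneration argument using matched moduli spaces and boundary degeneration counts. These results show directly that the hyperbox computing $F^{-K_1\#K_2}$ on the connected-sum diagram factors as $F^{-K_1}\otimes F^{-K_2}$. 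Your reduction of the link case to the knot case by ``bookkeeping'' is also not how it goes: the link case is proved directly, and indeed the higher-length maps $\Phi^{\vec N}$ in the link hypercube have no description in terms of $\cCFL$ alone that would allow such a reduction.
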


In Section~\ref{sec:link-surgery-A-infty} we describe several more general versions of the pairing theorem. One version allows us to compute 
\begin{equation}
\cX_{\Lambda_1+\Lambda_2}(L_1\#L_2)^{\cL_{\ell_1+\ell_2-1}}\hatbox {}_{\cK}[\cD_{0}]_{\bF[U]}.
\label{eq:intro-expanded-pairing}
\end{equation}
 In the above, $\cD_0$ denotes the module for the 0-framed solid torus. In the tensor product, we are tensoring the $\cK$-action on $\cD_0$ with the $\cK$-factor of $\cL_{\ell_1+\ell_2-1}$ which corresponds to $K_1\# K_2$.
 The reader should think of tensoring $\cD_0$ as corresponding to gluing in a solid torus $K_1\#K_2$. See Remark~\ref{rem:tensor-DD-with-A} for more details about this tensor product.
 
    We will show in Section~\ref{sec:AAA-identity} that the module in Equation~\eqref{eq:intro-expanded-pairing} is homotopy equivalent to an appropriate tensor product of the three modules
\[
\cX_{\Lambda_1}(L_1)^{\cL_{\ell_1}},\quad \cX_{\Lambda_2}(L_2)^{\cL_{\ell_2}}, \quad \text{and} \quad {}_{\cK| \cK} [\bI^{\Supset}]_{\bF[U]}.
\]
We think of tensoring with ${}_{\cK| \cK} [\bI^{\Supset}]_{\bF[U]}$ as corresponding to gluing torus boundary components together.

In some sense, the tensor product formulas above use up the two actions of $\cK$, and hence are not convenient for taking connected sums of more than two links along a single component. To take the connected sum of more than two components, we define \emph{pair-of-pants} bimodules
\[
{}_{\cK| \cK} W_{\a\b,\a}^{\cK}\quad \text{and} \quad {}_{\cK| \cK} W_{\a \b, \b}^{\cK}.
\]
In Section~\ref{sec:pair-of-pants} we study these bimodules, and we prove that they can be used to compute the type-$D$ module over $\cK$ after taking the connected sum of link components. There are additional modules for other combinations of $\a$ and $\b$ subscripts. There are multiple modules because of different choices of \emph{arc systems} in the construction of the surgery formula. (See Section~\ref{sec:arc-systems-intro} for more details).

In Section~\ref{sec:bordered-pair-of-pants}, we relate the pair-of-pants bimodules to a surgery description of the 3-manifold $P\times S^1$, where $P$ is a pair-of-pants (i.e. a twice punctured disk).

   \begin{rem} The above pairing theorems give formulas for gluing two bordered manifolds along torus boundary components.  Another important operation is to glue two boundary components of a single bordered 3-manifold together (i.e. self-gluing). Our pairing theorems do not cover this. We will investigate this in a future work.
   \end{rem}

\subsection{Splicing knot complements}

A basic topological operation is to glue the complements of two knots together using an orientation reversing diffeomorphism of their boundaries. 3-manifolds obtained in this manner are referred to as \emph{splices}. Dehn surgery is a special case of splicing.

Splicing in the context of Heegaard Floer homology has been studied by many authors. See \cite{EftekharySplices} \cite{HLSplices} \cite{HanselmanSplices} \cite{KLTSplicing} for some examples. Using bordered Heegaard Floer homology \cite{LOTBordered}, it is possible to compute $\HFhat$ of spliced manifolds. Despite the interest in splicing, there is to date no general description of $\HF^-$ of splices.

In this paper, we give a description of $\HF^-$ of splices in terms of knot Floer homology. If $\phi\colon \bT^2\to \bT^2$ is an orientation preserving diffeomorphism, then we will describe a bimodule ${}_{\cK} [\phi]^{\cK}$. We think of this bimodule as the bordered invariant for $[0,1]\times \bT^2$ with basis $(\mu,-\lambda)$ on $\{0\}\times \bT^2$ (where we write $\mu$ and $\lambda$ for a standard basis of $H_1(\bT^2)$), and $(\phi(\mu),\phi(\lambda))$ on $\{1\}\times \bT^2$.
 
 \begin{thm}
  Let $K_1$ and $K_2$ be knots in $S^3$ and let $M_1$ and $M_2$ be their complements. Let $\phi\colon \d M_1\to -\d M_2$ be an orientation preserving diffeomorphism. Then
  \[
  \ve{\CF}^-(M_1\cup_\phi M_2)_{\bF\llsquare U\rrsquare }\simeq \cX_{\lambda_1}(K_1)^{\cK}\hatbox  {}_{\cK} [\phi]^{\cK}\hatbox  {}_{\cK} \cX_{\lambda_2}(K_2)_{\bF\llsquare U\rrsquare }.
  \]
 \end{thm}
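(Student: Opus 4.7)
The plan is to reduce the splicing formula to Theorem \ref{thm:intro-pairing-theorem} (the connected sum pairing theorem) by observing that the canonical diffeomorphism $\phi_0$ which sends $\mu_1 \mapsto \mu_2$ and $\lambda_1 \mapsto -\lambda_2$ realizes the splicing as the Dehn surgery $S^3_{\lambda_1 + \lambda_2}(K_1 \# K_2)$, and that the bimodule ${}_{\cK}[\phi_0]^{\cK}$ coincides with the identity bimodule ${}_{\cK}[\bI]^{\cK}$ under the identifications of $\partial M_1$ and $-\partial M_2$ with $\bT^2$ via the bases $(\mu_1, \lambda_1)$ and $(\mu_2, -\lambda_2)$. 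For this $\phi_0$, Theorem \ref{thm:intro-pairing-theorem} directly yields
\[
\ve{\CF}^-(M_1 \cup_{\phi_0} M_2) \simeq \cX_{\lambda_1}(K_1)^{\cK} \hatbox {}_{\cK} \cX_{\lambda_2}(K_2),
\]
which matches the theorem in this base case.

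Next, I would establish the compositionality
\[
{}_{\cK}[\phi \circ \psi]^{\cK} \simeq {}_{\cK}[\phi]^{\cK} \hatbox {}_{\cK}[\psi]^{\cK}
\]
for orientation-preserving diffeomorphisms $\phi, \psi$ of $\bT^2$. Since ${}_{\cK}[\phi]^{\cK}$ is, by construction, the bordered invariant of the parametrized collar $[0,1] \times \bT^2$, stacking collars along $\bT^2$ realizes composition of diffeomorphisms, and a gluing principle for cylinders should give compositionality essentially tautologically. For general $\phi$, write $\phi = \phi_0 \circ \psi$ with $\psi \in \MCG^+(\bT^2) \iso \mathrm{SL}(2,\bZ)$. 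Combined with the base case, this reduces the theorem to verifying the splicing formula for each $\psi$, or further for $\psi$ in a generating set of $\mathrm{SL}(2,\bZ)$ such as $\{S, T\}$.

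The main obstacle will be the $S$-generator, which exchanges meridian and longitude. Unlike $T$ (which implements a framing change and, via a Rolfsen twist on the $\lambda_2$-framing of $K_2$, reduces to a direct computation on the surgery complex), the $S$-twisted splicing $M_1 \cup_{\phi_0 \circ S} M_2$ genuinely departs from Dehn surgery on $K_1 \# K_2$ and captures the phenomena that make splicing difficult in Heegaard Floer theory. To handle it, I would realize $M_1 \cup_{\phi_0 \circ S} M_2$ as surgery on a 2-component link in $S^3$ by a slam-dunk move, and then invoke the link surgery pairing theorem (Theorem \ref{thm:pairing}) together with an algebraic identification of the resulting $DA$-bimodule with ${}_{\cK}[S]^{\cK}$. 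Much of the algebraic framework developed in the paper, including the structure of the algebra $\cK$ and its categories of Alexander cobounded $DA$-bimodules, is designed precisely to make this identification tractable.
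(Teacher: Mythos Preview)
Your overall strategy---reduce to generators of $\MCG(\bT^2)$ and realize each generator via a Hopf link connected sum---is exactly the paper's approach. Both generator types (meridional Dehn twists and the $90^\circ$ rotations) arise from taking connected sum with a Hopf link: in the first case one keeps the original component as distinguished and surgers the new meridian with framing $\pm 1$; in the second case one surgers the original component and takes the new Hopf component as distinguished. The Appendix records the relevant topology, and the bimodules are then the Hopf link surgery complexes analyzed in Sections~\ref{sec:Hopf-links-comp}--\ref{sec:minimal-model-Hopf} and Section~\ref{sec:blow-ups}.

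The gap is your treatment of compositionality. You write that ${}_{\cK}[\phi]^{\cK}$ is ``by construction the bordered invariant of the parametrized collar'' and that a gluing principle for cylinders makes compositionality ``essentially tautological.'' But in this paper ${}_{\cK}[\phi]^{\cK}$ is \emph{defined} by choosing a factorization of $\phi$ into the listed generators and tensoring the corresponding Hopf-link bimodules; it is not constructed intrinsically as a bordered invariant of $[0,1]\times\bT^2$. The paper states explicitly that its techniques do not establish independence of ${}_{\cK}[\phi]^{\cK}$ from the chosen presentation, and leaves this for future work. So you cannot invoke an abstract cylinder-gluing principle here. What actually makes the argument work is simpler: having fixed a factorization $\phi=g_1\cdots g_k$, the topological identity in the Appendix rewrites $M_1\cup_\phi M_2$ as surgery on an iterated connected sum of $K_1$, several Hopf links, and $K_2$; then the link pairing theorem (Theorem~\ref{thm:pairing}) applied iteratively gives the tensor product formula directly. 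The ``compositionality'' you need is thus a consequence of the pairing theorem, not a separate input, and it only holds relative to the chosen factorization.
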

 
 The bimodule ${}_{\cK} [\phi]^{\cK}$ is constructed by factoring the diffeomorphism $\phi$ into the following generators of $\MCG(\bT^2)$:
\begin{enumerate}
\item $\phi(\mu)= \mu$ and $\phi(\lambda)=\lambda\pm \mu$.
\item $\phi(\mu)= \pm \lambda$ and $\phi(\lambda)=\mp \mu$.
\end{enumerate}
In both of the above cases, the bimodule ${}_{\cK} [\phi]^{\cK}$ is obtained from the surgery complex for the Hopf link. We remark that the Hopf link has complement $[0,1]\times \bT^2$, so taking a connected sum with the Hopf link and then surgering on the original knot has the effect of changing the boundary parametrization.

In general, we do not yet have a closed formula for the bimodule ${}_{\cK} [\phi]^{\cK}$ for an arbitrary diffeomorphism $\phi$, nor do our techniques imply independence of the bimodule ${}_{\cK} [\phi]^{\cK}$ from the above presentation. We plan to address both of these questions in a future work.

\begin{rem}
 When $K_1$ and $K_2$ are knots in $S^3$, the modules $\cX_{\lambda_1}(K_1)^{\cK}$ and ${}_{\cK}\cX_{\lambda_2}(K_2)_{\bF\llsquare U\rrsquare }$ are determined by the full knot Floer complexes $\CFK^\infty(K_1)$ and $\CFK^\infty(K_2)$, up to homotopy equivalence. 
\end{rem}

\subsection{Dual knots}

As a special case, our techniques recover the dual knot formula of Eftekhary \cite{EftekharyDuals} and Hedden--Levine   \cite{HeddenLevineSurgery} by taking a connected sum with the Hopf link. We recall that they computed the knot Floer complex of the dual knot $\mu\subset S_\lambda^3(K)$ in terms of the knot Floer complex $\CFK^\infty(K)$. Note that the dual knot $\mu\subset S^3_\lambda(K)$ may be obtained by taking the connected sum of $K$ with a Hopf link, and then performing $\lambda$-framed surgery on $K$.

In our present paper case, taking the connected sum of $K$ with a Hopf link and performing $\lambda$-framed surgery corresponds to taking the tensor product of $\cX_{\lambda}(K)^{\cK}$ with the Hopf link surgery complex, viewed as a $DA$-bimodule ${}_{\cK} \cH_{\Lambda}^{\cK}$. Using homological perturbation theory we describe a smaller model, which we denote by  ${}_{\cK}\cZ_{\Lambda}^{\cK}$. Taking the box tensor product with this minimal model and then restricting to idempotent $\ve{I}_0$ recovers the model of Eftekhary and Hedden--Levine (modulo differing notational conventions). We sketch the equivalence of our model of the dual knot complex with the Eftekhary and Hedden--Levine model in Section~\ref{sec:comparison-EHL}. 

Our analysis of the Hopf link complex builds off of joint work of the author with Hendricks, Hom and Stoffregen in \cite{HHSZDuals}, where an extension of the dual knot formula is proven for Hendricks and Manolescu's involutive knot Floer homology \cite{HMInvolutive}.

\subsection{Examples}

We compute many basic examples in this paper. The most fundamental is the type-$D$ module for the Hopf link. We compute this in Section~\ref{sec:Hopf-links-comp}, and compute a minimal model of the corresponding $DA$-bimodule in Section~\ref{sec:minimal-model-Hopf}. This example is important for applications and computations. Combined with the tensor product formulas, it also gives a computation of the Heegaard Floer homology of all 3-manifolds obtained as the boundary of a plumbing of disk bundles over $S^2$, plumbed along a tree. This is explored further in \cite{ZemLattice}, wherein the equivalence of Heegaard Floer homology and the lattice homology of N\'{e}methi \cite{NemethiAR} \cite{NemethiLattice} is proven.

In Section~\ref{sec:solid-tori} we describe the type-$D$ modules of rationally framed solid tori $\cD_{p/q}^{\cK}$ for $p/q\in \Q\cup \{\infty\}$. The type-$D$ modules $\cD_{p/q}^{\cK}$ can be viewed as naturally recovering the rational surgeries complex $\bX_{p/q}(K)$ of Ozsv\'{a}th and Szab\'{o} \cite{OSRationalSurgeries} in the sense that 
\[
\bX_{p/q}(K)\iso \cD_{p/q}^{\cK}\hatbox {}_{\cK}\cX_{0}(K)
\]
for any knot $K$ in $S^3$. 

\begin{rem} If $L\subset S^3$, then by tensoring type-$D$ modules of the form $\cD_{p/q}^{\cK}$ with the type-$A$ module ${}_{\cK|\cdots| \cK} \cX(L)$, we obtain a version of the link surgery formula which allows for rational surgeries on $L$.
\end{rem}

 As another example, we show that there is a homotopy equivalence of type-$D$ modules
\[
\cD_\infty^{\cK}\simeq \Cone(f^1\colon \cD_n^{\cK}\to \cD_{n+1}^{\cK}),
\]
for some type-$D$ morphism $f^1$. Tensoring with this homotopy equivalence recovers the surgery exact triangle in Heegaard Floer homology.  This example is reminiscent of the original setting of bordered Heegaard Floer homology \cite{LOTBordered}*{Section~11.2}

\subsection{Arc systems}

\label{sec:arc-systems-intro}

We also prove several important technical results about the link surgery formula of Manolescu and Ozsv\'{a}th \cite{MOIntegerSurgery}. We recall that the link surgery formula requires, for each knot component $K_i\subset L$, a choice of embedded arc $\scA_i\subset S^3$ which connects two basepoints on $K_i$. We furthermore require the arcs to be pairwise disjoint and to be disjoint from $L$ except at their endpoints. We call any such collection $\scA$ a \emph{system of arcs}. To a system of arcs, Manolescu and Ozsv\'{a}th's construction produces a chain complex $\cC_{\Lambda}(L,\scA)$.

Given a knot $K\subset Y$, there are two natural choices of arc systems on $K$. These are gotten by connecting the two basepoints $w,z\in K$ with an arc that runs parallel to $K$. We say the arc system is \emph{beta parallel} if the  arc runs from $z$ to $w$ and is oriented in the same direction as $K$. We say that the arc system is \emph{alpha parallel} if it runs from $z$ to $w$ and is oriented oppositely to $K$. Given a Heegaard knot splitting $\Sigma$ for $K$, the arc system is \emph{alpha parallel} if the arc is parallel to the subarc of $K$ which lies in the alpha handlebody, and similarly for beta parallel arcs. To a link $L\subset Y$, there are $2^{|L|}$ arc systems which can be constructed via this procedure, where we choose each arc to be either alpha parallel or beta parallel. Our theory also allows arc systems where some of the arcs are neither alpha parallel or beta parallel. Although these seem less natural, they appear naturally in our proof of the connected sum formula.

There is an analogous asymmetry in Lipshitz, Ozsv\'{a}th and Thurston's theory \cite{LOTBordered}. In their theory, for a bordered 3-manifold $Y$ they consider Heegaard diagrams with boundary where some set of the attaching curves are properly embedded arcs. For each boundary component of $Y$, they must choose whether the arcs which abut that boundary component are alpha arcs or beta arcs. See Figure~\ref{fig:45} for an example of a bordered Heegaard diagram with alpha arcs.

\begin{figure}[h]
\begingroup%
  \makeatletter%
  \providecommand\color[2][]{%
    \errmessage{(Inkscape) Color is used for the text in Inkscape, but the package 'color.sty' is not loaded}%
    \renewcommand\color[2][]{}%
  }%
  \providecommand\transparent[1]{%
    \errmessage{(Inkscape) Transparency is used (non-zero) for the text in Inkscape, but the package 'transparent.sty' is not loaded}%
    \renewcommand\transparent[1]{}%
  }%
  \providecommand\rotatebox[2]{#2}%
  \newcommand*\fsize{\dimexpr\f@size pt\relax}%
  \newcommand*\lineheight[1]{\fontsize{\fsize}{#1\fsize}\selectfont}%
  \ifx\svgwidth\undefined%
    \setlength{\unitlength}{209.51187254bp}%
    \ifx\svgscale\undefined%
      \relax%
    \else%
      \setlength{\unitlength}{\unitlength * \real{\svgscale}}%
    \fi%
  \else%
    \setlength{\unitlength}{\svgwidth}%
  \fi%
  \global\let\svgwidth\undefined%
  \global\let\svgscale\undefined%
  \makeatother%
  \begin{picture}(1,0.51802976)%
    \lineheight{1}%
    \setlength\tabcolsep{0pt}%
    \put(0,0){\includegraphics[width=\unitlength,page=1]{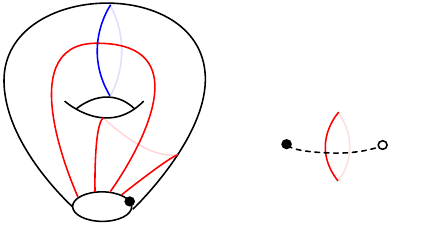}}%
    \put(0.80722388,0.1231257){\color[rgb]{0,0,0}\makebox(0,0)[lt]{\lineheight{1.25}\smash{\begin{tabular}[t]{l}$\scA$\end{tabular}}}}%
    \put(0,0){\includegraphics[width=\unitlength,page=2]{fig45.pdf}}%
  \end{picture}%
\endgroup%

\caption{ Left: A bordered Heegaard diagram for a solid torus in the Lipshitz-Ozsv\'{a}th-Thurston theory. Right: A Heegaard diagram of an unknot with a beta parallel arc system $\scA$ (passing through the alpha curve). In our theory, the corresponding bordered manifold would be represented by the module for an unknot (whose complement is a solid torus). The choice of alpha arcs on the boundary of the bordered Heegaard diagram on the left is analogous to the choice of a beta parallel arc system on the right.}
\label{fig:45}
\end{figure}

 For system of arcs $\scA$ we obtain a surgery complex $\cC_{\Lambda}(L,\scA)$ and a type-$D$ module $\cX_{\Lambda}(L,\scA)^{\cL}.$   Manolescu and Ozsv\'{a}th's result may be interpreted as showing if each arc of $\scA$ is alpha-parallel or beta-parallel, then
\[
\ve{\HF}^-(S^3_{\Lambda}(L))\iso H_*( \cC_{\Lambda}(L,{\scA}))
\]
as modules over $\bF\llsquare U\rrsquare $.  They only consider arc systems which are beta-parallel. In our work, we study the dependence on the arcs system, which turns out to be rather subtle. As a first step, we prove the following:
\begin{thm}\label{thm:independence-of-arcs} If $\scA$ is any system of arcs for $L$, then there is relatively graded isomorphism of vector spaces
\[
\ve{\HF}^-(S^3_{\Lambda}(L))\iso H_*( \cC_{\Lambda}(L,{\scA})).
\]
If the arc $a_i\in \scA$ is either alpha or beta-parallel, then the isomorphism is of $\bF\llsquare U\rrsquare $-modules, where $U$ acts by $U_i$ on $H_*(\cC_{\Lambda}(L,{\scA}))$.
\end{thm}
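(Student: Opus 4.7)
The plan is to reduce to the case treated by Manolescu and Ozsv\'{a}th (where every arc is alpha- or beta-parallel) by showing that changing one arc at a time corresponds to tensoring with a mapping-class-group bimodule ${}_\cK [\phi]^\cK$, and that this tensor product computes the Heegaard Floer homology of the same surgered manifold. Fix all arcs of $\scA$ except $\lambda_i$, and analyze how $\cC_\Lambda(L,\scA)$ depends on $\lambda_i$. Combining the pairing theorem (Theorem~\ref{thm:pairing}) with the tensor-product decomposition via the identity bimodule ${}_{\cK\otimes \cK}[\bI^{\Supset}]$ from Section~\ref{sec:AAA-identity}, one may write $\cX_\Lambda(L,\scA)^{\cL_\ell}$ as a box tensor product in which the $i$-th type-$D$ factor depends only on the bordered structure of the knot complement of $K_i$ with basis $(\mu_i,\lambda_i)$.

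Any two arcs $\lambda_i,\lambda_i'$ for the same component $K_i$ (sharing endpoints on $K_i$) determine bases for $H_1(\d \nu(K_i))$ that differ by an orientation-preserving self-diffeomorphism $\phi$ of the boundary torus. Factoring $\phi$ into the generators described before Theorem~\ref{thm:intro-pairing-theorem} and using the bimodules ${}_\cK [\phi]^\cK$ from the splicing section, I would establish a homotopy equivalence
\[
\cX_\Lambda(L,\scA')^{\cL_\ell} \simeq \cX_\Lambda(L,\scA)^{\cL_\ell}\hatbox \bigl(\id\otimes\cdots\otimes {}_\cK [\phi]^\cK\otimes\cdots\otimes \id\bigr).
\]
After box-tensoring both sides with the solid-torus modules ${}_\cK[\cD_0]_{\bF[U]}$ on every component, the underlying 3-manifold is unchanged (only the boundary parametrization of $\nu(K_i)$ has been altered, and this is filled in by the solid torus). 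Since the resulting complex computes $\ve{\HF}^-$ of the same $S^3_\Lambda(L)$, iterating this argument over all components reduces the general case to the alpha/beta-parallel case handled by Manolescu--Ozsv\'{a}th, yielding the desired vector space isomorphism. The relative Maslov grading is preserved at each step because each ${}_\cK[\phi]^\cK$ is graded (as built from surgery complexes of the Hopf link), and the solid torus modules carry canonical gradings.

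For the $\bF[[U]]$-module refinement, recall that when $\lambda_i$ is alpha- or beta-parallel, the variable $U_i$ corresponds to a basepoint supported on $K_i$ in the Heegaard diagram used by Manolescu--Ozsv\'{a}th, and the isomorphism they construct is automatically $\bF[[U]]$-equivariant. In that case the diffeomorphism $\phi$ on the $i$-th component is trivial, so no reparametrization bimodule is inserted, and the $U_i$-action survives the reduction. For a general arc, however, the diffeomorphism $\phi$ mixes $\scU$ and $\scV$ via the relations $\tau\scU = \scV^{-1}\tau$ and $\tau\scV=\scU\scV^2\tau$, so $U_i=\scU_i\scV_i$ need not act as the canonical Heegaard Floer $U$; this is exactly why the module statement in the theorem is conditional on parallelism.

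The principal obstacle is verifying that a single arc-slide of $\lambda_i$ across another component (or across itself, producing a Dehn twist of $\d \nu(K_i)$) is implemented precisely by the bimodule ${}_\cK[\phi]^\cK$. Concretely, one must identify the change-of-arc on the link surgery hypercube with the Hopf-link surgery picture underlying the $[\phi]$ construction, tracking the $\scU, \scV$ labels carefully; this uses the connected-sum-with-Hopf-link interpretation that also underlies the dual knot formula. The remaining moves (isotopies of $\lambda_i$ disjoint from $L$) do not alter the surgery complex at all, and so contribute nothing beyond identifications of generators.
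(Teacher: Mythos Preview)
Your proposal rests on a misidentification of what a \emph{system of arcs} is. In this paper (Definition~\ref{def:system-of-arcs}), the arc $\lambda_i\in\scA$ is an embedded arc in $S^3$ with $\d\lambda_i=\{w_i,z_i\}$, the two basepoints on $K_i$. It is auxiliary data used to build the hypercube maps in the link surgery formula (a trajectory along which one slides the free basepoint of the component); it is \emph{not} a longitude and does not determine a basis of $H_1(\d\nu(K_i))$. Changing $\lambda_i$ does not change the boundary parametrization of the knot complement at all, so there is no associated mapping-class $\phi$ and the bimodules ${}_{\cK}[\phi]^{\cK}$ are not relevant here. Relatedly, the module $\cX_\Lambda(L,\scA)^{\cL_\ell}$ does not factor as a tensor product of type-$D$ pieces for the individual knot complements (the components of $L$ may be linked), so the decomposition you invoke in the first paragraph is unavailable.

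The paper's argument is instead analytic: changing the arc $\lambda_i$ to another arc $\lambda_i'$ amounts to post-composing the $F^{-K_i}$ direction of the hypercube with a basepoint-moving map for the loop $\gamma=\lambda_i'*\lambda_i^{-1}$. The key input is a hypercube version of Sarkar's formula, Theorem~\ref{thm:basepoint-moving-hypercubes}, which identifies this map with $\id+\Phi_{w_i}\circ\cA_\gamma$; see Corollary~\ref{cor:different-path-formula}. One then uses the algebraic null-homotopy $\Phi_{w_i}=[\d,\d_{\scU_i}]$ (Remark~\ref{rem:Phi-nullhomotopic}) to conclude $\cC_\Lambda(L,\scA)\simeq\cC_\Lambda(L,\scA')$. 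The homotopy $\d_{\scU_i}$ commutes with $U_j=\scU_j\scV_j$ for $j\neq i$ but not with $U_i$, which is exactly why the equivalence is only $\bF[[U_2,\dots,U_\ell]]$-equivariant in general and why the $\bF[[U]]$-module statement requires $\lambda_i$ to already be alpha- or beta-parallel. This is the content of Theorem~\ref{thm:basepoint-independence} and Proposition~\ref{prop:equivalence-type-D-change-basepoint}.
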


The isomorphism between the link surgery complexes for different systems of arcs is somewhat subtle. For example, we will construct two systems of arcs $\scA_1$ and $\scA_2$ on the Hopf link $H$ for which
\begin{equation}
\cX_{\Lambda}(H,{\scA_1})^{\cL_2}\not \simeq \cX_{\Lambda}(H,{\scA_2})^{\cL_2}\label{eq:intro-Hopf-arcs}
\end{equation}
as type-$D$ modules. See Section~\ref{sec:alpha-beta-model}. On the other hand, for type-$D$ modules associated to knots, alpha parallel and beta parallel arc systems give the same modules. This follows from \cite{ZemLattice}*{Remark~5.7}.

In Section~\ref{sec:basepoint-moving-maps}, we will describe a general formula for the effect of changing the arcs in the link surgery formula. One important special case is when we want to change an arc from alpha-parallel to beta-parallel, or vice versa. This may be achieved by tensoring with a rank 1 $DA$-bimodule ${}_{\cK} \cT^{\cK}$. In Section~\ref{sec:alpha-to-beta-transformer} we study this bimodule. It is algebraically related to the basepoint moving map investigated by Sarkar \cite{SarkarMovingBasepoints} and later by the author \cite{ZemQuasi}.

The arc systems also play an important role in the connected sum theorems and our construction of the pair-of-pants modules in Section~\ref{sec:pair-of-pants}. Note that in the Lipshitz, Ozsv\'{a}th, Thurston bordered theory, the gluing formula requires one to tensor an ``alpha bordered'' module with an ``alpha bordered'' module.  From a Heegaard diagrammatic perspective, this is natural because it is most natural to glue an ``alpha bordered'' Heegaard diagram to an ``alpha bordered'' bordered Heegaard diagram to get a Heegaard diagram of the glued manifold (since we cannot glue alpha arcs to beta arcs).

It is helpful to extend the Lipshitz, Ozsv\'{a}th and Thurston convention of gluing alpha-to-alpha and beta-to-beta into our theory. In our theory, gluing is accomplished by way of taking the tensor product with algebraically defined bimodules
\[
{}_{\cK|\cK} [\bI^{\Supset}],\quad {}_{\cK|\cK} W_{\a\b,\b}^{\cK} \quad \text{and} \quad {}_{\cK|\cK} W_{\a\b,\a}^{\cK}.
\]
(There are further modules for all combinations of $\alpha$ and $\beta$ subscripts). For connected sums involving ${}_{\cK|\cK}[\bI^{\Supset}]$, the most basic theorem we prove requires one module to be alpha bordered, and one to be beta bordered. Therefore, it is most natural to think of the module ${}_{\cK|\cK} [\bI]^{\Supset}$ as alpha bordered on one component, and beta bordered on the other. The labeling of the modules ${}_{\cK|\cK}W_{\a\b,\b}^{\cK}$, ${}_{\cK|\cK} W_{\a\b,\a}^{\cK}$ indicates the borderedness. For example, tensoring two beta bordered type-$D$ modules for knots into ${}_{\cK|\cK} W_{\a\b,\a}^{\cK}$ gives the alpha-bordered module for their connected sum.

\subsection{General bordered manifolds with torus boundary}
 
 The connected sum formula of Theorem~\ref{thm:pairing} and the topological interpretation of surgeries on connected sums in terms of gluing torus boundary components motivate a definition of a potential invariant of bordered 3-manifold with torus boundary components.
 
 We suppose that $M$ is a bordered 3-manifold with $n$ toroidal boundary components. We may describe $M$ as surgery on a link $L$, in the complement of an unlink $\bU_n$ in $S^3$, in a way which is compatible with the parametrization of $\d M$.
 
 We may define a type-$D$ module $\cX(M)^{\cL_n}$ as follows. We begin with the link surgery formula for $L\cup \bU_n$, interpreted as a type-$D$ module over our algebra $\cL_{n+\ell}$, where $\ell=|L|$. We write $\cX_{\Lambda}(L\cup\bU_n)^{\cL_{n+\ell}}$ for this module. We define $\cX(M)^{\cL_n}$ by tensoring the module $\cX_{\Lambda}(L\cup \bU_n)^{\cL_{n+\ell}}$ with $\ell$ type-$A$ modules for 0-framed solid tori. This corresponds topologically to performing Dehn surgery on the components $L$, while identifying the components of $\d \nu(\bU_n)$ with $\d M$. Theorem~\ref{thm:pairing} may be interpreted as a pairing theorem for gluing two bordered 3-manifolds with torus boundary components together. We prove in Section~\ref{sec:finite-generation} that the modules $\cX(M)^{\cL_n}$, as defined above, are always homotopy equivalent to finitely generated type-$D$ modules, i.e., the underlying vector space $\cX(M)$ may be taken to be finite-dimensional over $\bF$.

 In a future work, we hope to understand in what sense $\cX(M)^{\cL_n}$, defined as above, is an invariant of the bordered manifold $M$. In light of Equation~\eqref{eq:intro-Hopf-arcs}, the modules will in general depend on the choice of system of arcs $\scA$ in $S^3$. We make the following conjecture:
 
 \begin{conj}\footnote{In a subsequent work \cite{ZemExactTriangle}, we prove the above conjecture for arc systems where each arc is either alpha parallel or beta parallel.} If $M$ is a bordered manifold with toroidal boundary components which is equipped with a system of arcs $\scA\subset M$, then the type-$D$ module $\cX(M,\scA)^{\cL}$ is an invariant of $(M,\scA)$ up to homotopy equivalence.
 \end{conj}

 \begin{rem}
  Our bordered modules $\cX(M)^{\cK}$ give useful surgery formulas for homologically essential knots in 3-manifolds with $b_1>0$. This is a setting where the techniques of Ozsv\'{a}th and Szab\'{o} \cite{OSIntegerSurgeries} do not seem to have a simple adaptation. As a concrete example, the $\infty$-framed solid torus may be viewed as the knot complement of a fiber in $S^1\times \{pt\}\subset S^1\times S^2$. We compute in Section~\ref{sec:solid-tori} the associated type-$D$ module, which is concentrated in idempotent 1 and has two generators, and has $\delta^1$ encoded by the diagram
  \[
  \begin{tikzcd}[labels=description, column sep=2cm]
  \xs^1\ar[r, "1+\scV"]& \ys^1.
  \end{tikzcd}
  \]
 \end{rem}

\subsection{Acknowledgments}

The author would like to thank A. Alfieri, R. Lipshitz, C. Manolescu, and P. Ozsv\'{a}th  for helpful conversations. The author would like to thank K. Hendricks, J. Hom and M. Stoffregen for helpful conversations and the collaboration \cite{HHSZDuals}, where some of the results on the minimal model of the Hopf link complex were discovered. The author would also like to thank collaborators M. Borodzik and B. Liu for interesting discussions and helpful insights on related work. Finally, the author would like to thank the anonymous referee for their careful and thoughtful comments.

\section{Hypercubes and Heegaard Floer homology}

In this section, we describe some background on Heegaard Floer homology and homological algebra.

\subsection{Background on Heegaard Floer homology}

We will assume that the reader is familiar with the basics of Heegaard Floer homology \cite{OSDisks}, as well as its refinements for knots \cite{OSKnots} \cite{RasmussenKnots} and links \cite{OSLinks}. We provide a brief review to establish notation.

\begin{define}[\cite{OSLinks}*{Definition~3.1}]\,
\begin{enumerate}
\item  A \emph{multi-pointed Heegaard diagram} consists of a tuple
\[
(\Sigma,\as,\bs,\ws)
\]
as follows. Here, $\Sigma$ is a surface of genus $g$ and $\ws=\{w_1,\dots, w_n\}$ is a nonempty collection of basepoints. Also, $\as=\{\a_1,\dots, \a_{g+n-1}\},$ is a collection of pairwise disjoint embedded curves on $\Sigma$ which are linearly independent in $H_1(\Sigma\setminus\ws)$. We assume $\bs=\{\b_1,\dots, \b_{g+n-1}\}$ satisfies the same condition. In particular, $\Sigma\setminus \as$ consists of $n$ components, each of which contains a single basepoint $w_i\in \ws$, and similarly for $\Sigma\setminus \bs$.
 \item A \emph{Heegaard link diagram with free basepoints} consists of a tuple
 \[
  (\Sigma,\as,\bs,\ws,\zs,\ps)
 \] 
 satisfying the following. We assume that $\ws=\{w_1,\dots, w_\ell\}$, $\zs=\{z_1,\dots, z_\ell\}$ and $\ps=\{p_1,\dots, p_m\}$ are collections of basepoints. Let $n=\ell+m$. We assume that $\as=\{\a_1,\dots, \a_{g+n-1}\}$ and $\bs=\{\b_1,\dots, \b_{g+n-1}\}$ are collections of pairwise disjoint, embedded curves which each span a rank $g$ summand of $H_1(\Sigma)$.  We assume that each component of $\Sigma\setminus \as$ (resp. $\Sigma\setminus \bs$) contains either a single basepoint of $\ps$, or contains one basepoint from $\ws$ and one basepoint from $\zs$. 
\end{enumerate}
\end{define}

\begin{rem}
\item Occasionally, it is helpful to allow the collections $\ws$ and $\zs$ to contain free basepoints. In this case, we assume that if $p\in \ws\cup \zs$ is the sole basepoint in a component of $\Sigma\setminus \as$, then it is the sole basepoint in a component of $\Sigma\setminus \bs$. 
\end{rem}

The curves $\as,\bs\subset \Sigma$ determine two handlebodies, $U_{\a}$ and $U_{\b}$, with boundary $\Sigma$. A Heegaard link diagram with free basepoints determines a 3-manifold $Y$ and an oriented link $L\subset Y$. In $U_{\a}$, the link $L$ is a push-off of a collection of smoothly embedded, pairwise disjoint arcs in $\Sigma\setminus \as$ which connect the link-basepoints of $\ws$ to the link-basepoints of $\zs$. In $U_{\b}$, we assume $L$ is a push off of a collection of smoothly embedded, pairwise disjoint arcs which connect the link-basepoints of $\zs$ to the link-basepoints of $\ws$. We follow the orientation convention that $L$ intersects $\Sigma$ positively at $\zs$ and negatively at $\ws$.

\begin{rem} Typically we assume that if $w_i$ and $z_j$ are in a component of $\Sigma\setminus \as$, then they are also in a component of $\Sigma\setminus \bs$. This ensures that $(\Sigma,\as,\bs,\ws,\zs,\ps)$ represents a link with $\ell$ components and each component has 2 basepoints.
\end{rem}

 If $\cH=(\Sigma,\as,\bs,\ws)$ is a multi-pointed Heegaard diagram for $Y$ and $\frs\in \Spin^c(Y)$, we define $\CF^-(\cH,\frs)$ to be the free $\bF[U_1,\dots, U_n]$-module generated by intersection points $\xs$ of the Lagrangian tori
 \[
 \bT_{\a}=\a_1\times \dots \times \a_{g+n-1}\quad \text{and} \quad \bT_{\b}=\b_1\times \cdots \times \b_{g+n-1}
 \]
 in the symmetric product $\Sym^{g+n-1}(\Sigma)$, such that $\frs_{\ws}(\xs)=\frs$.  See \cite{OSDisks}*{Section~2.6} for the definition of the map $\frs_{\ws}\colon \bT_{\a}\cap \bT_{\b}\to \Spin^c(Y)$. 
 
 The differential counts pseudo-holomorphic representatives of Maslov index 1 flowlines in $\Sym^{g+n-1}(\Sigma)$ via the formula
 \begin{equation}
 \d(\xs)=\sum_{\substack{\phi\in \pi_2(\xs,\ys) \\ \mu(\phi)=1}} \#(\cM(\phi)/\R) U_1^{n_{w_1}(\phi)}\cdots U_n^{n_{w_n}(\phi)}\cdot \ys,
 \label{eq:differential-basic}
 \end{equation}
 extended equivariantly over $\bF[U_1,\dots, U_n]$.
 
 We will frequently make use of Lipshitz's \emph{cylindrical reformulation} of Heegaard Floer homology \cite{LipshitzCylindrical}. In this formulation, the differential counts holomorphic curves of potentially higher genus in $\Sigma\times [0,1]\times \R$. We refer the reader to \cite{LipshitzCylindrical} for additional background.

 \begin{define}[Cf. \cite{OSDisks}*{Section~4.2}] 
 \label{def:weakly-admissible}
 \item
 \begin{enumerate}
 \item A \emph{periodic domain} on a Heegaard multi-diagram $\cH=(\Sigma,\gs_1,\dots,\gs_n,\ws)$ is a 2-chain $P$ on $\Sigma$ such that $\d P$ is an integral linear combination of the curves from $\gs_1,\dots, \gs_n$ and $n_{\ws}(P)=0$. 
 \item  A Heegaard diagram for a pointed 3-manifold $\cH=(\Sigma,\as,\bs,\ws)$ is \emph{weakly admissible} if each non-trivial periodic domain has both positive and negative multiplicities.
 \item  If $\cH=(\Sigma,\as,\bs,\ws,\zs, \ve{p})$ is a Heegaard link diagram with free basepoints, then we say that $\cH$ is weakly admissible if $(\Sigma,\as,\bs,\qs)$ is weakly admissible for each collection $\qs\subset \ws\cup \zs\cup \ps$ such that $\qs$ contains all of the free basepoints and exactly one link basepoint from each link component.
\item We say that a Heegaard multi-diagram $(\Sigma,\gs_1,\dots, \gs_n,\qs)$ is \emph{weakly admissible} if each periodic domain has positive and negative multiplicities. A Heegaard multi-diagram with link basepoints $(\Sigma,\gs_1,\dots, \gs_n, \ws, \zs, \ps)$ is called weakly admissible if it is weakly admissible for each collection $\qs\subset \ws\cup \zs\cup \ps$ as above.
 \end{enumerate}
 \end{define}

 Given a weakly admissible Heegaard diagram $\cH$, we can define a completed version of Heegaard Floer homology $\ve{\CF}^-(\cH)$, which is freely generated by $\xs\in \bT_{\a}\cap \bT_{\b}$ (with no restriction on $\Spin^c$ structures) over the ring $\bF\llsquare U_1,\dots, U_n\rrsquare $. 
 
We now discuss link Floer homology. In this paper, we use a slightly different version of link Floer homology than appeared in \cite{OSLinks}. If $(\Sigma,\as,\bs,\ws,\zs)$ is a Heegaard link diagram with no free basepoints and with $|\ws|=|\zs|=\ell$, we use a version of the link Floer complex which is a free, finitely generated chain complex over the ring $\bF[\scU_1,\dots, \scU_\ell,\scV_1,\dots,\scV_\ell]$. This version was considered in \cite{ZemQuasi} and \cite{ZemCFLTQFT}.
We define $\cCFL(\Sigma,\as,\bs,\ws,\zs,\frs)$ to be generated over $\bF[\scU_1,\dots, \scU_\ell,\scV_1,\dots, \scV_\ell]$ by intersection points $\xs\in \bT_{\a}\cap \bT_{\b}$ such that $\frs_{\ws}(\xs)=\frs$. The differential is similar to~\eqref{eq:differential-basic}, except that we weight a holomorphic curve by 
\[
\scU_1^{n_{w_1}(\phi)}\scV_1^{n_{z_1}(\phi)}\cdots \scU_\ell^{n_{w_\ell}(\phi)}\scV_\ell^{n_{z_\ell}(\phi)}.
\] 

In the case of a doubly pointed Heegaard knot diagram, $\cH=(\Sigma,\as,\bs,w,z)$, we write $\cCFK(\cH,\frs)$ for the complex which is freely generated over $\bF[\scU,\scV]$ by intersection points $\xs\in \bT_{\a}\cap \bT_{\b}$.

\subsection{Hypercubes and hyperboxes of chain complexes}

We now describe Manolescu and Ozsv\'{a}th's algebraic formalism of \emph{hypercubes}  and \emph{hyperboxes} of chain complexes \cite{MOIntegerSurgery}*{Section~5}. Define
\[
\bE_n:=\{0,1\}^n\subset \R^n,
\]
with the convention that $\bE_0=\{0\}$. 
If $\ve{d}=(d_1,\dots, d_n)\in \Z^{\ge 0}$, we define
\[
\bE(\ve{d}):=\{(i_1,\dots, i_n)\in \Z^n: 0\le i_j\le d_j, \forall j\}.
\]
If $\veps,\veps'\in \bE(\ve{d})$, we write $\veps\le \veps'$ if the inequality holds for each coordinate of $\veps$ and $\veps'$. We write $\veps<\veps'$ if $\veps\le \veps'$ and strict inequality holds at one or more coordinates.

\begin{define} An \emph{$n$-dimensional hypercube of chain complexes} $(C_{\veps}, D_{\veps,\veps'})_{\veps\in \bE_n}$ consists of the following:
\begin{enumerate}
\item A group $C_{\veps}$ for each $\veps\in \bE_n$.
\item For each pair of indices $\veps,\veps'\in \bE_n$ such that $\veps\le \veps'$, a linear map
\[
D_{\veps,\veps'}\colon C_{\veps}\to C_{\veps'}.
\]
\end{enumerate}
Furthermore, we assume the following compatibility condition holds whenever $\veps\le \veps''$:
\begin{equation}
\sum_{\substack{\veps'\in \bE_n\\ \veps\le \veps'\le \veps''}} D_{\veps',\veps''}\circ D_{\veps,\veps'}=0.
\label{def:hypercube}
\end{equation}
\end{define}

\begin{define} 
If $\ve{d}\in (\Z^{>0})^n$, a \emph{hyperbox of chain complexes of size $\ve{d}$} consists of a collection of groups $(C_{\veps})_{\veps\in \bE(\ve{d})}$, together with a linear map $D_{\veps,\veps'}$ whenever $\veps\le \veps'$ and $|\veps'-\veps|_{L^\infty}\le 1$. Furthermore, we assume that the analog of \eqref{def:hypercube} holds whenever $\veps\le \veps''$ and  $|\veps''-\veps|_{L^\infty}\le 1$.
\end{define}

Hypercubes and hyperboxes of the same size form $dg$-categories. If $\cC=(C_{\veps}, D_{\veps,\veps'})_{\veps\in \bE_n}$ and $\cC'=(C_{\veps}', D_{\veps,\veps'}')_{\veps\in \bE_n}$ are two hypercubes of the same dimension, then a \emph{morphism of hypercubes} $F$ consists of a collection of linear maps 
\[
F_{\veps,\veps'}\colon C_{\veps}\to C'_{\veps'},
\] ranging over all $\veps, \veps'\in \bE_n$ such that $\veps\le \veps'$. We write $\Hom(\cC,\cC')$ for the space of morphisms. There is a natural morphism differential $\d_{\Mor}$ on $\Hom(\cC,\cC')$, given by
\begin{equation}
\d_{\Mor}(F)_{\veps,\veps''}=\sum_{\substack{\veps'\in \bE_n \\ \veps\le \veps'\le \veps''}} D'_{\veps',\veps''}\circ F_{\veps,\veps'}+F_{\veps',\veps''}\circ D_{\veps,\veps'}.
\label{eq:morphism-differential-hypercubes}
\end{equation}

\begin{rem}
In Section~\ref{sec:hypercube-algebra} we will give an alternate perspective on hypercubes, and show that the category of hypercubes of chain complexes is equivalent to the category of type-$D$ modules over a certain algebra.
\end{rem}

\subsection{Compressing hyperboxes}
\label{sec:compression}
Manolescu and Ozsv\'{a}th \cite{MOIntegerSurgery}*{Section~5} describe an operation called \emph{compression} which takes a hyperbox $\cC=(C_{\veps},D_{\veps,\veps'})_{\veps\in \bE(\ve{d})}$ of size $\ve{d}=(d_1,\dots, d_n)$ and returns an $n$-dimensional hypercube $\widehat{\cC}=(\widehat{C}_{\veps}, \widehat{D}_{\veps,\veps'})_{\veps\in \bE_n}$. The underlying complexes are given by the formula
\[
\widehat{C}_{(\veps_1,\dots, \veps_n)}=C_{(d_1\veps_1,\dots, d_n\veps_n)}.
\]
The hypercube structure maps are more complicated. We first illustrate the construction in the 1-dimensional case. Suppose $\cC$ is the 1-dimensional hyperbox of chain complexes
\[
\begin{tikzcd}
C_0
	\ar[r, "f_{0,1}"]
&
C_1
	\ar[r, "f_{1,2}"]
&
\cdots
	\ar[r, "f_{n-1,n}"]
&
C_n.
\end{tikzcd}
\]
The compression of the above hyperbox is the hypercube
\[
\begin{tikzcd}[column sep=2.5cm]
C_0
\ar[r, "f_{n-1,n}\circ \cdots \circ f_{0,1}"]
&
C_n.
\end{tikzcd}
\]
In Section~\ref{sec:hypercube-algebra} we give an alternate description of compression in the 1-dimensional case in terms of box-tensor products of $DA$-bimodules over a certain algebra.

 The same description also works for hyperboxes of size $(1,\dots, 1,d)$. That is, we view $\cC$ as a 1-dimensional hyperbox where the complex at each point of $\bE(d)$ is a hypercube of dimension $n-1$. Compression in this case is given by composition of hypercube morphisms.
 
For a hyperbox $\cC$ of size $(d_1,\dots, d_n)$, the compression may be defined by iterating  the above construction. Doing so requires a choice of ordering of the axis directions. For concreteness, we pick the standard ordering of $1,\dots, n$. Given a hyperbox $\cC$ we define the partial compression of the $i$-th axis direction, denoted $\frc_i(\cC)$, to be the following hyperbox of size $(d_1,\dots, d_{i-1},1,d_{i+1},\dots, d_n)$. We view the $\cC$ as a 1-dimensional hyperbox (of hyperboxes) of size $(d_i)$, where the complex at each point in $\bE(d_i)$ is a hyperbox of size $(d_1,\dots, d_{i-1},d_{i+1},\dots, d_n)$. The hyperbox at point $j\in \{0,\dots, d_i\}$ has underlying group consisting of the sum of all complexes at points $\veps\in \bE(\ve{d})$ with $\veps_i=j$. We apply the composition perspective of compression for 1-dimensional hypercubes to obtain $\frc_i(\cC)$. We then define the compression
\[
\widehat{\cC}=(\frc_n\circ \cdots \circ \frc_1)(\cC).
\]
Compare \cite{Liu2Bridge}*{Section~4.1.2}.

Of course, we can compress the hyperbox $\cC$ using any ordering of the axis directions. The following is well-known, and we do not claim originality for the following, though we include it for completeness:

\begin{lem} If $\cC$ is a hypercube of chain complexes and $\widehat{\cC}$ is its compression, then $\widehat{\cC}$ is independent up to homotopy equivalence of hypercubes from the choice of ordering of the axis directions.
\end{lem}
\begin{proof}  Given the inductive nature of the construction, it suffices to prove the case when $n=2$. Let $\cC$ be a hyperbox of size $(d_1,d_2)$. Write $\frc_2(\frc_1(\cC))$ and $\frc_1(\frc_2(\cC))$ for the two compressions of $\cC$. 
 Note that the length 1 maps of $\frc_2(\frc_1(\cC))$ and $\frc_1(\frc_2(\cC))$ coincide, so it suffices to consider the diagonal maps. Since we are considering hypercubes of dimension 2, it suffices to show that the two diagonal maps are themselves chain homotopic since such a chain homotopy can be used to build a morphism of hypercubes $F\colon \frc_2(\frc_1(\cC))\to \frc_1(\frc_2(\cC))$ which is a homotopy equivalence. The components of $F$ which preserve cube points act by the identity. We define $F$ to have no components which increase only one cube point. We will define $F$ to have a component from cube point $(0,0)$ to cube point $(1,1)$, which acts by the above chain homotopy between the length 2 maps of $\frc_2(\frc_1(\cC))$ and $\frc_1(\frc_2(\cC))$. Assuming we can construct such a chain homotopy, the map $F$ is clearly a homotopy equivalence. We now describe how to construct this chain homotopy.

 Write $C_*(P_2)$ for the following chain complex over $\bF=\Z/2$. There are two generators in degree 0, denoted $e_1e_2$ and $e_2e_1$. We define $\d(e_ie_j)=0$. There is a single generator in degree $1$, denoted $h_{1,2}$, which satisfies
\[
\d h=e_1e_2+e_2e_1.
\]
We view $e_1e_2$ and $e_2e_1$ as the paths along the boundary of a 2-dimensional cube from $(0,0)$ to $(1,1)$, and we think of $h_{1,2}$ as the diagonal. We think of $C_*(P_2)$ as the cell complex for the 2-dimensional permutohedron.

 We now define another chain complex $C_*(P_{d_1,d_2})$. The generators consist of concatenations  of $e_1$, $e_2$ and $h_{1,2}$ such that the total number of $1$ subscripts appearing is $d_1$, and the total number of $2$ subscripts appearing is $d_2$. The differential sums over all ways of breaking a single $h_{1,2}$ into $(e_1e_2+e_2e_1)$. We think of the generators of $C_*(P_{d_1,d_2})$ as corresponding to paths in $[0,d_1]\times [0,d_2]$ from $(0,0)$ to $(d_1,d_2)$ which consist of concatenations of three types of segments (horizontal unit length segments, vertical unit length segments, or diagonal segments of $L^1$-length $2$).
 
 Note that the hyperbox structure maps of $\cC$ give a chain map
 \[
 C_*(P_{d_1,d_2})\to \Hom(C_{0,0}, C_{d_1,d_2}).
 \]
The two compression procedures give chain maps 
\[
\frc_{1,2}, \frc_{2,1}\colon C_*(P_2)\to C_*(P_{d_1,d_2}).
\]
 Note that $\frc_{i,j}(e_1e_2)=e_1^{d_1} e_2^{d_2}$ and $\frc_{i,j}(e_2e_1)=e_2^{d_2}e_1^{d_1}$ for both choices of $i,j$. In order to show that the diagonals of $\widehat{\cC}_{1,2}$ and $\widehat{\cC}_{2,1}$ are chain homotopic, it suffices to show that the maps $\frc_{1,2}$ and $\frc_{2,1}$ are themselves chain homotopic. To do this, it suffices to show that the homology of $P_{d_1,d_2}$ satisfies
 \[
 H_n(P_{d_1,d_2})\iso \begin{cases}\bF & \text{ if } n=0\\ 
 0& \text{ if } n>0  \end{cases}.
 \]
To establish this, we describe a filtration $\cF_{\le n}$ on $C_*(P_{d_1,d_2})$. Given an arrow sequence $\omega\in P_{d_1,d_2}$, view $\omega$ as giving an embedded piecewise linear curve in $[0,d_1]\times [0,d_2]\subset \R^2$. Consider the region $R$ between $\omega$ and the arrow sequence $e_1^{d_1} e_2^{d_2}$. We define the filtration level of $\omega$ to be the number of unit squares in $[0,d_1]\times [0,d_2]$ with integral corners and whose interiors intersect $R$ non-trivially. Note that the subspace $\cF_{\le n}$ is preserved by $\d$.

We now define a homotopy $H\colon C_*(P_{d_1,d_2})\to C_{*+1}(P_{d_1,d_2})$. The map $H$ sums over all ways of replacing one subword $e_2e_1$ with $h_{1,2}$. We observe that $\bI+[\d, H]$ acts by the identity on $e_1^{d_1} e_2^{d_2}$. We leave it to the reader to verify that if $\omega$ is a word with filtration level $n>0$, then $(\bI+[\d, H])(\omega)$ is a sum of arrow words of filtration level at most $n-1$. Therefore $(\bI+[\d, H])^N$ gives a homotopy equivalence between $C_*(P_{d_1,d_2})$ and $\bF$ (concentrated in degree 0) for $N\gg 0$. This completes the proof.
\end{proof}

\subsection{$A_\infty$-categories and twisted complexes}
\label{sec:twisted-complexes}

We now recall the notion of a \emph{twisted complex} in an $A_\infty$-category. We refer to \cite{SeidelFukaya}*{Section~3.l} for more background on this construction. 

We recall that an \emph{$A_\infty$-category} $\cC$ consists of a set of objects $\cO(\cC)$, a collection of morphisms $\Hom(L,L')$ for $L,L'\in \cO(\cC)$, together with a collection of higher compositions $(\mu_n)_{n\ge 1}$, as follows. We assume that if $L_0,L_1\in \cO(\cC)$, then $\Hom(L_0,L_1)$  is a vector space. We assume that for each sequence $L_0,\dots, L_n\in \cO(\cC)$, the map $\mu_n$ takes the form
\[
\mu_n\colon \Hom(L_0,L_1)\otimes \cdots \otimes \Hom(L_{n-1},L_n)\to \Hom(L_0,L_1).
\]
Furthermore, we assume that for all $\xs_{1},\dots, \xs_{n}$ (where $\xs_i\in \Hom(L_{i-1},L_i)$) the following associativity condition holds:
\begin{equation}
\sum_{i=1}^n \sum_{j=i}^{n} \mu_{n-j+i}(\xs_{1},\dots, \xs_{i-1}, \mu_{j-i+1}(\xs_{i},\dots, \xs_{j}), \xs_{j+1},\dots, \xs_{n})=0.
\label{eq:associativity-condition}
\end{equation}

If $\cC$ is an $A_\infty$-category, the \emph{additive enlargement} $\Sigma \cC$ of $\cC$ is defined as follows. The objects of $\Sigma\cC$ consist of formal sums of the form $\bigoplus_{i\in I} X_i\otimes V_i$. Here, $X_i$ denote objects of $\cC$,  $V_i$ denote graded vector spaces over $\bF=\Z/2$, and $I$ denotes a finite index set. The symbols $\otimes$ and $\bigoplus$ are just notation. (For our purposes, we can take $V_i=\bF$ for all $i$).

A morphism in $\Sigma \cC$ from  $\bigoplus_{i\in I}X_i\otimes V_i$ to $\bigoplus_{j\in J}Y_j\otimes W_j$ consists of a collection $(\theta_{i,j}\otimes f_{i,j})_{i\in I, j\in J}$ where $\theta_{i,j}\colon X_i\to X_j$ is a morphism in $\cC$ and $f_{i,j}\colon V_i\to W_j$ is a linear map of vector spaces. We will typically write such a morphism as $\sum_{(i,j)\in I\times J} \theta_{i,j}\otimes f_{i,j}$.

If $\cC$ is an $A_\infty$-category, then $\Sigma \cC$ is naturally also an $A_\infty$-category, with compositions
\[
\begin{split}&\mu^{\Sigma \cC}_n \left(\sum_{i_0,i_1} \theta_{i_0,i_1}\otimes f_{i_0,i_1},\dots, \sum_{i_{n-1},i_{n}} \theta_{i_{n-1},i_n}\otimes f_{i_{n-1},i_n}, \right)\\
:=&\sum_{i_0,i_1,\dots, i_n} \mu_n^{\cC}(\theta_{i_0,i_1},\dots, \theta_{i_{n-1},i_n})\otimes(f_{i_{n-1},i_n}\circ \cdots 
\circ f_{i_0,i_1}).
\end{split}
\]
It is straightforward to verify that $\Sigma \cC$ is an $A_\infty$-category.

Finally, we define the category of twisted complexes $\Tw \cC$. A \emph{twisted complex} $(X,\delta)$ in $\cC$ consists of an object $X=(X_i,V_i)_{i\in I}$ in $\Sigma \cC$ where $I$ is a finite partially ordered set, together with an endomorphism $\delta\in \Hom_{\Sigma \cC}(X,X)$. If $\delta=\sum_{i,j}\delta_{i,j}\otimes f_{i,j}$ we assume that  $\delta_{i,j}=0$ unless $i<j$. Furthermore, we assume the following version of the Mauer-Cartan equation is satisfied:
\[
\sum_{n\ge 1} \mu_n^{\Sigma \cC}(\underbrace{\delta,\dots, \delta}_n)=0.
\]
A morphism in $\Tw \cC$ is the same as a morphism in $\Sigma \cC$. Finally, we note that $\Tw \cC$ is itself an $A_\infty$-category, with compositions
\[
\mu_n^{\Tw}(\theta_1,\dots, \theta_n):=\sum_{i_0,\dots, i_n\ge 0} \mu_{n+i_0+\cdots+i_n}^{\Sigma \cC} (\underbrace{\delta,\dots, \delta}_{i_0}, \theta_1,\underbrace{\delta,\dots, \delta}_{i_1}, \theta_2,\dots, \theta_n, \underbrace{\delta,\dots, \delta}_{i_n}).
\]

\subsection{Hypercubes of attaching curves}

The notion of a hypercube of chain complexes adapts to other categories. We now describe the appropriate notion of a hypercube in the Fukaya category. See \cite{MOIntegerSurgery}*{Section~8} and \cite{LOTDoubleBranchedI}*{Section~3}, where such objects are referred to as \emph{hyperboxes of Heegaard diagrams}, and \emph{chain complexes of attaching curves}, respectively. These may naturally be viewed as twisted complexes in the Fukaya category, using the framework described in Section~\ref{sec:twisted-complexes}.

\begin{define}\label{def:hypercube-attaching-curves}
 A \emph{hypercube of handleslide equivalent attaching curves} 
 \[
 \cL_{\b}=(\bs_{\veps}, \Theta_{\veps,\veps'})_{\veps\in \bE_n}
 \]
  on $(\Sigma,\ws)$ consists of a collection of beta attaching curves $\bs_{\veps}$, which are each pairwise related by a sequence of handleslides and isotopies, together with distinguished morphisms $\Theta_{\veps,\veps'}\in \ve{\CF}^-(\Sigma,\bs_{\veps},\bs_{\veps'},\ws)$ whenever $\veps<\veps'$. We assume, furthermore, that the Heegaard multi-diagram containing all $2^n$ beta curves is weakly admissible (see Definition~\ref{def:weakly-admissible}). Finally, we assume that for each pair $\veps<\veps'\in \bE_n$, the following relation is satisfied:
\begin{equation}
\sum_{\veps=\veps_1<\cdots<\veps_n=\veps'} f_{\b_{\veps_1},\dots,\b_{\veps_n}}(\Theta_{\veps_1,\veps_2},\dots, \Theta_{\veps_{n-1},\veps_n})=0.\label{eq:compatibility-hypercube-lagrangians}
\end{equation}
In the above, $f_{\b_{\veps_1},\dots, \b_{\veps_n}}$ denotes the holomorphic polygon counting map.
\end{define}

Sometimes in Heegaard Floer theory it is helpful to have a notion of a hypercube of \emph{beta} attaching curves, or a hypercube of \emph{alpha} attaching curves. We define a hypercube of beta attaching curves to be the object described in Definition~\ref{def:hypercube-attaching-curves}, however we define a \emph{hypercube of alpha attaching curves} as the following modification: for each $\veps<\veps'$ we have a chain $\Theta_{\veps',\veps}\in \ve{\CF}^-(\Sigma,\as_{\veps'}, \as_{\veps},\ws)$, and the compatibility condition in Equation~\eqref{eq:compatibility-hypercube-lagrangians} holds, as long as we write the indices in the opposite order. Clearly any hypercube of attaching curves can be viewed as either a hypercube of alpha attaching curves or beta attaching curves (by possibly reversing the order of indices), but the terminology is occasionally helpful.

If $\cL_{\a}$ and $\cL_{\b}$ are hypercubes of handleslide equivalent attaching curves on $(\Sigma,\ws)$, of dimension $m$ and $n$ respectively, then there is an $(n+m)$-dimensional hypercube of chain complexes 
\[
(C_{(\veps,\nu)},D_{(\veps,\nu),(\veps',\nu')})_{(\veps,\nu)\in \bE_{n}\times \bE_m}=\ve{\CF}^-(\Sigma,\cL_{\a},\cL_{\b}, \ws),
\]
 whenever the diagram containing all attaching curves is weakly admissible.  For each $(\veps,\nu)\in \bE_{n+m}$, we set
\[
C_{(\veps,\nu)}=\ve{\CF}^-(\Sigma,\as_\veps, \bs_\nu,\ws).
\]
The structure maps are given by the following formula:
\begin{equation}
\begin{split}
&D_{(\veps,\nu),(\veps',\nu')}(\xs)\\
=&\sum_{\substack{\veps=\veps_1<\cdots<\veps_i=\veps' \\
\nu=\nu_1<\cdots<\nu_j=\nu'}}
f_{\a_{\veps_i},\dots, \a_{\veps_1},\b_{\nu_1},\dots, \b_{\nu_j}}(\Theta_{\veps_i,\veps_{i-1}},\dots, \Theta_{\veps_2,\veps_1},\ve{x}, \Theta_{\nu_1,\nu_2},\dots, \Theta_{\nu_{j-1},\nu_j}).
\end{split}
\label{eq:pairing-formula-hypercubes}
\end{equation}
It is straightforward to see that $\ve{\CF}^-(\Sigma,\cL_{\a},\cL_{\b},\ws)$ is a hypercube of chain complexes.

The case when there are link basepoints $\ws$, $\zs$ basepoints is a straightforward modification of the above construction, using variables $\scU_i$ and $\scV_i$ in the definition of the Floer complexes.

 We observe that a hypercube of attaching curves $\cL=(\gs_\veps,\theta_{\veps,\veps'})_{\veps\in \bE_n}$ may naturally be interpreted as a twisted complex of Lagrangians in the Fukaya category. The index set $I$ in the definition of a twisted complex is the cube $\bE_n$, and the partial order is the natural ordering of cube points.
 
 Correspondingly, there is a natural notion of a morphism between two hypercubes of attaching curves, and the resulting category is an $A_\infty$-category.  If $\cL=(\gs_{\veps}, \xs_{\veps,\veps'})_{\veps\in \bE_n}$ and $\cL'=(\gs'_{\veps}, \ys_{\veps,\veps'})_{\veps\in \bE_n}$ are $n$-dimensional hypercubes of attaching curves, then a morphism $\Phi$ from $\cL$ to $\cL'$ 
consists of a collection of chains $\Phi_{\veps,\veps'}\in \ve{\CF}^-(\gs_{\veps},\gs_{\veps'}',\ws)$ ranging over all $\veps,\veps'\in \bE_n$ with $\veps\le \veps'$. Of course, the set of morphisms is none other than the Floer complex $\ve{\CF}^-(\Sigma,\cL,\cL',\ws)$.

Suppose $\cL_1,\dots, \cL_n$ are $n$-dimensional hypercubes of attaching curves, where $\cL_i=(\gs_{i,\veps}, (\Theta_i)_{\veps,\veps'})_{\veps\in \bE_n}$. The composition operator
\[
\mu_n^{\Tw}\colon \ve{\CF}^-(\cL_1,\cL_2)\otimes \cdots \otimes \ve{\CF}^-(\cL_{n-1},\cL_n)\to \ve{\CF}^-(\cL_1,\cL_n)
\]
is given as follows. If $\Phi_{i,i+1}\in \ve{\CF}^-(\cL_{i},\cL_{i+1})$, then
\[
\begin{split}
&\mu_n^{\Tw}(\Phi_{1,2},\dots, \Phi_{n-1,n})\\
=&\sum_{i_1,\dots, i_n\ge 0} \mu_{i_1+\dots+i_n+n}(\underbrace{\Theta_1,\dots, \Theta_1}_{i_1}, \Phi_{1,2},\underbrace{\Theta_2,\dots, \Theta_2}_{i_2}, \Phi_{2,3},\dots, \Phi_{n-1,n}, \underbrace{\Theta_n,\dots, \Theta_{n}}_{i_n}).
\end{split}
\]
In the above, $\Theta_i=\sum_{\veps<\veps'} (\Theta_i)_{\veps,\veps'}$, viewed as an element in group $\bigoplus_{\veps<\veps'} \ve{\CF}^-(\gs_{i,\veps}, \gs_{i,\veps'})$, and similarly for $\Phi_{i,i+1}$. Additionally, $\mu_{i_1+\cdots+i_n+n}$ is declared to vanish on non-composable Floer chains, e.g. $\mu_2(\xs,\ys)=0$ if $\xs\in \ve{\CF}^-(\gs_0,\gs_1)$ and $\ys\in \ve{\CF}^-(\gs_1',\gs_2)$ where $\gs_1\neq \gs_1'$.

\subsection{More hypercubes}

We now collect several additional constructions and basic properties. Firstly, if $\cC=(C_\veps,D_{\veps,\veps'})_{\veps\in \bE_n}$ and $\cC'=(C'_{\nu}, D'_{\nu,\nu'})_{\nu\in \bE_m}$ are two hypercubes of chain complexes over a ring $R$, we may define their \emph{external tensor product}
\[
\cE:=\cC\otimes_R \cC',
\]
as follows. The hypercube $\cE=(E_{(\veps,\nu)},\delta_{(\veps,\nu), (\veps',\nu')})_{(\veps,\nu)\in \bE_n\times \bE_m}$ is the tensor product in the ordinary sense: As groups, we set
\[
E_{(\veps,\nu)}=C_{\veps}\otimes_R C_{\nu}'
\]
and the structure maps are given by
\[
\delta_{(\veps,\nu),(\veps',\nu')}=\begin{cases}D_{\veps,\veps'}\otimes \bI& \text{ if } \nu=\nu'\\
\bI\otimes D_{\nu,\nu'}'& \text{ if } \veps=\veps'\\
0& \text{ if } \veps<\veps' \text{ and } \nu<\nu'.
\end{cases}
\]

Additionally, if $\cC$ and $\cC'$ are two hyperboxes of size $\ve{d}_0\times \{d\}$ and $\ve{d}_0\times \{d'\}$, such that the restrictions $\cC|_{\bE(\ve{d}_0)\times \{d\}}$ and $\cC'|_{\bE(\ve{d}_0)\times \{0\}}$ coincide (in both underlying groups and structure morphisms), then we may stack $\cC$ and $\cC'$ to obtain a hyperbox of size $\ve{d}_0\times \{d+d'\}$, which we denote $\St(\cC,\cC')$.  The following lemma records some interactions between stacking and the external tensor product.

\begin{lem}\label{lem:stacking-v-compressing}
\item
\begin{enumerate}
\item\label{stack-v-compress-1} Stacking hyperboxes of attaching curves commutes with pairing: If $\cL_{\a}$, $\cL_{\b}$ and $\cL_{\b'}$ are hyperboxes of attaching curves, and $\cL_{\b}$ and $\cL_{\b'}$ may be stacked along a codimension 1 face, then
\[
\St(\ve{\CF}^-(\cL_{\a}, \cL_{\b}), \ve{\CF}^-(\cL_{\a}, \cL_{\b'}))\iso \ve{\CF}^-(\cL_{\a}, \St(\cL_\b,\cL_{\b'})).
\]
\item\label{stack-v-compress-2} Stacking hyperboxes of chain complexes commutes with tensoring hyperboxes: If $\cC$ and $\cC'$ are stackable hyperboxes of chain complexes, and $\cD$ is another hyperbox of chain complexes, then
\[
\St(\cC,\cC')\otimes \cD\iso \St(\cC,\cD)\otimes \St(\cC',\cD).
\]
\item \label{stack-v-compress-3}  Compressing hyperboxes of chain complexes commutes with tensor products of hyperboxes. If $\cC$ and $\cD$ are hyperboxes of chain complexes, and $\widehat{\cC}$ denotes compression, then there is a choice of ordering of the axis directions used to compute the compression so that
\[
\widehat{\cC}\otimes \widehat{\cD}\iso \widehat{\cC\otimes \cD}.
\]
\end{enumerate}
\end{lem}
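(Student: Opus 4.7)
The plan is to verify each claim by unwinding the relevant definitions, with part (3) being the substantive content.

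Parts (1) and (2) are essentially bookkeeping. For (1), the key point is that a structure map $D_{(\veps,\nu),(\veps',\nu')}$ in a hyperbox must satisfy $|\nu'-\nu|_{L^\infty}\le 1$, so no single structure map can jump across the codimension-1 face along which $\cL_\b$ and $\cL_{\b'}$ are stacked. A polygon map appearing in~\eqref{eq:pairing-formula-hypercubes} takes its $\Theta$-inputs from an increasing chain $\nu=\nu_1<\cdots<\nu_j=\nu'$; since the total coordinatewise difference is at most one and consecutive differences are strict, this chain lies entirely on one side of the stacking face (or on the face itself, where the two sub-hyperboxes agree by assumption). Hence the polygon counts for $\CF(\cL_\a,\St(\cL_\b,\cL_{\b'}))$ coincide with those obtained from $\CF(\cL_\a,\cL_\b)$ and $\CF(\cL_\a,\cL_{\b'})$ separately and stacked. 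For (2), both stacking and the external tensor product act componentwise on underlying groups and structure maps, and the convention that strictly ``diagonal'' structure maps of the external tensor product vanish is preserved by stacking. A vertex-by-vertex and arrow-by-arrow comparison then gives the isomorphism.

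For part (3), let $\cC$ have size $\ve{d}=(d_1,\ldots,d_n)$ and $\cD$ have size $\ve{e}=(e_1,\ldots,e_m)$, so that $\cC\otimes \cD$ has size $(d_1,\ldots,d_n,e_1,\ldots,e_m)$. I would fix the ordering that first compresses the $\cD$-axes in the order $e_m,e_{m-1},\ldots,e_1$ and then the $\cC$-axes in the order $d_n,\ldots,d_1$. The central observation is that compression along a single axis is given by function composition along one-dimensional subcolumns in that axis direction. By definition of the external tensor product, every strictly diagonal structure map of $\cC\otimes\cD$ vanishes, so a one-dimensional subcolumn in a $\cD$-axis carries only structure maps of the form $\id_\cC\otimes D'_{\nu,\nu'}$; their composition equals $\id_\cC\otimes (\text{composition of the }D'_{\nu,\nu'})$. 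Iterating over all $\cD$-axes produces $\cC\otimes \widehat{\cD}$. Since $\widehat{\cD}$ is a hypercube, $\cC\otimes \widehat{\cD}$ is again an external tensor product whose strictly diagonal structure maps vanish, so the same function-composition argument applied to the $\cC$-axes yields $\widehat{\cC}\otimes \widehat{\cD}$.

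The main subtlety is the ordering in (3): for an interleaved compression order, the intermediate hyperboxes would in general carry nonzero diagonal structure maps created by the partial compressions, and the clean factor-by-factor identification above would fail. The separated ordering that finishes one tensor factor before starting the other avoids this issue and is precisely what forces the phrase ``there is a choice of ordering'' in the statement.
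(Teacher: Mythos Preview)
Your argument is correct and follows essentially the same route as the paper: parts~(1) and~(2) are dismissed as definitional (you give a bit more detail than the paper does), and part~(3) is proved by fixing an axis ordering that handles all axes of one tensor factor before the other, then using the function-composition description of single-axis compression together with the vanishing of strictly diagonal maps in the external tensor product to see that compressing the $\cD$-axes yields $\cC\otimes\widehat{\cD}$ and then compressing the $\cC$-axes yields $\widehat{\cC}\otimes\widehat{\cD}$. One small remark: your final paragraph slightly overstates the danger of interleaving---after compressing a single $\cD$-axis the result is still an external tensor product $\cC\otimes\cD'$, so interleaving would in fact also work---but this does not affect the correctness of the argument you actually give.
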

\begin{proof} Claim~\eqref{stack-v-compress-1} is essentially the definition of the pairing between hyperboxes. Similarly~\eqref{stack-v-compress-2} is clear. For claim~\eqref{stack-v-compress-3}, we use the inductive perspective on compression from Section~\ref{sec:compression}. Suppose that $\cC$ is a hyperbox of size $(d_1,\dots, d_n)$ and $\cD$ is a hyperbox of size $(d_{1}',\dots, d_m')$. We use the partial compression operations $\frc$ described in Section~\ref{sec:compression}. To show the main claim, it suffices to show that
\begin{equation}
\frc_i(\cC\otimes \cD)\iso \frc_i(\cC)\otimes \cD\quad \text{and} \quad \frc_{j+n}(\cC\otimes \cD)=\cC\otimes \frc_j(\cD)
\label{eq:partial-compression-and-tensor-product}
\end{equation}
if $1\le i\le n$ and $1\le j\le m$. We focus on the equation $\frc_i(\cC\otimes \cD)\iso \frc_i(\cC)\otimes \cD$ when $1\le i\le n$. Recall that the differential on $\cC\otimes \cD$ is the tensor product $D_{\cC}\otimes \bI+\bI\otimes D_{\cD}$. Therefore if we view $\cC\otimes \cD$ as a hyperbox of size $(d_i)$, then only the components of $D_{\cC}\otimes \bI$ will increase the $\bE(d_i)$ index, while the components of $\bI_{\cC}\otimes D_{\cD}$ will preserve the $\bE(d_i)$ index. From this observation, Equation~\eqref{eq:partial-compression-and-tensor-product} is straightforward, and the main claim follows.
\end{proof}

A more sophisticated extension of Lemma~\ref{lem:stacking-v-compressing} concerns commuting a tensor product operation on attaching curves and the pairing operation on hypercubes of attaching curves. See Propositions~\ref{prop:disjoint-unions-hypercubes-main} and~\ref{prop:connected-sums}.

\section{\texorpdfstring{$A_\infty$}{A-infinity}-modules}

In this section, we recall some background about $A_\infty$ modules. We follow the notation of \cite{LOTBordered} \cite{LOTBimodules}. See \cite{KellerNotes} \cite{KellerNotesAddendum}  for further background and historical context.

\subsection{Type-$A$ and type-$D$ modules}
\label{sec:type-A/D}
In this section and throughout the paper, we work over characteristic 2.

\begin{define} Suppose $\cA$ is an associative algebra. A \emph{left $A_\infty$-module} ${}_{\cA} M$ over $\cA$ is a left $\ve{k}$-module $M$ equipped with $\ve{k}$-module map
\[
m_{j+1}\colon {\cA}^{\otimes j}\otimes_{\ve{k}}M\to M
\]
for each $j\ge 0$, such that if $a_n,\dots, a_1\in \cA$ and $\xs\in M$, then
\[
\sum_{j=0}^n m_{n-j+1}(a_n,\dots,a_{j+1}, m_{j+1}(a_{j},\dots, a_{1},\xs))+\sum_{k=1}^{ n-1} m_{n}(a_n,\dots, a_{k+1}a_{k},\dots, a_1, \xs)=0.
\]
\end{define}

An $A_\infty$-module ${}_{\cA} M$ is \emph{strictly unital} if $m_2(1,\ve{x})=\ve{x}$ for all $\ve{x}\in M$, and such that for each $j>1$, $m_{j+1}(a_j,\dots, a_1,\ve{x})=0$ if $a_i\in \ve{k}$ for some $i$.

\begin{define}
If $\cA$ is an associative algebra over $\ve{k}$, a right type-$D$ module $N^{\cA}$ is right $\ve{k}$-module $N$, equipped with a $\ve{k}$-linear structure map
\[
\delta^1\colon N\to N\otimes_{\ve{k}} \cA,
\]
which satisfies 
\[
(\id_N\otimes \mu_2)\circ (\delta^1\otimes \id_{\cA})\circ \delta^1=0.
\]
\end{define}
Note that in the above, we are only considering associative algebras, instead of $dg$-algebras or $A_\infty$-algebras. See \cite{LOTBimodules}*{Definition~2.2.23} for the general definition.

\subsection{Bimodules}

We also need to consider $A_\infty$-bimodules. Our exposition and notation follows \cite{LOTBimodules}, though we restrict to the case of associative algebras (as opposed to $dg$-algebras or $A_\infty$-algebras).

\begin{define}
Suppose that $\cA$ and $\cB$ are algebras over $\ve{j}$ and $\ve{k}$, respectively. A type-$DA$ bimodule, denoted ${}_{\cA} M^{\cB}$, consists of a $(\ve{j}$,$\ve{k})$-module $M$, equipped with $(\ve{j},\ve{k})$-linear structure morphisms
\[
\delta_{j+1}^1\colon \cA^{\otimes j} \otimes_{\ve{j}} M\to M\otimes_{\ve{k}} \cB,
\]
for $j\ge 0$. These are required to  satisfy the following structure relation:
\[
\begin{split}
&\sum_{j=0}^n ((\id_M\otimes \mu_2)\circ (\delta^1_{n-j+1}\otimes \id_{\cB}))(a_n,\dots,a_{j+1}, \delta^1_{j+1}(a_{j},\dots, a_{1},\xs))\\
+&\sum_{k=1}^{n-1} \delta^1_{n}(a_n,\dots, a_{k+1}a_k,\dots, a_1, \xs)=0.
\end{split}
\]
\end{define}

\subsection{The Lipshitz--Ozsv\'{a}th--Thurston box tensor product}

In this section, we recall the \emph{box tensor product} of Lipshitz, Ozsv\'{a}th and Thurston \cite{LOTBordered}*{Section~2.4}.

Suppose that $\cA$ is an associative algebra over $\ve{k}$. If $N^{\cA}$ is a type-$D$ module and ${}_{\cA} M$ is a type-$A$ module, then the \emph{box tensor product} $N^{\cA}\boxtimes {}_{\cA} M$ is the chain complex $N\otimes_{\ve{k}} M$, with differential $\d^{\boxtimes}$
as follows.  Following \cite{LOTBordered}, it is convenient to make the inductive notation that 
\begin{equation}
\delta^j\colon N\to N\otimes {\cA}^{\otimes j}
\label{eq:def-delta^j}
\end{equation}
 is the map given inductively by $\delta^j=( \delta^{1}\otimes \id_{\cA^{\otimes j-1}})\circ \delta^{j-1}$ and $\delta^0=\id$. Then the differential has the formula
\[
\d^{\boxtimes}(\xs\otimes \ys)=\sum_{j=0}^\infty (\id_N\otimes m_{j+1})(\delta^j(\xs),\ys).
\]
The differential is usually depicted via the following diagram:
\begin{equation}
\d^{\boxtimes}(\xs\otimes \ys)=\sum
\begin{tikzcd}[row sep=.3cm]
\xs
	\ar[d]
& \ys 
\ar[dddd] 
\\
\delta^1
	\ar[d]
	\ar[dddr]
	&
\\
\vdots 
	\ar[ddr,Rightarrow]
	\ar[d]
\\
\delta^1
	\ar[dd]
\ar[dr]
\\
\, &m_*\ar[d]\\
\,&\, 
\end{tikzcd}
\label{eq:def-box-tensor}
\end{equation}
According to \cite{LOTBordered}*{Lemma~2.30}, the map $\d^{\boxtimes}$ is a differential whenever one of $M$ or $N$  satisfies a \emph{boundedness} assumption; see \cite{LOTBimodules}*{Definitions~2.2.18 and~2.2.29}.

There are also tensor products between various types of bimodules. The following diagram indicates schematically the box tensor products of the form $AD\boxtimes AD$, $AD\boxtimes A$ and $D\boxtimes AD$:
\[
\delta^{AD\boxtimes AD}=\hspace{-.3cm}\begin{tikzcd}[row sep=.3cm, column sep=.4cm]
\ve{a}\ar[d,Rightarrow]&\xs
	\ar[dd]
	& 
\ys
	\ar[ddddd] \\
\Delta
	\ar[dr,Rightarrow]
	\ar[ddr,Rightarrow]
	\ar[dddr,Rightarrow]
&\\
&\delta^1_* \ar[rddd] \ar[d]
\\
& \vdots \ar[ddr,Rightarrow]\ar[d]
\\
&\delta_*^1
	\ar[dd]
	\ar[dr]&
	\\
&\, 
&
\delta^1_*
	\ar[d]
	\ar[dr]\\
&\,&\,&\,
\end{tikzcd} \delta^{AD\boxtimes A}=\hspace{-.3cm}\begin{tikzcd}[row sep=.3cm, column sep=.4cm]
\ve{a}\ar[d,Rightarrow]&\xs
	\ar[dd]
	& 
\ys
	\ar[ddddd] \\
\Delta
	\ar[dr,Rightarrow]
	\ar[ddr,Rightarrow]
	\ar[dddr,Rightarrow]
&\\
&\delta^1_* \ar[rddd]\ar[d]
\\
& \vdots \ar[ddr,Rightarrow]\ar[d]
\\
&\delta_*^1
	\ar[dd]
	\ar[dr]&
	\\
&\, 
&
m_*
	\ar[d]\\
&\,&\,&\,
\end{tikzcd}
 \delta^{D\boxtimes AD}= \hspace{-.5cm}
\begin{tikzcd}[row sep=.3cm, column sep=.4cm]
&\xs
	\ar[dd]
	& 
\ys
	\ar[ddddd] \\
&\\
&\delta^1 \ar[rddd]\ar[d]
\\
& \vdots \ar[ddr,Rightarrow]\ar[d]
\\
&\delta^1
	\ar[dd]
	\ar[dr]&
	\\
&\, 
&
\delta^1
	\ar[d]
	\ar[dr]\\
&\,&\,&\,
\end{tikzcd} 
\]

\subsection{Morphisms of modules and bimodules}

We now recall the definitions of morphisms of type-$D$, type-$A$ and type-$DA$ modules and bimodules from \cite{LOTBimodules}. We assume that all algebras are associative algebras with vanishing differential. We refer the reader to \cite{LOTBimodules} for the case of $dg$ or $A_\infty$-algebras.

Suppose $\cA$ is an algebra over $\ve{k}$ and $ N^{\cA}$ and $M^{\cA}$ are two type-$D$ modules. A morphism $f^1\colon  N^{\cA}\to  M^{\cA}$ is a $\ve{k}$-linear map
\[
f^1\colon N\to M\otimes_{\ve{k}} {\cA}.
\]
There is a morphism differential, given by
\begin{equation}
\d_{\Mor}(f^1)=(\id_M\otimes\mu_2)\circ (f^1\otimes \id_{\cA})\circ \delta^1+(\id_M\otimes\mu_2)\circ (\delta^1\otimes \id_{\cA})\circ f^1.
\label{eq:morphism-differential-type-D}
\end{equation}
The map $\d_{\Mor}$ squares to zero. A type-$D$ \emph{homomorphism} (or \emph{cycle}) is a morphism which satisfies $\d_{\Mor}(f^1)=0$.

A morphism $f_*$ between two type-$A$ modules ${}_{\cA} M$ and ${}_{\cA} N$ is a collection of $\ve{k}$-linear maps
\[
f_{j+1} \colon  \cA^{ \otimes j} \otimes_{\ve k} M\to N.
\]
There is a morphism differential $\d_{\Mor}(f_*)$ given by
\[
\begin{split}
\d_{\Mor}(f_*)_{n+1}(a_n,\dots, a_1, \xs)=&\sum_{j=0}^{n} m_{n-j+1}(a_n,\dots, a_{j+1}, f_{j+1}(a_{j},\dots, a_1, \xs))\\
+&\sum_{j=0}^{n} f_{n-j+1}(a_n,\dots, a_{j+1}, m_{j+1}(a_{j},\dots, a_1, \xs))\\
+&\sum_{i=1}^{n-1} f_{n}(a_n,\dots, a_{i+1}a_{i}, \dots, a_1,\xs).
\end{split}
\]

Finally, if ${}_\cA M^\cB$ and ${}_\cA N^\cB$ are $DA$-bimodules, then a morphism $f_*^1$ from ${}_{\cA} M^{\cB}$ to ${}_{\cA} N^{\cB}$ is a collection of maps
\[
f^1_{j+1}\colon \cA^{\otimes j}\otimes M\to N\otimes \cB
\]
for $j\ge 0$. The morphism differential is given by
\[
\begin{split}
&\d_{\Mor}(f^1_*)_{n+1}(a_n,\dots, a_1, \xs)\\
=&\sum_{j=0}^n (\id_N\otimes \mu_2) (\delta_{n-j+1}^1\otimes \id_{\cB})(a_n,\dots, a_{j+1}, f_{j+1}^1(a_{j},\dots, a_1, \xs))\\
+&\sum_{j=0}^n (\id_N\otimes \mu_2) (f_{n-j+1}^1\otimes \id_{\cB})(a_n,\dots, a_{j+1}, \delta^1_{j+1}(a_{j},\dots, a_1, \xs))\\
+&\sum_{i=1}^{n-1} f_{n}^1(a_n,\dots, a_{i+1}a_{i}, \cdots a_1,\xs).
\end{split}
\]
Of course, the morphism differential $\d_{\Mor}$ also squares to zero for morphisms of type-$A$ and $DA$-morphisms. See \cite{LOTBimodules}*{Definition~2.2.43}.

If $f_*^1\colon {}_\cA M^\cB\to {}_\cA N^\cB$ and $g_*^1 \colon {}_\cA N^\cB\to {}_\cA Q^\cB$ are $DA$-bimodule morphisms, Lipshitz--Ozsv\'{a}th--Thurston define the composition $f_*^1\circ g_*^1$ via the following diagram:
\begin{equation}
g_*^1\circ f_*^1=\begin{tikzcd}[row sep=.4cm]\ve{a}\ar[d, Rightarrow]& \ve{x} \ar[dd]\\
\Delta\ar[dr,Rightarrow,bend right=8]\ar[ddr,Rightarrow,bend right=4] &\,\\
& f^1_*\ar[d]\ar[ddr,bend left=8]\\
& g^1_*\ar[d]\ar[dr, bend left =4]\\
&\,&\mu_2
\end{tikzcd}
\label{eq:compose-morphisms}
\end{equation}
In Equation~\eqref{eq:compose-morphisms}, $\Delta$ denotes the co-multiplication
\[
\Delta\colon \cA^{\otimes n}\to \bigoplus_{i=0}^{n} \cA^{\otimes i}\otimes \cA^{\otimes (n-i)}
\]
given by
\[
\Delta(a_1|\dots|a_n)=\sum_{i=1}^n (a_1|\cdots|a_i)\otimes (a_{i+1}|\cdots |a_n).
\]
In the above, $|$ denotes tensor product.

We now discuss pairing morphisms of modules and bimodules. We focus on the case of $DA$-bimodule morphisms. The other types of pairings which are relevant to our paper are defined by forgetting about the input algebra or output algebra in the diagrams below.
The natural algebraic operation is to pair a bimodule morphism with the identity (as opposed to pairing two morphisms together). Suppose that ${}_\cA N^\cB$ and ${}_\cA M^\cB$ and ${}_\cB P^\cC$ are $DA$-bimodules, and
\[
f_*^1 \colon {}_\cA N^\cB\to {}_\cA M^\cB
\]
is a morphism of $DA$-bimodules. There is a morphism of $DA$-bimodules $f_*^1\boxtimes \bI\colon N\boxtimes P\to M\boxtimes P$, defined by the following diagram
\[
f_*^1\boxtimes \bI=\begin{tikzcd}[row sep=.2cm]
\ve{a} \ar[d,Rightarrow]&\xs\ar[dd]& \ys\ar[ddddd] \\
\Delta\ar[dr, bend right=10, Rightarrow] \ar[ddr,bend right=12,Rightarrow]\ar[dddr,bend right=14,Rightarrow]  \\
\,&\delta\ar[d] \ar[dddr, bend left=10,Rightarrow]&\,\\
&f_*^1\ar[d] \ar[ddr, bend left=6] \\
&\delta \ar[dd]\ar[dr,Rightarrow]&\,\\
&\,  &\delta^1 \ar[d] \ar[dr]&\\
&\,&\,&\,\\
\end{tikzcd}
\]
In the above, the map $\delta$ denotes the sum over $n\ge 0$ of the maps 
\[
\delta_{j+1}^n\colon\underbrace{\cA\otimes \cdots \otimes \cA }_j\otimes M\to M\otimes \underbrace{\cB\otimes \cdots \otimes \cB}_n
\]
obtained by composing $\delta_*^1$ repeatedly, but not multiplying the $\cB$-outputs. (Note that by convention $\delta_*^0=\bI$).

If instead $g_*^1\colon {}_\cB P^\cC\to {}_\cB Q^\cC$ is a morphism of type-$DA$ modules, then $\bI\boxtimes g_{*}^1$ is given by the diagram
\[
\begin{tikzcd}[row sep=.3cm, column sep=.4cm]
\ve{a}\ar[dr,Rightarrow]&\xs\ar[d]& \ys \ar[dd] \\
&\delta\ar[dd] \ar[dr,Rightarrow]&\\
&\, &g^1_*\ar[d] \ar[dr]\\
&\,&\,&\,
\end{tikzcd}
\]

\subsection{External tensor products}
\label{sec:extension-of-scalars}

In this section, we recall a basic algebraic operation called the \emph{external tensor product}. We consider these for type-$D$ and type-$DA$ modules over associative algebras, and we prove some basic properties. 

 We begin with the setting of type-$D$ modules. Let $\cA$ and $\cB$ algebras over rings $\ve{i}$ and $\ve{j}$ of characteristic 2, and let $N^{\cA}=(N,\delta^1)$ and $M^{\cB}=(M,d^1)$ be type-$D$ modules. Then the \emph{external tensor product}, denoted $(N\otimes_{\bF} M)^{\cA\otimes_{\bF} \cB}$, is the group $M\otimes_{\bF} N$ equipped with the structure map $\id_N\otimes 1_{\cA}\otimes d^1+\delta^1\otimes \id_M\otimes 1_{\cB}$ (with tensor factors reordered).

The external tensor product of two type-$DA$ bimodules is more complicated. In general, one must choose a cellular diagonal of the associahedron. We follow the presentation Lipshitz, Ozsv\'{a}th and Thurston \cite{LOTBordered} \cite{LOTDiagonals}. The main goal of this section is to give a concrete formula (not involving diagonals of the associahedron) of the external tensor product of two $DA$-bimodules, which we use throughout the paper. This formula appears in the statement of Proposition~\ref{prop:commute-extension-scalars}.

 Our description is in terms of a more basic operation, which we call  \emph{extension of scalars} (and is also inspired by \cite{LOTBimodules}). Suppose that $\cA$,  $\cB$ and $\cC$ are algebras over rings $\ve{i}$, $\ve{j}$ and $\ve{k}$, respectively. If ${}_{\cA} N^{\cB}$ is a bimodule, then we will describe a bimodule ${}_{\cA\otimes \cC} \cE(N;\cC)^{\cB\otimes \cC}$ which we say is formed by extension of scalars. We will relate this operation with an external tensor product with the identity bimodule ${}_{\cC}[\bI]^{\cC}$ in Proposition~\ref{prop:extension=external}.

 As an $(\ve{i}\otimes \ve{k}, \ve{j}\otimes \ve{k})$-module, we declare $\cE(N;\cC):=N\otimes_{\bF} \ve{k}$. If $k\in \ve{k}$, then we define
\begin{equation}
\delta_{j+1}^1( a_j|c_j,\dots, a_1|c_1,\xs|k)= \delta_{j+1}^1(a_j,\dots, a_1,\xs)\otimes k\otimes (c_j\cdots c_1).
\label{eq:extension-of-scalars-def}
\end{equation}
Note that $\delta_1^1(\ve{x}|k)=\delta_1^1(\ve{x})\otimes k\otimes 1_{\cC}.$ (Note we are implicitly reordering the $k$ factor and the $\cB$ factor above).

Given a morphism of $DA$-bimodules $f^1_{*}\colon {}_\cA N^\cB\to {}_{\cA} M^{\cB}$, there is a morphism 
\[
\cE(f)^1_{*}\colon {}_{\cA\otimes \cC} \cE(N;\cC)^{\cB\otimes \cC}\to {}_{\cA\otimes \cC} \cE(M;\cC)^{\cB\otimes \cC}
\]
given by replacing $\delta_{j+1}^1$ with $f_{j+1}^1$ in Equation~\eqref{eq:extension-of-scalars-def}.

\begin{rem}If ${}_{\cA} M^{\cB}$ is operationally bounded (i.e. $\delta_j^1=0$ for large $j$), then the same is true for $\cE(M,\cC)$. A similar statement holds for morphisms.
\end{rem}

\begin{lem}
\label{lem:extension-properties-1}
 Suppose that $\cA$, $\cB$, and $\cC$ are algebras and suppose that ${}_{\cA}N^{\cB}$ is a $DA$-bimodule.
\begin{enumerate}
\item ${}_{\cA\otimes \cC}\cE(N;\cC)^{\cB\otimes \cC}$ satisfies the $DA$-bimodule structure relations.
\item If $f^1_*$ is a morphism of $DA$-bimodules, then $\d_{\Mor}(\cE(f^1_*))=\cE(\d_{\Mor}(f_*^1))$.
\item If ${}_{\cA} M^{\cB}$ and ${}_{\cB} N^{\cD}$ are bimodules and ${}_{\cB} N^{\cD}$ is operationally bounded, then there is a canonical isomorphism of $DA$-bimodules
\[
{}_{\cA\otimes \cC}\cE({}_{\cA} M^{\cB}\boxtimes {}_{\cB} N^\cD;\cC)^{\cD\otimes \cC}\iso {}_{\cA\otimes \cC} \cE(M;\cC)^{\cB\otimes \cC}\boxtimes {}_{\cB\otimes \cC}\cE(N;\cC)^{\cD\otimes\cC}.
\]
\item With respect to the above isomorphism, we also have 
\[
\cE(f_*^1\boxtimes \bI)=\cE(f_*^1)\boxtimes \bI\quad \text{and} \quad \cE(\bI \boxtimes g_{*}^1)=\bI\boxtimes \cE(g_*^1),
\]
for any $DA$-bimodule morphisms $f_*^1$ and $g_*^1$.
\end{enumerate}
\end{lem}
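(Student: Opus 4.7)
The plan is to prove the four claims in order by direct verification from the definition in~\eqref{eq:extension-of-scalars-def}. The governing principle is that the $\cC$-factors in every structure map of $\cE(N;\cC)$ are multiplied together in the strict right-to-left order in which they entered, while the $\cA$- and $\cB$-factors are acted upon only by the original operations on $N$. This decoupling reduces each claim to a factorization of the form (structural identity for $N$) $\otimes\, k \otimes (c_n \cdots c_1)$, which then vanishes by the corresponding identity for $N$.

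For claim~(1), I would substitute $\cE(N;\cC)$ into the $DA$-bimodule structure relation. In the sum that composes two $\delta^1$'s with $\mu_2$, the inner $\delta^1_{j+1}$ produces the monomial $c_j \cdots c_1$ on its $\cB \otimes \cC$ output, the outer $\delta^1_{n-j+1}$ produces $c_n \cdots c_{j+1}$, and multiplication via $\mu_2$ in $\cB \otimes \cC$ concatenates them to $c_n \cdots c_1$. In the sum where adjacent inputs are multiplied, the product $(a_{k+1}|c_{k+1})(a_k|c_k) = (a_{k+1}a_k) | (c_{k+1}c_k)$ is fed into $\delta^1_n$, producing the $\cC$-factor $c_n \cdots c_{k+2}(c_{k+1}c_k)c_{k-1}\cdots c_1 = c_n \cdots c_1$ by associativity in $\cC$. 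Hence every term of the relation shares the common output factor $k \otimes c_n \cdots c_1$, and what remains is precisely the $DA$-structure relation for $N$, which vanishes. Claim~(2) is structurally identical, with some occurrences of $\delta^1$ replaced by $f^1_*$ in the morphism differential; the $\cC$-bookkeeping is unchanged.

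For claim~(3), I would first identify both sides as $(\ve{i} \otimes \ve{k},\, \ve{l} \otimes \ve{k})$-modules via the canonical isomorphism $(M \otimes_{\ve{j}} N) \otimes_\bF \ve{k} \iso (M \otimes_\bF \ve{k}) \otimes_{\ve{j} \otimes \ve{k}} (N \otimes_\bF \ve{k})$. I would then verify that the structure maps agree by unpacking the $DA \boxtimes DA$ formula for $\cE(M;\cC) \boxtimes \cE(N;\cC)$. Each iterated application of a $\delta^1$ of $\cE(M;\cC)$ emits an accumulated block of $c$-factors attached to its $\cB \otimes \cC$ output; those outputs are then consumed by a single $\delta^1$ of $\cE(N;\cC)$, whose $\cD \otimes \cC$ output collects all of the $\cC$-factors in right-to-left order. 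Regardless of how the $n$ inputs are distributed among the iterated $\delta$'s on the $M$-side, the resulting $\cC$-component on the output is always $c_n \cdots c_1$ by associativity in $\cC$, matching the structure map on $\cE(M \boxtimes N;\cC)$. Operational boundedness of $N$ (and hence of $\cE(N;\cC)$) guarantees that both box tensor products are well-defined and that the matching sums are finite. Claim~(4) follows by the same bookkeeping applied to the diagrams defining $f^1_* \boxtimes \bI$ and $\bI \boxtimes g^1_*$: replacing a structure map with a morphism in the relevant slot on each side affects the $\cA$-$\cB$-$\cD$-content identically on both sides, while the $\cC$-bookkeeping is unchanged.

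The main obstacle is the combinatorial bookkeeping in claim~(3): one must track how the $c_i$'s accumulate through an arbitrary distribution of inputs between the iterated $\delta$'s on the two sides of the box tensor product, and check that associativity in $\cC$ identifies every such distribution with the single product $c_n \cdots c_1$ arising on the other side. This is notationally heavy but requires no new ideas beyond patience with indices; the remaining claims are essentially immediate corollaries of the same observation.
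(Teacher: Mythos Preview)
Your approach is the same as the paper's: direct verification by observing that the $\cC$-factors decouple and always multiply to $c_n\cdots c_1$, so each identity factors as (the corresponding identity for $N$) tensored with $k\otimes c_n\cdots c_1$.

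One omission: the lemma is stated for $dg$-algebras, so the structure relation for $\cE(N;\cC)$ also contains terms where $\mu_1^{\cA\otimes\cC}$ hits an input $a_i|c_i$ or $\mu_1^{\cB\otimes\cC}$ hits the output. Since $\mu_1$ on a tensor product of $dg$-algebras obeys the Leibniz rule, each such term splits into a piece with $\mu_1$ on the $\cA$/$\cB$-factor (absorbed into the relation for $N$, as you argue) and a piece with $\mu_1^{\cC}$ on some $c_i$. The latter pieces cancel separately: applying $\mu_1^{\cC}$ to $c_i$ on input and then multiplying gives $c_n\cdots\mu_1^{\cC}(c_i)\cdots c_1$, and summing over $i$ yields $\mu_1^{\cC}(c_n\cdots c_1)$ by the Leibniz rule in $\cC$, which matches the term where $\mu_1$ hits the $\cC$-component of the output. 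The paper handles this in one sentence (``the Leibniz rule for $\cC$''); your write-up should too. The same remark applies to claim~(2).

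For claim~(3), the paper simply writes down the obvious map $f^1_1(\xs\otimes\ys)=\xs\otimes\ys\otimes 1$, $f^1_j=0$ for $j>1$, and asserts it is an isomorphism of $DA$-bimodules; your explicit tracking of the $c_i$'s through the box tensor product is exactly the content of checking this.
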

\begin{proof} The first claim follows from the bimodule relations for $N$, together with the fact that multiplication on $\cC$ is associative.

For the second claim, we note that each term of $\d_{\Mor}(\cE(f_*^1))$, when evaluated on $(b_n|c_n),\dots, (b_1|c_1),\xs|1_{\ve{k}}$, is equal to a corresponding term of $\d_{\Mor}(f_*^1)$. It is easy to see that all of the terms cancel.

For the third claim, the map is $f^1_1(\xs\otimes \ys)=\xs\otimes \ys\otimes 1_{\cA\otimes \cC}$ and $f_j^1=0$ if $j>1$. It is easy to see that this is an isomorphism of $DA$ bimodules.

The final claim is verified in a similar manner.
\end{proof}

\begin{rem}
\label{rem:tensor-DD-with-A}
The external tensor product gives a way to tensor a type-$D$ module $\cX^{\cA\otimes \cB}$ with a type-$A$ module ${}_{\cA} \cY$. Namely, we take the ordinary box tensor product of $\cX^{\cA\otimes \cB}$ with ${}_{\cA\otimes \cB}\cE(\cY, \cB)^{\cB}$. The reader may recognize that this procedure is identical to Lipshitz, Ozsv\'{a}th and Thurston's box tensor product between a type-$DD$ module and a type-$A$ module \cite{LOTBimodules}*{Figure~4}.
\end{rem}

We now consider an additional commutativity property of extension of scalars:

\begin{prop}
\label{prop:commute-extension-scalars}Suppose that ${}_{\cA} M^{\cB}$ and ${}_{\cC} N^{\cD}$ are $DA$-bimodules over associative algebras. Then 
\[
{}_{\cA\otimes \cC}\cE(M, \cC)^{\cB\otimes \cC}\boxtimes {}_{\cB\otimes \cC}\cE(N,\cB)^{\cB\otimes \cD}\quad \text{and}\quad  {}_{\cA\otimes \cC} \cE(N,\cA)^{\cA\otimes \cD}\boxtimes {}_{\cA\otimes \cD}\cE(M,\cD)^{\cB\otimes \cD}
\]
are homotopy equivalent as $DA$-bimodules. In fact, both are homotopy equivalent to the external tensor product of ${}_{\cA} M^{\cB}$ and ${}_{\cC} N^{\cD}$ along a cellular diagonal of the associahedron (defined below). 
\end{prop}

\begin{rem} If one of $M$ and $N$ has the property that $\delta_j^1=0$ for $j>2$, then the above Proposition may be easily proven directly since the structure maps on the two tensor products are identical.
\end{rem}

 As alluded to in its statement, Proposition~\ref{prop:commute-extension-scalars} is most naturally proven by considering a more general tensor product operation on bimodules,  called the \emph{external tensor product}. The construction requires a choice of a cellular diagonal of Stasheff's associahedron \cite{StasheffKn}. We recall some background about diagonals presently. Our exposition will follow the work of Lipshitz, Ozsv\'{a}th and Thurston \cite{LOTDiagonals}, though we refer the reader to earlier work of Saneblidze-Umble \cite{SaneblidzeUmble} on diagonals of the associahedron. See also \cite{MarklShnider} and \cite{Loday}.
 
 We recall that the $(n-2)$-dimensional associahedron, denoted $K_n$, has a cell decomposition which admits a simple combinatorial description, as follows. We consider a chain complex $C_*^{\cell}(K_n)$ over $\bF$ whose generators are identified with planar trees $T$, which have  have $n+1$ valence 1 vertices, which are partitioned into $n$ ``input'' vertices and one ``output'' vertex. Furthermore, trees $T$ are assumed to have no vertices of valence 2. The dimension of a generator $[T]$ is given by
 \[
\deg(T)= \sum_{v\in V_{\Int}(T)} (\val(v)-3)
 \]
 where $V_{\Int}(T)$ denotes the interior vertices of $T$.   The differential on $C_*^{\cell}(K_n)$ is defined by setting $\d(T)$ to be the sum of all trees obtained by splitting an interior vertex into two interior vertices by adding a single edge. 
 
 The external tensor product operation has as input a pair of $DA$-bimodules ${}_{\cA} M^{\cB}$ and ${}_{\cC} N^{\cD}$, and has as output a $DA$-bimodule ${}_{\cA\otimes \cC} (M\otimes_\Gamma N)^{\cB\otimes \cD}$. The underlying vector space of $M\otimes_\Gamma N$ is the ordinary tensor product $M\otimes_{\bF} N$. The $DA$-bimodule structure map $\delta_{j+1}^1$ depends on an extra piece of data, called a \emph{cellular diagonal of the associahedron} $\Gamma$:

\begin{define}[\cite{LOTDiagonals}*{Definition~2.13}]
\label{def:cellular-diagonal} A \emph{cellular diagonal of the associahedron} $\Gamma$ consists of a collection of degree preserving chain maps
\[
\Gamma_n\colon C_*^{\cell}(K_{n})\to C_*^{\cell}(K_{n})\otimes_\bF C_*^{\cell}(K_{n}),
\]
for $n\ge 2$, which are compatible with concatenation of trees (in the sense described below), and such that $\Gamma_2$ is the canonical isomorphism between $C_*^{\cell}(K_2)\iso \bF$ and $C_*^{\cell}(K_2)\otimes_{\bF} C_*^{\cell}(K_2)\iso \bF$.
\end{define}

   The compatibility condition is as follows. Firstly, there is a natural splicing operation 
\[
\circ_i \colon C_*^{\cell}(K_n)\otimes C_*^{\cell}(K_m)\to C_*^{\cell}(K_{n+m-1})
\]
where $T\circ_i S$ joins the output of $S$ into the $i$-th input of $T$. We define $\circ_i$ also for tensors of trees by the formula
\[
(T_1\otimes T_2)\circ_i (S_1\otimes S_2)=(T_1\otimes_i S_1)\otimes (T_2\otimes_i S_2).
\]
The compatibility condition of Definition~\ref{def:cellular-diagonal} is that if $T$ and $S$ have $n$ and $m$ inputs, respectively, then
\[
\Gamma_{n+m-1}(T\circ_i S)=\Gamma_n(T)\circ_i \Gamma_m(S).
\]

 We write $\Psi_n\in C_*^{\cell}(K_n)$ for the tree with $n$ inputs and exactly one interior vertex. The generator $\Psi_n$ is called the \emph{corolla} with $n$ inputs and has homological degree $n-2$. This is the maximal non-trivial grading of $C_*^{\cell}(K_n)$.

If ${}_{\cA} M^{\cB}=(M,\delta_*^1)$ is a $DA$-bimodule, then to each planar tree $T$ with $(n+1)$-inputs, we may form an evaluation map $\delta_T^1\colon \cA^{\otimes n} \otimes M\to M\otimes \cB$ as follows. We first compose the maps $\delta_j^1$ according to the tree $T$. This naturally gives a map from $\cA^{\otimes n} \otimes M$ to $M\otimes  \cB^{\otimes k}$, where $k$ is the number of vertices in the right-most path of $T$. We then apply $\mu_2^{\cB}$ repeatedly
to obtain a map from $\cA^{\otimes n}\otimes M$ to $M\otimes \cB$.

This gives a map
\[
\ev\colon C_*^{\cell}(K_{n+1})\to \Hom_{(\ve{k},\ve{j})}(\cA^{\otimes n}\otimes  M, M\otimes \cB).
\]
We equip $\Hom_{(\ve{k},\ve{j})}(\cA^{\otimes n}\otimes  M, M\otimes \cB)$ with a differential $\d$ given by
\[
\d(f_{j+1}^1)=\delta_1^1\circ f_{j+1}^1+f_{j+1}^1\circ \delta_1^1.
\]
The statement that the structure maps $\delta_*^1$ of ${}_{\cA} M^\cB$ satisfy the $DA$-bimodule relations is equivalent to the statement that $M^{\cB}\boxtimes {}_{\cB}\cB$ is a chain complex, and that $\ev$ is a chain map.

We define $\delta_1^1$ on $M\otimes_{\bF} N$ via the Leibniz rule, and for $n>0$ we define the structure map $\delta_{n+1}^1$  as $(\ev\otimes \ev)(\Gamma_{n+1}(\Psi_{n+1}))$. The fact that a $DA$-bimodule structure relations hold is immediate.

Since we are focused only on associative algebras, it is helpful to instead focus our attention on a slightly more coarse definition of a diagonal. 

\begin{define}
For $n\ge 2$, we define a chain complex $M^{\red}_{n,*}$ as the following modification of $C_*^{\cell}(K_n)$. We call $M^{\red}_{n,*}$ the complex of \emph{left reduced module trees}. The vector space $M^{\red}_{n,*}$ is generated by planar trees $T$ with $n$-inputs, such that all vertices of valence $4$ or more occur along the right most path in $T$ from an input to the output. Additionally, we quotient by the relation that $T\circ_i S=T\circ_i S'$ whenever $i<n$ and $S$ and $S'$ have degree 0 (i.e. only valence 3 internal vertices) and the same number of inputs. See Figure~\ref{fig:40} for examples. 
\end{define}

\begin{figure}[ht]
\begingroup%
  \makeatletter%
  \providecommand\color[2][]{%
    \errmessage{(Inkscape) Color is used for the text in Inkscape, but the package 'color.sty' is not loaded}%
    \renewcommand\color[2][]{}%
  }%
  \providecommand\transparent[1]{%
    \errmessage{(Inkscape) Transparency is used (non-zero) for the text in Inkscape, but the package 'transparent.sty' is not loaded}%
    \renewcommand\transparent[1]{}%
  }%
  \providecommand\rotatebox[2]{#2}%
  \newcommand*\fsize{\dimexpr\f@size pt\relax}%
  \newcommand*\lineheight[1]{\fontsize{\fsize}{#1\fsize}\selectfont}%
  \ifx\svgwidth\undefined%
    \setlength{\unitlength}{273.09102937bp}%
    \ifx\svgscale\undefined%
      \relax%
    \else%
      \setlength{\unitlength}{\unitlength * \real{\svgscale}}%
    \fi%
  \else%
    \setlength{\unitlength}{\svgwidth}%
  \fi%
  \global\let\svgwidth\undefined%
  \global\let\svgscale\undefined%
  \makeatother%
  \begin{picture}(1,0.32367806)%
    \lineheight{1}%
    \setlength\tabcolsep{0pt}%
    \put(0,0){\includegraphics[width=\unitlength,page=1]{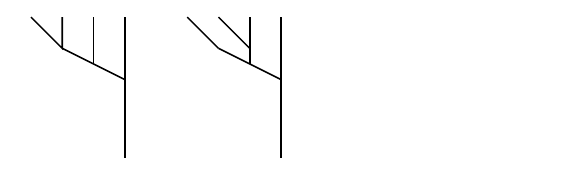}}%
    \put(0.270223,0.17129654){\makebox(0,0)[lt]{\lineheight{1.25}\smash{\begin{tabular}[t]{l}$=$\end{tabular}}}}%
    \put(0,0){\includegraphics[width=\unitlength,page=2]{fig40.pdf}}%
    \put(0.9240259,0.16876524){\makebox(0,0)[lt]{\lineheight{1.25}\smash{\begin{tabular}[t]{l}$=0$\end{tabular}}}}%
    \put(0,0){\includegraphics[width=\unitlength,page=3]{fig40.pdf}}%
  \end{picture}%
\endgroup%

\caption{Equalities of trees in $M_{n,*}^{\red}$.}
\label{fig:40}
\end{figure}

We define the differential $\d$ on $M^{\red}_{n,*}$ the same as in $C_*^{\cell}(K_n)$. It is not hard to see that $\d^2=0$.

For the purposes of defining the diagonal tensor product of two $DA$-bimodules over associative algebras, we will use the following notion:

\begin{define}A \emph{left-reduced module diagonal} is a collection of grading preserving chain maps
\[
\Gamma_n\colon M_{n,*}^{\red}\to M_{n,*}^{\red}\otimes M_{n,*}^{\red}
\]
which are compatible with splicing of trees.
\end{define}

\begin{lem}
\label{lem:M-acyclic} The complex $M^{\red}_{n,*}$ has homology $\bF$ in degree 0, and $0$ in other degrees.
\end{lem}
\begin{proof} We claim that the homology is generated by a tree 
\[
T_{\mathrm{gen}}:=\Psi_2\circ_1 T_0,
\] where $T_0$ is a degree 0 tree with $n-1$ inputs (all such trees are equal in $M_{n,*}^{\red}$). To see this, we argue as follows. We define a map
\[
H\colon M_{n,*}^{\red}\to M_{n,*+1}^{\red}
\]
as follows. If $T$ is a tree, we say the \emph{right-most path} of $T$ is the path from the right-most input of $T$ to its root. We write $RP(T)$ for this path. Given a tree $T$, if $T$ has more than one interior vertex along $RP(T)$, then we contract the top two interior vertices along this path to define a tree $T'$. We set $H(T)=T'$. Otherwise we define $H(T)=0$.

We claim that if $N\gg 0$, the map $(\bI+[\d, H])^N$ is projection onto the span of $T_{\mathrm{gen}}$. Note that this clearly proves the claim. To see this, we argue as follows.  Given a tree, we define the \emph{right-path weight} to be
\[
w_R(T):=\sum_{v\in RP(T)} (\val(v)-2) 
\]
We define the subspace $\cF_j\subset M_{n,*}^\red$ to be the span of trees $T$ with $w_R(T)\le j$.

It is straightforward to check that if $j>1$, then $\bI+[\d, H]$ maps $\cF_{j}$ into $\cF_{j-1}$. Furthermore, $\bI+[\d,H]$ is the identity on $\cF_1$. Therefore, for $N$ large enough, $(\bI+[\d,H])^N$ gives a homotopy equivalence onto $\cF_1$ (viewing $\cF_1$ as a subcomplex with vanishing differential). However $\cF_1$ is exactly the span of $T_{\mathrm{gen}}$, so the proof is complete.
\end{proof}

For our theory, we need to show that if ${}_\cA M$ and ${}_\cB N$ are $A_\infty$-modules over associative algebras $\cA$ and $\cB$, then ${}_{\cA\otimes \cB} (M\otimes_\Gamma N)$ is independent of the choice of left-reduced module diagonal $\Gamma$. The argument is identical to the work in \cite{LOTDiagonals}*{Section~3}, though we include it for completeness. Firstly, we introduce the following notion:

\begin{define}
 A \emph{left-reduced module morphism tree} consists of a planar input tree $T$ which has exactly one distinguished vertex along its right-most path, and whose vertices of valence 4 or more are all along the right-most path. We call the distinguished vertex the \emph{morphism vertex}. This vertex has valence 2 or greater, whereas the other interior vertices have valence 3 or greater. We quotient this set of trees by the relation that $T\circ_i S=T\circ_i S'$ whenever $T$ has $n$ inputs, $i<n$, and $S$ and $S'$ have degree 0 and the same number of inputs.  
\end{define}

The degree of a left reduced module tree $T$ is
\[
\deg(T)=1+\sum_{v\in V_0(T)} (\val(v)-3).
\]

We write $X^{\red}_{n,*}$ for the vector space of left-reduced module morphism trees. We can naturally equip $X^{\red}_{n,*}$ with a differential which sums over all ways of splitting one interior vertex into two interior vertices while leaving a valid left-reduced module morphism tree.

\begin{lem}\label{lem:Xn*-homology} The chain  complex $X_{n,*}^{\red}$ has homology equal to $\bF$ in degree 0, and 0 in other degrees.
\end{lem} 
The proof of Lemma~\ref{lem:Xn*-homology} is very similar to the proof of Lemma~\ref{lem:M-acyclic},  and we leave the details to the reader.

\begin{define} If $\Gamma$ and $\Gamma'$ are two left-reduced module diagonals, then a left-reduced module morphism diagonal from $\Gamma$ to $\Gamma'$ consists of a collection of chain maps
\[
\hat{\Gamma}_n\colon X_{n,*}^\red\to X_{n,*}^{\red}\otimes X_{n,*}^{\red}.
\]
such that $\hat{\Gamma}_1$ is an isomorphism, and $\hat{\Gamma}_n$ is compatible with splicing in the following sense. If $S\in X_{n,*}^{\red}$ and $T\in M_{m,*}^{\red}$, then
\[
\hat{\Gamma}_{n+m-1}(S\circ_n T)=\hat{\Gamma}_n(S)\circ_n \Gamma_m(T)\quad \text{and}\quad \hat{\Gamma}_{n+m-1}(T\circ_m S)=\Gamma'_m(T)\circ_m \hat{\Gamma}_n(S).
\]
Finally, if $T_0\in M_{m,0}^{\red}$ is a degree 0 tree and $S\in X_{n,*}^{\red}$, then
\[
\hat{\Gamma}_{n+m-1}(S\circ_i T_0)=\hat{\Gamma}_{n}(S) \circ_i (T_0\otimes T_0).
\]
\end{define}

Using Lemma~\ref{lem:Xn*-homology}, left-reduced module morphism diagonals may be constructed by a straightforward inductive argument.

\begin{rem}
\label{rem:tensor-morphisms-diagonals}
If $A$ and $B$ are associative algebras and $f_*\colon {}_{A} M_1\to {}_A M_2$ and $g_*\colon {}_B N_1\to {}_B N_2$ are two $A_\infty$-module morphisms and $\hat{\Gamma}$ is a left-reduced module morphism diagonal, then we can define an $A_\infty$-module morphism
\[
(f_*\otimes_{\hat{\Gamma}} g_*)\colon {}_{A\otimes B} (M_1\otimes_{\Gamma} N_1)\to {}_{A\otimes B} (M_2\otimes_{\Gamma'} N_2).
\]
Applying the above construction to the case that $M_1=M_2=M$ and $N_1=N_2=N$, we obtain a chain map $I_*$ from $(M\otimes_\Gamma N)$ to $(M\otimes_{\Gamma'} N)$ which is a homotopy equivalence because $I_1=\bI_{M\otimes N}$ is an isomorphism. We may similarly use $\hat{\Gamma}$ to tensor $DA$-bimodule morphisms together when the underlying algebras are associative algebras. 
\end{rem}

The motivation for left-reduced module diagonals is that it is straightforward to describe explicitly a diagonal over them. We now define two left-reduced module diagonals
\[
L\Gamma_n, R\Gamma_n\colon M_n^{\red}\to M_n^{\red}\otimes M_n^{\red}.
\]
We focus on $L\Gamma_n$, since $R\Gamma_n$ is obtained by switching the coordinates of the codomain.

  To describe $L\Gamma_n$, we define the \emph{right-join} of two input trees $T_1$ and $T_2$, denoted $\RiJ(T_1,T_2)$, to be the tree obtained by 
 stacking the output of $T_2$ into the right most input vertex of $T_1$. (This is the mirror of Lipshitz, Ozsv\'{a}th and Thurston's \emph{left-join}, as adapted to our handedness conventions on modules). If $T_1,\dots, T_n$ is a collection of trees, we write
 \[
 \RiJ(T_1,\dots, T_n)=\RiJ(T_1,\dots, \RiJ(T_{n-1},T_n)),
 \]
 
 If $T_1,\dots, T_n$ are $n$ trees, the \emph{root-join} of $T_1,\dots, T_n$, denoted $\RoJ(T_1,\dots, T_n)$, is obtained by adjoining the outputs of $T_1,\dots, T_n$ into a single corolla $\Psi_n$. See \cite{LOTDiagonals}*{Definition 2.28}. 
 
 For $n>1$, we write $S_n\in H_0^{\cell}(K_n)$ for the generator (thought of as a tree with $n$ inputs and only valence 3 interior vertices).

 The trees-pairs appearing as summands of $L\Gamma_n(\Psi_n)$ consist of  the sum of all pairs $T\otimes S$ where 
 \[
 T=\RiJ(\Psi_{n_k+1},\dots, \Psi_{n_1+1})
 \]
  for some numbers $n_1,\dots, n_k>0$ (where $k>0$) satisfying
 \[
 n_1+\cdots+n_k+1=n,
 \] 
 and 
 \[
 S=\RoJ(S_{n_1},\dots, S_{n_k},\downarrow)
 \]
 This root join is taken in such a way that $\downarrow$ corresponds to the module input of the tree. See Figure~\ref{fig:32} for an example.

 \begin{lem}\label{lem:RGamma-LGamma} The maps $R\Gamma_n$ and $L\Gamma_n$, defined above, are left-reduced module diagonal.
 \end{lem}
The proof is straightforward and left to the reader.

\begin{figure}[ht]
\begingroup%
  \makeatletter%
  \providecommand\color[2][]{%
    \errmessage{(Inkscape) Color is used for the text in Inkscape, but the package 'color.sty' is not loaded}%
    \renewcommand\color[2][]{}%
  }%
  \providecommand\transparent[1]{%
    \errmessage{(Inkscape) Transparency is used (non-zero) for the text in Inkscape, but the package 'transparent.sty' is not loaded}%
    \renewcommand\transparent[1]{}%
  }%
  \providecommand\rotatebox[2]{#2}%
  \newcommand*\fsize{\dimexpr\f@size pt\relax}%
  \newcommand*\lineheight[1]{\fontsize{\fsize}{#1\fsize}\selectfont}%
  \ifx\svgwidth\undefined%
    \setlength{\unitlength}{153.59987514bp}%
    \ifx\svgscale\undefined%
      \relax%
    \else%
      \setlength{\unitlength}{\unitlength * \real{\svgscale}}%
    \fi%
  \else%
    \setlength{\unitlength}{\svgwidth}%
  \fi%
  \global\let\svgwidth\undefined%
  \global\let\svgscale\undefined%
  \makeatother%
  \begin{picture}(1,0.42613792)%
    \lineheight{1}%
    \setlength\tabcolsep{0pt}%
    \put(0,0){\includegraphics[width=\unitlength,page=1]{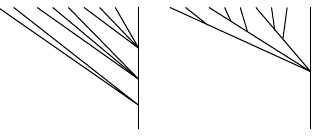}}%
    \put(0.50666561,0.20251566){\makebox(0,0)[lt]{\lineheight{1.25}\smash{\begin{tabular}[t]{l}$\otimes$\end{tabular}}}}%
    \put(0,0){\includegraphics[width=\unitlength,page=2]{fig32.pdf}}%
  \end{picture}%
\endgroup%

\caption{An example of a trees contributing to $L\Gamma_9(\Psi_9)$.}
\label{fig:32}
\end{figure}

\begin{proof}[Proof of Proposition~\ref{prop:commute-extension-scalars}]
 We observe that, immediately from the definitions, we have isomorphisms
 \[
 \begin{split}
{}_{\cA\otimes \cC} \cE(M,\cC)^{\cB\otimes \cC}\boxtimes{}_{\cB\otimes \cC} \cE(N,\cB)^{\cB\otimes \cD}&\iso {}_{\cA\otimes \cC} (N\otimes_{R\Gamma} M)^{\cB\otimes \cD}\\
{}_{\cA\otimes \cC} \cE(N,\cA)^{\cA\otimes \cD}\boxtimes {}_{\cA\otimes \cD}\cE(M,\cD)^{\cB\otimes \cD}&\iso {}_{\cA\otimes \cC} (N\otimes_{L\Gamma} M)^{\cB\otimes \cD}
\end{split}
\]
where $R\Gamma$ and $L\Gamma$ denote the left-reduced module diagonals from Lemma~\ref{lem:RGamma-LGamma}. This implies the main claim, because any two choices of module diagonals give homotopy equivalent bimodules by the argument described in Remark~\ref{rem:tensor-morphisms-diagonals}.
 \end{proof}

 We now prove a result which relates our extension of scalars operation with the external tensor product:
 
   \begin{prop}
   \label{prop:extension=external} Let $\cA$, $\cB$ and $\cC$ be associative algebras. If ${}_{\cA} N^{\cB}$ is a $DA$-bimodule, then ${}_{\cA\otimes \cC}\cE(N,\cC)^{\cB\otimes \cC}$ is isomorphic to the external tensor product of ${}_{\cA} N^{\cB}$ and ${}_{\cC} [\bI]^{\cC}$, for any choice of cellular diagonal of the associahedron (or more generally, module diagonal or left-reduced module diagonal).
   \end{prop}
   The key to the above result is the following lemma:
    \begin{lem}\label{lem:diagonal-asymmetric-terms} Suppose that $\Gamma$ is a cellular diagonal of the associahedron (or a module diagonal, or a left-reduced module diagonal). For each $n$, write $\Gamma_n(\Psi_n)=\Psi_n\otimes X+Y,$
    where $X$ has degree 0 and $Y\in C_*^\cell(K_n)\otimes C_{>0}^\cell(K_n)$. Write $X=T_1+\cdots +T_j$, where each $T_i$ is a tree of degree 0. Then $j\equiv 1 \pmod 2$.
    \end{lem}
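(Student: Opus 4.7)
The plan is to identify $j \bmod 2$ with a cochain pairing on $K_n$ and to evaluate that pairing by induction on $n$, using the chain-map axiom together with the compatibility of $\Gamma$ with concatenation. Let $\alpha_n \in C^0_\cell(K_n;\bF)$ be the $0$-cochain taking the value $1$ on every vertex (binary tree) of $K_n$. Every $1$-cell of $K_n$ is a $1$-dimensional face whose boundary consists of exactly two vertices, hence $\alpha_n \circ \partial = 0$ and $\alpha_n$ is a cocycle. Viewing $X$ as a $0$-chain in $K_n$, the definition gives $j \equiv \alpha_n(X) \pmod{2}$, so it suffices to prove $\alpha_n(X) \equiv 1 \pmod 2$.

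The base case $n=2$ is immediate from $\Gamma_2(\Psi_2)=\Psi_2\otimes \Psi_2$, which gives $X=\Psi_2$ and $j=1$. For the inductive step, fix $n\geq 3$, assume the result for all $2\leq m<n$, and decompose $\Gamma_n(\Psi_n)=\Psi_n\otimes X + Y_1 + Y'$, where $Y_1$ lies in bidegree $(n-3,1)$ and $Y'$ in bidegrees $(n-2-i,i)$ with $i\geq 2$. A direct bidegree count shows that the bidegree $(n-3,0)$ part of $\partial \Gamma_n(\Psi_n)$ equals $\partial \Psi_n\otimes X + (\id\otimes \partial)Y_1$; the contribution of $\partial Y'$ vanishes in this bidegree since the differential drops only one tensor-factor degree at a time and $Y'$ has second-factor degree $\geq 2$.

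For the other side of the chain-map identity, each codim-$1$ face $T$ appearing with coefficient $1$ in $\partial \Psi_n$ has exactly two internal vertices of valences $n_1(T)+1$ and $n_2(T)+1$ with $n_1(T)+n_2(T)=n+1$; it is naturally identified with $K_{n_1(T)}\times K_{n_2(T)}$. Compatibility of $\Gamma$ with concatenation identifies $\Gamma_n(T)$, pushed forward along this face inclusion, with $\Gamma_{n_1}(\Psi_{n_1})\otimes \Gamma_{n_2}(\Psi_{n_2})$. Extracting bidegree $(n-3,0)$ forces both left factors to be the top cells $\Psi_{n_1}, \Psi_{n_2}$, and the resulting term is $T\otimes (X_{n_1(T)}\cdot X_{n_2(T)})$, where $X_{n_i(T)}$ is the analogue of $X$ for $\Gamma_{n_i}(\Psi_{n_i})$ and the product denotes the K\"unneth $0$-chain in $K_n$ supported on the binary refinements of $T$.

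Writing $Y_1 = \sum_T T\otimes s_T$ and matching coefficients of the linearly independent cells $T\in C^\cell_{n-3}(K_n)$ yields, for each such $T$,
\[
X + \partial s_T \;=\; X_{n_1(T)} \cdot X_{n_2(T)} \quad \text{in } C_0^\cell(K_n).
\]
Applying the cocycle $\alpha_n$, which annihilates $\partial s_T$, and noting that $\alpha_n$ restricts along the face inclusion to the analogous cocycle $\alpha_{n_1}\otimes \alpha_{n_2}$, gives
\[
\alpha_n(X) \;=\; \alpha_{n_1(T)}(X_{n_1(T)})\cdot \alpha_{n_2(T)}(X_{n_2(T)}) \;\equiv\; 1\cdot 1 \;=\; 1 \pmod 2
\]
by the inductive hypothesis, completing the induction. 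The main delicate step is the bookkeeping in the bidegree-$(n-3,0)$ extraction together with the K\"unneth identification on the face $T$; characteristic $2$ removes all sign subtleties. For the module diagonal case the concatenation axiom is weakened but still applies to the codim-$1$ splittings needed here, so the same induction goes through essentially unchanged.
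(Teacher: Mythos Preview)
Your proof is correct and follows essentially the same approach as the paper's. Both arguments use the augmentation $\pi_n\colon C_*^{\cell}(K_n)\to\bF$ sending every vertex to $1$ (your cocycle $\alpha_n$ is its dual), together with the chain-map property of $\Gamma_n$, compatibility with concatenation, and induction on $n$. The paper's version is marginally slicker: rather than isolating the bidegree $(n-3,0)$ component and matching coefficients of a single face $T$, it applies $(\id\otimes\pi_n)$ globally to both sides of the chain-map identity, obtaining $\partial\bigl((\id\otimes\pi_n)\Gamma_n(\Psi_n)\bigr)=\partial\Psi_n$ and concluding since $C_{n-2}^{\cell}(K_n)$ is one-dimensional; this avoids introducing $Y_1$ and $s_T$ explicitly, but the content is the same.
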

    \begin{proof} The argument is the same, regardless of what kind of diagonal $\Gamma$ is, so we assume that $\Gamma$ is an ordinary cellular diagonal of the associahedron. There is a quasi-isomorphism $\Pi\colon C_*^\cell(K_n)\to \bF$, which is 0 on $C_{>0}^{\cell}(K_n)$. The map $\Pi$ sends a tree in grading 0 to $1\in \bF$, and is 0 on all other trees. It is straightforward to check that $\Pi$ is a chain map. By tensoring, we obtain a quasi-isomorphism $\bI\otimes \Pi\colon C_*^{\cell}(K_n)\otimes C_*^{\cell}(K_n)\to C_*^{\cell}(K_n)$.
    
     The claim in the statement equivalent to the claim that $(\bI\otimes \Pi)( \Gamma_n(\Psi_n))=\Psi_n$. This claim is equivalent to the claim that $(\bI\otimes \Pi)( \Gamma_n(\Psi_n))$ is non-zero, since $\Psi_n$ is the only tree in its grading.  We argue by induction. The case that $n=2$ is clear.
    
    By induction and the fact that $\Gamma$ respects splicing, we observe that $(\bI\otimes \Pi)(\Gamma_n(\d \Psi_n))=\d \Psi_{n}$. On the other hand, 
    \[
    (\bI\otimes \Pi)(\Gamma_n(\d \Psi_n))=\d \big( (\bI\otimes \Pi)(\Gamma_n(\Psi_n))\big).
    \]
    For this to be $\d \Psi_n$, we must have that $(\bI\otimes \Pi)(\Gamma_n( \Psi_n))$ is non-zero, so the proof is complete.
    \end{proof}
   
   \begin{proof} [Proof of Proposition~\ref{prop:extension=external}] The proof is the same, regardless of what kind of diagonal $\Gamma$ is, so we assume that $\Gamma$ is an ordinary cellular diagonal of the associahedron. 
   The map $\delta_{n+1}^1$ on the external tensor product is determined by the structure map on $N$ as well as the chain $\Gamma_{n+1}(\Psi_{n+1})$. If $T_i\otimes S_j$ is any summand of $\Gamma_{n+1}(\Psi_{n+1})$, where $T_i$ and $S_j$ are trees with $n+1$ inputs of degree $i$ and $j$, respectively, then the corresponding summand of $\delta_{n+1}^1$ on $\cE(N,\cC)$ will vanish unless $j=0$ and $i=n-1$ since $\cC$ is an associative algebra. In particular, the only contributions to $\delta_{n+1}^1$ will be from tree pairs of the form $\Psi_{n+1}\otimes T_0$, where $T_0$ is a degree 0 tree. By Lemma~\ref{lem:diagonal-asymmetric-terms} there are an odd number of such summands of $\Gamma_{n+1}(\Psi_{n+1})$. It follows that the external tensor product is exactly the extension of scalars construction we gave earlier.
   \end{proof}

\subsection{Hypercubes as type-$D$ modules}
\label{sec:hypercube-algebra}
In this section, we present a helpful perspective of hypercubes and hyperboxes in terms of type-$D$ modules over certain algebras which we call the \emph{cube} and \emph{box} algebras. Many natural hyperbox operations, such as compression, have natural interpretations as $A_\infty$-module operations with respect to these algebras. 

 We will not use the cube and box algebras explicitly in this paper, but they are related to our reinterpretation of the Manolescu and Ozsv\'{a}th's link surgery formula in terms of type-$D$ modules over the algebra $\cL$.

We begin with the 1-dimensional cube algebra $\cC_1$. The algebra $\cC_1$ is an algebra over the idempotent ring $\ve{I}_0\oplus \ve{I}_1$, where $\ve{I}_\veps\iso \bF$. If $\veps\in \{0,1\}$, we write $i_{\veps}$ for the generator of $\ve{I}_{\veps}$. We set $ \ve{I}_{\veps}\cdot \cC_1\cdot \ve{I}_{\veps}=\ve{I}_{\veps}$, and we set 
\[
\ve{I}_{1}\cdot \cC_{1}\cdot \ve{I}_{0}=\langle \theta_{0,1} \rangle,
\]
i.e. we set $\ve{I}_{1}\cdot \cC_{1}\cdot \ve{I}_{0}\iso \bF$, generated by an element $\theta_{0,1}$.

Next, we introduce the \emph{$n$-dimensional cube algebra} $\cC_n$. The definition is 
\[
\cC_n=\cC_1\otimes_{\bF}\cdots \otimes_{\bF} \cC_1=\otimes_{\bF}^n \cC_1.
\]
Note that this is an algebra over the idempotent ring $\ve{E}_n=\otimes^{n}_{\bF} \ve{I}$.  If $\veps\in \{0,1\}^n$, we will write $\ve{E}_{\veps}$ for $\ve{I}_{\veps_1}\otimes \cdots \otimes \ve{I}_{\veps_n}$. We will write $i_{\veps}$ for the generator of $\ve{E}_{\veps}\iso \bF$.

The importance of the cube algebra is the following:

\begin{lem}
\label{lem:cube-algere=hypercubes}
 The category of $n$-dimensional hypercubes of chain complexes is equivalent to the category of type-$D$ modules over $\cC_n$.
\end{lem}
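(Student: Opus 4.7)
The plan is to construct an explicit isomorphism of categories between type-$D$ modules over $\cC_n$ and $n$-dimensional hypercubes of chain complexes, which will in particular establish the stated equivalence.

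First I would unpack the structure of $\cC_n = \cC_1^{\otimes n}$. An $\bF$-basis is given by elements $\theta_{\veps,\veps'}$ indexed by pairs $\veps \le \veps'$ in $\bE_n$, defined by
\[
\theta_{\veps,\veps'} := \bigotimes_{i=1}^n \eta_i, \qquad \eta_i = \theta \text{ if } (\veps_i,\veps_i') = (0,1),\ \eta_i = \ve{I}_{\veps_i} \text{ otherwise}.
\]
When $\veps = \veps'$ this recovers the idempotent $\ve{I}_\veps \in \ve{E}_n$. A direct tensor-factor-wise computation shows that for $\veps \le \veps' \le \veps''$ one has $\theta_{\veps',\veps''} \cdot \theta_{\veps,\veps'} = \theta_{\veps,\veps''}$, and that all other products of basis elements vanish (either by idempotent mismatch or because $\theta \cdot \theta = 0$ in $\cC_1$).

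Next I would define the functor $F$ on objects. Given $N^{\cC_n}$, use the right $\ve{E}_n$-module decomposition $N = \bigoplus_{\veps \in \bE_n} C_\veps$ with $C_\veps := N \cdot \ve{I}_\veps$, and uniquely expand
\[
\delta^1 = \sum_{\veps \le \veps'} D_{\veps,\veps'} \otimes \theta_{\veps,\veps'}
\]
to obtain linear maps $D_{\veps,\veps'} \colon C_\veps \to C_{\veps'}$. Expanding the type-$D$ structure relation $(\id_N \otimes \mu_2) \circ (\delta^1 \otimes \id_{\cC_n}) \circ \delta^1 = 0$ via the multiplication rule above yields
\[
\sum_{\veps \le \veps''} \left( \sum_{\veps \le \veps' \le \veps''} D_{\veps',\veps''} \circ D_{\veps,\veps'} \right) \otimes \theta_{\veps,\veps''} = 0,
\]
and linear independence of the $\theta_{\veps,\veps''}$ recovers the hypercube compatibility relation \eqref{def:hypercube} for every pair $\veps \le \veps''$. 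Hence $F(N)$ is a hypercube of chain complexes.

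Finally I would extend $F$ to morphisms by the analogous decomposition $f^1 = \sum_{\veps \le \veps'} f_{\veps,\veps'} \otimes \theta_{\veps,\veps'}$ of a type-$D$ morphism, and verify by the same bookkeeping that $\d_{\Mor}(f^1) = 0$ becomes the standard compatibility relation for a morphism of hypercubes. An inverse functor $G$ is read off from the formulas: given a hypercube $(C_\veps, D_{\veps,\veps'})$, set $N = \bigoplus_\veps C_\veps$ and define $\delta^1$ by the displayed sum. The compositions $F \circ G$ and $G \circ F$ are literally the identity on objects and morphisms, so one in fact obtains an isomorphism of categories. No serious obstacle arises; the only technical point is the multiplication table for $\cC_n$, which requires mild care in tracking idempotents across the tensor factors.
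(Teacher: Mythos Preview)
Your proposal is correct and follows essentially the same approach as the paper: both identify the basis $\theta_{\veps,\veps'}$ of $\cC_n$, match the structure map $\delta^1$ with the collection $(D_{\veps,\veps'})$ via that basis, and observe that the type-$D$ relation and the hypercube compatibility relation \eqref{def:hypercube} correspond term by term. The only cosmetic difference is that the paper first writes down the functor from hypercubes to type-$D$ modules, whereas you begin with the inverse direction; you are also slightly more explicit about the multiplication table in $\cC_n$.
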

\begin{proof} We describe how to obtain a type-$D$ module over $\cC_n$ from a hypercube of chain complexes. Suppose that $X=(X_\veps,D_{\veps,\veps'})_{\veps\in \bE_n}$ is a hypercube of chain complexes. We define a type-$D$ module $\cX^{\cC_n}$ as follows. The underlying group of $\cX$ is the total space of $X$, i.e. $\cX=\bigoplus_{\veps\in \bE_n} X_{\veps}$. This is naturally a module over the idempotent ring $\ve{E}_n$.

 We define $\delta^1\colon \cX\to \cX\otimes_{\rmE_n} \cC_n$ as follows. If $\veps\le \veps'$, there is an algebra element $\theta_{\veps,\veps'}$ which is the tensor product of $\theta_{0,1}$ over all $i$ such that $\veps_i<\veps'_i$. If $\veps=\veps'$, we set $\theta_{\veps,\veps}=i_{\veps}$.
 With this notation, we set
 \[
 \delta^1(\xs)=\sum_{\veps\le \veps'} D_{\veps,\veps'}(\xs)\otimes \theta_{\veps,\veps'}.
 \]
The type-$D$ structure relations for $(\cX,\delta^1)$ are straightforward to verify.

If $X=(X_{\veps},D_{\veps,\veps'})$ and $Y=(Y_{\veps}, D'_{\veps',\veps'})$ are two $n$-dimensional hypercubes of chain complexes, it is straightforward to extend the above construction to give an isomorphism between morphism sets
\[
F\colon \Mor_{\bE_n}(X,Y)\to \Mor^{\cC_n}\left(\cX^{\cC_n},\cY^{\cC_n}\right),
\]
where $\Mor_{\bE_n}$ denotes the group of hypercube morphisms and $\Mor^{\cC_n}$ denotes the group of type-$D$ morphisms. Clearly $F$ is an isomorphism and $F(f\circ g)=F(f)\circ F(g)$. Hence, we have constructed a functor from the category of $n$-dimensional hypercubes to the category of type-$D$ modules over $\cC_n$. An inverse functor is straightforward to describe. 
\end{proof}

The above construction extends naturally to the setting of hyperboxes, as we now describe. We begin with the 1-dimensional box algebra $\cB_{(d)}$ of size $d>0$. This is an algebra over the idempotent ring 
\[
\ve{I}_{(d)}:=\ve{I}_0\oplus \ve{I}_1\oplus \cdots \oplus \ve{I}_d,
\]
where each $\ve{I}_{i}\iso \bF$. We set
\[
\ve{I}_{i}\cdot \cB_{(d)}\cdot \ve{I}_{i}=\bF.
\]
We set
\[
\ve{I}_{j}\cdot \cB_{(d)}\cdot \ve{I}_i=\begin{cases}\langle \theta_{i,i+1} \rangle& \text{ if } j=i+1\\
\{0\} & \text{ otherwise}.
\end{cases}
\]
Multiplication is given by setting $\theta_{j,j+1}\cdot \theta_{i,i+1}=0$ for all $i$ and $j$.

If $\ve{d}=(d_1,\dots, d_n)$ is a tuple of positive integers, we define the \emph{hyperbox algebra} of size $\ve{d}$, denoted $\cB_{\ve{d}}$ to be the tensor product
\[
\cB_{\ve{d}}:=\cB_{(d_1)}\otimes_{\bF}\cdots \otimes_{\bF} \cB_{(d_n)}.
\]
We have the following straightforward extension of Lemma~\ref{lem:cube-algere=hypercubes}:

\begin{lem} 
The category of hyperboxes of size $\ve{d}$ is equivalent to the category of type-$D$ modules over the algebra $\cB_{\ve{d}}$. 
\end{lem}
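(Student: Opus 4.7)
The plan is to mimic the proof of Lemma \ref{lem:cube-algere=hypercubes} (the cube case) with one crucial observation: the relation $\theta_{j,j+1}\cdot \theta_{i,i+1}=0$ in each one-dimensional box algebra is precisely the algebraic encoding of the constraint $|\veps''-\veps|_{L^\infty}\le 1$ that distinguishes hyperboxes from hypercubes.

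Concretely, I would define the functor $F$ from hyperboxes of size $\ve{d}=(d_1,\dots,d_n)$ to type-$D$ modules over $\cB_{\ve{d}}$ as follows. Given a hyperbox $X=(X_\veps,D_{\veps,\veps'})_{\veps\in \bE(\ve{d})}$, let the underlying $\ve{I}_{\ve{d}}$-module (where $\ve{I}_{\ve{d}}=\bigotimes_{i}\ve{I}_{(d_i)}$) be $\cX=\bigoplus_{\veps\in \bE(\ve{d})}X_\veps$, with $X_\veps$ sitting in the idempotent $\ve{I}_\veps:=\ve{I}_{\veps_1}\otimes\cdots\otimes \ve{I}_{\veps_n}$. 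For each pair $\veps\le \veps'$ with $|\veps'-\veps|_{L^\infty}\le 1$, set
\[
\theta_{\veps,\veps'}:=\bigotimes_{i=1}^n \theta^{(i)}_{\veps_i,\veps'_i}\in \cB_{\ve{d}},
\]
where $\theta^{(i)}_{\veps_i,\veps'_i}$ is $\theta_{\veps_i,\veps_i+1}$ if $\veps'_i=\veps_i+1$ and the idempotent $\ve{I}_{\veps_i}$ otherwise. Define
\[
\delta^1(\xs)=\sum_{\veps':\, \veps\le \veps',\, |\veps'-\veps|_{L^\infty}\le 1} D_{\veps,\veps'}(\xs)\otimes \theta_{\veps,\veps'},
\]
for $\xs\in X_\veps$.

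Next I would verify the type-$D$ structure relation. The key computation is that, inside $\cB_{\ve{d}}$,
\[
\theta_{\veps',\veps''}\cdot \theta_{\veps,\veps'}=\begin{cases} \theta_{\veps,\veps''} & \text{if } |\veps''-\veps|_{L^\infty}\le 1,\\ 0 & \text{otherwise,}\end{cases}
\]
because a nonzero product in each tensor factor requires that not both of $\veps_i<\veps'_i$ and $\veps'_i<\veps''_i$ occur (here we use $\theta_{j,j+1}\theta_{i,i+1}=0$). Under this identification, expanding $(\id\otimes \mu_2)(\delta^1\otimes \id)\delta^1$ and grouping the outputs by the final algebra element yields exactly the coefficient
\[
\sum_{\veps\le \veps'\le \veps''}D_{\veps',\veps''}\circ D_{\veps,\veps'},
\]
for each $\veps\le \veps''$ with $|\veps''-\veps|_{L^\infty}\le 1$. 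This vanishes precisely by the hyperbox compatibility condition, and the spurious relations one might have worried about (those with $|\veps''-\veps|_{L^\infty}\ge 2$) are killed automatically by the algebra.

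Finally I would extend $F$ to morphisms exactly as in Lemma \ref{lem:cube-algere=hypercubes}: a collection $(f_{\veps,\veps'})_{|\veps'-\veps|_{L^\infty}\le 1}$ maps to the $\cB_{\ve{d}}$-linear map $\cX\to \cY\otimes \cB_{\ve{d}}$ sending $\xs\in X_\veps$ to $\sum f_{\veps,\veps'}(\xs)\otimes \theta_{\veps,\veps'}$. The same mass-cancellation argument shows $F$ respects composition and the morphism differential. An inverse functor is built by reading off $D_{\veps,\veps'}$ from the $\theta_{\veps,\veps'}$-component of $\delta^1$; every algebra element of $\cB_{\ve{d}}$ is of the form $\theta_{\veps,\veps'}$ for some such pair, so this is well-defined. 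The main (mild) obstacle is bookkeeping: checking that the multiplication table of $\cB_{\ve{d}}$ matches hyperbox composition on the nose, which amounts to the product formula for $\theta_{\veps',\veps''}\cdot \theta_{\veps,\veps'}$ displayed above.
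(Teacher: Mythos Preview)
Your proof is correct and follows precisely the approach the paper intends; the paper omits the proof, merely calling it a ``straightforward extension of Lemma~\ref{lem:cube-algere=hypercubes}'', and your argument supplies exactly that extension. Your identification of the key point---that the relation $\theta_{j,j+1}\cdot\theta_{i,i+1}=0$ in each $\cB_{(d_i)}$ kills precisely the products $\theta_{\veps',\veps''}\cdot\theta_{\veps,\veps'}$ with $|\veps''-\veps|_{L^\infty}\ge 2$, so that the type-$D$ relation matches the hyperbox compatibility condition on the nose---is the essential content.
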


Manolescu and Ozsv\'{a}th's compression operation also has a conceptually simple description in terms of the cube and box algebras. With the present perspective, compression takes the form of a $DA$ bimodule
\[
{}_{\cB_{(d)}}[\frC_{(d)}]^{\cC_1},
\]
defined as follows.
 
 If $d=1$, we define $\frC_{(d)}$ to be the identity bimodule. Henceforth, assume $d>1$.  As a group $\frC_{(d)}$ is a copy of $\ve{I}=\ve{I}_0\oplus \ve{I}_1$. The right action of $\ve{I}$ is the obvious one. The left action of the box idempotent ring $\ve{I}_{(d)}$ is given by having $\ve{I}_{(0)}$ and $\ve{I}_{(d)}$ act by $\ve{I}_0$ and $\ve{I}_1$, respectively, and having $\ve{I}_{j}$ act trivially for $j=1,\dots, d-1$. 

The bimodule $[\frC_{(d)}]$ has $\delta_j^1=0$ for all $j$ except $\delta_{2}^1$ and $\delta_{d+1}^1$. We set $\delta_2^1$ to be the unique map satisfying $\delta_2^1(i_{\veps}, i_\nu)=(i_\veps\cdot i_\nu)\otimes 1$, where $\veps\in \{0,\dots, d\}$ and $\nu\in \{0,1\}$, and such that $\delta_2^1(\theta_{i,i+1},i_\nu)=0$ for all $i$. We define
\[
\delta_{d+1}^1(\theta_{d-1,d},\dots, \theta_{0,1}, i_0)=i_1\otimes \theta.
\]

\begin{example} Consider a hyperbox of dimension 1:
\[
X=\left(\begin{tikzcd}
C_0
	\ar[r, "f_{1,0}"]
&
C_1
\ar[r, "f_{2,1}"]
&
\cdots
\ar[r, "f_{d-1,d}"]
&
C_d
\end{tikzcd}\right)
\]
The associated type-$D$ module over $\cB_{(d)}$ takes the following form:
\[
\cX^{\cB_{(d)}}=\left(\begin{tikzcd}[column sep=2cm]
C_0
	\ar[r, "f_{0,1}\otimes \theta_{0,1}"]
&
C_1
\ar[r, "f_{1,2}\otimes \theta_{1,2}"]
&
\cdots
\ar[r, "f_{d-1,d}\otimes \theta_{d-1,d}"]
&
C_d
\end{tikzcd}
\right)
\]
The underlying group of the box tensor product $\cX^{\cB_{(d)}} \boxtimes {}_{\cB_{(d)}} [\frC_{(d)}]^{\cC_1}$ is $C_0\oplus C_d$. One easily computes that the differential on this box tensor product is
\[
\cX^{\cB_{(d)}} \boxtimes {}_{\cB_{(d)}} [\frC_{(d)}]^{\cC_1}=\left(\begin{tikzcd}[column sep=3cm] C_0 \ar[r, "(f_{d-1,d}\circ \cdots \circ f_{0,1} )\otimes \theta"] &C_{d}
\end{tikzcd}\right).
\]
The right hand side is the compression of $X$, viewed as a type-$D$ module over $\cC_1$, because it coincides with the function composition description of compression that is discussed in Section~\ref{sec:compression}
\end{example}

More generally, if $\ve{d}=(d_1,\dots, d_n)$ is an $n$-tuple of positive integers, we can define a bimodule ${}_{\cB_{\ve{d}}}[\frC_{\ve{d}}]^{\cC_n}$ as the external tensor product of the $DA$-bimodules ${}_{\cB_{(d_i)}} [\frC_{(d_i)}]^{\cC_1}$ ranging over $i=1,\dots, n$. Note that defining the external tensor product requires a choice of ordering of the axis directions, cf. Section~\ref{sec:extension-of-scalars}.

\begin{lem} If $X$ is a hyperbox of size $\ve{d}=(d_1,\dots, d_n)$, and $\cX^{\cB_{\ve{d}}}$ is the corresponding type-$D$ module, then the compression of $X$ coincides with the hypercube corresponding to $\cX^{\cB_{\ve{d}}}\boxtimes {}_{\cB_{\ve{d}}} [\frC_{\ve{d}}]^{\cC_{n}}$. 
\end{lem}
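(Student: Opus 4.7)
The plan is to induct on the number of axes $n$. The base case $n=1$ is exactly the content of the explicit computation in the preceding example: the one-dimensional bimodule ${}_{\cB_{(d)}}[\frC_{(d)}]^{\cC_1}$ has idempotents selecting only the two corners $0$ and $d$, and its sole higher action
\[
\delta_{d+1}^1(\theta_{d-1,d},\dots,\theta_{0,1}, i_0) = i_1 \otimes \theta
\]
forces the box tensor product with $\cX^{\cB_{(d)}}$ to produce precisely the composition $f_{d-1,d}\circ\cdots\circ f_{0,1}$, which by Section~\ref{sec:compression} is the definition of 1-dimensional compression.

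For the inductive step, fix an ordering of axes (say, compress axis $n$ first), matching the ordering used to define the external tensor product $[\frC_{\ve{d}}]$. By Lemma~\ref{lem:extension=external}, the external tensor product of the 1-dimensional compression bimodules can be re-expressed as an iterated box tensor product of extensions of scalars:
\[
{}_{\cB_{\ve{d}}}[\frC_{\ve{d}}]^{\cC_n} \;\iso\; \cE([\frC_{(d_n)}]; \cB_{\ve{d}'}) \boxtimes \cE([\frC_{\ve{d}'}]; \cC_1),
\]
where $\ve{d}'=(d_1,\dots,d_{n-1})$. Applying associativity of the box tensor product yields
\[
\cX^{\cB_{\ve{d}}} \boxtimes [\frC_{\ve{d}}] \;\iso\; \bigl(\cX^{\cB_{\ve{d}}} \boxtimes \cE([\frC_{(d_n)}]; \cB_{\ve{d}'})\bigr) \boxtimes \cE([\frC_{\ve{d}'}]; \cC_1).
\]
The first factor is analyzed by viewing $\cX^{\cB_{\ve{d}}}$, restricted to a fixed idempotent of $\cB_{\ve{d}'}$, as a 1-dimensional hyperbox in the axis-$n$ direction whose arrows are tagged by $\theta_{j,j+1}$ in the $\cB_{(d_n)}$-factor. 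The extension-of-scalars formula in Equation~\eqref{eq:extension-of-scalars-def} then shows that tensoring with $\cE([\frC_{(d_n)}]; \cB_{\ve{d}'})$ applies the 1-dimensional case fiberwise over each $\ve{d}'$-slice. The result is the type-$D$ module corresponding to the hyperbox $X^{(n)}$ of size $(d_1,\dots,d_{n-1},1)$ obtained from $X$ by compressing axis $n$. The inductive hypothesis applied to $X^{(n)}$ identifies the second box tensor product with the compression of the remaining $n-1$ axes, giving $\widehat{X}$.

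The main technical obstacle is the fiberwise claim in the inductive step, which requires verifying that extending ${}_{\cB_{(d_n)}}[\frC_{(d_n)}]^{\cC_1}$ by scalars over $\cB_{\ve{d}'}$ genuinely localizes the 1-D action to each axis-$n$ fiber of $\cX$. This reduces to observing that the structure map $\delta^1$ of $\cX^{\cB_{\ve{d}}}$ sends $\cX\cdot i_\veps$ into $\bigoplus_{\veps\le\veps'}\cX\cdot i_{\veps'}\otimes \theta_{\veps,\veps'}$, where $\theta_{\veps,\veps'}$ factors canonically across the tensor decomposition $\cB_{\ve{d}}\iso \cB_{\ve{d}'}\otimes \cB_{(d_n)}$, and that iterating $\delta^1$ along a path that only increases the $n$-th coordinate produces exactly the sequence of inputs matched by the extended higher action $\delta^1_{d_n+1}$. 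Once this bookkeeping is in place, Lemma~\ref{lem:extension-properties-1} provides the compatibility with iterated box tensor products needed to conclude.
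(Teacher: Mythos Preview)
Your inductive strategy is exactly the argument the paper has in mind when it says the lemma is ``immediate'' from the function-composition description of compression in Section~\ref{sec:compression}: the external tensor product $[\frC_{\ve{d}}]$ is by definition an iterated box tensor of extensions of scalars, associativity lets you peel off one axis at a time, and each step matches one step of the iterative compression procedure. So the overall shape of your proof is correct and matches the paper.

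However, your description of the key step is imprecise in a way that matters. You write that one analyzes $\cX^{\cB_{\ve{d}}}$ ``restricted to a fixed idempotent of $\cB_{\ve{d}'}$'' and that boxing with $\cE([\frC_{(d_n)}];\cB_{\ve{d}'})$ ``applies the 1-dimensional case fiberwise over each $\ve{d}'$-slice.'' This is not what happens: the paths contributing to $\delta_{d_n+1}^1$ are those that increment the $n$-th coordinate by $1$ at every step, but at each such step the $\ve{d}'$-coordinates may also move by $0$ or $1$. The extension-of-scalars formula does not restrict to a single $\ve{d}'$-idempotent; rather, it \emph{multiplies} the $\cB_{\ve{d}'}$-factors $c_{d_n}\cdots c_1$ along the path. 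Since $\theta_{j,j+1}\cdot\theta_{i,i+1}=0$ in each $\cB_{(d_k)}$, this product survives exactly when the total change in each $\ve{d}'$-coordinate is at most $1$, and the resulting sum over such paths is precisely the composition of hyperbox morphisms $f_{d_n-1,d_n}\circ\cdots\circ f_{0,1}$ from Section~\ref{sec:compression} (where each $f_{j,j+1}$ is a morphism of $(n-1)$-dimensional hypercubes, not of chain complexes). Once you replace the fiberwise picture with this observation, the argument goes through, and the second box tensor with $\cE([\frC_{\ve{d}'}];\cC_1)$ is handled the same way, with the single $\cC_1$-factor $\theta$ passing through unchanged.
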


\begin{proof} The bimodule ${}_{\cB_{\ve{d}}}[\frC_{\ve{d}}]^{\cC_n}$ requires a choice of module diagonal to construct. Alternatively, by the proof of Proposition~\ref{prop:commute-extension-scalars}, we can construct ${}_{\cB_{\ve{d}}} [\frC_{{\ve{d}}}]^{\cC_n}$  using the extension of scalars construction. Write $\ve{d}_i=(1,\dots, 1,d_i,\dots, d_n)$. We apply the extension of scalars construction to ${}_{\cB_{d_i}}[\frC_{(d_i)}]^{\cC_1}$ to obtain a bimodule
\[
{}_{\cB_{\ve{d}_i}}[\frC_{\ve{d}_i, i}]^{\cB_{\ve{d}_{i+1}}}.
\]
 Using Proposition~\ref{prop:commute-extension-scalars}, we see that
\begin{equation}
{}_{\cB_{\ve{d}}}[\frC_{\ve{d}}]^{\cC_n}\simeq {}_{\cB_{\ve{d}}}[\frC_{\ve{d}_1, 1}]^{\cB_{\ve{d}_{2}}}\boxtimes \cdots \boxtimes 
{}_{\cB_{\ve{d}_n}}[\frC_{\ve{d}_n, n}]^{\cC_n}
\label{eq:decompose-compression-bimodule}
\end{equation}
Tensoring with ${}_{\cB_{\ve{d}_i}}[\frC_{\ve{d}_i,i}]^{\cB_{\ve{d}_{i+1}}}$ has the same effect as applying the partial compression operation $\frc_i$ from Section~\ref{sec:compression}, so the main claim follows from Equation~\eqref{eq:decompose-compression-bimodule}.
\end{proof}

\section{Homological perturbation theory}

In this section, we describe several versions of the homological perturbation lemma. In this section and throughout the rest of the paper, we assume that every ring and module has characteristic 2.

\subsection{\texorpdfstring{$A_\infty$}{A-infinity}-modules}

 Kadeishvili proved that $A_\infty$-algebra structures could be transferred along homotopy equivalences of the underlying chain complex, when coefficients were in a field \cite{Kadeishvili_Ainfinity}. A formula for the perturbed $A_\infty$-module structure in terms of trees may be found in work of Kontsevich and Soibelman \cite{KontsevichSoibelman}*{Section~6.4}. See \cite{KellerNotes}*{Section~3.3} or \cite{SeidelFukaya}*{Remark~1.15} for more historical context and development of ideas. The following is a statement of the homological perturbation lemma for $A_\infty$-modules.

\begin{lem}\label{lem:homological-perturbation-modules}
Suppose that $\cA$ is an associative algebra over a ring $\ve{k}$, 
${}_{\cA} M$ is an $A_\infty$-module,  $(Z,m_1^Z)$ is a chain complex over $\ve{k}$, and that we have three maps of left $\ve{k}$-modules
\[
i\colon Z\to M,\quad \pi\colon M\to Z\quad \text{and} \quad h\colon M\to M
\]
satisfying the following:
\begin{enumerate}
\item $i$ and $\pi$ are chain maps.
\item $\pi\circ i=\id_Z$.
\item $i\circ \pi=\id_M+\d_{\Mor}(h).$
\item $h\circ i=0$.
\item $\pi\circ h=0$.
\item $h\circ h=0$.
\end{enumerate}
In the above, $\d_{\Mor}(h)$ denotes the morphism differential for chain complexes, i.e. $\d_{\Mor}(h)=h\circ m_1^M+m_1^M\circ h$. Then there is an $A_\infty$-module structure on $Z$, extending $m_1^Z$, such that the maps $\pi$, $i$ and $h$ extend to $A_\infty$-module morphisms which satisfy all of the above relations as $A_\infty$-module maps.
\end{lem}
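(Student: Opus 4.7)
I will use the standard tree-based homological perturbation formula.  Since $\cA$ is a $dg$-algebra (not a genuine $A_\infty$-algebra), the ``trees'' that appear reduce to linear strings: each transferred structure map will be defined as a sum, over ordered partitions of the number of algebra inputs, of a composition in which consecutive applications of $m^M_{k_j+1}$ are separated by insertions of $h$.  Concretely, for $n\ge 2$ I will set
\[
m^Z_n(a_{n-1},\dots,a_1,z) \;=\; \sum \pi\circ m^M_{k_r+1}\!\bigl(a_{n-1},\dots,\;h\circ m^M_{k_{r-1}+1}\bigl(\dots,\;h\circ m^M_{k_1+1}(a_{k_1},\dots,a_1,i(z))\cdots\bigr)\bigr),
\]
where the sum ranges over ordered partitions $k_1+\cdots+k_r=n-1$ with each $k_j\ge 1$.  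The extensions of $i,\pi,h$ to $A_\infty$-morphisms are built from the same internal strings but use different caps: $i_n$ starts with $i(z)$ at the bottom and leaves the top uncapped (so the output lies in $M$); $\pi_n$ takes a raw $x\in M$ at the bottom and caps the top with $\pi$; and $h_n$ takes $x\in M$ at the bottom and caps the top with $h$.

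The next step is to verify the $A_\infty$-module relations for $(Z,m^Z_*)$.  Substituting the tree formulas into $\sum_j m^Z_{n-j+1}(\dots,m^Z_{j+1}(\dots,z))+\sum_k m^Z_n(\dots,a_{k+1}a_k,\dots,z)$ produces, in each term, a long alternating composition whose internal junctions fall into one of the following cases: (i) two adjacent $m^M_*$'s combine via the $A_\infty$-relation for $M$; (ii) two adjacent algebra elements combine via $\mu_2$; (iii) a junction of the form $h\circ h$, $h\circ i$, or $\pi\circ h$ annihilates the term; (iv) a junction $\pi\circ i$ collapses to $\id_Z$; or (v) the defect $\id_M-i\pi=\d_{\Mor}(h)=m_1^M h+hm_1^M$ is used to insert or remove a pair of $h$'s.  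The $A_\infty$-relation for $M$ supplies the cancellations of type (i), the $dg$-algebra axiom of $\cA$ supplies those of type (ii), and the side conditions supply (iii) and (iv); the remaining terms pair off using (v).  The $A_\infty$-morphism relations for $i_*,\pi_*,h_*$, and the morphism-level identities $\pi_*\circ i_*=\id_Z$, $i_*\circ\pi_*=\id_M+\d_{\Mor}(h_*)$, $h_*\circ i_*=0$, $\pi_*\circ h_*=0$, and $h_*\circ h_*=0$, are proved by essentially the same calculation; for instance $h_*\circ i_*=0$ because every tree contributing to the composite has an $h\circ i$ at its bottom junction.

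The main obstacle is purely combinatorial bookkeeping: one must exhibit an explicit involution on the set of appearing trees that pairs cancelling terms.  I plan to organize this by scanning each tree left-to-right for its first ``event'' (an $A_\infty$-relation junction, a $\mu_2$-contractible pair, or a site where $\d_{\Mor}(h)$ resolves a defect) and pairing it with the tree obtained by toggling that event.  A more conceptual alternative is to realize the $A_\infty$-module structure on $M$ as a codifferential on the cofree tensor comodule $T(\cA[1])\otimes M$ over the bar coalgebra of $\cA$ and then to apply the classical perturbation lemma for filtered chain complexes to the strong deformation retract induced by $(i,\pi,h)$; this would give the conclusion uniformly, but I will carry out the proof via the explicit tree formulas, since those formulas are the ones directly used later in the paper.
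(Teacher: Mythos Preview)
Your proposal is correct and matches the paper's treatment: the paper does not give its own proof but cites the result from the literature (Kadeishvili, Kontsevich--Soibelman, Keller, Seidel) and simply records the explicit tree formulas, which are exactly the linear-string compositions you wrote down (see the figure immediately following the lemma). Your sketch of the verification via pairing cancellations is the standard one.
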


One extremely useful aspect of the homological perturbation lemma is that all of the maps have a concrete description. The structure maps on $Z$ are given by the diagrams shown in Figure~\ref{fig:homological-perturbation}. Therein $m_{>1}$ denotes the $A_\infty$ structure maps of $M$, with $m_1$ excluded.
\begin{figure}[ht]
\[
m^Z(\ve{a},\xs)=\begin{tikzcd}[column sep=.1cm,row sep=.4cm]
\ve{a}\ar[d, Rightarrow]& \ve{x} \ar[d]\\
 \Delta\ar[dr,bend right=10, Rightarrow]\ar[dd,Rightarrow]&i\ar[d]\\
\,& m_{>1} \ar[d]\\
\Delta\ar[dr,bend right=10, Rightarrow]\ar[dd,Rightarrow]&h\ar[d]\\
\,&m_{>1}\ar[d]\\
\vdots \ar[ddr,bend right=20,Rightarrow] & \vdots \ar[d]\\
\,&h\ar[d]\\
\,&m_{>1}\ar[d]\\
\,&\pi\ar[d] \\
\, &\, 
\end{tikzcd}
\quad 
i(\ve{a}, \xs)=\begin{tikzcd}[column sep=.1cm,row sep=.4cm]
\ve{a}\ar[d, Rightarrow]& \ve{x} \ar[d]\\
 \Delta\ar[dr,bend right=10, Rightarrow]\ar[dd,Rightarrow]&i\ar[d]\\
\,& m_{>1} \ar[d]\\
\Delta\ar[dr,bend right=10, Rightarrow]\ar[dd,Rightarrow]&h\ar[d]\\
\,&m_{>1}\ar[d]\\
\vdots \ar[ddr,bend right=20,Rightarrow] & \vdots\ar[d]\\
\,&h\ar[d]\\
\,&m_{>1}\ar[d]\\
\,&h\ar[d] \\
\, &\, 
\end{tikzcd}
\quad
\pi(\ve{a}, \xs)=\begin{tikzcd}[column sep=.1cm,row sep=.4cm]
\ve{a}\ar[d, Rightarrow]& \ve{x} \ar[d]\\
 \Delta\ar[dr,bend right=10, Rightarrow]\ar[dd,Rightarrow]&h\ar[d]\\
\,& m_{>1} \ar[d]\\
\Delta\ar[dr,bend right=10, Rightarrow]\ar[dd,Rightarrow]&h\ar[d]\\
\,&m_{>1}\ar[d]\\
\vdots \ar[ddr,bend right=20,Rightarrow] & \vdots\ar[d]\\
\,&h\ar[d]\\
\,&m_{>1}\ar[d]\\
\,&\pi\ar[d] \\
\, &\, 
\end{tikzcd}
\quad h(\ve{a},\xs)=
\begin{tikzcd}[column sep=.1cm,row sep=.4cm]
\ve{a}\ar[d, Rightarrow]& \ve{x} \ar[d]\\
 \Delta\ar[dr,bend right=10, Rightarrow]\ar[dd,Rightarrow]&h\ar[d]\\
\,& m_{>1} \ar[d]\\
\Delta\ar[dr,bend right=10, Rightarrow]\ar[dd,Rightarrow]&h\ar[d]\\
\,&m_{>1}\ar[d]\\
\vdots \ar[ddr,bend right=20,Rightarrow] & \vdots\ar[d]\\
\,&h\ar[d]\\
\,&m_{>1}\ar[d]\\
\,&h\ar[d] \\
\, &\, 
\end{tikzcd}
\]
\caption{The maps appearing in the homological perturbation lemma for $A_\infty$-modules.}
\label{fig:homological-perturbation}
\end{figure}

\subsection{$DA$-bimodules}

There is a homological perturbation lemma for $DA$-bimodules, which we will use extensively. The statement and proof are similar to the statement for $A_\infty$-modules. If $({}_{\cA} M^{\cB},\delta_j^1)$ is a $DA$-bimodule, we will write $M^{\cB}$ for the associated type-$D$ module which has the same generators as ${}_{\cA} M^{\cB}$, and whose structure map is given by the map $\delta_1^1$ from ${}_{\cA} M^{\cB}$.

\begin{lem}\label{lem:homological-perturbation-DA-modules} Suppose that $\cA$ and $\cB$ are associative algebras, and that ${}_{\cA} M^{\cB}$ is a type-$DA$ bimodule,  $Z^{\cB}$ is a type-$D$ module over $\cB$, and that we have three morphisms of type-$D$ modules
\[
i^1\colon Z^{\cB}\to M^{\cB},\quad \pi^1\colon M^{\cB}\to Z^{\cB}\quad \text{and} \quad h^1\colon M^{\cB}\to M^{\cB}
\]
satisfying the following:
\begin{enumerate}
\item $i^1$ and $\pi^1$ are homomorphisms of type-$D$ modules (i.e. $\d_{\Mor}(i^1)$ and $\d_{\Mor}(\pi^1)$ vanish). 
\item $\pi^1\circ i^1=\id_Z$.
\item $i^1\circ \pi^1=\id_M+\d_{\Mor}(h^1).$
\item $h^1\circ i^1=0$.
\item $\pi^1\circ h^1=0$.
\item $h^1\circ h^1=0$.
\end{enumerate}
In the above, $\d_{\Mor}(h^1)$ denotes the morphism differential for type-$D$ modules over $\cB$; see Equation~\eqref{eq:morphism-differential-type-D}. Then there is an induced $DA$-bimodule structure on ${}_{\cA} Z^{\cB}$, extending $Z^{\cB}$, such that the maps $\pi^1$, $i^1$ and $h^1$ extend to $DA$-bimodule morphisms which satisfy all of the above relations as $DA$-bimodule morphisms.
\end{lem}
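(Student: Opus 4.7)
The plan is to mimic Lemma~\ref{lem:homological-perturbation-modules} essentially verbatim, with one bookkeeping twist: each occurrence of an $A_\infty$-module structure map $m_{>1}$ in Figure~\ref{fig:homological-perturbation} is replaced by the higher type-$DA$ structure map $\delta^1_{>1}$ of ${}_\cA M^\cB$, and the resulting $\cB$-outputs produced by each such node are collected and multiplied together at the bottom via iterated $\mu_2^\cB$. Concretely, the structure maps $\delta^1_{j+1}$ on ${}_\cA Z^\cB$ are defined by the zigzag tree of the leftmost diagram of Figure~\ref{fig:homological-perturbation}, and the extensions $i^1_*$, $\pi^1_*$, $h^1_*$ to $DA$-bimodule morphisms are defined by the remaining three diagrams. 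In every case, the wires emerging on the right as elements of $\cB$ are concatenated along the bottom by $\mu_2^\cB$. Since $\delta^1_1$ has been packaged into $m_1^M$, $m_1^Z$, and the $\d_{\Mor}$ identities satisfied by $i^1, \pi^1, h^1$, none of the zigzags involve $\delta^1_1$, so the strict unitality of the higher operations $\delta^1_{>1}$ ensures the sums are finite on any fixed input.

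Next I would verify the $DA$-bimodule structure relations for $({}_\cA Z^\cB,\delta^1_*)$, i.e.\ that $\d_{\Mor}(\delta^1_*)=0$. Terms in $\d_{\Mor}$ applied to each zigzag tree come from four sources: (i) the $DA$-structure relations of $M$ applied internally at a $\delta^1_{>1}$ node; (ii) the multiplications $\mu_2^\cA$ on adjacent incoming algebra strands combined with co-associativity of $\Delta$; (iii) the associativity and Leibniz relations for $\mu_2^\cB$ and $\mu_1^\cB$ along the output wire; and (iv) $\d_{\Mor}$ applied to one of $i^1, \pi^1, h^1$. The contributions from $\d_{\Mor}(i^1)=0$ and $\d_{\Mor}(\pi^1)=0$ vanish because these are cycles. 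The splicing terms in (i) cancel against the terms in (iv) that merge two adjacent $\delta^1_{>1}$ nodes coming from $\d_{\Mor}(h^1)=i^1\circ\pi^1+\id$. The remaining terms of $\d_{\Mor}(h^1)$ insert a factor of $i^1\pi^1$ between two adjacent $\delta^1_{>1}$ nodes, and these telescope pairwise using $h^1\circ i^1=0$, $\pi^1\circ h^1=0$, and $h^1\circ h^1=0$, exactly as in the classical module case. A completely parallel computation, again modelled on the module version, verifies $\pi^1_*\circ i^1_*=\id_Z$, $i^1_*\circ\pi^1_*=\id_M+\d_{\Mor}(h^1_*)$, $h^1_*\circ i^1_*=0$, $\pi^1_*\circ h^1_*=0$, and $h^1_*\circ h^1_*=0$ as $DA$-bimodule morphism identities, where compositions are as in Equation~\eqref{eq:compose-morphisms}.

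The main obstacle is the combinatorial bookkeeping: one must keep track, for each zigzag tree, of the interaction between the $DA$-structure relations of $M$, the co-associativity of $\Delta$ on the incoming $\cA$-strands, the associativity and Leibniz relations for $\mu_2^\cB$ on the outgoing $\cB$-strand, and the homotopy identities among $i^1,\pi^1,h^1$. Fortunately, the $\cB$-output wire introduces no genuinely new combinatorics, since $\mu_2^\cB$ is associative and $\mu_1^\cB$ is a derivation, so the same pairwise cancellations that handle the $A_\infty$-module case go through with only notational changes. A cleaner, more conceptual alternative would be to deduce the lemma from Lemma~\ref{lem:homological-perturbation-modules} by regarding a $DA$-bimodule over $(\cA,\cB)$ as an $A_\infty$-module over $\cA$ valued in the $dg$-category of type-$D$ modules over $\cB$, with $i^1,\pi^1,h^1$ interpreted as morphisms in that category; but in practice I expect the direct tree calculation to be more transparent and adequate for our later uses.
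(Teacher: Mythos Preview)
Your proposal is correct and matches the paper's approach exactly: the paper does not give a proof in-line but refers to \cite{OSBorderedKauffmanStates}*{Lemma~2.12}, and the construction it records in Figure~\ref{fig:structure-map-homological-perturbation-bimodules} is precisely your zigzag tree with $\delta^1_{>1}$ in place of $m_{>1}$ and a final $\Pi$ multiplying the collected $\cB$-outputs. One small correction: the finiteness of the sums for fixed input is not a consequence of strict unitality but simply of the fact that each $\delta^1_{>1}$ node consumes at least one incoming $\cA$-element, so with $j$ algebra inputs there are at most $j$ such nodes; strict unitality is instead what guarantees the transferred structure and morphisms are themselves strictly unital.
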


We refer the reader to \cite{OSBorderedKauffmanStates}*{Lemma~2.12} for a proof. Note that the maps are similar to those appearing in Figure~\ref{fig:homological-perturbation}. The structure map $\delta_{j+1}^1$ is indicated in Figure~\ref{fig:structure-map-homological-perturbation-bimodules}.
\begin{figure}
\begin{equation}
\delta_{j+1}^1(\ve{a},\xs)=\begin{tikzcd}[row sep=.4cm]
\ve{a}\ar[d, Rightarrow]& \ve{x} \ar[d]\\
 \Delta\ar[dr,bend right=10, Rightarrow]\ar[dd,Rightarrow]&i^1\ar[d] \ar[ddddddddr,bend left=15]\\
\,& \delta_{>1}^1 \ar[d] \ar[dddddddr,bend left=13]\\
\Delta\ar[dr,bend right=10, Rightarrow]\ar[dd,Rightarrow]&h^1\ar[d]\ar[ddddddr,bend left=10]\\
\,&\delta^1_{>1}\ar[d]\\
\vdots \ar[ddr,bend right=20,Rightarrow] & \vdots \ar[d] \ar[ddddr,Rightarrow,bend left=9]\\
\,&h^1\ar[d] \ar[dddr,bend left=8]\\
\,&\delta^1_{>1}\ar[d] \ar[ddr,bend left=7]\\
\,&\pi^1\ar[dd] \ar[dr,bend left=6] \\
\, &\,& \Pi \ar[d]\\
\,&\,& \,
\end{tikzcd}
\end{equation}
\caption{The structure relation $\delta_{j+1}^1$ from the homological perturbation lemma for $DA$-bimodules.}
\label{fig:structure-map-homological-perturbation-bimodules}
\end{figure}
The other maps $i_j^1$, $\pi_j^1$ and $h_j^1$ are constructed by a similar modification of the other maps in Figure~\ref{fig:homological-perturbation}. Here, $\delta_{>1}^1$ denotes $\delta_{j}^1$ for some $j>1$. The final $\Pi$ map means to multiply all of the inputs using several applications of $\mu_2$.

\subsection{Hypercubes}

In this section, we review a version of the homological perturbation lemma for hypercubes. See  \cite{HHSZDuals}*{Section~2.7} for more details. Lemma~\ref{lem:homological-perturbation-cubes} is similar to other formulations of the homological perturbation lemma in the context of filtered chain complexes.  See, e.g.,  \cite{HK_Homological_Perturbation}.   A similar, though slightly less explicit, construction for transferring hypercube structures along homotopy equivalences is described by Liu \cite{Liu2Bridge}*{Section~5.6}.

\begin{lem}\label{lem:homological-perturbation-cubes}
Suppose that $\cC=(C_\veps,D_{\veps,\veps'})_{\veps\in \bE_n}$ is a hypercube of chain complexes, and $(Z_\veps)_{\veps\in \bE_n}$ is a collection of chain complexes, indexed by $\veps\in \bE_n$. Furthermore, suppose that for each $\veps\in \bE_n$, we have chosen maps 
\[
\pi_{\veps}\colon C_\veps\to Z_\veps\qquad i_{\veps}\colon Z_\veps\to C_\veps \qquad h_\veps\colon C_\veps\to C_\veps,
\]
satisfying
\[
\pi_{\veps}\circ i_{\veps}=\id, \quad i_{\veps}\circ \pi_{\veps}=\id+\d_{\Mor}(h_\veps), \quad  h_\veps\circ h_\veps=0, \quad \pi_{\veps}\circ h_{\veps}=0\quad \text{and} \quad h_{\veps}\circ i_{\veps}=0
\]
and such that $\pi_{\veps}$ and $i_{\veps}$ are chain maps.
With the above data chosen, there are canonical hypercube structure maps $\delta_{\veps,\veps'}\colon Z_{\veps}\to Z_{\veps'}$ so that $\cZ=(Z_\veps,\delta_{\veps,\veps'})$ is a hypercube of chain complexes, and also there are morphisms of hypercubes
\[
\Pi\colon \cC\to \cZ, \quad I\colon  \cZ\to \cC\quad \text{and}\quad H\colon \cC\to \cC
\]
such that $I$ and $\Pi$ are chain maps,  and satisfy
\[
\Pi\circ I=\id, \quad I\circ \Pi=\id+\d_{\Mor}(H), \quad  H\circ H=0, \quad \Pi\circ H=0\quad \text{and} \quad H\circ I=0.
\]
\end{lem}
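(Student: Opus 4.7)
The plan is to reduce to the classical homological perturbation lemma for ordinary chain complexes, of which this statement is essentially a packaging. Bundle the hypercube $\cC$ into a single chain complex $(C_{\tot}, D)$ where $C_{\tot} = \bigoplus_{\veps \in \bE_n} C_\veps$ and $D = \sum_{\veps \le \veps'} D_{\veps,\veps'}$; the compatibility relation \eqref{def:hypercube} is exactly $D^2 = 0$. Decompose $D = d + \delta$ where $d = \bigoplus_\veps D_{\veps,\veps}$ is the vertex-wise internal differential and $\delta = \sum_{\veps < \veps'} D_{\veps,\veps'}$ strictly increases the cube coordinate. Likewise set $Z_{\tot} = \bigoplus_\veps Z_\veps$ with differential $d_Z = \bigoplus_\veps d_{Z_\veps}$, and define $I_0 = \bigoplus_\veps i_\veps$, $\Pi_0 = \bigoplus_\veps \pi_\veps$, $H_0 = \bigoplus_\veps h_\veps$. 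By hypothesis, this data is a strong deformation retract from $(C_{\tot},d)$ to $(Z_{\tot},d_Z)$ satisfying all six side conditions.

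Next, I would view $\delta$ as a perturbation of $d$. The identity $(d+\delta)^2 = D^2 = 0$ says it is indeed a perturbation. The essential observation is that $\delta$ strictly increases the $\bE_n$-coordinate while $H_0$ preserves it, so $(\delta H_0)^{n+1} = 0$; hence the formal series
\[
\Sigma = \sum_{k \ge 0} (\delta H_0)^k \delta
\]
is a \emph{finite} sum, and no convergence issue arises. Applying the classical perturbation lemma in its standard form produces a new deformation retract from $(C_{\tot}, D)$ to $(Z_{\tot}, d_Z + \Pi_0 \Sigma I_0)$ with maps
\[
I = I_0 + H_0 \Sigma I_0, \qquad \Pi = \Pi_0 + \Pi_0 \Sigma H_0, \qquad H = H_0 + H_0 \Sigma H_0,
\]
satisfying $\Pi I = \id_{Z_{\tot}}$, $I \Pi = \id_{C_{\tot}} + [D,H]$, $H^2 = 0$, $HI = 0$, $\Pi H = 0$.

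Finally, I would translate back to the hypercube language. Each of $d$, $\delta$, $I_0$, $\Pi_0$, $H_0$ weakly preserves or strictly increases the $\bE_n$-coordinate, so the same holds for all the maps built above. Extracting the $(\veps,\veps')$ components for $\veps \le \veps'$ therefore yields well-defined hypercube structure maps and morphisms: for example, for $\veps < \veps'$,
\[
\delta_{\veps,\veps'} = \sum_{\veps = \veps_0 < \veps_1 < \cdots < \veps_k = \veps'} \pi_{\veps_k} D_{\veps_{k-1},\veps_k} h_{\veps_{k-1}} D_{\veps_{k-2},\veps_{k-1}} \cdots h_{\veps_1} D_{\veps_0,\veps_1} i_{\veps_0},
\]
and $\delta_{\veps,\veps} = d_{Z_\veps}$. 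The identity $D_{\tot}^2 = 0$ on $Z_{\tot}$ unpacks term-by-term to the hypercube relation \eqref{def:hypercube} for $\cZ$, and similarly the other classical identities unpack to $\Pi \circ I = \id_\cZ$ and $I \circ \Pi = \id_\cC + \d_{\Mor}(H)$, which is the morphism differential in the hypercube category. The step that requires the most care, though it is ultimately routine, is verifying that the classical perturbation identities, which a priori hold only as identities of operators on $C_{\tot}$ and $Z_{\tot}$, indeed disassemble correctly along the $\bE_n$-grading into the hypercube relations; this is automatic from the observation that every operator in sight respects that grading. Equivalently, one can phrase the entire argument module-categorically by converting $\cC$ to a type-$D$ module over the cube algebra $\cC_n$ of Section~\ref{sec:hypercube-algebra} and invoking a type-$D$ analogue of Lemma~\ref{lem:homological-perturbation-modules}, but the total-complex viewpoint above gives the explicit formulas most directly.
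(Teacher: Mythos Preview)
Your proposal is correct and recovers exactly the explicit formulas the paper records after the lemma statement; the paper itself does not give a proof here but cites \cite{HHSZDuals} and the filtered-complex perturbation lemma of Huebschmann--Kadeishvili, which is precisely the total-complex-plus-perturbation argument you wrote out. Your nilpotence observation $(\delta H_0)^{n+1}=0$ is the standard reason the sums are finite, and your unpacked formula for $\delta_{\veps,\veps'}$ matches the paper's verbatim.
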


It is important to understand that the structure maps $\delta_{\veps,\veps'}$ and the morphisms $\Pi$ and $I$ are determined completely by an explicit formula. We now describe $\delta_{\veps,\veps'}$. Suppose that $\veps<\veps'$ are points in $\bE_n$.   The hypercube structure maps $\delta_{\veps,\veps'}$ are given by the following sum:
\[
\delta_{\veps,\veps'}:=\sum_{\veps=\veps_1<\cdots<\veps_j=\veps'}  \pi_{\veps'}\circ D_{\veps_{j-1},\veps_j}\circ h_{\veps_{j-1}}\circ D_{\veps_{j-2},\veps_{j-1}}\circ \cdots \circ h_{\veps_2}\circ D_{\veps_1,\veps_2}\circ i_{\veps}.
\]
The component of the map $I$ sending coordinate $\veps$ to coordinate $\veps'$ is given via the formula
\[
I_{\veps,\veps'}:=\sum_{\veps=\veps_1<\cdots<\veps_j=\veps'}  h_{\veps_{j}}\circ D_{\veps_{j-1},\veps_{j}}\circ \cdots \circ h_{\veps_2}\circ D_{\veps_1,\veps_2}\circ i_{\veps}.
\]
Similarly, $\Pi$ is given by the formula
\[
\Pi_{\veps,\veps'}:=\sum_{\veps=\veps_1<\cdots<\veps_j=\veps'}  \pi_{\veps'}\circ D_{\veps_{j-1},\veps_j}\circ h_{\veps_{j-1}}\circ D_{\veps_{j-2},\veps_{j-1}}\circ \cdots\circ D_{\veps_1,\veps_2} \circ h_{\veps_1}.
\]

\subsection{Hypercubes of $DA$-bimodules}

There is a version of the homological perturbation lemma which naturally generalizes both the homological perturbation lemmas for $DA$-bimodules and hypercubes of chain complexes.

\begin{define} A \emph{hypercube of $DA$-bimodules} consists of a $DA$-bimodule $(C,\delta_*^1)$, such that $C$ decomposes as $C=\bigoplus_{\veps\in \bE_n} C_{\veps}$, where each $C_{\veps}$ is itself a $(\ve{j},\ve{k})$-module. Furthermore, the structure map $\delta_{j+1}^1$ decomposes as a sum of maps 
\[
\delta_{j+1,\veps,\veps'}^1\colon \cA^{\otimes j}\otimes C_{\veps}\to C_{\veps'}\otimes \cB.
\]
 where $j\ge 0$, and $\veps\le \veps'$.
 \end{define}
 
  The above definition is a special case of Lipshitz, Ozsv\'{a}th and Thurston's notion of a \emph{filtered $DA$-bimodule} \cite{LOTDoubleBranchedI}*{Section~2.2}.

\begin{lem}
\label{lem:homological-perturbation-DA-hypercube}
Suppose that $\cA$ and $\cB$ are $dg$-algebras.  Suppose also that ${}_{\cA}\cC^{\cB}=({}_{\cA} C_\veps^{\cB},D_{*,\veps,\veps'}^1)_{\veps\in \bE_n}$ is a hypercube of $DA$-bimodules, and suppose that for each $\veps\in \bE_n$ we have chosen type-$D$ modules $(Z_{\veps}^{\cB})_{\veps\in \bE_n}$, as well as morphisms of type-$D$ modules
\[
\pi^1_{\veps}\colon C_{\veps}^{\cB}\to Z_{\veps}^{\cB},\quad i_{\veps}^1\colon Z_{\veps}^{\cB}\to C_{\veps}^{\cB}\quad \text{and}\quad h_{\veps}^1\colon C_{\veps}^{\cB}\to C_{\veps}^{\cB}
\]
satisfying 
\[
\pi_{\veps}^1\circ i_{\veps}^1=\id_{Z_\veps},\quad i_{\veps}^1\circ \pi_{\veps}^1+\id_{C_\veps}=\d_{\Mor}(h^1_\veps),\quad h_{\veps}^1\circ h_{\veps}^1=0, \quad h^1_{\veps}\circ i^1_{\veps}=0,\quad \text{and}\quad \pi^1_{\veps}\circ h^1_{\veps}=0,
\]
and such that $\pi_{\veps}^1$ and $i_{\veps}^1$ are homomorphisms of type-$D$ modules (i.e. cycles). Such data determines structure maps $\delta_{j+1,\veps,\veps'}^1\colon \cA^{\otimes j}\otimes Z_{\veps}\to Z_{\veps'}\otimes \cB$, which make ${}_{\cA}\cZ^{\cB}=({}_{\cA}Z_{\veps}^{\cB}, \delta^1_{*,\veps,\veps'})$ a hypercube of $DA$-bimodules. Furthermore, there are morphisms of hypercubes of $DA$-bimodules
\[
\Pi_*^1\colon {}_{\cA}\cC^{\cB}\to {}_{\cA}\cZ^{\cB}\quad I_*^1\colon {}_{\cA}\cZ^{\cB}\to {}_{\cA}\cC^{\cB} \quad \text{and} \quad H_*^1\colon {}_{\cA} \cC^{\cB}\to {}_{\cA} \cC^{\cB},
\]
which satisfy analogous relations to $i_\veps^1$, $\pi_{\veps}^1$ and $h_{\veps}^1$.
\end{lem}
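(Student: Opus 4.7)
The plan is to combine the explicit formulas behind Lemmas~\ref{lem:homological-perturbation-DA-modules} and~\ref{lem:homological-perturbation-cubes}.

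First, I would observe that the diagonal components $D^1_{*,\veps,\veps}$ endow each ${}_{\cA} C_\veps^{\cB}$ with a genuine $DA$-bimodule structure: restricting the hypercube-of-bimodules structure relation for ${}_\cA\cC^\cB$ to the $\veps \to \veps$ component gives exactly the $DA$-bimodule relations for $D^1_{*,\veps,\veps}$. Applying Lemma~\ref{lem:homological-perturbation-DA-modules} at each vertex $\veps \in \bE_n$ separately, I obtain $DA$-bimodule structures on each ${}_{\cA} Z_\veps^{\cB}$ with structure maps $\delta^1_{*,\veps,\veps}$, together with extensions of $\pi^1_\veps$, $i^1_\veps$, $h^1_\veps$ to $DA$-bimodule morphisms $\tilde\pi^1_{*,\veps}$, $\tilde\imath^1_{*,\veps}$, $\tilde h^1_{*,\veps}$ satisfying the analogues of all six identities.

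Next, mirroring the explicit formula from Lemma~\ref{lem:homological-perturbation-cubes}, I would define the off-diagonal structure maps by
\[
\delta^1_{*,\veps,\veps'} = \sum_{\veps = \veps_1 < \cdots < \veps_k = \veps'} \tilde\pi^1_{*,\veps_k} \circ D^1_{*,\veps_{k-1},\veps_k} \circ \tilde h^1_{*,\veps_{k-1}} \circ \cdots \circ \tilde h^1_{*,\veps_2} \circ D^1_{*,\veps_1,\veps_2} \circ \tilde\imath^1_{*,\veps_1},
\]
where $\circ$ denotes the $DA$-bimodule morphism composition of~\eqref{eq:compose-morphisms} (comultiplying the $\cA$-inputs and multiplying the $\cB$-outputs, even though the interior $D^1$-factors need not themselves be cycles when $k \geq 3$). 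The morphisms $\Pi_*^1$, $I_*^1$, and $H_*^1$ of hypercubes of $DA$-bimodules are defined by completely analogous sums, with one endpoint factor of $\tilde\pi^1$ or $\tilde\imath^1$ (respectively both endpoint factors, in the case of $H_*^1$) replaced by $\tilde h^1$, exactly as in Figure~\ref{fig:homological-perturbation}.

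The key step, and principal obstacle, will be verifying the $DA$-bimodule structure relations on $\cZ$. The diagonal relations hold by construction from the previous step. For an off-diagonal pair $\veps < \veps'$, expanding the $DA$-bimodule structure relation applied to the displayed formula produces three families of terms: those arising from applying the hypercube-of-bimodules relation for $\cC$ to a pair of consecutive $D^1$-factors; those arising from the identity $\tilde\imath^1_{*,\veps_i} \circ \tilde\pi^1_{*,\veps_i} + \id = \d_{\Mor}(\tilde h^1_{*,\veps_i})$ at an interior vertex; and those arising from $\d_{\Mor}$ applied to an endpoint $\tilde\pi^1$ or $\tilde\imath^1$ (which vanish since these are cycles). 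The hypercube relation cancels consecutive-$D^1$ contributions against the identity summand coming from $\tilde\imath^1 \circ \tilde\pi^1 + \id = \d_{\Mor}(\tilde h^1)$ at an interior vertex, while the side-conditions $\tilde h^1 \circ \tilde h^1 = 0$, $\tilde\pi^1 \circ \tilde h^1 = 0$, and $\tilde h^1 \circ \tilde\imath^1 = 0$ kill the remaining unmatched boundary pieces. This cancellation is a simultaneous generalization of those in the proofs of Lemmas~\ref{lem:homological-perturbation-DA-modules} and~\ref{lem:homological-perturbation-cubes}; the only new subtlety is the bookkeeping required to disentangle contributions coming from the ``cube'' direction from those coming from the ``tree'' direction implicit in~\eqref{eq:compose-morphisms}. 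Once this verification is set up, the analogous identities $\Pi_*^1 \circ I_*^1 = \id_{\cZ}$, $I_*^1 \circ \Pi_*^1 + \id_{\cC} = \d_{\Mor}(H_*^1)$, and the three vanishing side-relations follow by the same combinatorial pattern.
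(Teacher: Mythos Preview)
Your proposal is correct and, once the $DA$-compositions with $\tilde\imath^1_*$, $\tilde h^1_*$, $\tilde\pi^1_*$ are fully unpacked, it produces exactly the formula the paper writes down. The paper's description is simply the ``one-shot'' version: modify Figure~\ref{fig:structure-map-homological-perturbation-bimodules} by allowing each $\delta^1_{>1}$ slot to be either an off-diagonal hypercube map $D^1_{*,\veps_i,\veps_{i+1}}$ (with any number $\geq 0$ of algebra inputs) or an internal $D^1_{>1,\veps_i,\veps_i}$; the paper then leaves the verification to the reader. Your two-stage organization---first apply Lemma~\ref{lem:homological-perturbation-DA-modules} vertexwise to build $\tilde\imath^1_*,\tilde\pi^1_*,\tilde h^1_*$, then feed these into the hypercube formula of Lemma~\ref{lem:homological-perturbation-cubes}---is a clean conceptual factoring of the same thing, since expanding $\tilde h^1_*$ inserts precisely the intermediate $h^1 \circ D^1_{>1,\veps_i,\veps_i} \circ h^1 \circ \cdots$ strings between consecutive off-diagonal $D^1$'s. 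Both approaches give the same induced maps; yours makes the reduction to the two previous lemmas more explicit, while the paper's direct description is more efficient for the concrete computations in Section~\ref{sec:minimal-model-Hopf}.
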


We now describe the maps $\delta_{*}^1$, $\Pi_{*}^1$, $I_*^1$ and $H_*^1$. We focus on the structure map $\delta_{j+1,\veps,\veps'}^1$. This map is defined by modifying Figure~\ref{fig:structure-map-homological-perturbation-bimodules} as follows. In place of each $\delta_{>1}^1$ labeled therein, we are allowed one of two maps:
\begin{enumerate}
\item A hypercube map $D_{j+1,\veps,\veps'}^1$ for $\cC$, such that $\veps<\veps'$. Furthermore, we assume only that $j\ge 0$ (so that we allow the case that there are no algebra inputs here).
\item An internal structure map $D_{>1,\veps,\veps}^1$ of some ${}_{\cA} C_\veps^{\cB}$ (so in this case we do not allow instances of the internal differential $D_{1,\veps,\veps}^1$).
\end{enumerate}

The maps $\Pi_*^1$, $I_*^1$ and $H_*^1$ are defined by small modifications, similar to Figure~\ref{fig:homological-perturbation}. We leave the proof of the above lemma to the reader, as it is a straightforward variation of Lemma~\ref{lem:homological-perturbation-DA-modules}.

\section{Linear topological spaces}
\label{sec:completions}

In this section, we recall basic background on algebraic completions. See \cite{AtiyahMacdonald}*{Section~10} and \cite{LefschetzAT}*{Chapter~2} for related background. This material is used to define our module categories in Section~\ref{sec:Algebra-K}.

\begin{define} A \emph{linear topological vector space} consists of a vector space $\cX$ equipped with a topology which admits a basis of open sets centered at 0 consisting of vector subspaces, such that furthermore the addition map $\cX\times \cX \to \cX$ is continuous.
\end{define}

If $R$ is a ring, an \emph{$R$-module with linear topology} is defined similarly, except that there is a basis of 0 consisting of $R$-submodules.

Vector spaces with linear topologies are usually specified by picking a decreasing filtration consisting of subspaces $X_i\subset \cX$ indexed by $i\in I$, where $I$ is some directed and partially ordered set. Such a filtration determines a topology, where we declare a basis to be the sets of the form $x+X_i$, where $i\in I$ and $x\in \cX$.

Given a filtration $(X_{i})_{i\in I}$ on a vector space $\cX$, the \emph{completion} is defined as the inverse limit
\[
\ve{\cX}:=\varprojlim_{i\in I} \cX/X_i.
\]

Rather than working with inverse limits, it is sometimes easier to work with Cauchy sequences and nets. Since some of the spaces we are working with are not first countable, we work with Cauchy \emph{nets} instead of the perhaps more familiar Cauchy \emph{sequences}.

We recall that a \emph{net} in a topological space $\cX$ is a map $\ve{x}\colon I\to \cX$, where $I$ is a directed set. We usually write $\ve{x}=(x_i)_{i\in I}$ for a net. When $\cX$ is a linear topological vector space, we say that a net $(x_i)_{i\in I}$ is \emph{Cauchy} if for each open subspace $E\subset \cX$, there is an $i_0\in I$ such that if $i,j\ge i_0$, then $x_i-x_j\in E.$

If $(x_i)_{i\in I}$ and $(y_j)_{j\in J}$ are two nets in $\cX$, their sum is the net $(x_i+y_j)_{(i,j)\in I\times J}$. Two Cauchy nets are \emph{equivalent} if their difference converges to $0\in \cX$ as a net. We note that the collection of nets in $\cX$ forms a proper class, though the set of Cauchy equivalence classes is a set. The following is well known and is elementary to prove:

\begin{lem} If $\cX$ is a linear topological space, then the completion $\ve{\cX}$ is canonically isomorphic to the vector space of equivalence classes of Cauchy nets in $\cX$.
\end{lem}

\subsection{Linear maps}
\label{sec:Cauchy}

If $\cX$ and $\cY$ are linear topological vector spaces, then a continuous map $f\colon \cX\to \cY$ naturally induces a map on completions. We recall the following  basic principle, whose proof is elementary:

\begin{lem}
\label{lem:continuous-iff-at-0} Suppose that $\cX$ and $\cY$ are linear topological spaces. A linear map $f\colon \cX\to \cY$ is continuous if and only if it is continuous at $0$.
\end{lem}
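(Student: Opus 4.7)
The forward direction is immediate from the definition of continuity, so the plan is to focus on the converse: assuming $f$ is continuous at $0$, I will show $f$ is continuous at every point $x \in \cX$. The key ingredients are the explicit description of the topology (with basis of opens $x + X_i$) noted just before the lemma, together with linearity of $f$.

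First I would fix an arbitrary point $x \in \cX$ and an open neighborhood $V$ of $f(x)$ in $\cY$. Since $\cY$ has a basis of opens of the form $y + Y_j$, I can shrink $V$ and assume without loss of generality that $V = f(x) + Y_j$ for some index $j$. Translating, $V - f(x) = Y_j$ is then an open neighborhood of $0 \in \cY$.

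Next I would apply the hypothesis that $f$ is continuous at $0$: this yields an index $i$ such that $f(X_i) \subset Y_j$ (using that the $X_i$ form a basis of opens around $0$ in $\cX$, and that $f(0) = 0$ by linearity). Then for any $u \in x + X_i$, linearity gives
\[
f(u) - f(x) = f(u - x) \in f(X_i) \subset Y_j,
\]
so $f(u) \in f(x) + Y_j = V$. Thus $f(x + X_i) \subset V$, establishing continuity of $f$ at $x$.

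There is no real obstacle here; the only thing to be slightly careful about is that the proof uses $f(0) = 0$ (linearity) to interpret ``continuity at $0$'' as the existence of an $X_i$ with $f(X_i) \subset Y_j$, rather than merely $f(X_i) \subset f(0) + Y_j$. Once linearity is invoked, the translation argument is automatic and independent of the particular filtered structures.
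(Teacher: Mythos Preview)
Your proof is correct and is exactly the standard translation argument one would give; the paper itself omits the proof entirely, remarking only that it is elementary and left to the reader, so there is nothing further to compare.
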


\begin{define}\label{def:linear-topological-category}
 We define the category of \emph{linear topological vector spaces} over $\bF$, denoted $\LTS_\bF$, as follows. The objects are linear topological vector spaces over $\bF$. We define the set of morphisms from $\cX$ to $\cY$ in this category to be the set of continuous linear maps from $\ve{\cX}$ to $\ve{\cY}$ (i.e. continuous linear maps between the completed spaces).
\end{define}

\begin{rem}\label{rem:canonically-isomorphic-X-veX} By definition, $\cX$ and $\ve{\cX}$ are isomorphic in the category $\LTS_{\bF}$ for any linear topological $\bF$-vector space $\cX$. 
\end{rem}

\subsection{Direct products}

We now describe some helpful perspectives on the direct sum and product in the context of linear topological spaces.  These are important for our work because of the role that direct products play in the surgery formula from \cite{MOIntegerSurgery}.

\begin{define}\label{def:cofinite-basis}
 If $(X_i)_{i\in I}$ is a family of linear topological spaces, the \emph{direct product} of topological vector spaces $\cX=\prod_{i\in I} X_i$ coincides with the direct product in the category of topological spaces. More explicitly, a subspace $E\subset \prod_{i\in I} X_i$ is open if and only if there is some finite set $S\subset I$ and a set of open subspaces $U_{s}\subset X_{s}$ ranging over $s\in S$, such that
 \[
  \left(\prod_{i\in I\setminus S} X_i \right)\oplus \left(\prod_{s\in S} U_s\right)\subset E.
 \]
 We declare the product topology on $\bigoplus_{i \in I} X_i$ to be the subspace topology induced by the inclusion $\bigoplus_{i\in I} X_i\subset \prod_{i\in I} X_i$.
\end{define}
Note that if $\cX=\bigoplus_{i\in I} X_i$ is equipped with the product topology, then $\ve{\cX}\iso \prod_{i\in I} \ve{X}_i$.

In our work, we consider frequently the case where each $X_i$ is equipped with the discrete topology. In this case, if $S\subset I$ is a finite set, we write 
\[
\cX_{\co(S)}=\bigoplus_{i\not \in S} X_i
\]
for the fundamental basis of opens.

\begin{example}
 We illustrate the case that $I=\Z$ and $X_i=\bF$. We consider $\cX=\bigoplus_{i\in I} X_i$. Write $x_i$ for the generator of $X_i$. The basis of open subspaces as $\cX_{\co(\{-n,\dots, n\})}=\Span(x_i: |i|>n)$. We can identify $\cX/\cX_{\co(\{-n,\dots, n\})}$ with $\Span(x_{-n},\dots, x_n)$. The inverse limit of $\cX/\cX_{\co(\{-n,\dots, n\})}$ consists of all tuples $(a_i)_{i\in \Z}$ where $a_i\in X_i$. This is the direct product $\prod_{i\in \Z} X_i$. 
\end{example}

The following example illustrates the meaning of a continuous map between direct products equipped with the product topologies:
\begin{example}
 Suppose that $F\colon \prod_{i\in I} X_i\to \prod_{j\in J} Y_j$ is a linear map and each $X_i$ and $Y_j$ is equipped with the discrete topology. Write $F_{j,i}$ for $\Pi_j\circ F\circ I_i$, where $\Pi_j$ and $I_i$ are projections and inclusions. Then $F$ is continuous with respect to the product topologies if and only if for each $j$, there are only finitely many $i$ such that $F_{j,i}\neq 0$, and furthermore $F=\sum_{j,i} F_{j,i}$.
\end{example}

\subsection{Linear compactness}

We now recall the notion of linear compactness. This definition is important for the module categories in Section~\ref{sec:Algebra-K} since some of the operations we define (such as various versions of the external tensor product) require the underlying spaces of our modules to be linearly compact. All of the link surgery modules we construct are linearly compact.

Many basic statements about compact sets from point-set topology have analogs for linearly compact vector spaces,  though linear compactness and point-set compactness are not equivalent in general.  There are several equivalent definitions of linear compactness, though the following is the most useful for our purposes:

\begin{define} A linear topological space $\cX$ is \emph{linearly compact} if each open subspace $U\subset \cX$ has finite codimension, i.e. $\dim(\cX/U)<\infty$. 
\end{define}

\begin{rem}
\begin{enumerate} 
\item The notion of linear compactness may be found in the classical work of Lefschetz \cite{LefschetzAT}*{Chapter~2}. Note that Lefschetz formulates linear compactness in terms of the finite intersection property for closed affine subspaces, though this turns out to be equivalent. Since we will not revisit the work of Lefschetz, we will not describe the equivalence.
\item The direct product of linearly compact spaces is linearly compact. In particular, $\prod_{\N} \bF$ is linearly compact.
\end{enumerate}
\end{rem}

\subsection{Tensor products}
\label{sec:tensor-prods}
Given linear topological vector spaces $\cX$ and $\cY$, there are many different ways to topologize the tensor product $\cX\otimes \cY$. This phenomenon goes back to work of Grothendieck \cite{GrothendieckNuclear} in the setting of functional analysis and Banach spaces. Despite fundamental differences between the Banach spaces and linear topological vector spaces, similar phenomena occur in both settings.

In this paper, we focus on three natural topologies on tensor products:
\begin{enumerate}
\item The \emph{standard topology} $\cX\otimes^! \cY$.
\item The \emph{$*$-topology} $\cX\otimes^* \cY$.
\item The \emph{chiral-topology} $\cX\vecotimes \cY$.
\end{enumerate}
The following inclusions are continuous
\[
\cX\otimesstar \cY\subset \cX\vecotimes \cY\subset \cX\otimesshriek \cY.
\]
 The topologies $\otimes^*$ and $\vecotimes$ are due to Beilinson \cite{Beilinson-Tensors}. Additional exposition and analysis of these constructions is given by Positelski \cite{Positelski-Linear}*{Sections~12 and 13}, wherein many basic properties, subtleties and counterexamples to natural conjectures are described. These topologies are defined as follows:

\begin{define} Suppose that $\cX$ and $\cY$ are linear topological spaces. 
\begin{enumerate}
\item A subspace $E\subset \cX\otimes^! \cY$ is open if and only if the following holds: There are open subspaces $U\subset \cX$ and $V\subset \cY$ such that 
$U\otimes \cY\subset E$ and $\cX\otimes V\subset E.$
\item A subspace $E\subset \cX\otimes^* \cY$ is open if and only if the following holds: There are open subspaces $U\subset \cX$ and $V\subset \cY$ such that 
$
U\otimes V\subset E;$ and for each $x\in \cX$ and $y\in \cY$, there are open subspaces $U_y\subset \cX$ and $V_x\subset \cY$ such that
$ x\otimes V_x\subset E$ and $U_y\otimes y\subset E.$
\item A subspace $E\subset \cX\vecotimes \cY$ is open if and only if the following holds: There is an open subspace $U\subset \cX$ so that 
$U\otimes \cY\subset E;$ and for each $x\in \cX$, there is an open $V_x\subset \cY$ such that $ x\otimes V_x\subset E.$ 
\end{enumerate}
\end{define}

The topologies $\otimesstar$ and $\otimesshriek $ are symmetric between $\cX$ and $\cY$, though $\vecotimes$ is asymmetric. The motivation for $\vecotimes$ is that it is a natural topology for an algebra whose topology is given by one-sided ideals. This perspective is of particular importance because our algebra $\cK$ is naturally topologized using a family of right ideals. See Lemma~\ref{lem:simple-properties-K-Rn}.

\begin{rem}
\label{rem:equivalent-bases} If $\cX$ and $\cY$ are equipped with filtrations $(X_i)_{i\in \N}$ and $(Y_i)_{i\in \N}$, such that $X_0=\cX$ and $Y_0=\cY$, then the topology on $\cX\otimes^! \cY$ is equivalent to the one induced by the filtration $(\cX\otimes \cY)_n=\sum_{i+j=n} X_i\otimes Y_j$. This is because
\[
X_{2n}\otimes \cY\oplus \cX\otimes Y_{2n}\subset \sum_{i+j=2n} X_i\otimes Y_j\subset X_n\otimes \cY\oplus\cX\otimes Y_n
\]
 This is not in general true if we work with filtrations over $\Z$ instead of $\N$, or if we relax the assumption that $X_0=\cX$ and $Y_0=\cY$.
\end{rem}

\subsection{Associativity and commutativity relations}

In this section, we recall some basic properties about the tensor products $\otimes^!$, $\vecotimes$ and $\otimes^*$. We begin with associativity:

\begin{lem} If $\circ$ is one of $\{*,!,\to\}$, then the canonical map
\[
\cX\otimes^{\circ} (\cY\otimes^{\circ} \cZ)\iso (\cX\otimes^{\circ} \cY)\otimes^{\circ} \cZ
\]
is a homeomorphism.
\end{lem}

The reader may consult \cite{Positelski-Linear}*{Proposition~13.4} for details. In contrast,  the different tensor product operations are not mutually associative. Nonetheless, there are some relations between different parenthesizations of the tensor products. One important and well-known example is the following (we give a proof for the benefit of the reader):

\begin{lem}
\label{lem:split-arrow-shriek}
Suppose that $\cX_1$, $\cX_2$, $\cY_1$, and $\cY_2$ are linear topological vector spaces.
 The natural map
\[
(\cX_1\otimesshriek \cY_1)\vecotimes (\cX_2\otimesshriek \cY_2)\to
 (\cX_1\vecotimes \cX_2)\otimesshriek (\cY_1\vecotimes \cY_2)
\]
is continuous.
\end{lem}
\begin{proof}
A subspace $E\subset (\cX_1\vecotimes \cX_2)\otimesshriek (\cY_1\vecotimes \cY_2)$ is open if
\begin{enumerate}[label=($s$)]
\item There are open $U_1\subset \cX_1$ and $V_1\subset \cY_1$, and for each $x_1\in \cX_1$ and $y_1\in \cY_1$ there are open $U_2^{x_1}\subset \cX_2$ and $V_2^{y_1}\subset \cY_2$ so that
$\cX_1\otimes \cX_2\otimes V_1\otimes \cY_2,$  $\cX_1\otimes \cX_2\otimes y_1\otimes V_2^{y_1}$, $U_1\otimes \cX_2\otimes \cY_1\otimes \cY_2$, and $x_1\otimes U_2^{x_1}\otimes \cY_1\otimes \cY_2$ are contained in  $E$.
\end{enumerate}
A subspace $E\subset (\cX_1\otimesshriek \cY_1)\vecotimes (\cX_2\otimesshriek \cY_2)$ is open if
\begin{enumerate}[label=($t$-\arabic*)]
\item There are open $U_1\subset \cX_1$ and $V_1\otimes \cY_1$ so that $U_1\otimes \cY_1\otimes \cX_2\otimes \cY_2$, and  $\cX_1\otimes V_1\otimes \cX_2\otimes \cY_2$ are contained in $E$.
\item For each $x_1\in \cX_1$ and $y_1\in \cY_1$, there is an open $U_2^{x_1,y_1}\subset \cX_2$ so that $x_1\otimes y_1\otimes U_2^{x_1,y_1}\otimes \cY_2\subset E$.
\item For each $x_1\in \cX_1$, $y_1\in \cY_1$, $x_2\in \cX_2$, there is an open $V_2^{x_1,y_1,x_2}\subset \cY_2$ so that $x_1\otimes y_1\otimes x_2\otimes  V_2^{x_1,y_1,x_2}\subset E.$
\end{enumerate}
Any subspace $E$ satisfying $(s)$ satisfies ($t$-1), ($t$-2) and ($t$-3), so the stated map is continuous.
\end{proof}

Applying Lemma~\ref{lem:split-arrow-shriek} repeatedly gives the following:

\begin{cor}
\label{cor:re-order-arrow-shriek}
 Suppose that $\cX_1,\dots, \cX_n$ and $\cY_1,\dots, \cY_n$ are linear topological vector spaces.
\begin{enumerate}
\item The map 
\[
(\cX_1\otimesshriek \cY_1)\vecotimes \cdots \vecotimes (\cX_n\otimesshriek \cY_n)\to
 (\cX_1\vecotimes\cdots\vecotimes \cX_n)\otimesshriek (\cY_1\vecotimes\cdots\vecotimes \cY_n)
 \]
 is continuous.
\item 
The map
\[
(\cX_1\otimes^!\cdots \otimes^! \cX_n)\vecotimes ( \cY_1\otimes^!\cdots \otimes^! \cY_n)\to (\cX_1\vecotimes \cY_1)\otimes^! \cdots \otimes^!(\cX_n\vecotimes \cY_n)
\]
is continuous. 
\end{enumerate}
\end{cor}

The following is  well-known:

\begin{lem}
\label{lem:lin-comact-discrete-change-completion}
Suppose that $\cX$ and $\cY$ are filtered spaces. If either
\begin{enumerate}
\item $\cX$ is linearly compact, or
\item $\cY$ is discrete,
\end{enumerate}
then $\cX\vecotimes \cY\iso \cX\otimesshriek \cY$.
\end{lem}
\begin{proof} Consider the case that $\cX$ is linearly compact and suppose that $E\subset \cX\vecotimes \cY$ is an open subspace. By assumption, there is some open subspace $U\subset \cX$ so that $U\otimes \cY\subset E$. Since $\cX$ is linearly compact, we may pick $x_1,\dots, x_n\in \cX$ spanning a complementary vector space to $U$. By assumption, there are open subspaces $V_1,\dots,V_n\subset \cY$ so that $x_i\otimes V_i\subset E$. Therefore
\[
U\otimes \cY+\cX\otimes ( V_1\cap\dots \cap V_n)\subset U\otimes \cY+x_1\otimes V_1+\cdots+ x_n\otimes V_n\subset E
\]
so the map $\cX\otimesshriek \cY\to \cX\vecotimes \cY$ is continuous. The map in the opposite direction is obviously continuous, so they are isomorphic.

The claim when $\cY$ is discrete is similar. 
\end{proof}

\begin{rem} If both $\cX$ and $\cY$ are linearly compact, or both are discrete, then all three tensor products $\cX\otimesshriek \cY$, $\cX\vecotimes \cY$ and $\cX\otimesstar \cY$ coincide. Similarly, if $\cX$ is linearly compact and discrete (i.e. finite dimensional), then all three tensor products coincide.
\end{rem}

\begin{rem} Even if $\cX$ and $\cY$ are first countable, it may not be the case that $\cX\vecotimes \cY$ is first countable. For example, if $\cX=\bigoplus_{i\in \Z} \bF$ is given the discrete topology and $\cY=\bF\llsquare U\rrsquare $ (equipped with the $U$-adic topology) then it is not hard to see that $\cX\vecotimes \cY$ is not first countable. See \cite{Positelski-Linear}*{Example~13.1 and Lemma~7.4}.
\end{rem}

Morphisms may also be tensored. We state the following well-known result:

\begin{lem}
\label{lem:tensor-morphisms}
All three tensor product operations $\otimes^!$, $\otimes^*$ and $\vecotimes$ are functorial with respect to morphisms, i.e., if $f$ and $g$ are continuous morphisms, then $f\otimes^\circ g$ is also continuous for $\circ\in \{!,*,\to\}$. 
\end{lem}

See  \cite{Positelski-Linear}*{Lemma~12.3} for a detailed exposition.

\subsection{The tree topology}
\label{sec:tree-completions}
 In this section, we introduce a topology on a tensor product of linear topological spaces indexed by vertices of certain directed trees.
   We use this notion when considering split Alexander modules in Section~\ref{sec:split-modules}.

 We say that a tree $\Gamma$ is \emph{strongly directed} if each edge is given an orientation, and each vertex has at most one outgoing edge.  We write $v>v'$ if there is an oriented sequence of edges from $v$ to $v'$. A strongly directed tree has a unique minimal vertex, which we refer to as the \emph{root}.
 
  If $(\cX_v)_{v\in V(\Gamma)}$ is a collection of linear topological spaces, we will describe a linear topology on the tensor product
\[
\cX_\Gamma:=\bigotimes_{v\in V(\Gamma)} \cX_v.
\]
The construction works more generally when the spaces $\cX_v$ are equipped with commuting actions of rings $R_e$ for each edge $e$ adjacent to $v$. (For our purposes, $R_e$ will be an idempotent ring of a link algebra).

We describe the topology after introducing some notation. We consider a set of symbols $\{p,\cO,X\}$. We call a function
\[
s\colon V(\Gamma)\to \{p, \cO, X\}
\]
 an \emph{admissible labeling} if it satisfies the following:
\begin{enumerate}[ label=($s$-\arabic*), ref=$s$-\arabic*]
\item $s$ is not constantly $p$. 
\item If $s(v)=X$, then $s(v')=X$ for all $v'< v$.
\item If $s(v)=p$, then $s(v')=p$ for all $v'>v$.
\item If $s(v)=\cO$, then $s(v')=p$ for all $v'>v$, and $s(v')=X$ for all $v'<v$.
\item If $s(v)=X$, then there is a vertex $v'$ with an edge pointing to $v$ such that $s(v')\in \{\cO,X\}$.
\end{enumerate}
See Figure~\ref{fig:33} for an example of an admissible labeling. Note that the last axiom prohibits any maximal leaf from being labeled by $X$. Furthermore, an admissible labeling must label at least one vertex with $\cO$. 

\begin{figure}[ht]
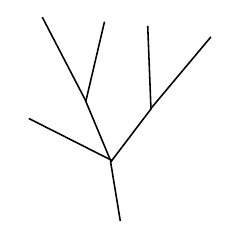
\caption{An admissible labeling of a strongly directed tree.}
\label{fig:33}
\end{figure}

Suppose that $s$ is an admissible labeling of $\Gamma$. We define a \emph{point-enhancement} $\ve{p}$ of $s$ to be a 
choice of element $\ve{p}(v)\in \cX_v$ for each $v\in s^{-1}(p)$. We say an \emph{open-enhancement} $\cU$ of $s$ is a choice of open subspaces $\cU(v)\subset \cX_v$ for each $v\in s^{-1}(\cO)$. Given point and open-enhancements, $\ve{p}$ and $\cU$, of $s$, there is a subspace $\cW(\cU,\ve{p})\subset \cX_{\Gamma}$, obtained by tensoring the spans of $\ve{p}(v)$ from $\ve{p}$, with the open subspaces $\cU(v)$, as well as $\cX_v$ ranging over $v\in s^{-1}(X)$.

We now define the \emph{tree topology} on $\cX_{\Gamma}$, when $\Gamma$ is a strongly directed tree:

\begin{define} Suppose that $\Gamma$ is a strongly directed tree. We say a subspace $E\subset \cX_{\Gamma}$ is open if and only if for each admissible labeling $s$ of $\Gamma$ and each point-enhancement $\ve{p}$ of $s$, there is a open-enhancement $\cU_{\ve{p}}$ of $s$ so that
\[
\cW(\cU_{\ve{p}},\ve{p})\subset E. 
\]
\end{define}

\begin{example} If $\Gamma$ is a linearly ordered graph with vertices $v_n>v_{n-1}>\cdots >v_1$, and $\cX_{v_i}$ are spaces, then 
\[
\cX_{\Gamma}=\cX_{v_n} \vecotimes \cdots \vecotimes \cX_{v_1}.
\]
\end{example}

\begin{rem}\label{rem:graphs-subtle} We warn the reader that there are some related (but subtly non-isomorphic) topologies that one can define on these tensor products. As an example, consider a graph $\Gamma$ with a central root vertex $v_0$, and two rays pointing into $v_0$, labeled by vertices $w_1,\dots, w_n$ and $u_1,\dots, u_m$.  We designate $v_0$ is minimal, and $w_n$ and $u_m$ are maximal. See below:

\begin{center}
\begingroup%
  \makeatletter%
  \providecommand\color[2][]{%
    \errmessage{(Inkscape) Color is used for the text in Inkscape, but the package 'color.sty' is not loaded}%
    \renewcommand\color[2][]{}%
  }%
  \providecommand\transparent[1]{%
    \errmessage{(Inkscape) Transparency is used (non-zero) for the text in Inkscape, but the package 'transparent.sty' is not loaded}%
    \renewcommand\transparent[1]{}%
  }%
  \providecommand\rotatebox[2]{#2}%
  \newcommand*\fsize{\dimexpr\f@size pt\relax}%
  \newcommand*\lineheight[1]{\fontsize{\fsize}{#1\fsize}\selectfont}%
  \ifx\svgwidth\undefined%
    \setlength{\unitlength}{140.47412999bp}%
    \ifx\svgscale\undefined%
      \relax%
    \else%
      \setlength{\unitlength}{\unitlength * \real{\svgscale}}%
    \fi%
  \else%
    \setlength{\unitlength}{\svgwidth}%
  \fi%
  \global\let\svgwidth\undefined%
  \global\let\svgscale\undefined%
  \makeatother%
  \begin{picture}(1,0.19999389)%
    \lineheight{1}%
    \setlength\tabcolsep{0pt}%
    \put(0.05559539,0.16914066){\makebox(0,0)[t]{\lineheight{1.25}\smash{\begin{tabular}[t]{c}$w_n$\end{tabular}}}}%
    \put(0.73558181,0.16919116){\makebox(0,0)[t]{\lineheight{1.25}\smash{\begin{tabular}[t]{c}$w_1$\end{tabular}}}}%
    \put(0.04785686,0.01517827){\makebox(0,0)[t]{\lineheight{1.25}\smash{\begin{tabular}[t]{c}$u_m$\end{tabular}}}}%
    \put(0.7355818,0.00567081){\makebox(0,0)[t]{\lineheight{1.25}\smash{\begin{tabular}[t]{c}$u_1$\end{tabular}}}}%
    \put(0.95734316,0.0836283){\makebox(0,0)[t]{\lineheight{1.25}\smash{\begin{tabular}[t]{c}$v_0$\end{tabular}}}}%
    \put(0,0){\includegraphics[width=\unitlength,page=1]{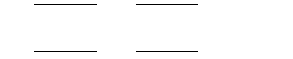}}%
    \put(0.39379995,0.16519111){\makebox(0,0)[t]{\lineheight{1.25}\smash{\begin{tabular}[t]{c}$\cdots$\end{tabular}}}}%
    \put(0.39379995,0.00501927){\makebox(0,0)[t]{\lineheight{1.25}\smash{\begin{tabular}[t]{c}$\cdots$\end{tabular}}}}%
    \put(0,0){\includegraphics[width=\unitlength,page=2]{fig35.pdf}}%
  \end{picture}%
\endgroup%

\end{center}

 Note that the topology
\[
\cY:=\left((\cX_{w_1}\vecotimes \cdots \vecotimes \cX_{w_n})\otimes^* (\cX_{u_1}\vecotimes \cdots \vecotimes \cX_{u_m})\right)\vecotimes \cX_{v_0}
\]
is similar to the tree topology $\cX_{\Gamma}$, but is not isomorphic in general. The open subspaces of $\cY$ may also be described in terms of admissible labelings of $\Gamma$, however the description has some subtle differences. For example, given an admissible labeling $s$ with point-enhancements $\ve{p}$, the open subspaces in an open-enhancement $\cU_{\ve{p}}$ for $\cX_{\Gamma}$ may vary as we change $\ve{p}$. For $\cY$, the open set at a vertex labeled $\cO$ can only depend on the values of $\ve{p}$ for vertices above that vertex. On the other hand, the map
\[
\cX_{\Gamma}\to \cY
\]
is continuous. Compare Proposition~\ref{prop:contraction} and Remark~\ref{ref:non-associative}, below.
\end{rem}

\begin{rem}\label{rem:small-star}
 Suppose that $\Gamma$ is a star with a minimal root $v_0$, and $n$ vertices $w_1,\dots, w_n$, each connected to $v_0$ (and no additional vertices).
 
 \begin{center}
\begingroup%
  \makeatletter%
  \providecommand\color[2][]{%
    \errmessage{(Inkscape) Color is used for the text in Inkscape, but the package 'color.sty' is not loaded}%
    \renewcommand\color[2][]{}%
  }%
  \providecommand\transparent[1]{%
    \errmessage{(Inkscape) Transparency is used (non-zero) for the text in Inkscape, but the package 'transparent.sty' is not loaded}%
    \renewcommand\transparent[1]{}%
  }%
  \providecommand\rotatebox[2]{#2}%
  \newcommand*\fsize{\dimexpr\f@size pt\relax}%
  \newcommand*\lineheight[1]{\fontsize{\fsize}{#1\fsize}\selectfont}%
  \ifx\svgwidth\undefined%
    \setlength{\unitlength}{68.4160047bp}%
    \ifx\svgscale\undefined%
      \relax%
    \else%
      \setlength{\unitlength}{\unitlength * \real{\svgscale}}%
    \fi%
  \else%
    \setlength{\unitlength}{\svgwidth}%
  \fi%
  \global\let\svgwidth\undefined%
  \global\let\svgscale\undefined%
  \makeatother%
  \begin{picture}(1,0.39492257)%
    \lineheight{1}%
    \setlength\tabcolsep{0pt}%
    \put(0.90587805,0.33450286){\makebox(0,0)[t]{\lineheight{1.25}\smash{\begin{tabular}[t]{c}$w_n$\end{tabular}}}}%
    \put(0.09383648,0.33450286){\makebox(0,0)[t]{\lineheight{1.25}\smash{\begin{tabular}[t]{c}$w_1$\end{tabular}}}}%
    \put(0.5056969,0.01030574){\makebox(0,0)[t]{\lineheight{1.25}\smash{\begin{tabular}[t]{c}$v_0$\end{tabular}}}}%
    \put(0.50127048,0.34993131){\makebox(0,0)[t]{\lineheight{1.25}\smash{\begin{tabular}[t]{c}$\cdots$\end{tabular}}}}%
    \put(0,0){\includegraphics[width=\unitlength,page=1]{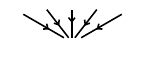}}%
  \end{picture}%
\endgroup%

 \end{center}
 
  If $\cX_{v_0},\cX_{w_1},\dots, \cX_{w_n}$ are spaces indexed by the vertices, then
\[
\cX_{\Gamma}=(\cX_{w_1}\otimes^*\cdots \otimes^* \cX_{w_n})\vecotimes \cX_{v_0}. 
\]
\end{rem}

\begin{prop}\label{prop:contraction}
 Suppose that $\Gamma$ is a strongly directed tree and $\Gamma_0\subset \Gamma$ is a connected subtree.  Let $(\cX_v)_{v\in V(\Gamma)}$ be a collections of spaces, indexed by vertices of $\Gamma$ and let $\cY$ be another linear topological space. Suppose that 
\[
f\colon \cX_{\Gamma_0}\to \cY
\]
is a continuous map. Let $\Gamma'=\Gamma/\Gamma_0$ be the strongly directed tree obtained by contracting all vertices of $\Gamma_0$ to a single vertex, denoted $[\Gamma_0]$. For $v\in V(\Gamma')$, write $\cY_v$ for $\cX_v$ if $v\in V(\Gamma)\setminus V(\Gamma_0)$, and $\cY$ if $v=[\Gamma_0]$.
Then the map 
\[
f\otimes \id\colon \cX_{\Gamma}\to \cY_{\Gamma'}
\]
is continuous.
\end{prop}

See Figure~\ref{fig:47} for a schematic.

\begin{figure}[H]
\begingroup%
  \makeatletter%
  \providecommand\color[2][]{%
    \errmessage{(Inkscape) Color is used for the text in Inkscape, but the package 'color.sty' is not loaded}%
    \renewcommand\color[2][]{}%
  }%
  \providecommand\transparent[1]{%
    \errmessage{(Inkscape) Transparency is used (non-zero) for the text in Inkscape, but the package 'transparent.sty' is not loaded}%
    \renewcommand\transparent[1]{}%
  }%
  \providecommand\rotatebox[2]{#2}%
  \newcommand*\fsize{\dimexpr\f@size pt\relax}%
  \newcommand*\lineheight[1]{\fontsize{\fsize}{#1\fsize}\selectfont}%
  \ifx\svgwidth\undefined%
    \setlength{\unitlength}{166.23561228bp}%
    \ifx\svgscale\undefined%
      \relax%
    \else%
      \setlength{\unitlength}{\unitlength * \real{\svgscale}}%
    \fi%
  \else%
    \setlength{\unitlength}{\svgwidth}%
  \fi%
  \global\let\svgwidth\undefined%
  \global\let\svgscale\undefined%
  \makeatother%
  \begin{picture}(1,0.71827364)%
    \lineheight{1}%
    \setlength\tabcolsep{0pt}%
    \put(0,0){\includegraphics[width=\unitlength,page=1]{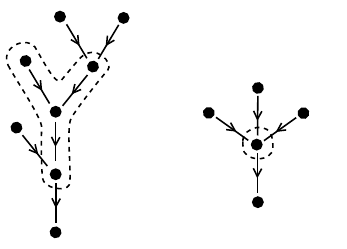}}%
    \put(0.21968103,0.28542232){\makebox(0,0)[lt]{\lineheight{1.25}\smash{\begin{tabular}[t]{l}$\Gamma_0$\end{tabular}}}}%
    \put(0.80051838,0.2678026){\makebox(0,0)[lt]{\lineheight{1.25}\smash{\begin{tabular}[t]{l}$[\Gamma_0]$\end{tabular}}}}%
  \end{picture}%
\endgroup%

\caption{The operation considered in Proposition~\ref{prop:contraction}. On the left is $\Gamma$ and on the right $\Gamma'$. The regions enclosed by dashes indicate $\Gamma_0$ and $[\Gamma_0]$.}
\label{fig:47}
\end{figure}

\begin{proof} We let $\cV'\subset \cY_{\Gamma'}$ be open. Let $s$ be an admissible labeling of $\Gamma$ and $\ve{p}$ a point-enhancement of $s$. Our goal is to construct an open-enhancement $\cU_{\ve{p}}$ of $s$ so that 
\[
(f\otimes \id)(\cW(\cU_{\ve{p}},\ve{p}))\subset \cV'.
\]
Our construction of $\cU_{\ve{p}}$ varies depending on $s$. 

We partition the admissible labelings $s$ of $\Gamma$ as follows:
\begin{enumerate}
\item ($p$-type) $s|_{\Gamma_0}\equiv p$.
\item ($\cO$-type) $s|_{\Gamma_0}$ is not constantly $p$, and all edges pointing into $\Gamma_0$ take value $p$. 
\item ($X$-type)  There is at least one edge pointing into $\Gamma_0$ which takes value $\cO$ or $X$. 
\end{enumerate}

If $s$ is an admissible labeling, then we define a labeling $s'$ of $V(\Gamma')$ as follows. For $v\neq [\Gamma_0]$, we can view $v$ as a vertex of both $\Gamma$ and $\Gamma'$. We set $s'(v)=s(v)$. If $v=[\Gamma_0]$, we define $s'(v)$ to be $p$ if $s$ is $p$-type, $\cO$ if $s$ is $\cO$-type, and $X$ if $s$ is $X$-type.

 We make the following claims:
\begin{enumerate}
\item If $s$ is an admissible labeling of $\Gamma$ then $s'$ is an admissible labeling of $\Gamma'$.
\item If $s$ is $\cO$-type, then $s|_{\Gamma_0}$ is an admissible labeling. 
\end{enumerate}
We leave it to the reader to verify both of the above two claims, as they are straightforward.

We now describe $\cU_{\ve{p}}$. The construction depends on whether $s$ is $p$, $\cO$ or $X$ type. Suppose first that $s$ is $p$-type. We let $s'$ be the admissible labeling of $\Gamma'$ constructed above, and we define a point-enhancement $\ve{p}'$ of $s'$ by using the points from $\ve{p}$ for vertices in $V(\Gamma)\setminus V(\Gamma_0)$, and using $f(\ve{p}(v_1)\otimes \cdots \otimes \ve{p}(v_n))$ for the vertex $[\Gamma_0]$, where $V(\Gamma_0)=\{v_1,\dots, v_n\}$.
Since $\cV'$ is open, there is an open-enhancement $\cU_{\ve{p}'}$ of $s'$ so that 
\begin{equation}
\cW(\cU_{\ve{p}'},\ve{p}')\subset \cV'.\label{eq:open-enhancement-case-p}
\end{equation}
 We can use the same open subspaces from $\cU_{\ve{p}'}$ to construct an open-enhancement $\cU_{\ve{p}}$ of $s$.  Equation~\eqref{eq:open-enhancement-case-p} is equivalent to the statement that
 \[
 (f\otimes \id)(\cW(\cU_{\ve{p}}, \ve{p}))\subset \cV'.
 \]
 
 The construction of $\cU_{\ve{p}}$ when $s$ is $X$-type is similar. In this case, given a point-enhancement $\ve{p}$ of $s$, we define a point-enhancement $\ve{p}'$ of $s'$ by using the points from $\ve{p}$ for the vertices in $V(\Gamma')\setminus \{[\Gamma_0]\}=V(\Gamma)\setminus V(\Gamma_0)$. Since $\cV'$ is open,  there is an open-enhancement $\cU_{\ve{p}'}$ of $s'$ so that
 \[
 \cW(\cU_{\ve{p}'}, \ve{p}')\subset \cV'.
 \]
 We construct an open-enhancement of $s$ by using the subspaces from $\cU_{\ve{p}'}$ for $v\not \in V(\Gamma_0)$, and using the entire subspaces $\cX_v$ for $v\in V(\Gamma_0)$ with $s(v)=\cO$. Since $s'([\Gamma_0])=X$, we have that
 \[
(f\otimes \id) (\cW(\cU_{\ve{p}}, \ve{p}))\subset \cW(\cU_{\ve{p}'}, \ve{p}')
 \]
 which is contained in $\cV'$ by construction.
 
 Finally, we consider the case that $s$ is $\cO$-type (or equivalently $s'([\Gamma_0])=\cO$). We let $\ve{p}$ be a point-enhancement of $s$. We let $\ve{p}'$ be the point-enhancement of $s'$ obtained by restricting $\ve{p}$ to the vertices of $V(\Gamma')\setminus \{[\Gamma_0]\}$. Since $\cV'$ is open, there is an open-enhancement $\cU_{\ve{p}'}$ of $s'$ so that
 \[
 \cW(\cU_{\ps'}, \ve{p}')\subset \cV'.
 \]
 Let $U'_0\subset \cY$ denote the open subspace $\cU_{\ps'}([\Gamma_0])$. Since $f$ is continuous, there is an open subspace $U_0\subset \cX_{\Gamma_0}$ such that $f(U_0)\subset U'_0$.
 
Write $s_0$ for $s|_{V(\Gamma_0)}$. As described earlier, $s_0$ is an admissible labeling. Let $\ve{p}_0$ denote the restriction of $\ve{p}$ to $\Gamma_0$. Since $U_0$ is open, there is an open-enhancement $\cU_{0}$ of $s_0$ so that 
\[
\cW(\cU_{0}, \ve{p}_0)\subset U_0.
\]

We now define the open-enhancement $\cU_{\ve{p}}$ of $s$ as follows. For vertices $w\in V(\Gamma)\setminus V(\Gamma_0)$ with $s(w)=\cO$, we set $\cU_{\ps}(w)=\cU_{\ps'}(w)$. For the vertices $w\in V(\Gamma_0)$ with $s(w)=\cO$, we set $\cU_{\ve{p}}(w)=\cU_0(w)$. By construction
\[
(f\otimes \id)(\cW(\cU_{\ve{p}},\ve{p}))\subset \cW(\cU_{\ve{p}'}, \ve{p}')\subset \cV'.
\]

It follows that $(f\otimes \id)^{-1}(\cV')$ is an open subspace, so the proof is complete.
 \end{proof}

\begin{rem} Note that we can extend the above result in several ways.
\begin{enumerate}
\item If $(\cX_{v})_{v\in V(\Gamma)}$ is a family of linear topological spaces with $\Gamma_0\subset \Gamma$, and if $f\colon \ve{\cX}_{\Gamma_0}\to \ve{\cY}$ is a continuous map between completions, then there is a well-defined, continuous map 
\[
f\otimes \id \colon \ve{\cX}_{\Gamma}\to \ve{\cY}_{\Gamma'}.
\]
To see this, note that it suffices to define a map from $\cX_{\Gamma}$ to $\ve{\cY}_{\Gamma'}$. In this case, observe that if $x\in \cX_{\Gamma_0}$, then by definition $f(x)\in \ve{\cY}$ is the limit of a Cauchy net $\{y_{\b}\}_{\b\in B}$ where $y_\b\in \cY$. If $\xs$ is in the tensor product of the $\cX_v$ for $v\not \in V(\Gamma_0)$, then we claim that $\{ y_{\b}\otimes \xs\}_{\b\in B}$ is a Cauchy net in $\cY_{\Gamma'}$. It suffices to prove this when $\xs$ is an elementary tensor. Let $s'$ be the admissible labeling on $\Gamma/\Gamma_0$ which assigns $\cO$ to $[\Gamma_0]$, assigns $X$ to each vertex below $[\Gamma_0]$, and assigns $p$ to all other vertices. Let $\ve{p}'$ be the point-enhancement of $s'$ which assigns to each vertex $w$ with $s(w')=p$ the corresponding factor from $\xs$. If  $\cV'\subset \cY_{\Gamma'}$ is an open subspace, then by definition there is an open-enhancement $\cU_{\ve{p}'}$ so that
\[
\cW(\cU_{\ve{p}'},\ve{p}')\subset \cV'.
\]
The open-enhancement $\cU_{\ve{p}'}$ consists of a single open subspace $U_{0}\subset \cY$, since $[\Gamma_0]$ is the only $\cO$-labeled vertex. For all sufficiently large $\b$, we have $y_{\b'}-y_{\b''}\in U_{0}$ whenever $\b',\b''\ge \b$. It follows that $ y_{\b'}\otimes \xs-y_{\b''}\otimes \xs\in \cW(\cU_{\ve{p}'}, \ve{p}')$ so $\{y_{\b}\otimes \xs\}_{\b\in B}$ is a Cauchy sequence in $\cY$. We therefore define $(f\otimes \id)(x\otimes \xs)$ to be the limit of the Cauchy net $\{y_\b\otimes \xs\}_{\b\in B}$. A similar argument to the previous paragraph shows that this definition of $(f\otimes \id)$ well-defined (independent of the choice of net $\{y_{\b}\}$). A straightforward extension of Proposition~\ref{prop:contraction} shows that $(f\otimes \id)\colon \ve{\cX}_{\Gamma}\to \ve{\cY}_{\Gamma'}$ is continuous.
\item It follows from the previous argument that the completion $\cX_{\Gamma}$ is unchanged if we replace one $\cX_v$ in our family of spaces by its completion $\ve{ \cX}_v$. To see this, let $\cX_\Gamma'$ denote the tree complex where one $\cX_v$ is replaced by $\ve{\cX}_v$. There is a natural map $\cX_{\Gamma}\to \cX_{\Gamma}'$. Applying the previous remark, there is a natural map in the opposite direction, because the identity map gives a continuous map from the completion of $\ve{\cX}_v$ to the completion of $\cX_v$. These maps are easily seen to be inverses.
\end{enumerate}
\end{rem}

We highlight an important application of Proposition~\ref{prop:contraction}:
\begin{rem}
\label{ref:non-associative}
If $\Gamma$ is a strongly directed graph and $(\cX_{v})_{v\in V(\Gamma)}$ is a family of spaces, we can form another collection of spaces by ``adding parentheses'', in the following sense. Let $\Gamma_0\subset \Gamma$ be a tree, and consider the tree $\Gamma/\Gamma_0$ obtained by collapsing to vertices and edged of $\Gamma_0$ to a single vertex, denoted $[\Gamma_0]$. We can define a collection of spaces indexed by $\Gamma/\Gamma_0$, by setting $\cY_v=\cX_v$ if $v\not \in V(\Gamma_0)$ and $\cY_{[\Gamma_0]}=\cX_{\Gamma_0}$. Proposition~\ref{prop:contraction} shows that the natural map
\[
\bI\colon \cX_{\Gamma}\to \cY_{\Gamma/\Gamma_0}
\] 
is continuous.
\end{rem}

In some cases, there is an isomorphism $\cX_{\Gamma}\iso \cY_{\Gamma/\Gamma_0}$. We focus on the following simple case:

\begin{lem}
\label{lem:retopologize-bridge}
Let $\Gamma$ be a strongly directed tree and $(\cX_v)_{v\in V(\Gamma)}$ a family of spaces and let $v_0$ and $v_1$ be edges of $\Gamma$ which are connected by an edge. Assume that $v_1>v_0$ and that $v_0$ has no other incoming edges. See Figure~\ref{fig:49}. Let $\Gamma_0$ consist of subgraph with vertices $v_0$ and $v_1$ and the edge connecting them. Let $\cY_{\Gamma/\Gamma_0}$ be the space above. If $\cX_{v_1}$ is linearly compact, then there is an isomorphism
\[
\cX_{\Gamma}\iso \cY_{\Gamma/\Gamma_0}.
\]
\end{lem}
\begin{figure}[h]
\begingroup%
  \makeatletter%
  \providecommand\color[2][]{%
    \errmessage{(Inkscape) Color is used for the text in Inkscape, but the package 'color.sty' is not loaded}%
    \renewcommand\color[2][]{}%
  }%
  \providecommand\transparent[1]{%
    \errmessage{(Inkscape) Transparency is used (non-zero) for the text in Inkscape, but the package 'transparent.sty' is not loaded}%
    \renewcommand\transparent[1]{}%
  }%
  \providecommand\rotatebox[2]{#2}%
  \newcommand*\fsize{\dimexpr\f@size pt\relax}%
  \newcommand*\lineheight[1]{\fontsize{\fsize}{#1\fsize}\selectfont}%
  \ifx\svgwidth\undefined%
    \setlength{\unitlength}{88.09987887bp}%
    \ifx\svgscale\undefined%
      \relax%
    \else%
      \setlength{\unitlength}{\unitlength * \real{\svgscale}}%
    \fi%
  \else%
    \setlength{\unitlength}{\svgwidth}%
  \fi%
  \global\let\svgwidth\undefined%
  \global\let\svgscale\undefined%
  \makeatother%
  \begin{picture}(1,1.19175268)%
    \lineheight{1}%
    \setlength\tabcolsep{0pt}%
    \put(0,0){\includegraphics[width=\unitlength,page=1]{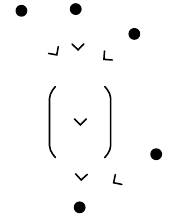}}%
    \put(0.44196041,0.68593068){\color[rgb]{0,0,0}\makebox(0,0)[t]{\lineheight{1.25}\smash{\begin{tabular}[t]{c}$v_1$\end{tabular}}}}%
    \put(0.45286658,0.37312358){\color[rgb]{0,0,0}\makebox(0,0)[t]{\lineheight{1.25}\smash{\begin{tabular}[t]{c}$v_0$\end{tabular}}}}%
    \put(0,0){\includegraphics[width=\unitlength,page=2]{fig49.pdf}}%
  \end{picture}%
\endgroup%

\caption{A tree from Lemma~\ref{lem:retopologize-bridge}. Here, $\Gamma_0$ denotes the parenthesized subtree.}
\label{fig:49}
\end{figure}

\begin{proof} Remark~\ref{ref:non-associative} shows that the natural map
\[
\bI \colon \cX_{\Gamma}\to \cY_{\Gamma/\Gamma_0}
\]
is continuous.  Therefore it suffices to show that the natural map
\[
\bI\colon \cY_{\Gamma/\Gamma_0}\to \cX_{\Gamma}
\]
is continuous. Equivalently, it suffices to show that each open subspace of $\cX_{\Gamma}$ contains an open subspace of $\cY_{\Gamma/\Gamma_0}$. Let $\cV\subset \cX_{\Gamma}$ be an open subspace.

We consider admissible labelings $s$ on $\Gamma/\Gamma_0$. We wish to show that for each admissible labeling $s'$ of $\Gamma/\Gamma_0$ and each point-enhancement $\ve{p}'$ of $s'$, there is an open-enhancement $\cU_{\ve{p}'}$ of $s'$ so that
\[
\cW(\cU_{\ve{p}'},\ve{p}')\subset \cV.
\]
We break the proof into three cases, depending on whether $s([\Gamma_0])$ is $p$, $\cO$ or $X$.

 If $s([\Gamma_0])=X$, then we define a labeling $s$ on $\Gamma$ by setting $s(v_0)=s(v_1)=X$ and setting $s(w)=s'(w)$ for other vertices. If $\ve{p}'$ is a point-enhancement of $s'$, then we can view $\ve{p}'$ also as a point-enhancement of $s$, for which we write $\ve{p}$. Since $\cV$ is open, there is an open-enhancement $\cU_{\ve{p}}$ so that
\[
\cW(\cU_{\ve{p}},\ve{p})\subset \cV.
\]
Since $v_0$ and $v_1$ are labeled $X$ by $s'$, we use the same open subspaces as in $\cU_{\ve{p}}$ to form an open-enhancement $\cU_{\ve{p}'}$ of $s'$. We observe that
\[
\cW(\cU_{\ve{p}'},\ve{p}')=\cW(\cU_{\ve{p}},\ve{p})\subset \cV.
\]

Next, we consider the case that $s'([\Gamma_0])=p$. In this case, we form an admissible labeling $s$ on $\Gamma$ by setting $s(v_0)=s(v_1)=p$, and defining $s$ and $s'$ to coincide at other vertices. We let $\ve{p}'$ be a point-enhancement of $s'$. There is a point assigned assigned to $[\Gamma_0]$, which consists of an element of the tensor product $\cX_{v_0}\otimes \cX_{v_1}$. We can write this as a finite sum of elementary tensors $\sum_{i=1}^n p_i\otimes q_i\in \cX_{v_0}\otimes \cX_{v_1}$. For each elementary tensor $p_i\otimes q_i$, we obtain a point-enhancement $\ve{p}_i$ of $\Gamma$ by using the elements of $\ve{p}'$ for $p$-labeled vertices other than $v_0$ and $v_1$, and by using $p_i$ for $v_0$ and $q_i$ for $v_1$. Since $\cV$ is open, there is an open-enhancement $\cU_{\ve{p}_i}$ of $s$ so that
\[
\cW(\cU_{\ve{p}_i},\ve{p}_i)\subset \cV.
\]
We construct an open-enhancement $\cU_{\ve{p}'}$ of $s'$ as follows. For a vertex $v\in V(\Gamma)\setminus \{v_0,v_1\}$ we set $\cU_{\ve{p}'}(v)=\cU_{\ve{p}_1}(v)\cap \cdots \cap \cU_{\ve{p}_n}(v)$.  We observe that
\[
\cW(\cU_{\ve{p}'},\ve{p}')\subset \cW(\cU_{\ve{p}_1},\ve{p}_1)+\cdots +\cW(\cU_{\ve{p}_n},\ve{p}_n)\subset \cV.
\]

Finally, we consider the case that $s'([\Gamma_0])=\cO$. If $s'([\Gamma_0])=\cO$, then there are two labelings to consider on $\Gamma$. We denote these by $s_{p,\cO}$ and $s_{\cO,X}$. The labeling $s_{p,\cO}$ has $s_{p,\cO}(v_1)=p$ and $s_{p,\cO}(v_0)=\cO$. The labeling $s_{\cO,X}$ has $s_{\cO,X}(v_1)=\cO$ and $s_{\cO,X}(v_0)=X$. Both $s_{p,\cO}$ and $s_{\cO,X}$ agree with $s'$ for other vertices. It is straightforward to see that $s_{p,\cO}$ and $s_{\cO,X}$ are admissible.

Since the $s'$ and $s_{\cO,X}$ have the same $p$-labeled vertices, the collection $\ve{p}'$ induces a point-enhancement $\ve{p}$ (using the same points) of $s_{\cO,X}$. Since $\cV$ is open, there is an open-enhancement $\cU_{\ve{p}}$ of $s_{\cO,X}$ so that
\[
\cW(\cU_{\ve{p}},\ve{p})\subset \cV.
\]
Let $U_{v_1}$ be the open set assigned to $v_1$.

Since $\cX_{v_1}$ is linearly compact, by definition there are elements $p_1,\dots, p_n\in \cX_{v_1}$ so that
\begin{equation}
\Span_{\bF}(p_1,\dots, p_n)+U_{v_1}=\cX_{v_1}.\label{eq:complement-linear-compact}
\end{equation}

We now construct point-enhancements $\ve{p}_1,\dots, \ve{p}_n$ of $s_{p,\cO}$. We use the points from $\ve{p}'$ for the $p$-labeled vertices other than $v_0$, and we use $p_i$ for $v_0$. By definition, there are open-enhancements $\cU_{\ve{p}_1},\dots, \cU_{\ve{p}_n}$ so that
\[
\cW(\cU_{\ve{p}_i},\ve{p}_i)\subset \cV.
\]
Write $U_{v_0,i}=\cU_{\ve{p}_i}(v_0)$.

We now define an open-enhancement $\cU_{\ve{p}'}$ of $s'$. For a vertex $v\neq [\Gamma_0]$ assigned $\cO$, we use the open set
\[
\cU_{\ve{p}}(v)\cap \cU_{\ve{p}_1}(v)\cap \cdots \cap \cU_{\ve{p}_n}(v).
\]

For the vertex $[\Gamma_0]$, we use the open set
\[
U_{v_1}\otimes \cX_{v_2}+\cX_{v_1}\otimes ( U_{v_0,1}\cap \cdots \cap U_{v_0,n}),
\]
which is open in $\cX_{\Gamma_0}\iso \cX_{v_1}\vecotimes \cX_{v_0}$ by Lemma~\ref{lem:lin-comact-discrete-change-completion} because $\cX_{v_1}$ is linearly compact.

We observe that
\begin{equation}
\cW(\cU_{\ve{p}'},\ve{p}')\subset \cW(\cU_{\ve{p}},\ve{p})+\cW(\cU_{\ve{p}_1},\ve{p}_1)+\cdots +\cW(\cU_{\ve{p}_n},\ve{p}_n)\subset \cV.\label{eq:def-open-enhancement-lin-compact-lemma}
\end{equation}
The above equation is proven as follows. For vertices $V(\Gamma/\Gamma_0)$ which are labeled $p$ by $s'$, we use the same points in all of $\ve{p}'$, $\ve{p}$ and $\ve{p}_i$. For the vertices of $V(\Gamma/\Gamma_0)$ which are labeled $\cO$, other than $[\Gamma_0]$, we use the intersection of the opens from $\cU_{\ve{p}}$, $\cU_{\ve{p}_1},$\dots, $\cU_{\ve{p}_n}$, so these factors are contained in the corresponding factors of each summand in the middle of Equation~\eqref{eq:def-open-enhancement-lin-compact-lemma}. Finally, for $v_0$ and $v_1$, we observe that
\[
U_{v_1}\otimes \cX_{v_2}+\cX_{v_1}\otimes (U_{v_0,1}\cap \cdots \cap U_{v_0,n})\subset U_{v_1}\otimes \cX_{v_2}+p_1\otimes U_{v_0,1}+\cdots p_n\otimes U_{v_0,n}
\]
by Equation~\eqref{eq:complement-linear-compact}. Together, the above implies Equation~\eqref{eq:def-open-enhancement-lin-compact-lemma}. We conclude that $\cV$ is open in $\cX_{\Gamma/\Gamma_0}$, completing the proof. 
\end{proof}

\section{The link surgery formula}

In this section, we give some background on the link surgery formula of Manolescu and Ozsv\'{a}th \cite{MOIntegerSurgery}, and prove some important preliminary results.

\subsection{Statement of the link surgery formula}
\label{sec:statement-of-link-surgery}
In this section, we state Manolescu and Ozsv\'{a}th's link surgery formula. We will describe more detail about the construction  in Section~\ref{sec:basic-systems}.

If $L\subset S^3$ is a link, write $K_1,\dots, K_\ell$ for the components of $L$. Manolescu and Ozsv\'{a}th define the affine lattice over $\Z^\ell$
\[
\bH(L):=\prod_{i=1}^\ell \left( \frac{\lk(K_i,L\setminus K_i)}{2}+\Z\right).
\]

Whenever $M\subset L$, Manolescu and Ozsv\'{a}th define a reduction map
\[
\psi^{ M}\colon \bH(L)\to \bH(L\setminus M)
\]
as follows. If $K_{i_1}\cup \cdots \cup K_{i_j}=L$, then they set
\[
\psi^{M}(\ve{s})=(\psi^M_{i_1}(s_i),\dots, \psi^M_{i_j}(s_j))
\]
where
\[
\psi^{M}_{i}(s_i)=s_i-\frac{\lk(K_i,M)}{2}.
\]
Manolescu and Ozsv\'{a}th also define a version of the map $\psi^{M}$ for oriented sublinks of $L$. See \cite{MOIntegerSurgery}*{Section~3.7}. The above map $\psi^M$ corresponds to the case that all components of $M$ are oriented consistently with $L$. 

\begin{define}
Suppose $\cH=(\Sigma,\as,\bs,\ws,\zs,\ps)$ is a Heegaard link diagram for $(Y,L)$ with free basepoints $\ps$, and link basepoints $\ws\cup \zs$. We say that $\cH$ is \emph{link minimal} if each link component of $L$ has exactly one basepoint from $\ws$, and one basepoint from $\zs$. 
\end{define}

Let $\cH=(\Sigma,\as,\bs,\ws,\zs)$ be a link-minimal Heegaard diagram for an oriented link $L\subset S^3$ with no free-basepoints. If $M\subset L$, we write $\cH^{L\setminus M}$ for the diagram obtained by deleting $z_i\in \zs$ for each component $K_i\subset L$. Note that Manolescu also define a reduction $\cH^{L\setminus \vec{M}}$ when $\vec{M}$ is equipped with an orientation. In this case, we delete $z_i$ for each positively oriented component of $M$, and we delete $w_i$ for each negatively oriented component of $M$. The diagram $\cH^{L\setminus M}$ is a link minimal diagram with $|M|$ free-basepoints.

Recall that if $(\Sigma,\as,\bs,\ws,\zs)$ is a link-minimal Heegaard link diagram for $L$, then there is an $\ell$-component Alexander grading
\[
A=(A_1,\dots, A_\ell)\colon \bT_{\a}\cap \bT_{\b}\to \bH(L).
\]

Suppose $\cH$ is a link minimal Heegaard diagram for $L\subset S^3$ and $\cH$ has no free basepoints. If $\ve{s}\in \bH(L)$, then Manolescu and Ozsv\'{a}th define
\[
\frA^-(\cH^{L\setminus M},\ve{s})
\] 
to be the free  $\bF\llsquare U_1,\dots, U_\ell\rrsquare $-module generated by monomials
\[
U_1^{i_1}\cdots U_{n}^{i_\ell} \cdot \xs
\]
where $\xs\in \bT_{\a}\cap \bT_{\b}$, which satisfy $i_j\ge 0$ for all $j$, and $A_j(\xs)\le s_j+i_j$ whenever $K_j\subset L\setminus M$. Manolescu and Ozsv\'{a}th equip $\frA^-(\cH^{L\setminus M},\ve{s})$ with the differential which counts Maslov index 1 holomorphic disks, weighted by the product of $U_i^{n_{w_i}(\phi)}$. 

We will also occasionally write
\[
A^-(\cH^{L\setminus M},\ve{s})
\]
for the free $\bF[U_1,\dots, U_n]$-module which has the same generators as $\frA^-(\cH^{L\setminus M},\ve{s})$.

Let $\Lambda$ denote an integral framing on $L$. The underlying group of the link surgery complex is defined to be
\[
\cC_{\Lambda}(L)=\bigoplus_{M\subset L} \prod_{\ve{s}\in \bH(L)} \frA^-(\cH^{L\setminus M}, \psi^{M}(\ve{s})).
\]
 The differential $\cD\colon \cC_{\Lambda}(L)\to \cC_{\Lambda}(L)$ decomposes as a sum
 \[
 \cD=\sum_{N\subset L} \sum_{\vec{N} \in \Omega(N)} \Phi^{\vec{N}},
 \]
 where $\Omega(N)$ denotes the set of orientations on $N$.

Given an oriented sublink $\vec{N}\subset L$ (with orientation potentially different than $L$), Manolescu and Ozsv\'{a}th define \[
\Lambda_{L,\vec{N}}\in \Z^\ell\iso H_1(S^3\setminus L)
\]
to be the sum of the longitudes of the negatively oriented components of $\vec{N}$.

Suppose $\vec{N}\subset L$ is an oriented sublink, and that $L_0\subset L$ is a sublink which contains $N$. Write $L_1=L_0\setminus N$. In this case, the summand $\Phi^{\vec{N}}$ of $\cD$ maps   $\frA^-(\cH^{L_0},\psi^{L\setminus L_0}(\ve{s}))$
to $\frA^-(\cH^{L_1},\psi^{L\setminus L_1}(\ve{s}+\Lambda_{L,\vec{N}}))$.

We write $\Phi_{L_0,L_1}^{\vec{N}}$ for the component of $\Phi^{\vec{N}}$ as follows:
\[
\Phi_{L_0,L_1}^{\vec{N}}\colon \prod_{\ve{s}\in \bH(L)}\frA^-(\cH^{L_0}, \psi^{L\setminus L_0}(\ve{s}))\to \prod_{\ve{s}\in \bH(L)}\frA^-(\cH^{L_1}, \psi^{L\setminus L_1}(\ve{s}+\Lambda_{L,\vec{N}})).
\]

Finally, we note that it is often convenient to view $\cC_{\Lambda}(L,\scA)$ as a one dimensional mapping cone, so we establish some notation for this purpose. If $J\subset L$ is a component, and $\nu\in \{0,1\}$, then we write $\cC_{\Lambda}^\nu(L)$ for the subcube consisting of  $\cC_{\veps}\subset \cC_{\Lambda}(L,\scA)$ ranging over $\veps$ with $J$ component equal to $\nu$. We may decompose
\[
\cC_{\Lambda}(L)\iso \Cone\left(
\begin{tikzcd}[column sep=2cm]
\cC_\Lambda^0(L) \ar[r, "F^J+F^{-J}"]&\cC_\Lambda^1(L)
\end{tikzcd}
 \right).
\]
In the above, $F^J$ (resp. $F^{-J}$) denotes the sum of $\Phi^{\vec{N}}$ for every oriented sublink $\vec{N}$ of  $L$ such that $J\subset \vec{N}$ (resp. $-J\subset \vec{N}$).

 We will review the construction of the hypercube maps $\Phi^{\vec{N}}$ in more detail in  Section~\ref{sec:basic-systems}.

\subsection{Gradings and algebraic reduction maps}

In this section, we state several grading formulas which are useful when working with the link surgery formula.

\begin{define}
Suppose that $\cH=(\Sigma,\as,\bs,\ws,\zs,\ps)$ is a link minimal Heegaard diagram, where $\ws\cup \zs$ are link basepoints and $\ps$ are free basepoints. We call a subset $\cW\subset \ws \cup \zs\cup \ps$ a \emph{complete set of basepoints} on $\cH$ if $\ps\subset \cW$ and $\cW$ contains exactly one basepoint from each link component. 
\end{define}

Suppose $\cH$ is a diagram for a link $L$ in $Y$, each of whose components is null-homologous, and $\frs\in \Spin^c(Y)$ is torsion. Given a complete set of basepoints $\cW$ on $\cH$, there is a well defined Maslov grading $\gr_{\cW}$ on $\cCFL(\cH,\frs)$. The grading $\gr_{\cW}$ is induced from the absolute grading of $\CF^-(Y,\cW,\frs)$ by declaring the natural quotient map $\cCFL(\cH,\frs)\to \CF^-(Y,\cW,\frs)$ to be grading preserving. There is also an $\ell$-component Alexander grading $A^{\cW}=(A_1^{\cW},\dots, A_\ell^{\cW})$. Note that a complete set of basepoints also determines an orientation on $L$, via the convention that $L$ intersects the Heegaard surface $\Sigma$ negatively at the link basepoints in $\cW$. In particular, we may equivalently specify the Alexander grading $A^L$ by picking an orientation on $L$.

The variable $\scU_i$ has $\gr_{\cW}$-grading $-2$ if $w_i\in \cW$, and $0$ if $w_i\not\in \cW$. Similarly $\scV_i$ has $\gr_{\cW}$-grading $-2$ if $z_i\in \cW$ and $0$ otherwise. The variable $U_i$ for a free basepoint $p_i\in \ps$ has $\gr_{\cW}$-grading $-2$.

\begin{rem}\label{rem:grading-conventions}
There are two natural conventions for defining the absolute grading for multi-pointed 3-manifolds. We recall that adding a basepoint has the effect on $\widehat{\HF}$ of tensoring with a 2-dimensional vector space $V=\bF\oplus \bF$. See \cite{OSLinks}*{Section~6}. In this paper, we use the convention that the top degree element of $V$ has absolute grading 0. This is different than the TQFT convention of \cite{ZemAbsoluteGradings}, where $V$ is supported in gradings $1/2$ and $-1/2$.
\end{rem}

The following is implicit in the work of Manolescu and Ozsv\'{a}th \cite{MOIntegerSurgery} (see in particular \cite{MOIntegerSurgery}*{Section~12}). In particular, the results are not new to our paper, however the presentation is helpful.

\begin{lem}
\label{lem:grading-changes}
 Let $L=K_1\cup \cdots \cup K_\ell$ denote an $\ell$-component link in a 3-manifold $Y$, such that each component is null-homologous, and suppose $Y$ is equipped with a torsion $\Spin^c$ structure $\frs$. Let $\cH=(\Sigma,\as,\bs,\ws,\zs,\ps)$ be a link minimal Heegaard diagram, where $\ws$ and $\zs$ are link basepoints and $\ps$ are free basepoints. Let $\cC$ denote the link Floer complex $\cCFL(\cH,\frs)$ over $\bF[\scU_1,\dots, \scU_\ell, \scV_1,\dots, \scV_\ell,U_1,\dots, U_n]$, where $\ell=|\ws|=|\zs|$ and $n=|\ps|$. 
 \begin{enumerate} 
 \item\label{gradings-1} If $\cW$ is complete collection of basepoints on $\cH$, and $z_i\in \zs$ but $z_i\not \in \cW$, then the quotient map 
 \[
 \cQ_{\scV_i}\colon \cC\to \cC/(\scV_i-1)
 \]
  is grading preserving with respect to $\gr_{\cW}$.  Similarly, the quotient map
  \[
  \cQ_{\scU_i}\colon  \cC\to \cC/(\scU_i-1),
  \]
  is grading preserving if $w_i\in \ws$ but $w_i\not \in \cW$.
 \item\label{gradings-2} Suppose that $K_j$ is a component of $L$, and let $\cW_0\cup \{w_j\}$ be a complete collection of basepoints, and write $\{w_j,z_j\}=K_j\cap (\ws\cup \zs)$.  Let $L$ and $K_j$ have orientations induced by $\cW_0\cup \{w_j\}$. Then
 \[
\gr_{\cW_0\cup \{w_j\}}-\gr_{\cW_0\cup \{z_j\}}= 2A_j^{\cW_0\cup \{w_j\}}-\lk(L\setminus K_j, K_j).
 \]
 \item\label{gradings-3} Suppose that $\cW$ and $\cW'$ are two complete collections of basepoints which coincide at index $j$. Then
 \[
 A^{\cW}_j=A^{\cW'}_j.
 \]
  \item\label{gradings-4} Let $K_i\subset L$, and orient both $K_i$ and $L$  to intersect $\Sigma$ negatively at $\ws$, and suppose $i\neq j$. Give $\cC$ and $\cC/(\scV_i-1)$ the Alexander grading $A_j$ induced by the basepoints $\ws$ and $\ws\setminus \{w_i\}$. For $j\neq i$,
  \[
 A_j(\cQ_{\scV_i})= -\frac{1}{2}\lk(K_i,K_j).
  \]
 \item \label{gradings-5} Let $K_i\subset L$, and orient $K_i$ and $L$ to intersect $\Sigma$ negatively at $\ws$, and suppose $j\neq i$. Give $\cC$ and $\cC/(\scU_i-1)$ the Alexander grading $A_j$ induced by the basepoints $\ws$ and $\ws\setminus \{w_i\}$, respectively. Then,  
 \[
 A_j(\cQ_{\scU_i})=\frac{1}{2}\lk(K_i,K_j).
 \]
 \end{enumerate}
\end{lem}

\begin{proof} Following \cite{ZemAbsoluteGradings}, the Alexander and Maslov gradings may be defined by representing $Y\setminus N(L)$ as Dehn surgery on a framed link $J$ in the complement of an $\ell$-component unlink in $S^3$. We pick a Heegaard triple $\cT=(\Sigma,\as,\bs_0,\bs,\ws,\zs)$ which represents the 2-handle cobordism $X(J)$. We pick a triangle class $\psi\in \pi_2(\xs,\ys,\zs)$ on $\cT$. In \cite{ZemAbsoluteGradings}*{Section~5.5}, the following formulas for the Maslov and Alexander gradings are proven:
\[
\begin{split}
\gr_{\cW}(\zs)=&\gr_{\cW}(\xs)+\gr_{\cW}(\ys)-\frac{1}{2}(g(\Sigma)+|\cW|-1)-\mu(\psi)+2n_{\cW}(\psi)\\
&+\frac{c_1(\frs_{\cW}(\psi))^2-2\chi(X(J))-3\sigma(X(J))}{4}\label{eq:Maslov-equation-def}\\
A_j^{\cW}(\zs)=&A_j^{\cW}(\xs)+A_j^{\cW}(\ys)+(n_{\cW}-n_{(\ws\cup \zs\cup \ps)\setminus\cW})(\psi)+\frac{\langle c_1(\frs_{\cW}(\psi)), \hat{S}_j\rangle-[\hat{S}]\cdot [\hat{S}_j]}{2}.
\end{split}
\]
In the above, if $W$ is obtained by attaching 2-handles to $S^3\times [0,1]$ in the complement of an unlink with components $u_1\cup \cdots \cup u_\ell$, then $S_j$ is the surface $u_j\times [0,1]$ and $S=S_1+\cdots+S_\ell$. The classes $\hat{S}$ and $\hat{S}_j$ are obtained by capping these surfaces with Seifert surfaces in either end. 

With the above in place, Claim~\eqref{gradings-1} follows because the formula for $\gr_{\cW}$ does not change if we delete a basepoint which is not in $\cW$.

Consider now Claim~\eqref{gradings-2}. An identical argument to \cite{ZemAbsoluteGradings}*{Lemma~3.8} implies that if $\psi$ is a homology class of triangles on the surgery diagram for the above cobordism, then
\begin{equation}
\frs_{\cW_{0}\cup \{w_j\}}(\psi)-\frs_{\cW_0\cup \{z_j\}}(\psi)=\PD[S_j].
\label{eq:difference-Spinc-structures}
\end{equation}

 We may use the formula in Equation~\eqref{eq:Maslov-equation-def} to evaluate $\gr_{\cW_0\cup \{w_j\}}-\gr_{\cW_0\cup \{z_j\}}$. Firstly, we recall that by straightforward computation $\widehat{\HFL}(\as,\bs_0,\ws,\zs,\ps)$ is isomorphic to $(\bF\oplus \bF)^{\otimes (|\ws|+|\ps|-1)}$. This computation can be obtained by using topological invariance of the Heegaard Floer group, and picking a convenient genus 0 Heegaard diagram, so that each $w_i$ is immediately adjacent to $z_i$, and so that the attaching curves come in isotopic pairs. Similarly, $\widehat{\HFL}(\bs_0,\bs,\ws,\zs,\ps)$ is isomorphic to $(\bF\oplus \bF)^{\otimes (g(\Sigma)-|J|+|\ws|+|\ps|-1)}$. Furthermore, for both groups, the non-zero elements of homology decompose into homogeneously graded summands which have the property that
 \begin{equation}
 \gr_{\cW}(\xs)=\gr_{\cW'}(\xs)\quad \text{and} \quad A^{\cW}(\xs)=A^{\cW'}(\xs)=0\label{eq:simple-generators-grading-formulas}
 \end{equation}
 for all complete collections $\cW,\cW'\subset \ws\cup \zs\cup \ps$. Therefore, in our grading formulas, we may assume that the intersection points $\xs$ and $\ys$ satisfy Equation~\eqref{eq:simple-generators-grading-formulas}. Using Equation ~\eqref{eq:difference-Spinc-structures}, we therefore obtain
\[
\begin{split}
&\gr_{\cW_0\cup \{w_j\}}(\zs)-\gr_{\cW_0\cup \{z_j\}}(\zs)
\\
=&\frac{c_1^2(\frs_{\cW_0\cup \{w_j\}}(\psi))-\left(c_1(\frs_{\cW_0\cup \{w_j\}}(\psi)) -2\PD[S_j]\right)^2}{4}-2(n_{z_j}-n_{w_j})(\psi).
\end{split}
\]
We may rearrange the above to see that
\[
\begin{split} &\gr_{\cW_0\cup \{w_j\}}(\zs)-\gr_{\cW_0\cup \{z_j\}}(\zs)
\\=&
\langle c_1(\frs_{\cW_0\cup \{w_j\}}(\psi)), \hat{S}_j\rangle-[\hat{S}_j]\cdot [\hat{S}_j]-2(n_{z_j}-n_{w_j})(\psi)\\
=&\langle c_1(\frs_{\cW_0\cup \{w_j\}}(\psi)), \hat{S}_j\rangle-[\hat{S}]\cdot [\hat{S}_j]-2(n_{z_j}-n_{w_j})(\psi)+[\hat{S} \setminus \hat{S}_j]\cdot [\hat{S}]\\
=&2 A_j^{\cW_0\cup \{w_j\}}(\zs)+[\hat{S}\setminus\hat{S}_j]\cdot [\hat{S}_j]\\
=&2 A_j^{\cW_0\cup \{w_j\}}(\zs)-\lk(L\setminus K_j,K_j).
\end{split}
\]
In the last line, we used the equality $[\hat{S}\setminus \hat{S}_j]\cdot [\hat S_j]=-\lk(L\setminus K_j, K_j)$, which follows since we capped with negative Seifert surfaces of $L$ (and since we are using the outward normal first convention for boundary orientations).

We now consider Claim~\eqref{gradings-3}. We consider the case when $\cW$ and $\cW'$ differ only at a single index $i\neq j$. Assume that $\cW$ contains $w_i$ and that $\cW'$ contains $z_i$. Write $L'$ for $L$ with the orientation of $K_i$ reversed. Computing directly from the definition, shows that
\[
2(A_j^{\cW}-A_j^{\cW'})=\langle c_1(\frs_{\cW}(\psi))-c_1(\frs_{\cW'}(\psi)), \hat{S}_j\rangle -\hat{S} \cdot \hat S_j+\hat S' \cdot \hat S_j.
\]
Here $\hat S'=\hat S-2 \hat S_i$. As before $\frs_{\cW}-\frs_{\cW'}=\PD[S_i]$. Hence, the above equation gives
\[
A_j^{\cW}-A_j^{\cW'}=0 
\]

 Consider claim \eqref{gradings-4} now. For this claim, we apply claim ~\eqref{gradings-2}, with $\cW_0=\cW\setminus \{w_j\}$. Since $z_i\not \in \cW_0\cup \{w_j\}$ and $z_i\not \in \cW_0\cup \{z_j\}$, by part~\eqref{gradings-1} we know that $\cQ_{\scV_i}$ is homogeneously graded with respect to $\gr_{\cW_0\cup \{w_j\}}$ and $\gr_{\cW_0\cup \{z_j\}}$. Using claim~\eqref{gradings-2}, we obtain
\[
2\Delta ( A_j)=\lk(L\setminus (K_i\cup K_j), K_j)-\lk(L\setminus K_j,K_j)=-\lk(K_i,K_j).
\]

We now consider claim~\eqref{gradings-5}. By claim~\eqref{gradings-3}, it is sufficient to compute the grading change of $A^{\cW'}_j$ where $\cW'=(\ws\setminus \{w_i\})\cup \ps\cup \{z_i\}$. We use claim~\eqref{gradings-4}, but note that with respect to $\cW'$, the orientation of $K_i$ is reversed, while the orientation of $K_j$ is unchanged.
\end{proof}

\subsection{On the algebra of the link surgery formula}

In this section, we reformulate the algebra of the link surgery formula slightly. We begin with a convenient description of the underlying group:

\begin{lem}\label{lem:module-structure-surgery-hypercube-groups} Suppose that $\cH$ is a link minimal Heegaard diagram for a link $L$ in $S^3$, which has no free basepoints. Let $M$ be a sublink of $L$. Write $S_M\subset \bF[\scU_1,\dots, \scU_{\ell},\scV_1,\dots, \scV_{\ell}]$ for the multiplicatively closed subset generated by $\scV_i$ for $i$ such that $K_i\subset M$. Then there is an $\bF[U_1,\dots, U_\ell]$-equivariant chain isomorphism
\[
\bigoplus_{\ve{s}\in \bH(L)} A^-(\cH^{L\setminus M}, \psi^{M}(\ve{s}))\iso S_M^{-1} \cdot\cCFL(\cH),
\]
where we view $U_i$ as acting by $\scU_i\scV_i$ on the right-hand side. Furthermore, if $\ve{s}\in \bH(L)$, this isomorphism intertwines the summand $A^-(\cH^{L\setminus M}, \psi^{M}(\ve{s}))$ with the subspace of $S_M^{-1}\cdot \cCFL(\cH)$ in Alexander multi-grading $\ve{s}$.
\end{lem}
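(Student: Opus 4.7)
The plan is to construct an explicit $\bF[U_1,\dots,U_\ell]$-equivariant chain map $\Phi$ from the left to the right and verify it is an isomorphism respecting the Alexander multi-grading decomposition. For a generator $U_1^{i_1}\cdots U_\ell^{i_\ell}\cdot \xs$ lying in the summand $A^-(\cH^{L-M},\psi^M(\ve{s}))$, I would set
\[
\Phi\bigl(U_1^{i_1}\cdots U_\ell^{i_\ell}\cdot \xs\bigr):=\left(\prod_{j=1}^\ell \scU_j^{i_j}\scV_j^{i_j+s_j-A_j(\xs)}\right)\cdot \xs.
\]
The first thing to check is that this lands in $S_M^{-1}\cdot \cCFL(\cH)$: the $\scU_j$-exponent $i_j$ is non-negative, while the $\scV_j$-exponent $i_j+s_j-A_j(\xs)$ is non-negative for $K_j\subset L-M$ by the defining inequality of $A^-$, but may be negative for $K_j\subset M$, which is exactly what the localization at $S_M$ accommodates. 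Using the convention $A_j(\scU_j)=-1$, $A_j(\scV_j)=+1$ (so that $U_j=\scU_j\scV_j$ preserves $A_j$), the $A_j$-grading of the image equals $A_j(\xs)+(i_j+s_j-A_j(\xs))-i_j=s_j$, so the $\ve{s}$-summand is sent into the multi-Alexander grading $\ve{s}$ subspace. The $\bF[U_1,\dots,U_\ell]$-equivariance (with $U_j$ acting by $\scU_j\scV_j$ on the right) is immediate from the formula.

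Next, I would verify bijectivity on generators. A basis monomial of $S_M^{-1}\cdot \cCFL(\cH)$ in Alexander grading $\ve{s}$ has the form $\prod_j\scU_j^{a_j}\scV_j^{b_j}\cdot\xs$ with $a_j\ge 0$, with $b_j\ge 0$ when $K_j\subset L-M$ and $b_j\in\Z$ otherwise, satisfying $b_j-a_j=s_j-A_j(\xs)$. Setting $i_j:=a_j$ and reading off $\ve{s}$ from the Alexander grading yields a unique preimage in the $\psi^M(\ve{s})$-summand, so $\Phi$ is an $\bF$-vector space isomorphism; here it is important that varying the components $s_j$ for $K_j\subset M$ of the indexing data does produce distinct summands, which on the right map precisely to the different $\scV_j$-towers provided by the localization.

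The main step is the chain-map property. Each Maslov-$1$ Whitney class $\phi\in\pi_2(\xs,\ys)$ contributes $\#(\cM(\phi)/\R)\cdot\prod_j U_j^{n_{w_j}(\phi)}\cdot\ys$ to $\d_{A^-}(\xs)$, which $\Phi$ sends to $\#(\cM(\phi)/\R)\cdot\prod_j\scU_j^{n_{w_j}(\phi)}\scV_j^{n_{w_j}(\phi)+s_j-A_j(\ys)}\cdot\ys$. The same class contributes $\#(\cM(\phi)/\R)\cdot\prod_j\scU_j^{n_{w_j}(\phi)}\scV_j^{n_{z_j}(\phi)}\cdot\ys$ to $\d_{\cCFL(\cH)}(\xs)$, and pulling the overall factor $\prod_j\scV_j^{s_j-A_j(\xs)}$ from $\Phi(\xs)$ through gives $\#(\cM(\phi)/\R)\cdot\prod_j\scU_j^{n_{w_j}(\phi)}\scV_j^{n_{z_j}(\phi)+s_j-A_j(\xs)}\cdot\ys$ as the corresponding term of $\d_{\cCFL}(\Phi(\xs))$. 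Equality of the two $\scV_j$-exponents is precisely the standard Alexander-grading relation $A_j(\xs)-A_j(\ys)=n_{z_j}(\phi)-n_{w_j}(\phi)$ on Whitney classes.

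The main obstacle is really just fixing conventions and bookkeeping; the substance of the argument reduces to this single identity on disks. Positivity of intersection at $z_j$ (for $K_j\subset M$) ensures that the two sides count the same Whitney classes even though $z_j$ is not marked as a basepoint in $\cH^{L-M}$, so no spurious disks are introduced on the $A^-$ side. Once the above termwise identification is in place, the $\bF[U_1,\dots,U_\ell]$-linearity of $\Phi$ promotes it from a map on generators to the desired chain isomorphism.
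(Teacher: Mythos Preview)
Your proof is correct and follows essentially the same approach as the paper: the explicit map $U_1^{i_1}\cdots U_\ell^{i_\ell}\cdot\xs\mapsto\prod_j\scU_j^{i_j}\scV_j^{i_j+s_j-A_j(\xs)}\cdot\xs$ is precisely the one the paper writes down, and the paper likewise reduces the image and bijectivity checks to the defining inequalities of $A^-$ together with the relation between $A_j^{L-M}$, $A_j^L$, and $\psi^M$. Your verification of the chain-map property via $A_j(\xs)-A_j(\ys)=n_{z_j}(\phi)-n_{w_j}(\phi)$ is more detailed than the paper's, which simply asserts the map is ``clearly a chain isomorphism''; the only minor point is that the paper makes explicit the translation from the defining inequality $A_j^{L-M}(\xs)-i_j\le\psi_j^M(s_j)$ to $A_j^L(\xs)-i_j\le s_j$ using the definition of $\psi^M$, which you implicitly absorb into ``the defining inequality of $A^-$''.
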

\begin{proof}
 The module $S_M^{-1}\cdot \cCFL(\cH)$ decomposes over Alexander gradings $\ve{s}\in \bH(L)$. Hence, it is sufficient to identify the subspace in Alexander grading $\ve{s}$ with
 \[
A^-(\cH^{L\setminus M}, \psi^{M}(\ve{s}))
 \]
 in a way which commutes with the action of $\bF[U_1,\dots, U_\ell]$.  We recall that $A^-(\cH^{L\setminus M}, \psi^{M}(\ve{s}))$ is generated by monomials $U_1^{i_1}\cdots U_{\ell}^{i_\ell}\cdot \ve{x}$  such that $i_j\ge 0$ for all $j$, and 
 \[
 A_j^{L\setminus M}(\ve{x})-i_j\le \psi^{M}_j(s_j)
 \]
 for each $j$ such that $K_j\not \in M$. Here, we are writing $\ve{s}=(s_1,\dots, s_\ell)$. Note that by definition of $\psi_j^{M}(s_j)$ (see \cite{MOIntegerSurgery}*{Section~3.7}), we have
 \[
A^L_j(\ve{x})-A^{L\setminus M}_j(\ve{x})=s_j-\psi_j^{M}(s_j).
 \]
 Compare Lemma~\ref{lem:grading-changes}.
Hence we may think of $A^-(\cH^{L\setminus M}, \psi^{M}(\ve{s}))$ as being generated by monomials $U_1^{i_1}\cdots U_{\ell}^{i_\ell} \cdot \xs$ such that $i_j\ge 0$ for all $j$ and such that
\[
A_j^L(\ve{x})-i_j\le s_j
\]
whenever $K_j\not \in M$. We may define a map
\[
A^-(\cH^{L\setminus M}, \psi^{M}(\ve{s}))\to S_M^{-1}\cdot \cCFL(\cH)
\]
via the formula
\[
U_1^{i_1}\cdots U_{\ell}^{i_\ell}\cdot \xs\mapsto \scU_1^{i_1}\cdots \scU_{\ell}^{i_\ell}\scV_1^{s_1-A_1^L(\xs)+i_1}\cdots \scV_\ell^{s_{\ell}-A^L_{\ell}(\xs)+i_\ell}\cdot \xs.
\]
\end{proof}

\begin{rem}
\label{rem:localization-natural}
 In their construction of the link surgery formula, Manolescu and Ozsv\'{a}th also inclusion maps $\cI^{\vec{N}}_{\ve{s}}$ between the complexes $\frA^-(\cH,\ve{s})$ and some generalizations of these complexes. See \cite{MOIntegerSurgery}*{Section~3.8}. We note that the map $\cI^{\vec{N}}_{\ve{s}}$ can be identified with the map for localizing at the variables $\scU_i$, for $i$ such that $+K_i\subset \vec{N}$, and localizing at variables $\scV_i$, for $i$ such that $-K_i\subset \vec{N}$. 
 \end{rem}

We now discuss some algebraic properties of the maps in the link surgery hypercube. Manolescu and Ozsv\'{a}th prove that the hypercube maps in their surgery formula $\cC_{\Lambda}(L)$ are $\bF[U_1,\dots, U_\ell]$-equivariant. By the previous lemma, there is a more refined action of $\bF[\scU_1,\dots, \scU_\ell, \scV_1,\dots, \scV_\ell]$ on the complex $\cC_{\Lambda}(L)$, though the hypercube differential does not commute with this action. We now describe how the hypercube maps interact with the action of this larger ring.

Firstly, we define a ring homomorphism
\[
\phi^\tau\colon \bF[\scU,\scV]\to \bF[\scU,\scV,\scV^{-1}]
\]
via the formula
\[
\phi^\tau(\scU)=\scV^{-1}\quad \text{and} \quad \phi^\tau(\scV)=\scU\scV^2.
\]
We write $\phi^\sigma\colon \bF[\scU,\scV]\to \bF[\scU,\scV,\scV^{-1}]$ for the canonical inclusion.

We view $S_M^{-1}\cdot\bF[\scU_1,\dots, \scU_{\ell}, \scV_1,\dots, \scV_\ell]$ as the tensor product over $\bF$ of $\ell$ different rings, which are each isomorphic to $\bF[\scU,\scV]$ or $\bF[\scU,\scV,\scV^{-1}]$.

If $N\subset L\setminus M$, then we define the ring homomorphism
\[
\phi^{\vec{N}}\colon S_M^{-1}\cdot  \bF[\scU_1,\dots, \scU_{\ell}, \scV_1,\dots, \scV_\ell]\to S_{M\cup N}^{-1}\cdot \bF[\scU_1,\dots, \scU_{\ell},\scV_1,\dots, \scV_{\ell}] 
\]
by tensoring the map $\phi^{\tau}$ for each component $K_i\subset \vec{N}$ which is oriented oppositely to $L$, tensoring $\phi^{\sigma}$ for each component $K_i\subset \vec{N}$ which is oriented consistently with $L$, and tensoring the identity map for the remaining components of $L$.

\begin{lem}\label{lem:module-structure-surgery-hypercube-morphisms}
Suppose that  $M\subset L$ and $N\subset L\setminus M$. Let $\vec{N}$ denote $N$ with a choice of orientation. 
If $a\in S_{M}^{-1}\cdot\bF[\scU_1,\dots, \scU_\ell, \scV_1,\dots, \scV_\ell]$, and $\ve{x}\in S_{M}^{-1}\cdot \cCFL(\cH)$,  then
\[
\Phi^{\vec{N}}(a\cdot \ve{x})=\phi^{\vec{N}}(a)\cdot \Phi^{\vec{N}}(\ve{x}).
\]
\end{lem}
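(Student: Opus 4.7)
The plan is to reduce to the case of a single oriented component $\vec{N} = \vec{K_i}$ and verify the formula by unpacking the identification of Lemma~\ref{lem:module-structure-surgery-hypercube-groups} together with Remark~\ref{rem:localization-natural}. First I would observe that $T^{\vec{N}}$ is defined componentwise as a tensor product over the components of $L$, and that the hypercube map $\Phi^{\vec{N}}$ in Manolescu--Ozsv\'ath's construction can be factored (along any choice of ordering of $\vec{K_i}\subset \vec{N}$) as a composition of single-component maps interleaved with the relevant inclusions. Associativity of these factorizations, together with the tensor-product structure of $T^{\vec{N}}$, reduces the proof to the case that $\vec{N}$ consists of a single oriented component $\vec{K_i}$.

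Next I would split the single-component case into two sub-cases: (a) $\vec{K_i}$ oriented coherently with $L$, and (b) $\vec{K_i}$ oriented oppositely. In either case, $\Phi^{\vec{K_i}}$ is built from the Manolescu--Ozsv\'ath inclusion map $\cI^{\vec{K_i}}$ composed with a chain map counting holomorphic triangles. The triangle-counting piece is $\bF[\scU_j,\scV_j]$-equivariant for every $j$, since it is defined on the full link Floer complex $\cCFL$ before any localization. Hence the whole content of the lemma is to understand how $\cI^{\vec{K_i}}$ interacts with the $\bF[\scU_i,\scV_i]$-action under the identification of Lemma~\ref{lem:module-structure-surgery-hypercube-groups}. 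For components $K_j\ne K_i$ the basepoints do not change, so I would simply verify $T^{\vec{K_i}}(\scU_j)=\scU_j$ and $T^{\vec{K_i}}(\scV_j)=\scV_j$ automatically.

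For sub-case (a) the inclusion $\cI^{\vec{K_i}}$ becomes, by Remark~\ref{rem:localization-natural}, the tautological localization at $\scV_i$, and the identification of Lemma~\ref{lem:module-structure-surgery-hypercube-groups} uses the same Alexander grading $A_i^L$ on both sides (the basepoint $w_i$ remains). This directly gives $T^{\vec{K_i}}(\scU_i)=\scU_i$ and $T^{\vec{K_i}}(\scV_i)=\scV_i$. For sub-case (b) the reduced diagram deletes $w_i$ instead of $z_i$, and $A_i$ is now computed with the basepoint $z_i$ on $K_i$. Writing an element of Alexander grading $\ve s$ in $S_M^{-1}\cdot \cCFL(\cH)$ in the normal form $\scU_i^a\scV_i^b\cdot \ve x$ with $b-a+A_i^{\bW_0\cup\{w_i\}}(\ve x)=s_i$, I would re-expand it after the localization in terms of the new normal form using basepoint $z_i$. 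Using Lemma~\ref{lem:grading-changes}(2) to relate $A_i^{\bW_0\cup\{w_i\}}$ and $A_i^{\bW_0\cup\{z_i\}}$, and using that after the basepoint swap $\scU_i$ has new $A_i$-grading $+1$ while $\scV_i$ has $-1$, I would compute that the action of $\scU_i$ before the localization corresponds to multiplication by $\scV_i^{-1}$ after, and that $\scV_i$ corresponds to $\scU_i\scV_i^2$. (The ratio $U_i=\scU_i\scV_i$ is preserved, consistent with the known $U_i$-equivariance of $\Phi^{\vec{K_i}}$, and the affine shift of Alexander grading by $\Lambda_{L,\vec{K_i}}$ is absorbed into $p^{\vec{K_i}}\circ\psi^M$.)

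Finally, combining the two sub-cases tensorially over the components of $\vec N$ yields the stated formula $\Phi^{\vec N}(a\cdot \ve x)=T^{\vec N}(a)\cdot \Phi^{\vec N}(\ve x)$. The main obstacle will be the bookkeeping in sub-case (b): keeping the various Alexander grading conventions and normal-form identifications consistent while the basepoint on $K_i$ is swapped, and carefully verifying that the resulting rational monomials in $\scU_i,\scV_i$ match exactly the formulas $T(\scU)=\scV^{-1}$ and $T(\scV)=\scU\scV^2$ rather than some other change of coordinates preserving $\scU\scV$.
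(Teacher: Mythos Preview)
Your proposal has a genuine gap in sub-case (b), which is the heart of the lemma.

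You assert that $\Phi^{\vec{K_i}}$ decomposes as $\cI^{\vec{K_i}}$ followed by a holomorphic triangle map, and that the latter is $\bF[\scU_j,\scV_j]$-equivariant for every $j$ because it is ``defined on the full link Floer complex $\cCFL$ before any localization.'' This is false for $j=i$ in the negatively-oriented case. In the Manolescu--Ozsv\'ath construction, after the inclusion $\cI^{-K_i}$ one lands in (what is identified with) $\scU_i^{-1}\cCFL(\cH)$, whereas the target of $\Phi^{-K_i}$ is (identified with) $\scV_i^{-1}\cCFL(\cH)$. The map between them is not a polygon count on the full diagram but rather the basepoint-moving homotopy equivalence $\Psi_{z_i\to w_i}\colon \cCFL(\cH)/(\scU_i-1)\to \cCFL(\cH)/(\scV_i-1)$, sandwiched between the trivialization isomorphisms $\phi_i$ and $\psi_i^{-1}$. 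The map $\Psi_{z_i\to w_i}$ sends $\scV_i$ to $\scU_i$; it is \emph{not} $\bF[\scU_i,\scV_i]$-equivariant. Your subsequent normal-form computation in (b) only tracks the algebraic identifications (essentially $\phi_i$ and $\psi_i$) and the inclusion $\cI_i$; it does not account for how $\Psi_{z_i\to w_i}$ intertwines the actions, so it cannot produce the formula $T(\scV)=\scU\scV^2$ honestly.

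The paper's proof instead writes $\Phi_{L,L-K_i}^{-K_i} = \psi_i^{-1}\circ(\Psi_{z_i\to w_i}\otimes t^{\lambda_i})\circ\phi_i\circ\cI_i$ explicitly and computes the interaction of each factor with $\scU_i$ and $\scV_i$ separately: $\phi_i(\scU_i^n x)=t^{-n}\phi_i(x)$, $\phi_i(\scV_i^n x)=\scV_i^n t^n\phi_i(x)$, $\Psi_{z_i\to w_i}(\scV_i^n x)=\scU_i^n\Psi_{z_i\to w_i}(x)$, and $\psi_i^{-1}(\scU_i^n x)=\scU_i^n\scV_i^n\psi_i^{-1}(x)$, $\psi_i^{-1}(t^n x)=\scV_i^n\psi_i^{-1}(x)$. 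Composing these gives $T(\scU_i)=\scV_i^{-1}$ and $T(\scV_i)=\scU_i\scV_i^2$ directly. (A secondary issue: your claim that $\Phi^{\vec{N}}$ factors as a composition of single-component maps is also incorrect---the higher-length hypercube maps are homotopies, not compositions---but this is not fatal, since the paper likewise focuses on the length-one case and remarks that higher lengths are handled by the same analysis of the constituent inclusion and polygon maps.)
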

\begin{proof} We focus on the length 1 maps in the surgery hypercube. The maps of higher length are analyzed using the same argument. In this case, write $N=K_i$, where $K_i$ is a component of $L$. To simplify the notation further, we focus on the case that $M=\emptyset$. Let $K_i$, (resp. $-K_i$), denote $K_i$ equipped with the orientation which coincides with (resp. is opposite to) the orientation from $L$.  The map $\Phi^{K_i}_{L,L\setminus K_i}$ is  the canonical inclusion map
\[
\cCFL(\cH)\hookrightarrow \scV_i^{-1}\cdot \cCFL(\cH),
\]
which is clearly $\bF[\scU_1,\dots, \scU_\ell, \scV_1,\dots, \scV_{\ell}]$-equivariant. Note that $\phi^{\sigma_i}$ is just the canonical inclusion of $\bF[\scU_1,\dots, \scU_\ell, \scV_1,\dots, \scV_{\ell}]$ into $\scV_i^{-1}\cdot \bF[\scU_1,\dots, \scU_{\ell}, \scV_1,\dots, \scV_{\ell}]$.

 The map $\Phi^{-K_i}_{L,L\setminus K_i}$ is more complicated, as we now describe. Similarly to Lemma~\ref{lem:module-structure-surgery-hypercube-groups}, there are canonical isomorphisms
\[
\begin{split}
\theta_i\colon \scU_i^{-1}\cdot \cCFL(\cH)&\to \cCFL(\cH)/(\scU_i-1)\otimes \bF[\bH_i(L)], \quad \text{and}
\\
\rho_{i}\colon \scV_i^{-1}\cdot \cCFL(\cH)&\to \cCFL(\cH)/(\scV_i-1)\otimes \bF[\bH_i(L)].
\end{split}
\]
Here $\bF[\bH_i(L)]$ denotes the $\bF[T_i,T_i^{-1}]$-module generated by $T_i^\alpha$ where $\alpha\in \bH_i(L)$. Here, $T_i$ is a formal variable. We also view $\cCFL(\cH)/(\scU_i-1)$ and $\cCFL(\cH)/(\scV_i-1)$ as $\bF[U_i,T_i,T_i^{-1}]$-modules, where $U_i$ acts by $\scV_i$ on $\cCFL(\cH)/(\scU_i-1)$, and by $\scU_i$ on $\cCFL(\cH)/(\scV_i-1)$.

The map $\theta_i$ is given as follows. Write 
\[
\cQ_{\scU_i} \colon \scU_i^{-1}\cCFL(\cH)\to \cCFL(\cH)/(\scU_i-1)
\]
for the quotient map, which sends $\scU_i$ to $1$ and which sends $\scV_i$ to $U_i$. 
Then $\theta_i$ is given by the formula
\[
\theta_i(\xs)=\cQ_{\scU_i}(\xs)\otimes T_i^{A_i^L(\xs)}.
\]
The map $\rho_i$ is defined similarly. 

We write $\cI_i$ for the inclusion of $\cCFL(\cH)$ into $\scU_i^{-1}\cdot \cCFL(\cH)$.

Moving the basepoint $z_i$ to $w_i$ along a subarc of $K_i$ determines a homotopy equivalence
\[
\Psi_{z_i\to w_i}\colon \cCFL(\cH)/(\scU_i-1)\to \cCFL(\cH)/(\scV_i-1)
\]
which preserves $A_j^{L\setminus K_i}$ for $j\neq i$. The map $\Psi_{z_i\to w_i}$ is $\bF[U_i,T_i,T_i^{-1}]$-equivariant, and is equivariant with respect to the variables for other link components.

The map $\Phi_{L,L\setminus K_i}^{-K_i}$ is defined as the composition
\[
\Phi_{L,L\setminus K_i}^{-K_i}:= \rho_i^{-1}\circ (\Psi_{z_i\to w_i}\otimes T_i^{\lambda_i})\circ \theta_i\circ \cI_i.
\]

We now consider the interaction of $\Phi_{L,L\setminus K_i}^{-K_i}$ with $\scU_i$ and $\scV_i$. The map $\cI_i$ commutes with $\scU_i$ and $\scV_i$. The map $\theta_i$ has the property that
\[
\theta_i(\scU_i^n \cdot \xs)=T_i^{-n}\cdot \theta_i(\xs)\quad \text{and} \quad \theta_i(\scV_i^n\cdot \xs)=U_i^nT_i^n\cdot\theta_i(\xs) .
\]
Similarly
\[
\rho_i(\scU_i^n \cdot \xs)=U_i^nT_i^{-n} \cdot \rho_i(\xs)\quad \text{and} \quad \rho_i(\scV_i^n \cdot \xs)=T_i^n \cdot \rho_i(\xs),
\]
which implies that
\[
\rho_i^{-1}(T_i^n \cdot \xs)=\scV_i^n\cdot\rho_i^{-1}(\xs)\quad \text{and} \quad \rho_i^{-1}(U_i^n\cdot  \xs)=\scU_i^n\scV_i^n \cdot  \rho_i^{-1}(\xs).
\]
From these relations, it follows that
\[
\Phi_{L,L\setminus K_i}^{-K_i}(\scU_i \cdot \ve{x})=\scV_i^{-1} \cdot \Phi_{L,L\setminus K_i}^{-K_i}(\ve{x})\quad \text{and} \quad \Phi_{L,L\setminus K_i}^{-K_i}(\scV_i\cdot \ve{x})=\scU_i \scV_i^2\cdot \Phi_{L,L\setminus K_i}^{-K_i}(\ve{x}),
\]
completing the proof. 
\end{proof}

\begin{example} 
We now describe the surgery complex for the unknot $\bO$ in our present notation, and compare it to the traditional notation of \cite{OSIntegerSurgeries}. The complex $\cCFK(\bO)$ has one generator, $1$, over $\bF[\scU,\scV]$. In our notation, $v$ sends $\scU^i\scV^j\in \cCFK(\bO)$ to $\scU^i \scV^j\in \scV^{-1} \cCFK(\bO)$. Similarly $h_{\lambda}$ sends $\scU^i\scV^j$ to $\scV^{\lambda} \phi^\tau(\scU^i\scV^j)=\scU^j \scV^{2j-i+\lambda}$.

Traditionally, one works with the mapping cone formula over $\bF[U]$. We identify both $A(\bO,s)\subset \cCFK(\bO)$ and $B(\bO,s)\subset \scV^{-1} \cCFK(\bO)$ with $\bF[U]$ (where $U$ acts by $\scU\scV$). We identify $\scU^i\scV^j\in \cCFK(\bO)$ with $U^{\min(i,j)}\in A(\bO,s)$, and we identify $\scU^i\scV^j\in \scV^{-1} \cCFK(\bO)$ with $U^{i}\in B(\bO,s)$. We can view the previous identification as giving two isomorphisms
\[
 \cCFK(\bO)\iso \bigoplus_{s\in \Z} \bF[U]\quad \text{and} \quad   \scV^{-1}\cCFK(\bO)\iso \bigoplus_{s\in \Z} \bF[U].
\]
In the original notation of Ozsv\'{a}th and Szab\'{o}
\[
v(U^i)=\begin{cases} U^{i-s}& \text{if } s\le  0,\\
U^i& \text{if } s\ge 0
\end{cases}
\qquad
h_{\lambda}(U^i)=\begin{cases}U^i &\text{if } s\le 0,\\
U^{i+s}& \text{if } s\ge 0.
\end{cases}
\]
The map $h_{\lambda}$ sends $A(\bO,s)$ to $B(\bO,s+\lambda)$. It is straightforward to see that the above maps form a commutative diagram
\[
\begin{tikzcd}\cCFK(\bO)\ar[d, "\iso"] \ar[r, "f"] &\scV^{-1} \cCFK(\bO)\ar[d, "\iso"]\\
\displaystyle\bigoplus_{s\in \Z} A(\bO,s)\ar[r, "f"]& \displaystyle\bigoplus_{s\in \Z} B(\bO,s)
\end{tikzcd}
\]
whenever $f$ is one of $v$ or $h_{\lambda}$.
\end{example}

\section{The knot surgery algebra}

\label{sec:Algebra-K}

In this section, we introduce our algebras $\cK$ and $\cL$. We also describe natural filtrations on $\cK$ and $\cL$. Using these filtrations, we describe our module categories.

\subsection{The algebras \texorpdfstring{$\cK$}{K} and \texorpdfstring{$\cL$}{L} }
\label{sec:knot-algebra}
We recall the \emph{knot surgery algebra} $\cK$ from the introduction. We define $\cK$ to be an algebra over the idempotent ring
\[
\ve{I}=\ve{I}_0\oplus \ve{I}_1,
\]
where each of $\ve{I}_i$ is rank 1 over $\bF=\Z/2$. We write $i_0$ and $i_1$ for the generators of $\ve{I}_0$ and $\ve{I}_1$, respectively.  We set
\[
\ve{I}_0\cdot \cK \cdot \ve{I}_0=\bF[\scU,\scV]\quad \text{and} \quad \ve{I}_1\cdot \cK\cdot \ve{I}_1\iso \bF[\scU,\scV,\scV^{-1}].
\]
Also
\[
\ve{I}_0\cdot \cK\cdot \ve{I}_1=0.
\]
We define 
\[
\ve{I}_1\cdot \cK\cdot \ve{I}_0=\bF[\scU,\scV,\scV^{-1}]\otimes \langle \sigma,\tau\rangle.
\]
That is, elements of $\ve{I}_1\cdot \cK\cdot \ve{I}_0$ can be written as sums of monomials of the form $\scU^i \scV^j \sigma$ and $\scU^i \scV^j \tau$ where $i\ge 0$ and $j\in \Z$. These algebra elements satisfy the relation
\[
\sigma\cdot a=\phi^\sigma(a)\cdot \sigma \quad \text{and} \quad
\tau\cdot a =  \phi^\tau(a)\cdot \tau,
\]
for $a\in \bF[\scU,\scV]=\ve{I}_0\cdot \cK\cdot \ve{I}_0$. Here $\phi^\tau\colon \bF[\scU,\scV]\to \bF[\scU,\scV,\scV^{-1}]$ is the algebra homomorphism satisfying $\phi^\tau(\scU)=\scV^{-1} $ and $\phi^\tau(\scV)=\scU \scV^2$. The map $\phi^\sigma\colon \bF[\scU,\scV]\to \bF[\scU,\scV,\scV^{-1}]$ is the canonical inclusion.

In particular, $\ve{I}_1\cdot \cK\cdot \ve{I}_0$ is generated by two special algebra elements $\sigma$ and $\tau$, together with the left action of $\bF[\scU,\scV,\scV^{-1}]$, which satisfy the relations
\[
\sigma \scU=\scU \sigma\quad \text{and} \quad \sigma \scV=\scV \sigma
\]
\[
\tau  \scU=\scV^{-1} \tau\quad \text{and} \quad \tau \scV=\scU\scV^2 \tau.
\]
The link algebra $\cL_\ell$ is defined as
\[
\cL_\ell:= \cK\otimes_{\bF}\cdots \otimes_{\bF} \cK.
\]
We often write just $\cL$, when $\ell$ is determined by context. We view $\cL_{\ell}$ as being an algebra over the idempotent ring
\[
\ve{E}_\ell:=\ve{I}\otimes_{\bF}\cdots \otimes_{\bF} \ve{I}.
\]

\begin{rem}
\label{rem:symmetry}
 The algebra $\cK$ admits a more symmetric basis, as follows. We may view $\ve{I}_1\cdot \cK \cdot \ve{I}_1$ as being $\bF[U,T,T^{-1}]$, where $U=\scU\scV$ and $T=\scV$. Then the above relations  become
\[
\sigma \scU=UT^{-1} \sigma\quad \text{and} \quad \sigma \scV=T \sigma
\]
\[
\tau \scU=T^{-1}\tau\quad \text{and} \quad \tau  \scV=U T \tau.
\]
\end{rem}
\begin{rem}
The above symmetry may be packaged into an algebra homomorphism
\[
\cE\colon \cK\to \cK,
\]
as follows. On $\ve{I}_0\cdot \cK\cdot \ve{I}_0$, we set
\[
\cE(\scU)=\scV\quad \text{and} \quad \cE(\scV)=\scU.
\]
On $\ve{I}_1\cdot \cK\cdot \ve{I}_1$, we set
\[
\cE(U)=U\quad \text{and} \quad \cE(\scV^i)=\scV^{-i}.
\]
On $\ve{I}_1\cdot \cK\cdot \ve{I}_0$, we set
\[
\cE(\sigma)=\tau\quad \text{and} \quad \cE(\tau)=\sigma.
\]
\end{rem}

\subsection{Topologies on $\cK$ and $\cL$}
\label{sec:filtration-K}

In this section, we describe the topologies on $\cK$ and $\cL$ which we use throughout the paper.

\begin{define}
\label{def:knot-topology}
Suppose that $n\in \N$ is fixed. We define $J_n\subset \cK$ to be the $\bF$ span of following set of generators:
\begin{enumerate}
\item In $\ve{I}_0\cdot \cK\cdot \ve{I}_0$, the generators $\scU^i\scV^j$, for $i\ge n$ or $j\ge n$ (i.e.  $\max(i,j)\ge n$).
\item In $\ve{I}_1\cdot \cK\cdot \ve{I}_0$, the generators $\scU^i\scV^j \sigma$ for $i\ge n$ or $j\ge n$.
\item In $\ve{I}_1\cdot\cK\cdot \ve{I}_0$, the generators $\scU^i\scV^j\tau$ for 
$j\le 2i-n$ or $i\ge n$.
\item In $\ve{I}_1\cdot\cK\cdot \ve{I}_1$, the generators $\scU^i\scV^j$ where $i\ge n$.
\end{enumerate}
The subspace $J_n$ is illustrated in Figure~\ref{fig:26}.
\end{define}

\begin{figure}[ht]
\begingroup%
  \makeatletter%
  \providecommand\color[2][]{%
    \errmessage{(Inkscape) Color is used for the text in Inkscape, but the package 'color.sty' is not loaded}%
    \renewcommand\color[2][]{}%
  }%
  \providecommand\transparent[1]{%
    \errmessage{(Inkscape) Transparency is used (non-zero) for the text in Inkscape, but the package 'transparent.sty' is not loaded}%
    \renewcommand\transparent[1]{}%
  }%
  \providecommand\rotatebox[2]{#2}%
  \newcommand*\fsize{\dimexpr\f@size pt\relax}%
  \newcommand*\lineheight[1]{\fontsize{\fsize}{#1\fsize}\selectfont}%
  \ifx\svgwidth\undefined%
    \setlength{\unitlength}{287.25050657bp}%
    \ifx\svgscale\undefined%
      \relax%
    \else%
      \setlength{\unitlength}{\unitlength * \real{\svgscale}}%
    \fi%
  \else%
    \setlength{\unitlength}{\svgwidth}%
  \fi%
  \global\let\svgwidth\undefined%
  \global\let\svgscale\undefined%
  \makeatother%
  \begin{picture}(1,0.81139697)%
    \lineheight{1}%
    \setlength\tabcolsep{0pt}%
    \put(0,0){\includegraphics[width=\unitlength,page=1]{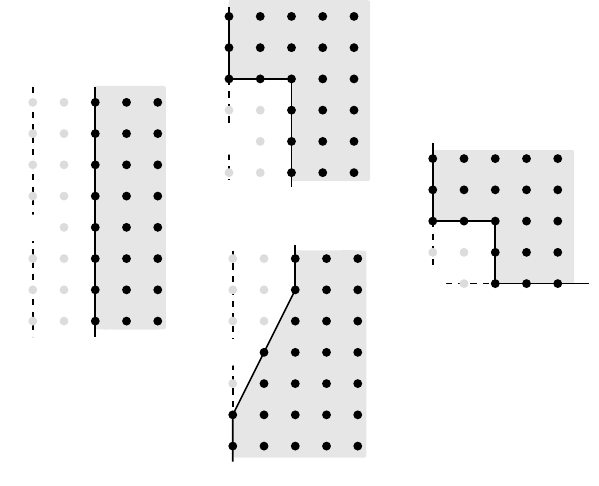}}%
    \put(0.84074571,0.2743729){\makebox(0,0)[t]{\lineheight{1.25}\smash{\begin{tabular}[t]{c}$\ve{I}_0\cdot J_n\cdot  \ve{I}_0$\end{tabular}}}}%
    \put(0.15914382,0.20604017){\makebox(0,0)[t]{\lineheight{1.25}\smash{\begin{tabular}[t]{c}$\ve{I}_1\cdot J_n \cdot \ve{I}_1$\end{tabular}}}}%
    \put(0.49691376,0.00306822){\makebox(0,0)[t]{\lineheight{1.25}\smash{\begin{tabular}[t]{c}$J_n^\tau$\end{tabular}}}}%
    \put(0.49691376,0.49777232){\makebox(0,0)[t]{\lineheight{1.25}\smash{\begin{tabular}[t]{c}\\$J_n^\sigma$\end{tabular}}}}%
    \put(0.72291402,0.3303176){\makebox(0,0)[t]{\lineheight{1.25}\smash{\begin{tabular}[t]{c}$1$\end{tabular}}}}%
    \put(0.38870066,0.2159748){\makebox(0,0)[t]{\lineheight{1.25}\smash{\begin{tabular}[t]{c}$\tau$\end{tabular}}}}%
    \put(0.38242115,0.56676279){\makebox(0,0)[t]{\lineheight{1.25}\smash{\begin{tabular}[t]{c}$\sigma$\end{tabular}}}}%
    \put(0.05513169,0.42209528){\makebox(0,0)[t]{\lineheight{1.25}\smash{\begin{tabular}[t]{c}$1$\end{tabular}}}}%
  \end{picture}%
\endgroup%

\caption{The subspace $J_n\subset \cK$, indicated by the shaded regions.  Here $J_n^\sigma$ and $J_n^\tau$ denote the subspaces of $J_n$ which are in the $\cK$-span of $\sigma$ and $\tau$. The dots indicate monomials in the algebra. The horizontal direction indicates the power of $\scU$, and the vertical direction indicates the power of $\scV$. }
\label{fig:26}
\end{figure}

We observe that
\[
J_0=\cK, \quad J_{n+1}\subset J_n,\quad \text{and} \quad \bigcap_{n\in \N} J_n=\{0\}.
\]
We will use the subspaces $J_n$ as a fundamental basis of $0$ in our topology on $\cK$. The condition that $\bigcap_{n\in \N} J_n=\{0\}$ is equivalent to the inclusion $\cK\to \ve{\cK}$ being injective.

\begin{rem} The subspaces $J_n^\sigma$ and $J_n^\tau$ can also be described as follows:
\[
\begin{split} J_n^\sigma&=(U^n)\cdot \sigma+\sigma\cdot (\scU^n,\scV^n)\\
J_n^\tau&=(U^n) \cdot \tau+\tau\cdot (\scU^n,\scV^n).
\end{split}
\]
Here $(U^n)\subset \ve{I}_1\cdot \cK\cdot \ve{I}_1$ denotes the ideal generated by $U=\scU\scV$, and $(\scU^n,\scV^n)\subset \ve{I}_0\cdot \cK\cdot \ve{I}_0$ denotes the ideal spanned by the elements $\scU^n$ and $\scV^n$. 
\end{rem}

We now consider the link algebra. For our purposes, the most natural topology on the link algebra is the $!$-topology:
\[
\cL_\ell=\cK\otimesshriek_{\bF}\cdots \otimesshriek_{\bF}\cK.
\]
This topology is first countable, with a basis of opens given by
\[
\cJ_{\ell,n}:=\sum_{i=0}^{\ell-1} \cK^{\otimes i} \otimes_{\bF} J_n\otimes_{\bF} \cK^{\otimes \ell-i-1}\subset \cL_{\ell}.
\]

A fundamental result to our paper is the following:

\begin{prop}
\label{prop:multiplication-continuous} The map $\mu_2\colon \cK\vecotimes_{\rmI} \cK\to \cK$ is continuous.
\end{prop}

\begin{rem}
\label{rem:continuity-and-K}
 Multiplication is not continuous on $\cK\otimesshriek_{\rmI} \cK$. As an example $\scV^{-i}\otimes \scV^i\sigma\to 0$ in $\cK\otimes_{\rmI} \cK$ as $i\to \infty$, whereas $\mu_2(\scV^{-i}\otimes \scV^i\sigma)=\sigma\not\to 0$. 
\end{rem}

 Proposition~\ref{prop:multiplication-continuous} follows immediately from the following lemma and the definition of $\vecotimes$ in Section~\ref{sec:tensor-prods}:

\begin{lem}\label{lem:simple-properties-K-Rn} Let $J_n\subset \cK$ be the subspace defined in Equation~\eqref{def:knot-topology}.
\begin{enumerate}
\item For all $n$, $\mu_2(J_n\otimes \cK)\subset J_n$ (i.e. $J_n$ is a right ideal of $\cK$).
\item Suppose $x\in \cK$ and $n\in \N$ are fixed. Then for sufficiently large $m$, 
\[
\mu_2( x \otimes J_m)\subset  J_n.
\]
\end{enumerate}
\end{lem}
\begin{proof} We consider the first claim. We wish to show that $\mu_2(x\otimes a)\in J_n$ whenever $x\in J_n$ and $a\in \cK$. We break the argument into cases, depending on the idempotents of $x$ and $a$. If $x,a\in \ve{I}_1\cdot \cK \cdot\ve{I}_1$, the claim follows because $w_U(x\cdot  a)\ge w_U(x)+w_U(a)$, where $w_U$ is the $U$-adic weight (i.e. $w_U(a)=\max\{i: \exists a',a=U^i\cdot a'\}$). The same argument applies if $x\in \ve{I}_1\cdot \cK\cdot \ve{I}_1$ and $a\in \ve{I}_1\cdot  \cK \cdot \ve{I}_0$. The situation that $x,a\in \ve{I}_0\cdot  \cK\cdot  \ve{I}_0$ is also clear. We now consider the case that $x\in \ve{I}_1\cdot \cK\cdot \ve{I}_0$ and $a\in \ve{I}_0\cdot \cK \cdot \ve{I}_0$. Suppose first that $x=\scU^i\scV^j \sigma$ and $a=\scU^s\scV^t$ (for $s,t\ge 0$). Then $x\cdot a=\scU^{i+s}\scV^{j+t}\sigma$, which is clearly also in $J_n$. Similarly, if $x=\scU^i\scV^j\tau$ and $a=\scU^s\scV^t$, then
\[
x\cdot a=\scU^{i+t} \scV^{j-s+2t}\tau.
\]
By assumption, either $i\ge n$ or $j\le 2i-n$. If $i\ge n$, then $i+t\ge n$ so $x\cdot a\in J_n$. The second case is equivalent to $2i-j-n\ge 0$. We note that 
\[
2(i+t)-(j-s+2t)-n=s+2i-j-n\ge 0
\]
so $x\cdot a\in J_n$. This completes the first part of the claim.

We leave the second claim to the reader, as it may be handled similarly.
\end{proof}

Combining Corollary~\ref{cor:re-order-arrow-shriek} and Proposition~\ref{prop:multiplication-continuous} we also obtain:
\begin{cor}\label{cor:link-continuous} Suppose $\cL$ is topologized as $\cK\otimes^!\cdots \otimes^! \cK$. Then the map
\[
\mu_2\colon \cL\vecotimes_{\rmE} \cL\to \cL
\]
is continuous.
\end{cor}

\subsection{Alexander modules}

In this section, we describe our categories of type-$D$, $A$ and $DA$ modules. We will refer to these categories as the categories of \emph{Alexander modules}. 

\begin{define}  A \emph{type-$D$ Alexander module} consists of a pair  $(\cX,\delta^1)$ where $\cX$ is a linear topological right $\ve{E}$-module and 
\[
\delta^1\colon \cX\to \cX\vecotimes_{\rmE} \cL
\]
is linear topological morphism which satisfies
\[
(\id\otimes \mu_2)\circ (\delta^1\otimes \id_{\cL})\circ \delta^1=0.
\]
\end{define}

Recall that a linear topological morphism is the same as a continuous linear map on completions.
If $\cX^\cL$ and $\cY^{\cL}$, we define $\Mor(\cX^{\cL}, \cY^{\cL})$ to be the space of linear topological morphisms
\[
f^1\colon \cX\to \cY\vecotimes_{\rmE} \cL.
\]
We write $\MOD^{\cK}_{\fra}$ for the category of Alexander type-$D$ modules.

Type-$A$ Alexander modules are similar:
\begin{define}  A  \emph{type-$A$ Alexander module} $(\cX,m_j)$ over $\cL$ consists of a linear topological left $\ve{E}$-module $\cX$ and a collection of linear topological morphisms
\[
m_{j+1}\colon \cL\vecotimes_{\rmE} \cdots \vecotimes_{\rmE} \cL\vecotimes_{\rmE} \cX\to \cX,
\]
satisfying the $A_\infty$-relations.
\end{define}

Type-$DA$ Alexander modules are defined by the obvious amalgamation of the above notions.

\subsection{Split Alexander modules}
\label{sec:split-modules}

We now define the notion of a \emph{split} Alexander type-$A$ or $DA$ module. Consider the link algebra $\cL_\ell$ and suppose that $\bP$ is a partition of $\{1,\dots, \ell\}$. Suppose that $\cX$ is a linear topological left $\ve{E}_\ell$-module. A \emph{$\bP$-split continuous} linear topological morphism
\[
f_{j+1}\colon \cL_\ell\otimes_{\rmE} \cdots \otimes_{\rmE} \cL_\ell\otimes_{\rmE} \cX\to \cX
\]
is a continuous map on completions, where we give $cL_{\ell}^{\otimes j}\otimes \cX$ the tree topology from Section~\ref{sec:tree-completions}, where $\cX$ is the central vertex and we have $|\bP|$-rays of length $j$ extending from this vertex. If $p_1,\dots, p_{|\bP|}$ are the sizes of the elements of $\bP$, then each non-root vertex of the $i$-th ray is labeled with $\cL_{p_i}$ (topologized using the $!$-topology).

Note that $\bP$-split continuity is preserved by $A_\infty$-composition of such maps by Proposition~\ref{prop:contraction}.

\begin{define}
Suppose that $\bP$ is a partition of $\{1,\dots, \ell\}$. A \emph{$\bP$-split type-$A$ Alexander  module} consists of a linear topological $\ve{E}=\ve{E}_\ell$-module $\cX$, equipped with a $\bP$-split linear topological morphism
\[
m_{j+1}\colon \cL_{\ell}\otimes_{\rmE}\cdots \otimes_{\rmE} \cL_{\ell}\otimes_{\rmE} \cX\to \cX,
\]
which satisfies the $A_\infty$-module relations.
\end{define}

If $\bP$ is a partition of $\{1,\dots, \ell\}$ and $i_1,\dots, i_n$ are the sizes of the partition elements (so $i_1+\cdots+i_n=\ell$), we will write
\[
{}_{\cL_{i_1}|\cdots|\cL_{i_n}} \cX
\]
for a $\bP$-split Alexander module.

Once we introduce box tensor products of split Alexander modules, we will show in Lemma~\ref{lem:split-identity-bimodule} that the split-Alexander condition is weaker than the Alexander condition. In particular, any type-$A$ Alexander module ${}_{\cL} \cX$ is also a split type-$A$ Alexander module for any partition.

$\bP$-Split Alexander type-$DA$ modules can be defined similarly to the above, though we typically need to assume that the underlying space $\cX$ of the module is linearly compact for the type-$DA$ structure relations to be meaningful, and for $A_\infty$-composition of morphisms to be defined. (Note that all of the link surgery modules we consider in this paper are linearly compact). We explain this in the case of a $\{ \{1\},\{2\}\}$-split Alexander module ${}_{\cK|\cK} \cX^{\cK}$. Consider the spaces involved in the composition of $\delta_{i+1}^1$, $\delta_{j+1}^1$ and $\bI_{\cX}\otimes \mu_2$, applied to 
\[
\underbrace{(\cK|\cK)\otimes_{\rmE_2} \cdots \otimes_{\rmE_2} (\cK|\cK)}_{i+j}\otimes_{\rmE_2} \cX.
\]
We topologize the above space using the tree topology for tree where there are $2$ length $i+j$ rays attached to a lowest vertex. The lowest vertex is assigned space $\cX$. Proposition~\ref{prop:contraction} shows that $\delta_{i+1}^1\otimes \id$ gives a map from the above space to
\[
\underbrace{(\cK|\cK)\otimes_{\rmE_2} \cdots \otimes_{\rmE_2} (\cK|\cK)}_{j}\otimes_{\rmE_2} (\cX\vecotimes \cK).
\]
The above space is topologized using a tree with a single lowest vertex, and 2 length $i$ rays extending from this ray. The lowest vertex is assigned the space $\cX\vecotimes \cK$. Lemma~\ref{lem:retopologize-bridge} implies that if $\cX$ is linearly compact, then the above topological vector space is isomorphic to the same tensor product, equipped with the tree topology where the tree $\Gamma$ has 2 length $i$ rays connected to a vertex $v_1$, which is above a single vertex $v_0$. We assign $v_1$ the vector space $\cX$, and $v_0$ the vector space $\cK$. See Figure~\ref{fig:47} for a schematic when $i=j=1$.

\begin{figure}[h]
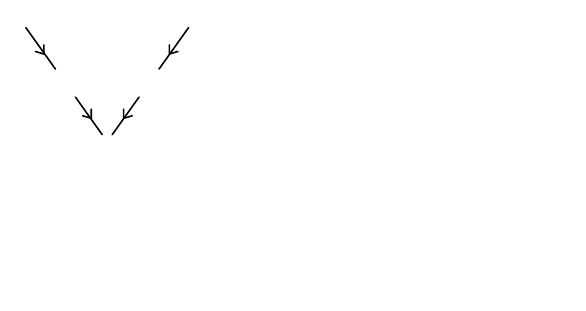
\caption{Maps appearing in the $DA$-structure relation for a split Alexander $DA$-bimodule. Here, the arrows labeled $\iso$ correspond to the isomorphism from Lemma~\ref{lem:retopologize-bridge}, which requires $\cX$ to be linearly compact.}
\label{fig:47}
\end{figure}

\subsection{External tensor products}

We now discuss external tensor products, focusing on subtleties involving completions. 

We first discuss type-$D$ modules. Suppose that $\cX^{\cL_m}$ and $\cY^{\cL_n}$ are type-$D$ Alexander modules. To form the external tensor products, we assume that $\cX$ and $\cY$ are linearly compact, so that $\cX\vecotimes \cL_m\iso \cX\otimes^! \cL_m$ by Lemma~\ref{lem:lin-comact-discrete-change-completion}, and similarly for $\cY\vecotimes \cL_n$. Under this assumption, we may define the structure map on the tensor product as $\delta^1_{\cX}\otimes \id_{\cY}\otimes 1_{\cL_n}+\id_{\cX}\otimes 1_{\cL_m}\otimes \delta^1_{\cY}$, with tensor factors reordered, as indicated below:
\[
\cX\otimes^! \cY\to (\cX\vecotimes \cL_m)\otimes^!(\cY\vecotimes \cL_n)\iso \cX\otimes^! \cL_m \otimes^! \cY\otimes^! \cL_n\iso (\cX\otimes^! \cY)\vecotimes (\cL_m\otimes^! \cL_n).
\]

Next, we consider tensoring (split) type-$A$ modules. Suppose that $\bP$ is a partition of $\{1,\dots, s\}$ and $\bP'$ is a partition of $\{1,\dots, r\}$. Let us write $i_1,\dots, i_m$ for the sizes of the elements of $\bP$, and $j_1,\dots, j_n$ for those of $\bP'$.  Suppose that we have $\bP$ and $\bP'$ split modules
\[
{}_{\cL_{i_1}|\cdots|\cL_{i_m}} \cX\quad \text{and} \quad {}_{\cL_{j_1}|\cdots|\cL_{j_n}} \cY.
\]
We will describe two versions of the external tensor product, namely
\[
{}_{\cL_{i_1}|\cdots|\cL_{i_m+j_1}|\cdots |\cL_{j_n}}(\cX\otimes^! \cY)\quad \text{and} \quad {}_{\cL_{i_1}|\cdots|\cL_{i_m}|\cL_{j_1}|\cdots|\cL_{j_n}} (\cX \otimes^! \cY).
\]
We topologize $\cL_{r+s}^{\otimes j} \otimes \cX\otimes \cY$ using the tree topology for a star with $m+n-1$ (resp. $m+n$ rays). We label the root $\cX\otimes^! \cY$, and we label each other vertex with a $\otimes^!$-tensor product of copies of $\cK$.

To construct the external tensor product, one must also pick a cellular diagonal of the associahedron. This construction is recalled in Section~\ref{sec:extension-of-scalars}. The fact that the external tensor product behaves well with completions is implied by the following result:

\begin{prop}\label{prop:external-tensor-product-type-A}
 Suppose that $\bP$ and $\bP'$ are partitions of $\{1,\dots, r\}$ and $\{1,\dots, s\}$, respectively, and that
\[
f_{j+1}\colon \cL_r^{\otimes j} \otimes \cX\to \cX\quad \text{and} \quad g_{j+1} \colon \cL_s^{\otimes j} \otimes \cY\to \cY
\]
are  $\bP$ and $\bP'$ split continuous, then their tensor product $f_{j+1}\otimes g_{j+1}$ is both $\bP\sqcup \bP'$ and $(\bP\sqcup \bP')_{p\sim p'}$ split continuous, for any $p\in \bP$ and $p'\in \bP'$. (Here we write $(\bP\sqcup \bP')_{p\sim p'}$ for the partition obtained by merging $p$ and $p'$ in $\bP\sqcup \bP'$).
\end{prop}

We will prove Proposition~\ref{prop:external-tensor-product-type-A} for the module ${}_{\cL_{i_1}|\cdots|\cL_{i_m+j_1}|\cdots |\cL_{j_n}}(\cX\otimes^! \cY)$. We will later see in Lemma~\ref{lem:split-identity-bimodule} that the module ${}_{\cL_{i_1}|\cdots|\cL_{i_m}|\cL_{j_1}|\cdots |\cL_{j_n}}(\cX\otimes^! \cY)$ is a box tensor product of this module with a split Alexander bimodule ${}_{\cL_{i_m}|\cL_{j_1}}[\bI]^{\cL_{i_m+j_1}}$, and hence is also a split Alexander module.

The proof Proposition~\ref{prop:external-tensor-product-type-A} is an immediate consequence of the following general result about tree topologies:

\begin{lem}
\label{lem:zipper-lemma}  Let $\Gamma_n^k$ denote the star which has $n$ rays of length $k$, which all point towards a root vertex $v_0$. Let $(\cX_v)_{v\in V(\Gamma_n^k)}$ and $(\cY_v)_{v\in V(\Gamma_m^k)}$ denote two families of spaces. Pick a distinguished ray from each of $\Gamma_{n}^k$ and $\Gamma_m^k$. Form a new collection of spaces
 $(\cZ_v)_{v\in \Gamma_{m+n-1}^k}$ as follows. View $\Gamma_{m+n-1}^k$ as being obtained by merging vertices of the same height along the distinguished rays.
 We define $\cZ_v$ to be either $\cX_v$ or $\cY_v$ for $v$ not along the merged ray. For $v$ along the merged ray, we define $\cZ_v$ to be $\cX_v\otimes^! \cY_v$. 
 Then the natural map
\[
\cZ_{\Gamma_{m+n-1}^k}\to \cX_{\Gamma_n^k}\otimes^! \cY_{\Gamma_m^k}
\]
is continuous. 
\end{lem}
\begin{figure}[H]
\begin{center}
\begingroup%
  \makeatletter%
  \providecommand\color[2][]{%
    \errmessage{(Inkscape) Color is used for the text in Inkscape, but the package 'color.sty' is not loaded}%
    \renewcommand\color[2][]{}%
  }%
  \providecommand\transparent[1]{%
    \errmessage{(Inkscape) Transparency is used (non-zero) for the text in Inkscape, but the package 'transparent.sty' is not loaded}%
    \renewcommand\transparent[1]{}%
  }%
  \providecommand\rotatebox[2]{#2}%
  \newcommand*\fsize{\dimexpr\f@size pt\relax}%
  \newcommand*\lineheight[1]{\fontsize{\fsize}{#1\fsize}\selectfont}%
  \ifx\svgwidth\undefined%
    \setlength{\unitlength}{311.28925259bp}%
    \ifx\svgscale\undefined%
      \relax%
    \else%
      \setlength{\unitlength}{\unitlength * \real{\svgscale}}%
    \fi%
  \else%
    \setlength{\unitlength}{\svgwidth}%
  \fi%
  \global\let\svgwidth\undefined%
  \global\let\svgscale\undefined%
  \makeatother%
  \begin{picture}(1,0.31428909)%
    \lineheight{1}%
    \setlength\tabcolsep{0pt}%
    \put(0,0){\includegraphics[width=\unitlength,page=1]{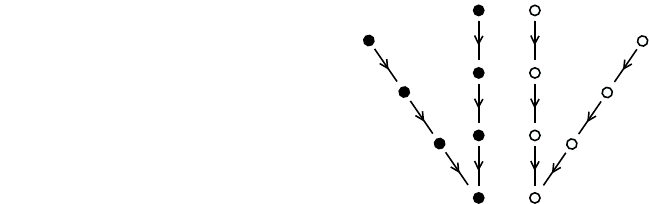}}%
    \put(0.78225966,0.13526189){\makebox(0,0)[t]{\lineheight{1.25}\smash{\begin{tabular}[t]{c}$\otimes^!$\end{tabular}}}}%
    \put(0,0){\includegraphics[width=\unitlength,page=2]{fig38.pdf}}%
    \put(0.21219853,0.00822698){\makebox(0,0)[t]{\lineheight{1.25}\smash{\begin{tabular}[t]{c}$\otimes^!$\end{tabular}}}}%
    \put(0,0){\includegraphics[width=\unitlength,page=3]{fig38.pdf}}%
    \put(0.21181373,0.10324444){\makebox(0,0)[t]{\lineheight{1.25}\smash{\begin{tabular}[t]{c}$\otimes^!$\end{tabular}}}}%
    \put(0,0){\includegraphics[width=\unitlength,page=4]{fig38.pdf}}%
    \put(0.21267362,0.19998182){\makebox(0,0)[t]{\lineheight{1.25}\smash{\begin{tabular}[t]{c}$\otimes^!$\end{tabular}}}}%
    \put(0,0){\includegraphics[width=\unitlength,page=5]{fig38.pdf}}%
    \put(0.21267362,0.29671908){\makebox(0,0)[t]{\lineheight{1.25}\smash{\begin{tabular}[t]{c}$\otimes^!$\end{tabular}}}}%
    \put(0,0){\includegraphics[width=\unitlength,page=6]{fig38.pdf}}%
  \end{picture}%
\endgroup%

\caption{A schematic of Lemma~\ref{lem:zipper-lemma}. Solid dots denote $\cX_v$ and open dots denote $\cY_v$.}
\end{center}
\label{fig:38}
\end{figure}
\begin{proof} We abbreviate $\cX_{\Gamma_n^k}$, $\cY_{\Gamma_m^k}$ and $\cZ_{\Gamma_{m+n-1}^k}$ by $\cX$, $\cY$ and $\cZ$, respectively.

 Let 
\[
U_{\cX}\otimes \cY+\cX\otimes V_{\cY}\subset \cX\otimes^! \cY
\]
 be an open subspace, where $U_{\cX}\subset \cX$ and $V_{\cY}\subset \cY$ are open subspaces. Let $s$ be an admissible labeling of $\Gamma_{n+m-1}^k$. Our goal is to show that if $\ve{p}$ is a point-enhancement of $s$, then there is an open enhancement $\cU_{\ve{p}}$ so that
 \[
 \cW(\cU_{\ve{p}},\ve{p})\subset U_{\cX}\otimes \cY+\cX\otimes V_{\cY}.
 \]
 
 There are two induced labelings $s_{\cX}$ and $s_{\cY}$ of $\Gamma_{n}^k$ and $\Gamma_{m}^{k}$, obtained by restricting $s$. We make the following claims:
\begin{enumerate}
\item At least one of $s_{\cX}$ and $s_{\cY}$ is admissible.
\item If there is an $\cO$-labeled vertex along the merged ray, then both $s_{\cX}$ and $s_{\cY}$ are admissible.
\end{enumerate}
The above claims are straightforward to prove, so we leave the argument to the reader.

Consider first the case that the merged ray has no vertices labeled $\cO$. In this case, the root is labeled $X$, but all other vertices along the merged ray are labeled $p$. Also, in this case, at least one of $s_{\cX}$ and $s_{\cY}$ is admissible. Suppose, without loss of generality, that $s_{\cX}$ is admissible. Write $v_1,\dots, v_{k-1}$ for the $p$-labeled vertices along the merged ray. If $v$ is a $p$-labeled vertex along the merged ray, write $\ve{p}(v)$ as a sum of finitely many elementary tensors. For each $v\in \{v_1,\dots, v_{k-1}\}$, write $x_{v,1},\dots, x_{v,i_v}$ for these elementary tensors.  We consider the $N:=i_{v_1}\dots i_{v_{k-1}}$ point enhancements $\ve{p}_{\cX,j}$, where $\ve{p}_{\cX,j}(w)=\ve{p}(w)$ if $w$ is not along the merged ray, and such that $\ve{p}_{\cX,j}$ enumerate all combinations of the $x_{v_i,r}$  for $i\in \{1,\dots, k-1\}$ and $r\in \{1,\dots, i_{v_i}\}$. 

Since $U_{\cX}$ is open, for each $\ve{p}_{\cX,i}$, there is an open enhancement $\cU_{\cX,i}\subset \cX$ so that 
\[
\cW(\cU_{\cX,i},\ve{p}_{\cX,i})\subset U_{\cX}.
\]
We define an open enhancement of $s$ as follows. If $v$ is in $\Gamma_n^k$ (the tree for $\cX$), we define
\[
\cU_{\ve{p}}(v)=\bigcap_{i=1}^{N} \cU_{\cX,i}(v).
\]
If $v$ is in $\Gamma_n^k$ (the tree for $\cY$), set $\cU_{\ve{p}}(v)=\cY_v$. Then
\[
\cW(\cU_{\ve{p}},\ve{p})\subset \cW(\cU_{\cX,1},\ve{p}_{\cX,1})\otimes \cY+\cdots+ \cW(\cU_{\cX,N}, \ve{p}_{\cX,N})\otimes \cY \subset U_{\cX}\otimes \cY+\cX\otimes V_{\cY},
\]
as we wanted to show.

Next, we consider the case that $s$ labels a vertex $v_0$ along the merged ray by $\cO$. In this case, both $s_{\cX}$ and $s_{\cY}$ are admissible. Consider a point enhancement $\ve{p}$ of $s$. Write $v_1,\dots, v_l$ for the vertices above $v_0$ (which are labeled $p$ by $s$). Similar to before, by separating the elements of $\ve{p}$ into elementary tensors, we obtain point enhancements $\ve{p}_{\cX,i}$ and $\ve{p}_{\cY,j}$ for $i=1,\dots, N$ and $j=1,\dots, M$. For each $i$ and $j$, there are open enhancements $\cU_{\cX,i}$ and $\cU_{\cY,j}$ of $s_{\cX}$ and $s_{\cY}$ (respectively) so that
\[
\cW(\cU_{\cX,i},\ve{p}_{\cX,i})\subset U_{\cX}\quad \text{and} \quad \cW(\cU_{\cY,j},\ve{p}_{\cX,j})\subset V_{\cY}. 
\]
We then define an open enhancement $\cU_{\ve{p}}$ of $s$ as follows.  Suppose $s(v)=\cO$. If $v$ is in $\Gamma_n^k$ not in the merged ray, we set
\[
\cU_{\ve{p}}(v)=\bigcap_{i=1}^N \cU_{\cX,i}(v).
\]
If $v$ is in $\Gamma_m^k$ but not in the merged ray, we set
\[
\cU_{\ve{p}}(v)=\bigcap_{j=1}^M \cU_{\cY,j}(v).
\]
If $v$ is in the merged ray, we set
\[
\cU_{\ve{p}}(v)=\left(\bigcap_{i=1}^N \cU_{\cX,i}(v)\right)\otimes \cY+\cX\otimes \left(\bigcap_{j=1}^M \cU_{\cY,j}(v)\right).
\]
We observe that
\[
\begin{split}
\cW(\cU_{\ve{p}},\ve{p})\subset &\left(\cW(\cU_{\cX,1},\ve{p}_{\cX,1})+\cdots+\cW(\cU_{\cX,N},\ve{p}_{\cX,N})\right)\otimes \cY\\
&+\cX\otimes \left(\cW(\cU_{\cY,1},\ve{p}_{\cY,1})+\cdots \cW(\cU_{\cY,M},\ve{p}_{\cY,M}) \right)\\
\subset& U_{\cX}\otimes \cY+\cX\otimes V_{\cY}. 
\end{split}
\]
This completes the proof.
\end{proof}

Taking the external tensor product of two type-$DA$ Alexander modules, when the underlying spaces of the modules are linearly compact, is a straightforward extension.

\begin{rem} We can see more directly that if  ${}_{\cL_i} \cX$ and ${}_{\cL_j} \cY$ are type-$A$ Alexander modules, then ${}_{\cL_{i+j}}(\cX\otimes^! \cY)$ is also an Alexander module. Consider two continuous maps
\[
f_{j+1}\colon \vec{\cT}^j \cL_n \vecotimes \cX\to \cX\quad \text{and} \quad g_{j+1}\colon \vec{\cT}^j \cL_m \vecotimes \cX\to \cX,
\]
where 
\[
\vec{\cT}^j \cL_{n}=\underbrace{\cL_n\vecotimes \cdots \vecotimes \cL_n}_n.
\]
 The external tensor product of $f_{j+1}$ and $g_{j+1}$ is continuous since it may be written as the following sequence of continuous maps:
\[
\begin{split} &\vec{\cT}^j (\cL_n\otimes^! \cL_m) \vecotimes (\cX\otimes^! \cY)\\
\to &\left(\vec{\cT}^j \cL_n \vecotimes \cX\right)\otimes^! \left(\vec{\cT}^j \cL_m\vecotimes \cY\right)\\
\to&\cX\otimes^! \cY.
\end{split}
\]
The first map is from Corollary~\ref{cor:re-order-arrow-shriek} and the second map is $f_{j+1}\otimes g_{j+1}$. 
\end{rem}

\subsection{Box tensor products}

In this section, we describe the interaction of Lipshitz, Ozsv\'{a}th and Thurston's box tensor product operation with Alexander modules. The simplest case is boxing an Alexander type-$D$ module $\cX^{\cL}$ with an Alexander type-$A$ module ${}_{\cL} \cY$. In this case, 
\[
\delta^1\colon \cX\to \cX\vecotimes_{\rmE} \cL \quad \text{and} \quad m_{j+1}\colon \cL\vecotimes_{\rmE} \cdots \vecotimes_{\rmE} \cL\vecotimes_{\rmE} \cY\to \cY,
\]
are continuous linear maps on the completions. Hence, assuming an  appropriate boundedness condition so that only finitely many terms contribute to the box tensor product (e.g. $m_j=0$ for $j\gg 0$), the differential $\d^\boxtimes$ will be a sum of finitely many continuous maps. Type-$DA$ modules present no additional complications.

Box tensor products involving split Alexander type-$A$ and $DA$ modules are more subtle. If ${}_{\cL_{i_1}|\cdots|\cL_{i_n}} \cY$ is a split Alexander type-$A$ module, it seems only possible to tensor with an Alexander type-$D$ module which is itself an external tensor product of $n$ modules 
\[
\cX_1^{\cL_{i_1}},\dots, \cX_n^{\cL_{i_n}}.
\]
In this case, we write
\[
(\cX^{\cL_{i_1}}_1,\dots,\cX^{\cL_{i_n}}_n)\hatbox {}_{\cL_{i_1}|\cdots|\cL_{i_n}} \cY
\]
for the resulting tensor product. Ignoring completions, the tensor product is formed by taking the external tensor product of the type-$D$ modules $\cX_j^{\cL_{i_j}}$ to obtain a type-$D$ module over $\cL_{i_1+\cdots+i_n}$. This type-$D$ module is then tensored with the type-$A$ module using the standard definition (see Equation~\eqref{eq:def-box-tensor}).
We topologize the underlying space of the tensor product 
\[
(\cX_1\otimes \cdots\otimes  \cX_n)\otimes \cY
\]
using the graph topology from Section~\ref{sec:tree-completions} for a tree with one root and $n$ rays. Here, $\cY$ is labeled as the root, and $\cX_1,\dots, \cX_n$ each form a ray, with an edge pointing to $\cY$. Proposition~\ref{prop:contraction} shows that the structure operations are continuous. 

We may also form the tensor product when $\cY$ or some of the $\cX_i$ are split $DA$ Alexander bimodules. The output will be a split Alexander $DA$-bimodule, whose tensor splitting of the incoming algebras is the concatenation of the tensor-splittings for each of the $\cX_i$. To illustrate, if ${}_{\cL_{i_1}|\cL_{i_2}} \cY^{\cL}$, ${}_{\cL_{j_1}|\cdots|\cL_{j_m}} \cX_1^{\cL_{i_1}}$ and ${}_{\cL_{k_1}|\cdots|\cL_{k_t}} \cX_2^{\cL_{i_2}}$ are split Alexander $DA$-bimodules, then their tensor product, $N$, will be a split module as indicated in the following equation:
\[
\left({}_{\cL_{j_1}|\cdots|\cL_{j_m}} \cX_1^{\cL_{i_1}} ,{}_{\cL_{k_1}|\cdots|\cL_{k_t}} \cX_2^{\cL_{i_2}}\right)\hatbox  {}_{\cL_{i_1}|\cL_{i_2}} \cY^{\cL}={}_{\cL_{j_1}|\cdots |\cL_{j_m}|\cL_{k_1}|\cdots |\cL_{k_t}} N^{\cL}.
\]

\begin{rem} The box tensor product operation of split Alexander modules seems definable using our techniques only when the underlying vector spaces of our modules are linearly compact. In this case, the underlying vector spaces of the tensor product $(\cX^{\cK}, \cY^{\cK})\hatbox{}_{\cK|\cK} \cZ$ is the standard tensor product $\cX\otimes^! \cY\otimes^! \cZ$ by Lemma~\ref{lem:lin-comact-discrete-change-completion}. Furthermore, assuming linear compactness, the completed box tensor product operation satisfies the following associativity property:
\[
(\cX^{\cK}, \cY^{\cK})\hatbox {}_{\cK|\cK} \cZ\iso \cX^{\cK}\hatbox {}_{\cK} (({}_{\cK} \bI^{\cK},\cY^{\cK}) \hatbox {}_{\cK|\cK} \cZ)\iso \cY^{\cK} \hatbox {}_{\cK} ((\cX^{\cK},{}_{\cK} [\bI]^{\cK})\hatbox{}_{\cK|\cK} \cZ). 
\]
Note that all of the bordered link surgery modules considered in this paper are countable direct products of $\bF$, so are linearly compact. 
\end{rem}

We now describe a split type-$DA$ Alexander bimodule ${}_{\cK|\cK} [\bI]^{\cL_2}$ which transforms any Alexander module ${}_{\cL_2} \cX$ into a split module ${}_{\cK|\cK} \cX$. The structure maps are the same as for the identity module.

\begin{lem}\label{lem:split-identity-bimodule}
 The bimodule ${}_{\cK|\cK} [\bI]^{\cL_2}$ is a split Alexander bimodule. More generally, if $\bP$ is a partition of $\{1,\dots, \ell\}$, and $\ell_1,\dots, \ell_j$ are the sizes of the partition elements, then ${}_{\cL_{\ell_1}|\cdots|\cL_{\ell_j}} [\bI]^{\cL_{\ell}}$ is a split Alexander bimodule. 
\end{lem}
\begin{proof} Consider first ${}_{\cK|\cK}[\bI]^{\cL_2}$. In this case, the claim follows from Remark~\ref{rem:small-star} and the fact that the natural map
\[
\cK\otimes^*_{\bF} \cK\to \cK\otimes^!_{\bF} \cK
\]
is continuous. The case of general $\ell$ and $\bP$ is proven by the same argument.
\end{proof}

\begin{rem} Lemma~\ref{lem:split-identity-bimodule} implies that the ordinary Alexander module condition is stronger than the split Alexander module condition. In particular, given a type-$A$ Alexander module ${}_{\cL_{\ell}} \cX$, we may always view $\cX$ as a split Alexander module for any partition of $\{1,\dots, \ell\}$. 
\end{rem}

\subsection{Finitely generated $\cK$-modules}

In this section, we discuss the category of finitely generated type-$D$ modules over $\cL$.

We denote by $\ve{\cK}$ the completion of the algebra $\cK$ with respect to the topology described above. As a vector space, we have the following isomorphisms:
\[
\begin{split}
\ve{I}_0\cdot \ve{\cK} \cdot \ve{I}_0&\iso \bF\llsquare \scU,\scV\rrsquare \\
 \ve{I}_1\cdot \ve{\cK}\cdot \ve{I}_0&\iso \bF[\scV,\scV^{-1}\rrsquare  \llsquare \scU\rrsquare  \langle \tau\rangle \oplus \bF\llsquare \scV,\scV^{-1}] \llsquare \scU\rrsquare  \langle \sigma \rangle\\
\ve{I}_1\cdot \ve{\cK} \cdot \ve{I}_1&\iso \bF[\scV,\scV^{-1}] \llsquare \scU\rrsquare .
\end{split}
\]
We may consider the completion of the link algebra $\ve{\cL}$ as well.

Since multiplication is continuous by Proposition~\ref{prop:multiplication-continuous} and Corollary~\ref{cor:link-continuous}, we obtain a well-defined map on the completed algebras
\[
\mu_2\colon \ve{\cL}\vecotimes_{\rmE} \ve{\cL}\to \ve{\cL}
\]

Since $\ve{\cL}$ is an algebra, we may also consider the ordinary category of  type-$D$ modules over this algebra (i.e. ordinary modules and algebras, with no topologies). Of particular interest to us is the category of finitely generated type-$D$ modules.
 We write $\MOD^{\ve{\cL}}_{\fg}$ for this category.

 There is a related category, $\MOD^{\cL}_{\fg,\fra}$, consisting of Alexander  type-$D$ modules over $\cL$ which are finitely generated over $\bF$.

\begin{prop}
 The categories $\MOD^{\cL}_{\fg,\fra}$ and $\MOD^{\ve{\cL}}_{\fg}$ are equivalent.
\end{prop}
\begin{proof} We define functors in both directions.  The functor  
 \[
 F\colon\MOD_{\fg, \fra}^{\cL}\to \MOD_{\fg}^{\ve{\cL}}
 \]
 is obtained by taking completions, noting that the completion of $\cX\vecotimes \cL$ coincides with the ordinary tensor product $\ve{\cX}\otimes \ve{\cL}$ when $\cX$ is finitely generated. (Note that when $\cX$ is finite dimensional, $\cX\iso \ve{\cX}$ if and only if $\cX$ is Hausdorff).

 The functor 
  \[
  G\colon \MOD_{\fg}^{\ve{\cL}}\to  \MOD_{\fg, \fra}^{\cL}
  \] 
  sends a type-$D$ module $(\cX, \delta^1)$ to itself, where $\cX$ is equipped with the discrete topology.
   This is well-defined due to the fact that if $\cX$ and $\cY$ are equipped with the discrete topology, then a map
 \[
 f^1\colon \cX\to \cY\vecotimes \ve{\cL}
 \]
 is automatically continuous, since $\{0\}\subset \cX$ is open. We observe that $F\circ G$ is the identity functor.
 
 On the other hand, $G\circ F$ maps $(\cX,\delta^1)$ to $(\ve{\cX}, \delta^1)$. Note that in $\Mod_{fg, \fra}^{\cL}$, the objects $(\cX,\delta^1)$ to $(\ve{\cX}, \delta^1)$ are canonically isomorphic, since morphisms are defined to be continuous linear maps on  completions.
 See Definition~\ref{def:linear-topological-category} and Remark~\ref{rem:canonically-isomorphic-X-veX}. In particular, there is a natural transformation from $G\circ F$ to the identity. The proof is complete.
\end{proof}

\section{Link surgery modules over $\cK$ and $\cL$}

In this section, we describe some basic link surgery modules over the algebras $\cK$ and $\cL$. We begin by defining the type-$D$ and type-$A$ modules for a solid torus and the algebraic merge module. Subsequently, we interpret the knot and link surgery formulas as type-$D$ modules over $\cK$ and $\cL$.

 \subsection{The type-$D$ module for a solid torus}
 \label{sec:solid-torus-module}
 If $\lambda$ is an integer, we refer to the complement of a $\lambda$-framed unknot in $S^3$ as the \emph{$\lambda$-framed solid torus}. We will define a type-$D$ module $\cD_{\lambda}^{\cK}$ for the $\lambda$-framed solid torus as follows.  We set
 \[
 \cD_\lambda^{\cK} \cdot \ve{I}_0=\langle \xs^0\rangle\quad \text{and} \quad   \cD_\lambda^{\cK} \cdot \ve{I}_1=\langle \xs^1\rangle,
 \]
 where $\langle \xs^\veps\rangle=\bF$, spanned by a generator $\xs^\veps$. Here, $\veps\in \{0,1\}$ denotes the idempotent. We define the structure map via the formula
 \[
 \delta^1(\xs^0)=\xs^1\otimes (\sigma+\scV^n \tau).
 \]

\subsection{The type-$A$ module for a solid torus}
\label{sec:framing}
Suppose that $\lambda$ is an integer. We now define the type-$A$ module 
${}_{\cK}\cD_\lambda$ for the solid torus as follows.  We set
\[
\ve{I}_0\cdot \cD_\lambda=\bF[\scU,\scV]\quad \text{and} \quad \ve{I}_1\cdot \cD_\lambda=\bF[\scU,\scV,\scV^{-1}].
\]
We define the type-$A$ structure map $m_j$ on $\cD_\lambda$ to be 0 unless $j=2$. We define $m_2$ on $\cD_\lambda$ as follows. If $f\in \ve{I}_i\cdot \cK\cdot \ve{I}_i$ and $x\in \ve{I}_j\cdot \cD_j$, then we define $m_2(f,x)$ to be $f\cdot x$ (ordinary multiplication of polynomials) if $i=j$ and to be 0 otherwise.  If $x\in \ve{I}_0\cdot \cD_{\lambda}$, we define
\[
m_2(\sigma,x)= \phi^\sigma(x)\in \bF[\scU,\scV,\scV^{-1}]=\ve{I}_1\cdot \cD_\lambda,
\]
where $\phi^\sigma$ is the canonical inclusion of localization.  Similarly, we define
\[
m_2(\tau,x)=\scV^\lambda \cdot \phi^\tau(x)\in \bF[\scU,\scV,\scV^{-1}],
\]
where $\cdot$ denotes ordinary multiplication of polynomials.

We view $\cD_{\lambda}$ as being the direct sum of the 1-dimensional spans of monomials, and give $\cD_{\lambda}$ the product topology with respect to this decomposition. See Definition~\ref{def:cofinite-basis}. With respect to this topology, the completion is given by
\[
\ve{I}_0\cdot \ve{\cD}_\lambda=\bF\llsquare \scU,\scV\rrsquare ,\quad \text{and} \quad  \ve{I}_1\cdot \ve{\cD}_\lambda=\bF\llsquare \scU,\scV,\scV^{-1}\rrsquare .
\]
(Note that the latter object is defined only as a vector space, and not as a ring, since we are taking completions with respect to $\scV$ and $\scV^{-1}$).

\begin{lem}\label{lem:continuity-framing} The map
\[
m_2\colon \cK\vecotimes_{\rmI} \cD_{\lambda}\to \cD_{\lambda}
\]
is continuous.
\end{lem}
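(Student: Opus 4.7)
The plan is to reduce the continuity statement to a componentwise check, and then to handle each piece via the explicit formulas for $m_2$ combined with the definition of the ideals $J_n$. Because the number of incoming algebra factors is $j = 1$, an ascending subspace $W \subset \cK$ is simply a locally cofinite subspace, so for each $n$ we may fix a finite dimensional $V_n \subset \cK$ with $W \subset V_n + J_n$.

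By the analog of Lemma~\ref{lem:continity=projective-cont} for $\cD_{\lambda}$, it suffices to show that for each basis monomial $y \in \cD_{\lambda}$, the component map $m_{2,\langle y\rangle}\colon W \otimes \cD_{\lambda} \to \langle y\rangle$ is continuous at $0$. Equivalently, given $y$, I want to produce an integer $N$ and a finite set $S$ of basis monomials of $\cD_{\lambda}$ such that
\[
m_2\bigl((W\cap J_N)\otimes \cD_{\lambda} + W\otimes \cD_{\lambda,\co(S)}\bigr) \subset \cD_{\lambda,\co(\{y\})}.
\]

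The heart of the argument is the claim that, for any fixed basis monomial $y$, if $N$ is sufficiently large then $m_2(J_N \otimes \cD_{\lambda})$ contains no summand equal to $y$. This is a direct case check against Definition~\ref{def:knot-topology}, split across the four pieces of $\cK$. For $f = \scU^i\scV^j \in \ve{I}_0\cdot\cK\cdot\ve{I}_0$ with $\max(i,j)\ge N$ acting on $\scU^a\scV^b\in\ve{I}_0\cdot\cD_\lambda$, the output $\scU^{i+a}\scV^{j+b}$ has $\max\ge N$ and cannot equal a fixed $y=\scU^c\scV^d$ once $N>\max(c,d)$; the $\ve{I}_1\cdot\cK\cdot\ve{I}_1$ case and the $\sigma$ case are analogous. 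The only nontrivial piece is the $\tau$ case: $m_2(\scU^i\scV^j\tau,\scU^a\scV^b) = \scU^{i+b}\scV^{j-a+2b+\lambda}$, and matching this with a fixed $y=\scU^c\scV^d\in \ve{I}_1\cdot \cD_\lambda$ forces $b = c-i$ and $a = j - 2i + 2c - d + \lambda$. The hypothesis $\scU^i\scV^j\tau\in J_N$ means either $i\ge N$ (forcing $b<0$ when $N>c$) or $2i-j\ge N$ (forcing $a<0$ when $N>2c-d+\lambda$); in either case the required $a,b\ge 0$ fail and there is no contribution. So $N = N(y)$ may be chosen large enough uniformly on $J_N$.

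Having fixed $N$, the remaining finite dimensional piece is easy: pick a basis $f_1,\dots,f_k$ of $V_N$. For each $f_r$, the formula for $m_2(f_r,-)$ shows that only finitely many basis monomials $x\in \cD_\lambda$ satisfy $m_2(f_r,x)\in \langle y\rangle$, so we may collect these into finite sets $S_r$ and set $S = S_1\cup\cdots\cup S_k$. Then $m_2(V_N \otimes \cD_{\lambda,\co(S)})$ has vanishing $y$-component, while $m_2(J_N \otimes \cD_{\lambda})$ has vanishing $y$-component by the previous paragraph; decomposing $W\subset V_N + J_N$ combines these two observations to give the required open neighborhood of $0$ that maps into $\cD_{\lambda,\co(\{y\})}$. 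The main subtlety is really only the $\tau$ case, where the twisted action $T(\scU) = \scV^{-1}$, $T(\scV)=\scU\scV^2$ is precisely why $J_n$ was defined asymmetrically in Definition~\ref{def:knot-topology}; once one checks that the asymmetric region in $J_n$ is exactly what is needed to absorb the $\tau$-twist, the rest of the verification is routine.
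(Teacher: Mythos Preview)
Your proof is correct and follows essentially the same approach as the paper: both decompose an ascending $W\subset\cK$ as $W\subset V_n+J_n$, verify by an idempotent-by-idempotent case check (with the $\tau$ case being the only nontrivial one) that $m_2(J_n\otimes\cD_\lambda)$ avoids any fixed finite set of target monomials for $n$ large, and then handle the finite-dimensional $V_n$ piece using that each algebra monomial has at most one preimage for a given output. The only cosmetic difference is that you reduce to a single target monomial $y$ via the componentwise criterion, whereas the paper works directly with a finite target set $S$.
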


\begin{proof}Write $\cD$ for $\cD_{\lambda}$. Continuity amounts to two claims. The first is that if $E\subset  \cD$ is an open subspace, then there is some $n$ so that $m_2(J_n\otimes \cD)\subset E$. Additionally, we need to show that if $E$ is as above and $a\in \cK$, then there is some open $V_a\subset \cD$ so that $m_2(a\otimes V_a)\subset E$. By definition, we may assume that $E$ is $\cD_{\co(S)}$ for some finite set of basis elements in $\cD$. (Recall that $\cD_{\co(S)}$ is the span of the basis elements not in $S$).

For the first claim, it suffices to show that if $n$ is sufficiently large, then $J_n\otimes \cD$ is mapped into $\cD_{\co(S)}$. This may be proven by considering each idempotent of $\cK$ separately. To ensure $(\ve{I}_0\cdot J_n\cdot \ve{I}_0)\otimes \cD$ is sent into $\cD_{\co(S)}$, it is sufficient to pick $n$ larger than $\max(p,q)$ ranging over all monomials $\scU^p\scV^q$ appearing in $\ve{I}_0\cdot S$.  For $(\ve{I}_1\cdot J_n\cdot \ve{I}_1)\otimes \cD$ to be sent into $\cD_{\co(S)}$, we pick $n$ larger than $p$ for each monomial $\scU^p\scV^q$ appearing  in $\ve{I}_1\cdot S$. Finally, to ensure that $(\ve{I}_1\cdot J_n \cdot\ve{I}_0)\otimes \cD$ is sent into $\cD_{\co(S)}$, it is sufficient to pick $n$ larger than $\max(p,q, 2p-q)$, ranging over monomials $\scU^p\scV^q$ appearing in a summand of $\ve{I}_1\cdot S$. To see that this is sufficient, note that if $\scU^i\scV^j\tau\otimes \scU^s\scV^t\in J_n\otimes \cD$, then $\max(i, 2i-j)\ge n$. We observe that $\scU^i\scV^j\tau \scU^s\scV^t=\scU^{i+t}\scV^{j-s+2t}\tau$ lies in $\cD_{\co(S)}$, since 
\[
\max(i+t, 2(i+t)-(j-s+2t))=\max(i+t,2i-j+s)\ge \max(i,2i-j) \ge n.
\]
The fact that $\scU^i \scV^j \sigma\otimes \scU^s \scV^t\in J_n\otimes \cD$ is sent into $\cD_{\co(s)}$ is similar. 

Next, we need to show that if $a\in \cK$ is fixed, then there is a finite subset $T$ of generators of $\cD$ so that $a\otimes \cD_{\co(T)}$ is mapped into $\cD_{\co(S)}$. Without loss of generality, we assume that $a$ is a monomial. We observe the following basic fact: if $a\in \ve{I}_{\veps'}\cdot \cK\cdot \ve{I}_{\veps}$ and $y\in \ve{I}_{\veps'}\cdot \cD$ are non-zero monomials, then there is at most 1 (in particular finitely many) $x\in \ve{I}_{\veps}\cdot \cD$ such that $m_2(a,x)=y$. In particular, there exists a finite set of generators $T$ such that $a\otimes \cD_{\co(T)}$ is sent into $\cD_{\co(S)}$, completing the proof.
\end{proof}

\begin{rem}\label{rem:m2-not-continuous}
 Note that $m_2$ is \emph{not} continuous as a map from $\cK\otimes^! \cD$ to $\cD$. Compare Remark~\ref{rem:continuity-and-K}. In fact, $m_2$ does not induce a map from the completion of $\cK\otimes^! \cD_\lambda$ to $\ve{\cD}_{\lambda}$. For example, in the completion of $\cK\otimes^! \cD_{\lambda}$ the sum $\sum_{i=0}^\infty \scV^i \otimes \scV^{-i} i_1$ converges since $\scV^{-i}i_1\to 0$ in the topology of $\cD_{\lambda}$, while $m_2$ cannot be defined on such infinite sums. 
\end{rem}

The module for a solid torus extends to a type-$AA$ bimodule:
\[
{}_{\cK} [\cD_{\lambda}]_{\bF[U]}.
\]
Here, we have $U$ act on the right via polynomial multiplication with $\scU\scV$ (summed over both idempotents).
 Since $[\scU\scV, \tau]=0$ and $[\scU\scV,\sigma]=0$, the construction above defines an $AA$-bimodule structure on $\cD_\lambda$. Note that only $m_{1,1,0}$ and $m_{0,1,1}$ are non-trivial.

 Additionally, we can also think of the module for a solid torus as corresponding instead to a $DA$-bimodule ${}_{\cK} [\cD_{\lambda}]^{\bF[U]}$, in such a way that
 \[
 {}_{\cK} [\cD_{\lambda}]^{\bF[U]}\hatbox {}_{\bF[U]}\bF[U]_{\bF[U]}\iso {}_{\cK} [\cD_{\lambda}]_{\bF[U]}.
 \] 
 For this description, the generators of $\ve{I}_0\cdot  {}_{\cK} [\cD_{\lambda}]^{\bF[U]}$ are $\scU^i$ and $\scV^j$ for $i,j\ge 0$. The generators of $\ve{I}_1\cdot  {}_{\cK} [\cD_{\lambda}]^{\bF[U]}$ are $\scV^i$ for $i\in \Z$. This module has $\delta_j^1=0$ unless $j=2$. We illustrate the structure map $\delta_2^1$ of ${}_{\cK}[\cD_{0}]^{\bF[U]}$ in Figure~\ref{fig:solid-torus-DA} and leave the structure relations to the reader for other framings.
 
 \begin{figure}[ht]
 \[
 \begin{tikzcd}[labels=description,column sep=1.3cm, row sep=1.5cm]
 \cdots
 &[-1cm]
 \scU^2
 	\ar[r, bend left=20, "\scV|U"]
 	\ar[from=r, bend left=20, "\scU|1"]
 	\ar[d,bend left,"\sigma|U^2",pos=.6]
 	\ar[d, bend right,"\tau|1", pos=.3]
 &
 \scU
 	\ar[r, bend left=20, "\scV|U"]
 	\ar[from=r, bend left=20, "\scU|1"]
 	\ar[d,bend left,"\sigma|U",pos=.6]
 	\ar[d, bend right,"\tau|1", pos=.3]
 &
 1
 	\ar[r, bend left=20, "\scV|1"]
 	\ar[from=r, bend left=20, "\scU|U"]
 	\ar[d,bend left,"\sigma|1",pos=.6]
 	\ar[d, bend right,"\tau|1", pos=.3]
 &
 \scV
 	\ar[r, bend left=20, "\scV|1"]
 	\ar[from=r, bend left=20, "\scU|U"]
 	\ar[d,bend left,"\sigma|1",pos=.6]
 	\ar[d, bend right,"\tau|U", pos=.3]
 &
  \scV^2
 	\ar[d,bend left,"\sigma|1",pos=.6]
 	\ar[d, bend right,"\tau| U^2", pos=.3]	
 &[-1cm]
 \cdots
 \\
 \cdots
 &
 \scV^{-2}
 	\ar[r, bend left=20, "\scV|1"]
 	\ar[from=r, bend left=20, "\scU|U"]
 &
 \scV^{-1}
 	\ar[r, bend left=20, "\scV|1"]
 	\ar[from=r, bend left=20, "\scU|U"]
 &
 1
 	\ar[r, bend left=20, "\scV|1"]
 	\ar[from=r, bend left=20, "\scU|U"]
 &
  \scV
  	\ar[r, bend left=20, "\scV|1"]
  	\ar[from=r, bend left=20, "\scU|U"]
 &
 \scV^2
 &
 \cdots
 \end{tikzcd}
 \]
 \caption{The $DA$-bimodule ${}_{\cK} \cD_0^{\bF[U]}$. An arrow decorated with $a|b$ from $\xs$ to $\ys$ means that $\delta_2^1(a,\xs)=\ys\otimes b$.}
 \label{fig:solid-torus-DA}
 \end{figure}

 \subsection{The merge bimodule}
 
 \label{sec:merge}
 
 In this section, we define a bimodule ${}_{\cK|\cK}M^{\cK}$, which we call the \emph{merge bimodule}.

 Whenever $R$ is a commutative algebra over $\bF$, there is a morphism of algebras $\phi\colon  R\otimes_{\bF} R\to R$, given by $\phi(a\otimes b)=ab$. A ring homomorphism $\psi\colon A\to B$ always determines a bimodule ${}_{A} [\psi]^B$ whose underlying vector space is the ground ring, and has structure map $\delta_1^1=0$ and $\delta_2^1(a\otimes 1)=1\otimes \psi(a)$. In particular, there is a natural bimodule ${}_{R\otimes R} [\phi]^{R}$ for any commutative ring $R$.
 
  The algebra $\cK$ is not commutative, though the bimodule ${}_{\cK|\cK} M^{\cK}$ is somewhat analogous.
 
 We now describe the bimodule $M$. To make the notation clear, in this section we will write $|$ for $\otimes_{\bF}$ and $\otimes$ for $\otimes_{\rmI}$.
 
 As a vector space, $M$ is $\ve{I}=\ve{I}_0\oplus \ve{I}_1$. There is a left action of $ \ve{I}| \ve{I}$, given by
 \[
 (i_1|  i_2)\cdot i=i_1i_2\cdot i.
 \]
  The right $\ve{I}$-action is the standard action.
  
On $M$, only $\delta_2^1$ and $\delta_3^1$ are non-trivial. The map $\delta_2^1$ is defined as follows. Suppose that $a_1|a_2$ is an elementary tensor in either
  \[
  (\ve{I}_0|\ve{I}_0)\cdot (\cK|\cK) \cdot (\ve{I}_0|\ve{I}_0)  \quad \text{or} \quad  (\ve{I}_1|\ve{I}_1)\cdot (\cK|\cK) \cdot  (\ve{I}_1|\ve{I}_1).
  \]
In this case, we set
  \[
  \delta_2^1(a_1|a_2 \otimes i)=i\otimes a_1a_2.
  \]
  On any other elementary tensor, we set $\delta_2^1$ to vanish.
 
 We define $\delta_3^1$ as follows. We set
 \[
 \delta_3^1(1| \sigma,\sigma| 1\otimes i_0)=i_1\otimes \sigma,\quad \text{and} \quad \delta_3^1(1| \tau,\tau| 1\otimes i_0)=i_1\otimes \tau.
 \]
 More generally, if $a,b,c,d$ are monomials concentrated in single idempotents, then we set
 \[
 \delta_3^1( a | b\sigma, c\sigma| d, i_0)=i_0\otimes abc\sigma d
 \]
 and similarly for the $\tau$ terms. We set
 \[
 \delta_3^1(\sigma| 1,1| \sigma\otimes i_0)=0,
 \]
 and similarly if $\tau$ replaces $\sigma$. The $DA$-bimodule relations are straightforward to verify.

   \begin{lem}\label{lem:merge-continuous} The merge module ${}_{\cK|\cK} M^\cK$ is a split Alexander module (using the discrete partition on the two incoming algebra factors). 
   \end{lem}

   \begin{rem} The structure maps of the merge module are \emph{not} continuous when we view the bimodule as a non-split module over $\cK\otimes^!\cK$ or $\cK\otimes^* \cK$, i.e. as  ${}_{\cK\otimes^! \cK} M^{\cK}$ or ${}_{\cK\otimes^* \cK} M^{\cK}$. To see this, observe that discontinuity with respect to the topology on $\cK\otimes_{\bF}^*\cK $ implies discontinuity with respect to $\cK\otimes^!_{\bF}\cK$ since there is a continuous map $\cK\otimes^*_{\bF} \cK\to \cK\otimes_{\bF}^! \cK$. Note that a sequence of tensors 
  \[
  \ve{x}_n=(x_n| y_n)\otimes (z_n| w_n)\in (\cK\otimes_{\bF}^* \cK)\vecotimes_{\rmE} ( \cK\otimes_{\bF}^* \cK)
  \]
    will converge to 0 if $x_n$ is constant in $n$ and $y_n\to 0$. In particular, we need not make any assumptions about $z_n$ or $w_n$. We note that $\scV^n\sigma\to 0$ in $\cK$, as each of the ideals $J_m$ (which form a basis of open sets  centered at $0$ for our topology on $\cK$) contains $\scV^n \sigma$ for all sufficiently large $n$. Therefore, 
   \[
   (1|\scV^n \sigma)\otimes (\scV^{-n} \sigma| 1)\to 0.
   \]
 On the other hand,
   \[
   \delta_3^1\left(1| \scV^n \sigma,\scV^{-n} \sigma| 1, i_0\right)=i_1\otimes \sigma\not\to 0,
   \]
   so $\delta_3^1$ is not continuous.
   \end{rem}

   \begin{proof}[Proof of Lemma~\ref{lem:merge-continuous}] We consider first $\delta_2^1$. In this case, the only non-trivial actions on $i_0$ or $i_1$ are from pairs $a| b$ where $a$ and $b$ are both in $\ve{I}_0\cdot \cK\cdot \ve{I}_0$ or both in $\ve{I}_1\cdot\cK\cdot \ve{I}_1$. In this case, the claim is clear.

We now consider $\delta_3^1$. Recall that verifying the split Alexander condition amounts to showing that $\delta_3^1$ is continuous if we use the tree topology from Section~\ref{sec:tree-completions} using the following tree: 

\begin{center}
\begingroup%
  \makeatletter%
  \providecommand\color[2][]{%
    \errmessage{(Inkscape) Color is used for the text in Inkscape, but the package 'color.sty' is not loaded}%
    \renewcommand\color[2][]{}%
  }%
  \providecommand\transparent[1]{%
    \errmessage{(Inkscape) Transparency is used (non-zero) for the text in Inkscape, but the package 'transparent.sty' is not loaded}%
    \renewcommand\transparent[1]{}%
  }%
  \providecommand\rotatebox[2]{#2}%
  \newcommand*\fsize{\dimexpr\f@size pt\relax}%
  \newcommand*\lineheight[1]{\fontsize{\fsize}{#1\fsize}\selectfont}%
  \ifx\svgwidth\undefined%
    \setlength{\unitlength}{96.5411208bp}%
    \ifx\svgscale\undefined%
      \relax%
    \else%
      \setlength{\unitlength}{\unitlength * \real{\svgscale}}%
    \fi%
  \else%
    \setlength{\unitlength}{\svgwidth}%
  \fi%
  \global\let\svgwidth\undefined%
  \global\let\svgscale\undefined%
  \makeatother%
  \begin{picture}(1,0.29742232)%
    \lineheight{1}%
    \setlength\tabcolsep{0pt}%
    \put(0.51968432,0.25260218){\makebox(0,0)[t]{\lineheight{1.25}\smash{\begin{tabular}[t]{c}$\cK$\end{tabular}}}}%
    \put(0.05775951,0.25260218){\makebox(0,0)[t]{\lineheight{1.25}\smash{\begin{tabular}[t]{c}$\cK$\end{tabular}}}}%
    \put(0.05775951,0.00730339){\makebox(0,0)[t]{\lineheight{1.25}\smash{\begin{tabular}[t]{c}$\cK$\end{tabular}}}}%
    \put(0.51968432,0.00730339){\makebox(0,0)[t]{\lineheight{1.25}\smash{\begin{tabular}[t]{c}$\cK$\end{tabular}}}}%
    \put(0,0){\includegraphics[width=\unitlength,page=1]{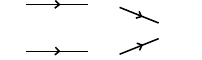}}%
    \put(0.81249908,0.11988534){\makebox(0,0)[lt]{\lineheight{1.25}\smash{\begin{tabular}[t]{l}$\ve{I}$\end{tabular}}}}%
  \end{picture}%
\endgroup%

\end{center}

 We define a function
   \[
   \tilde{\mu}\colon \cK\otimes_{\bF}^* \cK\to \cK
   \]
      given on elementary tensors of monomials by
      \[
      \tilde{\mu}(a \sigma, b\sigma)=ab\sigma\quad \text{and} \quad \tilde{\mu}(a\tau, b\tau)=ab\tau,
      \]
      with $\tilde{\mu}$ vanishing on other elementary tensors of monomials. Additionally, we consider a map
      \[
      \Pi\colon (\cK\otimes_{\rmI}\cK)\otimes_{\bF} (\cK\otimes_{\rmI}\cK)\to (\cK\otimes_{\rmI}\cK)\otimes_{\bF} (\cK\otimes_{\rmI}\cK)
      \]
      such that $\Pi$ is the identity on the elementary tensors of monomials
      \[
      (a\sigma\otimes a')| (b\otimes b'\sigma)\quad \text{and} \quad (a\tau\otimes a')| (b\otimes b'\tau),
      \]
      and such that $\Pi$ vanishes on other elementary tensors of monomials.
      
      We observe that
      \[
      \delta_3^1(\ve{a},1)=1\otimes (\tilde{\mu}\circ (\mu_2| \mu_2)\circ \Pi)(\ve{a}).
      \]
      The map $\Pi$ is clearly continuous with respect to the tree topology, and furthermore, $\mu_2|\mu_2$ is continuous by Propositions~\ref{prop:multiplication-continuous} and ~\ref{prop:contraction}. Note that if we topologize $(\cK|\cK)\otimes_{\rmI|\rmI} \ve{I}$ with the tree topology, it is isomorphic to $\cK\otimes^*_{\bF} \cK$. (See Remark~\ref{rem:small-star}). Hence, it suffices to show that $\tilde{\mu}$ is continuous. This amounts to the following:
      \begin{enumerate}[label=($m$-\arabic*), ref=$m$-\arabic*]
      \item\label{eq:tilde-mu-1}For each $n\in \N$, there are $k,m\in \N$ so that
      \[
      \tilde{\mu}(J_k\otimes J_m)\subset J_n.
      \]
      \item \label{eq:tilde-mu-2} For each $n\in \N$ and $x\in \cK$, there is an $m\in \N$ so that
      \[
      \tilde{\mu}(x\otimes J_{m})\subset J_n\quad \text{and} \quad \tilde{\mu}(J_{m}\otimes x)\subset J_n.
      \]
      \end{enumerate}
      Both of the arguments are elementary, and we illustrate the idea. Recall that by out construction in Definition~\ref{def:knot-topology}, $\scU^i\scV^j \sigma\in J_n$ if and only if $i \ge 0$ and $\max(i,j)\ge n$. If $\scU^i\scV^j\sigma$ and $\scU^s \scV^t\sigma$ are two monomials in $J_n$, then at least one of two situations occurs: both $j,t\ge n$; or one of $i$ and $s$ is at least $n$. If $j,t\ge n$, then $\scU^{i+s} \scV^{j+t}\sigma\in J_n$ since $j+t\ge 2n$. If one of $i$ and $s$ is at least $n$, then $i+s\ge n$. A symmetric analysis holds for the multiples of $\tau$, and we see that
      \[
      \tilde{\mu}(J_n\otimes J_n)\subset J_n,
      \]
      so \eqref{eq:tilde-mu-1} holds. A similar and elementary argument implies ~\eqref{eq:tilde-mu-2}.
         \end{proof}

There is an asymmetry in the merge module. Recall that we set
\[
\delta_3^1(1|\sigma, \sigma|1,i_0)=i_1\otimes \sigma \quad \text{and} \quad\delta_3^1(\sigma|1,1|\sigma, i_0)=0 .
\]
We define a different module ${}_{\cK|\cK} N^{\cK}$ by instead setting
\[
\delta_3^1(1|\sigma, \sigma|1,i_0)=0 \quad \text{and} \quad \delta_3^1(\sigma|1,1|\sigma, i_0)=i_1\otimes \sigma.
\]
We make a similar change to the map $\delta_3^1$ on $\tau$ inputs.
 
  This construction turns out to produce homotopy equivalent bimodules:
 
 \begin{lem}\label{lem:M-n-well-defined} There is a homotopy equivalence of $DA$-bimodules
 \[
 {}_{\cK|\cK} M^{\cK}\simeq {}_{ \cK|\cK} N^{\cK}.
 \]
 \end{lem}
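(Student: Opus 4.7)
The plan is to construct an explicit $DA$-bimodule homomorphism $F_*^1 \colon {}_{\otimes^2 \cK} M_2^{\cK} \to {}_{\otimes^2 \cK} N_2^{\cK}$ together with an inverse $G_*^1$. Since both bimodules share the same underlying $(\otimes^2 \ve{I}, \ve{I})$-bimodule $\ve{I}$ and the same $\delta_2^1$ action, the first-order term $F_1^1 \colon i_\veps \mapsto i_\veps \otimes 1$ is well-defined and automatically commutes with $\delta_2^1$. All the work is therefore concentrated in the length $\ge 3$ portion of the $DA$-morphism relation, where the discrepancy between $\delta_3^{M_2}$ and $\delta_3^{N_2}$ must be cancelled by higher-order correction terms.

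First, I will write out $\delta_3^{M_2}(f_2 \hat\sigma_2, f_1 \hat\sigma_1, i_0)$ and $\delta_3^{N_2}$ applied to analogous inputs using the inductive definitions, which push scalar coefficients leftward (respectively rightward) past the $\hat\sigma_j$ or $\hat\tau_j$ markers via the commutation rules $\sigma f = I(f) \sigma$ and $\tau f = T(f) \tau$. The resulting two expressions both take values in $\ve{I}_1 \cdot \cK \cdot \ve{I}_0$, but differ by a term whose precise form is dictated by these commutation rules. Guided by this explicit difference, I will define a higher correction term $F_2^1 \colon (\cK\otimes\cK)^{\otimes 2} \otimes M_2 \to N_2 \otimes \cK$ by an explicit bilinear formula on elementary tensors in the idempotent-preserving sectors, and vanishing on all other sectors. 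With $F_2^1$ so chosen, the length-3 morphism relation is satisfied by construction.

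Next I will verify that no higher terms $F_j^1$ for $j \ge 3$ are required, so that the morphism relation closes up. The verification at length $\ge 4$ reduces to an algebraic identity in $\cK$ that follows from the associativity of multiplication, the commutativity of $\ve{I}_i \cdot \cK \cdot \ve{I}_i$ for $i \in \{0,1\}$, and the compatibility of the homomorphisms $I$ and $T$ with multiplication in $\bF[\scU,\scV]$. The inverse morphism $G_*^1 \colon N_2 \to M_2$ is constructed by the symmetric procedure with the roles of left and right interchanged. Direct computation using the same commutation rules shows that the compositions $G_*^1 \circ F_*^1$ and $F_*^1 \circ G_*^1$ agree with the respective identity morphisms up to an explicit nullhomotopy, establishing the homotopy equivalence.

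The main obstacle is the book-keeping of the $DA$-morphism relation at length 3 and beyond: extracting the precise form of the obstruction requires carefully tracking the iterated push-operations governing the inductive definitions of $\delta_3^{M_2}$ and $\delta_3^{N_2}$, and showing that the resulting corrections assemble into a single consistent $F_2^1$. Separating the computation by idempotent sector of $\cK \otimes \cK$ and handling the $\sigma$ and $\tau$ contributions in parallel (their interaction is trivial, since the output of each lies in a sector where the other acts as zero) will render this combinatorial check tractable.
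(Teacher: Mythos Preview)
Your strategy matches the paper's: construct a homomorphism with $f_1^1 = \id$ plus one higher correction term. However, two specifics in your plan are off and would derail the computation.

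The correction term $f_2^1$ takes a \emph{single} algebra input (its domain is $(\cK\otimes\cK)\otimes M_2$, not $(\cK\otimes\cK)^{\otimes 2}\otimes M_2$), and it is supported on the idempotent-\emph{changing} sector, not the preserving one. The point is that $\delta_3^{M_2}$ and $\delta_3^{N_2}$ are nonzero on \emph{different} input sequences---$(i_1|\sigma,\,\sigma|i_0)$ versus $(\sigma|i_1,\,i_0|\sigma)$---so there is no meaningful ``difference dictated by commutation rules'' to read off on a common input. Rather, both sequences have the same product $\sigma|\sigma$ under $\mu_2$, and the correction enters the two-algebra-input morphism relation through the term $f_2^1(\mu_2(a_2,a_1),i_0)$. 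The paper's formula is simply
\[
f_2^1(\sigma|\sigma, i_0)=i_1\otimes\sigma,\qquad f_2^1(\tau|\tau, i_0)=i_1\otimes\tau,
\]
and zero elsewhere. With this, the three nontrivial trees in the two-algebra-input relation cancel directly, and since there is no $\delta_j^1$ for $j>3$ on either bimodule, the higher relations are brief checks. The same formula defines the inverse, and the composites equal the identity on the nose---no separate homotopy is required.
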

 \begin{proof} We construct a morphism $f_{j+1}^1$ between the two bimodules. We set $f_1^1(\ve{x})=\ve{x}\otimes 1$. The only other non-vanishing term is $f_2^1$. This map is determined by the relations
 \[
 f_2^1( \sigma| \sigma, i_0)=i_1\otimes \sigma \quad \text{and} \quad f_2^1(\tau|\tau,i_0)=i_1\otimes \tau.
 \]
  The $DA$-bimodule morphism structure relations for two algebra inputs which both change idempotent are given by the schematic
 \[
 \begin{tikzcd}[column sep=.5cm,row sep=.3cm]a_2\ar[ddrr]& a_1\ar[ddr]& \ve{x}\ar[d]\\
 &&f_1^1\ar[d]\ar[ddr]\\
 &&\delta_3^1\ar[d]\ar[dr]\\
 &&\,&\mu_2
 \end{tikzcd}
 \hspace{-.2cm}+\hspace{-.2cm}
 \begin{tikzcd}[column sep=.5cm,row sep=.3cm]a_2\ar[drr]& a_1\ar[dr]& \ve{x}\ar[d]\\
 &&\delta_3^1\ar[d]\ar[ddr]\\
 &&f_1^1\ar[d]\ar[dr]\\
 &&\,&\mu_2
 \end{tikzcd}
 \hspace{-.2cm}+\hspace{-.2cm}
 \begin{tikzcd}[column sep=.5cm,row sep=.3cm]a_2\ar[dr]& a_1\ar[d]& \ve{x}\ar[dd]\\
 &\mu_2\ar[dr]&\,\\
 &&f_2^1\ar[d]\ar[dr]\\
 &&\,&\mu_2
 \end{tikzcd}
 \hspace{-.2cm}=0
 \]
 which is easily verified. The structure relations for $n\neq 2$ algebra inputs are also easily verified.
 \end{proof}

 \subsection{The type-$A$ identity bimodule}
 \label{sec:AAidentity}
 It is helpful to introduce another bimodule, which we denote by
 \[
 {}_{\cK| \cK} [\bI^{\Supset}]:={}_{\cK| \cK} M^{\cK}\hatbox {}_{\cK}\cD_0.
 \]
 We call $ {}_{\cK| \cK} [\bI^{\Supset}]$ the \emph{type-$A$ identity module}.  We will use ${}_{\cK| \cK} [\bI^{\Supset}]$ to transform type-$D$ actions into type-$A$ actions.  The type-$A$ module ${}_{\cK| \cK} [\bI^{\Supset}]$ appears in the pairing theorem, stated in Theorem~\ref{thm:pairing}.

 There is a further refinement which takes into account the right $\bF[U]$-action on $\cD_0$. We define
 \[
 {}_{\cK| \cK}[\bI^{\Supset}]_{\bF[U]}:={}_{\cK|\cK} M^{\cK}\hatbox {}_{\cK}[\cD_0]_{\bF[U]}.
 \]
 Verifying the pairing theorem with the bimodule ${}_{\cK|\cK} [\bI^{\Supset}]_{\bF[U]}$ turns out to be more subtle than for ${}_{\cK|\cK}[\bI^{\Supset}]$. Nonetheless we prove the pairing theorem with ${}_{\cK|\cK} [\bI^{\Supset}]_{\bF[U]}$ in Section~\ref{sec:AAA-identity} once we introduce the pair-of-pants bimodules.

\subsection{The knot surgery formula over $\cK$}
\label{sec:mapping-cone-formula}

We now describe how to view the mapping cone formula of Ozsv\'{a}th and Szab\'{o} in terms of the algebra $\cK$. 

Recall that Ozsv\'{a}th and Szab\'{o}'s mapping cone formula \cite{OSIntegerSurgeries}*{Theorem~1.1} states that if $\lambda$ is an integral framing on $K\subset S^3$, then
\[
\ve{\CF}^-(S^3_\lambda(K))\iso \bX_\lambda(K)= \Cone\left(\bA(K)\xrightarrow{v+h_\lambda} \bB(K)\right).
\]
Recall, as in Lemma~\ref{lem:module-structure-surgery-hypercube-groups}, that $\bA(K)$ may be identified with a completion of $\cCFK(K)$ and $\bB(K)$ may be identified with a completion of $\scV^{-1} \cCFK(K)$.

We will describe the following algebraic perspectives of the mapping cone formula:
\begin{enumerate}
\item A chain complex $\bX_\lambda(K)$ over $\bF\llsquare U\rrsquare $.
\item A right type-$D$ module $\cX_{\lambda}(K)^{\cK}$.
\item A left type-$A$ module ${}_{\cK} \cX_\lambda(K)$.
\end{enumerate}

These will satisfy the following relations:
\begin{equation}
\bX_{\lambda}(K)\iso \cX_{\lambda}(K)^{\cK}\hatbox {}_{\cK} [\cD_{0}]\simeq \cD_0^{\cK} \hatbox {}_{\cK} \cX_{\lambda}(K).\label{eq:box-with-D}
\end{equation}

We begin with the description as a type-$D$ module. Let $\ve{x}_1,\dots, \ve{x}_n$ be a free basis of $\cCFK(K)$ over  $\bF[\scU,\scV]$. We declare the underlying $\ve{I}$-module of $\cX_{\lambda}(K)^{\cK}$ to be
\[
\cX_{\lambda}(K)^{\cK}=\Span_{\bF}(\ve{x}_1,\dots, \ve{x}_n)\otimes_{\bF} \ve{I}. 
\]
In particular, over $\bF$ each $\xs_i$ contributes one generator in each idempotent. We denote these generators by $\xs_i^0$ and $\xs_i^1$.

The map $\delta^1$ on $\cX_{\lambda}(K)^{\cK}$ is as follows. Firstly, there are internal $\delta^1$ summands from the differential of $\cCFK(K)$. If $\d(\xs)$ contains a summand of $\ys\cdot  \scU^{n}\scV^m$, then $\delta^1(\xs^\veps)$ contains a summand of $\ys^\veps\otimes \scU^n \scV^m$, for $\veps\in \{0,1\}$. 

In Ozsv\'{a}th and Szab\'{o}'s mapping cone formula, $v$ is the canonical inclusion of $\cCFK(K)$ into $\scV^{-1}\cCFK(K)$. Correspondingly, $\delta^1(\xs^0)$ contains a summand of the form $\xs^1\otimes \sigma$.

If $\ys \cdot \scU^i \scV^j$ is a summand of $h_\lambda(\ve{x})$ then we define $\delta^1(\xs^0)$ to have a summand of the form
\[
\ys^1\otimes \scU^i \scV^j \tau .
\]

\begin{lem}
\label{lem:XK-type-D-relations}
$\cX_{\lambda}(K)^{\cK}$ is a type-$D$ module.
\end{lem}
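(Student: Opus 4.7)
The plan is to decompose $\delta^1$ into three summands according to the three classes of arrows listed before the lemma, verify that only certain compositions are non-zero for idempotent reasons, and then match each surviving composition with a known algebraic identity on $\cCFK(K)$.

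\smallskip

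\noindent\textbf{Step 1: Decomposition.} Write $\delta^1=\delta^1_{\Int}+\delta^1_v+\delta^1_h$, where $\delta^1_{\Int}$ is induced by the internal differential of $\cCFK(K)$ (acting in both idempotents), $\delta^1_v(\xs^0)=\xs^1\otimes\sigma$, and $\delta^1_h(\xs^0)=\sum \ys^1\otimes \scU^i\scV^j\tau$ for summands $\ys\scU^i\scV^j$ of $h_\lambda(\xs)$. Since $\sigma$ and $\tau$ both map $\ve{I}_0$ to $\ve{I}_1$, and since no generator of $\cK$ maps $\ve{I}_1$ to $\ve{I}_0$, the composition $(\id\otimes\mu_2)\circ(\delta^1\otimes\id)\circ\delta^1$ evaluated on $\xs^1$ only involves $\delta^1_{\Int}\circ \delta^1_{\Int}$, and evaluated on $\xs^0$ it decomposes as the sum over the allowed idempotent chains $0\to *\to *$.

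\smallskip

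\noindent\textbf{Step 2: Internal--internal terms.} For $\xs^\veps$ with $\veps\in\{0,1\}$, the composition $(\id\otimes\mu_2)\circ(\delta^1_{\Int}\otimes\id)\circ \delta^1_{\Int}$ outputs $\zs^\veps\otimes c$ where $c$ is the algebra element recording the $\scU$, $\scV$ weight of the corresponding length-2 term of $\d\circ \d$ in $\cCFK(K)$. Since $\d^2=0$ on $\cCFK(K)$ (in both the $\bF[\scU,\scV]$-form and its $\scV$-localization), this contribution vanishes.

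\smallskip

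\noindent\textbf{Step 3: $v$-terms.} For $\xs^0$ the surviving pieces involving $\delta^1_v$ are the two compositions $\delta^1_{\Int}\circ \delta^1_v$ and $\delta^1_v\circ \delta^1_{\Int}|_{\ve{I}_0}$. Writing $v$ for the canonical inclusion $\cCFK(K)\hookrightarrow \scV^{-1}\cCFK(K)$, these produce, respectively, $\zs^1\otimes a\sigma$ (from $\d\circ v$ in $\scV^{-1}\cCFK(K)$) and $\zs^1\otimes \sigma b$ (from $v\circ \d$, with $b$ the weight of a term of $\d$ in $\cCFK(K)$). Using the commutation relations $\sigma\scU=\scU\sigma$ and $\sigma\scV=\scV\sigma$, each $\sigma b$ equals $b\sigma$, so the sum equals the $\sigma$-weighted coefficient of $[\d,v]$ on $\xs$. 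Since $v$ is a chain map, this is zero.

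\smallskip

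\noindent\textbf{Step 4: $h$-terms.} Similarly, for $\xs^0$ the surviving terms involving $\delta^1_h$ are $\delta^1_{\Int}\circ \delta^1_h$ and $\delta^1_h\circ \delta^1_{\Int}|_{\ve{I}_0}$. These yield, respectively, the coefficient of $\d\circ h_\lambda$ (computed in $\scV^{-1}\cCFK(K)$) weighted by $\tau$ from the left, and the coefficient of $h_\lambda\circ \d$ (computed in $\cCFK(K)$) weighted by $\tau$ from the right. Using $\tau\scU=\scV^{-1}\tau$ and $\tau\scV=\scU\scV^2\tau$, right-multiplication by $\scU^i\scV^j$ after $\tau$ equals $\tau\cdot T(\scU^i\scV^j)$, where $T(\scU)=\scV^{-1}$ and $T(\scV)=\scU\scV^2$. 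By Lemma~\ref{lem:module-structure-surgery-hypercube-morphisms} (specialized to $\vec N=-K$), $h_\lambda$ satisfies exactly this $T$-twisted equivariance, i.e.\ it is a chain map $\cCFK(K)\to \scV^{-1}\cCFK(K)$ where the target is given the $T$-twisted action. Hence $[\d,h_\lambda]=0$ and this contribution vanishes as well.

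\smallskip

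\noindent\textbf{Step 5: Remaining combinations.} Compositions $\delta^1_v\circ \delta^1_v$, $\delta^1_v\circ \delta^1_h$, $\delta^1_h\circ\delta^1_v$, $\delta^1_h\circ\delta^1_h$ are all zero because the second factor would have to start in $\ve{I}_1$, whereas $\delta^1_v$ and $\delta^1_h$ are supported on $\ve{I}_0$. Thus all terms vanish and $\cX_{\lambda}(K)^{\cK}$ is a type-$D$ module. The main (but routine) obstacle is just to keep the side on which $\sigma$ and $\tau$ are multiplied straight, which is handled by the commutation relations defining $\cK$ together with the $T$-equivariance of $h_\lambda$ provided by Lemma~\ref{lem:module-structure-surgery-hypercube-morphisms}.
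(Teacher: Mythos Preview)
Your proof is correct and follows essentially the same approach as the paper: decompose $\delta^1$ into the internal, $\sigma$-, and $\tau$-summands, use idempotent reasons to rule out the $\sigma/\tau$-squared and mixed terms, and then identify the surviving cross-terms with $\d^2=0$, $[\d,v]=0$, and $[\d,h_\lambda]=0$ via the commutation relations in $\cK$ and the $T$-equivariance of $h_\lambda$ from Lemma~\ref{lem:module-structure-surgery-hypercube-morphisms}. The only quibble is that your ``from the left/from the right'' bookkeeping in Step~4 is phrased a bit loosely (in the type-$D$ composition the second algebra element multiplies on the left, so $\delta^1_h\circ\delta^1_{\Int}$ produces $g_{k,j}\tau f_{j,i}=g_{k,j}T(f_{j,i})\tau$), but you use the relation $\tau f=T(f)\tau$ correctly and reach the right conclusion.
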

\begin{proof}

The proof is a formal consequence of the fact that $\d^2=0$ on $\cCFK(K)$, that $v$ and $h_{\lambda}$ are chain maps, and that $v$ and $h_{\lambda}$ satisfy the equivariance properties of  Lemma~\ref{lem:module-structure-surgery-hypercube-morphisms}.

In more detail, suppose that $\xs_1,\dots, \xs_n$ is a free basis of $\cCFK(K)$, and consider 
\begin{equation}
((\id_{\cX}\otimes \mu_2)\circ( \delta^1\otimes \id_{\cK})\circ \delta^1)(\xs^{\veps}_i)
\label{eq:d^2=X-knot}
\end{equation}
For concreteness, consider the case when $\veps=0$. There are three sources of terms in the above expression: those arising from two applications of $\d$, those arising from one $v$ and one $\d$, and those arising from one $h_{\lambda}$ and one $\d$. Consider the terms with two $\d$ terms. Write $\d(\xs_i)=\sum_{j=1}^n  \xs_j \cdot f_{j,i}$. The terms with two $\d$ terms are simply
\[
\sum_{k=1}^n \xs_k^0\otimes \bigg(\sum_{j=1}^n f_{k,j}f_{j,i}\bigg),
\]  
which is 0 since $\d^2=0$ on $\CFK(K)$. Similarly, the terms corresponding to one $\d$ and one $v$ are given by the formula
\[
\sum_{j=1}^n \xs_j^1\otimes ( \sigma f_{j,i}+f_{j,i}\sigma)
\]
which vanishes because $\sigma f_{j,i}=f_{j,i}\sigma$ in $\cK$. 

Finally, we consider the terms corresponding to one $\d$ and one $h_{\lambda}$. Write $h_{\lambda}(\xs_i)=\sum_{j=1}^n \xs_j \cdot g_{j,i}$, where $g_{j,i}\in \bF[\scU,\scV,\scV^{-1}]$. The corresponding terms of Equation~\eqref{eq:d^2=X-knot} are
\begin{equation}
\sum_{k=1}^n \xs_k^1\otimes \bigg(\sum_{j=1}^n g_{k,j} \tau f_{j,i} +f_{k,j} g_{j,i}\tau \bigg) =\sum_{k=1}^n \xs_k^1\otimes \bigg(\sum_{j=1}^n g_{k,j} \phi^\tau(f_{j,i}) +f_{k,j} g_{j,i} \bigg)\tau\label{eq:chain-map-h-expand} 
\end{equation}
However $\sum_{j=1}^n g_{k,j} \phi^\tau(f_{j,i}) +f_{k,j} g_{j,i}$ is the $\xs_k$ coefficient of $[\d, h_{\lambda}](\xs_i)$, since $h_{\lambda}$ satisfies $h_{\lambda}(\ve{x}\cdot a )= h_{\lambda}(\ve{x})\cdot \phi^\tau(a)$, by  Lemma~\ref{lem:module-structure-surgery-hypercube-morphisms}. In particular, Equation~\eqref{eq:chain-map-h-expand} vanishes. The case when $\veps=1$ is similar.
\end{proof}

Having defined the type-$D$ module $\cX_{\lambda}(K)^{\cK}$, we may now define the type-$A$  module
\[
{}_{\cK} \cX_{\lambda}(K):=\cX_{\lambda}(K)^{\cK}\hatbox {}_{\cK| \cK} [\bI^{\Supset}].
\]

\subsection{The link surgery formula over $\cL$}
\label{sec:link-surgery-A-infty}
 In this section, we describe how to view Manolescu and Ozsv\'{a}th's link surgery formula as a type-$D$ module.

Recall that we define
\[
\cL_\ell:=\cK_1\otimes_{\bF} \cdots \otimes_{\bF} \cK_\ell.
\]
where each $\cK_i$ denotes a copy of $\cK$. 

Let $L$ be an $\ell$-component link in $S^3$ with integral framing $\Lambda$. The link surgery complexes require a choice of auxiliary data, which we call a \emph{system of arcs} $\scA$. We will discuss these in detail in Section~\ref{sec:systems-of-arcs} below. If $\scA$ is chosen, Manolescu and Ozsv\'{a}th's construction produces a chain complex $\cC_{\Lambda}(L,\scA)$ over $\bF[U_1,\dots, U_\ell]$. In this section, we will describe how to construct a type-$D$ module 
\[
\cX_{\Lambda}(L,\scA)^{\cL_\ell},
\]
based on their construction of $\cC_{\Lambda}(L,\scA)$.  

We view the link algebra $\cL_\ell$ as being an algebra over the idempotent ring
\[
\ve{E}_\ell:=\underbrace{\ve{I}\otimes_{\bF}\cdots \otimes_{\bF} \ve{I}}_\ell.
\]
We can view $\ve{E}_\ell$ as being a ring with $2^\ell$ elementary idempotents, each identified with a point in the cube $\bE_\ell$. If $\veps\in \bE_\ell$, we write
\[
\ve{E}_\ell:=\ve{I}_{\veps_1}\otimes \cdots \otimes \ve{I}_{\veps_\ell}\iso \bF
\]
for the corresponding idempotent. 

If $\veps'\ge \veps$, we can understand $\ve{E}_{\veps'}\cdot \cL_\ell \cdot \ve{E}_{\veps}$ as follows. Write $I_{\veps',\veps}=\{i_1,\dots, i_n\}=(\veps'-\veps)^{-1}(1)$, i.e. the set of indices $i$ where $\veps'_i>\veps$. Then $\ve{E}_{\veps'}\cdot \cL_\ell \cdot \ve{E}_{\veps}$ is generated by elements of the form:
\[
\a_1\cdots \a_\ell \phi_{i_1}\cdots \phi_{i_n}
\]
where:
\begin{enumerate}
\item $a_i$ is in $\bF[\scU_i,\scV_i]$ if $\veps'_i=0$.
\item $a_i$ is in $\bF[\scU_i,\scV_i,\scV_i^{-1}]$ if $\veps'_i=1$.
\item Each $\phi_{i_j}$ is either $\sigma_{i_j}$ or $\tau_{i_j}$. 
\end{enumerate}

If $\ve{x}_1,\dots, \ve{x}_n$ is a free basis of $\cCFL(L)$  over $\bF[\scU_1,\dots, \scU_\ell,\scV_1,\dots, \scV_\ell]$, then we define the generators of $\cX_{\Lambda}(L,\scA)^{\cL_\ell}$ to be
\[
\Span_{\bF}( \ve{x}_1,\dots, \ve{x}_n) \otimes_{\bF} \ve{E}_\ell
\]
In particular, if $\xs$ is a basis element of $\cCFL(L)$, we have a generator $\xs^\veps$ for each $\veps\in \bE_{\ell}$.

If $M\subset L$, write $\veps(M)\in \bE_\ell$ for the coordinate such that $\veps(M)_i=0$ if $L_i\not \in M$ and $\veps(M)_i=1$ if $L_i\in M$.

Suppose that $\ve{x}$ is a basis element of $\cCFL(L)$ and $M\subset L$. By Lemma~\ref{lem:module-structure-surgery-hypercube-groups}, we may view $\ve{x}^{\veps(M)}$ as an element of the group $\cC_{\Lambda}(L,\scA)=\prod_{\ve{s}\in \bH(L)} \frA(\cH^{L\setminus M},\psi^{M}(\ve{s}))$. Suppose also that $\vec{N}$ is an oriented sublink of  $L\setminus M$, and that $\Phi^{\vec{N}}(\xs^{\veps(M)})$ has a summand of $\ve{y}^{\veps(M\cup N)}\cdot f$, where we view $f$ as an element of the $2\ell$-variable polynomial ring, localized at the variables $\scV_i$ such that $L_i\subset N$.
 We may naturally view $f$ as being an element of
\[
\ve{E}_{\veps(M\cup N)}\cdot \cL_\ell\cdot \ve{E}_{\veps(M\cup N)}.
\]

 There is an algebra element $t^{\vec{N}}_{\veps(M),\veps(M\cup N)}\in \ve{E}_{\veps(M\cup N)}\cdot \cL_\ell \cdot \ve{E}_{\veps(M)}$ which is the tensor of $\sigma_i$ for $i$ such that $L_i\subset \vec{N}$ and $L_i$ is oriented the same as $L$, $\tau_i$ for $i$ such that $L_i\subset \vec{N}$ and $L_i$ is oriented oppositely from $L$, and $1$ for $i$ such that $L_i\not \in N$. With this notation, we declare $\delta^1(\xs^{\veps(M)})$ to have the summand
\[
\ys^{\veps(M\cup N)}\otimes f \cdot t^{\vec{N}}_{\veps(M),\veps(M\cup N)}.
\]

\begin{lem} 
$\cX_{\Lambda}(L,\scA)^{\cL_\ell}$ is a type-$D$ module.
\end{lem}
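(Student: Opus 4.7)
My plan is to reduce the type-$D$ structure relation for $\cX_{\Lambda}(L,\scA)^{\cL_\ell}$ to the identity $\cD^2=0$ for Manolescu--Ozsv\'{a}th's link surgery complex $\cC_{\Lambda}(L,\scA)$, using the equivariance properties of the hypercube maps established in Lemma~\ref{lem:module-structure-surgery-hypercube-morphisms}. This is the natural generalization of the argument for the knot case given in Lemma~\ref{lem:XK-type-D-relations}, so I will model my plan on that proof.

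The first step is to unpack the structure map. By construction $\delta^1(\xs^{\veps(M)})$ splits into two types of summands: (a) terms $\ys^{\veps(M)}\otimes f$ coming from the internal differential of $\cCFL(L)$, and (b) terms $\ys^{\veps(M\cup N)}\otimes f\cdot t^{\vec{N}}_{\veps(M),\veps(M\cup N)}$ coming from the hypercube map $\Phi^{\vec{N}}$ for some oriented sublink $\vec{N}\subset L-M$. Using the identifications of Lemma~\ref{lem:module-structure-surgery-hypercube-groups}, the union of (a) and (b) is precisely the full Manolescu--Ozsv\'{a}th differential $\cD$ evaluated on $\xs\in \prod_{\ve{s}}\frA^-(\cH^{L-M},\psi^{M}(\ve{s}))$, decorated with a bookkeeping algebra element $t^{\vec{N}}_{\veps(M),\veps(M\cup N)}$ which records which sublink map was used.

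Next I would compute $(\id\otimes\mu_2)\circ(\delta^1\otimes\id_{\cL})\circ\delta^1(\xs^{\veps(M)})$ and group terms by the total sublink $M'\supset M$ reached and the intermediate idempotent $\veps(M'')$, $M\subset M''\subset M'$. Writing $\vec{N}_1$ for the oriented piece going from $\veps(M)$ to $\veps(M'')$ and $\vec{N}_2$ for the piece going from $\veps(M'')$ to $\veps(M')$, the resulting algebra coefficient is $\mu_2(g\cdot t^{\vec{N}_2},\, f\cdot t^{\vec{N}_1})$. Using the defining relations $\sigma a=I(a)\sigma$ and $\tau a=T(a)\tau$ of $\cK$, which assemble componentwise into the ring homomorphism $T^{\vec{N}_2}$ of Lemma~\ref{lem:module-structure-surgery-hypercube-morphisms}, this product equals $g\cdot T^{\vec{N}_2}(f)\cdot t^{\vec{N}_1\cup\vec{N}_2}$. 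By Lemma~\ref{lem:module-structure-surgery-hypercube-morphisms}, this is exactly the algebra element arising when one writes $(\Phi^{\vec{N}_2}\circ\Phi^{\vec{N}_1})(\xs)$ as a localized polynomial combination of basis elements of $\cCFL(L)$. Hence, after stripping off the idempotent decorations on generators and the factor $t^{\vec{N}_1\cup\vec{N}_2}$, the $\ve{z}^{\veps(M')}$--component of $(\id\otimes\mu_2)\circ(\delta^1\otimes\id_\cL)\circ\delta^1(\xs^{\veps(M)})$ is precisely the $\ve{z}$--component of $(\cD^2)(\xs)$ landing in the sublink-$M'$ piece.

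Because $\cC_{\Lambda}(L,\scA)$ is a chain complex, $\cD^2=0$, and therefore each idempotent component of $(\id\otimes\mu_2)\circ(\delta^1\otimes\id_\cL)\circ\delta^1$ vanishes, establishing the type-$D$ structure relation. The main obstacle, and the place where care must be taken, is the bookkeeping in the previous paragraph: one must verify that $t^{\vec{N}_2}\cdot t^{\vec{N}_1}=t^{\vec{N}_1\cup\vec{N}_2}$ holds in $\cL_\ell$ whenever $\vec{N}_1$ and $\vec{N}_2$ are disjoint oriented sublinks (so that no two $\sigma_i$ or $\tau_i$ with the same index appear in the product), and that the commutation of $t^{\vec{N}_2}$ past the polynomial coefficient $f$ reproduces exactly the homomorphism $T^{\vec{N}_2}$ of Lemma~\ref{lem:module-structure-surgery-hypercube-morphisms} in each link variable. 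Once these identifications are verified componentwise from the definition of $\cK$, the reduction to $\cD^2=0$ is automatic.
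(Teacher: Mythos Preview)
Your overall strategy is correct and matches the paper's, but there is a subtle gap in the final step. You appeal to $\cD^2=0$ on $\cC_{\Lambda}(L,\scA)$; what is actually needed is the finer statement that $(D^{\vec{N}})^2=0$ for each \emph{oriented} sublink $\vec{N}$ separately, where $D^{\vec{N}}=\sum_{\vec{J}\subset\vec{N}}\Phi^{\vec{J}}$ (including the internal differential). The point is that $\ve{E}_{\veps(M')}\cdot\cL_\ell\cdot\ve{E}_{\veps(M)}$ decomposes as a direct sum over orientations $\vec{N}$ of $N=M'\setminus M$, since $\ve{I}_1\cdot\cK\cdot\ve{I}_0$ splits as the $\sigma$-span plus the $\tau$-span. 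After your bookkeeping, the $\ys^{\veps(M')}$-component of the type-$D$ relation is $\sum_{\vec{N}} f_{\vec{N}}\cdot t^{\vec{N}}_{\veps(M),\veps(M')}$, where $f_{\vec{N}}$ is the $\ys$-coefficient of $(D^{\vec{N}})^2(\xs)$. Since the $t^{\vec{N}}$ for distinct orientations are linearly independent over $\ve{E}_{\veps(M')}\cdot\cL_\ell\cdot\ve{E}_{\veps(M')}$, vanishing of the sum is equivalent to each $f_{\vec{N}}=0$. But $\cD^2=0$ in the surgery complex only asserts $\sum_{\vec{N}}f_{\vec{N}}=0$, where the $t^{\vec{N}}$ decorations are absent and terms from different orientations could in principle cancel.

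The paper closes this gap by invoking \cite{MOIntegerSurgery}*{Proposition~9.4}, which gives precisely $(D^{\vec{N}})^2=0$ for each oriented $\vec{N}$. With that substitution in place of your appeal to $\cD^2=0$, your argument goes through and is essentially the paper's proof.
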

\begin{proof} The proof is similar to Lemma~\ref{lem:XK-type-D-relations}. 
We can decompose the structure map $\delta^1$ as
\[
\delta^1=\sum_{\vec{M}\subset L} \delta^1_{\vec{M}}
\]
where $\delta^1_{\vec{M}}$ consists of all terms which are weighted by idempotent-preserving multiples of the algebra element $t^{\vec{M}}$.

 The map $\delta^1_{\vec{M}}$ encodes the map $\Phi^{\vec{M}}$ on the surgery formula, in the sense that if $\Phi^{\vec{M}}(\xs)$ has a summand $a\cdot \ys$ for some monomial $a$ in the $\scU_i$ and $\scV_i^{\pm 1}$, then $\delta^1_{\vec{M}}$ has a summand $\ys\otimes a t^{\vec{M}}$.

Let $\vec{N}\subset L$ be an oriented link, and consider the components of
 \begin{equation}
 \bigg(
(\id_{\cX}\otimes \mu_2)\circ( \delta^1\otimes \id_{\cL_\ell})\circ \delta^1\bigg)(\xs^{\veps})
 \label{eq:structure-relation-X-L-n}
 \end{equation}
 whose algebra elements are weighted by idempotent-preserving multiples of $t^{\vec{N}}$. These can be identified with
 \begin{equation}
 \sum_{\substack{\vec{M}_1\cup \vec{M}_2= \vec{N}\\ \vec{M}_1\cap \vec{M}_2=\emptyset}} (\bI_{\cX}\otimes \mu_2)\circ(\delta_{\vec{M}_2}^1\otimes \bI_{\cK})\circ \delta_{\vec{M}_1}^1.
 \label{eq:structure-relation-X-L-n-refined}
 \end{equation}
 We consider a component of the above map corresponding to sublinks $\vec{M}_1$ and $\vec{M}_2$, applied to some generator $\xs$. Suppose that $\delta_{\vec{M}_1}^1(\xs)$ contains a summand $\ys\otimes a t^{\vec{M}_1}$. Such a summand corresponds to a summand of $\ys\cdot  a$ in $\Phi^{\vec{M}_1}(\xs)$. Next, consider a summand $\zs\otimes b t^{\vec{M}_2}$ in $\delta_{\vec{M}_2}^1(\ys)$. In the composition appearing in Equation~\eqref{eq:structure-relation-X-L-n-refined}, there is a corresponding term of 
 \[
 \zs\otimes b t^{\vec{M}_2} a t^{\vec{M}_1}=\zs\otimes b \phi^{\vec{M}_2}(a) t^{\vec{N}}.
 \]
 (The second equality follows from the definition of the algebra $\cL_\ell$).
  Using Lemma~\ref{lem:module-structure-surgery-hypercube-morphisms}, which states that $\Phi^{\vec{M}_2}(a \ys)=\phi^{\vec{M}_2}(a)\cdot \Phi^{\vec{M}_2}(\ys)$,  we see that there is a corresponding summand in $(\Phi^{\vec{M}_2}\circ \Phi^{\vec{M}_1})(\xs)$ of $\zs\cdot b \phi^{\vec{M}_2}(a)$.
 Therefore, the fact Equation~\eqref{eq:structure-relation-X-L-n-refined} vanishes follows from the fact that
\[
 \sum_{\substack{\vec{M}_1\cup \vec{M}_2= \vec{N}\\ \vec{M}_1\cap \vec{M}_2=\emptyset}} \Phi^{\vec{M}_2}\circ \Phi^{\vec{M}_1}=0,
\]
 which is proven in \cite{MOIntegerSurgery}*{Proposition~9.4}.
 \end{proof}

Given a collection of integers $\Lambda=(\lambda_1,\dots, \lambda_\ell)$, there is a type-$A$ module
\[
{}_{\cL_\ell}\cD_{\Lambda}
\]
defined by taking the external tensor product of the modules ${}_{\cK} [\cD_{\lambda_i}]$ from Section~\ref{sec:framing}. There is also a right action of $\bF[U_1,\dots, U_\ell]$, and $\cD_{\Lambda}$ may be viewed as an $AA$-bimodule 
\[
{}_{\cL_\ell} [\cD_{\Lambda}]_{\bF[U_1,\dots,U_\ell]}.
\]

The following is immediate from the definition of $\cX_{\Lambda}(L,\scA)^{\cL_\ell}$:
\begin{prop}
 There is a chain isomorphism
\[
\cC_{\Lambda}(L)_{\bF[U_1,\dots, U_\ell]}\iso \cX_{\Lambda}(L)^{\cL_\ell}\hatbox {}_{\cL_\ell} [\cD_{(0,\dots, 0)}]_{\bF[U_1,\dots, U_\ell]}.
\]
\end{prop}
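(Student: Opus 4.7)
The plan is to verify both sides are isomorphic as filtered chain complexes by matching underlying vector spaces and differentials, working one idempotent at a time. For fixed $\veps \in \bE_\ell$ corresponding to a sublink $M \subset L$ (with $K_i \in M$ iff $\veps_i = 1$), I would identify the $\veps$-summand of $\cX_\Lambda(L)^{\cL_\ell}\hatbox {}_{\cL_\ell}[\cD_{(0,\dots,0)}]$ with the summand $\prod_{\ve{s}\in\bH(L)}\frA^-(\cH^{L-M},\psi^M(\ve{s}))$ of $\cC_\Lambda(L)$. A basis for the $\veps$-summand of the box tensor product is $\xs^\veps \otimes y$, where $\xs$ ranges over a free $\bF[\scU_\bullet,\scV_\bullet]$-basis of $\cCFL(L)$ and $y$ ranges over monomials in the $\veps$-component of $\cD_{(0,\dots,0)}$. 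Multiplying the polynomial coefficient of $\xs$ into $y$ exhibits this span as $S_M^{-1}\cdot\cCFL(L)$, and Lemma~\ref{lem:module-structure-surgery-hypercube-groups} then gives the desired identification with $\bigoplus_{\ve{s}\in\bH(L)} A^-(\cH^{L-M},\psi^M(\ve{s}))$. The cofinite basis completion on the box tensor product side matches the direct-product-over-$\bH(L)$ completion used in the definition of $\cC_\Lambda(L)$.

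For the differentials, since ${}_{\cL_\ell}[\cD_{(0,\dots,0)}]$ satisfies $m_j=0$ for $j\neq 2$, it is operationally bounded, and Corollary~\ref{cor:box-tensor-AC} gives a well-defined differential $\d^\boxtimes=(\id\otimes m_2)\circ(\delta^1\otimes\id)$. The structure map $\delta^1$ on $\cX_\Lambda(L)^{\cL_\ell}$ decomposes into an idempotent-preserving part (coming from the internal $\cCFL(L)$ differential, with algebra outputs in $\ve{E}_\veps\cdot\cL_\ell\cdot\ve{E}_\veps$) and, for each $\veps<\veps'$ corresponding to an oriented sublink $\vec{N}\subset L-M$, summands of the form $\ys^{\veps'}\otimes f\cdot t^{\vec{N}}_{\veps,\veps'}$, one for each term $\ys\cdot f$ appearing in $\Phi^{\vec{N}}(\xs)$. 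On the idempotent-preserving part, $m_2$ is just polynomial multiplication, which reproduces the internal differential of $\frA^-$. On the idempotent-changing part, a direct computation shows that $m_2(t^{\vec{N}},y)$ applies the ring homomorphism $T^{\vec{N}}$ of Lemma~\ref{lem:module-structure-surgery-hypercube-morphisms} to $y$: the $\sigma_i$-factors act as the canonical inclusion $I_i$, the $\tau_i$-factors as $T_i$ (with no $\scV_i^{\lambda_i}$ shift, since $\lambda_i=0$ for the $0$-framed solid torus), and the remaining idempotent factors as the identity. Left multiplication by $f$ then recovers $\Phi^{\vec{N}}$ exactly. Note that the framing-dependent factor $\scV_i^{\lambda_i}$ appearing in the Manolescu--Ozsv\'{a}th formula for $\Phi^{-K_i}$ is absorbed into the polynomial coefficient $f$ in our definition of the structure map on $\cX_\Lambda(L)^{\cL_\ell}$, which is why the right-hand side of the proposition pairs with $\cD_{(0,\dots,0)}$ rather than with $\cD_\Lambda$.

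The main obstacle is largely bookkeeping rather than substance: carefully tracking how the cofinite basis completion on the box tensor product corresponds to the direct-product-over-$\bH(L)$ completion defining $\cC_\Lambda(L)$, and verifying that the higher-length summands of $\delta^1$ (with $|\veps'-\veps|_{L^\infty}\geq 1$ but more than one coordinate changing) pair with the corresponding higher-length Manolescu--Ozsv\'{a}th maps $\Phi^{\vec{N}}$ via the same equivariance argument iterated over the components of $\vec{N}$.
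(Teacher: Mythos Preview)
Your proposal is correct and is precisely the detailed verification that the paper leaves implicit: the paper simply states that the isomorphism is ``immediate from the definition of $\cX_{\Lambda}(L,\scA)^{\cL_\ell}$'' and gives no further argument. Your unpacking via Lemmas~\ref{lem:module-structure-surgery-hypercube-groups} and~\ref{lem:module-structure-surgery-hypercube-morphisms}, together with the observation that the $m_2$-action of $t^{\vec{N}}$ on $\cD_{(0,\dots,0)}$ recovers $T^{\vec{N}}$, is exactly what is needed to justify that claim.
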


We will also need to consider type-$A$ and type-$DA$ versions of the link surgery formula. An important special case is when we have a distinguished component $K_i\subset L$. We define a bimodule ${}_{\cK} \cX_{\Lambda}(L)^{\cL_{\ell-1}}$, where the $\cK$-input corresponds to $K_i$ by setting
\[
{}_{\cK} \cX_{\Lambda}(L)^{\cL_{\ell-1}}:=\cX_{\Lambda}(L)^{\cL_\ell}\hatbox {}_{\cK| \cK} [\bI^{\Supset}],
\]
where the tensor product is taken along the algebra component corresponding to $K_i$. In the above equation, ${}_{\cK| \cK}[\bI^{\Supset}]$ is the bimodule from Section~\ref{sec:AAidentity}. More generally, we may turn more algebra components from type-$D$ to type-$A$ by tensoring additional copies of ${}_{\cK| \cK}[\bI^{\Supset}]$.

\section{\texorpdfstring{$\sigma$}{sigma}-basic systems of Heegaard diagrams}
\label{sec:basic-systems}

In this section, we define the notion of a \emph{$\sigma$-basic} system of Heegaard diagrams. The notion is a very small generalization of Manolescu and Ozsv\'{a}th's \emph{basic system}.
These are collections of Heegaard diagrams which can be used to compute the link surgery formula. We use the small distinction in notation to disambiguate the notions, and to highlight the algebraic significance of these systems. For these systems, the maps $\Phi^{\vec{N}}$ vanish unless $\vec{N}=+K$ (i.e. a single component, oriented consistently with $L$), or $\vec{N}$ contains only negatively oriented components relative to $L$. We also describe how to construct a $\sigma$-basic system for the connected sum of two links.

\subsection{Systems of arcs}
\label{sec:systems-of-arcs}

In this section, we define the notion of a \emph{system of arcs}, which is a necessary piece of auxiliary data used to define the link surgery formula.

\begin{define}
\label{def:system-of-arcs}
 Suppose that $(S^3,L,\ws,\zs)$ is an oriented, multi-pointed link, which is link minimal (i.e. each component of $L$ contains exactly one basepoint of $\ws$, and one basepoint of $\zs$).
\begin{enumerate}
\item  A \emph{system of arcs} for $L$ consists of a set  $\scA=\{\scA_1,\dots, \scA_\ell\}$ of pairwise disjoint arcs satisfying that $\d \scA_i=\scA_i\cap L=\{w_i,z_i\}$.
\item We say that an arc $\scA_i$ is \emph{beta-parallel} if $\scA_i$ is isotopic to a push-off of the subarc of $L_i$ which goes from $z_i$ to $w_i$.
\item We say that $\scA_i$ is \emph{alpha-parallel} if $\scA_i$ is isotopic to a push-off of the subarc of $L_i$ which goes from $w_i$ to $z_i$.
\end{enumerate}
\end{define}

Note that an arc $\scA_i$ is beta-parallel if for any Heegaard diagram of $(Y,L,\ws,\zs)$, $\scA_i$ is isotopic to the subarc of $K_i\subset L$ which lies in the beta handlebody $U_{\b}$. A similar remark holds for alpha-parallel arcs.

Given a link minimal $(L,\ws,\zs)$ there are two distinguished systems of arcs, $\scA_{\a}$ and $\scA_{\b}$, which are the ones which have only alpha-parallel arcs, or only beta-parallel arcs, respectively.

\begin{rem} 
Manolescu and Ozsv\'{a}th's notion of \emph{good sets of trajectories} \cite{MOIntegerSurgery}*{Definition~8.26} corresponds to our notion of a system of alpha-parallel arcs.
\end{rem}

\subsection{$\sigma$-basic systems of Heegaard diagrams}

In this section, we define the notion of a \emph{$\sigma$-basic system of Heegaard diagrams}. Our notion is a small adaptation of Manolescu and Ozsv\'{a}th's construction.

\begin{define} Suppose that $(L,\ws,\zs)$ is a minimally pointed, $n$-component link in a 3-manifold $Y$, and suppose that $\scA=(\scA_1,\dots, \scA_\ell)$ is a system of arcs for $(Y,L,\ws,\zs)$. A \emph{$\sigma$-basic system of Heegaard diagrams} for $(Y,L,\scA)$ consists of the following data:
\begin{enumerate}
\item A tuple of positive integers $\ve{d}=(d_1,\dots, d_\ell)$ where $n=|L|$.
\item A collection of Heegaard diagrams $\scH(M)=(\cH_{\veps})_{\veps\in \bE(\ve{d})}.$ We assume all Heegaard diagrams have the same underlying Heegaard surface $\Sigma$, which we also assume contains the arcs $\scA_1,\dots, \scA_\ell$. We write $\cH=(\Sigma,\as_{\veps}, \bs_{\veps}, \ws_{\veps}, \zs_{\veps}, \ve{p}_{\veps})$, where $\ws_{\veps}$ and $\zs_{\veps}$ are link basepoints and $\ve{p}_{\veps}$ are free basepoints. 
\item For each component $i\in \{1,\dots, n\}$, we have a formal labeling of each of the intervals $[0,1],\dots, [d_{i}-1,d_i]$ as being either an \emph{alpha incrementing interval}, a \emph{beta incrementing interval} or a \emph{surface isotopy interval}.
\item A decomposition of each arc $\scA_j$ into a concatenation of oriented subarcs $l_{i,1},\dots, l_{i,j_i}$, as well as a choice of surface isotopies $\phi_{i,1},\dots, \phi_{i,j_i}$. We assume $\phi_{i,j}$ is supported in a small neighborhood of $l_{i,j}$ and moves the initial endpoint of $l_{i,j}$ to the terminal endpoint. 
\end{enumerate}
We assume that the following conditions are satisfied:
\begin{enumerate}
\item $\ws_{\veps}=\{w_i: \veps_i=0\}$ and $\zs_{\veps}=\{z_i: \veps_i=0\}$.
\item Let $\cH_{\veps}=(\Sigma,\as_{\veps},\bs_{\veps},\ws_{\veps},\zs_{\veps},\ps_{\veps})$ and let $e_i=(0,\dots, 1,\dots 0)\in \{0,1\}^n$ denote a length 1 vector (i.e. a direction). As described above, the interval $I=[\veps_i,\veps_i+e_i]$ is labeled as either an alpha incrementing interval, a beta incrementing interval, or a surface isotopy interval.
\begin{enumerate}
\item If $I$ is an alpha incrementing segment, then $\bs_{\veps}=\bs_{\veps+e_i}$ while $\as_{\veps+e_i}$ and $\as_{\veps}$ differ. Furthermore, $\ps_{\veps+e_i}=\ps_{\veps}$ unless $\veps_i=0$, in which case $\ps_{\veps+e_i}=\ps_{\veps}\cup \{z_i\}$.
\item If $I$ is a beta incrementing interval, then the same holds as above except with the alpha and beta curves interchanged.
\item If $I$ is a surface isotopy segment, then $\cH_{\veps+e_i}$ is the image of $\cH_{\veps}$ under one of the surface isotopies $\phi_{i,j}$  described above. Similarly, $\ws_{\veps+e_i}=\ws_{\veps}$ and $\zs_{\veps+e_i}=\zs_\veps$. If $\veps_i>0$, then $\ps_{\veps+e_i}=\phi_{i,j}(\ps_{\veps})$. If $\veps_i=0$, then $\ps_{\veps+e_i}=\phi_{i,j}(\ps_{\veps}\cup z_i)$.  
\end{enumerate} 
\item The faces on opposite sides of the box have the same Heegaard diagrams. In more detail, if $\veps\in \bE(\ve{d})$ and $\veps_i=0$, then
\[
\as_{\veps+d_ie_i}=\as_\veps,\quad \bs_{\veps+d_ie_i}=\bs_{\veps},\quad \ps_{\veps+d_ie_i}=\ps_{\veps}\cup \{w_i\}. 
\]
\end{enumerate}
\end{define}

We illustrate an example of a $\sigma$-basic system in Figure~\ref{fig:41}.

\begin{figure}[h]
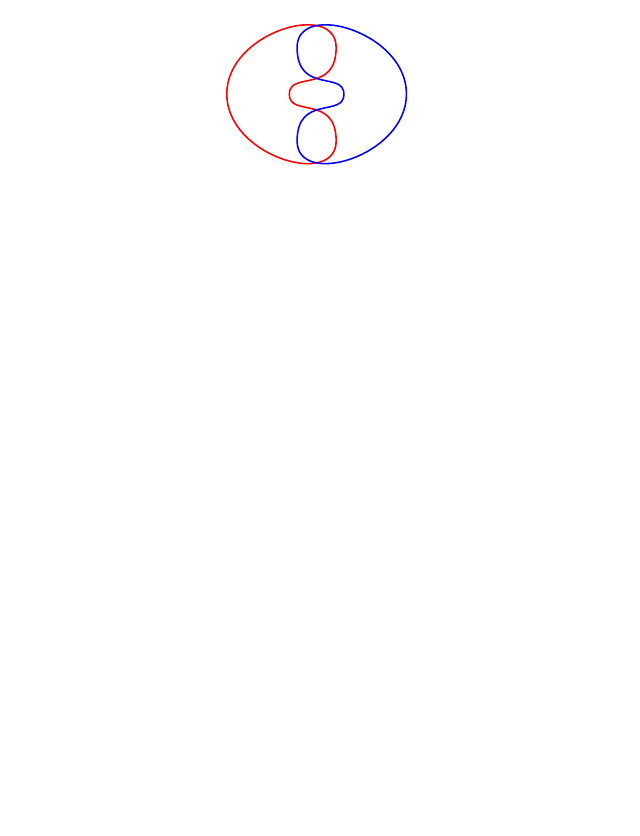
\caption{An example of a $\sigma$-basic system of Heegaard diagrams for the Hopf link. At the top, we show the underlying Heegaard link diagram of the Hopf link, with an arc system consisting of two beta parallel arcs $\scA_1$ and $\scA_2$ (shown as dashed arcs). For this example, the subdivisions of $\scA_1$ and $\scA_2$ are trivial (i.e. consist of a single arc). The labels of the arrows indicate whether the corresponding subintervals are ``alpha incrementing intervals'', ``beta incrementing intervals'' or ``surface isotopies intervals''. We label the $\ws$ basepoints as solid circles,  the $\zs$ basepoints as open circles, and the free basepoints as squares.}
\label{fig:41}
\end{figure}

\begin{rem} The terminology \emph{$\sigma$-basic system} refers to the fact that when we use such systems of Heegaard diagrams to construct the link surgery formula, the resulting complexes have $\Phi^{+K_i}$ equal to inclusions for localization, and $\Phi^{\vec{M}}=0$ if $\vec{M}$ has more than one component and contains a positively oriented component. In particular, using these diagrams we obtain models of the type-$D$ modules $\cX_{\Lambda}(L)^{\cL\ell}$ where the components weighted by $\sigma_i$ are very simple. Our definition is slightly more general than Manolescu and Ozsv\'{a}th's notion of a basic system of Heegaard diagrams, which has the same algebraic properties with respect to the differential.
\end{rem}

The following definition plays an important role in our proof of the connected sum formula for surgery hypercubes:

\begin{define}
\label{def:algebraically-rigid} Suppose that $\cL_{\b}=(\bs_{\veps},\theta_{\veps,\veps'})_{\veps\in \bE_{n}}$ is a weakly admissible hypercube of handleslide equivalent beta-attaching curves on a pointed Heegaard surface $(\Sigma,\ws,\zs, \ve{p})$ (where $\ws,\zs$ are link basepoints and $\ve{p}$ are free basepoints). We say $\cL_{\b}$ is \emph{algebraically rigid} if each $\bT_{\b_\veps}\cap \bT_{\b_{\veps'}}$ has $2^{g(\Sigma)+|\ws|+|\ve{p}|-1}$ intersection points (the minimal possible number). We make a parallel definition for alpha hyperboxes. We say a hyper\emph{box} of attaching curves is algebraically rigid if each sub-hypercube is algebraically rigid. 

Note that a hypercube of attaching curves being algebraically rigid is a property of the attaching curves, and has nothing to do with the chains $\theta_{\veps,\veps'}$.  Therefore, abusing notation slightly, we will say a $\sigma$-basic system of Heegaard diagrams $\scH$ is algebraically rigid if it has the property that $\bT_{\b_{\veps}}\cap \bT_{\b_{\veps'}}$ has $2^{g(\Sigma)+|\ws_\veps|+|\ps_{\veps}|}$ intersection points whenever $|\veps'-\veps|_{L^\infty}\le 1$ and none of the intervals $[\veps,\veps+e_i]$ is a surface isotopy interval for $i$ such that $\veps_i<\veps_i'$. We make the same assumption for the alpha curves.
\end{define}

\subsection{Meridional $\sigma$-basic systems}

\label{sec:meridional}

In this section, we describe a particular family of $\sigma$-basic systems which are useful in our proof of the pairing system. Similar diagrams are ubiquitous in Heegaard Floer theory. For example they are the diagrams which Ozsv\'{a}th and Szab\'{o} use to prove the mapping cone formula \cite{OSIntegerSurgeries}.

\begin{define}
\label{def:meridional-basic-system}
We say that a $\sigma$-basic system of Heegaard diagrams $\scH$ for $(Y,L)$, with a system of arcs $\scA$, is a \emph{meridional $\sigma$-basic system} if the following hold:
\begin{enumerate}
\item Each arc of $\scA$ is either beta-parallel or alpha-parallel.
\item The diagram $\scH(\emptyset)=(\Sigma,\as,\bs,\ws,\zs)$ has the following property. Suppose $K_i\in L$ is a component with corresponding arc $\scA_i\in \scA$ and basepoints $w_i,z_i$. Then $\scA_i$ is either alpha-parallel or beta-parallel. If $\scA_i$ is alpha-parallel, then $\scA_i$ is disjoint from the alpha curves, and intersects a single beta curve $\b^s_i$. A similar assumption is made if $\scA_i$ is beta-parallel. In particular, if $|L|=\ell$, there are $n$ special alpha and beta circles, denoted $\a_i^s$ and $\beta_j^s$.
\item In each Heegaard diagram of $\scH$, there are similarly $\ell$ special alpha and beta curves, and the rest are designated as non-special. The non-special alpha curves of each subcube of $\scH$ consist of small Hamiltonian translates of the non-special curves of $\cH$.
\item If $K_i\subset L$ is a component whose arc $\scA_i$ is beta-parallel, then the component of $\Sigma\setminus \bs$ which contains the basepoints of $K_i$ is a punctured disk which contains a special beta circle $\b^s_i$ as two of its boundary components. Gluing the two $\b^s_i$ boundary components together, we obtain a punctured torus. We assume that $K_i$-axis direction of $\scH$ realizes the isotopy of $\b^s_i$ in a loop around this punctured torus (handlesliding $\b^s_i$ over the boundary punctures). We make an analogous assumption for components with beta-parallel decoration.
\end{enumerate}
\end{define}

\begin{lem}\label{lem:admissibility-meridional-system} Suppose $L$ is an $\ell$-component link in $S^3$ and $\scA$ is a system of arcs for $L$ such that each arc is either alpha-parallel or beta-parallel. Then there exists a link minimal, weakly admissible, algebraically rigid $\sigma$-basic system of Heegaard diagrams for $(S^3,L,\scA)$.
\end{lem}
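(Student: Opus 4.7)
The strategy is to first build a single base diagram $\cH^\emptyset=(\Sigma,\as,\bs,\ws,\zs)$ that realizes the combinatorial structure of Definition~\ref{def:meridional-basic-system}, and then construct the higher-dimensional subcubes by small Hamiltonian translates and by tautological isotopy maps supported near the arcs of $\scA$. The main work is to arrange admissibility once and for all on a ``maximal'' diagram that contains all the attaching curves used in the basic system, so that admissibility descends to every subcube.

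First I would build $\cH^\emptyset$ as follows. Start with any link minimal Heegaard diagram for $(S^3,L)$ in which the arcs of $\scA$ are embedded on $\Sigma$. For each component $K_i$, modify the diagram by a handle stabilization near $\lambda_i$ so that there is a distinguished special curve of the appropriate type: a special meridional $\b^s_i$ if $\lambda_i$ is beta-parallel, and a special $\a^s_i$ if $\lambda_i$ is alpha-parallel. The stabilization is chosen so that $\lambda_i$ crosses exactly that one special curve and is disjoint from all non-special curves; topologically this is the standard genus-one model of a neighborhood of $K_i$ with a meridional circle. This gives the local model required by part (4) of Definition~\ref{def:meridional-basic-system}.

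Next I would construct each subcube $\scH^M_{E_m}$. By Definition~\ref{def:meridional-basic-system}, non-special alpha (resp.\ beta) curves in every diagram are small Hamiltonian translates of those in $\cH^\emptyset$, and the special curves in the axis direction for $K_i$ realize the prescribed loop of $\b^s_i$ (or $\a^s_i$) around the punctured torus obtained from $\Sigma\setminus \bs$ (or $\Sigma\setminus \as$). The hypercube morphisms $\Theta_{\veps,\veps'}$ are the standard ``top'' intersection points between these small translates, as in \cite{MOIntegerSurgery}. Surface-isotopy axis directions are realized by explicit Hamiltonian isotopies supported in a neighborhood of the relevant subarc of $\lambda_i$, which is disjoint from all non-special curves and crosses only its one special curve; the induced tautological maps are the identity at the chain level. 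Algebraic rigidity is automatic from this local model: each pair of attaching tori $(\bT_{\a_\veps}, \bT_{\b_{\veps'}})$ has the minimal number of intersection points because the non-special factors are nearest-point translates (one intersection per pair of parallel curves) and each special factor contributes exactly the meridional count, consistent with Definition~\ref{def:algebraically-rigid}. Coherence between subhypercubes indexed by $M_1\subset M_2$ follows from the construction, since deleting one of $w_i,z_i$ on a component $K_i\subset M_2\setminus M_1$ exactly recovers the induced subdiagram at the corresponding face of $\scH^{M_1}$.

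The main obstacle, and the step that will require the most care, is achieving weak admissibility simultaneously across the entire basic system. I would address this by listing all of the finitely many collections of attaching curves that appear in any Heegaard multi-tuple used in any subcube of any $\scH^M$ (the non-special curves consist of a bounded number of small Hamiltonian translates of each original curve, and the special curves consist of a bounded isotopy family), and then applying the standard winding procedure of \cite{OSDisks} away from all special curves and all arcs of $\scA$: pick a finite set of basepoints disjoint from $\scA$ and from the special curves, wind the non-special alpha curves sufficiently many times around transverse dual circles supported in that region, and observe that the resulting periodic domains of every diagram in the system must have both positive and negative multiplicities. Because the winding is performed before any of the Hamiltonian translations and takes place in a region disjoint from $\scA$ and the special curves, it neither affects algebraic rigidity nor interferes with the local model required by Definition~\ref{def:meridional-basic-system}. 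This gives a link minimal, weakly admissible, algebraically rigid basic system of Heegaard diagrams for $(S^3,L,\scA)$.
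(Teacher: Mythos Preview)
Your overall strategy is close to the paper's, but there are two genuine gaps.

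First, you have misstated algebraic rigidity. Definition~\ref{def:algebraically-rigid} concerns intersections $\bT_{\b_\veps}\cap \bT_{\b_{\veps'}}$ within a single hypercube of beta (or alpha) curves, not the mixed intersections $\bT_{\a_\veps}\cap \bT_{\b_{\veps'}}$ you wrote. More importantly, your claim that rigidity is ``automatic'' from small translates misses the point: the special curve $\b_i^s$ genuinely moves around the punctured torus as you go along its axis direction, so the curves $\b_\veps$ and $\b_{\veps'}$ differ by more than a small translate in that factor. You must actually arrange that each pair of distinct positions of $\b_i^s$ intersect in exactly two points; this can be done (see Figure~\ref{fig:3}), but it is part of the construction, not a consequence of it.

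Second, your admissibility argument is incomplete. You propose winding the non-special alpha curves, which handles the mixed diagrams $(\Sigma,\cL_\a,\cL_\b)$. But Definition~\ref{def:hypercube-attaching-curves} also requires that the pure beta multi-diagram $(\Sigma,\bs_{\veps_1},\dots,\bs_{\veps_{2^n}},\ws)$ and the pure alpha multi-diagram be weakly admissible on their own, and winding alpha curves does nothing for a beta-only diagram. The paper addresses this separately: since the non-special curves in $\cL_\b$ are all small translates of one another, admissibility of the beta-only multi-diagram reduces (after surgering out those curves) to admissibility of a diagram on a union of tori involving only the various positions of the special curves, which is elementary to arrange. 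Only after that does the paper wind $\cL_\a$ relative to $\cL_\b$ to achieve admissibility of the mixed diagrams.
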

\begin{proof}
Except for the claim about admissibility, existence of such a $\sigma$-basic system is clear since the construction is given in Definition~\ref{def:meridional-basic-system}.

Admissibility is verified as follows. We first verify admissibility for each hypercube of alpha and beta Lagrangians appearing in the construction. For concreteness, suppose that $\cL_{\a}$ is a hypercube of alpha attaching curves appearing in the construction. For appropriately chosen Hamiltonian translates, admissibility of $\cL_{\a}$ is equivalent to admissibility of the diagram where we delete the tuples of curves which appear only as small translates of each other, and surger $\Sigma$ along these curves as well. The resulting Heegaard diagram is a union of tori, and admissibility is straightforward to arrange. See Figure~\ref{fig:3}.

Admissibility of each pair $(\cL_{\a},\cL_{\b})$ may be arranged by winding one of the cubes (say $\cL_{\a}$) relative to $\cL_{\b}$, following the procedure described in \cite{OSDisks}*{Section~4.2.2} and \cite{OSLinks}*{Section~3.4}.
\end{proof}

\begin{figure}[ht]
\centering
\begingroup%
  \makeatletter%
  \providecommand\color[2][]{%
    \errmessage{(Inkscape) Color is used for the text in Inkscape, but the package 'color.sty' is not loaded}%
    \renewcommand\color[2][]{}%
  }%
  \providecommand\transparent[1]{%
    \errmessage{(Inkscape) Transparency is used (non-zero) for the text in Inkscape, but the package 'transparent.sty' is not loaded}%
    \renewcommand\transparent[1]{}%
  }%
  \providecommand\rotatebox[2]{#2}%
  \newcommand*\fsize{\dimexpr\f@size pt\relax}%
  \newcommand*\lineheight[1]{\fontsize{\fsize}{#1\fsize}\selectfont}%
  \ifx\svgwidth\undefined%
    \setlength{\unitlength}{300.33225914bp}%
    \ifx\svgscale\undefined%
      \relax%
    \else%
      \setlength{\unitlength}{\unitlength * \real{\svgscale}}%
    \fi%
  \else%
    \setlength{\unitlength}{\svgwidth}%
  \fi%
  \global\let\svgwidth\undefined%
  \global\let\svgscale\undefined%
  \makeatother%
  \begin{picture}(1,1.03365747)%
    \lineheight{1}%
    \setlength\tabcolsep{0pt}%
    \put(0,0){\includegraphics[width=\unitlength,page=1]{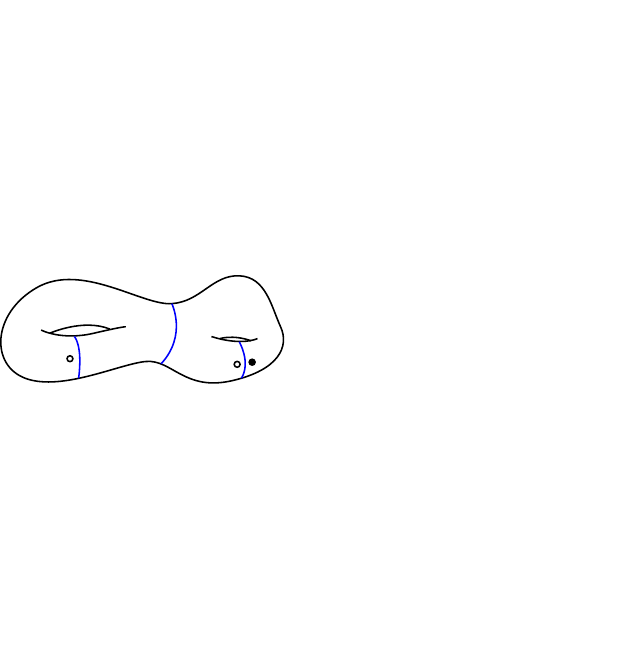}}%
    \put(0.11169747,0.47191066){\makebox(0,0)[rt]{\lineheight{1.25}\smash{\begin{tabular}[t]{r}$z_i$\end{tabular}}}}%
    \put(0,0){\includegraphics[width=\unitlength,page=2]{fig28.pdf}}%
    \put(0.38951106,0.77963574){\color[rgb]{0,0,1}\makebox(0,0)[lt]{\lineheight{1.25}\smash{\begin{tabular}[t]{l}$\beta_i$\end{tabular}}}}%
    \put(0.34648151,0.74897009){\makebox(0,0)[rt]{\lineheight{1.25}\smash{\begin{tabular}[t]{r}$z_i$\end{tabular}}}}%
    \put(0.40758396,0.74776112){\makebox(0,0)[lt]{\lineheight{1.25}\smash{\begin{tabular}[t]{l}$w_i$\end{tabular}}}}%
    \put(0.70543537,0.47699724){\makebox(0,0)[lt]{\lineheight{1.25}\smash{\begin{tabular}[t]{l}$w_i$\end{tabular}}}}%
    \put(0,0){\includegraphics[width=\unitlength,page=3]{fig28.pdf}}%
    \put(0.268407,0.39510599){\makebox(0,0)[lt]{\lineheight{1.25}\smash{\begin{tabular}[t]{l}$\phi$\end{tabular}}}}%
  \end{picture}%
\endgroup%

\caption{Some of the beta curves and a basepoint translation from Lemma~\ref{lem:admissibility-meridional-system}. The first arrow is a basepoint translation, and the remainder are holomorphic polygon counting maps.  The dashed arc on the top is $c_i$.}
\label{fig:3}
\end{figure}

\subsection{Basic systems for general systems of arcs}

We now consider general systems of arcs. Generalizing Lemma~\ref{lem:admissibility-meridional-system}, we prove the following:

\begin{lem}\label{lem:algebraic-rigid-general-basic-system} Suppose that $\scA$ is a system of arcs for $L\subset S^3$. Then there is a an algebraically rigid $\sigma$-basic system of Heegaard diagrams for $(L,\scA)$.
\end{lem}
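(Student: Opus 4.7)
The plan is to generalize the construction from Lemma~\ref{lem:admissibility-meridional-system} by allowing the hyperboxes $\scH^M$ to have axis lengths greater than $1$ and to include surface-isotopy directions running along the subarcs of the general (not necessarily $\alpha$- or $\beta$-parallel) arcs of $\scA$.

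First, choose a link-minimal Heegaard surface $\Sigma \subset S^3$ for $L$ which contains all arcs of $\scA$; such a surface can be constructed by thickening a $1$-complex containing $L\cup\scA$ to a handlebody neighborhood and taking its boundary (with extra stabilizations added as needed so that $\Sigma$ is a Heegaard surface for $S^3$). Pick attaching curves $\as,\bs$ giving a weakly admissible link-minimal Heegaard diagram $\cH^{\emptyset}=(\Sigma,\as,\bs,\ws,\zs)$ for $L$. For each component $K_i$, subdivide the arc $\lambda_i$ into subarcs $l_{i,1}\ast\cdots\ast l_{i,k_i}$ so that each subarc lies entirely in the complement of either $\as$ or $\bs$; this is possible by general position since $\lambda_i$ is embedded in $\Sigma$. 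Each subarc will correspond to one surface-isotopy step in the hyperboxes.

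For each oriented sublink $M\subset -L$, the hyperbox $\scH^M$ is built as follows. For each $K_i\subset M$, define a length-$d_i$ axis (with $d_i=k_i+1$) consisting of one handleslide direction (labeled $\alpha$ or $\beta$ depending on the orientation, effecting the deletion of the appropriate basepoint) followed by $k_i$ surface-isotopy directions corresponding to the subarc decomposition of $\lambda_i$. The total hyperbox $\scH^M$ is then the ``product'' of these axes over $K_i\subset M$, with the hypercubes of handleslide-equivalent attaching curves built as in Lemma~\ref{lem:admissibility-meridional-system} and the surface-isotopy directions built from the ambient isotopies supported near $l_{i,j}$. The coherence condition for $M_1\subset M_2$ is then immediate from the product structure over components.

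Weak admissibility is arranged by applying the standard winding procedure of \cite{OSDisks}*{Section~4.2.2} to each multi-diagram of attaching curves appearing in the construction; this can be done without disturbing the transverse intersections between parallel-translated curves. Algebraic rigidity is arranged by choosing every Hamiltonian translate of an attaching curve to be a sufficiently small perturbation of the original, so each pair of translates meets in the minimum possible number of points, and by noting that surface isotopies do not change the intersection pattern of the attaching curves themselves. The main obstacle is to verify that the hypercube compatibility relation~\eqref{eq:compatibility-hypercube-lagrangians} continues to hold when surface-isotopy directions are interleaved with handleslide directions; this reduces to the strict-unitality property of the basepoint-moving maps relative to holomorphic polygon counts, which is standard in this setting.
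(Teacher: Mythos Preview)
Your proposal has a genuine gap. The phrase ``one handleslide direction \dots\ effecting the deletion of the appropriate basepoint'' is not meaningful in this framework: basepoints are not deleted by handleslides within a hyperbox $\scH^M$; deletion is what the coherence condition does when passing between $\scH^{M_1}$ and $\scH^{M_2}$. You never specify which curve is handleslid or why.

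More seriously, the coherence condition forces the attaching curves at the \emph{endpoints} of each $K_i$-axis to coincide with those of $\scH^\emptyset$ (with the appropriate basepoint removed). Your surface isotopies along subarcs $l_{i,j}$ that cross attaching curves will drag those curves; at the end of the axis the curves are $\phi(\as),\phi(\bs)$ for the composite point-pushing diffeomorphism $\phi$, not $\as,\bs$. Undoing this requires further handleslides of whatever curves $\lambda_i$ happened to cross. For distinct components $K_i,K_{i'}$ these handleslides may involve the same original curve, so the ``product'' structure over components is not automatic, and there is no evident reason the resulting hypercubes of attaching curves can be made algebraically rigid.

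The paper's proof avoids all of this by a stabilization trick: one adds two tubes per component so that the pushed-over arc $\lambda_i'$ meets only one new special curve $\b_i^s$ and one new special curve $\a_i^s$. The $K_i$-axis then consists of handleslides moving $\b_i^s$ around its region $B_i'$ (clearing the way for a basepoint move that crosses no curves), then the basepoint move, then handleslides of $\a_i^s$ around $A_i'$, then another basepoint move. Because only the special curves $\a_i^s,\b_i^s$ are ever slid and the original $\as,\bs$ stay fixed, different components do not interfere, the coherence condition is immediate, and algebraic rigidity reduces to the same torus computation as in the meridional case.
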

\begin{proof} The proof is similar to the proof of Lemma~\ref{lem:admissibility-meridional-system}. We begin with a minimally pointed Heegaard diagram $(\Sigma,\as,\bs,\ws,\zs)$ for $(Y,L)$ such that each arc $\scA_i$ is embedded in $\Sigma$ and such that the arcs $\scA_i$ are pairwise disjoint. By performing an isotopy of $\as$ and $\bs$ supported in a neighborhood of the arcs $\scA_i$, we may assume that each $\scA_i$ is the concatenation of two arcs $A_i^\a$ and $A_i^\b$ such that $A_i^\a$ is disjoint from $\bs$ and $A_i^{\b}$ is disjoint from $\as$. We assume that $\d A_i^\b=\{z_i,x_i\}$ and $\d A_i^\a=\{w_i,x_i\}$, for some point $x_i\in \Sigma$.  

We now stabilize the Heegaard diagram $2|L|$ times, as shown in Figure~\ref{fig:29}. For each $\scA_i$, we introduce two stabilizations. One of the stabilizations corresponds to attaching a 1-handle with feet near $z_i$ and $x_i$. We introduce a new alpha and a new beta curve. The alpha curve runs parallel to $A_i^{\b}$. The new beta curve, denoted $\b_i^s$, is the belt-sphere of the 1-handle. Similarly, the other stabilization corresponds to attaching a 1-handle with feet near $x_i$ and $w_i$. Here, the new beta curve runs parallel to $A_i^{\a}$. The new alpha curve, $\a_i^s$, is the belt sphere of the 1-handle.  

We write $(\Sigma',\as',\bs',\ws,\zs)$ for the stabilized Heegaard diagram. We handleslide the arcs $\scA_i$ into the new stabilization tubes to obtain arcs $\scA_i'\subset \Sigma'$. The arcs $\scA_1',\dots, \scA_{\ell}'$ are still isotopic to the arcs in $\scA$.

By definition of a Heegaard link diagram, the basepoints $w_i,z_i\in \Sigma$ are contained in a single component $A_i\subset\Sigma\setminus \as$ and also a single component $B_i\subset \Sigma\setminus \bs$.  Note that the point $x_i\in \Sigma$ is contained in $A_i$ since there is a path $A_i^\b$ from $x_i$ to $z_i$ which is disjoint from $\as$. Similarly $x_i\in B_i$. 
Write $A_i'$ and $B_i'$ for the analogous components of $\Sigma'\setminus \as'$ and $\Sigma'\setminus \bs'$. Note that $A_i'$ is obtained by removing four disks from $A_i$, and similarly for $B_i'$.

We define our $\sigma$-basic system of Heegaard diagrams as follows. In the $i$-th axis direction, we first move $\b_i^s$ around $B_i$ by a sequence of handleslides across the curves of $\bs$. This is similar to the case of meridional $\sigma$-basic systems in Lemma~\ref{lem:admissibility-meridional-system}. We perform a sequence of isotopies and handleslides to move $\b_s^s$ to the other side of $z_i$, so that $z_i$ may be moved to $x_i$ without crossing any attaching curves. Write $z_i'$ for $x_i$, thought of as a basepoint. The next step in the $i$-th axis direction is to move $\a_i^s$ around $A_i$ so that it moves to the other side of $z_i'$, allowing $z_i'$ to be moved to $w_i$.

Note that since $\b_i^s$ moves only around $B_i'$, and $\a_i^s$ moves only around $A_i'$, the $\b_i^s$ may be moved around simultaneously (for different $i$) and similarly different $\a_i^s$ may be moved simultaneously. Additionally, the original curves $\as$ and $\bs$ are also unchanged in this process. In particular, we may build the different axis directions of the hypercube simultaneously to build the $\sigma$-basic system of Heegaard diagrams. By a similar argument to Lemma~\ref{lem:admissibility-meridional-system}, we may choose the constituent hypercubes of attaching circles to be algebraically rigid and weakly admissible.
\end{proof}

\begin{figure}[ht]
\centering
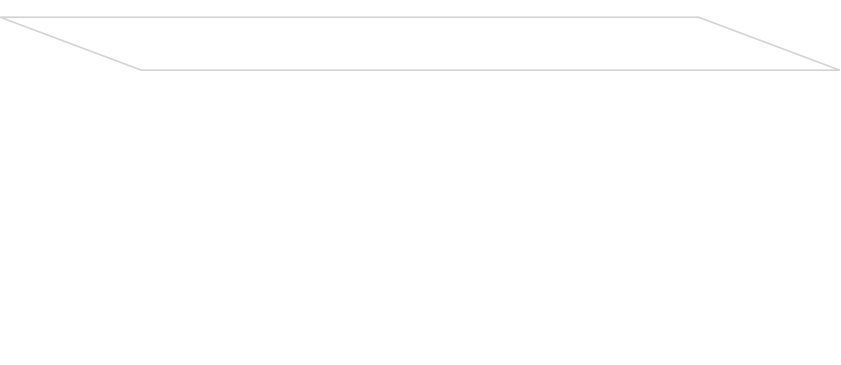
\caption{Top: the Heegaard surface $(\Sigma,\as,\bs,\ws,\zs)$ and the arc $\scA_i$ from Lemma~\ref{lem:algebraic-rigid-general-basic-system}. Bottom: the stabilized Heegaard surface $(\Sigma',\as',\bs',\ws,\zs)$ and the arc $\scA_i'$. Also shown are the special curves $\a_i^s$ and $\b_i^s$. }
\label{fig:29}
\end{figure}

\subsection{Tensor products of hypercubes of attaching curves}
\label{sec:tensor-product-def-hypercube}

 If $\cL_{\b}$ is a hypercube of attaching curves on $(\Sigma,\ws,\zs)$, and $\cL_{\b'}$ is a hypercube of attaching curves on $(\Sigma',\ws,',\zs')$, of dimensions $n$ and $m$, respectively, we now define an $(n+m)$-dimensional hypercube $\cL_{\b}\otimes \cL_{\b'}$ on $(\Sigma\sqcup \Sigma', \ws\cup \ws',\zs\cup \zs')$. We note that the construction of $\cL_{\b}\otimes \cL_{\b'}$ is essentially identical to Lipshitz, Ozsv\'{a}th and Thurston's construction of a \emph{connected sum of chain complexes of attaching circles} \cite{LOTDoubleBranchedII}*{Definition~3.40}.

 We begin with the underlying attaching curves of $\cL_{\b}\otimes \cL_{\b'}$, which we denote by $(\ds_{(\veps,\nu)})_{(\veps,\nu)\in \bE_n\times \bE_m}$, where $n=\dim (\cL_{\b})$ and $m=\dim (\cL_{\b'})$. We set $\ds_{(\veps,\nu)}$ to be a small translation of $\bs_{\veps}\cup \bs'_{\nu}$. We assume the small translations are chosen so that the copies of $\bs_{\veps}$ from $\ds_{(\veps,\nu)}$ and $\ds_{(\veps,\nu')}$ intersect in $2^{g(\Sigma)+|\ws|-1}$ points, and similarly for the translated curves on $\Sigma'$.
 
 Before defining the morphisms of $\cL_{\b}\otimes \cL_{\b'}$, we begin with some helpful terminology:

\begin{define} If $\veps,\veps'\in \bE_n$, we say that there is an \emph{arrow} from $\veps$ to $\veps'$ if $\veps< \veps'$.
\begin{enumerate}
\item We say that an arrow from $(\veps,\nu)\in \bE_{n}\times \bE_m$ to $(\veps',\nu')$ is \emph{mixed} if $\veps<\veps'$ and $\nu<\nu'$.
\item We say that an arrow from $(\veps,\nu)$ to $(\veps',\nu')$ is \emph{non-mixed} if $\veps=\veps'$ or $\nu=\nu'$.
\end{enumerate}
\end{define}

We now describe the morphisms of $\cL_{\b}\otimes \cL_{\b'}$. We define the chains in $\cL_{\b}\otimes \cL_{\b'}$ for each mixed arrow to be zero. For the arrow from $(\veps,\nu)$ to $(\veps,\nu')$, where $\nu<\nu'$, we use the chain
\[
\Theta_{\veps,\veps}^+\otimes \Theta_{\nu,\nu'},
\]
where $\Theta_{\veps,\veps}^+$ denotes the top degree generator (recall that we have perturbed one copy of $\bs_\veps$ slightly in the definition), and $\Theta_{\nu,\nu'}$ denotes the chain from $\cL_{\b'}$. Similarly, for the arrow from $(\veps,\nu)$ to $(\veps',\nu)$, where $\veps<\veps'$, we use the chain
\[
\Theta_{\veps,\veps'}\otimes \Theta_{\nu, \nu}^+.
\]

\begin{rem}
 We will only use the above construction in the case that $\cL_{\b}$ and $\cL_{\b'}$ are algebraically rigid. In this case, $\cL_{\b}\otimes \cL_{\b'}$ is also algebraically rigid, and the hypercube relations are automatic.
\end{rem}
\subsection{Basic systems and connected sums}

\label{sec:basic-systems-connected-sums}

In this section, we define a connected sum operation on $\sigma$-basic systems of Heegaard diagrams. 

We begin by defining a connected sum operation for systems of arcs. Let $L_1$ and $L_2$ be links in $S^3$, with distinguished components $K_1$ and $K_2$, respectively.  If $\scA_1$ and $\scA_2$ are two systems of arcs for $L_1$ and $L_2$, we can form their connected sum $\scA_1\# \scA_2$ by taking the connected sum of $L_1$ and $L_2$ at $w_1\in L_1$ and $z_2\in L_2$. We then concatenate the arcs for the components $K_1$ and $K_2$ to obtain the arc for $K_1\#K_2$.  \emph{A-priori}, this operation is asymmetric between $L_1$ and $L_2$. 

 The most important special cases of this construction are the following:
\begin{enumerate}[label=($\#$-\arabic*), ref=$\#$-\arabic*]
\item \emph{$\a\a$ connected sums}:  Suppose $\scA_{1}$ and $\scA_{2}$ are systems of arcs for $L_1$ and $L_2$, such that the arcs for $K_1$ and $K_2$ are both alpha-parallel. We form a  system of arcs for $L_1\# L_2$ by using an alpha-parallel arc for $K_1\# K_2$, and using the remaining arcs without change.
\item \emph{$\b\b$ connected sums}: These are analogous to $\alpha\alpha$.
\item\label{alpha-beta-connected-sums} \emph{$\a\b$ connected sums}: Suppose that $\scA_{1}$ and $\scA_{2}$ are systems of arcs for $L_1$ and $L_2$, such that the arc for $K_1$ is alpha-parallel while the arc for $K_2$ is beta-parallel. We form the system $\scA_{1}\# \scA_{2}$ by using the co-core of the connected sum band as the arc for $K_1\#K_2$.
\end{enumerate}
These special cases are illustrated in Figure~\ref{fig:25}.

\begin{figure}[ht]
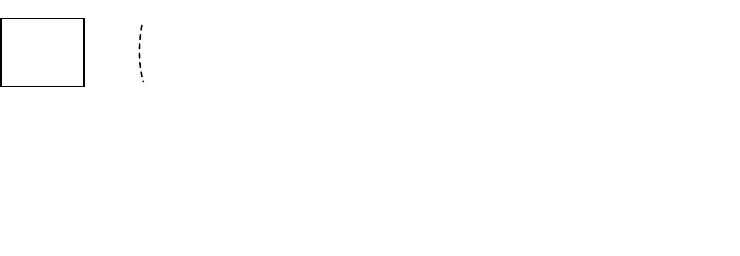
\caption{The connected sums of two alpha-parallel arcs (top row), and the connected sum of one alpha-parallel arc and one beta-parallel arc (bottom row).}
\label{fig:25}
\end{figure}

We now describe how to realize the above connected sum operations on the level of $\sigma$-basic systems of Heegaard diagrams. Suppose that we have $\sigma$-basic systems $\scH_1$ and $\scH_2$ for $L_1$ and $L_2$. We focus on the largest hyperbox of the system, since the smaller hyperboxes are determined by the compatibility condition with respect to inclusion.

We take the connected sum of the Heegaard surfaces $\Sigma_1$ and $\Sigma_2$ at $w_1\in K_1$ and $z_2\in K_2$, deleting those two basepoints, and leaving $z_1$ and $w_2$. See Figure~\ref{fig:31}.

We write $\scH^{(0)}_1,\scH_{1}^{(1)}\subset \scH_1$ for the codimension 1 subboxes where the $K_1$-component is 0 (resp. where the $K_1$-component is maximal). Write $\scH^{(0)}_{2}$ and $\scH_{2}^{(1)}$ for the analogous subboxes of $\scH_2$. 

 We first construct a hyperbox of Heegaard diagrams on $\Sigma_1\# \Sigma_2$, which we denote  $\scH_1\# \scH_2^{(0)}$. We build this hyperbox similarly to the tensor product procedure from Section~\ref{sec:tensor-product-def-hypercube}.  We build an analogous hyperbox of Heegaard diagrams $\scH^{(1)}_1\otimes \scH_2$ of dimension $(\ell_1+\ell_2-1)$. Note that since these hypercubes take place on the connected sum $\Sigma_1\# \Sigma_2$, as opposed to the disjoint union, we may not be able to build the chains in the constituent hypercubes of attaching curves using the tensor product operation. Nonetheless, we may always use the curves so-constructed, and fill in the morphisms of these hypercubes using the standard filling construction of Manolescu-Ozsv\'{a}th  \cite{MOIntegerSurgery}*{Lemma~8.6}. 

Both $\scH_1\otimes \scH_2^{(0)}$ and $\scH_{1}^{(1)}\otimes \scH_2$ share $\scH^{(1)}_1\otimes \scH_2^{(0)}$ as a codimension 1 subface. In particular, we may stack $\scH_1\otimes \scH_2^{(0)}$ and $\scH^{(1)}_1\otimes \scH_2$. Hence, for the connected sum $L_1\# L_2$, we use
\begin{equation}
\scH_1\#\scH_2:=\St\left(\scH_1\otimes \scH^{(0)}_2,\scH^{(1)}_1\otimes \scH_2\right),
\label{eq:connected-sum-basic-system}
\end{equation}
where $\St$ denotes stacking hyperboxes. This construction yields a $\sigma$-basic system of Heegaard diagrams for $L_1\#L_2$, whose largest hyperbox is given by Equation~\eqref{eq:connected-sum-basic-system}.

\begin{rem}
 If $\scH_1$ and $\scH_2$ are algebraically rigid, then $\scH_1\# \scH_2$ is also algebraically rigid.
\end{rem}

\begin{rem} Note that $\St\left(\scH_1^{(0)} \otimes \scH_2, \scH_1^{(1)}\otimes \scH_2\right)$ does not naturally define a $\sigma$-basic system of Heegaard diagrams. This is because the the Heegaard diagrams must encode an isotopy of the basepoint $z_1$ to the basepoint $w_2$ on the connected sum of the Heegaard surfaces. The stacking in Equation~\eqref{eq:connected-sum-basic-system} naturally encodes an isotopy (concatenating the isotopy from $z_1$ to $w_1$, the isotopy of the basepoint across the connected sum tube from $w_1$ to $z_2$, and the isotopy of $z_2$ to $w_2$). The alternate stacking does not naturally encode an isotopy of this sort.
\end{rem}

   \begin{figure}[ht]
   \centering
   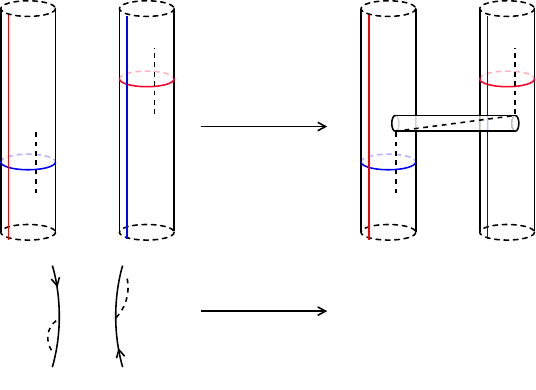
   \caption{Forming the connected sum of two $\sigma$-basic systems of Heegaard diagrams. The dashed arcs denote the arcs in our $\sigma$-basic system. On the left, we begin with an alpha-parallel arc for $K_1$ and a beta-parallel arc for $K_2$. On the right, we have the arc for the connected sum.}
   \label{fig:31}
   \end{figure}

\section{Hypercubes and disjoint unions}

\label{sec:disjoint-unions}

In this section, we consider hypercubes of attaching curves on disconnected Heegaard surfaces. Recall that in Section~\ref{sec:tensor-product-def-hypercube}, we defined a tensor product operation on hypercubes of attaching curves. The goal of this section is to prove the following tensor product formula for pairing such hypercubes of attaching curves:

\begin{prop}
\label{prop:disjoint-unions-hypercubes-main}Suppose that $\cL_{\b}$ and $\cL_{\b'}$ are admissible hypercubes of handleslide equivalent beta attaching curves on $(\Sigma,\ws,\zs)$ and $(\Sigma',\ws',\zs')$, respectively. Suppose additionally that $\cL_{\a}$ and $\cL_{\a'}$ are hypercubes of alpha attaching curves, satisfying the same assumptions. Suppose also that if $\as$ and $\bs$ are curves in $\cL_{\a}$ and $\cL_{\b}$, then $(\Sigma,\as,\bs,\ws,\zs)$ is the diagram of a link in a rational homology 3-sphere, and similarly for $\cL_{\a'}$ and $\cL_{\b'}$.
\begin{enumerate}
\item \label{prop:disjoint-unions-1} If the small translates in the construction of $\cL_{\b}\otimes \cL_{\b'}$ are chosen suitably small, then $\cL_{\b}\otimes \cL_{\b'}$ is a hypercube of attaching curves. The same holds for $\cL_{\a}\otimes \cL_{\a'}$.
\item \label{prop:disjoint-unions-2} If the translations used in the constructions of $\cL_{\a}\otimes \cL_{\a'}$ and $\cL_{\b}\otimes \cL_{\b'}$ are suitably small, then the canonical map of vector spaces
\[
\begin{split}
&\ve{\CF}^-_{J\sqcup J;}(\Sigma\sqcup\Sigma', \cL_{\a}\otimes \cL_{\a'}, \cL_{\b}\otimes \cL_{\b'},\ws\cup \ws',\zs\cup \zs')\\
\iso &\ve{\CF}^-_J(\Sigma,\cL_{\a},\cL_{\b},\ws,\zs)\otimes_{\bF} \ve{\CF}^-_{J'}(\Sigma', \cL_{\a'},\cL_{\b'},\ws',\zs'),
\end{split}
\]
is a chain isomorphism.
\end{enumerate}
The same statements hold if $\Sigma$ and $\Sigma'$ have free basepoints in addition to link basepoints.
\end{prop}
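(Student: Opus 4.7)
The common geometric input for both parts is that pseudo-holomorphic polygons in $\Sigma\sqcup\Sigma'$ split as disjoint pairs $u\sqcup u'$ with Maslov indices adding. Combined with the small-translation hypothesis on the attaching curves used in the construction of $\cL_{\a}\otimes \cL_{\a'}$ and $\cL_{\b}\otimes \cL_{\b'}$, this will let us reduce polygon counts on the tensor-product hypercubes to external products of polygon counts on the individual factors. The crucial auxiliary fact is a ``nearest-point'' principle: pairing a top-degree generator $\Theta^+$ of a small Hamiltonian translate into a polygon map produces, at the chain level, a count that agrees with the polygon map obtained by removing that $\Theta^+$ input. This is standard in the Heegaard Floer literature (compare the associativity of the triangle maps and the filling construction used by Manolescu--Ozsv\'ath in \cite{MOIntegerSurgery}*{Section~8}).

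For Part~(1), the plan is to verify the hypercube structure equation for $\cL_{\b}\otimes \cL_{\b'}$ directly. Fix $(\veps,\nu) < (\veps',\nu')$ in $\bE_n\times \bE_m$ and expand the sum over strictly increasing paths between them. Because the defining chains vanish on mixed arrows, only paths whose every step changes exactly one coordinate contribute; such paths are precisely shuffles of a strictly increasing sequence in $\bE_n$ from $\veps$ to $\veps'$ with a strictly increasing sequence in $\bE_m$ from $\nu$ to $\nu'$. Feeding the chains along such a shuffle into the holomorphic polygon map produces a polygon on $\Sigma\sqcup\Sigma'$ whose inputs on the ``opposite'' side are all of the form $\Theta^+$. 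Applying the $\Theta^+$-reduction to each such input, the polygon count on the disjoint union factors as the external tensor product of the polygon counts evaluated on the $\veps$-path in $\cL_{\b}$ and on the $\nu$-path in $\cL_{\b'}$. Summing over all shuffles, the total is the shuffle product of the hypercube relations for $\cL_{\b}$ and $\cL_{\b'}$, each of which vanishes by assumption; the same argument handles $\cL_{\a}\otimes \cL_{\a'}$.

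For Part~(2), the isomorphism of underlying vector spaces is the identification $\bT_{\a\cup\a'}\cap \bT_{\b\cup\b'}=(\bT_\a\cap \bT_\b)\times(\bT_{\a'}\cap \bT_{\b'})$, which respects the relevant $\Spin^c$ decomposition since the ambient manifolds are simply disjoint. To see that this map intertwines the hypercube differentials and higher structure maps, I would unpack Equation~\eqref{eq:pairing-formula-hypercubes} on the disjoint union and apply the same $\Theta^+$-reduction argument from the preceding paragraph: each polygon map appearing as a summand of the structure map on $(\Sigma\sqcup\Sigma',\cL_{\a}\otimes\cL_{\a'},\cL_{\b}\otimes\cL_{\b'},\ws\cup\ws',\zs\cup\zs')$ has inputs consisting of one intersection point of the tensor-product form together with hypercube chains, all of whose ``opposite side'' factors are $\Theta^+$. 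Reducing these, each summand equals the product of the corresponding summands in $\CF_J(\Sigma,\cL_{\a},\cL_{\b},\ws,\zs)$ and $\CF_{J'}(\Sigma',\cL_{\a'},\cL_{\b'},\ws',\zs')$, which is exactly the differential of the tensor product. Weak admissibility of the combined diagram is automatic from admissibility of each factor, since periodic domains on a disjoint union split.

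The main obstacle is making the $\Theta^+$-reduction rigorous in the presence of higher polygons. For a single polygon class the statement follows from a standard neck-stretching or nearest-point argument across the small-translation region, but here it must be applied compatibly across all shuffled paths and all polygon lengths appearing in the hypercube structure equations. I expect this to be controllable by choosing the Hamiltonian translates successively small relative to the finitely many energies of polygon classes that contribute to each hypercube relation in a fixed pair of endpoints, using weak admissibility to ensure these are finite. Once this technical point is in hand, both parts follow from the combinatorial shuffle bookkeeping sketched above.
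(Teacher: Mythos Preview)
Your $\Theta^+$-reduction principle does not behave the way you describe. The issue is that a polygon on the disjoint union is not a product of polygons on $\Sigma$ and $\Sigma'$: it is a \emph{fibered product} over the associahedron $K_{\ell-1}$. Concretely, an $\ell$-gon on $\Sigma\sqcup\Sigma'$ is a pair $(u,u')$ with the \emph{same} complex-structure parameter $x\in K_{\ell-1}$, and one counts the fibered product $\cM(\psi)\times_{K_{\ell-1}}\cM(\psi')$ where $\mu(\psi)+\mu(\psi')=3-\ell$. There is no mechanism by which ``removing the $\Theta^+$ inputs'' on each side turns this into an honest tensor product of a $(p+1)$-gon count on $\Sigma$ with a $(q+1)$-gon count on $\Sigma'$ (these live over $K_p$ and $K_q$, not $K_{p+q}$). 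So the sentence ``the polygon count on the disjoint union factors as the external tensor product of the polygon counts evaluated on the $\veps$-path and on the $\nu$-path'' is not correct, and the subsequent shuffle bookkeeping collapses. Even if it held, summing over shuffles would produce binomial coefficients $\binom{p+q}{p}$ multiplying $f_{\veps\text{-path}}\otimes f_{\nu\text{-path}}$, which is not the product of the two hypercube relations.

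What actually happens, and what the paper proves, is rather different: for a shuffle with $p$ arrows in $\bE_n$ and $q$ in $\bE_m$ (so $\ell=p+q+1$), the small-translate Maslov bound of Lemma~\ref{lem:nearest-point} gives $\mu(\psi)\ge\min(0,3-\ell+q)$ and $\mu(\psi')\ge\min(0,3-\ell+p)$, and combining these with $\mu(\psi)+\mu(\psi')=3-\ell$ forces $(p,q)\in\{(0,\ell-1),(\ell-1,0)\}$ when $\ell>3$. Thus mixed shuffles contribute \emph{zero}; only the pure ones survive. The pure contributions, and the exceptional triangle case $\ell=3$, are then computed by showing the evaluation map from the $\Theta^+$-side moduli space to $K_{\ell-1}$ has odd degree (the argument traces a path in $K_{\ell-1}$ to a maximal-codimension boundary stratum and invokes the nearest-point triangle identity there). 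This is the content of Lemma~\ref{lem:technical-Lb-Lb'-hypercube-relations}, from which Part~\eqref{prop:disjoint-unions-1} follows: the mixed hypercube relation reduces to exactly two canceling triangle terms $\Theta_{\veps,\veps'}\otimes\Theta_{\nu,\nu'}$. Part~\eqref{prop:disjoint-unions-2} is proved by the same index-and-degree mechanism. Your final paragraph correctly anticipates that the small-translate input is the crux, but the paper's use of it is an index inequality plus a degree computation over $K_{\ell-1}$, not a strict-unitality reduction applied termwise.
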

\begin{rem} In our proof, the assumption that $(\Sigma,\as,\bs,\ws,\zs)$ is a diagram for a link in a rational homology 3-sphere is only used to simplify several statements about admissibility. For general three-manifolds, the statement above holds as long as we restrict to a finite set of $\Spin^c$ structures on the hypercubes and assume the diagrams satisfy a stronger version of admissibility (see  \cite{OSDisks}*{Definition~4.10}).
\end{rem}

 Our proof of Proposition~\ref{prop:disjoint-unions-hypercubes-main} follows from the same line of reasoning as \cite{LOTDoubleBranchedII}*{Section~3.5}, though we present it for the benefit of the reader.

\subsection{Small translate theorems}

We now review a preliminary technical result concerning Heegaard diagrams with repeated attaching curves. These are based on the results of \cite{LOTDoubleBranchedII}*{Section~3} and \cite{HHSZExact}*{Section~11}.

Suppose $\cD=(\Sigma,\as_j,\dots, \as_1, \bs_1,\dots, \bs_k,\ws,\zs)$ is a multi-pointed Heegaard multi-diagram, the curves $\bs_1,\dots, \bs_k$ are pairwise handleslide-equivalent, and the curves $\as_1,\dots, \as_j$ are pairwise handleslide-equivalent.  Let $\bI=(i_1,\dots, i_j)$ and $\bI'=(i'_1,\dots, i'_k)$ be tuples of positive integers.
For $s\in \{1,\dots, j\}$, pick curves $\as_s^{1},\dots, \as_s^{i_s}$ which are small Hamiltonian translates of $\as_s$. Similarly, for  $t\in \{1,\dots, k\}$, pick attaching curves $\bs_t^{1},\dots, \bs_t^{i_t'}$ which are small translates of $\bs_t$. We define the diagram
\[
{}_{\bI}\cD_{\bI'}:=(\Sigma,\as_j^{i_j},\dots, \as_j^{1},\dots,\as_1^{i_1},\dots, \as_1^{1},\bs_1^{1},\dots, \bs_1^{i_1'},\dots, \bs_k^{1},\dots, \bs_k^{i_k'},\ws,\zs).
\]
 We assume that each $\bT_{\a_s^{n}}\cap \bT_{\a_s^{m}}$ contains exactly $2^{g(\Sigma)+|\ws|-1}$ points, for each $s$, $n$ and $m$, and similarly for the beta-translates.  

\begin{lem}\label{lem:nearest-point} Suppose $\cD=(\Sigma,\as_j,\dots, \as_1, \bs_1,\dots, \bs_k,\ws,\zs)$ is a multi-diagram such that the $\as_i$ are all pairwise handleslide equivalent, and the $\bs_i$ are also all pairwise handleslide equivalent.  Assume that $(\Sigma, \as_1,\bs_1,\ws,\zs)$ represents a link in a rational homology 3-sphere. Assume that $\cD$ is admissible for each complete collection of basepoints $\cW\subset \ws\cup \zs$. Let ${}_{\bI} \cD_{\bI'}$ be the diagram constructed above. Assume the translations are chosen so that ${}_{\bI} \cD_{\bI'}$ is also weakly admissible. Suppose that $(J_y)_{y\in K_{\ell-1}}$ is a generically chosen family of almost complex structures for counting holomorphic $\ell$-gons, where $\ell$ is the total number of attaching curves on ${}_{\bI} \cD_{\bI'}$. Here, $K_{\ell-1}$ is Stasheff's associahedron on $\ell-1$ inputs. Equivalently, $K_{\ell-1}$ is the moduli space of complex disks with $\ell$ boundary marked points, one of which is distinguished as the ``output''. If the translations in the construction are chosen suitably small, then the following holds: If $\psi$ is a class of $\ell$-gons representing $\frs$ such that the input of each $\ve{\CF}^-(\as_{s}^{i+1},\as_s^{i})$ and $\ve{\CF}^-(\bs_t^{i},\bs_t^{i+1})$ is the top degree generator and $\psi$ has a $J_y$-holomorphic representative for some $y\in K_{\ell-1}$, then 
\[
\mu(\psi)\ge \min(0,3-j-k).
\]
Equivalently, if $S:= \sum_{s=1}^j (i_s-1)+\sum_{t=1}^{k} (i_t'-1)$ denotes the number of special inputs from small translates, then 
\[
\mu(\psi)\ge \min(0,3-\ell+S).
\]
\end{lem}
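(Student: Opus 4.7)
The plan is to argue by contradiction via a small-translate degeneration, closely following the proofs of the analogous statements in \cite{LOTDoubleBranchedII}*{Section~3.5} and \cite{HHSZExact}*{Section~11}. Suppose, for some sequence of translation parameters $\delta_n\to 0$, that there exist classes $\psi_n$ as in the statement admitting $J_{x_n}$-holomorphic representatives for some $x_n\in K_{\ell-1}$ and satisfying $\mu(\psi_n)<\min(0,3-j-k)$.

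First I would fix a convenient Hamiltonian model for the translates: each $\as_s^{i+1}$ is obtained from $\as_s^i$ by a Hamiltonian isotopy with $C^0$-norm $O(\delta_n)$, chosen so that the top generator $\Theta_{s,+}^{i,i+1}$ of $\CF(\as_s^{i+1},\as_s^i)$ corresponds to the "diagonal" intersection point, and symmetrically for the beta side. Strong $\frs$-admissibility guarantees that the domains realizing $\psi_n$ are drawn from a finite set (in the rational homology sphere setting this is automatic once the output intersection point is fixed), so the holomorphic energies are uniformly bounded.

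Next I would apply Gromov compactness as $\delta_n\to 0$. In the limit, the parallel translates $\as_s^1,\dots,\as_s^{i_s}$ collapse onto $\as_s$, and similarly on the beta side, and each $J_{x_n}$-holomorphic representative converges to a nodal curve which decomposes into
\begin{enumerate}
\item a \emph{principal component}, which is a holomorphic $(j+k)$-gon on the collapsed diagram $(\Sigma,\as_j,\dots,\as_1,\bs_1,\dots,\bs_k,\ws,\zs)$ in some class $\psi_\infty$, and
\item \emph{ghost components} supported in small neighborhoods of the collapsed intersection points.
\end{enumerate}
For generic $x$ the principal component obeys the standard polygon Maslov-index inequality, giving $\mu(\psi_\infty)\ge \min(0,3-j-k)$.

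The main technical step, and the expected obstacle, is the analysis of the ghost components: one must show that each ghost bubble, all of whose translate-morphism inputs are top generators, contributes nonnegatively to the total Maslov index. This is handled as in \cite{LOTDoubleBranchedII}*{Lemma~3.36} by an explicit local computation near the diagonal, where the relevant moduli spaces admit a Morse--Bott description. In this local model the constant strips connecting top generators have Maslov index exactly zero, while any non-constant configuration contributes strictly positive Maslov index. Summing the indices of the principal and ghost components then yields $\mu(\psi_n)\ge \min(0,3-j-k)$ for all sufficiently small $\delta_n$, contradicting the assumption and completing the proof.
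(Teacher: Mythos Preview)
Your approach is essentially the same as the paper's: degenerate the translates, extract a limiting polygon on the collapsed diagram $\cD$, and apply transversality there. The paper presents the index step slightly differently, however. Rather than analyzing ghost components and arguing they contribute nonnegatively, it first observes (citing \cite{HHSZExact}*{Lemma~11.3}) that the original class $\psi$ and its approximating class $\psi^{\app}$ on $\cD$ satisfy the \emph{equality} $\mu(\psi)=\mu(\psi^{\app})$, precisely because the translate-inputs are top-degree generators. Then Gromov compactness supplies a holomorphic representative of $\psi^{\app}$, and transversality for $(j+k)$-gons on $\cD$ gives $\mu(\psi^{\app})\ge \min(0,3-j-k)$ directly. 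This sidesteps the local Morse--Bott analysis you describe: the index comparison is purely combinatorial, and there is no need to track ghost contributions separately. Your version would still go through, but the paper's route is a bit cleaner.
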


\begin{rem}\label{rem:QHS-admissibility-finiteness} The condition that $(\Sigma,\as_1,\bs_1)$ represents a rational homology 3-sphere is to obtain finiteness in the number of classes that the polygon maps count. In this case, the existence of the Maslov gradings $\gr_{\cW}$ for each complete collection $\cW\subset \ws\cup \zs$ will ensure that for each $N$, there are only finitely many nonnegative classes in a given
\[
\pi_2(\Theta^{\a}_{n,n-1},\dots, \Theta^\a_{2,1}, \xs, \Theta^{\b}_{1,2},\dots, \Theta^{\b}_{m-1,m}, \ys)
\]  with Maslov index $N$. Similar arguments can be made for diagrams of general 3-manifolds, though one would need to require a stronger version of admissible (such as strong $\frs$-admissibility). See \cite{OSDisks}*{Definition~4.10}.
\end{rem}

We now briefly sketch the proof of Lemma~\ref{lem:nearest-point}. We assume for simplicity of notation that $j=1$ and $i_1=1$, so that there are only beta-translates. Consider a class of $\ell$-gons $\psi$ on ${}_{\bI}\cD_{\bI'}$, such that for each $i$, $t$, the input from $\bT_{\b_{t}^{i}}\cap \bT_{\b_t^{i+1}}$ is the top degree generator. There is a well-defined approximating class $\psi^{\app}$ on $\cD$, and $\mu(\psi)=\mu(\psi^{\app})$ (at this step, one needs the special inputs to be the top degree generator; see \cite{HHSZExact}*{Lemma~11.3}, which generalizes easily from triangles to $\ell$-gons). For each $i$ and $j$, we pick a sequence $\{\bs_{t,n}^i\}_{n\in \N}$ such that as $n\to \infty$, the curves $\bs_{t,n}^i$ approach $\bs_t$ (in the sense that there are maps $\bs_{t,n}^i$ is the time 1 translation by a Hamiltonian vector field $X_{H_{i,t,n}}$ for a sequence of functions $H_{i,t,n}$ on $\Sigma$ which converge to 0 in the $C^\infty$ topology). Write $\cD_n'$ for the diagram obtained by replacing each $\bs_t^i$ with $\bs_{t,n}^i$. Given a sequence  $u_n$ of holomorphic representatives of $\psi$ on $\cD_n'$, we may extract a subsequence which converges to representative of $\psi^{\app}$, which is a $(k+1)$-gon. In particular, by applying transversality for $(k+1)$-gons, one obtains that
\[
\mu(\psi)\ge \min(0,3-(k+1)).
\]

As noted in Remark~\ref{rem:QHS-admissibility-finiteness}, we use the assumption that $(\Sigma,\as,\bs,\ws,\zs)$ is a diagram for a rational homology sphere so that there are only finitely many classes of $\ell$-gons of index $3-\ell$ representing a given $\Spin^c$ structure, and we apply the above argument to each class.

We refer the reader to \cite{LOTDoubleBranchedII}*{Section~3} and \cite{HHSZExact}*{Section~11} for additional details.

There is an additional refinement of the above lemma for the case when $\cD=(\Sigma,\as,\bs,\ws,\zs)$ is an ordinary Heegaard diagram, and we have only one translate $\bs'$ of the curves $\bs$. In this case, if $\bs'$ are chosen to be suitably small Hamiltonian translates of $\bs$, then there is a canonical nearest point map
\[
\Phi_{np}\colon \bT_{\a}\cap \bT_{\b}\to \bT_{\a}\cap \bT_{\b'}.
\]
Extending this map linear over the variables, we obtain a map
\[
\Phi_{np}\colon \ve{\CF}^-(\Sigma,\as,\bs,\ws,\zs)\to \ve{\CF}^-(\Sigma,\as,\bs',\ws,\zs).
\]

\begin{lem} \label{lem:nearest-point-map} Suppose that $(\Sigma,\as,\bs,\ws,\zs)$ is a diagram for a link in a rational homology 3-sphere which is weakly admissible for each complete collection $\cW\subset \ws\cup \zs$, and suppose that $\bs'$ are suitably small translates of the curves $\bs$. Then we have an equality of maps
\[
\Phi_{np}(-)=f_{\a,\b,\b'}(-,\Theta_{\b,\b'}^+).
\]
\end{lem}
The proof in the case of $\widehat{\CF}$ was given by Lipshitz, Ozsv\'{a}th and Thurston in \cite{LOTDoubleBranchedII}*{Lemma~3.38}. This was extended to $\ve{\CF}^-$ (mostly by adapting the notation of \cite{LOTDoubleBranchedII}) in \cite{HHSZExact}*{Proposition~11.1}.

\subsection{Disconnected Heegaard surfaces}
\label{sec:disconnected-Heegaard-surfaces}

Suppose that $\cL_{\b}$ and $\cL_{\b'}$ are hypercubes of handleslide equivalent attaching curves on $(\Sigma,\ws,\zs)$ and $(\Sigma',\ws',\zs')$, of dimension $n$ and $m$, respectively. In this section, we prove Part~\eqref{prop:disjoint-unions-1} of Proposition~\ref{prop:disjoint-unions-hypercubes-main}, i.e. for suitable choices of translates, the diagram $\cL_{\b}\otimes \cL_{\b'}$ is a hypercube of attaching curves (compare \cite{LOTDoubleBranchedII}*{Proposition~3.52}).

We begin with a technical result, from which we will derive the hypercube relations. In the following, we will write $\kappa=(\veps,\nu)$ for points in $\bE_{n+m}\iso \bE_n\times \bE_m$. Also, we write $(\ds_{\kappa})_{\kappa\in \bE_{n+m}}$ for the attaching curves of $\bE_{n+m}$.  If $\kappa_1<\dots<\kappa_\ell$ is an increasing sequence of indices, we write $f_{\kappa_1,\dots, \kappa_\ell}$ for the holomorphic $\ell$-gon map $f_{\dt_{\kappa_1},\dots, \dt_{\kappa_\ell}}$. We similarly write $f_{\veps_1,\dots, \veps_\ell}$ and $f_{\nu_1,\dots, \nu_\ell}$ for holomorphic polygon maps on $\Sigma$ and $\Sigma'$.  We write $\Theta_{\kappa_{i-1},\kappa_i}$ for the chains on $\cL_{\b}\otimes \cL_{\b'}$. We recall that these are given by
\[
\Theta_{\kappa_{i},\kappa_{i+1}}=
\begin{cases} \Theta_{\veps_i,\veps_{i+1}}\otimes \Theta_{\nu_i}^+ & \text{ if } \nu_i=\nu_{i+1}\\
\Theta_{\veps_i}^+\otimes \Theta_{\nu_i,\nu_{i+1}} & \text{ if } \veps_i=\veps_{i+1}\\
0& \text{ otherwise}.
\end{cases}
\]

\begin{lem}\label{lem:technical-Lb-Lb'-hypercube-relations} Let $\cL_\beta$ and $\cL_{\b'}$ are hypercubes of handleslide equivalent attaching curves on $(\Sigma,\ws,\zs)$ and $(\Sigma',\ws',\zs')$, respectively, and let $\cL_{\b}\otimes\cL_{\b'}$ denote the hypercube on $(\Sigma\sqcup \Sigma',\ws\cup \ws', \zs\cup \zs')$ described in Section~\ref{sec:tensor-product-def-hypercube}. Suppose that $\kappa_1<\cdots<\kappa_\ell$ is an increasing sequence in $\bE_{n+m}$, and let $\Theta_{\kappa_{i-1},\kappa_i}$ denote the chains in $\cL_\b\otimes \cL_{\b'}$. If $\ell\neq 3$, then
\begin{equation}
\begin{split}
&f_{\kappa_1,\dots, \kappa_\ell}(\Theta_{\kappa_1,\kappa_2},\dots, \Theta_{\kappa_{\ell-1},\kappa_\ell})
\\
&=\begin{cases}
 f_{\veps_1,\dots, \veps_\ell}(\Theta_{\veps_1,\veps_2},\dots, \Theta_{\veps_{\ell-1},\veps_\ell})\otimes \Theta^+_{\nu_1,\nu_1}
	&	 \text{if } \nu_1=\nu_\ell,
\\
\Theta_{\veps_1,\veps_1}^+\otimes f_{\nu_1,\dots, \nu_\ell}(\Theta_{\nu_1,\nu_2},\dots, \Theta_{\nu_{\ell-1},\nu_\ell})
	& \text{if } \veps_1=\veps_\ell,\\
0& \text{otherwise}.
\end{cases}
\end{split}
\label{eq:main-computation-disjoint-union-hypercubes-v2}
\end{equation}
 When $\ell=2$, we interpret $f_{\kappa_1,\kappa_2}$ as the ordinary Floer differential.
When $\ell=3$, Equation~\eqref{eq:main-computation-disjoint-union-hypercubes-v2} holds when $\nu_1=\nu_3$ or $\veps_1=\veps_3$. If $\ell=3$ and instead $\veps_1<\veps_2=\veps_3$ and $\nu_1=\nu_2<\nu_3$, then
\[
f_{\kappa_1,\kappa_2,\kappa_3}(\Theta_{\kappa_1,\kappa_2},\Theta_{\kappa_2,\kappa_3})=\Theta_{\veps_1,\veps_2}\otimes \Theta_{\nu_2,\nu_3},
\]
where we identify complexes whose attaching curves are small approximations of each other. Similarly if $\veps_1=\veps_2<\veps_3$ and $\nu_1<\nu_2=\nu_3$ we have
\[
f_{\kappa_1,\kappa_2,\kappa_3}(\Theta_{\kappa_1,\kappa_2},\Theta_{\kappa_2,\kappa_3})=\Theta_{\veps_2,\veps_3}\otimes \Theta_{\nu_1,\nu_2}.
\]
\end{lem}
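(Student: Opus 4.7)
The strategy is to decompose polygon counts on the disconnected Heegaard surface $\Sigma\sqcup\Sigma'$ and reduce to the small-translate theorems Lemmas~\ref{lem:nearest-point} and~\ref{lem:nearest-point-map}. First I would observe that because mixed arrows in $\cL_\b\otimes \cL_{\b'}$ carry the zero chain, the left hand side of~\eqref{eq:main-computation-disjoint-union-hypercubes-v2} vanishes unless each step $\kappa_i<\kappa_{i+1}$ is non-mixed. Assuming this, let $a$ be the number of indices $i$ with $\veps_i<\veps_{i+1}$ and $b$ the number with $\nu_i<\nu_{i+1}$, so $a+b=\ell-1$, and each input $\Theta_{\kappa_i,\kappa_{i+1}}$ splits either as $\Theta^+_{\veps_i,\veps_i}\otimes \Theta_{\nu_i,\nu_{i+1}}$ or $\Theta_{\veps_i,\veps_{i+1}}\otimes \Theta^+_{\nu_i,\nu_i}$. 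The case $\ell=2$ is then immediate, since the Floer differential on a disjoint union is the sum of the differentials on each piece and $\d\Theta^+=0$.

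Next I would use that a holomorphic $\ell$-gon on $\Sigma\sqcup\Sigma'$ decomposes as a pair $\psi=(\psi_\Sigma,\psi_{\Sigma'})$ of $\ell$-gons with $\mu(\psi)=\mu(\psi_\Sigma)+\mu(\psi_{\Sigma'})$. On the $\Sigma$-side, the $b$ steps at which $\veps_i=\veps_{i+1}$ present the top generator of a small translate pair as a polygon input, and symmetrically on $\Sigma'$. Applying Lemma~\ref{lem:nearest-point} to each side, with the translates in the construction of $\cL_\b\otimes\cL_{\b'}$ chosen small enough, yields
\[
\mu(\psi_\Sigma)\ge \min(0,3-\ell+b),\qquad \mu(\psi_{\Sigma'})\ge\min(0,3-\ell+a).
\]
Since only index $3-\ell$ polygons contribute to $f_{\kappa_1,\dots,\kappa_\ell}$, a routine case analysis on $(a,b)$ with $a+b=\ell-1$ shows that for $\ell\ge 4$ the only possibilities with nonzero contribution are $a=0$ (so $\veps_1=\veps_\ell$) or $b=0$ (so $\nu_1=\nu_\ell$), while for $\ell=3$ the additional case $a=b=1$ survives.

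In the case $\nu_1=\nu_\ell$ (and symmetrically $\veps_1=\veps_\ell$), one side must contribute an $\ell$-gon of index $0$ whose every input is the top generator of a small-translate pair. Iteratively applying Lemma~\ref{lem:nearest-point-map} $\ell-1$ times along the chain of small translates identifies this count with the composition of nearest-point maps, which sends the top generator to the top generator. Combined with the genuine $\ell$-gon count on the other side, this yields the first two cases of~\eqref{eq:main-computation-disjoint-union-hypercubes-v2}. For the exceptional $\ell=3$ subcase with $\veps_1<\veps_2=\veps_3$ and $\nu_1=\nu_2<\nu_3$, each of the two triangle counts must have Maslov index $0$ and has exactly one top-generator small-translate input, so Lemma~\ref{lem:nearest-point-map} identifies each count as the nearest-point map applied to the remaining input. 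Under the canonical identification of complexes that differ only by small translates, this gives $\Theta_{\veps_1,\veps_2}\otimes\Theta_{\nu_2,\nu_3}$, and symmetrically for the other $\ell=3$ subcase.

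The main technical obstacle is arranging the hypotheses of Lemmas~\ref{lem:nearest-point} and~\ref{lem:nearest-point-map} uniformly: one needs the translates in $\cL_\b\otimes\cL_{\b'}$ and the families of almost complex structures to be chosen so that the small-translate bound applies on \emph{every} sub-diagram simultaneously, and so that the nearest-point identification holds at every step of the iteration used in Cases $\veps_1=\veps_\ell$ and $\nu_1=\nu_\ell$. Because we are working over a rational homology 3-sphere, weak admissibility implies strong $\frs$-admissibility for the relevant $\Spin^c$ structures and hence only finitely many classes of index $3-\ell$ polygons need be considered; this allows the choices of translates and almost complex structures to be made by a finite shrinking and transversality argument along the lines of~\cite{LOTDoubleBranchedII}*{Section~3} and~\cite{HHSZExact}*{Section~11}.
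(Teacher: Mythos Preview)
Your index--case analysis using Lemma~\ref{lem:nearest-point} is correct and matches the paper's, as does the reduction to $a=0$ or $b=0$ for $\ell\ge 4$. The $\ell=2$ and $\ell=3$ cases are also fine; for $\ell=3$ the associahedron $K_2$ is a point, so the disjoint-union triangle count genuinely factors as a product and Lemma~\ref{lem:nearest-point-map} applies to each factor.

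The gap is in the non-mixed cases for $\ell\ge 4$. You treat the $\ell$-gon count on $\Sigma\sqcup\Sigma'$ as a product of independent counts on each piece, but it is a \emph{fibered product} over the associahedron $K_{\ell-1}$: a pair $(u,u')$ contributes only if both curves sit over the same conformal parameter $x\in K_{\ell-1}$. When $\mu(\psi_{\Sigma'})=0$, the $\Sigma'$-side parametrized moduli space is not a finite set but an $(\ell-3)$-dimensional space mapping to $K_{\ell-1}$, and what you actually need is that this evaluation map has odd degree. Your phrase ``iteratively applying Lemma~\ref{lem:nearest-point-map}'' does not compute this degree: that lemma concerns triangles, and there is no direct mechanism by which an $\ell$-gon count with all top-generator inputs reduces to an iterated composition of triangle maps.

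The paper supplies the missing step by a cobordism argument: one considers the preimage of a path $\gamma$ in $K_{\ell-1}$ from a generic interior point to a vertex of $\partial K_{\ell-1}$ corresponding to a maximal degeneration into $\ell-2$ triangles. Along the interior, only index-$1$ disk breakings can occur, and these cancel (input direction: $\Theta^+$ is a cycle; output direction: forbidden by the grading bound of Lemma~\ref{lem:nearest-point}). At the boundary endpoint the count becomes the iterated triangle composition you describe, which is $1\pmod 2$. This cobordism is what justifies equating the degree with the iterated nearest-point computation; without it, the argument is incomplete. The technical obstacle you identified (uniform smallness of translates, admissibility) is real but secondary to this structural point.
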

\begin{proof} Consider first the claim when $\ell=2$ (i.e. for holomorphic disks). Suppose that $(\veps_1,\nu_1)<(\veps_2,\nu_2)$ and consider $\Theta_{\kappa_1,\kappa_2}$. By assumption, $\Theta_{\kappa_1,\kappa_2}$ is only non-trivial if $\veps_1=\veps_2$ or $\nu_1=\nu_2$, so assume for concreteness that $\veps_1<\veps_2$ and $\nu_1=\nu_2$. By Definition, $\Theta_{\kappa_1,\kappa_2}=\Theta_{\veps_1,\veps_2}\otimes \Theta_{\nu_1,\nu_1}^+$. The differential on the disjoint union of two diagrams is clearly tensorial, so 
\[
\d(\Theta_{\veps_1,\veps_2}\otimes \Theta_{\nu_1,\nu_1}^+)=\d\Theta_{\veps_1,\veps_2}\otimes \Theta_{\nu_1,\nu_1}^++\Theta_{\veps_1,\veps_2}\otimes \d \Theta_{\nu_1,\nu_1}^+=\d \Theta_{\veps_1,\veps_2}\otimes \Theta_{\nu_1,\nu_1}^+,
\]
proving the claim in this case. The case that $\ell=3$ is similarly straightforward to verify. 

We assume now that $\ell>3$. The map $f_{\kappa_1,\dots, \kappa_\ell}$ counts holomorphic $\ell$-gons of Maslov index $3-\ell$. The holomorphic curves map into $(\Sigma\sqcup \Sigma')\times D_\ell$. To define the polygon counting map, we pick a family of almost complex structures $(\cJ_y)_{y\in K_{\ell-1}}$ on $(\Sigma\sqcup \Sigma')\times D_{\ell}$. Such a family of almost complex structures induces two families, $(J_y)_{y\in K_{\ell-1}}$ and $(J_y')_{y\in K_{\ell-1}}$, on $\Sigma\times D_\ell$ and $\Sigma'\times D_{\ell}$, respectively. 

The holomorphic polygon map $f_{\kappa_1,\dots, \kappa_\ell}$ on the disjoint union may be equivalently described by counting pairs $(u,u')$ representing pairs of classes of $\ell$-gons $(\psi,\psi')$ satisfying 
\begin{equation}
\mu(\psi)+\mu(\psi')=3-\ell,\label{eq:basic-index-additivity}
\end{equation}
 which have the same almost complex structure parameter $y\in K_{\ell-1}$. In particular, we may naturally view the moduli space for a class $(\psi,\psi')$ (as would be counted by the polygon map $f_{\kappa_1,\dots, \kappa_{\ell}}$) as the fibered product
 \begin{equation}
 \begin{split}
&\bigcup_{y\in K_{\ell-1}} \cM_{\cJ_y}(\psi,\psi')\times \{y\}\\
=&\left(\bigcup_{y\in K_{\ell-1}} \cM_{J_y}(\psi)\times \{y\}\right) \times_{\ev} \left(\bigcup_{y\in K_{\ell-1}} \cM_{J_y'}(\psi')\times \{y\}\right),
\end{split}
\label{eq:moduli-space-fibered-product}
 \end{equation}
 where $\ev$ is the evaluation map to $K_{\ell-1}$, which sends a pair $(u,y)$ to the parameter $y$.

Consider the last line of Equation~\eqref{eq:main-computation-disjoint-union-hypercubes-v2}. This equation concerns sequences $\kappa_1<\cdots<\kappa_\ell$ which increment both the $\bE_n$-coordinates and the $\bE_m$-coordinates.  Consider a class of $\ell$-gons $(\psi,\psi')$ on $(\Sigma\sqcup \Sigma')\times D_\ell$, potentially counted by the $\ell$-gon map $f_{\kappa_1,\dots, \kappa_\ell}$. Let $k$ be the number of inputs of $\psi$ which do not increment the $\bE_n$-coordinate. Let $k'$ be the number of inputs of $\psi'$ which do not increment the $\bE_m$-coordinate.

 By Lemma~\ref{lem:nearest-point}, if $(\psi,\psi')$ has a holomorphic representative, we must have
\begin{equation}
\mu(\psi)\ge \min(3-\ell+k,0)\quad \text{and} \quad \mu(\psi')\ge \min(3-\ell+k',0), \label{eq:twin-inequalities-Maslov}
\end{equation}
because each input of $\psi$ which does not increase the $\bE_n$-coordinate contributes a top degree generator in a pair of translated curves as input. Since $\mu(\psi)+\mu(\psi')=3-\ell$, by assumption, for a holomorphic representative to exist we must have
\[
\min(3-\ell+k,0)+\min(3-\ell+k',0)\le 3-\ell.
\]
We add $2(\ell-3)$ to the above to obtain
\begin{equation}
\min(k,\ell-3)+\min(k',\ell-3)\le \ell-3. \label{eq:inequality-kk'}
\end{equation}
On the other hand, there are $\ell-1$ total inputs to the map $f_{\kappa_1,\dots, \kappa_{\ell}}$, and each input increments either $\bE_n$ or $\bE_m$, but not both (by definition of $\cL_{\b}\otimes \cL_{\b'}$). Hence 
\begin{equation}
k+k'=\ell-1.
\label{eq:k+k'=l-1}
\end{equation} Combining the above equation with Equation~\eqref{eq:inequality-kk'}, we deduce that it is not possible that both $k\le \ell-3$ and $k'\le \ell-3$, since then Equation~\eqref{eq:inequality-kk'} would  imply $k+k'\le \ell-3$, which contracts Equation~\eqref{eq:k+k'=l-1}. Hence, we assume without loss of generality that $k> \ell-3$. Equation~\eqref{eq:inequality-kk'} implies that
\[
\min(k',\ell-3)\le 0,
\]
which implies that either $k'=0$ or $\ell\in \{2,3\}$. Since we assumed that $\ell>3$, we conclude that $k'=0$. Equation~\eqref{eq:k+k'=l-1} implies that $k=\ell-1$. Symmetrically, if instead we assumed $k'>\ell-3$, we would have $(k,k')=(0,\ell-1)$. This proves the last line of Equation~\eqref{eq:main-computation-disjoint-union-hypercubes-v2}.

We now consider the first two lines of Equation~\eqref{eq:main-computation-disjoint-union-hypercubes-v2}. These correspond to the cases that $(k,k')$ is $(\ell-1,0)$ or $(0,\ell-1)$. Consider the case that $(k,k')=(0,\ell-1)$. In this case, the holomorphic curve count occurs on the diagram 
\[
(\Sigma,\bs_{\veps_1},\dots,\bs_{\veps_\ell},\ws,\zs)\sqcup (\Sigma', \bs'_{\nu},\dots, \bs'_{\nu},\ws',\zs'),
\]
where the $\ell$ attaching curves on the right-hand diagram are small translates and we write $\nu$ for $\nu_1$.  Equation~\eqref{eq:twin-inequalities-Maslov} implies in this case that $\mu(\psi)\ge 3-\ell$ and $\mu(\psi')\ge 0$. Together with Equation~\eqref{eq:basic-index-additivity} we see that 
\begin{equation}
\mu(\psi)=3-\ell \quad \text{and} \quad  \mu(\psi')=0.
\label{eq:index-equations-psi-psi'}
\end{equation}

If $\psi'\in \pi_2(\Theta^+_{\nu,\nu},\cdots, \Theta^+_{\nu,\nu},\zs)$, the Maslov index formula for the grading implies that
\[
\mu(\psi')=n_{\ws'}(\psi')+\gr_{\ws'}(\Theta_{\nu,\nu}^+,\zs).
\]
Since $\mu(\psi')=0$, we conclude that $n_{\ws'}(\psi')=0$ and $\zs=\Theta_{\nu,\nu}^+$.

For the main claim, it suffices to show that
\[
\# \bigcup_{y\in K_{\ell-1}}\cM_{J_y}(\psi)\equiv \sum_{\substack{
\psi'\in \pi_2(\Theta_{\nu,\nu}^+,\dots, \Theta_{\nu,\nu}^+)
\\
\mu(\psi')=0\\
y\in K_{\ell-1}
}} \cM_{\cJ_y}(\psi,\psi') \pmod{2}.
\]
Equation~\eqref{eq:index-equations-psi-psi'} implies that 
\[
\dim \bigcup_{y\in K_{\ell-1}} \cM_{J_y}(\psi)\times \{y\}=0\quad \text{and} \quad \dim \bigcup_{y\in K_{\ell-1}} \cM_{J_y'}(\psi')\times \{y\}=\dim(K_{\ell-1})=\ell-3.
\]
By the fibered product equation in Equation~\eqref{eq:moduli-space-fibered-product} and the above dimension counts, it therefore suffices to show that the map
\[
\ev \colon \bigcup_{
\substack{
\psi'\in \pi_2(\Theta_{\nu,\nu}^+,\dots, \Theta_{\nu,\nu}^+)
\\
\mu(\psi')=0\\
y\in K_{\ell-1}
}}\cM_{J_y}(\psi')\to K_{\ell-1}
\]
has odd degree. (Compare \cite{LOTDoubleBranchedII}*{Lemma~3.50}) This is proven by considering the preimage of a path $\gamma\colon [0,1]\to K_{\ell-1}$, such that $\gamma(0)=y$, $\gamma(t)\in \Int K_{\ell-1}$ for $t\in [0,1)$ and $\gamma(1)$ is a point in $\d K_{\ell-1}$ which realizes a degeneration of a holomorphic $\ell$-gon into $\ell-2$ triangles (i.e. a point in the lowest dimensional boundary strata of $K_{\ell-1}$). The codimension 1 degenerations along the image of $(0,1)$ under $\g$ consist of index 1 holomorphic disks breaking off. The holomorphic disks which break off at the input generator cancel in pairs, since each $\Theta_{\nu,\nu}^+$ is a cycle. There are no disks which bubble off in the output, since they would leave an index $-1$ $\ell$-gon in $\pi_2(\Theta_{\nu,\nu}^+,\dots, \Theta_{\nu,\nu}^+,\ys)$, for some $\ys$, as well as an index $1$ disk $\phi\in\pi_2(\ys,\Theta_{\nu,\nu}^+)$ with $n_{\ws}(\phi)=0$. The existence of such a degeneration would imply that $\gr(\ys,\Theta_{\nu,\nu}^+)=1$, which is impossible since there are no generators with grading higher than $\Theta_{\nu,\nu}^+$.

The cardinality of the limit at $t=1$ corresponds to the component of $\Theta_{\nu,\nu}^+$ in the $\ell-2$ fold composition of maps of the form $f_{\b'_{\nu},\b'_{\nu},\b'_{\nu}}(\Theta_{\nu,\nu}^+,-)$, applied to the element $\Theta_{\nu,\nu}^+$ (e.g. by the grading preserving invariance of the Heegaard Floer complex of connected sums of $S^1\times S^2$). Clearly this is 1, modulo 2. This establishes the first line of~\eqref{eq:main-computation-disjoint-union-hypercubes-v2}. The second line follows from the same reasoning, establishing Equation~\eqref{eq:main-computation-disjoint-union-hypercubes-v2} and completing the proof.
\end{proof}

We now use the previous Lemma to finish our proof that $\cL_{\b}\otimes \cL_{\b'}$ is a hypercube of attaching curves:

\begin{proof}[Proof of Part~\eqref{prop:disjoint-unions-1} of Proposition~\ref{prop:disjoint-unions-hypercubes-main}]
Suppose that $\kappa<\kappa'$ are points in $\bE_{n+m}$. We wish to show the hypercube relations for $\cL_{\b}\otimes \cL_{\b'}$. Write $\kappa=(\veps,\nu)$ and $\kappa'=(\veps',\nu')$. There are two cases to consider:
\begin{enumerate}
\item $\veps=\veps'$ or $\nu=\nu'$. I.e. the arrow $(\kappa,\kappa')$ is \emph{non-mixed}.
\item $\veps<\veps'$ and $\nu<\nu'$. I.e. the arrow $(\kappa,\kappa')$ is \emph{mixed}.
\end{enumerate}
In the case of a non-mixed arrow, Lemma~\ref{lem:technical-Lb-Lb'-hypercube-relations} shows that the hypercube relations for $\cL_{\b}\otimes \cL_{\b'}$ follow immediately from the hypercube relations on $\cL_{\b}$ and $\cL_{\b'}$.

For a mixed arrow $(\kappa,\kappa')$, Lemma~\ref{lem:technical-Lb-Lb'-hypercube-relations} implies that there are exactly two terms which contribute to the hypercube relation. These correspond to the two broken arrow sequences $(\kappa,\kappa_0,\kappa')$ where $\kappa_0\in \{(\veps,\nu'),(\veps',\nu)\}$. By Lemma~\ref{lem:technical-Lb-Lb'-hypercube-relations} both of these sequences contribute $\Theta_{\veps,\veps'}\otimes \Theta_{\nu,\nu'}$, which cancel, so the hypercube relations are satisfied.
\end{proof}

We are now able to prove the remainder of Proposition~\ref{prop:disjoint-unions-hypercubes-main}:

\begin{proof}[Proof of Part~\eqref{prop:disjoint-unions-2} Proposition~\ref{prop:disjoint-unions-hypercubes-main}]
The proof is in the same spirit as Part~\eqref{prop:disjoint-unions-1} of the proposition. To simplify the notation, we will assume that $\cL_{\a}$ and $\cL_{\a'}$ are both 0-dimensional, and consist of ordinary sets of attaching circles $\as$ and $\as'$. If $(\veps,\nu)<(\veps',\nu')$ are points in $\bE_{n+m}$, then the hypercube map from $(\veps,\nu)$ to $(\veps',\nu')$ in 
\begin{equation}
\ve{\CF}^-(\Sigma\sqcup \Sigma', \as\cup \as', \cL_{\b}\otimes \cL_{\b'},\ws\cup \ws')\label{eq:hypercube-restated}
\end{equation}
 is obtained by summing over all increasing sequences 
 \[
 (\veps,\nu)=(\veps_1,\nu_1)<\cdots< (\veps_\ell,\nu_\ell)=(\veps',\nu')
 \]
 the holomorphic $(\ell+1)$-gon map which has special inputs from $\cL_{\b}\otimes \cL_{\b'}$. We say such a sequence is \emph{mixed} if $\veps<\veps'$ and $\nu<\nu'$. We claim that in the pairing of the two hypercubes of attaching curves, mixed sequences make trivial contribution. This is an approximation argument similar to Part~\eqref{prop:disjoint-unions-1} of the Proposition, as we now describe.
  
   We recall that $\cL_{\b}\otimes \cL_{\b'}$ has no mixed arrows which are assigned non-zero chain. Hence we may consider only broken arrow paths in $\bE_{n+m}$ where each individual arrow increases exactly one of the $\bE_n$ or $\bE_m$ coordinates. For such a sequence of length $\ell\ge 1$, let $k$ be the number of arrows which do not increase the $\bE_n$ coordinate, and let $k'$ be the number of arrows which do not increase the $\bE_m$ coordinate. In our present situation, $k+k'=\ell-1$. The contribution of this arrow path is a count of holomorphic $(\ell+1)$-gons, with $\ell-1$ special inputs from the hypercube $\cL_{\b}\otimes \cL_{\b'}$, and one input from $\ve{\CF}^-(\as\cup \as', \bs_{\veps}\cup \bs'_{\nu})$. Suppose $(\psi,\psi')$ is a homology class which could potentially contribute. By construction, $\mu(\psi)+\mu(\psi')=2-\ell$. Equation~\eqref{eq:twin-inequalities-Maslov} adapts to show that
\[
\mu(\psi)\ge \min(2-\ell+k,0)\quad \text{and} \quad \mu(\psi')\ge \min(2-\ell+k',0),
\]
so 
\[
\min(2-\ell+k,0)+\min(2-\ell+k',0)\le 2-\ell,
\]
and hence
\[
\min(k,\ell-2)+\min(k',\ell-2)\le \ell-2.
\]
Similar to the argument from  Part~\eqref{prop:disjoint-unions-1}, the only allowable configurations are $(k,k')\in \{(\ell-1,0),(0,\ell-1)\}$. This implies that there are no mixed arrows in the hypercube of Equation~\eqref{eq:hypercube-restated}.

It remains to verify that the non-mixed arrows of Equation~\eqref{eq:hypercube-restated} coincide with the tensor product differential. Arguing similarly to the proof of Part~\eqref{prop:disjoint-unions-1}, it suffices to show that if $\xs\in \bT_{\a}\cap \bT_{\b_{\veps}}$, then the map
\[
\ev \colon \bigcup_{
\substack{
\psi\in \pi_2(\xs,\Theta_{\veps,\veps}^+,\dots, \Theta_{\veps,\veps}^+,\ys)
\\
\mu(\psi)=0\\
y\in K_{\ell}
}}\cM_{J_y}(\psi)\to K_{\ell}
\]
is odd degree if $\ys$ is the canonical nearest point to $\xs$, $\ys=\xs_{np}$, and is even degree otherwise. Similarly to Part~\eqref{prop:disjoint-unions-1}, we consider the preimage of a path $\gamma\colon [0,1]\to K_{\ell}$, which connects a generic $y\in \Int K_{\ell}$ to a point in $\d K_{\ell}$ of maximal codimension. The only possible generic degenerations on the interior consist of an index 1 disk breaking off. Disks breaking off at the $\Theta_{\veps,\veps}^+$ inputs cancel in pairs. Index 1 disks breaking off at the $\xs$ input or the $\ys$ output are impossible, since they leave an index $-1$ $(\ell+1)$-gon which has $\ell-1$ inputs equal to the top degree generator, whose existence would violate Lemma~\ref{lem:nearest-point}. Hence, we identify the preimage over $y$ with the preimage of $\gamma(1)$. By the nearest point map argument of Lemma~\ref{lem:nearest-point-map}, the count is odd if and only if $\ys=\xs_{np}$, completing the proof.
\end{proof}

\section{Hypercubes and connected sums}

\label{sec:hypercubes-connected-sums}
In this section, we prove a connected sum formula for hypercubes of attaching curves. We begin with some preliminary definitions before stating our result in Proposition~\ref{prop:connected-sums}.

\begin{define}
\label{def:matching} A \emph{Heegaard surface with matched link basepoints} $(\Sigma,\ws,\zs,\ps)$ consists of a surface with a finite collection of points $\ws\cup \zs\cup \ps$, which is equipped with a matching function
\[
m\colon  \ws\to \zs,
\]
which is a bijection. We call $\ws\cup \zs$ the \emph{link basepoints}, and $\ps$ the \emph{free basepoints}.
\end{define}

\begin{define}
\label{def:admissible-connected-sum}
Suppose that $(\Sigma,\ws,\zs,\ps)$ and $(\Sigma',\ws',\zs',\ps')$ are two Heegaard surfaces with matched link basepoints, as in Definition~\ref{def:matching}. We say that $(\Sigma\# \Sigma',\ws'',\zs'',\ps'')$ is formed by an \emph{admissible connected sum} if it is formed by one of the two following procedures (possibly with the roles of $\Sigma$ and $\Sigma'$ reversed).
\begin{enumerate}
\item (Connected sum at a free basepoint) The connected sum is taken at a free basepoint $p\in \ps $, and at some point $x'\in \Sigma'\setminus (\ws'\cup \zs')$. The basepoints are
\[
\ws''=\ws\cup \ws', \quad \zs''=\zs\cup \zs', \quad \text{and} \quad \ps''=(\ps\setminus \{p\})\cup \ps'.
\] 
\item (Connected sum at link basepoints) The connected sum is taken at two link basepoints $w\in \ws$ and $z'\in \zs'$. The basepoints are 
\[
\ws''=(\ws\setminus \{w\})\cup \ws',\quad \zs''=\zs\cup (\zs'\setminus \{z'\}),\quad \text{and} \quad \ps''=\ps\cup \ps'.
\] 
\end{enumerate}
\end{define}

Suppose that we form an admissible connected sum $(\Sigma\# \Sigma',\ws'',\zs'',\ps'')$ from the pointed surfaces $(\Sigma,\ws,\zs,\ps)$ and $(\Sigma',\ws',\zs',\ps')$. Suppose $\cL_{\b}$ and $\cL_{\b'}$ are hypercubes of handleslide equivalent attaching curves on $\Sigma$ and $\Sigma'$, respectively. We may construct a hypercube-shaped diagram  $\cL_{\b}\otimes \cL_{\b'}$ on $(\Sigma\# \Sigma',\ws'',\zs'',\ps'')$ via the same procedure as on the disjoint union of the two diagrams in Section~\ref{sec:tensor-product-def-hypercube}. We do not claim that the hypercube relations are satisfied, however we have the following:

\begin{rem}
If $\cL_{\b}$ and $\cL_{\b'}$ are algebraically rigid, then the top degree chains coincide on disjoint unions of diagrams and on admissible connected sums, justifying the use of the notation $\cL_{\b}\otimes \cL_{\b'}$ for both situations. Also, if $\cL_{\b}$ and $\cL_{\b'}$ are algebraically rigid, then $\cL_{\b}\otimes \cL_{\b'}$ is automatically a hypercube of attaching curves. 
\end{rem}

To state our connected sum theorem for hypercubes, we need an additional condition on our hypercubes of attaching curves:

\begin{define}
\label{def:graded-by}
 Suppose that $\cL_{\a}$ is an algebraically rigid hypercube of handleslide equivalent attaching curves on $(\Sigma,\ws,\zs,\ps)$, and $x$ is a point in the complement of $\Sigma\setminus \bigcup_{\a\in \cL_{\a}} \as$.  We say that $x$ is \emph{basepoint-esque} for $\cL_{\a}$ if for every composable sequence $\Theta_{\nu_j,\nu_{j-1}},\dots, \Theta_{\nu_2,\nu_1}$ of chains in $\cL_{\a}$, and every nonnegative class $\psi\in \pi_2(\Theta_{\nu_j,\nu_{j-1}},\dots, \Theta_{\nu_2,\nu_1},\xs)$, we have
\[
\mu(\psi)\ge 2n_x(\psi).
\]
(Note $\xs$ is the outgoing intersection point in the class $\psi$).
 We make a similar definition for hypercubes of algebraically rigid beta hypercubes.
\end{define}

\begin{lem} If $\cL_{\a}$ is an algebraically rigid hypercube of handleslide equivalent attaching curves on $(\Sigma,\ws,\zs,\ps)$, and $x\in \ws\cup \zs\cup \ps$, then $x$ is basepoint-esque for $\cL_{\a}$. 
\end{lem}
\begin{proof} Assume for concreteness that $x\in \ws$. If $\psi\in \pi_2(\Theta_{\nu_j,\nu_{j-1}},\dots, \Theta_{\nu_2,\nu_1},\xs)$ is a class, then the relation between the Maslov index and the absolute grading on Heegaard Floer homology (see \cite{OSTriangles}*{Section~7}) implies
\[
\mu(\psi)=2n_{\ws\cup \ps}(\psi)+\gr_{\ws\cup \ps}(\Theta_{\nu_j,\nu_{j-1}})+\cdots +\gr_{\ws\cup \ps}(\Theta_{\nu_2,\nu_1})-\gr_{\ws\cup \ps}(\xs),
\]
where $\gr_{\ws\cup \ps}$ is the absolute grading, normalized so that the top degree cycle of  
\[
\ve{\HF}^-(\Sigma,\as_{\nu_i},\as_{\nu_{i-1}},\ws,\zs,\ps)
\]
 has grading 0. Since $\cL_{\a}$ is algebraically rigid, we also know that $\gr_{\ws\cup \ps}(\xs)\le 0$ and $\gr_{\ws\cup \ps}(\Theta_{\nu_{i+1},\nu_i})=0$ for all $i$ and hence
 \[
 \mu(\psi)\ge 2n_{\ws\cup \ps}(\psi).
 \] 
 If $\psi$ is a nonnegative class, we have $n_{\ve{w}\cup \ps}(\psi)\ge n_x(\psi)$, so we see that $x$ is basepoint-esque for $\cL_{\a}$.
\end{proof}

\begin{rem}
\label{rem:not-graded-by-p}
 Suppose 
 \[
 \cL_{\a}=\left(\begin{tikzcd} \as\ar[r, "\Theta^+_{\a',\a}"]& \as'\end{tikzcd}\right)
 \] where $\as'$ is obtained by performing a Hamiltonian isotopy to $\as$ which crosses $x$, then $x$ is not basepoint-esque for $\cL_{\a}$  because there is a bigon of index 1 which covers $x$ once. See Figure~\ref{fig:50}.
\end{rem}

\begin{figure}[h]
\begingroup%
  \makeatletter%
  \providecommand\color[2][]{%
    \errmessage{(Inkscape) Color is used for the text in Inkscape, but the package 'color.sty' is not loaded}%
    \renewcommand\color[2][]{}%
  }%
  \providecommand\transparent[1]{%
    \errmessage{(Inkscape) Transparency is used (non-zero) for the text in Inkscape, but the package 'transparent.sty' is not loaded}%
    \renewcommand\transparent[1]{}%
  }%
  \providecommand\rotatebox[2]{#2}%
  \newcommand*\fsize{\dimexpr\f@size pt\relax}%
  \newcommand*\lineheight[1]{\fontsize{\fsize}{#1\fsize}\selectfont}%
  \ifx\svgwidth\undefined%
    \setlength{\unitlength}{148.7961248bp}%
    \ifx\svgscale\undefined%
      \relax%
    \else%
      \setlength{\unitlength}{\unitlength * \real{\svgscale}}%
    \fi%
  \else%
    \setlength{\unitlength}{\svgwidth}%
  \fi%
  \global\let\svgwidth\undefined%
  \global\let\svgscale\undefined%
  \makeatother%
  \begin{picture}(1,0.40693723)%
    \lineheight{1}%
    \setlength\tabcolsep{0pt}%
    \put(0,0){\includegraphics[width=\unitlength,page=1]{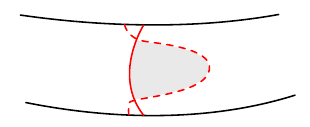}}%
    \put(0.52270566,0.15756543){\color[rgb]{0,0,0}\makebox(0,0)[t]{\lineheight{1.25}\smash{\begin{tabular}[t]{c}$x$\end{tabular}}}}%
    \put(0.40211125,0.15416525){\color[rgb]{1,0,0}\makebox(0,0)[rt]{\lineheight{1.25}\smash{\begin{tabular}[t]{r}$\as$\end{tabular}}}}%
    \put(0.69481466,0.18646738){\color[rgb]{1,0,0}\makebox(0,0)[lt]{\lineheight{1.25}\smash{\begin{tabular}[t]{l}$\as'$\end{tabular}}}}%
  \end{picture}%
\endgroup%

\caption{An example of a point $x$ which is not basepoint-esque from Remark~\ref{rem:not-graded-by-p}. }
\label{fig:50}
\end{figure}

 Suppose that $\cL_{\a}$ is a hypercube of handleslide equivalent attaching curves on $(\Sigma,\ws,\zs,\ps)$ and $x\in \Sigma$ is in the complement of each curve in $\cL_{\a}$. If $\as$ is in $\cL_{\a}$, write $E_x^{\a}$ for the product of the variables of the basepoints in the component of $\Sigma\setminus \as$ containing $x$.

\begin{lem}\label{lem:action-of-variables-well-defined} Let $\cL_{\a}$ be a hypercube of handleslide equivalent attaching curves on $(\Sigma,\ws,\zs,\ps)$ and suppose that $x$ is basepoint-esque for $\cL_\a$. Then $E_x^\a$ is independent of $\as\in \cL_{\a}$.
\end{lem}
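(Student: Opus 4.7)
The plan is to reduce the claim to the case of two attaching curves $\as_\nu, \as_{\nu'} \in \cL_{\a}$ connected by a single arrow $\nu < \nu'$ in the hypercube, and then to contradict the graded-by-$p$ hypothesis via a periodic-domain construction. The reduction is immediate: the hypercube $\bE_n$ is connected under arrows, so chaining equalities along any arrow path in $\cL_{\a}$ from $\as$ to $\as'$ reduces matters to the single-arrow case.

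For the single-arrow case, algebraic rigidity of $\cL_{\a}$ supplies a distinguished top-grading cycle $\Theta_{\nu',\nu} \in \CF(\Sigma, \as_{\nu'}, \as_\nu, \ws, \zs)$. Suppose for contradiction that $E_p^{\as_\nu} \neq E_p^{\as_{\nu'}}$. Writing $R_\nu, R_{\nu'}$ for the components of $\Sigma \setminus \as_\nu$ and $\Sigma \setminus \as_{\nu'}$ containing $p$, the assumption forces some basepoint $x \in \ws \cup \zs$ to lie in the symmetric difference $R_\nu \triangle R_{\nu'}$. I would then consider the nonnegative two-chain $P := R_\nu + R_{\nu'}$ on $\Sigma$: its boundary is a cycle supported on $\as_\nu \cup \as_{\nu'}$, so $P$ represents a class in $\pi_2(\Theta_{\nu',\nu}, \Theta_{\nu',\nu})$, with $n_p(P) = 2$ and $n_x(P) = 1$. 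The key point is that $P$ is nonnegative (which fails for the alternative combination $R_\nu - R_{\nu'}$), so the graded-by-$p$ hypothesis applies and gives $\mu(P) \ge 4$.

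The contradiction comes from computing $\mu(P)$ directly. Using Lipshitz's formula $\mu(\phi) = e(\phi) + 2 n_{\Theta}(\phi)$ for periodic classes, together with the observation that the intersection points comprising $\Theta_{\nu',\nu}$ lie on the boundary of $R_\nu$ and $R_{\nu'}$ rather than in their interiors, one bounds $n_{\Theta}(P)$ in terms of the combinatorics of how $\as_\nu, \as_{\nu'}$ meet. The Euler measure $e(P) = \chi(R_\nu) + \chi(R_{\nu'})$ is bounded by topological invariants of the two regions. Assembling these bounds — and using that $\Theta_{\nu',\nu}$ sits in grading zero by algebraic rigidity, which pins down the grading-theoretic contribution — one obtains $\mu(P) < 4 = 2 n_p(P)$, contradicting the graded-by-$p$ condition.

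The hard part will be the Maslov-index accounting in the last step: the contributions from $e(P)$ and $n_{\Theta}(P)$ must be controlled tightly enough against $2n_p(P) = 4$, and the argument requires that the number of basepoints contained in $R_\nu \cup R_{\nu'}$ (which governs the grading) cannot conspire with the Euler measure to inflate $\mu(P)$ past $4$. A secondary technical point is verifying that $R_\nu + R_{\nu'}$ really does realize a class in $\pi_2(\Theta_{\nu', \nu}, \Theta_{\nu', \nu})$ rather than merely an abstract two-chain; this follows because $\d R_\nu$ and $\d R_{\nu'}$ are sums of full closed $\as$-curves, so their boundary is a genuine periodic-domain boundary rather than arcs terminating at intersection points.
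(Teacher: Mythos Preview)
Your approach has a genuine gap in the Maslov-index step, and it is not a matter of tightening bounds: the inequality you need goes the wrong way.

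For the periodic class $P = R_\nu + R_{\nu'} \in \pi_2(\Theta_{\nu',\nu}, \Theta_{\nu',\nu})$, the absolute grading formula on $\CF(\as_{\nu'}, \as_\nu, \ws, \zs)$ (which computes the Floer homology of $\#^N(S^1\times S^2)$ in the torsion $\Spin^c$ structure) gives
\[
\mu(P) = 2\, n_{\ws\cup\ve{f}}(P),
\]
where $\ve{f}$ denotes the free basepoints. Each component of $\Sigma\setminus\as_\nu$ contains exactly one basepoint from $\ws\cup\ve{f}$ (either a free basepoint, or the $w$-basepoint of a $(w,z)$ pair); the same holds for $\as_{\nu'}$. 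Hence $n_{\ws\cup\ve{f}}(R_\nu) = n_{\ws\cup\ve{f}}(R_{\nu'}) = 1$, and so $\mu(P) = 4 = 2n_p(P)$. The graded-by-$p$ inequality $\mu(P) \ge 2n_p(P)$ is satisfied with equality, and no contradiction arises. The same obstruction blocks any variant built from $R_\nu$ and $R_{\nu'}$: each is an index-$2$ boundary degeneration with $n_p = 1$, so combinations of them always sit exactly on the threshold.

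The paper's argument is short and quite different. It introduces the endomorphism $A_p$ of $\CF(\as_2,\as_1)$ that counts index-$1$ disks weighted by $n_p$; Gromov compactness (with boundary degenerations contributing) gives $[\d, A_p] = (E_p^{\a_1} + E_p^{\a_2})\cdot\id$. The graded-by-$p$ hypothesis forces $A_p(\Theta_{\a_2,\a_1}) = 0$ (no index-$1$ nonnegative disks out of $\Theta$ cross $p$), and $\Theta_{\a_2,\a_1}$ is a cycle, so evaluating $[\d, A_p]$ on $\Theta_{\a_2,\a_1}$ yields $(E_p^{\a_1} + E_p^{\a_2})\cdot\Theta_{\a_2,\a_1} = 0$, whence $E_p^{\a_1} = E_p^{\a_2}$ by freeness. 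The boundary degenerations $R_\nu$ and $R_{\nu'}$ do enter here---they are what produce the $(E_p^{\a_1} + E_p^{\a_2})\cdot\id$ term---but the mechanism is the chain-level identity for $A_p$ rather than a direct index bound on a single periodic domain.
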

\begin{proof} Let $\as_1,\as_2\in \cL_{\a}$. Consider the endomorphism $A_x$ of $\ve{\CF}^-(\as_2,\as_1)$ which counts holomorphic disks of index 1, which are given a multiplicative factor of $n_x(\phi)$. It is straightforward to see that $[\d, A_x]=(E^{\a_1}_x+E^{\a_2}_x)\cdot \id$. On the other hand, if $\Theta_{\a_2,\a_1}$ is a cycle generating the top degree of homology, then $\d(\Theta_{\a_2,\a_1}) =0$ since $\Theta_{\a_2,\a_1}$ is a cycle and $A_x (\Theta_{\a_2,\a_1})=0$ since $x$ is basepoint-esque for $\cL_{\a}$ so there are no Maslov index 1 and nonnegative homology classes of disks in any $\pi_2(\Theta_{\a_2,\a_1},\xs)$ with multiplicity 1 on $x$. Since $\ve{\CF}^-(\as_2,\as_1)$ is free and $0=[\d, A_x](\Theta_{\a_2,\a_1})=\Theta_{\a_2,\a_1}\cdot (E^{\a_1}_x+E^{\a_2}_x)$, it follows that $E^{\a_1}_x+E^{\a_2}_x=0$, completing the proof. 
\end{proof}

The main result of this section is the following:

\begin{prop}\label{prop:connected-sums} Suppose that $\cL_{\a}$ and $\cL_{\b}$ are algebraically rigid hypercubes of handleslide-equivalent attaching curves on $(\Sigma,\ws,\zs,\ps)$ and $\cL_{\a'}$ and $\cL_{\b'}$ are algebraically rigid hypercubes of handleslide-equivalent attaching curves on $(\Sigma',\ws',\zs',\ps')$. Suppose that we form $(\Sigma\# \Sigma',\ws'',\zs'',\ps'')$ by an admissible connected sum of $(\Sigma,\ws,\zs,\ps)$ and $(\Sigma',\ws',\zs',\ps')$ at points $x\in \Sigma$ and $x'\in \Sigma'$. Form the hypercubes $\cL_{\a}\otimes \cL_{\a'}$ and $\cL_{\b}\otimes \cL_{\b'}$ as described above. If $x$ is basepoint-esque for $\cL_{\b}$ and $x'$ is basepoints-esque for $\cL_{\a'}$,  then there is a natural homotopy equivalence of hypercubes
\[
\begin{split}
&\ve{\CF}^-(\Sigma\# \Sigma',\cL_{\a}\otimes \cL_{\a'}, \cL_{\b}\otimes \cL_{\b'}, \ws'',\zs'',\ps'')\\
\simeq   &\ve{\CF}^-(\Sigma, \cL_{\a},\cL_{\b},\ws,\zs,\ps)\otimes_R \ve{\CF}^-(\Sigma',\cL_{\a'},\cL_{\b'},\ws',\zs',\ps').
\end{split}
\]
 Also, $R$ is as follows:
\begin{enumerate}
\item If the connected sum is taken along a link component, then $R=\bF[\scU,\scV]$ is the ring for the link components along which the connected sum is taken.
\item  If the connected sum is taken at a free basepoint $x=p_i\in \ps$, then the tensor product is taken over $\bF[U]$, where we have $U$ act by $U_i$ on the left factor, and $E_{x'}^{\a'}$ on the right factor. (Cf. Lemma~\ref{lem:action-of-variables-well-defined}).
\item  If the connected sum is taken at a free basepoint $x'=p_i'\in \ps'$ in $\ws'\cup \zs'$, the tensor product is taken over $\bF[U]$ where we have $U$ act by $U_i$ on the right factor and $E_{x}^{\b}$ on the left factor.
\end{enumerate}
\end{prop}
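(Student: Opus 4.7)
The plan is to deduce the connected sum formula from the disjoint union formula (Proposition~\ref{prop:disjoint-unions-hypercubes-main}) by a neck stretching argument at the connected sum point $p \# p'$, with the grading-by-$p$ and grading-by-$p'$ hypotheses playing the role of controlling the matching at the neck.

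First I would set up a one-parameter family of almost complex structures $J_T$ on $\Sigma \# \Sigma'$ which, as $T \to \infty$, inserts a cylindrical neck of length $T$ at the connected sum annulus. By standard SFT-style compactness (following Ozsv\'ath--Szab\'o for $\widehat{\CF}$ of connected sums and its adaptations in \cite{LipshitzCylindrical}), holomorphic $\ell$-gon representatives on $(\Sigma\# \Sigma', \cL_\a\otimes \cL_{\a'}, \cL_\b\otimes \cL_{\b'})$ limit to pairs $(u,u')$ of holomorphic $\ell$-gons on $(\Sigma,\cL_\a,\cL_\b)$ and $(\Sigma',\cL_{\a'},\cL_{\b'})$, matched along the punctures at $p$ and $p'$. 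The matching condition forces $n_p(\psi) = n_{p'}(\psi')$ for the corresponding homology classes, and the variable $U$ (in the free-basepoint case) or $\scU,\scV$ (in the link-basepoint case) appears as the common weight on either side.  For large enough $T$, this correspondence becomes a bijection on zero-dimensional moduli spaces, yielding a chain isomorphism onto the fiber product of the $\CF$ complexes over the $p$/$p'$ multiplicities.

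Next I would analyze which classes contribute. As in Lemma~\ref{lem:technical-Lb-Lb'-hypercube-relations}, for any increasing sequence $\kappa_1<\cdots<\kappa_\ell$ in $\bE_{n+n'}$, the relevant chains in $\cL_\b \otimes \cL_{\b'}$ vanish unless each arrow is non-mixed, so it suffices to check non-mixed arrow paths. For such a path, let $k$ be the number of arrows increasing only the $\cL_\b$-coordinate and $k'$ those increasing only the $\cL_{\b'}$-coordinate; then $k + k' = \ell - 1$. The small translate inputs from $\cL_\b$ (resp.\ $\cL_{\b'}$) are all top degree chains on the $\Sigma$-side (resp.\ $\Sigma'$-side), so Lemma~\ref{lem:nearest-point} gives $\mu(\psi) \ge \min(0, 3 - \ell + k')$ on the $\Sigma$-side (since $k'$ of the inputs there are top-degree translates) and similarly $\mu(\psi') \ge \min(0, 3-\ell+k)$ on the $\Sigma'$-side. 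Combined with $\mu(\psi) + \mu(\psi') = 3 - \ell$, exactly the same arithmetic as in the proof of Proposition~\ref{prop:disjoint-unions-hypercubes-main} forces the configuration $(k,k')\in \{(\ell-1,0),(0,\ell-1)\}$: mixed contributions on either surface are ruled out.

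The key additional input, which is not present in the disjoint-union case, is that in the connected-sum situation the two curves $\psi,\psi'$ may carry positive multiplicities $n_p(\psi) = n_{p'}(\psi')$. The grading-by-$p$ hypothesis on $\cL_\b$ says that any nonnegative class on the $\Sigma$-side with only top-degree inputs from $\cL_\b$ has $\mu(\psi) \ge 2 n_p(\psi)$, and symmetrically for $\cL_{\a'}$ on the $\Sigma'$-side via $p'$. Together with the index balance and the bounds above, these inequalities force the ``trivial'' side of the matching (i.e.\ the side on which all $\cL$-inputs are top-degree nearest-point translates) to have $n_p=0$ and to consist of the constant polygon at the appropriate nearest point; equivalently, the matching degenerates to a pure product. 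This is the crucial computation, analogous to the evaluation-map degree argument in the proof of Lemma~\ref{lem:technical-Lb-Lb'-hypercube-relations}: one reduces a parametrized moduli space to a nearest-point count using Lemma~\ref{lem:nearest-point-map}, with the grading-by-$p$/$p'$ hypothesis eliminating the disks of positive $p$-multiplicity that would otherwise obstruct the identification. Lemma~\ref{lem:action-of-variables-well-defined} ensures that the resulting $U$-action on the trivial factor is well-defined and agrees across the $\cL$-hypercube, so the tensor product is over the correct ring $R$ described in the three cases of the statement.

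Putting the pieces together, the differential in $\CF(\Sigma\# \Sigma', \cL_\a\otimes \cL_{\a'}, \cL_\b \otimes \cL_{\b'})$ decomposes as the tensor-product differential on $\CF(\Sigma, \cL_\a, \cL_\b) \otimes_R \CF(\Sigma', \cL_{\a'},\cL_{\b'})$, modulo terms coming from higher-index degenerations which cancel in pairs by a boundary argument along a generic path in $K_{\ell-1}$, exactly as in the proof of Part~(2) of Proposition~\ref{prop:disjoint-unions-hypercubes-main}. Taking the neck length $T$ large enough that the identification holds for all relevant $\Spin^c$ classes (using weak admissibility for the tensor-product hyperboxes, arranged as in Lemma~\ref{lem:admissibility-meridional-system}) produces the desired chain isomorphism, which one then promotes to a homotopy equivalence of hypercubes by the standard continuation argument for varying $T$. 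The main obstacle is the $n_p>0$ analysis in the third paragraph: controlling the holomorphic curves that nontrivially traverse the connected sum neck is exactly where the grading-by-$p$/$p'$ assumption becomes indispensable, and where the argument departs from the simpler disjoint-union case.
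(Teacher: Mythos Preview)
Your proposal has a genuine gap in the third paragraph, and it lies at the heart of the matter. After neck-stretching, the differential on the matched complex $\CF_{J\wedge J'}$ counts pairs $(\phi,\phi')$ with $n_p(\phi)=n_{p'}(\phi')$ and matched index $\mu(\phi)+\mu(\phi')-2n_p(\phi)=1$ (for disks; the obvious analog for polygons). Such pairs with $n_p>0$ \emph{do} contribute, and the resulting differential is \emph{not} the tensor-product differential. Your claim that the grading-by-$p$ hypothesis forces the ``trivial side'' to have $n_p=0$ does not follow: the hypothesis (Definition~\ref{def:graded-by}) bounds $\mu\ge 2n_p$ only for classes among the curves of $\cL_\b$ (or $\cL_{\a'}$), i.e.\ polygons with no alpha (resp.\ no beta) inputs. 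The polygons you are trying to control are mixed---they carry both $\as$ and $\bs$ boundaries---so the hypothesis simply does not apply to them. Already in the $0$-dimensional case there is no graded-by-$p$ condition available (the hypercubes are points), yet the matched complex is not the tensor product.

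What the paper actually does is construct an explicit chain homotopy equivalence between the matched complex and the disjoint-union complex, via maps $\Phi^{\bullet<\circ'}$ and $\Psi^{\bullet<\circ'}$ that count holomorphic curves with a \emph{special line}: below the line the marked points over $p$ and $p'$ are perfectly matched, along the line they alternate, and above the line they are free. Boundary-degeneration counts (Proposition~\ref{prop:count-boundary-degenerations}) make these chain maps, and a two-line homotopy $H_\wedge$ shows they are inverses. Only in the extension to higher-dimensional hypercubes (Lemma~\ref{lem:codim-1-Phi-ell}) does the graded-by-$p$ hypothesis enter, and there it is used exactly where it can be: to control pieces $\psi_l$ that break off along the $\bs$-boundary of the polygon (hence are classes entirely within $\cL_\b$), ruling out degenerations where a boundary puncture collides with the special line. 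This is a qualitatively different mechanism from the one you sketched.
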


Unlike in Section~\ref{sec:disjoint-unions}, the homotopy equivalence of Proposition~\ref{prop:connected-sums} is not given by the identity map on the level of groups. Instead it involves counting holomorphic curves. 

\begin{rem} If instead $x$ is basepoint-esque for $\cL_{\a}$ and $x'$ is basepoint-esque for $\cL_{\b'}$, then there is also a homotopy equivalence of hypercubes realizing a tensor product formula. Note that in the case that if $x$ and $x'$ are basepoint-esque for all of the hypercubes of Lagrangians, then there are two distinct maps of hypercubes realizing the homotopy equivalence. These are referred to as $\Phi^{\bullet<\circ'}$ and $\Phi^{\circ'<\bullet}$ below.
\end{rem}

\subsection{Cylindrical boundary degenerations}

In this section, we recall some important facts about boundary degenerations.

\begin{define}
 Suppose $\bs$ is a set of attaching curves on $(\Sigma,\ws)$, and $\xs\in \bT_{\b}$.  A \emph{cylindrical beta boundary degeneration} at $\xs$ consists of a tuple $(u,S,j)$ such that $(S,j)$ is a Riemann surface and
 \[
 u\colon (S,\d S)\to (\Sigma\times [0,\infty)\times \R, \bs \times \{0\}\times \R)
 \]
 is a map  satisfying the following:
 \begin{enumerate}
 \item $u$ is $(j,J)$-holomorphic.
 \item $u$ is proper.
 \item For each $t\in \R$ and $i\in \{1,\dots, g(\Sigma)+|\ws|-1\}$, the set $u^{-1}(\b_i\times \{0\}\times \{t\})$ consists of a single point.
 \item $u$ has finite energy.
 \item $\pi_{\bH}\circ u$ is non-constant on each component of $S$, where $\bH=[0,\infty)\times \R$.
 \item $S$ has a collection of $n=g(\Sigma)+|\ws|-1$ boundary punctures $p_1,\dots, p_n$. If $\xs=(x_1,\dots, x_n)$, where $x_i\in \b_i$, then
 \[
 \lim_{z\to p_i} (\pi_{\bH}\circ u)(z)=\infty\quad \text{and} \quad \lim_{z\to p_i} (\pi_{\Sigma}\circ u)(z)=x_i.
 \]
 \end{enumerate}
\end{define}

Cylindrical alpha boundary degenerations are defined by analogy. Of fundamental importance is the mod 2 count of boundary degenerations.:

\begin{prop} \label{prop:count-boundary-degenerations} Suppose $\bs$ is a set of attaching curves on $(\Sigma,\ws)$, and $B$ is a Maslov index 2 class of boundary degenerations. For an appropriate choice of almost complex structures on $\Sigma\times [0,\infty)\times \R$, the moduli space of boundary degenerations $\cN(B,\ve{x})$ is transversely cut out. Furthermore, the parametrized moduli space
\[
\bigcup_{\xs\in \bT_{\b}} \cN(B,\ve{x})\times \{\ve{x}\}
\]
is also transversely cut out.  Furthermore
\[
\# \cN(B,\ve{x})/\Aut(\bH)\equiv 1.
\] 
\end{prop}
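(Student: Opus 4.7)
The plan is to establish the transversality statements first by a standard Sard--Smale argument, and then reduce the mod 2 count to a model computation in the spirit of Ozsv\'{a}th--Szab\'{o}.

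For transversality, I would work with a Banach manifold of almost complex structures $\cJ$ on $\Sigma\times [0,\infty)\times \R$ which are split at infinity and in a neighborhood of the basepoints, and which are compatible with the projection $\pi_{\bH}$ in the sense required for the cylindrical setting. The universal moduli space $\cN^{\univ}(B,\ve x)$ of pairs $(u,J)$ is cut out by the $\dbar$ operator, whose linearization at $(u,J)$ is surjective by a standard somewhere-injectivity argument: the boundary condition forces each component of $u$ to have injective points in the interior of $\Sigma\times [0,\infty)\times \R$, and one may perturb $J$ there. Projecting to $\cJ$ and applying Sard--Smale yields transversality for generic $J$, both with $\xs$ fixed and for the parametrized moduli space $\bigcup_{\xs\in \bT_\b}\cN(B,\xs)\times\{\xs\}$, since $\xs$ varies in a finite dimensional family and the evaluation map at the punctures depends smoothly on $(u,J)$.

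For the count, I would first invoke the Maslov index formula for cylindrical boundary degenerations from Lipshitz \cite{LipshitzCylindrical}, together with $\mu(B)=2$, to deduce that $B$ is supported on a single connected component $D$ of $\Sigma\setminus \bs$ with multiplicity $1$; this makes $B$ one of the two ``basic'' boundary degeneration classes associated to a component of $\Sigma\setminus\bs$. The remaining task is to evaluate $\#\cN(B,\xs)/\Aut(\bH)\pmod 2$. The main step is to reduce to a standard model by a neck-stretching / degeneration argument: one pinches $\Sigma$ along a collection of separating curves so that all beta circles not bounding $D$ are pushed onto one side, and $D$ together with the beta curves bounding it lies on the other. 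In the Gromov limit, the boundary degeneration decomposes into pieces each of Maslov index $0$ or $2$; the index $0$ pieces are constant by positivity of energy for boundary degenerations, and what remains is a single boundary degeneration in the simplified model (a planar surface whose boundary is a single beta curve, or a pair bounded by a single beta curve in the case of $S^2$). A gluing argument then identifies the mod 2 count in the original situation with that in the simplified model.

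In the simplified model, the count reduces to the count for a doubly pointed genus-zero picture, which is the cylindrical analogue of the count computed in \cite{OSDisks} for the standard genus-one Heegaard diagram of $S^3$; in that model there is a unique (up to $\Aut(\bH)$) boundary degeneration, giving $1\pmod 2$. The main obstacle I anticipate is controlling the Gromov compactification during the neck-stretching: one must rule out bubbling of closed holomorphic curves in $\Sigma$, of boundary-degeneration ``ghost'' components, and of $\alpha$-boundary degenerations (which cannot occur because the boundary condition only involves $\bs$). Once these are excluded and the standard gluing result is applied, the count in the original setting matches the model count, completing the argument.
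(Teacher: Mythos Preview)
The paper does not give an in-text proof of this proposition; it simply cites \cite{OSLinks}*{Theorem~5.5} for the case $|\ws|>1$ and \cite{HHSZExact}*{Section~7.6} for the case $|\ws|=1$, and explicitly flags the latter case as subtle. Your proposal is a genuinely different, direct approach, but it has a real gap precisely where the paper warns of one.

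You do not distinguish the cases $|\ws|>1$ and $|\ws|=1$, and your neck-stretching reduction cannot handle the latter. When $|\ws|=1$ the complement $\Sigma\setminus\bs$ has a single component, so the index-$2$ domain $B$ is $[\Sigma]$ itself; there is no separating curve along which to pinch so as to isolate $D$ from ``the rest.'' More seriously, in this case there are always \emph{broken} boundary degenerations---a closed holomorphic curve representing $[\Sigma]$ together with a constant disk at $\xs$---at which transversality fails. Your Sard--Smale argument via somewhere-injectivity does not address these broken configurations, and your final paragraph acknowledges closed-curve bubbling as an obstacle without saying how to exclude it; in the $|\ws|=1$ case it cannot be excluded by elementary means, which is exactly why a separate treatment is required in \cite{HHSZExact}.

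Even for $|\ws|>1$, two points are off. First, the proof in \cite{OSLinks} is not a neck-stretching reduction to a local model: it is an indirect argument extracting the count from a model diagram using the structure of the Floer differential. Second, your appeal to \cite{OSDisks} for the model count is misplaced---that paper works in the singly-pointed setting and shows boundary degenerations do \emph{not} contribute for generic (split) $J$; it does not compute the odd count you need. If you want to pursue your direct approach for $|\ws|>1$, you would need to carry the pinching all the way down to a sphere with a single $\b$-circle and give an honest count there, together with a careful gluing statement; this is plausible but is not what is in the literature, and it still leaves the $|\ws|=1$ case untreated.
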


The case when $|\ws|>1$ is proven by Ozsv\'{a}th and Szab\'{o} \cite{OSLinks}*{Theorem~5.5}. Ozsv\'{a}th and Szab\'{o} also proved that if $|\ws|=1$ there are generically no boundary degenerations for split almost complex structures on $\Sigma\times [0,\infty)\times \R$. However for their boundary degenerations, the parametrized moduli space $\bigcup_{\xs\in \bT_{\b}} \cN(B,\ve{x})\times \{\ve{x}\}$ might in general be non-empty, and there are always broken boundary degenerations at which transversality is not achieved (e.g. the union of a closed curve representing $\Sigma$ and a constant disk at $\xs$). This is sufficient for showing that $\d^2=0$, but is not sufficient for our purposes. The $|\ws|=1$ case was revisited in \cite{HHSZExact}*{Section~7.6}, where it was shown that for appropriately generic choices of almost complex structure, the count of Proposition~\ref{prop:count-boundary-degenerations} holds.

\subsection{The 0-dimensional case}
\label{sec:connected-sums-diagrams}

In this section, we present the proof of Proposition~\ref{prop:connected-sums} in the case that $\cL_{\b}$ and $\cL_{\b'}$ are both 0-dimensional (i.e. we recover the standard connected sum formula of Ozsv\'{a}th and Szab\'{o} \cite{OSProperties}). The analytic details we present here will be the basis of the higher dimensional cases of Proposition~\ref{prop:connected-sums}, which we consider in the subsequent section. Our argument is inspired by work of Ozsv\'{a}th and Szab\'{o} in the bordered setting \cite{OSBorderedHFK}. See also \cite{HHSZExact}*{Section~19.4}.

 Suppose $\cH=(\Sigma,\as,\bs,\ws,\zs)$ and $\cH'=(\Sigma',\as',\bs',\ws',\zs')$ are multi-pointed diagrams, and suppose that $J$ and $J'$ are almost complex structures on $\Sigma\times [0,1]\times \R$ and $\Sigma'\times [0,1]\times \R$, respectively. The notation $J\wedge J'$ means the data of the pair $J$ and $J'$ together with distinguished connected sum points, $x\in \Sigma$ and $x'\in \Sigma'$. We assume that the connected sum is admissible, in the sense of Definition~\ref{def:admissible-connected-sum}.

\begin{rem}
In Proposition~\ref{prop:connected-sums} we additionally had the assumption that $x$ and $x'$ were basepoint-esque for some of the hypercubes of attaching curves. In the present case, $\cL_{\a}$, $\cL_{\a'}$, $\cL_{\b}$ and $\cL_{\b'}$ are all 0-dimensional, and this condition is automatic.
\end{rem}

For notational simplicity, we focus on the case where we have just one $\scU$ variable, and one $\scV$ variable, no free-basepoints, and we are doing a connected sum of link components.  The first step is to do a neck-stretching degeneration along the connected sum tube. The result of this degeneration is a chain complex, freely generated over the ground ring $R=\bF[\scU,\scV]$ by pairs $\xs\times \xs'$, where $\xs\in \bT_{\a}\cap \bT_{\b}$ and $\xs'\in \bT_{\a'}\cap \bT_{\b'}$. The differential is given by the formula
\begin{equation}
\d_{J\wedge J'}(\xs\times \xs')= \sum_{\substack{\ys\times \ys'\in (\bT_{\a}\cap \bT_{\b})\times (\bT_{\a'}\cap \bT_{\b'})\\ \phi\in \pi_2(\xs,\ys) \\
\phi'\in \pi_2(\xs',\ys')\\
n_{x}(\phi)=n_{x'}(\phi')\\ 
\mu(\phi)+\mu(\phi')-2n_x(\phi)=1
}}\# (\cM\cM(\phi,\phi')/\R) \cdot \scU^{n_{\ws''}(\phi+\phi')}\scV^{n_{\zs''}(\phi+\phi')} \cdot \ys\times \ys'
\label{eq:del-J-wedge-J'}
\end{equation}
where $ \cM\cM(\phi,\phi')$ is the \emph{perfectly matched moduli space}, consisting of pairs $(u,u')$ satisfying the following matching condition. In the following, we write $(S^{\qs},j)$ and $(T^{\qs'}, j')$ for the sources curves of $u$ and $u'$, where $(S,j)$ and $(T,j')$ are Riemann surfaces, and $\qs$ and $\qs'$ are ordered collections of $n_{x}(\phi)=n_{x'}(\phi)$ marked points, for which we write $\qs=\{q_1,\dots, q_n\}$ and $\qs'=\{q_1,\dots, q_n'\}$. We define
\begin{equation}
\cM\cM_{J\wedge J'}(\phi,\phi'):=\left\{(u,u')\middle\vert \begin{array}{l} u \text{ is $J$-holomorphic}\\
u'\text{ is $J'$-holomorphic}\\
(\pi_{\Sigma}\circ u)(q_i)=x, (\pi_{\Sigma'}\circ u')(q_i')=x'\\
(\pi_{\bD}\circ u)(q_i)=(\pi_{\bD}\circ u')(q_i')
\end{array}  \right\}.
\label{eq:infinity-matched-moduli-spaces}
\end{equation}
It is not hard to see that this gives a chain complex, which we denote by $\ve{\CF}^-_{J\wedge J'}(\cH,\cH')$. (Compare \cite{HHSZExact}*{Section 19.4}). The construction is inspired by the matched moduli spaces which appear in \cite{LOTBordered}*{Section~9.1}.

If $I$ is a (non-singular) almost complex structure on $\Sigma\# \Sigma'\times [0,1]\times \R$, then one may define a chain homotopy equivalence
\[
\ve{\CF}^-_{I}(\cH\# \cH')\simeq \ve{\CF}^-_{J\wedge J'}(\cH,\cH')
\]
by counting index $0$ curves for a non-cylindrical almost complex structure on $\Sigma\# \Sigma'\times [0,1]\times \R$ which interpolates an ordinary cylindrical almost complex structure on $\Sigma\# \Sigma'\times [0,1]\times (-\infty,t_1]$, and a degenerate almost complex structure (i.e. one with infinite neck length) on $\Sigma\wedge \Sigma'\times [0,1]\times [t_2,\infty)$, for some $t_1\ll 0$ and $t_2\gg 0$. See \cite{HHSZExact}*{Section~19.4} for more details. 

We now describe a chain homotopy equivalence
\[
\Phi^{\bullet<\circ'}\colon \ve{\CF}^-_{J\wedge J'}(\cH,\cH')\to \ve{\CF}^-_J(\cH)\otimes_R \ve{\CF}^-_{J'}(\cH').
\]
If $t_0\in \R$, write 
\[
\lambda_{t_0}:=[0,1]\times \{t_0\}\subset [0,1]\times \R.
\]
Let us write
\[
s\colon \Sigma\times [0,1]\times \R\to [0,1]\quad \text{and} \quad  t\colon \Sigma\times [0,1]\times \R\to \R
\]
for the projection maps.

\begin{define}
\label{def:Phi-matched-disk} Suppose $t_0\in \R$. A \emph{$(\Phi,\bullet<\circ',t_0)$-matched $J\wedge J'$-holomorphic curve pair} consists of a pair of marked $J\wedge J'$ holomorphic disks $(u,u')$, equipped with the following data:
\begin{enumerate}
\item A partition of the marked points of $u$ (resp. $u'$) into three sets, $\ve{S}$, $\ve{C}$ and $\ve{N}$ (resp. $\ve{S}'$, $\ve{C}'$ and $\ve{N}'$).
\item A bijection $\phi\colon \ve{S}\to \ve{S}'$.
\end{enumerate}
We assume the following are satisfied:
\begin{enumerate} 
\item If $q_i$ is a marked point of $u$, then $(\pi_{\Sigma}\circ u)(q_i)=x$, and similarly for all marked points of $u'$
\item If $q\in \ve{S}$ and $q'\in \ve{S}'$ and $\phi(q)=q'$, then 
\[
(t\circ u)(q)=(t\circ u')(q')<0 \quad \text{and} \quad (s\circ u)(q)=(s\circ u')(q')
\]
\item If $q\in \ve{N}$ then $(t\circ u)(q)>t_0$. The analogous statement holds for $q'\in \ve{N}'$.
\item $|\ve{C}|=|\ve{C}'|$. Furthermore, if $q\in \ve{C}$ then $(t\circ u)(q)=t_0$, and similarly for the marked points of $\ve{C}'$. The marked points of $\ve{C}$ and $\ve{C}'$ alternate between those of $\ve{C}$ and those of $\ve{C}'$ along $\lambda_{t_0}$. Finally, the left-most marked point along this line is contained in $\ve{C}$.
\end{enumerate}
\end{define}

See Figure~\ref{fig:5} for a schematic of a $(\Phi,\bullet<\circ',t_0)$-matched curve pair.

\begin{figure}[ht]
\centering
\begingroup%
  \makeatletter%
  \providecommand\color[2][]{%
    \errmessage{(Inkscape) Color is used for the text in Inkscape, but the package 'color.sty' is not loaded}%
    \renewcommand\color[2][]{}%
  }%
  \providecommand\transparent[1]{%
    \errmessage{(Inkscape) Transparency is used (non-zero) for the text in Inkscape, but the package 'transparent.sty' is not loaded}%
    \renewcommand\transparent[1]{}%
  }%
  \providecommand\rotatebox[2]{#2}%
  \newcommand*\fsize{\dimexpr\f@size pt\relax}%
  \newcommand*\lineheight[1]{\fontsize{\fsize}{#1\fsize}\selectfont}%
  \ifx\svgwidth\undefined%
    \setlength{\unitlength}{103.84852612bp}%
    \ifx\svgscale\undefined%
      \relax%
    \else%
      \setlength{\unitlength}{\unitlength * \real{\svgscale}}%
    \fi%
  \else%
    \setlength{\unitlength}{\svgwidth}%
  \fi%
  \global\let\svgwidth\undefined%
  \global\let\svgscale\undefined%
  \makeatother%
  \begin{picture}(1,0.9509635)%
    \lineheight{1}%
    \setlength\tabcolsep{0pt}%
    \put(0,0){\includegraphics[width=\unitlength,page=1]{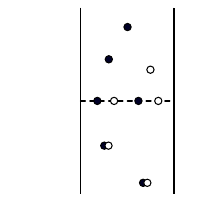}}%
    \put(0.82371089,0.46846761){\makebox(0,0)[lt]{\lineheight{1.25}\smash{\begin{tabular}[t]{l}$\lambda_{t_0}$\end{tabular}}}}%
    \put(0,0){\includegraphics[width=\unitlength,page=2]{fig5.pdf}}%
    \put(0.14784776,0.4534033){\color[rgb]{0,0,0}\makebox(0,0)[rt]{\lineheight{1.25}\smash{\begin{tabular}[t]{r}$\ve{C}$\end{tabular}}}}%
    \put(0.22519521,0.23847907){\color[rgb]{0,0,0}\makebox(0,0)[rt]{\lineheight{1.25}\smash{\begin{tabular}[t]{r}$\ve{S}$\end{tabular}}}}%
    \put(0.22519521,0.68703969){\color[rgb]{0,0,0}\makebox(0,0)[rt]{\lineheight{1.25}\smash{\begin{tabular}[t]{r}$\ve{N}$\end{tabular}}}}%
  \end{picture}%
\endgroup%

\caption{The projection to $[0,1]\times \R$ of marked points of a $(\Phi,\bullet<\circ',t_0)$-matched holomorphic curve pair.  Solid dots indicate the marked points of $u$, while open dots are the marked points of $u'$. The map $\Psi^{\bullet<\circ'}$ counts similar configurations.}
\label{fig:5}
\end{figure}

If $(\phi,\phi')$ are a pair of homology classes, $M$ is a pair of marked source curves for $\phi$ and $\phi'$, and $t_0\in \R$, we write $\cM\cM^{\Phi,\bullet<\circ'}(\phi,\phi',M,t_0)$ for the moduli space of $(\Phi,\bullet<\circ',t_0)$-matched disks representing $\phi$ and $\phi'$. We consider the parametrized moduli space
\[
\cM\cM^{\Phi,\bullet<\circ'}(\phi,\phi',M)=\bigcup_{t_0\in \R} \cM\cM^{\Phi,\bullet<\circ'}(\phi,\phi',M,t_0)\times \{t_0\}.
\]
If $t_0$ is fixed, the expected dimension of $\cM\cM^{\Phi,\bullet<\circ'}(\phi,\phi',M,t_0)$ is given by the formula
\[
\ind(\phi,\phi',M)=\mu(\phi)+\mu(\phi')-2(|\ve{S}|+|\ve{C}|).
\]

The map $\Phi^{\bullet<\circ'}$ counts pairs of curves $(u,u')$ in $\cM\cM^{\Phi,\bullet<\circ'}(\phi,\phi',M)/\R$, ranging over tuples with $\ind(\phi,\phi',M)=0$. Equivalently, we can think of $\Phi^{\bullet<\circ'}$ as counting elements of the non-paramatrized moduli spaces $\cM\cM^{\Phi,\bullet<\circ'}(\phi,\phi',M,t_0)$ for some fixed $t_0$. (The parametrized perspective becomes helpful later).

Next, we explain the $\scU$ and $\scV$ weights of a curve. The basepoints not involved in the connected sum contribute powers of variables as normal. The marked points in $\ve{S}\cup \ve{S}'\cup \ve{C}\cup \ve{C}'$ contribute algebra elements which are the same as would be counted on the connected sum of the two diagrams, where we have removed any basepoints used for the connected sum. For the marked points in $\ve{N}$ and $\ve{N}'$, the algebraic contribution is the same as on the disjoint union of the two diagrams, where we treat the punctures as basepoints.

Concretely, if the connected sum is formed along a link component, and $x=w\in \ws$ and $x'=z'\in \zs'$, then the algebra element contributed by the marked points would be
\begin{equation}
\scU^{|\ve{N}|}\scV^{|\ve{N}'|}.\label{eq:algebra-contribution-North}
\end{equation}
(There would be an additional contributions from the other basepoints not involved in the connected sum). If instead the connected sum is formed at a free basepoint $x=w\in \Sigma$ and some non-basepoint $x'\in \Sigma'$, then the algebra contribution from the marked points would be $\scU^{|\ve{N}|}.$

We define a similar map 
\[
\Psi^{\bullet<\circ'}\colon \ve{\CF}^-_J(\cH)\otimes_R \ve{\CF}^-_{J'}(\cH')\to \ve{\CF}^-_{J\wedge J'}(\cH,\cH')
\]
by counting holomorphic curves with similar conditions, but with the roles of the $\ve{S}$ and $\ve{N}$ labeled marked points switched.

\begin{rem}
Constant disks are counted by both $\Phi^{\bullet<\circ'}$ and $\Psi^{\bullet<\circ'}$.
\end{rem}

\begin{lem}
\label{lem:PhiPsi-chain-maps}
 The maps $\Phi^{\bullet<\circ'}$ and $\Psi^{\bullet<\circ'}$ are chain maps.
\end{lem}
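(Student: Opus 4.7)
My plan is to show that $\Phi^{\bullet<\circ'}$ is a chain map by analyzing the ends of the $1$-dimensional parametrized moduli space
\[
\cM\cM^{\Phi,\bullet<\circ'}(\phi,\phi',M)\big/\R,
\]
where we quotient by the diagonal $\R$-action translating both $u$, $u'$, and $t_0$. The statement for $\Psi^{\bullet<\circ'}$ will follow by an entirely symmetric argument, switching the roles played by the labels $\ve{S}$ and $\ve{N}$. Fixing $(\phi,\phi',M)$ with $\ind(\phi,\phi',M)=1$ and using the gauge freedom to set $t_0=0$, we obtain a $1$-manifold whose mod-$2$ count of ends must vanish.

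The ends come from four sources: (i) strip-breaking of $u$ or $u'$ at $t=\pm\infty$; (ii) a marked point of type $\ve{N}$ (resp.\ $\ve{N}'$) moving down onto $\lambda_{t_0}$, transitioning into a $\ve{C}$ (resp.\ $\ve{C}'$) label; (iii) a matched $\ve{S}$/$\ve{S}'$ pair moving up to $\{t=0\}$, transitioning into a $\ve{C}/\ve{C}'$ pair; and (iv) boundary degenerations. For (i), a breaking of $u$ that occurs entirely above the line $\lambda_{t_0}$ leaves only $\ve{N}$-marked points in the broken-off level; together with the analogous breakings for $u'$, these ends assemble to $(\d_J\otimes 1 + 1\otimes \d_{J'})\circ \Phi^{\bullet<\circ'}$, since above the line the pair is unmatched. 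A breaking that occurs at or below the line $\lambda_{t_0}$ leaves a broken-off level containing only matched marked points of types $\ve{S}$, $\ve{C}$ (together with their primed counterparts); this configuration is precisely what is counted by $\d_{J\wedge J'}$ from~\eqref{eq:del-J-wedge-J'}, so these ends assemble to $\Phi^{\bullet<\circ'}\circ \d_{J\wedge J'}$. The algebra weighting is consistent: the matched region contributes the $\scU,\scV$ weights of the connected-sum diagram, and the unmatched $\ve{N}$-region contributes the split weights $\scU^{|\ve{N}|}\scV^{|\ve{N}'|}$ as in~\eqref{eq:algebra-contribution-North}, matching the action of $R=\bF[\scU,\scV]$ over which the tensor product is taken.

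The remaining ends from (ii)--(iv) must cancel in pairs. For each transition of type (ii) where an $\ve{N}$-point descends onto $\lambda_{t_0}$, the resulting configuration has one extra $\ve{C}$-point that must fit into the alternating ordering along $\lambda_{t_0}$; this is paired bijectively with the collision of that $\ve{C}$-point with its neighboring $\ve{C}'$-point, which in turn can be reinterpreted as an $\ve{N}$-to-$\ve{C}$ transition from the other side. Transitions of type (iii) cancel against adjacent $\ve{C}/\ve{C}'$-collisions at $\{t=0\}$ that drop an alternating pair into matched position below the line. Boundary degenerations (iv) either carry no marked point at $p$ or $p'$, in which case they cancel in pairs by Proposition~\ref{prop:count-boundary-degenerations} in the same way as for the usual proof that $\d^2=0$, or they do carry such a marked point and are excluded by an index count since the remaining curve would have negative expected dimension.

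The main obstacle is the combinatorics of the alternating $\ve{C}/\ve{C}'$ ordering along $\lambda_{t_0}$: one must verify that each codimension-$1$ degeneration in the strata (ii) and (iii) is shared by exactly two $1$-dimensional components of the parametrized moduli space, with opposite contributions in $\bF_2$ (equivalently, that the two reinterpretations described above exhaust the gluing possibilities). A secondary technical point, handled by a standard transversality argument for the evaluation maps $q_i\mapsto (s\circ u)(q_i)$ and $q_i\mapsto (t\circ u)(q_i)$ generalizing the transversality for matched moduli spaces from \cite{LOTBordered}*{Section~9.1}, is to ensure that the moduli space of matched curves is cut out transversely, so that only the enumerated codimension-$1$ phenomena contribute to the ends.
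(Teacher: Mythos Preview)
Your overall strategy is right---count ends of the $1$-dimensional moduli space with $t_0$ fixed---and your treatment of strip breaking (i) and of the pairing (iii)/$\ve{C}$--$\ve{C}'$ collision is correct.  The gap is in your cancellation scheme for (ii) and (iv), which does not match the actual codimension-$1$ boundary structure.

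First, boundary degenerations that carry a marked point at $p$ (or $p'$) are \emph{not} excluded by an index count.  A beta boundary degeneration on $\Sigma$ of index $2$ through $p$ absorbs exactly one point of $\ve{C}$; the remaining curve has index $\mu(\phi)+\mu(\phi')-2-2(|\ve{S}|+|\ve{C}|-1)=\ind(\phi,\phi',M)$ before losing one line constraint, so it lies in an honest codimension-$1$ stratum.  These are exactly the ends that must be paired with something.  Second, your proposed pairing for (ii) double-spends the $\ve{C}/\ve{C}'$ collisions, which you already (correctly) used to cancel (iii).  The correct bookkeeping splits (ii) into two subcases.  If an $\ve{N}$-point lands and the alternation along $\lambda_{t_0}$ is \emph{broken}---so that two adjacent points on the line are both from $u$ (or both from $u'$)---then that limiting configuration is the boundary of \emph{two} $1$-parameter families, corresponding to which of the two adjacent same-type points is labeled the ``new'' $\ve{N}$ and which is the ``old'' $\ve{C}$; these cancel in pairs among themselves.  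If instead the $\ve{N}$-point lands and the alternation is \emph{preserved}, that end pairs with the boundary degeneration of type (iv) that eats a $\ve{C}$-point (and symmetrically an $\ve{N}'$-landing pairs with an alpha degeneration eating a $\ve{C}'$-point), using the odd count from Proposition~\ref{prop:count-boundary-degenerations}.  One must also check that the algebra weights agree across this last pairing: the boundary degeneration covers a basepoint in the connected-sum region, contributing exactly the factor of $\scU$ (or $\scV$) that is lost when the corresponding marked point changes label from $\ve{N}$ to $\ve{C}$.
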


\begin{proof}We focus on $\Phi^{\bullet<\circ'}$ since the argument for $\Psi^{\bullet<\circ'}$ is not substantially different. Our proof is modeled on work of Ozsv\'{a}th and Szab\'{o} in the setting of bordered knot Floer homology \cite{OSBorderedHFK}. The proof is to count the ends of $(\Phi,\bullet<\circ',t_0)$-matched moduli spaces for triples $(\phi,\phi',M)$ with $\ind(\phi,\phi',M)=1$. Modulo the $\R$-action on the parametrized moduli space, such moduli spaces are 1-dimensional. In the present situation, it is sufficient to  fix $t_0=0$, and consider only $(\Phi,\bullet<\circ',0)$-matched holomorphic curves. The ends are constrained generically to the following configurations:
\begin{enumerate}[label=($\Phi$-\arabic*), ref=$\Phi$-\arabic*]
\item \label{dM:Phi-1} Two paired marked points in $\ve{S}$ and $\ve{S}'$ may collide with the line $\lambda_{0}$, away from the marked points in $\ve{C}$ and $\ve{C}'$.
\item \label{dM:Phi-2}A pair of punctures in $\ve{C}$ and $\ve{C}'$ may collide along $\lambda_0$.
\item \label{dM:Phi-3} A puncture of $\ve{N}$ or $\ve{N}'$ may collide with $\lambda_0$ (in the complement of $\ve{C}$ and $\ve{C}'$).
 There are two subcases:
\begin{enumerate}
\item \label{dM:Phi-3-a} After the degeneration, the marked points along $\lambda_0$ do not alternate between those of  $u$ and $u'$.
\item \label{dM:Phi-3-b} After the degeneration, the marked points along $\lambda_0$ do alternate between those of $u$ and $u'$.
\end{enumerate}
\item \label{dM:Phi-4} A puncture of $\ve{C}$ may degenerate into an index 2 beta boundary degeneration at height $t=0$.
\item \label{dM:Phi-5} A puncture of $\ve{C}'$ may degenerate into an index 2 alpha boundary degeneration at height $t=0$.
\item \label{dM:Phi-6} Strip breaking may occur, leaving a $(\Phi,\bullet<\circ',0)$-matched disk of index 0 as well as a holomorphic disk of index 1 in either positive or negative direction. There are two subcases:
\begin{enumerate}
\item The index 1 holomorphic disk degenerates towards $-\infty$, and is perfectly-matched.
\item The index 1 holomorphic disk degenerates towards $+\infty$, and has trivial matching (i.e. the projections of all marked points to $[0,1]\times \R$ are distinct).
\end{enumerate}
\end{enumerate}
Most of these ends appear in canceling pairs, and the rest correspond to the relation $[\d, \Phi^{\bullet<\circ'}]=0$, as we describe presently.

 The ends \eqref{dM:Phi-1} cancel with the ends~\eqref{dM:Phi-2}.

 In an end of type~\eqref{dM:Phi-3-a}, there are two adjacent marked points along $\lambda_0$ which are both from $u$ or both from $u'$. Such a curve appears twice in the boundary of the moduli spaces. See Figure~\ref{fig:1}. 
 
 The ends of type~\eqref{dM:Phi-3-b} cancel with the end of type~\eqref{dM:Phi-4} and~\eqref{dM:Phi-5}. Here we are using the mod 2 count of boundary degenerations from Proposition~\ref{prop:count-boundary-degenerations}. Note also the algebra contributions coincide for the two canceling degenerations of type~\eqref{dM:Phi-3-b} and type~\eqref{dM:Phi-4} or~\eqref{dM:Phi-5} (cf. Equation ~\eqref{eq:algebra-contribution-North}). If the connected sum is taken along a link component, each boundary degeneration which forms along $\lambda_{t_0}$ will contain one puncture from $\ve{C}\cup \ve{C}'$ and also one link basepoint. The degeneration which cancels this boundary degeneration formation consists of a $\ve{N}$ or $\ve{N}'$ puncture colliding with $\lambda_{t_0}$. The algebra weight from the $\ve{N}$ or $\ve{N}'$ puncture coincides with the weight of the basepoint which is lost in the boundary degeneration. The case of a connected sum at a free basepoint is similar.
  
 The remaining ends are ~\eqref{dM:Phi-6}, which correspond exactly to the commutator $[\d,\Phi^{\bullet<\circ'}]$. Summing all ends, we conclude that
 \[
\d \circ \Phi^{\bullet<\circ'}+\Phi^{\bullet<\circ'}\circ \d=0, 
 \]
completing the proof.
\end{proof}

\begin{figure}[ht]
\centering
\begingroup%
  \makeatletter%
  \providecommand\color[2][]{%
    \errmessage{(Inkscape) Color is used for the text in Inkscape, but the package 'color.sty' is not loaded}%
    \renewcommand\color[2][]{}%
  }%
  \providecommand\transparent[1]{%
    \errmessage{(Inkscape) Transparency is used (non-zero) for the text in Inkscape, but the package 'transparent.sty' is not loaded}%
    \renewcommand\transparent[1]{}%
  }%
  \providecommand\rotatebox[2]{#2}%
  \newcommand*\fsize{\dimexpr\f@size pt\relax}%
  \newcommand*\lineheight[1]{\fontsize{\fsize}{#1\fsize}\selectfont}%
  \ifx\svgwidth\undefined%
    \setlength{\unitlength}{199.31810308bp}%
    \ifx\svgscale\undefined%
      \relax%
    \else%
      \setlength{\unitlength}{\unitlength * \real{\svgscale}}%
    \fi%
  \else%
    \setlength{\unitlength}{\svgwidth}%
  \fi%
  \global\let\svgwidth\undefined%
  \global\let\svgscale\undefined%
  \makeatother%
  \begin{picture}(1,1.5644244)%
    \lineheight{1}%
    \setlength\tabcolsep{0pt}%
    \put(0,0){\includegraphics[width=\unitlength,page=1]{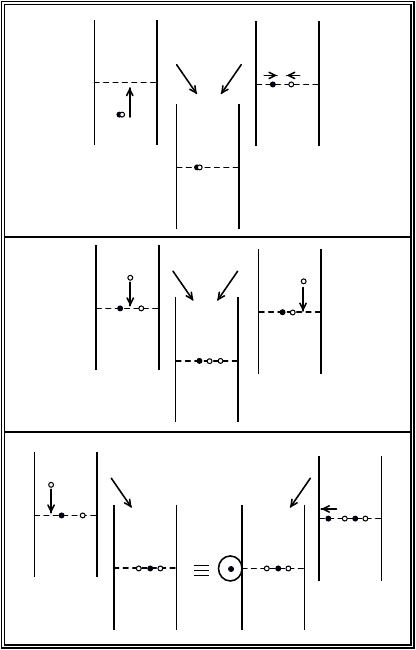}}%
    \put(0.78868351,1.34629059){\makebox(0,0)[lt]{\lineheight{1.25}\smash{\begin{tabular}[t]{l}\eqref{dM:Phi-2}\end{tabular}}}}%
    \put(0.21282375,1.34629066){\makebox(0,0)[rt]{\lineheight{1.25}\smash{\begin{tabular}[t]{r}\eqref{dM:Phi-1}\end{tabular}}}}%
    \put(0.20309398,0.80340384){\makebox(0,0)[rt]{\lineheight{1.25}\smash{\begin{tabular}[t]{r}\eqref{dM:Phi-3-a}\end{tabular}}}}%
    \put(0.15882142,0.13022388){\makebox(0,0)[t]{\lineheight{1.25}\smash{\begin{tabular}[t]{c}\eqref{dM:Phi-3-b}\end{tabular}}}}%
    \put(0.84349151,0.13022388){\makebox(0,0)[t]{\lineheight{1.25}\smash{\begin{tabular}[t]{c}\eqref{dM:Phi-4}\end{tabular}}}}%
    \put(0.79321173,0.80340384){\makebox(0,0)[lt]{\lineheight{1.25}\smash{\begin{tabular}[t]{l}\eqref{dM:Phi-3-a}\end{tabular}}}}%
  \end{picture}%
\endgroup%

\caption{Some cancellations in the proof that $\Phi^{\bullet<\circ'}$ and $\Psi^{\bullet<\circ'}$ are chain maps}
\label{fig:1}
\end{figure}

\begin{lem}
\label{lem:Phi-Psi-homotopy-inverses-diagrams} The maps $\Phi^{\bullet<\circ'}$ and $\Psi^{\bullet<\circ'}$ are homotopy inverses.
\end{lem}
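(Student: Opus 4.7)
My plan is to construct an explicit chain homotopy from $\Psi^{\bullet<\circ'} \circ \Phi^{\bullet<\circ'}$ to $\id_{\CF_{J\wedge J'}(\cH,\cH')}$ by counting a $1$-parameter family of matched moduli spaces; the homotopy from $\Phi^{\bullet<\circ'} \circ \Psi^{\bullet<\circ'}$ to the identity is entirely analogous, so I focus on the first direction. The key idea is to enrich Definition~\ref{def:Phi-matched-disk} by allowing \emph{two} parallel collision lines $\lambda_{t_0}$ and $\lambda_{t_1}$ with $t_0 \le t_1$. Concretely, a $(\Psi\Phi,\tau)$-matched curve pair (with $\tau := t_1-t_0\ge 0$, after modding out by overall $\R$-translation) consists of $(u,u')$ whose marked points project to $p$ and $p'$ and are partitioned into five sets $\ve{S}^{-},\ve{C}_0,\ve{N},\ve{C}_1,\ve{S}^{+}$, equipped with matchings $\phi_S^{\pm}\colon\ve{S}^\pm\to(\ve{S}^\pm)'$, where $\ve{S}^{\pm}$ are matched in the sense of Definition~\ref{def:Phi-matched-disk} and lie in $t<t_0$ resp.\ $t>t_1$; $\ve{C}_0$ lies on $\lambda_{t_0}$ and alternates with $\ve{C}_0'$; $\ve{N}$ lies unmatched in $t_0<t<t_1$; and $\ve{C}_1$ lies on $\lambda_{t_1}$ and alternates with $\ve{C}_1'$. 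Define $H\colon\CF_{J\wedge J'}\to \CF_{J\wedge J'}$ by counting elements of the parametrized moduli space $\bigcup_{\tau\ge 0}\cM\cM^{\Psi\Phi}(\phi,\phi',M,\tau)\times\{\tau\}$ for $(\phi,\phi',M)$ with $\ind=-1$, weighted by the evident products of $\scU$ and $\scV$ (as in~\eqref{eq:algebra-contribution-North}).

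The technical heart is the analysis of the codimension-$1$ ends of the $1$-parameter moduli spaces for $(\phi,\phi',M)$ with $\ind(\phi,\phi',M)=0$. The ends fall into the following classes, which I enumerate in the order I would check them: (i) \emph{strip breaking} on the top or bottom, giving $[\d_{J\wedge J'},H]$; (ii) \emph{pairs of $\ve{S}$-points colliding with $\lambda_{t_i}$} canceling with \emph{pairs of $\ve{C}_i,\ve{C}_i'$-points colliding on $\lambda_{t_i}$}, exactly as in~\eqref{dM:Phi-1}/\eqref{dM:Phi-2}; (iii) \emph{$\ve{N}$-points crossing $\lambda_{t_i}$ into $\ve{S}^\pm$ or $\ve{C}_i$}, which either cancel pairwise or cancel against (iv) \emph{index-$2$ $\a$- and $\b$-boundary degenerations at $\lambda_{t_i}$}, invoking Proposition~\ref{prop:count-boundary-degenerations} and matching algebraic weights as in the proof of Lemma~\ref{lem:PhiPsi-chain-maps}; (v) the \emph{neck-stretching limit $\tau\to\infty$}, where the configuration factors through an intermediate pair in $\CF_J(\cH)\otimes_R\CF_{J'}(\cH')$ and yields precisely $\Psi^{\bullet<\circ'}\circ\Phi^{\bullet<\circ'}$; and (vi) the \emph{collision limit $\tau\to 0$}, which I claim contributes the identity. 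Summing all ends gives $[\d,H]=\Psi^{\bullet<\circ'}\circ\Phi^{\bullet<\circ'}+\id$.

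The main obstacle I anticipate is controlling the limit (vi). Naively at $\tau=0$ the two collision lines merge, the $\ve{N}$ region is squeezed to zero width, and the resulting objects are $(\Phi,\bullet<\circ',0)$-matched curves whose $\ve{C}$-set is $\ve{C}_0\cup\ve{C}_1$; however the alternating conditions imposed on $\ve{C}_0$ and $\ve{C}_1$ separately do not force the merged set to alternate, so extra care is needed. The plan is to first show by an index/codimension count, analogous to Lemma~\ref{lem:nearest-point}, that generically any $\ve{N}$ puncture which survives into the $\tau\to 0$ limit forces the configuration into a stratum of codimension $\ge 2$, so only configurations with $\ve{N}=\emptyset$ contribute. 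Then I plan to show that the remaining strata with $\ve{C}_0\cup\ve{C}_1$ non-alternating cancel in pairs under the involution swapping consecutive like-colored punctures (similar to end \eqref{dM:Phi-3-a}), while the strata where $\ve{C}_0\cup\ve{C}_1$ happens to alternate as a single set cancel with the degenerations that split this merged set back into $\ve{C}_0$ and $\ve{C}_1$ via $\alpha$- and $\beta$-boundary degenerations (again using Proposition~\ref{prop:count-boundary-degenerations}). What remains is the stratum with $\ve{N}=\ve{C}_0=\ve{C}_1=\emptyset$, which is exactly a perfectly-matched pair in $\cM\cM_{J\wedge J'}(\phi,\phi')/\R$; this is the identity contribution. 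Once (vi) is pinned down, the argument for $\Phi^{\bullet<\circ'}\circ\Psi^{\bullet<\circ'}\simeq\id$ proceeds by the mirror construction, with unmatched $\ve{N}$-points on the outside and matched $\ve{S}$-points between the two lines.
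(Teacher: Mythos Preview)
Your overall strategy matches the paper's exactly: define the homotopy $H_\wedge$ by counting curve pairs with \emph{two} special lines, then analyze the codimension-one ends. The treatment of the interior ends (your (i)--(iv)) and of the $\tau\to\infty$ limit (your (v)) is correct and parallels the paper.

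The gap is in your analysis of the $\tau\to 0$ limit. Your step~1 (that $\ve{N}=\emptyset$ generically) is correct, though it follows from a direct dimension count rather than from anything like Lemma~\ref{lem:nearest-point}: at $\tau=0$ the limiting moduli space (with $t'$ varying) has expected dimension $-|\ve{N}|-|\ve{N}'|+1$ and carries a free $\R$-action, forcing $|\ve{N}|=|\ve{N}'|=0$. Your steps~2 and~3, however, do not work. Step~3 in particular is incorrect: there is no mechanism by which $\tau=0$ ends with an ``alternating merged collision set'' cancel against boundary degenerations. Boundary degenerations are ends occurring at interior values of $\tau$ and are already fully accounted for in your (iv); they do not reappear at $\tau=0$, and there is no natural pairing between them and the $\tau=0$ configurations you describe. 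Step~2 is also problematic, since swapping consecutive like-colored punctures need not preserve the separate alternation constraints on $(\ve{C}_0,\ve{C}_0')$ and $(\ve{C}_1,\ve{C}_1')$.

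The correct cancellation at $\tau=0$ is much simpler. When the two lines merge, a limiting configuration still carries the labels $\ve{C}_0,\ve{C}_0',\ve{C}_1,\ve{C}_1'$, now all constrained to the single merged line. Because the alternation conditions imposed on $(\ve{C}_0,\ve{C}_0')$ and on $(\ve{C}_1,\ve{C}_1')$ are identical in form, the involution swapping $\ve{C}_0\leftrightarrow\ve{C}_1$ and $\ve{C}_0'\leftrightarrow\ve{C}_1'$ (``letting the two special lines pass through each other'') is a free $\Z/2$-action on the $\tau=0$ ends with at least one collision point. These ends therefore cancel in pairs, leaving only the configurations with $\ve{C}_0=\ve{C}_1=\emptyset$, which are perfectly matched index-$0$ curves and contribute the identity, as you correctly conclude.
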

\begin{proof}We define two maps $H_{\wedge}$ and $H_{\sqcup}$, which we prove satisfy
\begin{equation}
\id+\Psi^{\bullet<\circ'}\circ\Phi^{\bullet<\circ'}=[\d, H_\wedge]\quad \text{and} \quad \id+\Phi^{\bullet<\circ'}\circ \Psi^{\bullet<\circ'}=[\d,H_\sqcup].
\label{eq:Phi-Psi-homotopy-equivalence}
\end{equation}
We focus on the map $H_\wedge$, since $H_\sqcup$ is constructed by a straightforward modification.

 The map $H_{\wedge}$ counts certain curve pairs $(u,u')$ where $u$ has five collections of marked points, $\ve{NN}$, $\ve{NC}$, $\ve{M}$, $\ve{SC}$ and $\ve{SS}$, and $u'$ has five collections of marked points $\ve{NN}'$, $\ve{NC}'$, $\ve{M}'$, $\ve{SC}'$ and $\ve{SS}'$.

\begin{define}
 Suppose that $t_0<t_1$ are real numbers. We say a pair of marked holomorphic strips $(u,u')$ is \emph{$(H_\wedge,t_0,t_1)$-matched} if  the following are satisfied:
\begin{enumerate}
\item The marked points of $\ve{NN}$ and $\ve{NN}'$ are perfectly matched. Furthermore their projection to $\R$ lies above $t_1$.
\item The marked points in $\ve{NC}$ and $\ve{NC}'$ both project to $\lambda_{t_1}$. Furthermore, $|\ve{NC}|=|\ve{NC}'|$. As one travels along $\lambda_{t_1}$, the marked points alternate between $|\ve{NC}|$ and $|\ve{NC}'|$, and the left-most marked point is from $\ve{NC}$, while the right most is from $\ve{NC}'$.
\item The marked points of $\ve{M}$ and $\ve{M}'$ have no matching condition. Their projection to $[0,1]\times \R$ lies between $\lambda_{t_0}$ and $\lambda_{t_1}$.
\item The marked points $\ve{SC}$ and $\ve{SC}'$ satisfy the same conditions as $\ve{NC}$ and $\ve{NC}'$, except that they project to $\lambda_{t_0}$.
\item The marked points in $\ve{SS}$ and $\ve{SS}'$ are perfectly matched, and their projection to $\R$ lies below $t_0$. 
\end{enumerate}
\end{define}

\begin{figure}[ht]
\centering
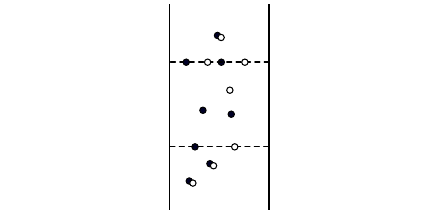
\caption{A $(H_\wedge,t)$-matched curve pair. On the left side, we indicate the regions where the marked points $\ve{NN},$ $\ve{NC},$ $\ve{M},$ $\ve{SC},$ and $\ve{SS}$ are sent.}
\label{fig:21}
\end{figure}

If $(\phi,\phi')$ are two homology classes of disks, equipped with decorated sources and matching data $M$, and $t_0<t_1$ are fixed, then the expected dimension of the moduli space of curves which are $(H_\wedge,t_0,t_1)$-matched and represent $(\phi,\phi',M)$ is given by
\[
\ind(\phi,\phi',M)=\mu(\phi)+\mu(\phi')-2|\ve{SS}|-2|\ve{NN}|-2|\ve{SC}|-2|\ve{NC}|.
\]
Write $\cM\cM(\phi,\phi',M)$ for this moduli space. The parametrized moduli space
\[
\cM\cM(\phi,\phi',M):=\bigcup_{t_0<t_1} \cM\cM(\phi,\phi',M,t_0,t_1)\times \{(t_0,t_1)\}
\]
has a free $\R$-action, corresponding to overall translation (acting diagonally on $t_0$ and $t_1$).

The map $H_\wedge$ counts $\cM\cM(\phi,\phi',M)/\R$ for triples $(\phi,\phi',M)$ satisfying
$
\ind(\phi,\phi',M)=-1.
$
In this case, the expected dimension of $\cM\cM(\phi,\phi',M)$ is 1.

To verify Equation~\eqref{eq:Phi-Psi-homotopy-equivalence}, we count the ends of the parametrized moduli spaces $\cM\cM(\phi,\phi',M)/\R$ where $\ind(\phi,\phi',M)=0$ (so that $\dim \cM\cM(\phi,\phi',M)/\R=1$).

There are ends which occur at $0<t_1-t_0<\infty$. These are the obvious analogs of the ends in the case of one special line \eqref{dM:Phi-1},\dots,\eqref{dM:Phi-5}. They cancel by an identical proof to the case of one special line.

It remains to analyze the curves which appear as $t_1-t_0\to 0$ or $t_1-t_0\to \infty$. The ends which appear as $t_1-t_0\to \infty$ are easy to analyze: they correspond exactly to the composition $\Psi^{\bullet<\circ'}\circ \Phi^{\bullet<\circ'}$. We now focus on the ends which appear as $t_1-t_0\to 0$. These correspond to the lines $\lambda_{t_0}$ and $\lambda_{t_1}$ colliding to form a single line, for which we write $\lambda_{t'}$. The limiting curves live in a moduli space of pairs $(u,u')$ where the points of $\ve{SS}$ are perfectly matched with $\ve{SS}'$ and $\ve{NN}$ are perfectly matched to $\ve{NN}'$, and the rest of the marked points are matched to $\lambda_{t'}$. For fixed $t'$, this moduli space has  expected dimension
\[
\mu(\phi)+\mu(\phi')-2|\ve{SS}|-2|\ve{NN}|-2|\ve{S}\ve{C}|-2|\ve{NC}|-|\ve{M}|-|\ve{M}'|.
\]
By assumption, the above quantity is $-|\ve{M}|-|\ve{M}'|$. The moduli space which is parametrized over all $t'$ has dimension $-|\ve{M}|-|\ve{M}'|+1$, but since it also has a free $\R$ action, we conclude that $-|\ve{M}|-|\ve{M}'|\ge 0$, so $|\ve{M}|=|\ve{M}'|=0$.

There are two remaining cases for the curves appearing as $t_1-t_0\to 0$:
\begin{enumerate}[label=($H_\wedge$-\arabic*), ref=$H_\wedge$-\arabic*]
\item\label{list:H-wedge-ends-0} $|\ve{SC}|=|\ve{NC}|=|\ve{SC}'|=|\ve{NC}'|=0$.
\item\label{list:H-wedge-ends-1} At least one of $|\ve{SC}|,$ $|\ve{NC}|,$ $|\ve{SC}'|$, $|\ve{NC}'|$ is non-empty.
\end{enumerate}
For the ends~\eqref{list:H-wedge-ends-0}, there are no marked points along the special line and the limiting curve pair $(u,u')$ is perfectly matched, similar to the curves counted by $\d_{J\wedge J'}$ in Equation~\eqref{eq:del-J-wedge-J'}, except $(u,u')$ has index 0 (i.e. the expected dimension, ignoring the special line is 0). Using transversality and expected dimension counts, we see that $u$ and $u'$ must both represent the constant class. These ends contribute $\id_{\ve{\CF}^-_{J\wedge J'}(\cH,\cH')}$ to the left equation of ~\eqref{eq:Phi-Psi-homotopy-equivalence}.

We now consider the ends labeled~\eqref{list:H-wedge-ends-1}. We claim that these appear in canceling pairs. Indeed, each such end appears with even multiplicity, corresponding to switching the roles of $\ve{NC}$ and $\ve{SC}$, and switching $\ve{NC}'$ and $\ve{SC}'$ (i.e. having the special lines pass through each other). See Figure~\ref{fig:6}.

\begin{figure}[ht]
\centering
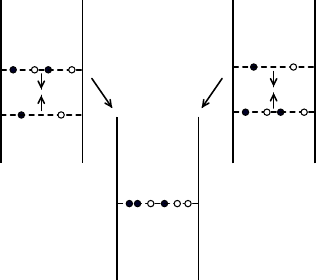
\caption{The cancellation of the ends labeled~\eqref{list:H-wedge-ends-1}.}
\label{fig:6}
\end{figure}

The construction of the homotopy $H_\cup$ is defined via the obvious modification, and a similar analysis goes through to establish the right equation of~\eqref{eq:Phi-Psi-homotopy-equivalence}.
\end{proof}

\subsection{Connected sums and hypercubes}

We now extend the work of the previous section to handle the more general case of hypercubes of dimension greater than 0. We focus on the case that $\cL_{\a}$ and $\cL_{\a'}$ are 0-dimensional, i.e. $\cL_{\a}=\as$ and $\cL_{\a'}=\as'$, while $\cL_\b$ and $\cL_{\b'}$ are of arbitrary dimension.

Similar to the setting of ordinary Floer complexes in Section~\ref{sec:connected-sums-diagrams}, it is straightforward to construct a neck-stretching homotopy equivalence
\[
\ve{\CF}^-_{I}(\Sigma\# \Sigma', \as\cup \as', \cL_{\b}\otimes \cL_{\b'})\simeq \ve{\CF}^-_{J\wedge J'}(\Sigma\wedge \Sigma', \as\cup \as', \cL_{\b}\otimes \cL_{\b'}).
\]
The construction is essentially the same as the construction of a homotopy equivalence of hypercubes for changing the almost complex structure. Compare \cite{HHSZExact}*{Section~14.2}. 

The goal of this section is to describe a homotopy equivalence of hypercubes
\[
\Phi^{\bullet<\circ'}\colon \ve{\CF}^-_{J\wedge J'}(\Sigma\wedge\Sigma', \as\cup \as', \cL_{\b}\otimes \cL_{\b'})\to \ve{\CF}^-_{J\sqcup J'}(\Sigma \sqcup \Sigma', \as\cup \as',\cL_{\b}\otimes \cL_{\b'}).
\]
under certain restrictions on $\cL_{\b}$ and $\cL_{\b'}$. 
In the domain of $\Phi^{\bullet<\circ'}$, the curves counted are perfectly-matched. In codomain, the curves have trivial matching (i.e. they are counted in the normal way for disconnected Heegaard surfaces, as in Section~\ref{sec:disconnected-Heegaard-surfaces}). The map $\Phi^{\bullet<\circ'}$ is an extension of the map defined in the previous section, which we describe momentarily. Note that by Proposition~\ref{prop:disjoint-unions-hypercubes-main}, the codomain of $\Phi^{\bullet<\circ'}$ is homotopy equivalent to the tensor product of $\ve{\CF}^-_J(\Sigma,\as,\cL_{\b})$ and $\ve{\CF}^-_{J'}(\Sigma',\as',\cL_{\b'})$, if the hypercube $\cL_{\b}\otimes \cL_{\b'}$ is constructed with sufficiently small Hamiltonian translations.

Similar to the case of connected sums, we will define the map $\Phi^{\bullet<\circ'}$ to count holomorphic polygons with a special line.

To simplify the exposition and notation, it is helpful to establish a few conventions. Firstly, we recall that the holomorphic $\ell$-gon maps require a choice of a family of almost complex structures $(J_y)_{y\in K_{\ell-1}}$. The space $K_{\ell-1}$ may be identified with the moduli space of disks with $\ell$ marked points along its boundary. For the sake of notation, it is more convenient to consider the moduli space of $\ell-2$ marked points along the line 
\[
\{0\}\times \R\subset [0,1]\times \R.
\]
Overall translation by the $\R$-action gives equivalent elements in the moduli space. For our purposes, it is actually more convenient to not initially quotient by this action, and consider a family of almost complex structures $(J_{y,\tau})_{(y,\tau)\in K_{\ell-1}\times \R}$.  Let $T_r$ denote translation of the $\R$ component of $\Sigma\times [0,1]\times \R$ by $r$ units.
We require that
\[
J_{y,\tau}=(T_r^*)(J_{y,\tau+r}),
\] 
for each $y\in K_{\ell-1}$, and $\tau,r\in \R$. Note that this does \emph{not} imply that each $J_{y,\tau}$ is translation invariant under the $\R$-action on $[0,1]\times \R$. 

With this convention, the normal $\ell$-gon counting maps can be viewed as counting the 1-dimensional parametrized moduli spaces
\[
\bigcup_{(y,\tau)\in K_{\ell-1}\times \R} \cM_{J_y}(\psi)\times \{(y,\tau)\},
\]
modulo their free $\R$-action.

If $t_0\in \R$, the notion of a \emph{$(\Phi,\bullet<\circ',t_0)$}-matched pair of $(\ell+1)$-gons is easily adapted from Definition~\ref{def:Phi-matched-disk}. (Note that to define the map $\Phi^{\bullet<\circ'}$, the indexing is somewhat more natural if we count $(\ell+1)$-gons than $\ell$-gons). Here, $t_0$ denotes the height of the special line, where the marked points $\ve{C}$ and $\ve{C}'$ are matched. If $(\psi,\psi',M)$ is a triple consisting of two homology classes of $(\ell+1)$-gons, and $M$ is a pair of decorated source curves and matching data, and $(y,\tau)\in K_{\ell}\times \R$ and $t_0\in \R$ is fixed, then the expected dimension of the moduli space of $(\Phi,\bullet<\circ',t_0)$-matched $(\ell+1)$-gons, denoted $\cM\cM_{J_{y,\tau}}^{\Phi,\bullet<\circ'}(\psi,\psi',M,t_0)$, is given by
\[
 \ind(\psi,\psi',M)=\mu(\psi)+\mu(\psi')-2|\ve{S}|-2|\ve{C}|. 
\]
This is because the moduli space with $J_{y,\tau}$ fixed but with no puncture constraints has expected dimension $\mu(\psi)+\mu(\psi')$, and the perfect matching below $t_0$ and the matching along the special line give a codimension $2(|\ve{S}|+|\ve{C}|)$ constraint.

If $J=(J_{(y,\tau)})_{(y,\tau)\in K_{\ell}\times \R}$ is a family, we define the parametrized moduli space
\[
\cM\cM_{J}^{\Phi,\bullet<\circ'}(\psi,\psi',M)=\bigcup_{\substack{
(y,\tau)\in K_{\ell}\times \R\\ t_0\in \R}}\cM\cM^{\Phi,\bullet<\circ'}_{J_{(y,\tau)}}(\psi,\psi',M,t_0)\times \{(y,\tau,t_0)\},
\]
which has expected dimension 
\begin{equation}
e\dim \cM\cM_{J}^{\Phi,\bullet<\circ'}(\psi,\psi',M)=\ind(\psi,\psi',M)+\ell,
\label{eq:e-dim-parametrized-w-t-tau}
\end{equation}
 and has a free $\R$-action.

We now define our hypercube morphism $\Phi^{\bullet<\circ'}$.
If $\kappa\le \kappa'$ and $\kappa,\kappa'\in \bE_{n+m}$, we  define the component $\Phi_{\kappa\to \kappa'}^{\bullet<\circ'}$ of $\Phi^{\bullet<\circ'}$ as follows. If $\kappa=\kappa'=(\veps,\nu)$, then the map 
\[
\Phi^{\bullet<\circ'}_{\kappa\to \kappa'}\colon \ve{\CF}^-_{J\wedge J'}(\Sigma\wedge \Sigma', \as\cup \as', \bs_{\veps}\cup \bs'_{\nu})\to \ve{\CF}^-_{J\sqcup J'}(\Sigma\sqcup \Sigma', \as\cup \as', \bs_{\veps}\cup \bs'_{\nu})
\]
is the tensor product map defined in Section~\ref{sec:connected-sums-diagrams}.

If $\kappa_1<\cdots<\kappa_\ell$ is an increasing sequence of points in $\bE_{n+m}$, then we write $\kappa_j=(\veps_j,\nu_j)$ and we define a map 
\[
\varphi^{\bullet<\circ'}_{\kappa_1<\cdots<\kappa_\ell}\colon \ve{\CF}^-_{J\wedge J'}(\Sigma\wedge \Sigma', \as\cup \as', \bs_{\veps_1}\cup \bs'_{\nu_1})\to  \ve{\CF}^-_{J\sqcup J'}(\Sigma\wedge \Sigma', \as\cup \as', \bs_{\veps_\ell}\cup \bs'_{\nu_\ell})
\]
by counting $(\Phi,\bullet<\circ',t_0)$-matched holomorphic $(\ell+1)$-gons which have $\ind(\psi,\psi',M)=1-\ell$, and all of whose special inputs come from $\cL_{\b}\otimes \cL_{\b'}$. By Equation~\eqref{eq:e-dim-parametrized-w-t-tau} the parametrized moduli spaces of such families have expected dimension $1$, and also have a free $\R$-action.

 If $\kappa<\kappa'$, then we define 
\[
\Phi^{\bullet<\circ'}_{\kappa\to \kappa'}:=\sum_{\kappa=\kappa_1<\cdots<\kappa_\ell=\kappa'} \varphi_{\kappa_1<\cdots<\kappa_\ell}.
\]

We may also define a map 
\[
\Psi^{\bullet<\circ'}\colon \ve{\CF}^-_{J\sqcup J'}(\Sigma\sqcup \Sigma', \as\cup \as', \cL_{\b}\otimes \cL_{\b'})\to \ve{\CF}^-_{J\wedge J'}(\Sigma\wedge \Sigma', \as\cup \as', \cL_{\b}\otimes \cL_{\b'})
\]
in the opposite direction via the obvious modification.

\begin{lem}\label{lem:codim-1-Phi-ell} Suppose that $\cL_{\b}$ and $\cL_{\b'}$ are hypercubes of beta attaching curves on $\Sigma$ and $\Sigma'$, respectively, and $x\in \Sigma$ and $x'\in \Sigma'$ are choices of connected sum points. Suppose the following are satisfied:
\begin{enumerate}
\item $\cL_{\b}$ and $\cL_{\b'}$ are algebraically rigid.
\item $x$ is basepoint-esque for $\cL_{\b}$.
\end{enumerate}The following are the generic codimension 1 degenerations in the parametrized moduli spaces of $(\Phi,\bullet<\circ',t_0)$-matched $(\ell+1)$-gon curve pairs on $(\Sigma\wedge \Sigma', \as\cup \as',\cL_\b\otimes \cL_{\b'})$ representing classes with matching data satisfying $\ind(\psi,\psi',M)=2-\ell$ (i.e. of expected dimension $1$ after quotienting by the free $\R$-action), where the special beta inputs are from the hypercube $\cL_{\b}\otimes \cL_{\b'}$:
\begin{enumerate}[label=($\Phi^\ell$-\arabic*), ref=$\Phi^\ell$-\arabic*]
\item\label{Phi-ell-1} Two paired marked points in $\ve{S}$ and $\ve{S}'$ may collide with the special line (away from the marked points $\ve{C}$ and $\ve{C}'$). None of the punctures in $\ve{P}\cup \ve{P}'$ have the same height as the special line. 
\item\label{Phi-ell-2} A pair of punctures of $\ve{C}$ and $\ve{C}'$ may collide along $\lambda_{t_0}$ (and the $\ve{P}$ and $\ve{P}'$ punctures have different heights than $\lambda_{t_0}$).
\item\label{Phi-ell-3} A puncture of $\ve{N}$ or $\ve{N}'$ may collide with $\lambda_{t_0}$ (in the complement of $\ve{C}$ and $\ve{C}'$, and with different height than $\ve{P}$ and $\ve{P}'$).
\item\label{Phi-ell-4} A puncture of $\ve{C}$ may degenerate into an index 2 beta boundary degeneration or alpha boundary degeneration. The height $\lambda_{t_0}$ is different than any puncture in $\ve{P}$ or $\ve{P}'$.
\item\label{Phi-ell-5} The holomorphic pair $(u,u')$ may break into two $\R$-levels, exactly one of which contains the special line.
\item\label{Phi-ell-6} Degenerations may occur which involve the punctures $\ve{P}$ and $\ve{P}'$. These are constrained to the following:
\begin{enumerate}
\item \label{Phi-ell-6a} An index 1 disk pair may bubble at two punctures of $\ve{P}\cup \ve{P}'$. If it degenerates above the special line, then the matching is trivial. If it degenerates below the special line, the matching is perfect.
\item \label{Phi-ell-6b} Four punctures of $\ve{P}\cup \ve{P}'$ collide and bubble off a pair of index 0 holomorphic triangles. If they form below the special line, the matching is perfect. If they occur above the special line, the matching is trivial. 
\end{enumerate}
\end{enumerate}
See Figure~\ref{fig:7} for a schematic of the above degenerations.
\end{lem}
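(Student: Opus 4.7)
The plan is to apply a standard Gromov/SFT-type compactness argument to the parametrized moduli space $\cM\cM^{\Phi,\bullet<\circ'}_J(\psi,\psi',M)/\R$ and then eliminate, via transversality and the specific algebraic assumptions on $\cL_\b \otimes \cL_{\b'}$, those codimension-1 strata which are not on the list. For a parametrized $(\ell+1)$-gon pair of expected dimension one, the total budget of Maslov index available to boundary strata is fixed at $2-\ell$, and this budget plus the extra parameter directions (the $K_{\ell}\times \R$ factor for the conformal structure and the $\R$ factor for the height $t_0$) must be absorbed by the degeneration. This is the standard framework used in the proof of Lemma \ref{lem:PhiPsi-chain-maps}, lifted from disks ($\ell=1$) to $(\ell+1)$-gons.

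First I would list the standard degenerations that occur for ordinary polygon-counting maps, independent of the special line: strip/polygon breaking in the $\R$-direction (this gives \eqref{Phi-ell-5}), index 2 boundary degenerations of the beta or alpha side carrying punctures of $\ve{C}\cup \ve{C}'$ (this gives \eqref{Phi-ell-4}), and the usual collisions of the $(\ell+1)$ polygon inputs along $\partial K_{\ell}$. The latter are what produces \eqref{Phi-ell-6a} and \eqref{Phi-ell-6b}: when two input corners collide in the Deligne--Mumford compactification, an index $0$ or $1$ polygon bubbles off at $\infty$ in the relevant strip direction. The matching behaviour on the bubble is then dictated by whether the bubble forms above $\lambda_{t_0}$ (trivial matching, since the resulting piece is detached from the special line) or below (perfect matching, since the perfect matching constraint extends to the bubble). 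Algebraic rigidity of $\cL_\b\otimes \cL_{\b'}$ and Lemma \ref{lem:technical-Lb-Lb'-hypercube-relations} ensure that in the bubble we only see the outputs permitted as morphisms in the tensor hypercube, forbidding mixed arrows.

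Next I would handle the degenerations that involve the special line $\lambda_{t_0}$ itself. The matching data $(\ve{S},\ve{C},\ve{N})$ and $(\ve{S}',\ve{C}',\ve{N}')$ may interact with $\lambda_{t_0}$ in the same qualitative ways as in the disk case in Lemma \ref{lem:PhiPsi-chain-maps}: a paired pair from $\ve{S}\leftrightarrow \ve{S}'$ can cross up to $\lambda_{t_0}$, giving \eqref{Phi-ell-1}; a pair in $\ve{C}\cup \ve{C}'$ can collide along $\lambda_{t_0}$, giving \eqref{Phi-ell-2}; and a $\ve{N}$ or $\ve{N}'$ puncture can drop to $\lambda_{t_0}$, giving \eqref{Phi-ell-3}. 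A careful codimension count (using the $\R$ parameter for $t_0$) shows these are exactly the codimension-one events involving the special line, apart from the boundary degenerations \eqref{Phi-ell-4}. Generic transversality excludes simultaneous occurrences.

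The main obstacle, and the place where the hypothesis that $\cL_\b$ is graded by $p$ is crucial, is ruling out hybrid degenerations in which a polygon input collides with the special line at the same height, or in which a boundary degeneration on the $\Sigma'$-side would carry multiplicity at $p'$ without being compensated by a corresponding Maslov index. Definition \ref{def:graded-by} precisely provides the inequality $\mu(\psi)\ge 2n_p(\psi)$ needed to guarantee that after absorbing the multiplicity at $p$ on one side, the leftover Maslov index on the other side is nonnegative, which forces, as in Proposition \ref{prop:count-boundary-degenerations} and Lemma \ref{lem:nearest-point}, that the degeneration falls into one of the listed strata. Algebraic rigidity then kills any would-be mixed-arrow bubble since such a chain is zero in $\cL_\b\otimes \cL_{\b'}$ by construction in Section \ref{sec:tensor-product-def-hypercube}. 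Collecting these exclusions together with the enumeration above yields exactly the list \eqref{Phi-ell-1}--\eqref{Phi-ell-6b}, which I would then assemble into the final statement.
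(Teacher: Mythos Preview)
Your proposal has the right overall shape (compactness, then elimination of strata via transversality and index counts), but the actual content of the lemma lies in two detailed Maslov-index computations that you do not carry out, and your identification of where each hypothesis enters is partly off.

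The heart of the paper's proof is two explicit exclusions. First, it rules out degenerations in which a boundary puncture of $\ve{P}\cup\ve{P}'$ lies at the same height as the special line. For this one considers a degeneration into a $j$-gon piece $u_l$ (viewed on the half-plane) and a $k$-gon piece $u_r$ with $j+k=\ell+3$, writes down separate expected-dimension inequalities for each piece, and then uses the graded-by-$p$ hypothesis in the precise form $\mu(\psi_l)\ge 2n_p(\psi_l)=2|\ve{C}_l|+2|\ve{S}_l|+2|\ve{N}_l|$ together with the $\bullet<\circ'$ alternation condition $|\ve{C}_l|\ge |\ve{C}_l'|\ge |\ve{C}_l|-1$ to force the two inequalities to sum to something strictly positive while the total index budget is zero. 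Your sentence ``after absorbing the multiplicity at $p$ on one side, the leftover Maslov index on the other side is nonnegative'' gestures at this, but it is not the argument: both sides contribute nontrivially, and the alternation condition on $\ve{C},\ve{C}'$ is essential and never mentioned in your sketch.

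Second, the paper restricts the bubbles in \eqref{Phi-ell-6} to disks and triangles by a separate index argument: one views $\psi_l\#\psi_l'$ as a class on $\Sigma\#\Sigma'$, uses index excision and the grading formula (with algebraic rigidity ensuring the inputs are top-degree) to get $\mu(\psi_l\#\psi_l')\ge 0$, and concludes $j\le 3$. You assert this conclusion but give no mechanism.

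Finally, your appeals to Lemma~\ref{lem:technical-Lb-Lb'-hypercube-relations} and to ``mixed arrows being zero'' are misplaced: that lemma concerns disjoint unions, not the matched moduli spaces here, and the vanishing of mixed chains in $\cL_\b\otimes\cL_{\b'}$ plays no role in the index argument. Algebraic rigidity enters only to guarantee that the special inputs are top-degree generators, which feeds into the grading inequality.
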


\begin{figure}[ht]
\centering
\begingroup%
  \makeatletter%
  \providecommand\color[2][]{%
    \errmessage{(Inkscape) Color is used for the text in Inkscape, but the package 'color.sty' is not loaded}%
    \renewcommand\color[2][]{}%
  }%
  \providecommand\transparent[1]{%
    \errmessage{(Inkscape) Transparency is used (non-zero) for the text in Inkscape, but the package 'transparent.sty' is not loaded}%
    \renewcommand\transparent[1]{}%
  }%
  \providecommand\rotatebox[2]{#2}%
  \newcommand*\fsize{\dimexpr\f@size pt\relax}%
  \newcommand*\lineheight[1]{\fontsize{\fsize}{#1\fsize}\selectfont}%
  \ifx\svgwidth\undefined%
    \setlength{\unitlength}{248.63065796bp}%
    \ifx\svgscale\undefined%
      \relax%
    \else%
      \setlength{\unitlength}{\unitlength * \real{\svgscale}}%
    \fi%
  \else%
    \setlength{\unitlength}{\svgwidth}%
  \fi%
  \global\let\svgwidth\undefined%
  \global\let\svgscale\undefined%
  \makeatother%
  \begin{picture}(1,0.88914271)%
    \lineheight{1}%
    \setlength\tabcolsep{0pt}%
    \put(0,0){\includegraphics[width=\unitlength,page=1]{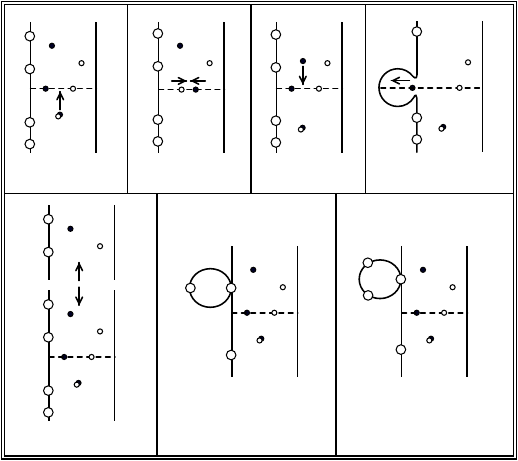}}%
    \put(0.11210108,0.5532816){\makebox(0,0)[t]{\lineheight{1.25}\smash{\begin{tabular}[t]{c}\eqref{Phi-ell-1}\end{tabular}}}}%
    \put(0.1551847,0.03917535){\makebox(0,0)[t]{\lineheight{1.25}\smash{\begin{tabular}[t]{c}\eqref{Phi-ell-5}\end{tabular}}}}%
    \put(0.3578158,0.5532816){\makebox(0,0)[t]{\lineheight{1.25}\smash{\begin{tabular}[t]{c}\eqref{Phi-ell-2}\end{tabular}}}}%
    \put(0.59348934,0.5532816){\makebox(0,0)[t]{\lineheight{1.25}\smash{\begin{tabular}[t]{c}\eqref{Phi-ell-3}\end{tabular}}}}%
    \put(0.85302849,0.5532816){\makebox(0,0)[t]{\lineheight{1.25}\smash{\begin{tabular}[t]{c}\eqref{Phi-ell-4}\end{tabular}}}}%
    \put(0.48410505,0.03917535){\makebox(0,0)[t]{\lineheight{1.25}\smash{\begin{tabular}[t]{c}\eqref{Phi-ell-6a}\end{tabular}}}}%
    \put(0.8383682,0.03917535){\makebox(0,0)[t]{\lineheight{1.25}\smash{\begin{tabular}[t]{c}\eqref{Phi-ell-6b}\end{tabular}}}}%
  \end{picture}%
\endgroup%

\caption{The generic degenerations of $(\Phi,\bullet<\circ',t_0)$-matched holomorphic $\ell$-gon pairs. The large boundary dots denote punctures of $\ve{P}$ and $\ve{P}'$.}
\label{fig:7}
\end{figure}
\begin{proof}The claim that these degenerations have codimension 1 is clear. We claim that there are no further degenerations. There are two points which require explanation:
\begin{enumerate}[label=($P$-\arabic*), ref=$P$-\arabic*]
\item\label{putative-ell-gon-1} There are generically no degenerations involving the punctures $\ve{P}\cup \ve{P}'$ which leave a puncture along $\{0\}\times \R$ of the same height as the special line $\lambda_{t_0}$.
\item \label{putative-ell-gon-2}The generic degenerations involving the punctures $\ve{P}$ or $\ve{P}'$ along $\{0\}\times \R$ (which necessarily occur at heights other than that of the special line by ~\eqref{putative-ell-gon-1}) are constrained to those listed in~\eqref{Phi-ell-6}. In particular, they are constrained only to index 1 disk bubbling and index 0 triangle degenerations. Such curves are perfectly matched if they occur below the special line, and trivially matched if they occur above the special line. Degenerations of pairs of $j$-gons at these punctures are prohibited for $j>3$.
\end{enumerate}
See Figure~\ref{fig:2} for a schematic of a degeneration which we claim is non-generic.

We now argue that degenerations ~\eqref{putative-ell-gon-1} are non-generic. Consider a pair $(u,u')$ of broken curves obtained as the limit of a 1-parameter family of $(\Phi,\bullet<\circ',t_0)$-matched $(\ell+1)$-gons, representing a pair of classes $(\psi,\psi')$, all of whose special inputs are the top degree generators. Assume that each of $u$ and $u'$ consist of a $j$-gon and a $k$-gon for
\begin{equation}
j+k=\ell+3.\label{eq:j-k-gon-breaking}
\end{equation}
 Such a degeneration corresponds to the codimension 1 strata of $K_{\ell}$. Our argument extends easily to further degenerations of into a tree of curves, corresponding to the higher codimension strata of $K_{\ell}$, though we focus on the codimension 1 strata to simplify the notation.

  Suppose that $u$ degenerates into a pair $u_r$ and $u_l$, where $u_r$ is a map into $\Sigma\times [0,1]\times \R$, and we view $u_l$ as a map into $\Sigma \times \bH$ where $\bH=[0,\infty)\times \R$. Here, $u_r$ and $u_l$ have additional boundary punctures, and we think of $u_r$ as a holomorphic $k$-gon, and we think of $u_l$ as a holomorphic $j$-gon.  We assume that $u'$ similarly consists of a pair $u'_l$ and $u_r'$, which are holomorphic $j$-gons and $k$-gons.
  
We assume for the sake of the argument that $j,k \ge 2$. Note that we automatically have $k\ge 2$, since there are the punctures at $\pm \infty$. The case that $j=1$ corresponds to a boundary degeneration. In this case, we may fill in the boundary puncture for the curves $u_r$ and $u_r'$ and obtain an $(\ell+1)$-gon. We leave it to the reader to analyze the case of boundary degenerations by adapting the index arguments we present.

\begin{figure}[ht]
\centering
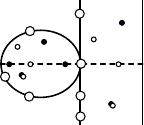
\caption{A putative example of a degeneration of type~\eqref{putative-ell-gon-1}. Big dots are boundary punctures. Small dots are marked points. The left disk is identified with $\bH$.}
\label{fig:2}
\end{figure}

In the limit, write $\ve{N}_l,$ $\ve{N}_r$, (resp. $\ve{C}_l,$ $\ve{C}_r$) (resp. $\ve{S}_l$, $\ve{S}_r$) for the marked points of $u_l$ and $u_r$ that lie above above, (resp. on) (resp. below) the special line. Write $\ve{N}'_l$ (and so forth) for the analogous marked points of $u_l'$ and $u_r'$. Here, we use the same designation of $\ve{N}$, $\ve{C}$ and $\ve{S}$ as from the 1-parameter family (so in principle, it is possible for a $\ve{N}$-marked point to lie on the special line, though it will follow from our argument that these degenerations occur in too high of codimension to be generic).

By construction of the 1-parameter family relevant to the degeneration, we have
\begin{equation}
\mu(\psi)+\mu(\psi')+\ell-2-2(|\ve{C}|+|\ve{S}|)=0.
\label{eq:Maslov-index-l-gon}
\end{equation}
For these families, the parametrized moduli space has expected dimension 2 by Equation~\eqref{eq:e-dim-parametrized-w-t-tau}, and also has a free $\R$-action. We note that $|\ve{N}|=|\ve{N}_l|+|\ve{N}_r|$, and similarly for the other collections of marked points. 

We may view the pair $(u_l,u_l')$ as determining a map to $(\Sigma\sqcup \Sigma')\times[0,\infty)\times \R$.  Excluding the puncture at $\infty$, there are $j-1$ punctures along $\{0\}\times \R$. We normalize the limit so that the special line coincides with $[0,\infty)\times \{0\}$. There is a 1-parameter family of conformal automorphisms of the half plane which preserve this line. They are multiplication by positive real numbers.
 
We claim that
  \begin{equation}
 \mu(\psi_l)+\mu(\psi_l')+j-2-2|\ve{S}_l|-\max(|\ve{C}_l|+|\ve{C}_{l}'|,1)\ge 0.
 \label{eq:expected-dimension-psi-l}
  \end{equation}
  The reasoning for the above equation is as follows.  We may view the limiting curves as being elements of a moduli space of holomorphic curves which is parametrized by the moduli space of $j-1$ marked points along $\{0\}\times \R$. This is a $j-1$ dimensional moduli space, however there is a free action by $\R\times (0,\infty)$. The $\R$-factor translates in the $i$-direction, and the $(0,\infty)$-factor scales by a real positive number. If $|\ve{C}_l|+|\ve{C}_l'|=0$, then both of these actions preserve the moduli space of curves which are parametrized over the moduli space of $j-1$ points along $\{0\}\times \R$, and normalized so that the special line is $\{0\}\times [0,\infty)$. This parametrized moduli space has expected dimension $\mu(\psi_l)+\mu(\psi_l')+j-1-2|\ve{S}_l|$. Hence $\mu(\psi_l)+\mu(\psi_l')+j-3-2|\ve{S}_l|\ge 0$ if this moduli space is non-empty, as claimed. If $|\ve{C}_l|+|\ve{C}_l'|\ge 1$, then the $\R$-action which translates in the $i$-direction no longer preserves the matched moduli space, but the $(0,\infty)$-action does preserve the matched moduli space, so instead we only obtain 
  \[
  \mu(\psi_l)+\mu(\psi_l')+j-2-2|\ve{S}_l|-|\ve{C}_l|-|\ve{C}_l'|\ge 0,
  \]
  as claimed.

Similarly, $(u_r,u_r')$ can be viewed as living in a moduli space of $k$-gons where the marked points $\ve{C}_r$ and $\ve{C}_r'$ lie on the line $\lambda_{t_0}$, the marked points $\ve{S}_r$ and $\ve{S}_r'$ are perfectly matched, and also that there is a boundary puncture which has the same height as the special line. Since the expected dimension of this moduli space must be nonnegative if it admits a representative, we conclude that
\begin{equation}
\mu(\psi_r)+\mu(\psi_r')+k-3-2|\ve{S}_r|-|\ve{C}_r|-|\ve{C}_r'|\ge 0.
\label{eq:expected-dimension-psi-2}
\end{equation}

 We may rearrange Equations~\eqref{eq:Maslov-index-l-gon} and~\eqref{eq:j-k-gon-breaking} to obtain
\begin{equation}
\begin{split}
0=&\mu(\psi)+\mu(\psi')+\ell-2-2(|\ve{C}|+|\ve{S}|)\\
=&\mu(\psi_l)+\mu(\psi_l')+j-2-|\ve{C}_l|-|\ve{C}_l'|-2|\ve{S}_l|\\
&+\mu(\psi_r)+\mu(\psi_r')+k-3-|\ve{C}_r|-|\ve{C}_r'|-2|\ve{S}_r|.
\end{split}
\label{eq:inequality-l-curve-marked-points}
\end{equation}

Next, since $\cL_{\b}$ is algebraically rigid and $x$ is basepoint-esque for $\cL_{\b}$, we obtain
\begin{equation}
\mu(\psi_l)\ge 2n_p(\psi_l)= 2|\ve{C}_l|+2|\ve{S}_l|+2|\ve{N}_l|.\label{eq:graded-p-result1}
\end{equation}
See Definition~\ref{def:graded-by}. In particular, we obtain that
\begin{equation}
\mu(\psi_l)+\mu(\psi_l')+j-2-|\ve{C}_l|-|\ve{C}_l'|-2|\ve{S}_l|\ge \mu(\psi_l')+j-2+|\ve{C}_l|-|\ve{C}_l'|+2|\ve{N}_l|.
\label{eq:refine-using-rigidity}
\end{equation}
Since we are performing the $\bullet<\circ'$ degeneration, we know also that
\begin{equation}
|\ve{C}_l|\ge |\ve{C}'_l|\ge |\ve{C}_l|-1,\label{eq:graded-p-result2}
\end{equation}
since the left-most puncture along the special line will be in $\ve{C}_l$. 
By transversality for ordinary $j$-gons, we know that 
\begin{equation}
\mu(\psi_l')+j-2\ge 1.
\label{eq:maslov-index-basic-j-gons}
\end{equation}
 In particular, from Equations~\eqref{eq:refine-using-rigidity} and~\eqref{eq:maslov-index-basic-j-gons} we obtain that
 \begin{equation}
 \mu(\psi_l)+\mu(\psi_l')+j-2-|\ve{C}_l|-|\ve{C}_l'|-2|\ve{S}_l|\ge 1.\label{eq:expected-dimension-psi-3}
 \end{equation}

Equations~\eqref{eq:expected-dimension-psi-l} and~\eqref{eq:expected-dimension-psi-3} imply that Equation~\eqref{eq:inequality-l-curve-marked-points} is the sum of two nonnegative integers, one of which is at least 1. Hence the equation is never satisfiable. This implies claim~\eqref{putative-ell-gon-1}.

It remains to consider degenerations into polygons where the special line does not occur at the same height as one of the punctures in $\ve{P}\cup \ve{P}'$. In such cases,  $|\ve{C}_l|=|\ve{C}_l'|=0$. Furthermore, Equations~\eqref{eq:expected-dimension-psi-l} and~\eqref{eq:expected-dimension-psi-2} adapt to show that
\begin{equation}
\begin{split}
&\mu(\psi_l)+\mu(\psi_l')+j-3-2|\ve{S}_l|\ge 0\qquad \text{and}\\ &\mu(\psi_r)+\mu(\psi_r')+k-2-2|\ve{S}_r|-|\ve{C}_r|-|\ve{C}_r'|\ge 0.
\end{split}
\label{eq:expected-dimension-psi-l-v2}
\end{equation}
Arguing similarly to Equation~\eqref{eq:inequality-l-curve-marked-points}, we see that both of the two inequalities above are equalities.

We consider first the case that $(u_l,u_l')$ form below the special line.
In this case, we observe that $\psi_l\# \psi_{l}'$ may be viewed as a class on $\Sigma\# \Sigma'$ since they have the same multiplicity at the connected sum point. The excision principle for the index (i.e. deleting disks at the connected sum points) implies that
\[
\mu(\psi_l\# \psi_l')=\mu(\psi_l)+\mu(\psi_l')-2|\ve{S}_l|.
\]
By the absolute grading formula, if $\zs$ is the outgoing intersection point of $\psi_l\# \psi_l'$, we have
\[
\mu(\psi_l\# \psi_l')=n_{\ws''}(\psi_l\# \psi_l')+\gr(\Theta^+,\zs),
\]
since the inputs of $\psi_l\# \psi_l'$ consist of only the top degree intersection points. We conclude that $\mu(\psi_l\# \psi_l')\ge 0$. The first line of Equation~\eqref{eq:expected-dimension-psi-l-v2} (now known to be an equality) now implies that $j-3\le 0$. The only possibilities are $j=2,3$, as claimed. Degenerations of $j$-gons above the special line are handled via an essentially identical argument.
\end{proof}

\begin{lem}\label{lem:chain-maps-Phi-Psi} Under the assumptions in Lemma~\ref{lem:codim-1-Phi-ell}, the maps $\Phi^{\bullet<\circ'}$ and $\Psi^{\bullet<\circ'}$ are chain maps of hypercubes.
\end{lem}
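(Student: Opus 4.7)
The argument is essentially parallel to that of Lemma~\ref{lem:PhiPsi-chain-maps}, with the list of generic codimension~$1$ boundary degenerations now provided by Lemma~\ref{lem:codim-1-Phi-ell}. The plan is to fix two points $\kappa\le \kappa'$ in $\bE_{n+m}$, fix an increasing sequence $\kappa=\kappa_1<\cdots<\kappa_\ell=\kappa'$, and count the ends of the parametrized moduli spaces of $(\Phi,\bullet<\circ',t_0)$-matched $(\ell+1)$-gons representing class/source data $(\psi,\psi',M)$ with $\ind(\psi,\psi',M)=2-\ell$. By Equation~\eqref{eq:e-dim-parametrized-w-t-tau}, these parametrized moduli spaces (including the free $t_0$ parameter and the $K_{\ell}\times \R$ parameter) have expected dimension $2$, and after dividing by the free overall $\R$-action they are $1$-manifolds whose oriented boundary count must vanish.

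The first step is to verify that the degenerations in Lemma~\ref{lem:codim-1-Phi-ell} which are ``internal'' to the special line, namely \eqref{Phi-ell-1}, \eqref{Phi-ell-2}, \eqref{Phi-ell-3}, \eqref{Phi-ell-4}, cancel amongst themselves. The argument is identical to that in the proof of Lemma~\ref{lem:PhiPsi-chain-maps}: degenerations of type \eqref{Phi-ell-1} pair with those of type \eqref{Phi-ell-2}; degenerations of type \eqref{Phi-ell-3} for which the marked points along the special line no longer alternate between $u$ and $u'$ are counted with even multiplicity; and the remaining degenerations of type \eqref{Phi-ell-3} pair with the boundary degenerations of type \eqref{Phi-ell-4}, using the mod~$2$ boundary degeneration count from Proposition~\ref{prop:count-boundary-degenerations} and the usual matching of algebra weights as described in Equation~\eqref{eq:algebra-contribution-North}.

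The second step, which is the one that is genuinely new in the hypercube setting, is to see that the ends of type \eqref{Phi-ell-5} and \eqref{Phi-ell-6} sum to exactly the hypercube chain map relation
\begin{equation*}
\sum_{\kappa=\kappa_1<\cdots<\kappa_\ell=\kappa'}\left( D^{J\sqcup J'}_{\kappa_{\ell-1},\kappa_\ell}\circ \varphi^{\bullet<\circ'}_{\kappa_1<\cdots<\kappa_{\ell-1}} +\varphi^{\bullet<\circ'}_{\kappa_2<\cdots<\kappa_\ell}\circ D^{J\wedge J'}_{\kappa_1,\kappa_2}+\cdots\right)=0,
\end{equation*}
summed over all ways of inserting an internal hypercube structure map $D_{\kappa_i,\kappa_{i+1}}$ into the composition. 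Ends of type \eqref{Phi-ell-5} contribute the outer compositions where one $\R$-level contains the full special line and the other is a standard (non-matched or perfectly-matched) holomorphic polygon counted by the differential on the hypercube at $\kappa$ or $\kappa'$. Ends of type \eqref{Phi-ell-6a} contribute compositions where an index~$1$ holomorphic disk bubbles at a hypercube input puncture; because the matching is perfect below the line and trivial above, these are precisely the compositions with $D^{J\wedge J'}$ or $D^{J\sqcup J'}$ for a length-one sub-arrow. Ends of type \eqref{Phi-ell-6b} contribute compositions with the holomorphic triangle maps appearing in longer broken arrow paths of the hypercube structure, by the same argument.

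The main obstacle, as in Lemma~\ref{lem:PhiPsi-chain-maps}, is ensuring that the algebra weights from the various $\scU$ and $\scV$ variables agree under each of the canceling pairs of degenerations (in particular the pairing of \eqref{Phi-ell-3} with \eqref{Phi-ell-4}), and that every generic codimension~$1$ degeneration is accounted for; this last point is where the assumption that $\cL_{\b}$ is graded by $p$ is essential, since it was used in Lemma~\ref{lem:codim-1-Phi-ell} to prohibit stray degenerations in which a $\ve{P}$ or $\ve{P}'$ puncture drifts to the height of the special line. Once these points are verified, summing all contributions over all $\kappa\le \kappa'$ and all intermediate sequences yields $\d_{\Mor}(\Phi^{\bullet<\circ'})=0$. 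The argument for $\Psi^{\bullet<\circ'}$ is obtained by reflecting the roles of $\ve{S}$ and $\ve{N}$.
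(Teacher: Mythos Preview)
Your accounting of the degenerations \eqref{Phi-ell-5} and \eqref{Phi-ell-6} is reversed in a crucial way. The ends of type \eqref{Phi-ell-6} occur at the \emph{beta input punctures} $\ve{P}\cup\ve{P}'$, and the pieces that bubble off there are pure-beta disks or triangles with inputs among the chains $\Theta_{\kappa_i,\kappa_{i+1}}$. These are exactly the terms $\d\Theta$ and $f_{\b,\b,\b}(\Theta,\Theta')$ appearing in the compatibility relations for the Lagrangian hypercube $\cL_{\b}\otimes\cL_{\b'}$ (on $\Sigma\wedge\Sigma'$ below the special line, on $\Sigma\sqcup\Sigma'$ above), and so they cancel among themselves by those relations. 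They do \emph{not} produce compositions with the paired-hypercube differentials $D^{J\wedge J'}$ or $D^{J\sqcup J'}$: a disk bubbling at a $\Theta$-puncture contributes $\d\Theta$, not a summand of the structure map $D_{\kappa_i,\kappa_{i+1}}(\xs)=f_{\a,\b_i,\b_{i+1}}(\xs,\Theta)$.

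Conversely, the ends of type \eqref{Phi-ell-5} account for the \emph{entire} chain map relation, not merely compositions with the internal differential at the endpoints $\kappa,\kappa'$. When a $(\Phi,\bullet<\circ',t_0)$-matched $(\ell+1)$-gon strip-breaks into two $\R$-levels, the level not containing the special line is an $(s+1)$-gon (for any $1\le s\le \ell$) with inputs $\xs$ and a consecutive subcollection of the $\Theta$'s; perfectly matched if below the line, trivially matched if above. Summing over all increasing sequences $\kappa_1<\cdots<\kappa_\ell$ and all break heights recovers every term $D^{J\sqcup J'}_{\kappa_0\to\kappa'}\circ\Phi^{\bullet<\circ'}_{\kappa\to\kappa_0}$ and $\Phi^{\bullet<\circ'}_{\kappa_0\to\kappa'}\circ D^{J\wedge J'}_{\kappa\to\kappa_0}$. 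This is the decomposition the paper uses; once you correct this, the rest of your argument (the handling of \eqref{Phi-ell-1}--\eqref{Phi-ell-4} and the role of the graded-by-$p$ hypothesis) is fine.
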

\begin{proof}
We focus on the map $\Phi^{\bullet<\circ'}$, since the analysis for $\Psi^{\bullet<\circ'}$ is nearly identical. Suppose $\kappa\le \kappa'\in \bE_{n+m}$, and consider the hypercube relations for $\Cone(\Phi^{\bullet<\circ'})$. If $\kappa=\kappa'$, then the hypercube relations follow from Lemma~\ref{lem:PhiPsi-chain-maps}, so assume $\kappa<\kappa'$. 

Suppose that $\kappa=\kappa_1<\cdots <\kappa_{\ell}=\kappa'$. We count the ends of 1-dimensional families of $(\Phi,\bullet<\circ',t_0)$-matched $\ell+1$-gons, which have inputs from $\cL_{\b}\otimes \cL_{\b'}$.  The codimension 1 degenerations are analyzed in Lemma~\ref{lem:codim-1-Phi-ell}.  Many of these ends have a similar cancellation pattern as to Lemma~\ref{lem:PhiPsi-chain-maps}. For example the ends~\eqref{Phi-ell-1} and~\eqref{Phi-ell-2} cancel modulo 0. Similarly, each curve satisfying ~\eqref{Phi-ell-3} appears twice in the boundary of the modulo space, unless the limiting marked points alternate between $\Sigma$ and $\Sigma'$, in which case the curves cancel the degenerations in~\eqref{Phi-ell-4}. The curves appearing in~\eqref{Phi-ell-6} cancel modulo 2 because of the hypercube relations for $\cL_{\b}\otimes \cL_{\b'}$ (on both $\Sigma\sqcup \Sigma'$ and $\Sigma\wedge \Sigma'$).

The remaining curves are those which appear in the degenerations labeled~\eqref{Phi-ell-5}. These ends correspond exactly to the hypercube relations for the mapping cone of $\Phi^{\bullet<\circ'}$. This verifies that $\Phi^{\bullet<\circ'}$ is a chain map.
\end{proof}

\begin{rem} In the above, we assumed that $\cL_{\a}\otimes \cL_{\a'}$ was 0-dimensional. In the case that both $\cL_{\a}\otimes \cL_{\a'}$ and $\cL_{\b}\otimes \cL_{\b'}$ have positive dimension, the argument is essentially the same. The only additional subtlety concerns for which hypercubes $x$ and $x'$ are basepoint-esque for. In the above, we used that $x$ was basepoint-esque for $\cL_{\b}$ to obtain Equations~\eqref{eq:graded-p-result1} and~\eqref{eq:graded-p-result2} for degenerations along $\{0\}\times \R$. To obtain the analogous bounds for degenerations along $\{1\}\times \R$ we instead would need $p'$ to be basepoint-esque for $\cL_{\a'}$, as in the statement of Proposition~\ref{prop:connected-sums}.
\end{rem}

\begin{lem}
Under the assumptions in Lemma~\ref{lem:codim-1-Phi-ell}, the maps $\Phi^{\bullet<\circ'}$ and $\Psi^{\bullet<\circ'}$ are homotopy inverses of each other.
\end{lem}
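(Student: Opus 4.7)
The strategy is to extend the homotopies $H_\wedge$ and $H_\sqcup$ from Lemma~\ref{lem:Phi-Psi-homotopy-inverses-diagrams} to the hypercube setting, showing
\[
\id+\Psi^{\bullet<\circ'}\circ \Phi^{\bullet<\circ'}=\d_{\Mor}(H_\wedge)\quad \text{and}\quad \id+\Phi^{\bullet<\circ'}\circ \Psi^{\bullet<\circ'}=\d_{\Mor}(H_\sqcup),
\]
as hypercube morphism relations. We focus on $H_\wedge$, since $H_\sqcup$ admits a parallel construction.

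The first step is to define the component of $H_\wedge$ sending $\kappa_1\in\bE_{n+m}$ to $\kappa_\ell\in \bE_{n+m}$: sum over all increasing sequences $\kappa_1<\cdots<\kappa_\ell$ the count of $(H_\wedge,t_0,t_1)$-matched holomorphic $(\ell+1)$-gon pairs, now allowing boundary punctures whose inputs are the chains $\Theta_{\kappa_{i},\kappa_{i+1}}$ of $\cL_{\b}\otimes \cL_{\b'}$. We modify Definition of a $(H_\wedge,t_0,t_1)$-matched pair from Lemma~\ref{lem:Phi-Psi-homotopy-inverses-diagrams} in the obvious way, now placing the marked points $\ve{SC},\ve{SC}'$ on $\lambda_{t_0}$ and $\ve{NC},\ve{NC}'$ on $\lambda_{t_1}$, and leaving $\ve{M},\ve{M}'$ unmatched in the strip region between the two lines. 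With a family of almost complex structures $(J_{x,\tau})_{(x,\tau)\in K_\ell\times \R}$ as in Section~\ref{sec:tensor-product-def-hypercube}, the parametrized moduli space over $(x,\tau,t_0,t_1)$ has expected dimension $\ind(\psi,\psi',M)+\ell+1$, where
\[
\ind(\psi,\psi',M)=\mu(\psi)+\mu(\psi')-2|\ve{SS}|-2|\ve{NN}|-2|\ve{SC}|-2|\ve{NC}|.
\]
The map $H_\wedge$ counts triples $(\psi,\psi',M)$ with $\ind(\psi,\psi',M)=-\ell$ modulo the free $\R$-action.

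Next, I would analyze the codimension 1 degenerations of 1-parameter families with $\ind(\psi,\psi',M)=1-\ell$. The generic boundary behavior has three sources: (i) degenerations at $0<t_1-t_0<\infty$, which are the hypercube analogs of \eqref{dM:Phi-1}--\eqref{dM:Phi-5} applied independently at each of the two special lines, plus strip-breaking, plus the polygonal degenerations~\eqref{Phi-ell-6}; (ii) the limit $t_1-t_0\to\infty$; and (iii) the limit $t_1-t_0\to 0$ in which $\lambda_{t_0}$ and $\lambda_{t_1}$ collide. The ends of type (i) will cancel in pairs by exactly the arguments of Lemmas~\ref{lem:PhiPsi-chain-maps} and~\ref{lem:chain-maps-Phi-Psi}: adjacent matched puncture pairs cancel with puncture pairs along a special line; unpaired punctures coming in to a special line cancel against boundary degenerations via Proposition~\ref{prop:count-boundary-degenerations}; and the polygonal degenerations~\eqref{Phi-ell-6} cancel via the hypercube relations on both sides of Proposition~\ref{prop:connected-sums}. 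The ends of type (ii) split the holomorphic curve pair along a strip which lies between the two special lines, producing exactly the composition $\Psi^{\bullet<\circ'}\circ \Phi^{\bullet<\circ'}$ on the codomain side plus the hypercube differentials giving $\d_{\Mor}(H_\wedge)$ in the hypercube morphism sense.

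The main obstacle, and the step requiring the most care, is analyzing the type (iii) ends as $t_1-t_0\to 0$. An index argument parallel to the proof of Lemma~\ref{lem:codim-1-Phi-ell} is required to rule out pathological configurations in which the collapsing pair of special lines passes through a boundary puncture, or in which $j$-gons with $j>3$ bubble at the collision. Concretely, when the two special lines collide, the limit moduli space has expected dimension $-|\ve{M}|-|\ve{M}'|$, forcing $\ve{M}=\ve{M}'=\emptyset$; then one must split into the case $|\ve{SC}|=|\ve{NC}|=|\ve{SC}'|=|\ve{NC}'|=0$, which contributes $\id$ by transversality (only constant pairs survive), and the case where at least one of these sets is nonempty, in which the limit configuration appears twice because the two special lines may pass through one another (see Figure~\ref{fig:6}), so the contributions cancel in pairs. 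The analogous index inequalities using $\cL_{\b}$ graded by $p$ (and $\cL_{\a'}$ graded by $p'$ for degenerations along $\{1\}\times \R$), together with algebraic rigidity, forbid higher $j$-gon bubbling exactly as in the proof of Lemma~\ref{lem:codim-1-Phi-ell}. Summing all ends yields the desired identity for $H_\wedge$; the construction of $H_\sqcup$ and the second homotopy relation are entirely symmetric.
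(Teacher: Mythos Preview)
Your approach is essentially the same as the paper's: extend the two-special-line homotopy $H_\wedge$ from disks to $(\ell+1)$-gons, analyze the codimension~1 ends, and use the collision $t_1-t_0\to 0$ to extract the identity. The paper's proof is very terse, and your expansion of the index argument at the collision (forcing $|\ve{M}|=|\ve{M}'|=0$ and then invoking the pass-through cancellation of Figure~\ref{fig:6}) is on target.

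There is one bookkeeping slip in your accounting of ends. You attribute the terms $\d_{\Mor}(H_\wedge)$ to the $t_1-t_0\to\infty$ limit, but that limit produces only the composition $\Psi^{\bullet<\circ'}\circ\Phi^{\bullet<\circ'}$. The $\d_{\Mor}(H_\wedge)$ terms instead arise from strip breaking at $t=\pm\infty$ with $t_1-t_0$ finite (contributing the internal differentials pre- and post-composed with $H_\wedge$), together with those polygonal degenerations of type~\eqref{Phi-ell-6} which are \emph{not} absorbed by the hypercube relations of $\cL_\b\otimes\cL_{\b'}$ (these contribute the hypercube structure maps composed with $H_\wedge$). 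In particular, not all of the~\eqref{Phi-ell-6} ends cancel in pairs: only the ones at the beta inputs do, while those at the $\xs$-input and $\ys$-output survive and feed into $\d_{\Mor}(H_\wedge)$. With this correction, summing all ends gives the hypercube relation $\id+\Psi^{\bullet<\circ'}\circ\Phi^{\bullet<\circ'}=\d_{\Mor}(H_\wedge)$ exactly as you claim.
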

\begin{proof} The proof is similar to the proof of Lemma~\ref{lem:Phi-Psi-homotopy-inverses-diagrams}. Indeed, one can define $(H_\wedge,t_0)$-matched holomorphic $(\ell+1)$-gons by analogy to the case of disks. Counting such curves gives a morphism of hypercubes
\[
H_{\wedge}\colon \ve{\CF}^-(\Sigma\wedge \Sigma',\as\cup \as', \cL_{\b}\otimes \cL_{\b'})\to \ve{\CF}^-(\Sigma\wedge \Sigma',\as\cup \as', \cL_{\b}\otimes \cL_{\b'}).
\]
By a straightforward adaptation of Lemma~\ref{lem:codim-1-Phi-ell} to the case where there are two special lines, and by adapting the argument from Lemma~\ref{lem:Phi-Psi-homotopy-inverses-diagrams} to handle the limiting curves appearing as the special lines collide, we obtain exactly that the following diagram is an $(n+m+2)$-dimensional hypercube of chain complexes:
\[
\begin{tikzcd}
\ve{\CF}^-(\Sigma\wedge \Sigma',\as\cup \as', \cL_{\b}\otimes \cL_{\b'})
	\ar[d, "\id"]
	\ar[r, "\Phi^{\bullet<\circ'}"]
	\ar[dr,dashed, "H_\wedge"]
&
\ve{\CF}^-(\Sigma\sqcup \Sigma',\as\cup \as', \cL_{\b}\otimes \cL_{\b'})
	\ar[d, "\Psi^{\bullet<\circ'}"]
\\
\ve{\CF}^-(\Sigma\wedge \Sigma',\as\cup \as', \cL_{\b}\otimes \cL_{\b'})
	\ar[r, "\id"]
&
\ve{\CF}^-(\Sigma\wedge \Sigma',\as\cup \as', \cL_{\b}\otimes \cL_{\b'})
\end{tikzcd}
\]
The above diagram realizes the relation
\[
\Psi^{\bullet<\circ'}\circ \Phi^{\bullet<\circ'}+\id=[\d, H_{\wedge}],
\]
completing the proof. A homotopy equivalence in the other direction is defined by the obvious adaptation.
\end{proof}

\section{The pairing theorem}

In this section, we prove the pairing theorem. We prove Theorems~\ref{thm:intro-pairing-theorem} and~\ref{thm:pairing} of the introduction, as well as some generalizations and refinements.

\subsection{The statement}

We now state the pairing theorem (i.e. connected sum formula). In this section, we state the pairing theorem for links in terms of a box-tensor product. In Section~\ref{sec:unpacking}, we unpack the statement in terms of the link surgery complexes. In Section~\ref{sec:proof-pairing-thm}, we prove the statement.

\begin{thm}\label{thm:pairing-links}
 Suppose that $L_1$ and $L_2$ are two links in $S^3$, which are equipped with systems of arcs $\scA_1$ and $\scA_2$, respectively. Form their connected sum $L_1\#L_2$ along two distinguished components $K_1\subset L_1$ and $K_2\subset L_2$. Assume the arc for $K_1$ is alpha-parallel and that the arc for $K_2$ is beta-parallel. Let $\scA_1\# \scA_2$ denote the connected sum, in the sense of Section~\ref{sec:basic-systems-connected-sums} (see in particular ~\eqref{alpha-beta-connected-sums}). Then
\[
\cX_{\Lambda_1+\Lambda_2}(L_1\# L_2, \scA_1\# \scA_2)^{\cL_{\ell_1+\ell_2-1}}\simeq \left(\cX_{\Lambda_1}(L_1,\scA_1)^{\cL_{\ell_1}}\otimes_{\bF} \cX_{\Lambda_2}(L_2, \scA_2)^{\cL_{\ell_2}} \right)\hatbox {}_{\cK| \cK} M^{\cK}.
\]
\end{thm}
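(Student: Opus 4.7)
The plan is to combine the connected-sum formula for hypercubes of attaching curves (Proposition~\ref{prop:connected-sums}) with a careful bookkeeping of the link surgery hypercube data, recognizing the resulting bookkeeping as the $A_\infty$-action of ${}_{\cK\otimes\cK}M_2^{\cK}$. First, choose algebraically rigid basic systems of Heegaard diagrams $\scH$ and $\scH'$ for $(L_1,\scA_1)$ and $(L_2,\scA_2)$, in which the arc decorating $K_1$ is alpha-parallel in $\scH$ and the arc decorating $K_2$ is beta-parallel in $\scH'$ (such systems exist by Lemma~\ref{lem:algebraic-rigid-general-basic-system}). Form the connected-sum basic system $\scH\#\scH'$ for $(L_1\#L_2,\scA_1\#\scA_2)$ as in Section~\ref{sec:basic-systems-connected-sums}, taken along basepoints $w\in K_1$ and $z'\in K_2$, so that the connected-sum points $p\in\Sigma$ and $p'\in\Sigma'$ are each adjacent to a link basepoint. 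This adjacency ensures that the relevant beta-hypercubes on $\Sigma$ are graded by $p$ and the relevant alpha-hypercubes on $\Sigma'$ are graded by $p'$, so the hypotheses of Proposition~\ref{prop:connected-sums} are satisfied.

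Second, identify underlying generators and local complexes. A sublink $M\subset L_1\#L_2$ either contains $K_1\#K_2$ or not, and in either case corresponds uniquely to a pair of sublinks $M_1\subset L_1$ and $M_2\subset L_2$ which agree on whether they contain $K_1$ and $K_2$. This exactly matches the idempotent decomposition of ${}_{\cK\otimes\cK}M_2^{\cK}$, which is supported on idempotent pairs $(\ve{I}_0\otimes\ve{I}_0,\ve{I}_0)$ and $(\ve{I}_1\otimes\ve{I}_1,\ve{I}_1)$. For each vertex $\kappa\in\bE_{\ell_1+\ell_2-1}$ of the surgery cube, apply Proposition~\ref{prop:connected-sums} to the corresponding sub-Heegaard-diagram of $\scH\#\scH'$; this yields a chain homotopy equivalence between the local Floer complex at $\kappa$ in $\cX_{\Lambda_1+\Lambda_2}(L_1\#L_2,\scA_1\#\scA_2)$ and the tensor product of the local complexes of $\cX_{\Lambda_1}(L_1,\scA_1)$ and $\cX_{\Lambda_2}(L_2,\scA_2)$ at the corresponding pair, the tensor being over $\bF[\scU,\scV]$ or $\bF[\scU,\scV,\scV^{-1}]$ according as $K_1\#K_2\notin M$ or $K_1\#K_2\in M$. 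This is precisely the underlying pairing produced by the $\delta_2^1$ action of $M_2$, using Lemma~\ref{lem:module-structure-surgery-hypercube-groups} to identify the local complexes with localizations of $\cCFL$.

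Third, identify hypercube edge maps and stack everything together. Length-one edges $\Phi^{\vec{N}}$ for $\vec{N}$ a component of $L_1-K_1$ or $L_2-K_2$ act on only one tensor factor and are recorded by $\delta_2^1$ applied to an ordinary monomial. For $\vec{N}=K_1\#K_2$ (oriented coherently with $L$) the inclusion map factors, via Proposition~\ref{prop:connected-sums}, as the tensor product of the inclusions for $K_1$ and $K_2$, which is exactly what the length-three operation $\delta_3^1(i_1|\sigma,\sigma|i_0,i_0)=i_1\otimes\sigma$ of $M_2$ encodes; the opposite orientation $-K_1\#K_2$ is handled symmetrically using $\tau$, with algebraic equivariance supplied by Lemma~\ref{lem:module-structure-surgery-hypercube-morphisms}. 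Higher-length edges $\Phi^{\vec{N}}$ with $|\vec{N}|\ge 2$ are assembled from these length-one pieces by the compression of the relevant sub-hyperboxes of $\scH\#\scH'$ (Equation~\eqref{eq:connected-sum-basic-system}), and Lemma~\ref{lem:stacking-v-compressing} allows us to commute compression with the external tensor product, so that the compressed hypercubes assemble to the box tensor product $(\cX_{\Lambda_1}(L_1,\scA_1)\otimes_{\bF}\cX_{\Lambda_2}(L_2,\scA_2))\hatbox M_2$.

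The principal technical obstacle is controlling the higher-length hypercube maps $\Phi^{\vec{N}}$ with $|\vec{N}|\ge 2$ that simultaneously involve the merged component $K_1\#K_2$ and other components. One must check that the only nontrivial interactions are those encoded by the two structure maps $\delta_2^1$ and $\delta_3^1$ of $M_2$; this is where the alpha-parallel/beta-parallel hypothesis enters decisively, since it forces $K_1\#K_2$ to contribute at most one $\sigma$-type and one $\tau$-type transition, matching $\delta_3^1$ exactly and precluding the length-$(\ge 4)$ operations that $M_2$ does not carry. Once these identifications are made at the level of structure maps and the graded rigidity of the basic systems is invoked to match the holomorphic polygon counts on both sides, the type-$D$ modules agree up to homotopy equivalence, completing the proof.
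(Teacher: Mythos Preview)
Your outline has the right ingredients but two real gaps prevent it from being a proof.

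First, the graded-by-$p$ verification is backwards and the justification is insufficient. Adjacency of $p$ to a basepoint gives the gradedness property for a hypercube of attaching curves \emph{only if that hypercube does not move curves across $p$} (see Remark~\ref{rem:not-graded-by-p}). When $K_1$ is alpha-parallel, the special \emph{beta} curve is the meridian that gets isotoped around in the basic system, so the beta-hypercubes on $\Sigma$ are generically \emph{not} graded by $p_1$; it is the alpha-hypercubes on $\Sigma$ that stay fixed near $p_1$ and are graded by it. Symmetrically, the beta-parallel hypothesis on $K_2$ makes the beta-hypercubes on $\Sigma'$ graded by $p'$. Thus you must invoke the variant of Proposition~\ref{prop:connected-sums} with $\cL_{\a}$ graded by $p$ and $\cL_{\b'}$ graded by $p'$, not the version you stated. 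This is how the $\alpha$-parallel/$\beta$-parallel hypothesis actually enters; your explanation that it ``forces $K_1\#K_2$ to contribute at most one $\sigma$-type and one $\tau$-type transition'' is not the mechanism.

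Second, and more seriously, your third paragraph does not explain why the higher-length maps $\Phi^{\vec{N}}$ factor tensorially, and your final paragraph essentially concedes this. The paper's argument does not apply Proposition~\ref{prop:connected-sums} vertex-by-vertex and then ``assemble''; rather, it exploits the explicit \emph{stacking} structure of the connected-sum basic system, $\scH_{1\#2}=\St(\scH_1\otimes\scH_2^{(0)},\,\scH_1^{(1)}\otimes\scH_2)$ from Equation~\eqref{eq:connected-sum-basic-system}. One applies Propositions~\ref{prop:disjoint-unions-hypercubes-main} and~\ref{prop:connected-sums} to each stacked piece separately (each piece is a genuine tensor product of hypercubes of attaching curves), uses Lemma~\ref{lem:stacking-v-compressing} to commute stacking, pairing, tensoring, and compression, and arrives at the two-step hyperbox displayed in Equation~\eqref{eq:unweighted-hypercube-pairing}. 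Compressing that hyperbox gives exactly the composition $(\id\otimes F_2^-)\circ(F_1^-\otimes\id)$, which is what $\delta_3^1$ of $M_2$ encodes. You never articulate this factorization; without it, the claim that no unexpected length-$\ge 4$ operations appear is an assertion rather than a consequence. Finally, the paper also tracks the framing weights $t_{\vec N}^{\Lambda}$ and the trivialization isomorphisms $\phi_\veps$ to pass from the unweighted hypercube $C(L_1\#L_2)$ back to the surgery complex; this bookkeeping (which uses $t_{\vec N}^{\Lambda_1+\Lambda_2}=t_{\vec N_1}^{\Lambda_1}\otimes t_{\vec N_2}^{\Lambda_2}$ and $\phi_{(\veps,\nu)}=\phi_\veps\otimes\phi_\nu$) is absent from your sketch.
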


\begin{rem} In Theorem~\ref{thm:pairing-links}, the arc of $\scA_1\# \scA_2$ for $K_1\#K_2$ is the co-core of the band used to form the connected sum. See Figure~\ref{fig:25}.
\end{rem}

\subsection{Unpacking the statement} 
\label{sec:unpacking}
We now give a concrete reformulation of the pairing theorem from the previous section, in terms of the knot and link surgery complexes.

We begin by unpacking the claim for the Ozsv\'{a}th--Szab\'{o} mapping cone complex $\bX_{\lambda}(Y_1\# Y_2, K_1\#K_2)$. For $i\in \{1,2\}$, the complex $\bX_{\lambda_i}(Y_i,K_i)$ is the mapping cone complex
\[
\bX_{\lambda_i}(Y_i,K_i)=
\Cone\big(\begin{tikzcd}[column sep=3cm]
\bA(K_i)\ar[r, "v^{(Y_i,K_i)}+h_{\lambda_i}^{(Y_i,K_i)}"]& \bB(K_i)
\end{tikzcd}\big)
\]
We recall from Lemma~\ref{lem:module-structure-surgery-hypercube-groups} that we may identify $\bA(K_i)$ with a completion of $\cCFK(Y_i,K_i)$, and that we may identify $\bB(K_i)$ with a completion of $\scV^{-1} \cCFK(Y_i,K_i)$.
From the connected sum formula for knot Floer homology, we know that
\[
\cCFK(Y_1\# Y_2,K_1\#K_2)\iso \cCFK(Y_1,K_1)\otimes_{\bF[\scU,\scV]} \cCFK(Y_2,K_2)
\]
and similarly the localized module $\scV^{-1} \cCFK(Y_1\#Y_2, K_1\# K_2)$ decomposes as a tensor product over the ring $\bF[\scU,\scV,\scV^{-1}]$.

 These tensor products of knot Floer complexes may themselves be described as box tensor products over the algebras $\bF[\scU,\scV]$ and $\bF[\scU,\scV,\scV^{-1}]$. Indeed, if $\xs_1,\dots, \xs_n$ is a free-basis of $\cCFK(Y_1,K_1)$, then we may form a type-$D$ module $\cCFK(Y_1,K_1)^{\bF[\scU,\scV]}$, spanned over $\bF$ by $\xs_1,\dots, \xs_n$, with structure map $\delta^1$ encoding the differential on $\cCFK(Y_1,K_1)$. Similarly, we may view $\cCFK(Y_2,K_2)$ as a type-$A$ module ${}_{\bF[\scU,\scV]} \cCFK(Y_2,K_2)$, freely generated over $\bF[\scU,\scV]$ by intersection points, and with only $m_1$ and $m_2$ non-vanishing. Clearly,
 \[
 \cCFK(Y_1,K_1)^{\bF[\scU,\scV]}\boxtimes {}_{\bF[\scU,\scV]} \cCFK(Y_2,K_2)\iso \cCFK(Y_1,K_1)\otimes_{\bF[\scU,\scV]} \cCFK(Y_2,K_2).
 \]

 The maps $v^{(Y_i,K_i)}$ from the knot surgery formulas of $K_1$ and $K_2$ are the canonical inclusion maps for localization at $\scV_i$. Hence, the map $v$ on the tensor product is just the tensor product $v^{(Y_1,K_1)}\otimes v^{(Y_2,K_2)}$.

On the other hand, the maps $h_{\lambda_1}^{(Y_1,K_1)}$ and $h_{\lambda_2}^{(Y_2,K_2)}$ both satisfy  $h_{\lambda_i}^{(Y_i,K_i)}(a \ve{x})=\phi^\tau(a) h_{\lambda_i}^{(Y_i,K_i)}$ by  Lemma~\ref{lem:module-structure-surgery-hypercube-morphisms}. It is straightforward to see that this implies that their tensor product $h_{\lambda_1}^{(Y_1,K_1)}\otimes h_{\lambda_2}^{(Y_2,K_2)}$ is well defined. 

In the box tensor product $\cX_{\lambda_1}(K_1)^{\cK} \hatbox {}_{\cK} \cX_{\lambda_2}(K_2)$, there is a summand of the differential which corresponds to $\tau$ being output by $\cX_{\lambda_1}(K_1)^{\cK}$, and then input into ${}_{\cK}\cX_{\lambda_2}(K_2)$. This summand is identified with $h_{\lambda_1}^{(Y_1,K_2)}\otimes h_{\lambda_2}^{(Y_2,K_2)}$. Similarly, there is a summand of the differential that corresponds to a $\sigma$ being output by $\cX_{\lambda_1}(K_1)^{\cK}$, and then input into ${}_{\cK} \cX_{\lambda_2}(K_2)$. This contributes $v_1\otimes v_2$ to the differential.

We summarize the above observations with the following lemma:

\begin{lem}
\label{lem:restatement-knots} If $K_1\subset Y_1$ and $K_2\subset Y_2$ are two knots in integer homology 3-spheres, then Theorem~\ref{thm:pairing-links} is equivalent to the statement that the map $h_{\lambda_1+\lambda_2}^{(Y_1\# Y_2, K_1\#K_2)}$ in  Ozsv\'{a}th and Szab\'{o}'s surgery formula $\bX_{\lambda_1+\lambda_2}(Y_1\# Y_2,K_1\# K_2)$ satisfies
\[
h_{\lambda_1+\lambda_2}^{(Y_1\#Y_2,K_1\#K_2)}\simeq h_{\lambda_1}^{(Y_1,K_1)}\otimes h_{\lambda_2}^{(Y_2,K_2)}.
\]
\end{lem}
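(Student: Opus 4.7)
The plan is to make rigorous the identifications sketched in the discussion preceding the lemma. Almost all of the content is bookkeeping: the nontrivial input is the equivariance of $h_{\lambda}$ from Lemma~\ref{lem:module-structure-surgery-hypercube-morphisms}, which is what permits the tensor product $h_{\lambda_1}^{(Y_1,K_1)}\otimes h_{\lambda_2}^{(Y_2,K_2)}$ to be well-defined in the first place.

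First I would identify, for $i \in \{1,2\}$, the groups $\bA(K_i)$ and $\bB(K_i)$ with completions of $\cCFK(Y_i,K_i)$ and $\scV_i^{-1}\cdot \cCFK(Y_i,K_i)$ via Lemma~\ref{lem:module-structure-surgery-hypercube-groups}. Under these identifications, $v^{(Y_i,K_i)}$ is the canonical localization map and $h^{(Y_i,K_i)}_{\lambda_i}$ is the twisted chain map of Lemma~\ref{lem:module-structure-surgery-hypercube-morphisms}. Next I would invoke the K\"unneth formula for knot Floer homology together with the fact that localization is symmetric monoidal to obtain chain isomorphisms
\[
\bA(K_1\# K_2) \simeq \bA(K_1)\hatotimes_{\bF[\scU,\scV]} \bA(K_2),\qquad \bB(K_1\# K_2) \simeq \bB(K_1)\hatotimes_{\bF[\scU,\scV,\scV^{-1}]} \bB(K_2),
\]
and verify that the inclusion $v^{(Y_1\#Y_2,K_1\#K_2)}$ corresponds under these isomorphisms to $v^{(Y_1,K_1)}\otimes v^{(Y_2,K_2)}$.

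The second step is to unpack the box tensor product $\cX_{\lambda_1}(K_1)^{\cK}\hatbox {}_{\cK}\cX_{\lambda_2}(K_2)$. By the construction of $\cX_{\lambda}(K)^{\cK}$ in Section~\ref{sec:mapping-cone-formula} and of ${}_{\cK}\cX_{\lambda}(K) = \cX_{\lambda}(K)^{\cK}\hatbox {}_{\cK\otimes\cK}[\bI^{\Supset}]$, the box tensor product is a mapping cone from idempotent $0\otimes 0$ to idempotent $1\otimes 1$, where the differential decomposes according to whether algebra outputs in $\{\sigma,\tau\}$ are fed into the type-$A$ factor. The $\sigma$-output on the left paired with the $\sigma$-action on the right yields, using the defining relations of the $AA$-identity bimodule of Section~\ref{sec:AAidentity}, precisely the tensor product $v^{(Y_1,K_1)}\otimes v^{(Y_2,K_2)}$. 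Similarly, the $\tau$-output on the left paired with the $\tau$-action on the right yields $h^{(Y_1,K_1)}_{\lambda_1}\otimes h^{(Y_2,K_2)}_{\lambda_2}$; the key point here is that the $T$-equivariance established in Lemma~\ref{lem:module-structure-surgery-hypercube-morphisms} is exactly the commutation relation $\tau\scU=\scV^{-1}\tau$ and $\tau\scV=\scU\scV^2\tau$ defining the algebra $\cK$, so the tensor product is automatically a chain map.

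The third step is to compare. On the one hand, Theorem~\ref{thm:pairing-links} asserts a homotopy equivalence between $\bX_{\lambda_1+\lambda_2}(Y_1\#Y_2,K_1\#K_2)$ and $\cX_{\lambda_1}(K_1)^{\cK}\hatbox {}_{\cK}\cX_{\lambda_2}(K_2)$ after boxing with ${}_{\cK}[\cD_0]_{\bF[U]}$. On the other hand, the complex $\bX_{\lambda_1+\lambda_2}(Y_1\#Y_2,K_1\#K_2)$ is by definition a mapping cone with connecting map $v^{(Y_1\#Y_2,K_1\#K_2)}+h^{(Y_1\#Y_2,K_1\#K_2)}_{\lambda_1+\lambda_2}$. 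Since the $v$-summands already agree canonically, the mapping cones agree (up to homotopy) precisely when the $h$-summands agree, i.e. when
\[
h^{(Y_1\#Y_2,K_1\#K_2)}_{\lambda_1+\lambda_2} \simeq h^{(Y_1,K_1)}_{\lambda_1}\otimes h^{(Y_2,K_2)}_{\lambda_2}.
\]
This is the desired equivalence.

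The main obstacle is verifying that the pairing of $\tau$ with itself in the box tensor product genuinely produces the tensor product of the $h$-maps, rather than some twisted or homotopic variant. This reduces to a careful audit of sign-free bookkeeping: writing out $\delta_2^1$ and $\delta_3^1$ on ${}_{\cK\otimes\cK}[\bI^{\Supset}]$ in Section~\ref{sec:AAidentity}, multiplying out the $T$-twists from $\cX_{\lambda_1}(K_1)^{\cK}$ against the $T$-twists hidden in ${}_{\cK}\cX_{\lambda_2}(K_2)$, and checking that the residual action on the completed tensor product $\scV^{-1}\cdot\cCFK(Y_1,K_1)\hatotimes_{\bF[\scU,\scV,\scV^{-1}]}\scV^{-1}\cdot\cCFK(Y_2,K_2)$ is precisely multiplication by $\scV_1^{\lambda_1+\lambda_2}$, matching the framing convention in $\bB(K_1\#K_2)$.
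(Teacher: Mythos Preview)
Your proposal is correct and follows essentially the same approach as the paper. In fact the paper has no separate proof of this lemma at all: the lemma is stated as a summary of the discussion immediately preceding it (``We summarize the above observations with the following lemma''), and your three steps faithfully reproduce that discussion---identifying the groups via Lemma~\ref{lem:module-structure-surgery-hypercube-groups}, observing that the $\sigma$-pairing yields $v\otimes v$ and the $\tau$-pairing yields $h\otimes h$ (using the $T$-equivariance of Lemma~\ref{lem:module-structure-surgery-hypercube-morphisms}), and concluding that the mapping cones agree iff the $h$-maps do. Your final paragraph about auditing the $\tau$-pairing is more cautious than the paper, which simply asserts the identification; the phrase ``multiplication by $\scV_1^{\lambda_1+\lambda_2}$'' is slightly garbled notation, but the underlying check is routine.
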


The pairing theorem for the link surgery formula has a similar restatement.
 Let $\cC_{\Lambda_1}(L_1)$ and $\cC_{\Lambda_2}(L_2)$ be the  link surgery hypercubes of Manolescu and Ozsv\'{a}th. Write
\[
\cC_{\Lambda_i}(L_i)\iso \Cone\left(\begin{tikzcd}[column sep=1.3cm] \cC_0(L_i)\ar[r, "F^{-K_i}+F^{K_i}"] &\cC_1(L_i)\end{tikzcd}\right)
\]
Here $\cC_\nu(L_i)$ consists of the complexes of all points of the cube $\bE_{\ell_i}$ such that the coordinate for $K_i$ is $\nu\in \{0,1\}$.  Also, we are writing $F^{ K_i}$ (resp. $F^{-K_i}$) for the sum of the hypercube maps for all sublinks $\vec{N}\subset L_i$ which contain $K_i$ (resp. $-K_i$).

Note that similar to the case of ordinary link Floer complexes, the connected sum formula \cite{OSKnots}*{Section~7}  provides an identification of vector spaces
\[
\cC_0(L_1\#L_2)\iso \cC_0(L_1)\hatotimes_{\bF[\scU,\scV]} \cC_0(L_2)
\]
and
\[
\cC_1(L_1\# L_2)\iso \cC_1(L_1)\hatotimes_{\bF[\scU,\scV,\scV^{-1}]} \cC_1(L_2).
\]
Here, $\hatotimes$ denotes the completed tensor product. 

The maps $F^{K_i}$ are $\bF[\scU,\scV]$-equivariant, while $F^{-K_i}$ are $[T]$-equivariant. In particular, both tensor products $F^{K_1}\otimes F^{K_2}$ and $F^{-K_1}\otimes F^{-K_2}$ are well-defined. The same logic as with the case of connected sums of knots gives the following restatement of the pairing theorem for the surgery hypercubes:
 
\begin{lem}\label{lem:restatement-links}
 Theorem~\ref{thm:pairing-links} is equivalent to the statement that the surgery hypercube $\cC_{\Lambda_1+\Lambda_2}(L_1\# L_2, \scA_1\# \scA_2)$ is homotopy equivalent to the $(\ell_1+\ell_2-1)$-dimensional hypercube
\[
\Cone\left(\begin{tikzcd}[column sep=3.5cm]\cC_0(L_1, \scA_1)\hatotimes \cC_0(L_2,\scA_2)\ar[r,"F^{K_1}\otimes F^{K_2} +F^{-K_1}\otimes F^{-K_2}"] &\cC_1(L_1,\scA_1)\hatotimes \cC_1(L_2,\scA_2)\end{tikzcd}\right)
\]
 For each $\nu\in \{0,1\}$, the above complexes $\cC_\nu(L_1)\hatotimes \cC_\nu(L_2)$ are equipped with the tensor product differential $D_1^\nu\otimes \id+\id\otimes D_2^\nu$, where $D_j^\nu$ is the total differential of the hypercube $\cC_\nu(L_j)$ (i.e. the sum of the internal differentials as well as the hypercube maps).  
\end{lem}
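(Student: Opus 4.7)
The plan is to apply the functor $(-)\hatbox{}_{\cL_{\ell_1+\ell_2-1}}[\cD_{\Lambda_1+\Lambda_2}]$ to both sides of the equivalence in Theorem~\ref{thm:pairing-links} and show that the two resulting chain complexes coincide with the two sides of the mapping cone equivalence asserted in the present lemma. On the left-hand side, this tensoring recovers $\cC_{\Lambda_1+\Lambda_2}(L_1\#L_2,\scA_1\#\scA_2)$ directly from the definition of the link surgery type-$D$ module in Section~\ref{sec:link-surgery-A-infty}, yielding the left-hand side of the mapping cone in the lemma.

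For the right-hand side of Theorem~\ref{thm:pairing-links}, I would first decompose ${}_{\cL_{\ell_1+\ell_2-1}}[\cD_{\Lambda_1+\Lambda_2}]$ as the external tensor product of $\ell_1+\ell_2-1$ individual solid torus modules, one for each component of $L_1\#L_2$, and reorganize the tensor products so that the factor ${}_{\cK}[\cD_{\lambda_1+\lambda_2}]$ corresponding to $K_1\#K_2$ is paired adjacent to $M_2^{\cK}$. The essential local computation is to identify the composite $AA$-bimodule
\[
M_2^{\cK}\hatbox{}_{\cK}[\cD_{\lambda_1+\lambda_2}]
\]
over $(\cK\otimes\cK,\bF[U])$. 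From the definitions in Sections~\ref{sec:framing} and~\ref{sec:merge}, the non-trivial structure maps are as follows: diagonal polynomial actions in matched idempotents multiply out via ordinary polynomial multiplication; pairs $\hat\sigma_2|\hat\sigma_1$ produce a $\sigma$ output of $M_2$, which acts on $\cD_{\lambda_1+\lambda_2}$ as the canonical inclusion $\bF[\scU,\scV]\hookrightarrow\bF[\scU,\scV,\scV^{-1}]$, i.e.\ the Ozsv\'{a}th--Szab\'{o} map $v$; and analogously pairs $\hat\tau_2|\hat\tau_1$ produce a $\tau$ output, which acts as $\scV^{\lambda_1+\lambda_2}\cdot T(-)$, i.e.\ the map $h_{\lambda_1+\lambda_2}$. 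Crucially, $M_2^{\cK}$ has no structure map combining a $\sigma$ from one factor with a $\tau$ from the other, so no mixed $v\otimes h$ or $h\otimes v$ terms appear.

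Combining this local computation with Lemma~\ref{lem:module-structure-surgery-hypercube-groups} (to identify the idempotent-$\nu$ summand of $\cX_{\Lambda_i}(L_i)^{\cL_{\ell_i}}\hatbox[\cD_{\Lambda_i}]$ with $\cC_\nu(L_i)$), and applying Lemma~\ref{lem:module-structure-surgery-hypercube-morphisms} to verify that the tensor product $F^{K_1}\otimes F^{K_2}$ is well-defined over $\bF[\scU,\scV]$ and $F^{-K_1}\otimes F^{-K_2}$ is well-defined via $T$-equivariance, the right-hand side of Theorem~\ref{thm:pairing-links} (after tensoring with the solid torus module) becomes exactly the mapping cone asserted in the present lemma. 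For the converse direction, a homotopy equivalence of the chain complexes together with the underlying $\bF[U_1,\ldots,U_{\ell_1+\ell_2-1}]$-module structure is enough to recover a type-$D$ homotopy equivalence of the finitely generated Alexander cobounded type-$D$ modules on both sides, since the $\sigma$ and $\tau$ actions are determined up to homotopy by the maps $F^{\pm K_i}$ via the same equivariance of Lemma~\ref{lem:module-structure-surgery-hypercube-morphisms}.

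The main obstacle is the algebraic bookkeeping required to commute the various external tensor products, merges, and solid torus pairings in the correct order, while keeping the completions consistent throughout. This requires careful use of Proposition~\ref{prop:commute-extension-scalaras} and the framework of Section~\ref{sec:completions}, particularly because ${}_{\cK\otimes\cK}M_2^{\cK}$ is only split Alexander cobounded in the sense of Section~\ref{sec:split-modules-overview} rather than Alexander cobounded in the non-split sense; consequently the relevant tensor products must be formed and analyzed within the split framework of Proposition~\ref{prop:split-Alexander-cobounded}.
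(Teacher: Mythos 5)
Your overall strategy—apply the functor $(-)\hatbox {}_{\cL_{\ell_1+\ell_2-1}}[\cD_{(0,\dots,0)}]$ to both sides of Theorem~\ref{thm:pairing-links} and unpack the merge module—is the correct way to make precise what the paper treats as an informal unpacking (the paper supplies no formal proof of this lemma, only the discussion preceding it). Your key observation that $M_2^{\cK}$ has no $\delta_3^1$ combining a $\sigma$ from one factor with a $\tau$ from the other, so no mixed $v\otimes h$ or $h\otimes v$ terms appear, is precisely the algebraic fact that the paper relies on when it says $F^{K_1}\otimes F^{K_2}$ and $F^{-K_1}\otimes F^{-K_2}$ are the only summands.

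However, there is a genuine error in which functor you apply. In the paper's conventions (the Proposition at the end of Section~\ref{sec:link-surgery-A-infty}),
\[
\cC_{\Lambda}(L)_{\bF[U_1,\dots,U_\ell]} \iso \cX_{\Lambda}(L)^{\cL_\ell}\hatbox {}_{\cL_\ell}[\cD_{(0,\dots,0)}]_{\bF[U_1,\dots,U_\ell]},
\]
so the framing is already stored in the type-$D$ module $\cX_{\Lambda}(L)^{\cL}$, and the solid torus factors are always the $0$-framed ones. If you instead tensor $\cX_{\Lambda_1+\Lambda_2}(L_1\#L_2)^{\cL}$ with $[\cD_{\Lambda_1+\Lambda_2}]$, you double-count the framing: the $\tau$-coefficients of $\delta^1$ on $\cX_{\Lambda_1+\Lambda_2}$ already carry the $\scV^{\lambda_1+\lambda_2}$ factor (by the definition in Section~\ref{sec:mapping-cone-formula}, $\delta^1(\xs^0)$ has a summand $\ys^1\otimes\scU^i\scV^j\tau$ where $\scU^i\scV^j$ is the $h_{\lambda_1+\lambda_2}$-coefficient), and then $m_2(\tau,-)=\scV^{\lambda_1+\lambda_2}T(-)$ in $\cD_{\lambda_1+\lambda_2}$ would contribute a second $\scV^{\lambda_1+\lambda_2}$. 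The correct local computation is that in $M_2^{\cK}\hatbox {}_{\cK}\cD_0$, the pair $\hat\tau_2|\hat\tau_1$ produces a $\tau$ that acts as $T(-)$ with no extra $\scV$-power, and the $\scV^{\lambda_i}$-factors enter through the $\tau$-coefficients of $\cX_{\Lambda_i}(L_i)^{\cL_{\ell_i}}$. Once you replace $\cD_{\Lambda_1+\Lambda_2}$ with $\cD_{(0,\dots,0)}$, the rest of your argument goes through.

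Your discussion of the converse direction is somewhat informal, but this matches the level of precision in the original paper's treatment of the ``equivalence,'' and the appeal to Lemma~\ref{lem:module-structure-surgery-hypercube-morphisms} for the equivariance is the right ingredient. The concern about needing the split Alexander cobounded framework of Section~\ref{sec:split-modules-overview} is also well-placed.
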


\subsection{Proof of the pairing theorem}
\label{sec:proof-pairing-thm}
We now prove the pairing theorem, Theorem~\ref{thm:pairing-links}:
\begin{proof}[Proof of Theorem~\ref{thm:pairing-links}]
 We will use the restatement  in terms of mapping cones  from Lemma~\ref{lem:restatement-links}. Our strategy is to pick a $\sigma$-basic system  for which we can apply the hypercube tensor product formulas from Section~\ref{sec:hypercubes-connected-sums}.

We consider first the map $F^{-(K_1\# K_2)}$ and the corresponding Heegaard diagrams. These maps will in general have summands of length greater than 1, but since we are using a $\sigma$-basic system, the only non-trivial summands correspond to sublinks $\vec{M}\subset -(L_1\#L_2)$, all of whose components are oriented oppositely to $L_1\# L_2$. (See the second paragraph of \cite{MOIntegerSurgery}*{Section~8.7}).

 We focus on the largest hyperbox in a $\sigma$-basic system, i.e. the one for $\vec{M}=-(L_1 \# L_2)$, since all of the smaller hypercubes are determined by this one.  We suppose that $\scH_1$ and $\scH_2$ are $\sigma$-basic systems of Heegaard diagrams for $L_1$ and $L_2$, which use the systems of arcs $\scA_1$ and $\scA_2$, respectively. We consider the $\sigma$-basic system $\scH_{\#}$ constructed in  Section~\ref{sec:basic-systems-connected-sums}.
 
We obtain two important hyperboxes and hypercubes:
 \begin{enumerate}
 \item A hyperbox  $\ve{\CF}^-(\scH_\#)$ over \[R=\bF[U_1,\dots, U_{\ell_1-1},U_{\ell_1},U_{\ell_1+1},\dots, U_{\ell_1+\ell_2-1}]
   \]
   where $U_1,\dots, U_{\ell_1}$ are the variables for $L_1$, and $U_{\ell_1},\dots, U_{\ell_1+\ell_2-1}$ are the variables for $L_2$. Here, $U_{\ell_1}$  is viewed as the variable for the special component $K_1\# K_2$.  This hypercube is generated over $R$ by intersection points on each constituent Heegaard diagram. If $\ve{d}$ is the size of the hypercube, then at each point $\veps\in \bE(\ve{d})$, the diagrams of $\scH_{\#}$ are equipped with a complete collection of basepoints $\cW_{\veps}$, consisting of exactly one basepoint for each link component. When $\veps=0$, these coincide with the $\zs$-basepoints. As one increases the coordinates of the cube, the $\zs$-basepoints are moved into the position of the $\ws$-basepoints.   A holomorphic curve representing a class $\phi$ is weighted by the product of $U_i^{n_{p_{i,\veps}}(\phi)}$, where $p_{i,\veps}\in \cW_{\veps}$ is the basepoint corresponding to the link component in index $i$.
   \item A hypercube $\cC_{\Lambda_1+\Lambda_2}^{-(L_1\# L_2)}$, which has the same underlying groups and internal differential as the link surgery hypercube $\cC_{\Lambda_1+\Lambda_2}(L_1\#L_2)$ (computed using $\scH_{\#}$), but has only the differentials for negatively oriented sublinks of $L_1\#L_2$.
 \end{enumerate}
 
 Write $\frc \ve{\CF}^-(\scH_{\#})$ for the compression of $\ve{\CF}^-(\scH_{\#})$ If $\veps\in \bE_{\ell_1+\ell_2-1}$, write $\cC_{\veps}\subset \cC_{\Lambda_1+\Lambda_2}(L)$ and $\bm{C}_{\veps}\subset \frc \ve{\CF}^-(\scH_\#)$ for the underlying chain complexes at the point $\veps\in \bE_{\ell_1+\ell_2-1}$.

  The hypercube structure maps of $\cC_{\Lambda_1+\Lambda_2}^{-(L_1\#L_2)}$ are determined by the hypercube structure maps for $\ve{\CF}^-(\scH_{\#})$, as we now describe. If $\veps\in \bE_{\ell_1+\ell_2-1}$, write 
  \[
  \cS_{\veps}\subset \bF[\scU_1,\scV_1,\dots, \scU_{\ell_1+\ell_2-1}, \scV_{\ell_1+\ell_2-1}]
  \]
  for the multiplicatively closed subset generated by $\scU_i$ for each $i$ such that $\veps_i=0$ and $\scV_i$ for each $i$ such that $\veps_i=1$.

   At each point $\veps\in \bE_{\ell_1+\ell_2-1}$, we write $I_{\veps}$ for the inclusion map
 \[
I_{\veps} \colon  \cC_{\veps}\to \cS_{\veps}^{-1} \cdot \cC_{\veps}.
 \]                                         
 
 For each $\veps$, there is also an isomorphism
 \[
 \theta_\veps\colon \cS_{\veps}^{-1} \cC_{\veps}\to \bm{C}_{\veps}\otimes \bF[\bH(L)].
 \]
 This map is gotten by sending a generator $a\cdot \xs$ (where $\xs$ is an intersection point and $a$ is an algebra element) to $a'\cdot \xs$, where $a'$ is the unique element in  $\bF[U_1,\dots, U_{\ell_1+\ell_2-1}]\otimes \bF[\bH(L)]$ satisfying the following:
 \begin{enumerate}
 \item $A(a')=A(\xs)+A(a)$ (where $A(a')$ is defined by setting $A(U_i)=0$ and $A(T^{\ve{s}})=\ve{s}$).
 \item If $\veps_i=0$, then the power of $U_i$ in $a'$ is equal to the power of $\scV_i$ in $a$.
 \item If $\veps_i=1$, then the power of $U_i$ in $a'$ is equal to the power of $\scU_i$ in $a$. 
 \end{enumerate}
 
 Write $\Phi_{\veps,\veps'}$ for the hypercube structure map for $\cC_{\Lambda_1+\Lambda_2}^{-(L_1 \# L_2)}$ from $\cC_{\veps}$ to $\cC_{\veps'}$, and write $f_{\veps,\veps'}$ for the hypercube structure map for $\frc\ve{\CF}^-(\scH_\#)$ from $\bm{C}_{\veps}$ to $\bm{C}_{\veps'}$. The map $\Phi_{\veps,\veps'}$ is equal to
 \begin{equation}
 \Phi_{\veps,\veps'}= \theta_{\veps'}^{-1} \circ (f_{\veps,\veps'}\otimes T^{\Lambda_{\veps,\veps'}})\circ \theta_{\veps}\circ I_{\veps}.
 \label{eq:Phi-veps-veps'-conn-sum}
 \end{equation}
 In the above, $\Lambda_{\veps,\veps'}\in \Z^{\ell_1+\ell_2-1}$ denotes the sum of the columns of the framing matrix of $L_1 \# L_2$ corresponding to components $K_i$ of $L_1\#L_2$ for which $\veps_i'>\veps_i$.

Hence, to show the main claim it suffices to show that $\frc \ve{\CF}^-(\scH_\#)$ admits a tensor product decomposition analogous to the claimed one for $\cC(\scH_\#)$, and also to show that each summand of $F^{-K_1}\otimes F^{-K_2}$ has Alexander grading consistent with the surgery formula. The claim about Alexander gradings is obvious, so it suffices to address the claim about $\frc\ve{\CF}^-(\scH_\#).$

  We now consider in more detail the $\sigma$-basic system of Heegaard diagrams $\scH_\#$ constructed in Section~\ref{sec:basic-systems-connected-sums}. The hypercubes of attaching curves which appear in this $\sigma$-basic system have a simple description in terms of tensor products of hypercubes of attaching curves. See Equation~\eqref{eq:connected-sum-basic-system}. Recall that this hyperbox was constructed from $\sigma$-basic systems of Heegaard diagrams $\scH_{1}$ and $\scH_{2}$ for $L_1$ and $L_2$, respectively. We may assume that these hypercubes are algebraically rigid by Lemma~\ref{lem:algebraic-rigid-general-basic-system} (see also Lemma~\ref{lem:admissibility-meridional-system}). Recall that in Section~\ref{sec:basic-systems-connected-sums} we defined
  \[
  \scH_{\#}=\St\left(\scH_{1}\# \scH_{2}^{(0)}, \scH_{1}^{(1)}\# \scH_{2}\right).
  \]
  Here, $\scH_{1}^{(0)}$ (resp. $\scH_1^{(1)}$) is the subbox of $\scH_1$ where the coordinate for $K_1$ is 0 (resp. maximal). We define $\scH_2^{(0)}$ and $\scH_2^{(1)}$ similarly.
 Write $x_1\in \Sigma_1$ and $x_2\in \Sigma_2$ for the connected sum points. Since $\scH_1$ is alpha-parallel at $K_1$ (i.e. we use a special beta-curve as a meridian of $K_1$) it is straightforward to see that $x_1$ is basepoint-esque for  all of the constituent alpha-hypercubes of $\scH_1$, in the sense of Definition~\ref{def:graded-by}. (Note that $x_1$ is typically not basepoint-esque for the beta-hyperboxes in the construction, cf. Remark~\ref{rem:not-graded-by-p}).   Similarly since $\scH_2$ is beta-parallel at $K_2$, $x_2$ will be basepoint-esque for all of the constituent beta-hypercubes of $\scH_2$. See Figure~\ref{fig:31}. Note that $x_1$ and $x_2$ will be basepoint-esque for both the alpha and beta hypercubes of $\scH_{i}^{(j)}$.

 By Lemma~\ref{lem:stacking-v-compressing}, the operations of stacking and pairing hyperboxes of attaching curves commute. Using our results about connected sums of hypercubes in Propositions~\ref{prop:disjoint-unions-hypercubes-main} and~\ref{prop:connected-sums}, we obtain a homotopy equivalence between $\ve{\CF}^-(\scH_\#)$ and the hyperbox obtained by stacking
  \[
  \ve{\CF}^-(\scH_1)\otimes_{\bF[U_\ell]} \ve{\CF}^-(\scH^{(0)}_2)\quad \text{and} \quad \ve{\CF}^-(\scH_{1}^{(1)})\otimes_{\bF[U_\ell]} \ve{\CF}^-(\scH_{2}).
  \]
 To compress, we use the inductive construction described in Section~\ref{sec:compression}. In doing so, we may choose to compress the axis corresponding to the special link components $K_1$ and $K_2$ last. We obtain that the compression of $\ve{\CF}^-(\scH_\#)$ is homotopy equivalent to the compression of a hyperbox
  \begin{equation}
  \begin{tikzcd}[column sep=1.2cm]
 \bm{C}_0(L_1)\otimes_{\bF[U_\ell]} \bm{C}_0(L_2)\ar[r,"f_1\otimes \id"] & \bm{C}_1(L_1)\otimes_{\bF[U_\ell]} 
 \bm{C}_0(L_2)\ar[r, "\id\otimes f_2"] & \bm{C}_1(L_1)\otimes_{\bF[U_\ell]} \bm{C}_1(L_2).
  \end{tikzcd}
  \label{eq:unweighted-hypercube-pairing}
  \end{equation}
  In the above, $\bm{C}_{\nu}(L_i)$ denotes the compression of $\ve{\CF}^-(\scH_i^{(\nu)})$, for $\nu\in \{0,1\}$, and $f_i$ is the map so that the compression of $\ve{\CF}^-(\scH_i)$ is isomorphic to $\Cone(f_i)$. This is the analog for $\frc \ve{\CF}^-(\scH_\#)$ of the formula in the main statement for $\cC_{\Lambda_1+\Lambda_2}^{-(L_1\#L_2)}$. Since the other maps in Equation~\eqref{eq:Phi-veps-veps'-conn-sum} are tensorial, we obtain $F^{-(K_1\# K_2)}=F^{-K_1}\otimes F^{-K_2}$, as in the statement.
  
  We now consider the map $F^{K_1\# K_2}$. Since we are using a $\sigma$-basic system, the maps $F^{K_1}$, $F^{K_2}$ and $F^{K_1\# K_2}$ are all the canonical inclusions for localizing at the $\scV$ variable for $K_1$, $K_2$ and $K_1\# K_2$. We claim that the homotopy equivalence described above intertwines $F^{K_1\# K_2}$ with $F^{K_1}\otimes F^{K_2}$. Concretely, this amounts to the claim that the homotopy equivalence, defined above, counts the same curves on $\bm{C}_0(L_1)\otimes_{\bF[U_\ell]} \bm{C}_0(L_2)$ as it does on $\bm{C}_1(L_1)\otimes_{\bF[U_\ell]} \bm{C}_1(L_2)$. This is immediate from the definition. The main claim follows.
  \end{proof}

\section{Arc systems and the link surgery formula}

\label{sec:basepoint-moving-maps}

For any system of arcs $\scA$ for a link $L$, the construction of Manolescu and Ozsv\'{a}th gives a hypercube of chain complex $\cC_{\Lambda}(L,\scA)$, which is a module over $\bF\llsquare U_1,\dots, U_\ell\rrsquare $. When $\scA$ consists only of beta parallel arcs, their link surgery formula states that
\[
H_*(\cC_{\Lambda}(L,\scA))\iso \ve{\HF}^-(Y_{\Lambda}(L)).
\]
When $\scA$ consists only of arcs which are alpha-parallel or beta-parallel, their proof also adapts without substantial change to show the above isomorphism. However, for a general set of arcs, their work does not prove the isomorphism. In this section, we prove the following:

\begin{thm}
\label{thm:basepoint-independence}
Suppose that $\scA$ and $\scA'$ are two systems of arcs for a framed link $L\subset S^3$. Suppose that $\scA$ differs from $\scA'$ by changing only the arc for the component $K_1\subset L$. Then there is a homotopy equivalence of hypercubes
\[
\cC_{\Lambda}(L,\scA)\simeq \cC_{\Lambda}(L,\scA')
\]
which is equivariant over  $\bF\llsquare U_2,\dots, U_\ell\rrsquare $.
\end{thm}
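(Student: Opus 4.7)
The plan is to interpolate between $\lambda_1$ and $\lambda_1'$ by a generic smooth 1-parameter family $\{\lambda_1^t\}_{t\in[0,1]}$ of properly embedded arcs in $S^3$ with fixed endpoints $w_1, z_1$. By standard transversality, I can arrange that each $\lambda_1^t$ is an embedded arc, disjoint from $L\cup\lambda_2\cup\cdots\cup\lambda_\ell$ except at finitely many exceptional times $0<t_1<\cdots<t_n<1$, at each of which there is exactly one transverse crossing, either with some component $K_j$ of $L$ (with $j\neq 1$), or with some arc $\lambda_j$ (with $j\neq 1$). Between exceptional times, $\{\lambda_1^t\}$ is an isotopy in the complement of the rest of the arc system and the link, and so the corresponding systems of arcs yield isomorphic basic systems of Heegaard diagrams up to basepoint slides, attaching-curve isotopies, and handleslides; invariance of $\cC_\Lambda(L,\scA)$ under these moves is already established in Manolescu--Ozsv\'{a}th's work, and the induced homotopy equivalence is $\bF[[U_1,\ldots,U_\ell]]$-equivariant.

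The heart of the argument is therefore at the exceptional times. At each $t_i$, let $\scA^{(i-)}$ and $\scA^{(i+)}$ denote the systems of arcs immediately before and after the crossing. I would produce a $DA$-bimodule ${}_{\cK_1} \cT_i^{\cK_1}$, over the algebra factor $\cK_1\subset \cL_\ell$ corresponding to $K_1$, together with an explicit homotopy equivalence
\[
\cX_\Lambda(L,\scA^{(i+)})^{\cL_\ell}\simeq \cX_\Lambda(L,\scA^{(i-)})^{\cL_\ell}\hatbox {}_{\cK_1}\cT_i^{\cK_1}.
\]
For a crossing with $K_j$, the bimodule $\cT_i$ is constructed from the link surgery hypercube of the Hopf link formed by $K_1$ and a meridian that locally models the crossing; this is analogous to how ${}_{\cK}\cT^{\cK}$ realizes the alpha-to-beta arc change in Section~\ref{sec:alpha-to-beta-transformer}. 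For a crossing with another arc $\lambda_j$, the bimodule ${}_{\cK_1}\cT_i^{\cK_1}$ is built from a hexagonal chain homotopy encoding the commutativity (up to an inner term) of the basepoint-moving maps along $\lambda_1$ and $\lambda_j$. In each case, the bimodule admits an explicit inverse up to homotopy, constructed via Lemma~\ref{lem:homological-perturbation-DA-modules} by passing to a minimal model. Composing the equivalences across all $t_i$ and box-tensoring with the solid torus modules ${}_{\cL_\ell}\cD_\Lambda$ produces the desired homotopy equivalence $\cC_\Lambda(L,\scA)\simeq\cC_\Lambda(L,\scA')$.

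The key reason the equivalence is $\bF[[U_2,\ldots,U_\ell]]$-equivariant, even though it is \emph{not} expected to be $\bF[[U_1]]$-equivariant, is that each bimodule $\cT_i$ acts trivially on the algebra factors of $\cL_\ell$ other than $\cK_1$. Concretely, at a crossing with $K_j$, the local model is supported in a neighborhood disjoint from the other basepoints of the diagram, so the holomorphic polygons counted in the construction of $\cT_i$ have zero multiplicity at the basepoints $w_j,z_j$; this forces the bimodule to be the identity on the $\cK_j$ factor and hence to preserve the $U_j$-action after box-tensoring with the solid torus. The same statement holds for crossings with $\lambda_j$.

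The main obstacle will be the explicit construction and identification of the bimodules $\cT_i$ for the two types of elementary moves, together with a verification that the resulting morphisms of link surgery complexes are independent (up to homotopy) of the choices made in the proof --- in particular, of the order in which the crossings are resolved. I expect to handle this by reducing, via the pairing theorem (Theorem~\ref{thm:pairing-links}), to a computation in the Hopf link surgery complex, where the minimal model constructed in Section~\ref{sec:minimal-model-Hopf} makes the relevant chain homotopies explicit. Once each $\cT_i$ is identified and shown to be an equivalence on the $\cK_1$-factor, the global compatibility follows formally from the properties of Alexander cobounded morphisms proved in Section~\ref{sec:Algebra-K}.
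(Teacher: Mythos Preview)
Your strategy of decomposing the change of arc into elementary crossing moves and constructing a bimodule for each is substantially more complicated than the paper's argument, and the bimodule constructions you sketch have real gaps. The paper proceeds in one step: any two arcs $\lambda_1,\lambda_1'$ with the same endpoints differ by concatenation with a single closed loop $\gamma$ based at $z_1$, and the basepoint-moving formula for hypercubes (Theorem~\ref{thm:basepoint-moving-hypercubes}, Corollary~\ref{cor:different-path-formula}) computes the effect directly. Writing $\cC_\Lambda(L,\scA)\cong\Cone(F^{K_1}+F^{-K_1}\colon \cC_0\to\cC_1)$, one obtains that $\cC_\Lambda(L,\scA')$ is homotopy equivalent to $\Cone\bigl(F^{K_1}+(\id+\scV_1^{-1}\cA_\gamma\circ\Phi_{w_1})F^{-K_1}\bigr)$. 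The $\bF[[U_2,\dots,U_\ell]]$-equivariance then falls out of the explicit null-homotopy $\Phi_{w_1}=[\d,\d_{\scU_1}]$ (Remark~\ref{rem:Phi-nullhomotopic}): formal differentiation $\d_{\scU_1}$ commutes with every $\scU_j,\scV_j$ for $j\neq 1$ but not with $\scU_1$, which is precisely why equivariance holds for $U_j$ with $j\neq 1$ and fails for $U_1$. No decomposition into crossing moves is needed.

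Your proposed bimodules ${}_{\cK_1}\cT_i^{\cK_1}$ would, if they existed, prove something strictly stronger (a relation between the type-$D$ modules over $\cL_\ell$, as in Proposition~\ref{prop:equivalence-type-D-change-basepoint}, not merely between the complexes $\cC_\Lambda$). But the constructions are not complete. The transformer ${}_{\cK}\cT^{\cK}$ realizes the specific change where $\lambda_1$ wraps once around $K_1$; your claim that a crossing of $\lambda_1$ with a \emph{different} component $K_j$ is modeled by ``the Hopf link formed by $K_1$ and a meridian'' does not make sense as stated, and there is no evident analogue of the transformer for this move. The ``hexagonal chain homotopy'' for arc--arc crossings is likewise unspecified. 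Finally, your equivariance justification is confused: if $\cT_i$ is genuinely a bimodule over $\cK_1$ alone, equivariance over the other $U_j$ is automatic from extension of scalars and needs no statement about multiplicities at $w_j,z_j$; the actual difficulty is proving that tensoring with $\cT_i$ realizes the arc change on $\cX_\Lambda(L,\scA)^{\cL_\ell}$, which is a global Floer-theoretic statement you have not addressed.
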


We note that our argument does not imply that the type-$D$ modules $\cX_{\Lambda}(L,\scA)^{\cL}$ are homotopy equivalent for all choices of $\scA$. Instead, our proof gives the following statement about the bordered link modules:

\begin{prop}\label{prop:equivalence-type-D-change-basepoint}
 Let $\scA$ and $\scA'$ be two systems of arcs for $L\subset S^3$, and suppose that $\scA$ differs from $\scA'$ by changing only the arc for $K_1\subset L$. Then there is a homotopy equivalence of type-$D$ modules over $\cL_{\ell-1}$:
\[
\cX_{\Lambda}(L,\scA)^{\cL_\ell}\hatbox {}_{\cK_1}\cD_0\simeq \cX_{\Lambda}(L,\scA')^{\cL_\ell}\hatbox {}_{\cK_1}\cD_0.
\]
In the above,  $\cK_1$ denotes the algebra factor corresponding to the component $K_1\subset L$. 
\end{prop}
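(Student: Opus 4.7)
The plan is to invoke the general change-of-arc formalism developed in Section~\ref{sec:basepoint-moving-maps}, which (for any two systems of arcs $\scA$ and $\scA'$ differing only in the arc for $K_1$) produces a $DA$-bimodule ${}_{\cK_1}\cB(\scA,\scA')^{\cK_1}$ realizing
\[
\cX_{\Lambda}(L,\scA')^{\cL_\ell}\simeq \cX_{\Lambda}(L,\scA)^{\cL_\ell}\hatbox {}_{\cK_1}\cB(\scA,\scA')^{\cK_1},
\]
where the box tensor product is taken along the $\cK_1$ factor. Granted such a bimodule, Proposition~\ref{prop:equivalence-type-D-change-basepoint} reduces to the algebraic statement
\[
{}_{\cK_1}\cB(\scA,\scA')^{\cK_1}\hatbox {}_{\cK_1}\cD_0\simeq {}_{\cK_1}\cD_0,
\]
since the $\cL_{\ell-1}$-action is untouched by the change-of-arc, and box tensor products with $\cD_0$ (which is operationally bounded) are well defined within our Alexander cobounded category.

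First, I would factor the change $\scA\to \scA'$ into a finite sequence of elementary moves. These are (i) isotopies of $\lambda_1$ (and the underlying multi-pointed Heegaard diagram) that preserve the alpha- or beta-parallel type of the arc, and (ii) conversions of $\lambda_1$ between alpha-parallel and beta-parallel presentations. Any two arcs with the same endpoints on $K_1$ that are disjoint from $L\setminus K_1$ can be joined by such a sequence after passing to a common ambient Heegaard surface via stabilization. This reduces the proof to verifying the required equivalence for the two types of elementary moves.

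For isotopy-type moves, the bimodule $\cB(\scA,\scA')$ is constructed from basepoint-moving maps along $\lambda_1$ in the Heegaard diagram; on the underlying chain complexes these are classical quasi-isomorphisms in the sense of Sarkar \cite{SarkarMovingBasepoints} and the author \cite{ZemQuasi}, and a short diagram chase (using that the $\cL_{\ell-1}$-algebra actions on $\cX_{\Lambda}(L,\cdot)^{\cL_\ell}$ are unaffected) promotes this to a homotopy equivalence of type-$D$ modules over $\cL_{\ell-1}$ after tensoring with $\cD_0$. For the alpha/beta conversion move, $\cB(\scA,\scA')$ is identified with the transformer bimodule ${}_{\cK}\cT^{\cK}$ of Section~\ref{sec:alpha-to-beta-transformer}, and the required identity becomes $\cT\hatbox \cD_0\simeq \cD_0$. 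This may be checked directly from the explicit model of $\cT$, which is built from the Hopf link surgery complex: tensoring with ${}_{\cK}\cD_0$ effectively performs $0$-framed Dehn filling on one component of the Hopf link, leaving the complement of a $0$-framed unknot, i.e., $\cD_0$ itself. The explicit identification is obtained by applying the homological perturbation lemma (Lemmas~\ref{lem:homological-perturbation-modules} and~\ref{lem:homological-perturbation-DA-modules}) to shrink $\cT\hatbox \cD_0$ to a minimal model and matching its structure maps with those of $\cD_0$ depicted in Figure~\ref{fig:solid-torus-DA}.

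The main obstacle is the explicit construction of $\cB(\scA,\scA')$ for isotopy moves that drag $\lambda_1$ across large regions of the Heegaard surface, together with the verification that the resulting $DA$-bimodule is indeed quasi-inverse to $\cD_0$ after tensoring. Genericially, such an isotopy can be factored into elementary finger moves across single attaching curves, each of which corresponds to a handleslide-plus-basepoint-move move in the Heegaard multi-diagram. Assembling these elementary bimodules and checking the relation $\cB\hatbox\cD_0\simeq \cD_0$ term-by-term (by tracking holomorphic polygon counts through the isotopy) is the essential technical step, and it is here that the hypercube homological perturbation lemma (Lemma~\ref{lem:homological-perturbation-DA-hypercube}) plays a crucial role in simplifying the transferred structure maps.
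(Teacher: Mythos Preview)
Your approach has a genuine gap: you assume the existence of a universal $DA$-bimodule ${}_{\cK_1}\cB(\scA,\scA')^{\cK_1}$ realizing an arbitrary arc change as a tensor product over $\cK_1$, but the paper does not construct such a bimodule and there is no reason to expect one exists. The change-of-arc formula in Corollary~\ref{cor:different-path-formula} says that composing with a loop $\gamma$ replaces $F^{-K_1}$ by $(\id+\scV_1^{-1}\cA_\gamma\circ\Phi_{w_1})F^{-K_1}$, where $\cA_\gamma$ is a hypercube endomorphism of the full subcube $\cC^1_\Lambda(L,\scA)$. This endomorphism genuinely interacts with the other cube directions (i.e., the $\cL_{\ell-1}$-structure), so the effect cannot in general be packaged as a bimodule over $\cK_1$ alone. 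Your proposed factorization into ``isotopy moves preserving the alpha/beta-parallel type'' and ``alpha/beta conversions'' also fails because general arcs need not be alpha- or beta-parallel at all (for example, the co-core arc $\scA_1\#\scA_2$ arising in the pairing theorem is neither).

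The paper's actual argument is more direct and does not attempt to build such a bimodule. From Corollary~\ref{cor:different-path-formula}, the two hypercubes differ by the factor $\id+\scV_1^{-1}\cA_\gamma\circ\Phi_{w_1}$. The key observation (Remark~\ref{rem:Phi-nullhomotopic}) is that $\Phi_{w_1}=[\d,\d_{\scU_1}]$, where $\d_{\scU_1}$ is formal differentiation with respect to $\scU_1$. This null-homotopy commutes with $\scU_j,\scV_j$ for $j\neq 1$ but \emph{not} with $\scU_1$, so it does not give an equivalence of type-$D$ modules over $\cL_\ell$. However, once you tensor with ${}_{\cK_1}\cD_0$, the $\cK_1$-action is absorbed and the endomorphism $\id\otimes\d_{\scU_1}$ makes sense on $\cX_\Lambda(L,\scA)^{\cL_\ell}\hatbox{}_{\cK_1}\cD_0$; the resulting null-homotopy of $\Phi_{w_1}$ produces the desired $\cL_{\ell-1}$-equivariant homotopy equivalence. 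This is exactly why the statement requires tensoring with $\cD_0$ rather than asserting an equivalence over all of $\cL_\ell$. (As an aside, the transformer $\cT$ is defined algebraically in Section~\ref{sec:transformer-def}, not via the Hopf link complex as you suggest.)
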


\subsection{Basepoint moving maps on hypercubes}
\label{sec:basepoint-moving-maps-subsec}

Suppose that $\cL_{\a}$ and $\cL_{\b}$ are two hypercubes of handleslide equivalent attaching curves on $(\Sigma,\ws,\zs,\ps)$. We suppose that $\g\subset \Sigma$ is an immersed loop, starting and ending at a free basepoint $p$. We now describe an endomorphism $\gamma_*$ of $\ve{\CF}^-(\cL_{\a},\cL_{\b})$.

 We begin by picking a diffeomorphism of $\Sigma$, which corresponds to an isotopy which moves $p$ in a loop along $\g$, and is the identity outside of a small neighborhood of $\g$.
 
There is a canonical map
\[
\phi_*\colon \ve{\CF}^-(\cL_{\a},\cL_{\b})\to \ve{\CF}^-(\phi\cL_{\a},\phi \cL_\b)
\]
obtained by pushing forward an intersection point tautologically under the diffeomorphism $\phi$, and extending equivariantly over $\scU_i$, $\scV_i$ and $U_j$. We now build a hyperbox of handleslide equivalent attaching curves
\begin{equation}
\begin{tikzcd}
\phi\cL_{\a}\ar[r]& \cL_{\a}'\ar[r]& \cL_{\a}.
\end{tikzcd}
\label{eq:hyperbox-phiLa->La}
\end{equation}
We pick $\cL_{\a}'$ by winding the curves of $\cL_{\a}$ sufficiently so that $(\phi \cL_{\a},\cL'_{\a})$ and $(\cL'_{\a},\cL_{\a})$ are both weakly admissible. We use the standard filling procedure of Manolescu and Ozsv\'{a}th \cite{MOIntegerSurgery}*{Lemma~8.6} to fill in the remaining chains of the above hyperbox. 

By pairing the hyperbox in Equation~\eqref{eq:hyperbox-phiLa->La} with $\phi_* \cL_{\b}$ and compressing, we obtain a map
\[
\Psi_{\phi \cL_{\a}\to \cL_{\a}}^{\phi\cL_{\b}}\colon \ve{\CF}^-(\phi \cL_{\a},\phi\cL_\b)\to \ve{\CF}^-(\cL_{\a},\phi \cL_\b).
\]
A map $\Psi_{\cL_{\a}}^{\phi\cL_\b\to \cL_{\b}}\colon \ve{\CF}^-(\cL_{\a},\phi\cL_\b)\to \ve{\CF}^-(\cL_\a,\cL_\b)$ is defined similarly. We define the map $\g_*$ as a composition
\[
\gamma_*:=\Psi_{\cL_{\a}}^{\phi\cL_\b\to \cL_{\b}}\circ \Psi_{\phi \cL_{\a}\to \cL_{\a}}^{\phi\cL_{\b}}\circ \phi_*.
\]

It is not hard to see that the map $\gamma_*$ is well defined up to chain homotopies of hypercube morphisms by repeating the same procedure to build hypercubes of higher dimension. 

\begin{rem}\label{rem:naturality-hypercubes}
The above construction may be packaged into a form of naturality for hypercubes. If $\scH=(\cL_{\a},\cL_{\b})$ and $\scH'=(\cL_{\a'},\cL_{\b'})$ are two hypercubes of handleslide equivalent attaching curves, such that each set of curves in $\cL_{\a}$ is handleslide equivalent to each set of curves in $\cL_{\a'}$ and $\dim(\cL_{\a})=\dim(\cL_{\a'})$, and similarly for $\cL_{\b}$ and $\cL_{\b'}$, then the above construction produces a map $\Psi_{\scH\to \scH'}\colon \ve{\CF}^-(\scH)\to \ve{\CF}^-(\scH')$. Furthermore,
\begin{equation}
\Psi_{\scH'\to \scH}\circ \Psi_{\scH\to \scH'}\simeq \id_{\ve{\CF}^-(\scH)}.
\label{eq:naturality-hypercubes}
\end{equation}
Equation~\eqref{eq:naturality-hypercubes} is proven by using associativity to reduce to the case that $\scH'$ consist of small translates of $\cL_{\a}$ and $\cL_{\b}$, and then by either adapting Lipshitz's proof in the case of triangles \cite{LipshitzCylindrical} or by using the small translate theorems for holomorphic polygons in Lemma~\ref{lem:nearest-point}. 
\end{rem}

We now state our formula for the map $\gamma_*$. The statement is formally similar to the case of the ordinary Floer complexes \cite{ZemGraphTQFT}*{Theorem~D}. Our formula involves two endomorphisms $\Phi_w$ and $\cA_{\g}$ which are defined in Sections~\ref{sec:homology-actions-hypercubes} and~\ref{sec:basepoint-actions-hypercubes}.

\begin{thm}\label{thm:basepoint-moving-hypercubes} Suppose that $\cL_{\a}$ and $\cL_{\b}$ are hypercubes of handleslide equivalent attaching curves on $(\Sigma,\ws,\zs,\ps)$. Let $p\in \ps$ be a free basepoint and let $\g$ be a closed path on $\Sigma$, which is based at $p$. If $\gamma_*$ is the associated basepoint moving hypercube morphism, then
\[
\gamma_*\simeq \id+\Phi_p\circ \cA_\g
\]
as morphisms of hypercubes.
\end{thm}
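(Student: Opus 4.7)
The plan is to adapt Zemke's argument from \cite{ZemGraphTQFT} for ordinary Floer complexes to the hypercube setting, using the explicit construction of $\gamma_*$ given in Section~\ref{sec:basepoint-moving-maps-subsec} as a composition $\Psi_{\cL_{\a}}^{\phi\cL_\b\to \cL_\b}\circ \Psi_{\phi \cL_{\a}\to \cL_{\a}}^{\phi\cL_{\b}}\circ \phi_*$. The first step is to reduce to the case where $\gamma$ is a simple loop supported in a small disk crossing exactly one alpha-curve and one beta-curve at each vertex of the hypercube; a general $\gamma$ can be decomposed into a concatenation of such elementary loops, and both $\cA_\gamma$ and (up to homotopy) $\gamma_*$ are additive in this decomposition, while the $\Phi_w$-correction is compatible because $\Phi_w \circ \Phi_w \simeq 0$. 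I would then arrange $\phi$ to be supported in a small neighborhood of $\gamma$ and choose the intermediate attaching curves in \eqref{eq:hyperbox-phiLa->La} to be sufficiently small Hamiltonian translates of $\cL_\a$ away from that neighborhood.

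Next, I would compute each of the factors $\Psi_{\phi\cL_\a\to \cL_\a}^{\phi\cL_\b}$ and $\Psi_{\cL_\a}^{\phi\cL_\b\to\cL_\b}$ using the small translate machinery from Lemmas~\ref{lem:nearest-point} and~\ref{lem:nearest-point-map}. The hypercube small translate theorem forces all length-at-least-two polygon contributions to vanish when the translates are sufficiently small, except for contributions localized at $\gamma$, while the length-one pieces reduce to the nearest point map by Lemma~\ref{lem:nearest-point-map}. The contributions localized at $\gamma$ can then be analyzed by direct local computation on a standard neighborhood, exactly as in the ordinary Floer case of \cite{ZemGraphTQFT}: outside $\gamma$ the map is the identity, and inside $\gamma$ one gets the correction producing $\Phi_w \circ \cA_\gamma$ after composing with $\phi_*$.

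To promote this from a vertex-by-vertex computation to a hypercube homotopy equivalence, I would construct an auxiliary $(n+m+1)$-dimensional hypercube whose top face is $\gamma_*$, whose bottom face is $\id + \Phi_w\circ \cA_\gamma$, and whose higher length morphisms are defined by counting parametrized moduli spaces of polygons with an additional marked-point constraint tracking $\gamma$, together with a choice of 1-parameter interpolation between the two systems of almost complex structures and translates used to define the two sides. The hypercube relations would follow from a codimension-one boundary analysis of these parametrized moduli spaces, in the spirit of Lemma~\ref{lem:codim-1-Phi-ell}: the generic degenerations either cancel in pairs, correspond to hypercube relations for $\cL_\a$ and $\cL_\b$, or recover the two morphisms on the boundary faces.

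The main obstacle will be the bookkeeping required to make the above interpolation compatible with the full hypercube structure simultaneously on every subface. In particular, one must choose the auxiliary data defining the extensions of $\Phi_w$ and $\cA_\gamma$ to hypercubes consistently with the small-translate choices used in the construction of $\Psi_{\phi\cL_\a\to\cL_\a}^{\phi\cL_\b}$ and $\Psi_{\cL_\a}^{\phi\cL_\b\to\cL_\b}$, and verify that the correction term extracted from the local analysis near $\gamma$ agrees with $\Phi_w\circ \cA_\gamma$ as hypercube morphisms (not merely as chain maps of the total complexes). As is typical for such naturality statements, the higher homotopies are forced by an acyclic-models-type argument once the vertex-level identification and the compatibility of the degenerations have been established.
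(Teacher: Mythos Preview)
Your approach diverges substantially from the paper's, and the reduction step has a genuine gap. You propose to decompose a general loop $\gamma$ into a concatenation of ``elementary'' simple loops, each supported in a small disk and crossing a single alpha and beta curve, and then use additivity. But such a decomposition does not exist: a loop supported in a disk on $\Sigma$ bounds, so by Lemma~\ref{lem:homology-action-homotopy-hypercube} its homology action is null-homotopic, and its basepoint-moving map is homotopic to the identity (the diffeomorphism is isotopic to the identity). A homologically nontrivial $\gamma$ cannot be written as a concatenation of such loops. Moreover, even if one had some concatenation $\gamma = \gamma_1 * \gamma_2$, the map $\gamma_*$ behaves multiplicatively, not additively: $(\gamma_2)_*(\gamma_1)_* = \id + \Phi_w(\cA_{\gamma_1}+\cA_{\gamma_2}) + \Phi_w\cA_{\gamma_2}\Phi_w\cA_{\gamma_1}$, and controlling the cross term requires knowing $[\cA_{\gamma_i},\Phi_w]$ for closed loops, which is not one of the established relations (Lemma~\ref{lem:basic-graph-TQFT-relations} only gives $[\cA_\lambda,\Phi_w]\simeq\id$ for arcs $\lambda$ ending at $w$). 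Your appeal to $\Phi_w^2\simeq 0$ alone is not enough.

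The paper's proof avoids this entirely by working with \emph{arcs} rather than loops: it introduces an auxiliary free basepoint $z$ on $\gamma$, splits $\gamma$ into arcs $\lambda_1,\lambda_2$, and invokes the hypercube normalization lemma (the unnamed lemma immediately preceding the theorem) to write $\gamma_* \simeq \cS_z^-\cA_{\lambda_2}\cS_w^+\cS_w^-\cA_{\lambda_1}\cS_z^+$. From there the argument is a short algebraic manipulation using the relations already established in Section~\ref{sec:basepoint-moving-maps}: $\cS_w^+\cS_w^-=\Phi_w$, $[\cA_{\lambda_i},\Phi_w]\simeq\id$, $\cA_{\lambda_1}+\cA_{\lambda_2}\simeq\cA_\gamma$, $\cA_{\lambda_1}^2\simeq\scU$, and $\cS_z^-\cA_{\lambda_1}\cS_z^+\simeq\id$. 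No direct holomorphic-curve computation or interpolating hypercube is needed; the analytic work was front-loaded into proving those relations. If you want to salvage a direct approach, you would need to replace your loop decomposition with the arc decomposition and the free-stabilization framework the paper uses.
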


The proof of the above result will be given in Section~\ref{sec:algebraic-relations}. As a first application, we can compute the effect of changing one arc in a system of arcs for a link $L\subset S^3$:

\begin{cor}
\label{cor:different-path-formula}
Suppose that $L\subset S^3$ is a framed link and that $\scA$ is a system of arcs for $L$. Let $\scH$ be a $\sigma$-basic system of Heegaard diagrams for $(L,\scA)$. Let $\g$ be an embedded closed loop in $S^3$ which is based at some $z_i\in K_i\subset L$ but is otherwise disjoint from $L$ and $\scA$. Assume $\g$ is contained in the Heegaard surface for $\Sigma$. Let $\scA'$ be the system of arcs obtained by concatenating the arc for $K_i$ with $\g$. Let us view $\cC_\Lambda(\scH)$ as
\begin{equation}
\cC_{\Lambda}(\scH)\iso \Cone(\begin{tikzcd}[column sep=2cm]\cC_0(\scH) \ar[r, "F^{K_i}+F^{-K_i}"] &\cC_{1}(\scH)
\end{tikzcd}),
\label{eq:basic-system-C-lambda-path-1}
\end{equation}
where $\cC_i$ is a $|L|-1$ dimensional hypercube and $F^{-K_i}$ denotes the sum of all hypercube maps for oriented sublinks of $L$ which contain $-K_i$, and similarly for $F^{K_i}$. Then there is a $\sigma$-basic system of Heegaard diagrams $\scH'$ for $(L,\scA')$ such that 
\begin{equation}
\cC_{\Lambda}(\scH')\simeq 
\Cone(\begin{tikzcd}[column sep=4cm]
\cC_0(\scH) \ar[r, "F^{K_i}+(\id+\scV^{-1}_i\cA_\g\circ \Phi_{w_i})F^{-K_i}"]& \cC_{1}(\scH)
\end{tikzcd})
\label{eq:basic-system-C-lambda-path-2}
\end{equation}
where $\cA_\g$ and $\Phi_{w_i}$ are viewed as endomorphisms of $\cC_1(\scH)$.
\end{cor}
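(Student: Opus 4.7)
The plan is to realize the change of arcs as the insertion of a new axis direction in the basic system for $(L,\scA)$, and then to identify the resulting extra composition with the basepoint-moving morphism of Theorem~\ref{thm:basepoint-moving-hypercubes}. First, I would build $\scH'$ from $\scH$ by inserting, within each sub-hyperbox of $\scH$ whose labelling contains the $-K_i$ direction, an additional unit-length axis which realizes the surface isotopy of $z_i$ around $\g$. Since $\lambda_i' = \lambda_i \ast \g$ with $\g$ based at $z_i$, the inserted axis is attached at the $z_i$-end of the existing $K_i$-axis, so $\scH'$ is a valid basic system for $(L,\scA')$.

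By Lemma~\ref{lem:stacking-v-compressing}, stacking and compression commute, so the compressed hyperbox has the same underlying vector space as $\cC_\Lambda(\scH)$, the same length-$1$ map $F^{K_i}$, and $F^{-K_i}$ replaced by $F^{-K_i}\circ \g_\ast$, where $\g_\ast$ is the basepoint-moving endomorphism of $\cC_0(\scH)$ associated to the isotopy of $z_i$ around $\g$; all other hypercube maps are unaffected. This gives
\[
\cC_\Lambda(\scH') \simeq \Cone\Bigl( F^{K_i} + F^{-K_i}\circ \g_\ast \colon \cC_0(\scH) \to \cC_1(\scH)\Bigr).
\]
Applying Theorem~\ref{thm:basepoint-moving-hypercubes} (adapted to the link-basepoint $z_i$ by embedding into a meridional basic system as in Section~\ref{sec:meridional}) identifies $\g_\ast \simeq \id + \Phi_{z_i}\circ \cA_\g$ as morphisms of hypercubes on $\cC_0(\scH)$, and hence
\[
F^{-K_i}\circ \g_\ast \simeq F^{-K_i} + F^{-K_i}\circ \Phi_{z_i}\circ \cA_\g.
\]

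To arrive at the stated form~\eqref{eq:basic-system-C-lambda-path-2}, it remains to rewrite the correction term as $\scV_i^{-1}\cA_\g\circ \Phi_{w_i}\circ F^{-K_i}$. Naturality of the $H_1$-action yields $\cA_\g\circ F^{-K_i} \simeq F^{-K_i}\circ \cA_\g$ up to chain homotopy, so the task reduces to the algebraic identity
\[
F^{-K_i}\circ \Phi_{z_i} \simeq \scV_i^{-1}\,\Phi_{w_i}\circ F^{-K_i},
\]
which follows from the $\cK$-algebra relations $\tau\scU_i = \scV_i^{-1}\tau$ and $\tau\scV_i = \scU_i\scV_i^2\tau$ of Section~\ref{sec:knot-algebra} (see Lemma~\ref{lem:module-structure-surgery-hypercube-morphisms}): under the $\tau$-twisted identification supplied by $F^{-K_i}$, the source-side endomorphism $\Phi_{z_i}$ (weighted by disk multiplicities at $z_i$) is intertwined with $\scV_i^{-1}\Phi_{w_i}$ on the target. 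The main obstacle is this final algebraic commutation, which requires a careful chain-level accounting of how the basepoint multiplicity action transforms through the localization, basepoint swap $z_i \leftrightarrow w_i$, and Alexander-grading shift comprising $F^{-K_i}$; the $\scV_i^{-1}$ factor encodes the mismatch in variable conventions between source and target and is forced by the relation $\tau\scV_i = \scU_i\scV_i^2\tau$. The first two steps, by contrast, are essentially formal consequences of the basic-system construction and Theorem~\ref{thm:basepoint-moving-hypercubes}.
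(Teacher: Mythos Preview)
Your overall strategy—build $\scH'$ by inserting an extra loop axis and then invoke the basepoint-moving formula—is correct, but you have attached the loop at the wrong end, and this forces you into difficulties that the intended argument avoids entirely. You place the loop at the $z_i$-end, so that $\gamma_*$ is an endomorphism of $\cC_0(\scH)$, and then appeal to Theorem~\ref{thm:basepoint-moving-hypercubes}. But on $\cC_0(\scH)$ the component $K_i$ is unreduced: both $w_i$ and $z_i$ are present and $z_i$ is a \emph{link} basepoint. Theorem~\ref{thm:basepoint-moving-hypercubes} is stated and proved only for \emph{free} basepoints (the proof uses the free-stabilization maps $\cS_w^\pm$), and your ``embedding into a meridional basic system'' does not supply the missing case. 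Even granting that step, the commutation $F^{-K_i}\circ\Psi_{z_i}\simeq \scV_i^{-1}\Phi_{w_i}\circ F^{-K_i}$ (note: the $z$-basepoint map is $\Psi_{z_i}$, not $\Phi_{z_i}$) is not a formal consequence of the $\tau$-relations; it would itself require a hypercube argument along the lines of the proof of Lemma~\ref{lem:U-actions-homotopic}.

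The paper's approach exploits the freedom in ``there is a basic system $\scH'$''. One takes $\scH'$ so that the $K_i$-direction is the \emph{old} hyperbox for $\lambda_i$ followed by a loop move of $w_i$ around $\tilde\gamma=\lambda_i^{-1}*\gamma*\lambda_i$; after compression this gives $F'^{-K_i}=\tilde\gamma_*\circ F^{-K_i}$ with $\tilde\gamma_*$ an endomorphism of $\cC_1(\scH)$. On $\cC_1$ the basepoint $z_i$ has been deleted and $w_i$ \emph{is} free, so Theorem~\ref{thm:basepoint-moving-hypercubes} applies directly to give $\tilde\gamma_*\simeq\id+\Phi_{w_i}\cA_{\tilde\gamma}$, and $\cA_{\tilde\gamma}\simeq\cA_\gamma$ since $[\tilde\gamma]=[\gamma]$ in $H_1(\Sigma)$ (Lemma~\ref{lem:homology-action-homotopy-hypercube}). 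No commutation through $F^{-K_i}$ is needed; the factor $\scV_i^{-1}$ is just the Alexander-grading correction explained in the Remark immediately after the corollary. This is exactly how the corollary is invoked in the proof of Theorem~\ref{thm:transformer-bimodule}, where the author composes ``the $K$ direction with a \emph{final} basepoint moving hypercube which moves $w$ in a loop''.
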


We now sketch the proof of Theorem~\ref{thm:basepoint-independence} using Corollary~\ref{cor:different-path-formula} and a few other basic results from later in this section:

\begin{proof}[Proof of Theorem~\ref{thm:basepoint-independence}]
The map $\Phi_{w_i}$ will be introduced in more detail in Section~\ref{sec:homology-actions-hypercubes}. According to Remark~\ref{rem:Phi-nullhomotopic}, below, the map $\Phi_{w_i}$ is null-homotopic as a hypercube endomorphism via the homotopy
\[
\Phi_{w_i}=\d_{\Mor}(\d_{\scU_i})=\d \circ \d_{\scU_i}+\d_{\scU_i} \circ \d.
\]
Here, $\d$ denotes the hypercube differential on $\cC_1(\scH)$, and $\d_{\scU_i}$ is the derivative with respect to $\scU_i$ (i.e. $\d_{\scU_i}(\scU_i^n \xs)=n \scU_i^{n-1} \xs$), when $\xs$ is a generator which has no $\scU_i$ powers. See Remark~\ref{rem:Phi-nullhomotopic} for more details on this null-homotopy.

The null-homotopy $\d_{\scU_i}$ of $\Phi_{w_i}$ gives a chain homotopy equivalence between the hypercubes in Equations~\eqref{eq:basic-system-C-lambda-path-1} and~\eqref{eq:basic-system-C-lambda-path-2}. The map $\d_{\scU_i}$ does not commute with $\scU_i$ or $U_i=\scU_i\scV_i$, however it will commute with $\scU_j$ and $\scV_j$ for  $j\neq i$. In particular, the null-homotopy $\d_{\scU_i}$ cannot be used to induce a morphism between $\cX_{\Lambda}(L,\scA)^{\cL_\ell}$ and $\cX_{\Lambda}(L,\scA')^{\cL_\ell}$. However, after tensoring with $\cD_0$, the endomorphism $\bI\otimes \d_{\scU_i}$ can be defined, so we we may package the null-homotopy of $\Phi_{w_i}$ as a homotopy equivalence of type-$D$ modules
\[
\cX_{\Lambda}(L,\scA)^{\cL_\ell}\hatbox {}_{\cK} \cD_0\simeq \cX_{\Lambda}(L,\scA')^{\cL_\ell}\hatbox {}_{\cK} \cD_0
\]
as in the statement of Proposition~\ref{prop:equivalence-type-D-change-basepoint}.
\end{proof}

We now describe the proof of Corollary~\ref{cor:different-path-formula}.

\begin{proof}[Proof of Corollary~\ref{cor:different-path-formula}]
One first builds the surgery hyperbox for sublinks of $-L$. We compress the direction for $K_i$. The resulting hyperbox takes the form
\[
\Cone(\begin{tikzcd}\widetilde{\cC}_0(\scH) \ar[r, "F^{-K_i}"]& \widetilde{\cC}_1(\scH)\end{tikzcd} ),
\]
where $\widetilde{\cC}_j(\scH)$ compresses to $\cC_j(\scH)$.
To build the hyperbox for $\scH'$, we stack the above hyperbox with the basepoint moving map hyperbox for $\g_*$ to form the following hyperbox:
\[
 (\begin{tikzcd}\widetilde{\cC}_0(\scH) \ar[r, "F^{-K_i}"]& \tilde{\cC}_1(\scH) \ar[r, "\g_*"] & \widetilde{\cC}_1(\scH)\end{tikzcd} ).
\]
If $\widetilde{\cC}_1(\scH)$ has dimension $n-1$ and size $\ve{d}$, the hyperbox for $\g_*$ has dimension $n$ and size $\ve{d}\times \{1\}$. 

Theorem~\ref{thm:basepoint-moving-hypercubes} implies that the above hyperbox has homotopy equivalent compression to the compression of
\[
(\begin{tikzcd}\widetilde{\cC}_0(\scH) \ar[r, "F^{-K_i}"]& \widetilde{\cC}_1(\scH) \ar[r, "\scV_i^{-1}\Phi_{w_i}"] &\widetilde{\cC}_1(\scH)\ar[r, "\cA_\g"]& \widetilde{\cC}_1(\scH)\end{tikzcd} ).
\]
The factor of $\scV^{-1}_i$ in front of $\Phi_{w_i}$ is to make the map for sublinks of $L$ which contain $-K_i$ have the correct Alexander grading, since $\cA_\g$ has $A_i$-Alexander grading $0$ while $\Phi_{w_i}$ has $A_i$-Alexander grading $1$. (Note that $\cA_\g$ and $\Phi_{w_i}$ will not necessarily be homogeneous in the other Alexander gradings, since they are hypercube morphisms, and hence may increment the cube directions for other link components). We recall that for these maps, the powers of $\scV_i$ are determined by the overall Alexander grading change of the map.
Compressing the above hyperbox gives the statement.
\end{proof}

\begin{rem}
We observe that in the above corollary, there is some ambiguity in the definition of $\Phi_{w_i}$. In practice, we would like to compute $\Phi_{w_i}$ on the link surgery hypercube, as opposed computing $\Phi_{w_i}$ using a large hyperbox whose compression is the link surgery hypercube. It is an easy consequence of the Leibniz rule that  $\Phi_{w_i}$ commutes with the compression operation, so either choice gives the same answer.
\end{rem}

The following remark indicates some subtleties of the hypercube homology actions $\cA_\g$ and the basepoint moving map formulas:

\begin{rem}  The statement of  Corollary~\ref{cor:different-path-formula} is false if $\g$ is not disjoint from the curves $\scA$ on the Heegaard diagram. An example is the genus 0 diagram for the Hopf link in Figure~\ref{fig:11}. In this case, composing with the basepoint moving map for moving one $w_i$ around one of the components will switch between two models of the Hopf link complex (for different arc systems) which we show are non-isomorphic type-$D$ modules in Section~\ref{sec:Hopf-links-comp}. On the other hand, any loop on this Heegaard diagram bounds a disk since the surface is $S^3$, so Lemma~\ref{lem:homology-action-homotopy-hypercube} can be used to show that the homology action is null-homotopic. In this case, the hypotheses are not satisfied, since the curve $\g=K_2$ is not disjoint from the arc $\scA_1$ for $K_1$.
\end{rem}

\subsection{Basepoint actions on hypercubes}

\label{sec:basepoint-actions-hypercubes}
In this section, we define the basepoint action on hypercubes. The basepoint actions are analogs of natural endomorphisms which appear on the ordinary Heegaard Floer complexes, especially in the context of basepoint moving maps and diffeomorphism maps. See \cite{SarkarMovingBasepoints}, \cite{ZemQuasi} and \cite{ZemGraphTQFT} for example.

Suppose that $\cL_{\a}$ and $\cL_{\b}$ are hypercubes of attaching curves on $(\Sigma,\ws,\zs, \ve{p})$. Here and throughout, we assume $\ws$ and $\zs$ are link basepoints and $\ps$ are free basepoints.  In this section, we will consider the Floer complex $\ve{\CF}^-(\cL_{\a},\cL_{\b})$ which is a free module over $\bF[\scU_1,\dots, \scU_\ell,\scV_1,\dots, \scV_\ell,U_1,\dots, U_t]$, where $\scU_j$ is the variable for $w_j\in \ws$, $\scV_j$ is the variable for $z_j\in \zs$, and $U_j$ is the variable for $p_j\in \ps$.

If $w_i\in \ws$,  we now describe an endomorphism
\[
\Phi_{w_i}\colon \ve{\CF}^-(\cL_{\a},\cL_{\b})\to \ve{\CF}^-(\cL_{\a},\cL_{\b}).
\]
If $z_i\in \zs$, or $p_i\in \ps$, there will be analogous endomorphisms, denoted $\Psi_{z_i}$ and $\Phi_{p_i}$, all defined by essentially the same construction.

We define the map $\Phi_{w_i}$ by formally differentiating the hypercube differential $\d$ on $\ve{\CF}^-(\cL_{\a}, \cL_{\b})$ with respect to $\scU_i$. More precisely, if $\xs$ and $\ys$ are intersection points  $\ve{\CF}^-(\cL_{\a},\cL_{\b})$ and $\d(\xs)$ has a summand of $\ve{a}\cdot \ys$ for some $\ve{a}\in \bF[\scU_1,\scV_1,\dots, \scU_\ell,\scV_\ell,U_1,\dots, U_t]$,  then we define $\Phi_{w_i}(\xs)$ to have a summand of $\d_{\scU_i}(\ve{a})\cdot \ys$. We extend $\Phi_{w_i}$ to all of $\ve{\CF}^-(\cL_{\a},\cL_{\b})$ by declaring it to be $\bF[\scU_1,\scV_1,\dots, \scU_\ell,\scV_\ell,U_1,\dots, U_t]$-equivariant.

\begin{lem}\label{lem:Phi-chain-map} The map $\Phi_{w_i}$ satisfies $\d_{\Mor}(\Phi_{w_i})=0$, where $\d_{\Mor}$ is the morphism differential for hypercube morphisms (see Equation~\eqref{eq:morphism-differential-hypercubes}).
\end{lem}
\begin{proof}  The proof is the same as in the setting of the ordinary Floer chain complexes. See, e.g., \cite{ZemQuasi}*{Lemma~3.1}. This is obtained by taking the equation $\d^2=0$, and differentiating with respect to $\scU_{i}$. The Leibniz rule yields $\Phi_{w_i}\circ \d+\d \circ \Phi_{w_i}=0$.
\end{proof}

\begin{rem}
 In Lemma~\ref{lem:Phi-chain-map}, we are implicitly using the fact that the differential commutes with the action of $\scU_i$. We note that in the link surgery formula $\cC_{\Lambda}(L)$, although the group is naturally a completion of a free-module over $\bF[\scU_i]$, the differential does not commute with $\scU_i$ (cf. Lemma~\ref{lem:module-structure-surgery-hypercube-morphisms}). However, if $w\in K$ and we restrict to a subcube of $\cC_{\Lambda}(L)$ where the coordinate for $K$ is constant, then the differential does commute with $\scU_i$ and the map $\Phi_{w_i}$ can be defined. This is the only case that we consider in the present paper.
\end{rem}

When $\cL_{\a}$ and $\cL_{\b}$ are algebraically rigid, there is an alternate definition of $\Phi_{w_i}$, for which we write $\Phi_w^0$. The map $\Phi_{w_i}^0$ is defined as follows. Suppose that $\cL_{\a}$ is $m$-dimensional and $\cL_{\b}$ is $n$-dimensional. Given $\veps_1<\cdots<\veps_j$ in $\bE_m$ and $\nu_1<\dots<\nu_k$ in $\bE_n$, we define the \emph{$w_i$-weighted polygon counting map}, $f^{w_i}_{\a_{\veps_j},\dots, \a_{\veps_1},\b_{\nu_1},\dots, \b_{\nu_k}}$, to count holomorphic polygons of index $3-j-k$ with a multiplicative weight of 
\[
n_{w_i}(\psi) \scU_1^{n_{w_1}(\psi)}\scV_1^{n_{z_1}(\psi)}\cdots \scU_\ell^{n_{w_\ell}(\psi)} \scV_\ell^{n_{w_\ell}(\psi)}U_1^{n_{p_1}(\psi)}\cdots U_t^{n_{p_t}(\psi)}.
\]  We define
\begin{equation} 
(\phi_{w_i})_{\veps_1<\cdots<\veps_j}^{\nu_1<\cdots<\nu_k}\colon \ve{\CF}^-(\as_{\veps_1},\bs_{\nu_1})\to \ve{\CF}^-(\as_{\veps_j},\bs_{\nu_k})
\label{eq:phiw-veps-nu-def}
\end{equation}
via the formula
\[
\begin{split}
&(\phi_{w_i})_{\veps_1<\cdots<\veps_j}^{\nu_1<\cdots<\nu_k}(\xs)
\\
:=&\scU_i^{-1} f^{w_i}_{\a_{\veps_j},\dots, \a_{\veps_1},\b_{\nu_1},\dots, \b_{\nu_k} }(\Theta_{\a_{\veps_j},\a_{\veps_{j-1}} },\dots, \Theta_{\a_{\veps_2},\a_{\veps_1} }, \xs, \Theta_{\b_{\nu_1},\b_{\nu_2} }, \dots \Theta_{\b_{\nu_{k-1}},\b_{\nu_k} } )
\end{split}
\]
We define $\Phi_{w_i}^0\colon \ve{\CF}^-(\cL_{\a},\cL_{\b})\to \ve{\CF}^-(\cL_{\a},\cL_{\b})$ via the formula
\begin{equation}
\Phi_{w_i}^0:=\sum_{\substack{\nu_1<\dots<\nu_k \\ \veps_1<\cdots<\veps_j}} (\phi_{w_i})_{\veps_1<\cdots<\veps_j}^{\nu_1<\cdots<\nu_k}.
\label{eq:def-map-Phi_w-0}
\end{equation}

\begin{lem}
\label{lem:algebraically-rigid-Phiw0}
 If $\cL_{\a}$ and $\cL_{\b}$ are algebraically rigid hypercubes of handleslide equivalent attaching curves, then $\Phi_{w_i}=\Phi_{w_i}^0$.
\end{lem}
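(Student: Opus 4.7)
The plan is to show that $\Phi_w$ and $\Phi_w^0$ agree term-by-term by expanding both as sums of polygon counts and applying the Leibniz rule. The starting point is the formula~\eqref{eq:pairing-formula-hypercubes} which expresses the hypercube differential $D$ on $\ve{\CF}(\cL_{\a},\cL_{\b})$ as a sum of holomorphic polygon maps $f_{\a_{\veps_k},\ldots,\b_{\nu_j}}$ with special inputs $\Theta_{\veps_i,\veps_{i-1}}$ and $\Theta_{\nu_{i-1},\nu_i}$. Each such polygon map is multilinear over $\bF[\scU,\scV]$ in its inputs, and each holomorphic representative of a class $\psi$ contributes an algebra coefficient containing the factor $\scU_w^{n_w(\psi)}$.

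The first step is to compute $\partial_{\scU_w}$ of a single polygon contribution. If $\Theta_{\veps_i,\veps_{i-1}}$ and $\Theta_{\nu_{i-1},\nu_i}$ are chosen to be free of $\scU_w$ in their algebra coefficients, then the Leibniz rule reduces $\partial_{\scU_w}$ of $f_{\a_{\veps_k},\ldots,\b_{\nu_j}}(\Theta,\ldots,\xs,\ldots,\Theta)$ to $\scU_w^{-1}$ times the $n_w$-weighted polygon map $f^{n_w}_{\a_{\veps_k},\ldots,\b_{\nu_j}}$ evaluated on the same inputs. Summing over all increasing sequences $\veps_1<\cdots<\veps_j$ in $\bE_m$ and $\nu_1<\cdots<\nu_k$ in $\bE_n$, one recovers exactly the definition~\eqref{eq:def-map-Phi_w-0} of $\Phi_w^0$. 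Hence the identity $\Phi_w=\Phi_w^0$ reduces to verifying that the chains $\Theta$ may be chosen to be free of $\scU_w$ in their coefficients.

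This last step is where the algebraic rigidity hypothesis enters, and it is the main obstacle. For an arrow of length one, say $\veps<\veps'$ with $|\veps'-\veps|=1$, the algebraic rigidity of $\cL_{\a}$ forces $\bT_{\a_{\veps}}\cap \bT_{\a_{\veps'}}$ to realize the minimum possible intersection count, so the top-degree generator $\Theta_{\veps,\veps'}^+$ is a single intersection point with no polynomial weight. Inductively, for longer arrows, the chain $\Theta_{\veps,\veps'}$ is constructed by Manolescu--Ozsv\'{a}th's filling procedure (\cite{MOIntegerSurgery}*{Lemma~8.6}) as a bounding chain of the polygon obstruction determined by smaller chains. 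Assuming inductively that the smaller $\Theta$'s are $\scU_w$-free, one observes that the obstruction cycle decomposes as an $\bF[\scU_w]$-linear combination of cycles in a diagram that is algebraically rigid, and Lemma~\ref{lem:action-of-variables-well-defined} (adapted to this setting) allows us to select a bounding chain with no $\scU_w$ dependence.

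The plan is then to assemble these observations: combining the $\scU_w$-free choice of $\Theta$'s with the Leibniz rule computation gives the term-by-term matching $\Phi_w=\Phi_w^0$. Since both sides are independent (up to chain homotopy of hypercube morphisms) of the choices of $\Theta$'s, the identity holds for any algebraically rigid hypercube, not just for the particular $\scU_w$-free choice. The main technical step to verify carefully is the inductive construction of $\scU_w$-free $\Theta$'s, which is where the algebraic rigidity assumption is essential; the remainder is a direct unwinding of definitions.
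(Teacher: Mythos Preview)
Your overall strategy---reducing $\Phi_w=\Phi_w^0$ to the claim that the chains $\Theta_{\veps,\veps'}$ carry no $\scU_w$-powers, and then invoking the Leibniz rule---is exactly right and is what the paper does. The problem is in how you justify the $\scU_w$-freeness.

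You propose to \emph{construct} $\scU_w$-free $\Theta$'s via an inductive filling argument, and then argue that the conclusion transfers to any algebraically rigid hypercube because ``both sides are independent (up to chain homotopy) of the choices of $\Theta$'s.'' This is a genuine gap. The lemma asserts a strict equality $\Phi_w=\Phi_w^0$, and both maps are defined in terms of the given differential, which depends on the given $\Theta$'s. If you replace the $\Theta$'s by a different filling, you change both $\Phi_w$ and $\Phi_w^0$, so proving the identity for your preferred filling says nothing about the original one; and ``independent up to homotopy'' cannot upgrade to the on-the-nose equality claimed. (Also, your appeal to Lemma~\ref{lem:action-of-variables-well-defined} is misplaced: that lemma shows a certain product of basepoint variables is independent of the curve in $\cL_\a$, which is unrelated to producing $\scU_w$-free bounding chains.)

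The paper's argument is much shorter because algebraic rigidity already pins down the $\Theta$'s with no construction needed. The minimal intersection count means each $\bT_{\b_\veps}\cap\bT_{\b_{\veps'}}$ has exactly one intersection point in each occupied $(\gr_w,\gr_z)$-bigrading, with the top degree at $(0,0)$. A length-$1$ chain must be a top-degree cycle, hence equal to that unique intersection point with coefficient $1$. For a length-$k$ arrow with $k\ge 2$, the graded compatibility relation forces $\Theta_{\veps,\veps'}$ to sit in bidegree $(k-1,k-1)$, which lies strictly above the top degree and is therefore zero in $\CF^-$ (monomials and intersection points both have nonpositive bigradings). So every special input to the hypercube differential is either a bare intersection point or zero---uniquely so, with no $\scU_w$-power in sight---and the Leibniz step then gives $\Phi_w=\Phi_w^0$ immediately.
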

\begin{proof} For algebraically rigid hypercubes of attaching curves, the only special inputs for the hypercube differential are top degree intersection points, which are uniquely determined and have no $\scU_i$-powers. In particular, the $\scU_i$-power of an arrow in the differential of $\ve{\CF}^-(\cL_{\a},\cL_{\b})$ is determined entirely by the $n_{w_i}$ multiplicity of the corresponding holomorphic $\ell$-gon, which is exactly what the map $\Phi_{w_i}^0$ records.
\end{proof}

\begin{rem}\label{rem:Phi-nullhomotopic} Similar to the setting of the 3-manifold invariants, the map $\Phi_{w_i}$ is a null-homotopic hypercube endomorphism, though the natural homotopy does not commute with the action of $\scU_{i}$. A similar comment holds for the endomorphisms $\Psi_{z_i}$ and $\Phi_{p_i}$, when $p_i$ is a free basepoint. As in \cite{ZemGraphTQFT}*{Equation~14.34}, we have
\begin{equation}
\Phi_{w_i}=[\d, \d_{\scU_i}]=\d \circ \d_{\scU_i}+\d_{\scU_i}\circ \d
\label{eq:Phi-w-null-homotopic}
\end{equation}
where $\d$ denotes the hypercube differential and $\d_{\scU_i}$ is the map 
\[
\d_{\scU_i}(\scU_i^k \ve{x})=k \scU_i^{k-1}\ve{x}
\]
extended equivariantly over the actions of the other variables. Conceptually, we can think of Equation~\eqref{eq:Phi-w-null-homotopic} as an application of the Leibniz rule, in the following sense. If we think of $\Phi_{w_i}$ and $\d$ as matrices with coefficients in $\bF[\scU_i]$ and a generator $\xs$ as a column vector with entries in $\bF[\scU_i]$, then $\Phi_{w_i}(\xs)$ and $\d(\xs)$ can be thought of as matrix products. The Leibniz rule for derivatives implies that
\[
\d_{\scU_i}(\d\xs)=(\d_{\scU_i} (\d))(\xs)+ \d (\d_{\scU_i}(\xs)).
\]
By definition $\d_{\scU_i} (\d)=\Phi_{w_i}$, so rearranging the above equation yields the null-homotopy $\Phi_{w_i}=[\d,\d_{\scU_i}]$.

\end{rem}

\subsection{Homology actions on hypercubes}
\label{sec:homology-actions-hypercubes}
We now recall the construction of various homological actions on hypercubes from \cite{HHSZNaturality}*{Section~6.2}. We will describe an action for both closed curves on $\Sigma$ and also arcs which have boundary on two basepoints $\{w_1,w_2\}\subset \ws$.

In the case of closed curves, the construction is an adaptation of the original case \cite{OSDisks}*{Section~4.2.5}. For arcs with boundary, the construction extends \cite{Ni-homological} and \cite{ZemGraphTQFT}*{Section~5}.

We consider first the case of a closed curve $\g\subset \Sigma$. Suppose that $\cL_{\a}$ and $\cL_{\b}$ are hypercubes of attaching curves on $(\Sigma,\ws,\zs)$.  We now describe an endomorphism
\[
\cA_{\g}\colon \ve{\CF}^-(\cL_{\a},\cL_{\b})\to \ve{\CF}^-(\cL_{\a},\cL_{\b})
\]
as follows. Suppose
\begin{equation}
\psi\in \pi_2(\xs_{\a_{\veps_j},\a_{\veps_{j-1}}},\dots, \xs_{\a_{\veps_2},\a_{\veps_{1}}},\xs,\xs_{\b_{\nu_1},\b_{\nu_{2}}},\dots, \xs_{\b_{\nu_{k-1}},\b_{\nu_{k}}},\ys)
\label{eq:psi-A-lambda-def-intersection-points}
\end{equation}
where each $\as_{\veps_{i}}$ and $\bs_{\veps_{i}}$ are curves in $\cL_{\a}$ or $\cL_{\b}$, respectively, and each intersection point labeled $\xs_{\g,\g'}$ is in $\bT_{\g}\cap \bT_{\g'}$. We define the 1-chain $\d_{\a}(\psi)\subset \Sigma$ to be the boundary of $\psi$ which lies in $\as_{\veps_1}\cup \cdots \cup \as_{\veps_j}$. We define a quantity $a(\psi,\g)\in \bF$ via the formula
\[
a(\psi,\g)=\#(\d_{\a} (\psi)\cap \g).
\]
We now define an endomorphism
\[
\cA_\g\colon \ve{\CF}^-(\cL_{\a},\cL_{\b})\to \ve{\CF}^-(\cL_{\a},\cL_{\b}).
\]
The map counts holomorphic polygons as would be counted in the hypercube differential for $\ve{\CF}^-(\cL_{\a},\cL_{\b})$, except with an extra multiplicative weight of $a(\psi,\g)$.

\begin{lem}
\label{lem:homology-action-chain-homotopy} Suppose that $\cL_{\a}$ and $\cL_{\b}$ are hypercubes of attaching curves on $(\Sigma,\ws,\zs,\ps)$ and $\g$ is a closed curve on $\Sigma$. Then $\d_{\Mor}(\cA_{\g})=0$, where $\d_{\Mor}$ is the morphism differential for hypercube morphisms.
\end{lem}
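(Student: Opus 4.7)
The plan is to prove $\d_\Mor(\cA_\g) = 0$ by counting ends of 1-dimensional moduli spaces of holomorphic polygons with inputs from the hypercubes $\cL_\a$ and $\cL_\b$, with each polygon weighted by the intersection count $a(\psi, \g)$. The key algebraic input is the Leibniz identity
\[
a(\psi_1 \# \psi_2, \g) = a(\psi_1, \g) + a(\psi_2, \g),
\]
which follows immediately from additivity of alpha boundaries, $\d_\a(\psi_1 \# \psi_2) = \d_\a(\psi_1) + \d_\a(\psi_2)$, when two polygons are concatenated at a shared intersection point. With this in hand, the proof becomes formally parallel to the chain map property of the classical homology action from \cite{OSDisks}*{Section~4.2.5} and \cite{ZemGraphTQFT}*{Section~5}, except that one must also account for the special inputs from $\cL_\a$ and $\cL_\b$.

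First I would fix indices $\kappa_0 \le \kappa_m$ in $\bE_n \times \bE_m$, together with increasing sequences of alpha and beta vertices interpolating between them, and express the component of $\d_\Mor(\cA_\g)$ from $\kappa_0$ to $\kappa_m$ as a sum over such sequences of $a$-weighted polygon counts. These contributions are obtained by counting index $2-\ell$ holomorphic $\ell$-gons modulo the free $\R$-action, weighted by $a(\psi, \g)$. The generic codimension-1 ends of the corresponding 1-dimensional parametrized moduli spaces are: (i) strip breaking at the input or output; (ii) polygon breaking at associahedral boundaries of $K_{\ell-1}$ into two lower-index polygons $\psi_1, \psi_2$ sharing an intersection point; and (iii) alpha or beta boundary degenerations.

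Next I would observe that the Leibniz identity packages the ends of types (i) and (ii) exactly into the sum $\cA_\g \circ D + D \circ \cA_\g = \d_\Mor(\cA_\g)$: the summand $a(\psi_1, \g)$ contributes to one term, and $a(\psi_2, \g)$ to the other, with strip-breakings handled as the $\ell=2$ special case. So vanishing of $\d_\Mor(\cA_\g)$ reduces to showing that the weighted count of boundary degenerations (iii) is zero modulo 2.

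The main obstacle is this last step. Beta boundary degenerations contribute nothing because $\d_\a(B) = 0$ for such classes, so $a(B, \g) = 0$ tautologically. For alpha boundary degenerations $B$, Proposition~\ref{prop:count-boundary-degenerations} gives that the mod 2 count is odd, so one must instead show that the $a$-weights themselves contribute evenly. The plan is to arrange this by choosing $\g$ generically: after a small isotopy, $\g$ can be pushed off every alpha curve appearing in any $\cL_\a$-hypercube of attaching curves, so that $a(B, \g) = 0$ for every alpha boundary degeneration class whose $\d_\a$-boundary is a sum of full alpha curves. That this does not change the endomorphism $\cA_\g$ up to chain homotopy is a direct hypercube analog of the classical statement that $\cA_\g$ depends only on the homology class of $\g$, and is established separately (cf. Lemma~\ref{lem:homology-action-homotopy-hypercube}). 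Combining these steps gives $\d_\Mor(\cA_\g) = 0$.
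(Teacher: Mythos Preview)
Your argument has the right skeleton, but there is a genuine gap in step (ii). When an index-$(4-\ell)$ polygon degenerates at a codimension-1 face of $K_{\ell-1}$ into $\psi_1 * \psi_2$, it is \emph{not} always the case that both pieces are mixed polygons carrying an input from $\bT_\a \cap \bT_\b$. Some breakings produce one mixed polygon and one \emph{pure} polygon living entirely on a subdiagram of $(\Sigma,\cL_\a)$ or of $(\Sigma,\cL_\b)$. In such a breaking, the Leibniz split $a(\psi,\g) = a(\psi_1,\g) + a(\psi_2,\g)$ yields one term in which the $a$-weight sits on the pure piece. That term is not of the form $D\circ \cA_\g$ or $\cA_\g\circ D$ (the pure polygon is not part of the hypercube differential on $\CF(\cL_\a,\cL_\b)$; it appears only through the hypercube relations for $\cL_\a$ or $\cL_\b$ themselves), so these ends do not package into $\d_\Mor(\cA_\g)$ without a further argument.

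The paper closes this gap with the observation that for any class $\psi$ on a pure alpha subdiagram one has $\d_\a(\psi) = \d(D(\psi))$, hence
\[
a(\psi,\g) \;=\; \#\bigl(\d D(\psi)\cap \g\bigr) \;\equiv\; \#\bigl(D(\psi)\cap \d\g\bigr) \;=\; 0
\]
since $\g$ is closed; pure beta polygons have $\d_\a(\psi)=0$ tautologically. This is exactly the place where closedness of $\g$ is used. The same Stokes-type identity also gives $a(\phi,\g)=0$ for any boundary degeneration $\phi$ directly, so your isotopy maneuver for step (iii) is unnecessary. (As written, your isotopy trick would in fact also rescue step (ii) --- once $\g$ is disjoint from every alpha curve, pure alpha polygons have $a=0$ trivially --- but you only invoked it for boundary degenerations, and in any case the direct argument is both cleaner and avoids relying on Lemma~\ref{lem:homology-action-homotopy-hypercube}.)
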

\begin{proof} 
The proof that $\d_{\Mor}(\cA_\g)=0$ follows from a Gromov compactness argument, similar to the proof of the hypercube relations for $\ve{\CF}^-(\cL_{\a},\cL_{\b})$. 

 Suppose that  $\xs\in \bT_{\a_{\veps}} \cap \bT_{\b_\nu}$ and $\ys\in \bT_{\a_{\veps'}}\cap \bT_{\b_{\nu'}}$, for $\veps\le \veps'$ and $\nu\le \nu'$.  We consider a class of polygons 
 \[
 \psi\in \pi_2(\xs_{\a_{\veps_j},\a_{\veps_{j-1}}},\dots, \xs_{\a_{\veps_2},\a_{\veps_{1}}},\xs,\xs_{\b_{\nu_1},\b_{\nu_{2}}},\dots, \xs_{\b_{\nu_{k-1}},\b_{\nu_{k}}},\ys)
 \]
 where each $\xs_{\a_{\veps_k},\a_{\veps_{k-1}}}$ is an intersection point appearing as a summand of a chain from $\cL_{\a}$, and similarly for the beta labeled intersection points. We suppose that
 \[
 \mu(\psi)=4-j-k
 \]
 so that $\cM(\psi)$ is 1-dimensional. We consider the ends of the 1-dimensional moduli space $\cM(\psi)$,

 If $j+k>2$, the ends of $\cM(\psi)$ generically consist of $\psi$ breaking into a pair of holomorphic polygons (with at least 2 sides, each) representing homology classes $\psi_1,\psi_2$ satisfying $\psi=\psi_1+\psi_2$. It is helpful to organize possible degenerations into three subtypes (up to reordering $\psi_1$ and $\psi_2$):
 \begin{enumerate}[label=($e$-\arabic*), ref=$e$-\arabic*]
 \item\label{list:end1} $\psi_1$, $\psi_2$ both have boundary on both the alpha and beta curves.  
 \item\label{list:end2} $\psi_1$ has boundary on both  alpha and beta curves, while $\psi_2$ has boundary only on the alpha curves.
 \item\label{list:end3} $\psi_1$ has boundary on both alpha and beta curves, while $\psi_2$ has boundary only on the beta curves.
 \end{enumerate}
 
 \begin{figure}[h]
 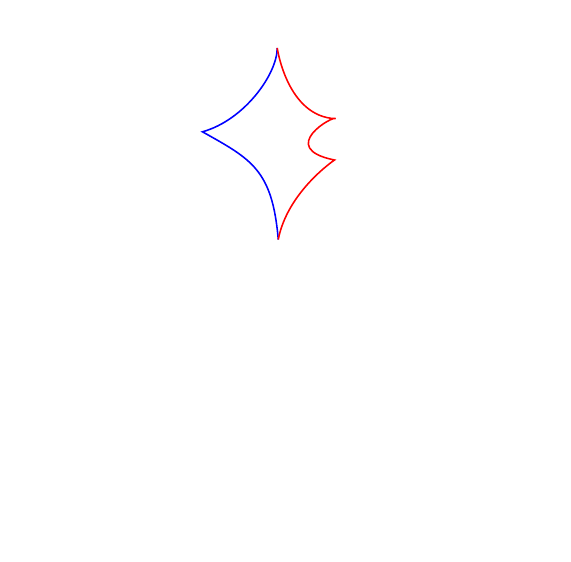
 \caption{The degenerations \eqref{list:end1}, \eqref{list:end2} and \eqref{list:end3} from Lemma~\ref{lem:homology-action-chain-homotopy}.}
 \label{fig:43}
 \end{figure}
 
 We consider the sum over all sequences of intersections points $\xs_{\a_{\veps_j},\a_{\veps_{j-1}}},\dots, \xs_{\a_{\veps_2},\a_{\veps_{1}}}$ and $\xs_{\b_{\nu_1},\b_{\nu_{2}}},\dots, \xs_{\b_{\nu_{k-1}},\b_{\nu_{k}}}$ from the corresponding chains of $\cL_{\a}$ and $\cL_{\b}$, as well as  classes $\psi$, as above, of 
 \[
 a(\psi,\g)\# \d \cM(\psi) \scU_1^{n_{w_1}(\psi)}\scV_1^{n_{z_1}(\psi)}\cdots \scU_\ell^{n_{w_\ell}(\psi)} \scV_\ell^{n_{z_\ell}(\psi)}U_1^{n_{p_1}(\psi)}\cdots U_t^{n_{p_t}(\psi)}.
 \]
(We also multiply by any variables $\scU_i$ or $\scV_i$ appearing in the corresponding chains of $\cL_{\a}$ or $\cL_{\b}$, but we omit this from the above equation). We observe that if $\psi=\psi_1+\psi_2$, as above, then
\begin{equation}
a(\psi,\g)=a(\psi_1,\g)+a(\psi_2,\g).\label{eq:additivity-a-gamma-psi}
\end{equation}
Therefore, when summing over the ends of $\#\d \cM(\psi)$, instead of summing each end with weight $a(\psi,\g)$, we can instead sum each end twice, once with weight $a(\psi_1,\g)$ and once with weight $a(\psi_2,\g)$.

Ends ~\eqref{list:end1}, summed as above, contribute the $\ys$ coefficient of 
\[
(\d\circ \cA_\g+\cA_\g\circ \d)(\xs).
\]

When weighted by $a(\psi_1,\g)$, we claim that the ends ~\eqref{list:end2} and~\eqref{list:end3} cancel modulo 2 by the hypercube structure relation for $\cL_{\a}$ and $\cL_{\b}$. 

We will show that when weighted by $a(\psi_2,\g)$, the ends ~\eqref{list:end2} and~\eqref{list:end3} cancel modulo 2. To establish this, we claim more generally that if $\psi_2$ is a class of polygons on a subdiagram of $(\Sigma,\cL_\a)$ or $(\Sigma,\cL_\b)$, then
\begin{equation}
a(\psi_2,\g)=0,\label{eq:a-gamma=0-non-mixed}
\end{equation}
which clearly will imply the claim. 
If $\psi_2$ is a class of polygons on $(\Sigma,\cL_{\b})$ this claim is trivial since the alpha boundary of $\psi_2$ is trivial. If $\psi_2$ is instead a class on a subdiagram of $(\Sigma,\cL_{\a})$, Equation~\eqref{eq:a-gamma=0-non-mixed} follows from the fact that $\d_{\a}(\psi_2)=\d(D(\psi_2))$, where we are writing $D(\psi_2)$ for the \emph{domain} of the class $\psi_2$. We recall that $D(\psi_2)$ is the formal integral combination of the components of $\Sigma\setminus \bigcup_{\as \in \cL_{\a}} \as$ whose multiplicity at a point $p\in \Sigma\setminus \bigcup_{\as \in \cL_{\a}}  \as$ coincides with $n_p(\psi_2)$.
Therefore,
\begin{equation}
a(\psi_2,\g)=\# \d_{\a}(\psi_2)\cap \g=\# \d(D(\psi_2))\cap \g\equiv \# D(\psi_2)\cap \d\g=0,
\label{eq:a-psi-g=0-pure-alpha}
\end{equation}
since $\d \g=\emptyset$. This establishes Equation~\eqref{eq:a-gamma=0-non-mixed}.

Finally, we consider the ends of the moduli spaces $\psi$, as above, when $k=j=1$ (i.e. the $\psi$ is a class of disks). In this case, ends of the form ~\eqref{list:end2} and~\eqref{list:end3} do not appear, though ends of the form~\eqref{list:end1} contribute the $\ys$ coefficient of $(\cA_{\g}\circ \d+\d\circ \cA_{\g})(\xs)$ as before. In this case, there is an additional type of end which appears when $\xs=\ys$:
 \begin{enumerate}[label=($e$-\arabic*), ref=$e$-\arabic*]
 \setcounter{enumi}{3}
 \item\label{list:end4} A constant holomorphic strip at $\xs$, together with a Maslov index 2 boundary degeneration.
 \end{enumerate}
 See \cite{OSLinks}*{Section~5} for more on boundary degenerations in Heegaard Floer theory. We claim that boundary degenerations make trivial algebraic contribution to the weighted sum of the ends of the moduli spaces under consideration. This follows from the fact that if $\psi_2$ is a class of boundary degenerations, then the same argument as Equation~\eqref{eq:a-psi-g=0-pure-alpha} establishes that $a(\psi_2,\g)=0$.

Summing all ends as above, we conclude that $\d_{\Mor}(\cA_{\g})=0$.
\end{proof}

There is a more streamlined presentation of the proof of the above lemma. It is helpful to view $\g$ itself as a type of Floer morphism from a set of attaching curves $\as$ to itself.  We refer to $\g$ as a \emph{formal endomorphism} of $\as$. We can define holomorphic polygon counts with $\g$ as an input as follows. If $\ds_{1},\dots, \ds_n$ are attaching curves, we define 
\[
f_{\dt_1,\dots, \dt_j,\dt_j,\dots, \dt_n}(\Theta_{1,2},\dots, \Theta_{j-1,j},\g, \Theta_{j,j+1},\dots, \Theta_{n-1,n})
\]
to count holomorphic $n$-gons of Maslov index $3-n$ which are weighted by a factor or $\# \d_{\dt_j}(\psi)\cap \g$. If we formally set $\d \g=0$, then it is straightforward to adapt the argument from the proof of Lemma~\ref{lem:homology-action-chain-homotopy} to see that standard associativity relations hold if we allow for a formal endomorphism as an argument.

We can extend the above formalism by defining a formal endomorphism of hypercubes of attaching curves:
\[
F_\g \colon \cL_{\a}\to \cL_{\a}.
\]
The morphism $L_{\g}$ has only length 1 components, all of which are $\g$. There are no higher length components. We will think of $F_{\g}$ as a morphism of twisted complexes (see Section~\ref{sec:twisted-complexes}).

Equation~\eqref{eq:a-gamma=0-non-mixed} immediately implies
\[
\mu_1^{\Tw}(F_\g)=0.
\]
Furthermore, by definition 
\[
\cA_\g=\mu_2^{\Tw}(F_\g,-).
\]
The $A_\infty$-associativity conditions for the category of twisted complexes implies that  
\begin{equation}
\d_{\Mor}(\cA_\g):=\mu_1^{\Tw}\left( \mu_2^{\Tw}(F_\g,-)\right)+\mu_2^{\Tw}\left(F_\g,\mu_1^{\Tw}(-)\right)=0,
\label{eq:associativie=>A_g-chain-map}
\end{equation}
which is the statement of Lemma~\ref{lem:homology-action-chain-homotopy}.

Another important property of the homology action is the following:

\begin{lem} 
\label{lem:homology-action-homotopy-hypercube}
Suppose that $\cL_{\a}$ and $\cL_{\b}$ are hypercubes of attaching curves on $(\Sigma,\ws,\zs,\ps)$ and that $\g$ is a closed 1-chain on $\Sigma$. Suppose that $C\subset \Sigma$ is an integral 2-chain such that $\d C=\g+S_{\a}+S_{\b}$ where $S_{\a}$ are closed 1-chains which are disjoint from all curves in $\cL_{\a}$ and $S_{\b}$ are closed 1-chains which are disjoint from all curves in $\cL_{\b}$. Then
\[
\cA_\g\simeq 0
\]
as morphisms of hypercubes.
\end{lem}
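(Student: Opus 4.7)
The plan is to construct an explicit chain null-homotopy of $\cA_\g$ built from the 2-chain $C$, adapting the classical argument of \cite{OSDisks}*{Section~4.2.5} (which shows that the $H_1$-action on Heegaard Floer homology factors through $H_1(Y)$) to the hypercube setting.

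First, use the $\bF$-linearity of $\cA$ in its 1-chain argument: since $\g = \d C + S_\a + S_\b$, we have $\cA_\g = \cA_{\d C} + \cA_{S_\a} + \cA_{S_\b}$. The summand $\cA_{S_\a}$ vanishes identically, because for every holomorphic polygon $\psi$ appearing in the hypercube differential, the 1-chain $\d_\a \psi$ is supported on curves in $\cL_\a$, all of which are disjoint from $S_\a$ by hypothesis, forcing $\#(\d_\a \psi \cap S_\a) = 0$. Thus it suffices to show $\cA_{\d C} + \cA_{S_\b} \simeq 0$.

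Construct an endomorphism $H_C$ of $\CF(\cL_\a,\cL_\b)$ by counting the holomorphic polygons contributing to the hypercube differential (of arbitrary length, with the appropriate top-degree special inputs from $\cL_\a$ and $\cL_\b$), each weighted by the domain multiplicity $n_C(\psi) := \sum_R c_R\, n_R(\psi)$, where $C = \sum_R c_R \cdot R$ as a formal sum of regions of $\Sigma \setminus (\as \cup \bs)$. Compute $\d_{\Mor}(H_C)$ via Gromov compactness on the 1-dimensional moduli spaces of Maslov index 2 polygons. Strip and polygon breakings $\psi = \psi_1 * \psi_2$ contribute in pairs via the additivity $n_C(\psi) = n_C(\psi_1) + n_C(\psi_2)$, and cancel against one another through the already-established hypercube relations for $(\cL_\a, \cL_\b)$. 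The surviving ends are alpha and beta boundary degenerations of index 2, each weighted by $n_C$ of the associated periodic domain. Applying the mod 2 count of Proposition~\ref{prop:count-boundary-degenerations} and a combinatorial identity relating $n_C(P)$ for a periodic domain $P$ to the intersection number $\#(\d P \cap \g)$ on $\Sigma$ (using the decomposition $\g + S_\a + S_\b = \d C$ and the disjointness of $S_\a$ from alpha curves for alpha degenerations, and of $S_\b$ from beta curves for beta degenerations), the boundary degeneration contributions total $\cA_{\d C} + \cA_{S_\b}$. Together with the first step, this gives $\cA_\g = \d_{\Mor}(H_C)$, as desired.

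The main obstacle is the careful identification of the boundary degeneration contributions with $\cA_{\d C} + \cA_{S_\b}$, which requires a precise combinatorial/intersection-theoretic identity on $\Sigma$ relating $n_C$ of an alpha or beta periodic domain to the intersection number of its boundary with $\g$. A separate technical point is extending the construction across hypercube structure maps of positive length, where one must check that the polygon-polygon breakings across different cube levels cancel correctly, and that the top-degree special inputs do not contribute unexpected terms to the $n_C$ weighting.
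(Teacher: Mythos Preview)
Your proposed null-homotopy has the wrong Maslov grading. You define $H_C$ by counting exactly the holomorphic polygons that appear in the hypercube differential, only reweighted by $n_C(\psi)$; such a map has the same grading as $\d$, namely $-1$, so $\d_{\Mor}(H_C)=\d H_C+H_C\d$ has grading $-2$. But $\cA_\g$ also counts the same polygons as $\d$ (just with the extra weight $a(\psi,\g)$), so it has grading $-1$. Thus $\d_{\Mor}(H_C)$ cannot equal $\cA_\g$ or any nonzero piece of it. There is a second, independent problem with the definition itself: you write $C=\sum_R c_R\,R$ as a sum over regions of $\Sigma\setminus(\as\cup\bs)$, but $\d C=\g+S_\a+S_\b$ and $\g$ typically crosses the attaching curves transversally, so $C$ cannot be constant on those regions and the expression $\sum_R c_R\,n_R(\psi)$ is not well defined. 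The boundary-degeneration bookkeeping you outline therefore does not get off the ground.

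You also make the problem harder than necessary by leaving $\cA_{S_\b}$ to be absorbed later. In fact $\cA_{S_\b}=0$ on the nose: for any polygon $\psi$ the chains $\d_\a\psi$ and $\d_\b\psi$ cobound the $2$-chain $D(\psi)$, hence have the same algebraic intersection with the closed $1$-cycle $S_\b$; since $S_\b$ is disjoint from all beta curves, $\#(\d_\b\psi\cap S_\b)=0$ and therefore $\#(\d_\a\psi\cap S_\b)=0$ as well. The paper's null-homotopy is then the elementary \emph{diagonal} map $H_C(\xs)=n_{\xs}(C)\cdot\xs$ (sum of the multiplicities of the $2$-chain $C$ at the constituent points of $\xs$), which has the correct grading $0$. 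For each polygon $\psi$ from $\xs$ to $\ys$ one checks the pointwise identity
\[
n_{\xs}(C)+n_{\ys}(C)\equiv \#(\d_\a\psi\cap \d C)=\#(\d_\a\psi\cap\g)\pmod 2,
\]
the first step being Stokes applied to $\d_\a\psi$ against $C$ and the second using $\#(\d_\a\psi\cap S_\a)=\#(\d_\a\psi\cap S_\b)=0$. Summing over all polygons in the hypercube differential gives $[\d,H_C]=\cA_\g$ immediately, with no Gromov compactness and no boundary degenerations involved.
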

\begin{proof}
We construct the following diagram to realize the chain homotopy:
\[
\begin{tikzcd} \ve{\CF}^-(\cL_{\a},\cL_{\b})
	\ar[r, "\cA_{\g}"]
	\ar[d, "\id"]
	\ar[dr, "H_C",dashed]
& \ve{\CF}^-(\cL_{\a},\cL_{\b})
	\ar[d, "\id"]
\\
\ve{\CF}^-(\cL_{\a},\cL_{\b})
	\ar[r, "0"]&
\ve{\CF}^-(\cL_{\a},\cL_{\b})
\end{tikzcd}
\]
We define the map $H_C$ to have only length 2 chains in the above diagram (i.e. to increment the $\gamma$ and $\id$ directions, but no other directions), and to send $\xs\in \ve{\CF}^-(\as_{\veps},\bs_{\nu})$ 
to $n_{\xs}(C)\cdot \xs\in \ve{\CF}^-(\as_{\veps},\bs_{\nu})$, and to be equivariant in actions of the variables.

We claim that the hypercube relations are satisfied. To see this, it is sufficient to show that if
\[
\veps_1<\cdots<\veps_n\quad \text{and} \quad \nu_1<\cdots<\nu_m
\]
are increasing sequences in the cubes for $\cL_{\a}$ and $\cL_{\b}$, respectively, then
\begin{equation}
H_C\circ D_{\veps_1<\cdots<\veps_n}^{\nu_1<\cdots<\nu_m}+D_{\veps_1<\cdots<\veps_n}^{\nu_1<\cdots<\nu_m}\circ H_C=(\cA_\g)_{\veps_1<\cdots<\veps_n}^{\nu_1<\cdots<\nu_m}
\label{eq:homology-action-homotopy-hypercube}
\end{equation}
In the above, $D_{\veps_1<\cdots<\veps_n}^{\nu_1<\cdots<\nu_m}$ denotes the summands of the hypercube differential of $\ve{\CF}^-(\cL_{\a},\cL_{\b})$ which counts curves on $(\Sigma,\as_{\veps_n},\dots, \as_{\veps_1},\bs_{\nu_1},\dots, \bs_{\nu_m})$.

In order to prove Equation~\eqref{eq:homology-action-homotopy-hypercube}, we consider a class
\[
\psi\in \pi_2(\Theta_{\veps_n,\veps_{n-1}},\dots, \Theta_{\veps_2,\veps_1} ,\xs,\Theta_{\nu_1,\nu_2},\cdots ,\Theta_{\nu_{m-1},\nu_m},\ys)
\]
  of $(n+m)$-gons, and consider $\d_{\a}(\psi)\cap C\subset \Sigma$. Since $\d_{\a}(\psi)$ is 1-dimensional and $C$ is 2-dimensional, the intersection $\d_{\a}(\psi) \cap C$ is 1-dimensional. We compute $\d (\d_{\a}(\psi) \cap C)$ using the Leibniz rule for intersections, which yields:
\[
\begin{split}
\d(\d_{\a}(\psi)\cap C)\equiv &\# (\d \d_{\a}(\psi))\cap C+\# \d_{\a}(\psi)\cap \d C \pmod{2} \\
\equiv& n_{\ve{x}}(C)+n_{\ve{y}}(C)+\# \d_{\a}(\psi)\cap (\g+S_{\a}+S_{\b}) \pmod{2}.
\end{split}
\]
We first note that $\d_{\a}(\psi)\cap S_{\a}=\d_{\b}(\psi)\cap S_{\b}=\emptyset$. Furthermore $\d_{\a}(\psi)$ and $\d_{\b}(\psi)$ are homologous via the 2-chain $D(\psi)$, so we also have $\# \d_{\a}(\psi)\cap S_{\b}\equiv 0$. Hence
\[
n_{\ve{x}}(C)+n_{\ve{y}}(C)\equiv \#\d_{\a}(\psi)\cap \g\pmod 2.
\]
This implies Equation~\eqref{eq:homology-action-homotopy-hypercube}, completing the proof.
\end{proof}

\subsection{Relative homology actions and hypercubes}
\label{sec:relative-homology-actions}

We now define an endomorphism $\cA_{\lambda}$ for arcs $\lambda$ on $\Sigma$ which have boundary on two basepoints. We focus on the case that $\d \lambda$ consists of two free basepoints, $p_i$ and $p_j$, to simplify the exposition.

In the case of the 3-manifold invariants, if $(Y,\ps)$ is a multi-pointed 3-manifold and $\lambda$ is an arc connecting two basepoints $p_i,p_j\in \ps$, then there is an endomorphism
\[
A_{\lambda}\colon \ve{\CF}^-(Y,\ps)\to \ve{\CF}^-(Y,\ps)
\]
which satisfies
\[
\d_{\Mor}(A_{\lambda})=U_{i}+U_{j}.
\]
See \cite{ZemGraphTQFT}*{Section~5}.

We now describe how to extend the construction of this map into the setting of hypercubes. When $\cL_{\a}$ and $\cL_{\b}$ are hypercubes of strongly equivalent attaching curves on $(\Sigma,\ws,\zs,\ps)$ and $\lambda$ connects two free basepoints $p_i$, $p_j$, we will construct a morphism of hypercubes
\[
\cA_{\lambda}\colon \ve{\CF}^-(\cL_{\a},\cL_{\b})\to \ve{\CF}^-(\cL_{\a},\cL_{\b})
\]
which satisfies
\[
\d_{\Mor}(\cA_{\lambda})=U_i+U_j.
\]

Similar to the case of closed curves $\g$, we will define
\[
\cA_{\lambda}=\mu_2^{\Tw}(F_{\lambda},-)
\]
for some formal endomorphism
\[
F_{\lambda}\colon \cL_{\a}\to \cL_{\a}.
\]
The map $F_{\lambda}$ will have length 1 components equal to the formal endomorphisms $\lambda\colon \as_{\veps}\to \as_{\veps}$. However, unlike for closed curves, we will need to add higher length arrows to the map $F_{\lambda}$. The main issue is that if $\psi$ is a class of $\ell$-gons on a subdiagram of $(\Sigma,\cL_{\a},\ws,\zs,\ps)$, then in contrast to Equation~\eqref{eq:a-gamma=0-non-mixed} we have
\begin{equation}
a(\psi,\lambda)\equiv n_{p_i}(\psi)-n_{p_j}(\psi)\pmod 2,
\label{eq:a(psi,lambda)=npnp}
\end{equation}
which may be non-zero. Therefore the proof of Lemma~\ref{lem:homology-action-chain-homotopy} does not carry over without modification.

To construct $F_{\lambda}$ and understand its formal properties, we have to first expand our notation of a formal endomorphism slightly. If $\as$ is a set of attaching curves, we will also consider formal morphisms of the form $a\cdot \bI\colon \as\to \as$, where $a$ is a polynomial in the $\scU_i$, $\scV_i$ and $U_j$ variables. 
We define holomorphic polygon maps with $a \cdot\bI$ as an input by declaring them to be $\scU_i$, $\scV_i$ and $U_i$-equivariant, and also strictly unital. That is, we declare
\[
f_{\a_1,\dots, \a_j, \a_j,\dots, \a_n}(\xs_1,\dots, a\cdot \bI,\dots, \xs_{n-1})
\]
to vanish unless $n=2$. We define
\[
f_{\a_1, \a_2,\a_2}(\xs, a\cdot\bI)=a\cdot \xs\quad \text{and} \quad f_{\a_1,\a_1,\a_2}(a\cdot\bI, \xs)=a\cdot \xs.
\]
We also declare $\d(\bI)=0$.

We can extend the above construction to define a formal endomorphism $\bI\colon \cL_{\a}\to \cL_{\a}$ for any hypercube of attaching curves $\cL_{\a}$. The morphism $\bI$ has only length 1 components, all of which are of the form $\bI_{\a_\veps}$.

We will construct $F_{\lambda}$ so that
\[
\mu_1^{\Tw}(F_{\lambda})=(U_{i}+U_{j})\cdot \bI.
\]
Once we construct $F_{\lambda}$, we will define
\[
A_{\lambda}(-):=\mu_2^{\Tw}(F_{\lambda},-).
\]

 In the following lemma, we prove that it is always possible to construct the higher length chains of $F_\lambda$ when $\cL_{\a}$ is a hypercube of handleslide equivalent curves:

\begin{lem}\label{lem:d-mor-A-lambda}
Suppose that $\cL_{\a}$ is a hypercube of handleslide equivalent attaching curves on $(\Sigma,\ws,\zs,\ps)$, and let $\lambda$ be an immersed arc on $\Sigma$.
\begin{enumerate}
\item If $\d\lambda=\{p_i,p_j\}$, where $p_i,p_j$ are free basepoints, then $L_\lambda$ may be constructed so that
\[
\mu_1^{\Tw}(F_{\lambda})=U_{i}+U_j.
\]
\item More generally, if $\d \lambda=\{x_1,x_2\}\subset \ws\cup \zs\cup \ps$, then $F_\lambda$ may be constructed so that
\[
\mu_1^{\Tw}(F_{\lambda})=E_{x_1}^\a+E_{x_2}^{\a},
\]
where $E_{x_i}^{\a}$ denotes the product of all variables for basepoints in the component of $\Sigma\setminus \as_{\veps}$ which contains $x_i$. (Note that this is independent of $\veps$, since $\cL_{\a}$ consists of handleslide equivalent attaching curves). 
\end{enumerate}
\end{lem}
\begin{rem}
\label{rem:remark-dmor-A-lambda}
\item
\begin{enumerate}
\item It follows from the associativity relations for $\mu_i^{\Tw}$ (as in Equation~\eqref{eq:associativie=>A_g-chain-map}) that if $\cL_\b$ is another hypercube of attaching curves and $\d \lambda=\{p_i,p_j\}$ consists of free basepoints, then
\[
\d_{\Mor}(\cA_{\lambda})=(U_i+U_j)\cdot \bI
\]
as an endomorphism of $\ve{\CF}^-(\cL_{\a},\cL_{\b})$. The second part of the lemma has a similar consequence.
\item  As an example of the second part of the lemma, if $(\Sigma,\as,\bs,\ws,\zs)$ represents a link $L\subset Y$ for some $\as\in \cL_{\a}$ and $\bs\in \cL_{\b}$, and $x_1=w_i$ and $x_2=w_j$ are two link basepoints on components $K_i$ and $K_j$ of $L$, then
\[
\d_{\Mor}(\cA_{\lambda})=(\scU_{i}\scV_{i}+\scU_{j}\scV_{j})\cdot \id.
\]
\end{enumerate}
\end{rem}
\begin{proof} We focus on the first part of the lemma, as the second part is not substantially different. The proof is described in \cite{HHSZNaturality}*{Section~6.2} when we set $U_i=U_j$, though we review it in detail since we need to prove additional properties of the maps $\cA_{\lambda}$.

 We begin by considering the length 2 arrows which increment the $\lambda$ direction. If $\as_1$ is an immediate successor of $\as_0$ in $\cL_{\a}$, then we wish to construct a chain $\eta_{\a_1,\a_0}$ so that the diagram below is a hypercube (except for the length 1 relations along $\lambda$):
\[
\begin{tikzcd}[labels=description, column sep=2cm, row sep=2cm]
\as_0
	\ar[d, "\Theta_{\a_1,\a_0}"]
	\ar[r, "\lambda"]
	\ar[dr,dashed, "\eta_{\a_1,\a_0}"]
& \as_0
	\ar[d, "\Theta_{\a_1,\a_0}"]
\\
\as_1 \ar[r, "\lambda"]& \as_1
\end{tikzcd}
\]
Concretely, this amounts to finding $\eta_{\a_1,\a_0}\in \ve{\CF}^-(\Sigma,\as_1,\as_0)$ such that
\[
\d \eta_{\a_1,\a_0}=A_\lambda^{\a_0}(\Theta_{\a_1,\a_0})+A^{\a_1}_{\lambda}(\Theta_{\a_1,\a_0}).
\]
Above, the map $A_{\lambda}^{\a_0}$ weights curves by $\# (\d_{\a_0}(\phi)\cap \lambda)$ while $A^{\a_1}_{\lambda}$ weights curves by $\#(\d_{\a_1}(\phi)\cap \lambda)$.

We recall that for the ordinary Floer complexes, 
\[
A_{\lambda}^{\a_0}(\Theta_{\a_1,\a_0})+A^{\a_1}_{\lambda}(\Theta_{\a_1,\a_0})=U_{p_i}\Phi_{p_i}(\Theta_{\a_1,\a_0})+U_{p_j}\Phi_{p_j}(\Theta_{\a_1,\a_0}).
\]
See \cite{ZemGraphTQFT}*{Lemma~5.7}. However $\Phi_{p_i}(\Theta_{\a_1,\a_0})$ and $\Phi_{p_j}(\Theta_{\a_1,\a_0})$ are cycles of Maslov $(\gr_{\ws\cup \ps},\gr_{\zs\cup \ps})$-bigrading $(1,1)$ larger than the canonical generator of $\ve{\HF}^-(\Sigma,\as_1,\as_0)$, and hence are each boundaries. We note that $\eta_{\a_1,\a_0}$ may be chosen to be in the submodule of $\ve{\CF}^-(\Sigma,\as_1,\as_0)$ generated by the ideal $(U_{p_i},U_{p_j})$.

We now define the length 3 morphisms of $F_\lambda$ which increase the $\lambda$-direction. Suppose that $\as_{00}$ and $\as_{11}$ differ by two coordinates of $\cL_{\a}$. We consider the 3-dimensional cube spanned by $\as_{00}$, $\as_{10}$, $\as_{01}$, $\as_{11}$, extended into the $\lambda$ direction. All  length 2 hypercube relations are satisfied. We sum the terms which would appear in the length 3 relation of this hypercube except for contribution of the length 3 arrow. Write
\[
C_{\a_{11},\a_{00}}\in \ve{\CF}^-(\as_{11},\as_{00})
\]
for this element.  We observe three facts:
\begin{enumerate}[label=($C$-\arabic*), ref=$C$-\arabic*]
\item $C_{\a_{11},\a_{00}}$ is a cycle.
\item $C_{\a_{11},\a_{00}}$ has Maslov bigrading equal to that of the canonical top degree generator of $\ve{\HF}^-(\as_{11},\as_{00})$.
\item\label{claim-C-submoduleU} $C_{\a_{11},\a_{00}}$ is in the submodule generated by the ideal $(U_{p_i},U_{p_j})$.
\end{enumerate}
The first two equations are easily verified. To see the third, we observe that the length 3 relation is obtained by summing over all ways of taking a consecutive sequence of morphisms in the cube, and then inserting $\lambda$ between 2 terms or at the start or beginning and then applying the holomorphic polygon maps interpreted as above. As an example, the expression $C_{\a_{11},\a_{00}}$ contains the sum
\[
\begin{split}&f_{\a_{11},\a_{01},\a_{00},\a_{00}}( \Theta_{\a_{11},\a_{01}},\Theta_{\a_{01},\a_{00} },\lambda)\\
+&f_{\a_{11},\a_{01},\a_{01},\a_{00}}( \Theta_{\a_{11},\a_{01}},\lambda,\Theta_{\a_{01},\a_{00} })\\
+&f_{\a_{11},\a_{11},\a_{01},\a_{00}}(\lambda,\Theta_{\a_{11},\a_{01}},\Theta_{\a_{01},\a_{00} }).
\end{split}
\]
The above expression may be collapsed into a count of holomorphic triangles which are weighted by $n_{p_i}(\psi)+n_{p_j}(\psi)$. In particular, the output lies in the submodule generated by $U_{p_i}$ and $U_{p_j}$. The same reasoning holds for the other terms in $C_{\a_{11},\a_{00}}$.

 Since $\as_{11}$ and $\as_{00}$ are handleslide equivalent, the canonical generator of $\ve{\HF}^-(\as_{11},\as_{00})$ has both the maximal $\gr_{\ws\cup \ps}$ and maximal $\gr_{\zs\cup \ps}$-grading of any non-zero element, and furthermore, the canonical element is the unique non-zero element of homology in this grading. Furthermore, it is straightforward to  verify (e.g. by a model computation on a simple diagram) that the quotient map 
\begin{equation}
\Pi\colon \ve{\CF}^-(\as_{11},\as_{00})\to \ve{\CF}^-(\as_{11},\as_{00})/(U_{i},U_{j})
\label{eq:pi-map-quotient-variables}
\end{equation}
is non-vanishing on the canonical generator of top degree. This map $\Pi$ vanishes on $C_{\a_{11},\a_{00}}$ because $C_{\a_{11},\a_{00}}$ is in the ideal $(U_{i},U_j)$. We note that $C_{\a_{11},\a_{00}}$ has the same grading as the top degree element of $\ve{\HF}^-(\as_{11},\as_{00})$, and hence $C_{\a_{11},\a_{00}}$, being zero on homology, must be a boundary in $\ve{\CF}^-(\as_{11},\as_{00})$. We pick any homogeneously graded chain $\omega_{\a_{11},\a_{00}}$ such that $\d \omega_{\a_{11},\a_{00}}=C_{\a_{11},\a_{00}}$ to be the length 3 map in our cube.

Filling in the higher dimensional cubes proceeds in a similar, though slightly simpler manner. In this case, when we want to fill over an $n$-dimensional subcube for $n>3$, all of whose morphisms except for the longest morphism are defined, the corresponding element $C$ is a cycle of bidegree $(n-3,n-3)$ greater than the canonical generator, and hence will automatically be a cycle.
\end{proof}

\begin{rem}
If one restricts to the case that $\cL_{\a}$ is algebraically rigid, then we can take $F_{\lambda}$ to have only length 1 components, which consist of the formal endomorphisms $\lambda$. This is because if $\psi\in \pi_2(\Theta_{\a_{\veps_{n}},\a_{\veps_{n-1}}},\dots, \Theta_{\a_{\veps_2},\a_{\veps_1}}, \ys)$ is a class of polygons with expected dimension 0 on $(\Sigma,\as_{\veps_n},\dots, \as_{\veps_1},\ws,\zs,\ps)$, then the Maslov grading formula in Heegaard Floer theory implies that
\[
\mu(\psi)=\gr_{\ws\cup \ps}(\Theta_{\a_{\veps_n},\a_{\veps_1}},\ys)+2n_{\ws\cup \ps}(\psi).
\]
Since $\cL_{\a}$ is algebraically rigid, $\gr_{\ws\cup \ps}(\Theta_{\a_{\veps_n},\a_{\veps_1}},\ys)\ge 0$. It follows that, if $\mu(\psi)=3-n$ (so that $\cM(\psi)$ is 0-dimensional) then there are no holomorphic representatives of $\psi$ if $n\ge 3$ and if $n=2$ or $n=3$, then we have $n_{\ws\cup \ps}(\psi)=0$. In particular, $n_{p_i}(\psi)=n_{p_j}(\psi)=0$ and hence
\[
\#\d_{\a}(\psi)\cap \lambda=0
\]
by Equation~\eqref{eq:a(psi,lambda)=npnp}. In particular, we can argue as in Lemma~\ref{lem:homology-action-chain-homotopy} to see that $\d(F_{\lambda})=(U_i+U_j)\cdot \bI$, when $F_{\lambda}$ is defined with only length 1 morphisms. Therefore, when $\cL_{\a}$ is algebraically rigid, we can define $\cA_{\lambda}=\mu_2^{\Tw}(F_{\lambda},-)$ (with $F_{\lambda}$ having no higher length arrows) just as we defined $\cA_{\g}$ in Lemma~\ref{lem:homology-action-chain-homotopy}, whereas in general we need to add higher length arrows to $F_{\lambda}$. 
\end{rem}

The following is a technical result which is useful when we prove properties about $\cA_{\lambda}$ (cf. \cite{HHSZNaturality}*{Lemma~6.2}).

\begin{lem}
\label{lem:higher-length-chains-in-submodule-A-lambda} If $\d \lambda=\{x_1,x_2\}$, where $x_i\in \ws\cup \zs\cup \ps$, then all of the higher length morphisms of $F_\lambda$ may be chosen to be in the submodule spanned by $(E_{x_1},E_{x_2})$.
\end{lem}
\begin{proof} We focus on the case that $\d \lambda=\{p_i,p_j\}$ are free basepoints, so $E_{x_1}=U_i$ and $E_{x_2}=U_j$, since the general case is no different. In the proof of Lemma~\ref{lem:d-mor-A-lambda} we observed that the length 2 components of $F_\lambda$ could be chosen to be in submodule generated by the ideal $(U_i,U_j)$. Consider the length 3 arrows of $F_\lambda$ (the argument for higher length chains is essentially the same). Following the proof of Lemma~\ref{lem:d-mor-A-lambda}, let $C_{\a_{11},\a_{00}}$ be the sum of the terms in the length 3 relation, excluding the term $\d\omega_{\a_{11},\a_{00}}$, which has yet to be defined.

We observed in Lemma~\ref{lem:d-mor-A-lambda} that $C_{\a_{11},\a_{00}}$ was a boundary, and we picked some $\omega_{\a_{11},\a_{00}}$ in bigrading $(1,1)$ higher than the canonical generator such that $\d \omega_{\a_{11},\a_{00}}=C_{\a_{11},\a_{00}}$. Write 
\[
\omega_{\a_{11},\a_{00}}=U_i\omega_i+U_j  \omega_j+\omega_0,
\] 
where $\omega_0$ has no powers of $U_i$ or $U_j$. Note that the image $\Pi(\omega)$ of $\omega_0$  in $\ve{\CF}^-(\as_{11},\as_{00})/(U_i,U_j)$ is a cycle because in Claim~\eqref{claim-C-submoduleU} in the proof of Lemma~\ref{lem:d-mor-A-lambda} we showed that $C_{\a_11,\a_{00}}$ is in the submodule spanned by $(U_i,U_j)$. Further, $\omega$ has bigrading $(1,1)$ higher than the top degree. Hence $\Pi(\omega_0)$ is a boundary, i.e. $\Pi(\omega_0)=\d (\xs_0)$. We lift $\xs_0$ to $\ve{\CF}^-(\as_{11},\as_{00})$ to get a chain $\xs$ of homogeneous bigrading $(1,1)$ higher than $\omega_0$, so that
\[
\d \ve{x}=\omega_0+U_{i} \ve{y}_i+U_{j} \ve{y}_j,
\]
for some $\ve{y}_i,\ve{y}_j$. Note that 
\[
\d(\omega_{\a_{11},\a_{00}}+\d \xs)=C_{\a_{11},\a_{00}}
\]
and $\omega_{\a_{11},\a_{00}}+\d \xs$ is in the submodule spanned by $(U_i,U_j)$, so we use $\omega_{\a_{11},\a_{00}}+\d \xs$ in place of $\omega_{\a_{11},\a_{00}}$. The proof for chains of length $n>3$ follows from essentially the same logic.
\end{proof}

\begin{rem}\label{rem:well-defined-F_lambda} Applying the essentially the same construction as above shows that $F_{\lambda}$ is well-defined up to chain homotopy. If $F_{\lambda}$ and $F_{\lambda}'$ are two constructions of this morphism, then we can apply the filling construction to build a diagonal morphism $h$ in the following diagram
\[
\begin{tikzcd}[labels=description, column sep=1cm, row sep=1cm] \cL_{\a} \ar[r, "F_{\lambda}"] \ar[dr,dashed, "h"] \ar[d, "\bI"] & \cL_{\a} \ar[d, "\bI"] \\
\cL_{\a} \ar[r, "F_{\lambda}'"] & \cL_{\a}
\end{tikzcd}
\]
which gives a chain homotopy of generalized endomorphisms
\[
\d_{\Mor}(h)=F_{\lambda}+F_{\lambda}'.
\]
\end{rem}

\subsection{Free stabilization}

In \cite{ZemGraphTQFT}*{Section~6}, the author describes maps for adding and removing basepoints to the Heegaard Floer complexes. These are referred to as \emph{free-stabilization} maps. One can also define a free-stabilization map on the level of hypercubes. These are described in \cite{HHSZNaturality}*{Section~7.1}, and we recall the construction presently.

Suppose that $\cL_{\a}$ and $\cL_{\b}$ are hypercubes of
of attaching curves on a surface $(\Sigma,\ws,\zs,\ps)$, and that $p$ is a point on $\Sigma\setminus (\ws\cup \zs\cup \ps)$, which is not contained in any curves of $\cL_{\a}$ or $\cL_{\b}$. We define stabilized hypercubes $\cL_{\a}^+$ and $\cL_{\b}^+$. We construct the attaching curves of $\cL_{\a}^+$ and $\cL_{\b}^+$ as follows. For each $\as$ in $\cL_{\a}$, we adjoin a new component which consists of a small circle bounding a disk centered at $p$. We define $\cL_{\b}^+$ similarly. We translate these curves slightly to achieve admissibility. If $\veps<\veps'$ are points in the cube for $\cL_{\a}$, and $\Theta_{\a_{\veps'},\a_{\veps}}$ is the corresponding chain of $\cL_{\a}$, then we define corresponding chain in $\cL_{\a}^+$ to be $\Theta_{\a_{\veps'},\a_{\veps}}\otimes \theta^+$, where $\theta^+$ is the top graded generator of the new attaching curves centered at $p$. We make a similar definition for the chains of $\cL_{\b}^+$. 

We define a map
\[
\cS^+_p\colon \ve{\CF}^-(\cL_{\a},\cL_{\b})\to \ve{\CF}^-(\cL_{\a}^+,\cL_{\b}^+)
\]
via the formula $\xs\mapsto \xs\otimes \theta^+$, extended equivariantly over the variables $\scU_{i}$, $\scV_{i}$ and $U_j$.

There is also a map $\cS^-_p$ in the opposite direction, defined dually by the formula $\xs\otimes \theta^+\mapsto 0$ and $\xs\otimes \theta^-\mapsto \xs$, extended equivariantly over the variables $\scU_{i}$, $\scV_{i}$ and $U_j$. 

\begin{lem}\label{lem:Sw-chain-maps}
 The hypercube free-stabilization maps, $\cS_p^+$ and $\cS_p^-$, are chain maps.
\end{lem}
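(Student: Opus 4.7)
The plan is to verify the hypercube-morphism cycle condition $\d_{\Mor}(\cS_w^{\pm})=0$ by reducing to a local analysis of holomorphic polygons near the new basepoint $w$. Concretely, the content of the lemma is that for every increasing pair of sequences $\veps_1<\cdots<\veps_j$ in the $\cL_\a$-cube and $\nu_1<\cdots<\nu_k$ in the $\cL_\b$-cube, the length-$(j{+}k{-}1)$ hypercube morphism in $\CF(\cL_\a^+,\cL_\b^+)$ with special inputs $\Theta_{\a_{\veps_i\to\veps_{i+1}}}\otimes\theta^+$ and $\Theta_{\b_{\nu_i\to\nu_{i+1}}}\otimes\theta^+$ intertwines with the corresponding morphism of $\CF(\cL_\a,\cL_\b)$ under the tensor factorization $\xs\mapsto\xs\otimes\theta^+$ (for $\cS_w^+$) and dually for $\cS_w^-$.

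The key step will be a neck-stretching/small-translate decomposition at the small circles around $w$. Picking the belt-sphere circles in $\cL_\a^+,\cL_\b^+$ to be sufficiently small Hamiltonian translates around $w$ (relative to the chosen almost complex structures), one shows that any holomorphic $(j+k)$-gon on the stabilized diagram whose special inputs are of the form $\Theta\otimes\theta^+$ must split as $\psi_0\#\psi_w$, where $\psi_0$ is an $(j+k)$-gon on the original diagram $(\Sigma,\cL_\a,\cL_\b,\ws,\zs)$ and $\psi_w$ is a polygon in the local region bounded by the small circles. A Maslov-index count of the sort carried out in Lemma~\ref{lem:nearest-point} and used throughout Section~\ref{sec:disjoint-unions}, together with the fact that the local complex at $w$ has $\theta^+$ as the top-graded cycle, forces $\psi_w$ to be constant at $\theta^+$ whenever the output also has a $\theta^+$ factor, and to be the unique local small disk through $w$ (contributing a power of $\scU_w$) whenever the output has a $\theta^-$ factor.

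Given this decomposition, the chain map property for $\cS_w^+$ is immediate: the output of the hypercube differential applied to $\xs\otimes\theta^+$ splits into a $\theta^+$-component, which matches $(\d\xs)\otimes\theta^+=\cS_w^+(\d\xs)$, and a $\theta^-$-component that lies in the kernel of the projection $\cS_w^-$. Dually, applying the differential to $\xs\otimes\theta^-$ yields $(\d\xs)\otimes\theta^-$ up to $\theta^+$-corrections that are killed by $\cS_w^-$, so $\cS_w^-\circ\d=\d\circ\cS_w^-$. The main obstacle I anticipate is carrying out the decomposition argument uniformly across all polygon lengths rather than only for disks; this is essentially the content of the small-translate theorems and the index analysis of Lemma~\ref{lem:codim-1-Phi-ell}, which can be adapted here with the small stabilization circles playing the role of the neck. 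Once this local decomposition is in hand, the hypercube-level compatibility — including all higher-length chains in $\cL_\a^+$ and $\cL_\b^+$ arising from the $\theta^+$-tensor convention — follows without further holomorphic-curve input.
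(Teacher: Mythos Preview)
Your overall strategy—stretch the neck at the small stabilization circles, split contributing polygons as $\psi_0\#\psi_w$, and control the local piece via an index bound—is exactly the approach the paper invokes. The paper itself only cites \cite{HHSZNaturality}*{Proposition~5.5} and \cite{ZemGraphTQFT}; the relevant index computation is written out nearby in the proof of Lemma~\ref{lem:Phi-SS}.

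There is, however, a genuine gap in your handling of $\cS_w^+$. For $\cS_w^+$ to be a chain map you need $\d(\xs\otimes\theta^+)=(\d\xs)\otimes\theta^+$ on the nose; a nonzero $\theta^-$-component would break this identity. Your claim that such a component ``lies in the kernel of the projection $\cS_w^-$'' is wrong on two counts: $\cS_w^-$ is nonzero precisely on $\theta^-$-terms, and in any case the chain-map condition for $\cS_w^+$ does not involve $\cS_w^-$. The correct conclusion of the index/gluing argument is that the $\theta^-$-component of $\d(\xs\otimes\theta^+)$ vanishes outright: once the index bound forces $n_w(\psi_w)=0$, the fact that every special input and the module input carry the top-degree local generator $\theta^+$ pins the output at $\theta^+$ as well. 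You should state and verify this vanishing rather than attempt to project it away. The same vanishing is exactly what is needed for $\cS_w^-$ to be a chain map on inputs $\xs\otimes\theta^+$ (where $\cS_w^-\circ\d$ must vanish), a case your sketch omits.
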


The proof follows from a standard index computation and gluing argument. See \cite{HHSZNaturality}*{Proposition~5.5} for a proof in the general case, and \cite{ZemGraphTQFT}*{Propositions~6.5 and Theorem 6.7} for the case of holomorphic disks and triangles.

The fact that $\cS_p^+$ and $\cS_p^-$ are chain maps of hypercubes may be interpreted as implying a form of naturality for the free-stabilization maps, similar to Remark~\ref{rem:naturality-hypercubes}. Suppose $\scH=(\Sigma,\cL_{\a},\cL_{\b})$ and $\scH'=(\Sigma,\cL_{\a'},\cL_{\b'})$ are diagrams of hypercubes of attaching curves and that the curves of $\cL_{\a}$ and $\cL_{\a'}$ are pairwise handleslide equivalent, and similarly for $\cL_{\b}$ and $\cL_{\b'}$. Write $\scH^+_p$ and ${\scH'}_p^+$ for their stabilizations.
Using the notation of Remark~\ref{rem:naturality-hypercubes}, we see that Lemma~\ref{lem:Sw-chain-maps} implies the (strict) commutation of the diagram
\[
\begin{tikzcd}[column sep=2cm]
\ve{\CF}^-(\scH)\ar[r, "\Psi_{\scH\to \scH'}"]\ar[d, "\cS_p^+"]& \ve{\CF}^-(\scH')\ar[d, "\cS_p^+"]\\
\ve{\CF}^-(\scH^+_p)\ar[r,"\Psi_{\scH^+_p\to {\scH'}_p^+}"]& \ve{\CF}^-({\scH_p'}^+)
\end{tikzcd}
\]

Similar to the setting of the graph TQFT, we have the following relation between the free-stabilization maps and the basepoint actions from Section~\ref{sec:basepoint-actions-hypercubes}.

\begin{lem}
\label{lem:Phi-SS}
For appropriately degenerated almost complex structures, we have $\cS_p^+\cS_p^-=\Phi_p$.
\end{lem}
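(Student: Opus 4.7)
The plan is to verify this identity component-by-component as an equality of morphisms of hypercubes on $\CF(\cL_\a^+,\cL_\b^+)$, with the key input being a neck-stretching analysis of holomorphic curves at $w$, adapted from the argument for ordinary free-stabilization maps in \cite{ZemGraphTQFT}*{Section~6}. First I would note that both $\cS_w^+$ and $\cS_w^-$ are defined with only length $0$ components, so the composition $\cS_w^+\cS_w^-$ is itself concentrated in length $0$ and is determined by $\xs\otimes \theta^-\mapsto \xs\otimes \theta^+$ and $\xs\otimes \theta^+\mapsto 0$, extended equivariantly over the variables. To match this with $\Phi_w$, I must show (i) that the length $0$ part of $\Phi_w$ has this formula, and (ii) that all higher-length components of $\Phi_w$ vanish for appropriately degenerated almost complex structures.

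For step (i), for a sufficiently degenerated almost complex structure near $w$, the internal differential at each vertex of $\CF(\cL_\a^+,\cL_\b^+)$ splits as a tensor product of the unstabilized Floer differential with the local differential on the small diagram near $w$. The local differential contributes exactly one term, namely $\theta^-\mapsto \scU_w \theta^+$, from the standard small bigon covering $w$ once. Since the unstabilized differential has no $\scU_w$ dependence, taking the formal $\scU_w$-derivative of the total differential gives precisely the formula for $\cS_w^+\cS_w^-$.

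For step (ii), I would use a neck-stretching degeneration at $w$ applied to the holomorphic $\ell$-gon maps underlying the length $\geq 1$ components of the hypercube differential. For any such $\ell$-gon on the stabilized diagram with $\ell \geq 2$, all special inputs of the form $\Theta\otimes \theta^+$, and $n_w \geq 1$, the degenerate limit factors the curve as a pair $(u,v)$, where $u$ is a polygon on the unstabilized diagram (contributing $n_w(u)=0$) and $v$ is a polygon on the small local diagram near $w$ with $n_w(v)\geq 1$ and all special inputs equal to $\theta^+$. An index computation using the fact that $\theta^+$ is the top-degree generator of the small local Floer complex would show that such local polygons $v$ have negative expected dimension for $\ell \geq 2$, so they do not generically contribute, and hence the length $\geq 1$ part of $\Phi_w$ vanishes for this choice of almost complex structure.

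The main obstacle will be the neck-stretching analysis itself: establishing the claimed factorization of holomorphic curves in the degenerate limit and ruling out boundary degenerations or bubbling configurations that could spoil it requires careful compactness arguments in the spirit of the small translate theorems of Section~\ref{sec:disjoint-unions}. Once the factorization is secured, the local index computation is routine because the small curves near $w$ have a simple local model and the gradings of $\theta^\pm$ are explicit.
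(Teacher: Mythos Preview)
Your proposal is correct and follows essentially the same approach as the paper: both arguments neck-stretch at $w$, decompose each class as a global piece $\psi$ on the unstabilized diagram and a local piece $\psi'$ in the stabilization region, and use the index identity $\mu(\psi\#\psi')=\mu(\psi)+2n_w(\psi')$ together with the transversality bound $\mu(\psi)\ge 3-\ell$ to force $n_w(\psi')=0$ for the higher polygon maps, leaving only the small bigon over $w$ in the length-$0$ part. The only cosmetic difference is that the paper states the argument via the explicit connected-sum index formula rather than phrasing it as ``the local polygon has negative expected dimension''; also note that your ``$\ell\ge 2$'' in step~(ii) should read $\ell\ge 3$ (i.e.\ the length~$\ge 1$ hypercube components), since $\ell=2$ is exactly the disk case handled in step~(i).
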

\begin{proof}
The argument is essentially the same as \cite{HHSZNaturality}*{Proposition~5.5}, though we repeat the argument here for completeness. We consider a class of polygons
\[
\psi\# \psi'\in \pi_2(\Theta_{\a_n,\a_{n-1}}\times \theta^+,\dots, \Theta_{\a_2,\a_1}\times \theta^+,\xs\times \theta,\Theta_{\b_1,\b_2}\times \theta^+,\dots, \Theta_{\b_{m-1},\b_m}\times \theta^+,\ys\times \theta')
\]
 which potentially contributes to the differential on the hypercube $\ve{\CF}^-(\cL_{\a},\cL_{\b})$. Here, $\theta,\theta'\in \{\theta^+,\theta^-\}$. Here, $\psi'$ has is a class in the stabilization region, and $\psi$ is a class on the unstabilized diagram. By a standard computation
\begin{equation}
\mu(\psi\# \psi')=\mu(\psi)+2n_p(\psi')+\gr(\theta,\theta').\label{eq:index-connected-sum-Phi-w}
\end{equation}
See \cite{ZemGraphTQFT}*{Equation~6.3}. The hypercube differential counts solutions with $\mu(\psi\# \psi')=3-m-n$. 

In Equation~\eqref{eq:index-connected-sum-Phi-w}, the term $\gr(\theta,\theta')$ is in $\{-1,0,1\}$. For sufficiently stretched almost complex structure, we know that $\mu(\psi)\ge 3-m-n$ if $m+n\ge 3$, by transversality. Therefore it follows that when $m+n\ge 3$, we have
\[
(\mu(\psi), \gr(\theta,\theta'), n_p(\psi'))\in \{ (3-m-n,0,0), (4-m-n,-1, 0)\}.
\]
In particular, $n_p(\psi)=0$ so such curves do not contribute to $\Phi_p$ because $n_p(\psi')=0$. 

When $m+n=2$, it is possible for $\mu(\psi)=2-m-n=0$ and for $\psi$ to have a representative, though such curves must represent the constant class. The only possibility with $n_p(\psi')=1$ is when $\gr(\theta,\theta')=-1$, $\mu(\psi)=0$ and $n_p(\psi')=1$. These curves have domain equal to the bigon in the stabilization region which covers $p$. These are the only curves which are counted by the hypercube map $\Phi_p$, and we see that
\[
\Phi_p=\cS^+_p \circ \cS_p^-.
\]
\end{proof}

The same neck-stretching argument as in Lemma~\ref{lem:Sw-chain-maps} can be used to prove the following (cf. \cite{ZemGraphTQFT}*{Corollary~6.6}):

\begin{lem}\label{lem:commute-S-A-lam}
 Suppose that $\lambda$ is an arc or closed path which is disjoint from $p$. For an appropriately degenerated almost complex structure, we have
\[
\left[\cA_\lambda, \cS_p^{\pm}\right]= 0.
\]
\end{lem}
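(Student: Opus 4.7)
The plan is to follow the same neck-stretching strategy used in the proofs of Lemmas~\ref{lem:Sw-chain-maps} and~\ref{lem:Phi-SS}. The central observation is that for sufficiently stretched almost complex structures near $w$, any holomorphic $\ell$-gon on the stabilized diagram $(\Sigma,\cL_{\a}^+,\cL_{\b}^+)$ that potentially contributes to $\cA_\lambda$ or to a higher length chain of $\cL_{\a^+}^\lambda$ decomposes as a pair consisting of a polygon $\psi$ on the unstabilized diagram and a polygon $\psi'$ in the stabilization region, with Maslov indices satisfying $\mu(\psi\#\psi') = \mu(\psi) + 2n_w(\psi')$. Since $\lambda$ is disjoint from $w$, the multiplicative weight $\#(\partial_{\a}(\psi\#\psi')\cap \lambda)$ only depends on the unstabilized piece $\psi$.

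First I would show that the higher length chains constructed in Lemma~\ref{lem:d-mor-A-lambda} in the stabilized hypercube $\cL_{\a^+}^\lambda$ can be chosen to be of tensor product form. That is, writing $\Theta_{\a_1^+,\a_0^+} = \Theta_{\a_1,\a_0}\otimes \theta^+$, we can pick $\eta_{\a_1^+,\a_0^+} = \eta_{\a_1,\a_0}\otimes \theta^+$ for the length-2 chains, and analogously $\omega_{\a_{11}^+,\a_{00}^+} = \omega_{\a_{11},\a_{00}}\otimes \theta^+$ for higher length chains. Verification amounts to showing that the defining equation, e.g.~$\partial\eta_{\a_1^+,\a_0^+} = A_\lambda^{\a_0^+}(\Theta_{\a_1^+,\a_0^+}) + A_\lambda^{\a_1^+}(\Theta_{\a_1^+,\a_0^+})$, reduces under the degeneration to the corresponding equation on the unstabilized diagram tensored with $\theta^+$. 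The index argument from Lemma~\ref{lem:Phi-SS} forces $n_w(\psi') = 0$ in every relevant moduli space of index $\ge 2$, so the stabilization-region piece must be a constant disk at $\theta^+$. The algebraic obstruction to choosing $\omega_{\a_{11}^+,\a_{00}^+}$ of this tensor form is the image in $H_*(\CF^-(\as_{11}^+,\as_{00}^+)/(\scU_{w_1},\scU_{w_2}))$, which tensors with $\theta^+$ factor under the K{\"u}nneth isomorphism for stabilization.

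With these compatible choices made, the remaining step is to observe that both compositions $\cA_\lambda \circ \cS_w^+$ and $\cS_w^+ \circ \cA_\lambda$ (viewed as maps into the mapping cone defining $\cA_\lambda$) count the same holomorphic data: pairs $(\psi,\psi')$ where $\psi$ contributes to $\cA_\lambda$ on the unstabilized diagram and $\psi'$ is the constant disk at $\theta^+$. This gives the desired equality $[\cA_\lambda,\cS_w^+] = 0$. The case of $\cS_w^-$ is dual; after neck stretching the only contributions to either composition that do not pair $\theta^+\mapsto 0$ correspond to pairing $\theta^-\mapsto \theta^-$ with the constant disk in the stabilization region, and again these reduce to $\cA_\lambda$ applied on the unstabilized diagram.

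The hard part will be carrying out the neck-stretching argument uniformly across all the higher length chains of $\cL_{\a^+}^\lambda$ simultaneously and verifying that the tensor splitting persists even when $\psi$ itself has low Maslov index (so that transversality alone is not enough to rule out bubbling in the stabilization region). This is handled by combining the index argument of Lemma~\ref{lem:Phi-SS} with the refinement from Lemma~\ref{lem:higher-length-chains-in-submodule-A-lambda} that higher length chains live in the submodule generated by $(\scU_{w_1},\scU_{w_2})$, which controls the possible algebraic ambiguity in choosing the chains.
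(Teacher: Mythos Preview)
Your proposal is correct and follows exactly the approach the paper intends; the paper itself leaves the proof to the reader, noting only that it is ``very similar to Lemmas~\ref{lem:Sw-chain-maps} and~\ref{lem:Phi-SS}.'' Your identification of the key ingredients---the neck-stretching index formula $\mu(\psi\#\psi')=\mu(\psi)+2n_w(\psi')$, the fact that the $\lambda$-weight depends only on the unstabilized piece since $\lambda$ is disjoint from the stabilization region, and the need to choose the higher length chains of $(\cL_{\a}^+)^\lambda$ in tensor-product form $\eta\otimes\theta^+$---is exactly right and fills in the details the paper omits.

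One minor remark: the concern you flag at the end about bubbling in the stabilization region when $\psi$ has low Maslov index, and the appeal to Lemma~\ref{lem:higher-length-chains-in-submodule-A-lambda}, is more than is actually needed. Once you have chosen the chains of $(\cL_{\a}^+)^\lambda$ to be of the form $(\text{chain in }\cL_\a^\lambda)\otimes\theta^+$, the commutation $[\cA_\lambda,\cS_w^{\pm}]=0$ follows directly from Lemma~\ref{lem:Sw-chain-maps} applied to the enlarged hypercubes $\cL_\a^\lambda$ and $\cL_\b$ (respectively $(\cL_\a^+)^\lambda$ and $\cL_\b^+$): that lemma already shows $\cS_w^{\pm}$ strictly intertwines the paired hypercube differentials, and $\cA_\lambda$ is by construction part of that paired differential. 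So you do not need the refined submodule information of Lemma~\ref{lem:higher-length-chains-in-submodule-A-lambda} here.
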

We leave the proof of the above result to the reader, as it is very similar to Lemmas~\ref{lem:Sw-chain-maps} and~\ref{lem:Phi-SS}.

Similarly, we have the following analog of \cite{ZemGraphTQFT}*{Proposition~6.14}, whose proof we also leave to the reader:
\begin{lem}\label{lem:commute-free-stabilizations} If $p$ and $p'$ are distinct free basepoints, then $\left[\cS^{\circ_1}_p,\cS^{\circ_2}_{p'}\right]\simeq 0$, where $\circ_1,\circ_2\in \{+,-\}$.
\end{lem}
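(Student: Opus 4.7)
The plan is to model the argument on Lemmas~\ref{lem:Sw-chain-maps} and~\ref{lem:Phi-SS}, using two simultaneous neck stretchings, one at $w$ and one at $w'$. Because $w\neq w'$ are distinct free basepoints, we may choose the small circles and their Hamiltonian translates used in the two stabilizations to have disjoint support, and we may degenerate the almost complex structure on $\Sigma\times [0,1]\times \R$ so that the two stabilization regions are separated from the rest of $\Sigma$ by long necks. We then check that in the limit, the two compositions $\cS_w^{\pm_1}\cS_{w'}^{\pm_2}$ and $\cS_{w'}^{\pm_2}\cS_{w}^{\pm_1}$ are defined on the same underlying stabilized hypercube (the one obtained by adjoining both pairs of small circles), and that their defining formulas agree on generators of the form $\xs\otimes \theta_w\otimes \theta_{w'}$, for $\theta_w,\theta_{w'}\in \{\theta^+,\theta^-\}$, up to a chain homotopy that we construct below.

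First, observe that on the level of underlying $\ve{I}$-modules, both compositions are simply $\xs\otimes \theta_w\otimes \theta_{w'}\mapsto \xs\otimes \theta_w\otimes \theta_{w'}$ (or zero on $\theta^-$ summands, according to the signs), so they literally coincide on generators. The content of the lemma is that they commute with the hypercube differentials, i.e. that the corresponding hypercube maps are chain homotopic. By the neck-stretching argument, any holomorphic polygon on the doubly-stabilized diagram which contributes to the differential of either composition splits, in the limit of infinite necks, as a triple $(\psi,\psi_w,\psi_{w'})$ consisting of a polygon $\psi$ on the unstabilized diagram together with classes $\psi_w,\psi_{w'}$ supported in the respective stabilization regions. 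A standard index computation of the form $\mu(\psi\#\psi_w\#\psi_{w'})=\mu(\psi)+2n_w(\psi_w)+2n_{w'}(\psi_{w'})$, together with transversality on the unstabilized diagram, forces $n_w(\psi_w)=n_{w'}(\psi_{w'})=0$ whenever $\ell\geq 3$. Hence the local contributions in the two stabilization regions are the trivial classes, and the two compositions produce the same polygon counts in the range $\ell\ge 3$.

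For $\ell=2$, exactly as in the proof of Lemma~\ref{lem:Phi-SS}, there can be a non-trivial local contribution from a small bigon covering one of the new basepoints, producing a factor of $\Phi_w$ or $\Phi_{w'}$. These contributions can be read off by symmetry: the difference $\cS_w^{\pm_1}\cS_{w'}^{\pm_2}-\cS_{w'}^{\pm_2}\cS_w^{\pm_1}$ is expressed in terms of these local bigon counts, which are themselves symmetric under swapping the two stabilizations since the stabilization regions are disjoint and each small bigon covers only one of $w$ or $w'$. Any residual difference is then killed by an explicit homotopy $H$ which counts the same hypercube polygons but with one of the local regions decorated by a basepoint-moving homotopy (explicitly, a copy of the null-homotopy $\d_{\scU_w}$ from Remark~\ref{rem:Phi-nullhomotopic} in the stabilization region), exactly as in the analogous argument in \cite{ZemGraphTQFT}*{Proposition~6.14}.

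The main obstacle will be the careful bookkeeping for the $\ell=2$ case: one must check that the local bigon contributions from the two stabilization regions appear symmetrically in the two orderings, and build the chain homotopy $H$ as a count of polygons with a marked point in one of the stabilization regions. Once the disjointness of the stabilization regions is exploited via the two-parameter neck-stretch, however, all of the analytic input is standard and parallel to Lemmas~\ref{lem:Sw-chain-maps}, \ref{lem:Phi-SS}, and \ref{lem:commute-S-A-lam}; there are no genuinely new transversality or compactness issues.
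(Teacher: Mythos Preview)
The paper leaves this proof to the reader, citing \cite{ZemGraphTQFT}*{Proposition~6.14}, so there is no detailed argument in the paper to compare against. That said, your sketch over-complicates matters and the third paragraph contains a confused step.

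Your first two paragraphs have the essential content: degenerate the almost complex structure near both $w$ and $w'$, and use the index argument of Lemma~\ref{lem:Sw-chain-maps} at each stabilization region to see that all of $\cS_w^{\pm_1}$ and $\cS_{w'}^{\pm_2}$ are chain maps of hypercubes on the relevant (singly- and doubly-) stabilized diagrams. But once that is established, the proof is complete: both compositions $\cS_w^{\pm_1}\cS_{w'}^{\pm_2}$ and $\cS_{w'}^{\pm_2}\cS_w^{\pm_1}$ are chain maps between the same pair of hypercube complexes, and they agree on the nose on generators because the defining formulas $\xs\mapsto \xs\otimes\theta^+$ and $\xs\otimes\theta^-\mapsto\xs$, $\xs\otimes\theta^+\mapsto 0$ act in disjoint tensor factors. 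For this choice of $J$ the commutator is literally zero; the $\simeq$ in the statement merely accommodates other choices of almost complex structure via the transition maps of Remark~\ref{rem:naturality-hypercubes}.

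Your third and fourth paragraphs should be deleted. The claim that ``the difference $\cS_w^{\pm_1}\cS_{w'}^{\pm_2}-\cS_{w'}^{\pm_2}\cS_w^{\pm_1}$ is expressed in terms of these local bigon counts'' is not correct: the difference is zero on generators. The $\ell=2$ subtlety you are recalling from Lemma~\ref{lem:Phi-SS} arises there because $\cS_w^+\cS_w^-$ involves the \emph{same} basepoint, so a bigon covering $w$ in the single stabilization region can contribute; here $w\neq w'$, the regions are disjoint, and no such cross-term appears. The proposed homotopy $H$ built from $\d_{\scU_w}$ is therefore unnecessary (and is in any case the nullhomotopy of $\Phi_w$ from Remark~\ref{rem:Phi-nullhomotopic}, not something relevant to this commutator).
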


\subsection{Algebraic relations and the diffeomorphism action}
\label{sec:algebraic-relations}

In this section, we prove some important algebraic relations involving the maps $\cA_\lambda$ and $\cS_p^+$. The relations we prove are analogs of the relations from the graph TQFT \cite{ZemGraphTQFT}.

In the following lemma, we collect several first relations about the maps $\cA_{\lambda}$ and $\cS_p^{\pm}$:

\begin{lem}
\label{lem:basic-graph-TQFT-relations} Suppose that $\cL_{\a}$ and $\cL_{\b}$ are hypercubes of handleslide equivalent attaching curves on $(\Sigma,\ws,\zs,\ps)$.
\begin{enumerate}
\item If $p\in \ps$, then $\Phi_p^2\simeq 0$. The same holds for basepoints in $\ws\cup \zs$.
\item  Suppose that $p,p'$ are distinct free basepoints, and that $\lambda$ is an arc on $(\Sigma,\ws,\zs,\ps)$ such that $\d \lambda=\{p,p'\}$. Then
\[
[\cA_{\lambda},\Phi_p]\simeq [\cA_{\lambda}, \Phi_{p'}]\simeq \id.
\]
\item Suppose that $p\in \ps$  is a free basepoint, and $p'\in \Sigma\setminus (\ws\cup \zs\cup \ps)$ is a point which is not contained in $\cL_{\a}$ or $\cL_{\b}$. If $\lambda$ is an arc with $\d \lambda=\{p,p'\}$, then
\[
\cS_{p'}^- \cA_{\lambda} \cS_{p'}^+\simeq \id
\]
as endomorphisms of $\ve{\CF}^-(\cL_{\a},\cL_{\b},\ws,\zs,\ps)$. Here, we view $\cS_{p'}^+$ as having codomain $\ve{\CF}^-(\cL_{\a}^+,\cL_{\b}^+,\ws,\zs,\ps\cup \{p'\})/(U_p+U_{p'})$. 
\end{enumerate}
\end{lem}
\begin{proof}
The claim that $\Phi_p^2\simeq 0$ follows from the argument in \cite{SarkarMovingBasepoints}*{Lemma~4.4}. Cf. \cite{ZemGraphTQFT}*{Lemma~14.8}. 

 The proof of the second claim is essentially the same as in the case of the ordinary 3-manifold invariants \cite{ZemGraphTQFT}*{Lemma~14.6}. We differentiate the relation $[\d, \cA_{\lambda}]=(U_p+U_{p'})\cdot \id$ from Lemma~\ref{lem:d-mor-A-lambda} with respect to the $U_p$ to obtain the relation $[\Phi_p,\cA_{\lambda}]\simeq\id$. The same logic gives $[\Phi_{p'}, \cA_{\lambda}]\simeq \id$.

The final claim that $\cS_{p'}^- \cA_{\lambda} \cS^+_{p'}\simeq \id$ is proven in \cite{HHSZNaturality}*{Proposition~7.2}. Compare \cite{ZemGraphTQFT}*{Lemma~7.10} for the case of the ordinary Heegaard Floer complexes.
\end{proof}

The following is a hypercube analog of \cite{OSDisks}*{Proposition~4.17} and \cite{ZemGraphTQFT}*{Lemma~5.5}.

\begin{lem}\label{lem:homology-action-squares}Let $\cL_{\a}$ and $\cL_{\b}$ be hypercubes of handleslide equivalent attaching curves on $(\Sigma,\ws,\zs,\ps)$. If $\g$ is a closed curve on $\Sigma$ then \[
\cA_{\g}^2\simeq 0.
\] If $\lambda$ is an arc which connects two distinct free basepoints $p_1,p_2\in \ps$, then  
\[
\cA_{\lambda}^2\simeq U_1\cdot \id\simeq U_{2}\cdot \id.
\]
\end{lem}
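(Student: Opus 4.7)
I would prove both identities by the same template: construct an auxiliary hypercube endomorphism $B$ whose morphism differential equals $\cA_{\g}^2$ (resp.\ $\cA_{\lambda}^2+\scU_w\cdot\id$), obtained from the formula for $\cA_{\g}$ or $\cA_{\lambda}$ in Section~\ref{sec:homology-actions-hypercubes} by replacing the weight $a(\psi,\cdot)$ by $\binom{a(\psi,\cdot)}{2}$. The algebraic input is the mod-$2$ identity
\[
\binom{a_1+a_2}{2}\equiv \binom{a_1}{2}+a_1a_2+\binom{a_2}{2}\pmod 2,
\]
which, combined with the additivity $a(\psi_1*\psi_2,\cdot)=a(\psi_1,\cdot)+a(\psi_2,\cdot)$ from Equation~\eqref{eq:additivity-a-gamma-psi}, organizes the $2$-level ends of the $1$-dimensional parametrized moduli of index $4-\ell$ polygons into $B\circ\d+\d\circ B+\cA^2$. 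This strategy is a hypercube adaptation of the arguments for the ordinary Floer complex in \cite{OSDisks}*{Proposition~4.17} and \cite{ZemGraphTQFT}*{Lemmas~5.5~and~14.13}.

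\textbf{Closed case.} For $\g$ a closed curve, let $B_\g$ count the same $\ell$-gons as the hypercube differential (of index $3-\ell$, with special inputs from $\cL_{\a}$ and $\cL_{\b}$) but weighted by $\binom{a(\psi,\g)}{2}$. Gromov compactness on $1$-dimensional families of index $4-\ell$ polygons gives exactly the ends analyzed in the proof of Lemma~\ref{lem:homology-action-chain-homotopy}: $2$-level broken curves $\psi=\psi_1*\psi_2$ together with index-$2$ boundary degenerations. The binomial identity above identifies the $2$-level contribution with $B_\g\circ\d+\d\circ B_\g+\cA_\g\circ\cA_\g$. Boundary degenerations contribute trivially: by Equation~\eqref{eq:a-gamma=0-non-mixed} (established in the proof of Lemma~\ref{lem:homology-action-chain-homotopy}), $a(\phi,\g)=0$ for every boundary degeneration $\phi$, hence $\binom{a(\phi,\g)}{2}=0$ as well. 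One concludes $\d_{\Mor}(B_\g)=\cA_\g^2$, exhibiting $B_\g$ as the desired null-homotopy.

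\textbf{Arc case.} For $\lambda$ an arc between distinct free basepoints $w,w'$, I first extend the cone hypercube $\cL_{\a}^{\lambda}$ from Section~\ref{sec:basepoint-actions-hypercubes} to a hypercube $\cL_{\a}^{\lambda,\lambda}$ carrying two $\lambda$-directions, using a fillability argument modelled on Lemmas~\ref{lem:d-mor-A-lambda} and~\ref{lem:higher-length-chains-in-submodule-A-lambda}. The relevant obstruction cycles lie in the submodule generated by $(\scU_w,\scU_{w'})$, and the quotient map~\eqref{eq:pi-map-quotient-variables} is nonvanishing on the canonical top generator, so all higher-length chains can be chosen. Pairing $\cL_{\a}^{\lambda,\lambda}$ with $\cL_{\b}$ realizes $\cA_{\lambda}^2$ as the length-$2$ composite in a hypercube, whose length-$2$ fill supplies a built-in homotopy. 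Defining $B_\lambda$ analogously by the $\binom{a(\psi,\lambda)}{2}$-weighted count and repeating the combinatorics of the closed case yields $\d_{\Mor}(B_\lambda)=\cA_{\lambda}^2+X$, where $X$ is the sum of contributions from index-$2$ boundary degenerations coupled with the length-$2$ fill. Partitioning boundary degenerations by which component $A$ of $\Sigma\setminus\as$ (resp.\ $\Sigma\setminus\bs$) supports them, and using Lemma~\ref{lem:action-of-variables-well-defined} to identify the variable-product $E_w^{\a}$ with $\scU_w$ on the free-basepoint component containing $w$, a careful count identifies $X$ with $\scU_w\cdot\id$. The equivalence $\scU_w\cdot\id\simeq\scU_{w'}\cdot\id$ follows by running the symmetric argument using beta boundary degenerations, or equivalently by applying $B_\lambda$ with the roles of $w$ and $w'$ exchanged.

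\textbf{Main obstacle.} The closed case is essentially combinatorial once the binomial weighting is installed, since all boundary degenerations drop out. The substantive difficulty is the identification $X\simeq\scU_w\cdot\id$ in the arc case: the most basic index-$2$ boundary degenerations (those whose support meets $\lambda$ in $0$ or $1$ transverse points) contribute $\binom{0}{2}=\binom{1}{2}=0$ under the binomial weight, so the $\scU_w$ factor cannot come from them directly. The correction must instead be extracted from the interaction between the binomial weighting and the length-$2$ chains of $\cL_{\a}^{\lambda,\lambda}$ (which exist precisely because $\d_{\Mor}(\cA_{\lambda})=\scU_w+\scU_{w'}$ is nontrivial by Lemma~\ref{lem:d-mor-A-lambda}), together with boundary degenerations of higher multiplicity. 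Carrying out this bookkeeping is the step most analogous to, and borrows most heavily from, the graph-TQFT calculation in \cite{ZemGraphTQFT}*{Section~14}, now performed in a hypercube of possibly positive dimension.
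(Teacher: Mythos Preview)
Your closed-curve argument is correct and coincides with the paper's: both define a homotopy by reweighting the polygon counts by a quadratic function of the intersection number and invoke the binomial identity for additivity.

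For the arc case, your overall architecture (build a two-$\lambda$ extension of $\cL_{\a}$, fill using the $(\scU_w,\scU_{w'})$-submodule argument, pair with $\cL_{\b}$) also matches the paper's construction of $\cL_{\a}^{\lambda\cdot\lambda}$. However, your ``main obstacle'' is not an obstacle at all, and stems from a sign error. To make sense of the weight $\binom{a(\psi,\lambda)}{2}$ and have the additivity identity hold, you must lift $a(\psi,\lambda)$ to a signed integer $\#(\d_{\a}(\psi)\cap\lambda)\in\Z$, which requires orienting $\lambda$. For an index-$2$ alpha boundary degeneration $\phi$, one has $a(\phi,\lambda)=\pm(n_w(\phi)-n_{w'}(\phi))$, so the two relevant degenerations have $a=+1$ and $a=-1$, not $0$ and $1$. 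Since $\binom{-1}{2}=1$, exactly one of them contributes, and it contributes precisely $\scU_{w'}\cdot\id$ (or $\scU_w\cdot\id$, depending on the orientation). The paper makes the same computation with the variant weight $a(a+1)/2$ in place of $\binom{a}{2}$; with their orientation convention this yields $1$ on $\phi_w$ and $0$ on $\phi_{w'}$, encoded formally as $\mu_2(\lambda,\lambda)=\scU_w$.

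Thus your $X$ is simply $\scU_{w'}\cdot\id$, coming directly from a single index-$2$ boundary degeneration; no ``interaction with length-$2$ chains'' or ``boundary degenerations of higher multiplicity'' is needed. Once this is corrected, your argument is the paper's argument.
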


\begin{proof}We begin by considering the claim for $\cA_{\g}$, when $\g$ is a closed curve. Our strategy will be to interpret the expression $\mu_2^{\Tw}(F_\g,F_\g)$ and show that
\[
\mu_2^{\Tw}(F_\g,F_\g)=0.
\]
Assuming such a formalism can be established, the associativity relations for twisted complexes would show that
\begin{equation}
\cA_\g\circ \cA_\g=\mu_2^{\Tw}(F_\g, \mu_2^{\Tw}(F_\g,-))=\d_{\Mor}(\mu_3^{\Tw}(F_\g,F_\g,-)). \label{eq:null-homotopy-A-g^2}
\end{equation}

We  now describe how to interpret $\mu_2^{\Tw}(F_\g,F_\g)$ and $\mu_3^{\Tw}(F_\g,F_\g,-)$. To define these, it suffices to explain how to define the holomorphic polygon counting maps when there are two inputs which are $\g$. Let $\ds_1,\dots, \ds_n$ be attaching curves on $(\Sigma,\ws,\zs,\ps)$. We define 
\[
f_{\dt_1,\dots, \dt_i,\dt_i,\dots, \dt_j,\dt_j,\dots, \dt_n}(\Theta_{1,2},\dots, \g,\Theta_{i,i+1},\dots,\Theta_{j-1,j}, \g,\dots, \Theta_{n-1,n})
\]
(where the two $\g$'s are non-adjacent inputs) to count holomorphic $n$-gons of index $3-n$ with a multiplicative weight of 
\[
(\#\d_{\dt_i}(\psi)\cap \g)\cdot (\# \d_{\dt_j}(\psi)\cap \g).
\]
We define
\[
f_{\dt_1,\dots, \dt_i,\dt_i,\dt_i,\dots, \dt_n}(\Theta_{1,2},\dots, \Theta_{i-1,i} ,\g,\g, \Theta_{i,i+1},\dots, \Theta_{n-1,n}).
\]
to count holomorphic triangles with weight $3-n$ and multiplicative weight
\[
\frac{ (\# \d_{\dt_i}(\psi)\cap \g+1)(\# \d_{\dt_i}(\psi)\cap \g)}{2}.
\]
The $A_\infty$-associativity relations are not hard to verify if we interpret $\mu_1(\g)=0$ and $\mu_2(\g,\g)=0$. This allows us to interpret all of the expressions in Equation~\eqref{eq:null-homotopy-A-g^2}. These numerical choices can be motivated by thinking of the homology action as counting curves with marked points along their alpha boundaries. See \cite{LipshitzSarkarSplit}*{Section~3}.

To establish Equation~\eqref{eq:null-homotopy-A-g^2}, using associativity for $\mu_n^{\Tw}$, it suffices to show that
\begin{equation}
\mu_2^{\Tw}(F_\g,F_\g)=0.\label{eq:FgFg=0} 
\end{equation}

To establish Equation~\eqref{eq:FgFg=0}, we argue as follows. To streamline the argument, we define the quantities
\[
a_i^\psi:=\# (\d_{\a_i}(\psi)\cap \g)\quad \text{and} \quad a_{i,i}^{\psi}:=\frac{a_i^\psi(a_i^{\psi}+1)}{2}.
\] 
The length 1 components of $\mu_2^{\Tw}(F_\g,F_\g)$ are equal to $\mu_2(\g,\g)$, which is 0 by definition. The higher length components count sums as in
\[
f_{a_{\veps_1},\dots, \a_{\veps_i},\a_{\veps_i},\dots, \a_{\veps_j},\a_{\veps_j},\dots, \a_{\veps_n}}(\Theta_{1,2},\dots, \g, \dots, \g, \dots, \Theta_{n-1,n}).
\]
Summing over all ways of inserting two $\g$ terms into the above expression, we see that a given class $\psi$ is counted with weight
\[
\sum_{i<j} a_i^{\psi}a_j^{\psi}+\sum_{i=1}^n a_{ii}^{\psi}.
\]
It is straightforward to see that the above quantity is equal, modulo 2, to
\[
\frac{ \left(\# \d_{\a}(\psi)\cap \g\right)\left( \# \d_{\a}(\psi)\cap \g+1\right)}{2}.
\]
By Equation~\eqref{eq:a-gamma=0-non-mixed}, $\# \d_{\a}(\psi)\cap \g\equiv 0\pmod 2$, so we conclude that $\mu_2^{\Tw}(F_\g,F_\g)=0$.

We now move on to the case that $\d \lambda=\{p_1,p_2\}$ and $p_1,p_2$ are two free basepoints. In this case, we will construct a formal endomorphism $h_{\lambda,\lambda}\colon \cL_{\a}\to \cL_{\a}$ such that
\begin{equation}
\mu_2^{\Tw}(F_\lambda,F_\lambda)+\mu_1^{\Tw}(h_{\lambda,\lambda})=U_1\cdot \bI. \label{eq:FlFl=U}
\end{equation}
When defining holomorphic polygon counts with two $\lambda$ terms as inputs, we use the same convention as above, except we use
\[
\mu_2(\lambda,\lambda)=U_j\cdot \bI
\]
and
\[
\mu_1(\lambda)=(U_i+U_j)\cdot \bI.
\]

We return to the general question of verifying the associativity conditions on a diagram $(\Sigma,\ds_1,\dots, \ds_n,\ws,\zs,\ps)$.  We remark that both our definition of $\mu_2(\lambda,\lambda)$ and the definition of our polygon maps when $\lambda$ appears twice are asymmetric between $U_i$ and $U_j$. Note also that the quantity $a_{ii}^{\psi}$ requires a choice of orientation on $\lambda$.  We orient $\lambda$ so that $\# (\d(\psi)\cap \lambda)=n_{p_1}(\psi)-n_{p_2}(\psi)$. This implies that if $\psi_1$ and $\psi_{2}$ are the nonnegative classes of index 2 $\ds_j$-boundary degenerations which cover $p_1$ and $p_2$, respectively, then
\begin{equation}
a_{j,j}^{\psi_1}\equiv 1 \pmod{2} \quad \text{and} \quad a_{j,j}^{\psi_2}\equiv 0 \pmod{2}. \label{eq:def-ajj-phi}
\end{equation}
Furthermore,  if $\psi$ is any other nonnegative class of index 2 boundary degeneration, then
\begin{equation}
a_{j,j}^{\psi}=0.
\label{eq:ajj-boundary-degeneration-other-classes}
\end{equation} 

With these conventions, we now claim that the $A_\infty$-associativity relations hold for polygons with two $\lambda$ inputs.  We consider the desired associativity relations applied to the tuple $(\Theta_{1,2},\dots,\Theta_{i-1,i}, \lambda,\dots, \lambda,\Theta_{j,j+1}, \dots,\Theta_{n-1,n})$. There are two cases to consider: $i<j$ and $i=j$. Suppose that $\psi=\psi_1*\psi_2$ is a class of $n$-gons.  We consider first the case that $i<j$. Here, the associativity relations follow from the fact that
\[
a_i^{\psi_1} a_j^{\psi_1}+a_i^{\psi_1} a_j^{\psi_2}+a_i^{\psi_2} a_j^{\psi_1}+a_i^{\psi_2}a_j^{\psi_2}\equiv a^{\psi_1*\psi_2}_i a^{\psi_1* \psi_2}_j\pmod 2.
\]
Since the right-hand side is independent of the decomposition of $\psi=\psi_1*\psi_2$, we may obtain the associativity relations in this case summing $a^{ \psi}_i a^{ \psi}_j\# \d \cM( \psi)$ (multiplied by the appropriate powers of $\scU_i$, $\scV_i$ and $U_j$) over $n$-gon classes $\psi$ of index $4-n$. If $n=2$, there are additional ends corresponding to boundary degenerations. Since we are considering the case that $i<j$, this corresponds to considering the associativity relations for $(\lambda,\Theta_{1,2},\lambda)$. In this case, boundary degenerations make trivial contribution to the associativity relations since $a^\psi_1\cdot a^\psi_2=0$ for any nonnegative, index 2 class of boundary degenerations $\psi$ (since $\psi$ will only have non-trivial boundary on one of the two sets of attaching curves).

We now consider the associativity relation in the case that $i=j$. We observe the relation
\[
a_{i,i}^{\psi_1}+a_{i,i}^{\psi_2}+a_i^{\psi_1} a_i^{\psi_2}\equiv a_{i,i}^{\psi_1*\psi_2}\pmod 2.
\]
When $n>2$, the associativity relations follow from the above relation by summing over the ends of moduli spaces of $n$-gons with Maslov index $4-n$, with a weight of $a_{i,i}^{\psi}$.  When $n=2$, there are additional ends corresponding to boundary degenerations. These cases are relevant to the tuples $(\lambda,\lambda,\Theta_{1,2})$ and $(\Theta_{1,2},\lambda,\lambda)$. Focus on $(\lambda,\lambda,\Theta_{1,2})$, since the other configuration is handled by a similar argument. The values of $a_{i,i}^\psi$ on index 2 boundary degenerations is described in Equations~\eqref{eq:ajj-boundary-degeneration-other-classes} and ~\eqref{eq:def-ajj-phi}. When weighted by $a_{1,1}^{\psi}$, the total weighted count of boundary degenerations is $U_1$, and this is contributed by the $\ds_1$-degeneration covering $p_1$. Similarly when weighted by $a_{2,2}^{\psi}$, the total count is also $U_1$, and this is contributed by the $\ds_2$-degeneration covering $p_1$. Both of these are encoded by the relation $\mu_2(\lambda,\lambda)=U_1\cdot \bI$. This completes the proof of associativity.

We now construct $h_{\lambda,\lambda}$ by a filling procedure similar to our construction of the morphism $F_{\lambda}$. We assume that the morphisms $F_{\lambda}$ have already been constructed, as in Lemma~\ref{lem:d-mor-A-lambda}. We think of $h_{\lambda,\lambda}$ as being the diagonal morphism in a diagram of the following shape:
\[
\begin{tikzcd}[column sep=1cm, row sep=1cm, labels=description] \cL_{\a} \ar[dr, dashed, "h_{\lambda,\lambda}"] \ar[r, "F_\lambda"] & \cL_{\a} \ar[d, "F_{\lambda}"] \\
& \cL_{\a}
\end{tikzcd}.
\]
As such, we say that the \emph{length} of a component of $h_{\lambda,\lambda}$ from $\as_{\veps}$ to $\as_{\veps'}$ is $|\veps'-\veps|_{L^1}+2$.  

We now construct the components of $h_{\lambda,\lambda}$ by induction on their length. We define the length 2 components of $h_{\lambda,\lambda}$ to vanish. We consider the problem of filling arrows of length $n>2$ of $h_{\lambda,\lambda}$. We illustrate the case that $n=3$ below:
\[
\begin{tikzcd}[labels=description,column sep=2cm, row sep=.2cm]
&
\as_1
	\ar[from=dl, "\lambda"]
	\ar[dr, "\lambda"]
	\ar[dd, "\Theta_{\a_2,\a_1}"]
	\ar[dddr,dashed, "\eta_{\a_2,\a_1}"]
 &\\
\as_1
	\ar[dd, "\Theta_{\a_2,\a_1}"]
	\ar[dr, "\eta_{\a_2,\a_1}",dashed]
	\ar[ddrr,dotted, "\omega_{\a_2,\a_1}",pos=.6]
&& \as_1
	\ar[dd, "\Theta_{\a_2,\a_1}"]
\\[2cm]
& \as_2
	\ar[from=dl, "\lambda"]
	\ar[dr, "\lambda"]
\\
\as_2&& \as_2
\end{tikzcd}
\]
We let $C_{\a_2,\a_1}$ denote the terms appearing in the length 3 relation, except for $\d \omega_{\a_2,\a_1}$ (which has yet to be defined). We make two claims:
\begin{enumerate}
\item $C_{\a_2,\a_1}$ is a cycle of bidegree $(1,1)$ less than the top degree of homology.
\item If $F_{\lambda}$ is constructed as in Lemma~\ref{lem:higher-length-chains-in-submodule-A-lambda} so that its components of length 2 or more lie in the submodule generated by the ideal $(U_1,U_2)$, then $C_{\a_2,\a_1}$ is also in the submodule generated by the ideal $(U_1,U_2)$.
\end{enumerate}
The claim that $C_{\a_2,\a_1}$ is a cycle is an easy consequence of the associativity relations, interpreted above, which we leave to the reader. 

The fact that $C_{\a_2,\a_1}$ may be assumed to be in the submodule spanned by $(U_1,U_2)$ may be seen as follows. By assumption each summand of $C_{\a_2,\a_1}$ which involves $\eta_{\a_2,\a_1}$ is in this submodule. The remaining terms involve counts of holomorphic disks of index 1 which have $\Theta_{\a_2,\a_1}$ as an input. There are multiple ways for such a holomorphic disk to contribute to $C_{\a_2,\a_1}$, depending on the locations of the two $\lambda$-inputs. We observe that the total count of a class $\phi \in\pi_2(\Theta_{\a_2,\a_1},\ys)$ to $C_{\a_2,\a_1}$ is weighted by
\[
a_1^\phi a_2^\phi+a_{1,1}^\phi+a_{2,2}^\phi.
\]
It is straightforward to see that the above quantity is congruent modulo 2 to
\[
\frac{(a_1^\phi+a_2^\phi)(a_1^\phi+a_2^\phi+1)}{2}.
\]
We observe that $a^\phi_1+a_2^\phi=\pm (n_{p_1}(\phi)-n_{p_2}(\phi))$, since $\d D(\phi)=\d_{\a_1}(\phi)+\d_{\a_2}(\phi)$. In particular, if a disk $\phi$ has $n_{p_1}(\phi)=n_{p_2}(\phi)=0$, then it is weighted by 0 in $C_{\a_2,\a_1}$. Hence $C_{\a_2,\a_1}$ is in the submodule spanned by $(U_1,U_2)$.

We have established that $C_{\a_2,\a_1}$ is a cycle in bigrading $(1,1)$ less than the canonical top degree generator, and furthermore is in the span of $U_1$ and $U_2$. We observe that the map $\Pi$ in Equation~\eqref{eq:pi-map-quotient-variables} for quotienting by $U_1$ and $U_2$ is a chain map which is injective on the subspace of homology which lies in the grading of the canonical generator and also in the subspace of grading $(1,1)$ less than the top degree of $\ve{\HF}^-(\as_2,\as_1)$. Hence, $C_{\a_2,\a_1}=\d \omega_{\a_2,\a_1}$ for some $\omega_{\a_2,\a_1}$ in the top degree.

We now argue that $\omega_{\a_2,\a_1}$ may be taken to be in the submodule $(U_1,U_2)$. The proof is essentially the same as Lemma~\ref{lem:higher-length-chains-in-submodule-A-lambda}. The image $\Pi(\omega_{\a_2,\a_1})$ is a cycle, and by possibly adding a top degree generator of $\ve{\HF}^-(\as_2,\as_1)$, we may assume that $[\Pi(\omega_{\a_2,\a_1})]=0$. From here, the argument of Lemma~\ref{lem:higher-length-chains-in-submodule-A-lambda} may be applied verbatim.

From here, we continue filling the higher length chains of $h_{\lambda,\lambda}$. The argument for the higher length chains is essentially identical to the argument for the length 3 chains, so we leave it to the reader.  This verifies Equation~\eqref{eq:FlFl=U}. The main claim follows from this and the associativity relations for morphisms of twisted complexes of attaching curves.
\end{proof}

\begin{rem} We now make an algebraic digression. We note that the map $\cA_{\lambda}$ is in general only defined up to chain homotopy, and is not a cycle unless we set $U_1=U_2$. Some care is required when composing maps which are not cycles. For example, if $f_1$, $f_2$, $g_1$, $g_2$ are in $\End(C)$, (but not necessarily chain maps) and $f_1\simeq f_2$ and $g_1\simeq g_2$, it is not necessarily the case that $f_1\circ g_1\simeq f_2\circ g_2$. For the above lemma to be meaningful, it must be the case that changing $\cA_{\lambda}$ by a chain homotopy also changes $\cA_{\lambda}^2$ by a chain homotopy. We observe the following: If $f_1\simeq f_2$ and $\d_{\Mor}(f_1)=\d_{\Mor}(f_2)$ is central in $\End(C)$, then each $f_i^2$ is a chain map and
\[
f_1^2\simeq f_2^2.
\]
Similarly if $f_1,f_2,g_1,g_2$ are maps so that $f_1\simeq f_2$, $g_1\simeq g_2$ and $\d_{\Mor}(f_i)$ and $\d_{\Mor}(g_i)$ are central in $\End(C)$, then  $[f_i,g_i]$ are chain maps ($i=1,2$) and
\[
[f_1,g_1]\simeq [f_2,g_2].
\]
To see that $f_1^2\simeq f_2^2$, we let $j$ be a chain homotopy between $f_1$ and $f_2$, i.e. $\d_{\Mor}(j)=f_1+f_2$. Then
\[
\d_{\Mor}(f_1\circ j+j\circ f_2)=[ \d_{\Mor}(f_1),j]+f_1^2+f_2^2=f_1^2+f_2^2.
\]
(Note here that $\d_{\Mor}(f_1)=\d_{\Mor}(f_2)$ since $f_1\simeq f_2$). Similarly, if $\d_{\Mor}(j)=f_1+f_2$ and $\d_{\Mor}(h)=g_1+g_2$, then
\[
\begin{split}
\d_{\Mor}([j,g_1]+[f_2,h])=&[f_1+f_2,g_1]+[j,\d_{\Mor}(g_1)]+[f_2,g_1+g_2]+[\d_{\Mor}(f_2), h]\\
=&[f_1,g_1]+[f_2,g_2].
\end{split}
\]
\end{rem}

\begin{lem}
Suppose that $\cL_{\a}$ and $\cL_{\b}$ are hypercubes of handleslide equivalent attaching curves on $(\Sigma,\ws,\zs,\ps)$, where $\ps$ consists of free basepoints. Suppose that $\lambda_1$ and $\lambda_2$ are closed curves or arcs on $\Sigma$ with boundary in $\ps$. Then
\[
[\cA_{\lambda_1},\cA_{\lambda_2}]\simeq \sum_{p_i\in \d \lambda_1\cap \d\lambda_2} U_i.
\]
\end{lem}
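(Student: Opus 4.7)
The proof would follow closely the pattern established in Lemma~\ref{lem:homology-action-squares}, using the filling construction together with an extended set of formal associativity relations for polygon maps with $\lambda$ inputs. My plan is to construct a two-dimensional filled diagram
\[
\cL_{\a}^{\lambda_1,\lambda_2}=
\begin{tikzcd}[ampersand replacement=\&, cramped, labels=description]
\cL_{\a} \ar[r, "\lambda_2"] \ar[d, "\lambda_1"] \ar[dr, dashed, "\omega"] \& \cL_{\a} \ar[d, "\lambda_1"]\\
\cL_{\a} \ar[r, "\lambda_2"] \& \cL_{\a}
\end{tikzcd}
\]
where the length-$n$ morphisms in the diagonal ($\omega$ and its higher length analogs) are to be filled in by induction on length. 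Pairing this with $\cL_\b$ would exhibit $[\cA_{\lambda_1},\cA_{\lambda_2}]$ as the total length-$2$ hypercube relation on the diagonal, up to the error encoded by the filling.

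To make the associativity bookkeeping precise, I would extend the formal polygon maps of Section~\ref{sec:homology-actions-hypercubes} to allow both $\lambda_1$ and $\lambda_2$ as inputs. Following the convention of Lemma~\ref{lem:homology-action-squares}, a $\lambda_r$-input at position $k$ weights a class $\psi$ by $a_k^{\psi,\lambda_r}:=\#(\d_{\g_k}(\psi)\cap \lambda_r)$, and multiple inputs at distinct positions multiply. I would declare $\d \lambda_r=0$ and, crucially,
\[
\mu_2(\lambda_1,\lambda_2)+\mu_2(\lambda_2,\lambda_1)=\sum_{w\in \d\lambda_1\cap \d \lambda_2}\scU_w.
\]
The motivation is that nonnegative index $2$ boundary degenerations $\phi$ covering a shared endpoint $w$ satisfy $a_k^{\phi,\lambda_1}a_k^{\phi,\lambda_2}\equiv 1\pmod 2$ at precisely those $w$, after fixing compatible orientations on $\lambda_1,\lambda_2$ as in Equation~\eqref{eq:def-ajj-phi}; for closed boundary degenerations not covering $w$, the product vanishes mod 2. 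For $n\ge 3$-gons with $\lambda_1,\lambda_2$ at distinct positions, the parity identity $a_i^{\phi_1\ast \phi_2,\lambda_1}a_j^{\phi_1\ast \phi_2,\lambda_2}\equiv \sum a_i^{\phi_s,\lambda_1}a_j^{\phi_t,\lambda_2}\pmod 2$ gives associativity by the usual Gromov compactness argument over ends of $1$-dimensional moduli spaces.

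The filling of $\omega$ and all higher length chains in the diagonal direction proceeds by the same inductive argument as in Lemmas~\ref{lem:d-mor-A-lambda} and~\ref{lem:homology-action-squares}. At each stage the obstruction cycle $C$ is homogeneous of bigrading one less than the canonical top generator and, by the mod-$2$ parity computation above combined with Lemma~\ref{lem:higher-length-chains-in-submodule-A-lambda}, lies in the submodule generated by $(\scU_{w}: w\in \d\lambda_1\cup \d\lambda_2)$. The quotient map $\pi$ of Equation~\eqref{eq:pi-map-quotient-variables} (suitably extended to kill all these variables) then shows $C$ is a boundary since it lies in a grading where $\pi$ is injective on homology, and by the same argument as Lemma~\ref{lem:higher-length-chains-in-submodule-A-lambda} the chosen primitive can again be taken inside the submodule generated by $(\scU_w)$. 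Pairing the completed $\cL_\a^{\lambda_1,\lambda_2}$ with $\cL_\b$ and extracting the diagonal length-$2$ relation gives exactly
\[
[\cA_{\lambda_1},\cA_{\lambda_2}]\simeq \sum_{w\in \d\lambda_1\cap \d \lambda_2}\scU_w.
\]

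The main obstacle is the combinatorial identity packaging the boundary degeneration contributions. Specifically, one must carefully verify that for index-$2$ boundary degenerations at the shared basepoints, the parity of $a_k^{\phi,\lambda_1}a_k^{\phi,\lambda_2}$ is exactly $1$ at a common endpoint $w$ and $0$ otherwise, which amounts to a local orientation computation around $w$. The rest of the filling is then a direct adaptation of the framework already established for $\cA_\lambda^2$ and for $\cA_\lambda$ itself.
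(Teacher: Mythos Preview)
Your proposal is correct and follows essentially the same approach as the paper: build a two-dimensional filled diagram $\cL_\a^{[\lambda_1,\lambda_2]}$, extend the formal polygon maps to allow both $\lambda_1$ and $\lambda_2$ as inputs with multiplicative weights, encode the boundary-degeneration contribution as the commutator $[\lambda_1,\lambda_2]=\sum_{w\in\d\lambda_1\cap\d\lambda_2}\scU_w$, and fill the higher chains by the standard inductive argument. One minor remark: the reference to Equation~\eqref{eq:def-ajj-phi} for orientations is slightly misplaced (that equation concerns the self-product $a_{j,j}^\phi$ for a single $\lambda$); for the cross-product $a_k^{\phi,\lambda_1}a_k^{\phi,\lambda_2}$ no orientation choice is needed in characteristic $2$, and the parity $\equiv 1$ at a shared endpoint follows directly from the fact that an index-$2$ boundary degeneration covering $w$ has boundary meeting each $\lambda_i$ once near $w$.
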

\begin{proof} The proof is similar to the proof of Lemma ~\ref{lem:homology-action-squares}. We are going to construct a formal endomorphism
\[
h_{[\lambda_1,\lambda_2]}\colon \cL_{\a}\to \cL_\a
\]
which satisfies
\begin{equation}
\mu_1^{\Tw}(h_{[\lambda_1,\lambda_2]})=\mu_2^{\Tw}(F_{\lambda_1},F_{\lambda_2})+\mu_2^{\Tw}(F_{\lambda_2},F_{\lambda_1}).
\label{eq:[l_1,l_2]-hypercube-formal}
\end{equation}
 We think of constructing $h_{[\lambda_1,\lambda_2]}$ as corresponding to building a diagram of the form
 \begin{equation}
\cL_\a^{[\lambda_1,\lambda_2]}:=
\begin{tikzcd}[labels=description, column sep=1.2cm, row sep=1.2cm]
\cL_{\a}\ar[r, "\lambda_1"]\ar[d, "\lambda_2"]\ar[dr,dashed, "h_{[\lambda_1,\lambda_2]}"] &\cL_{\a}\ar[d,"\lambda_2"]\\
\cL_{\a}\ar[r, "\lambda_1"]& \cL_{\a}
\end{tikzcd}
\label{eq:hypercubeL-a[lambda-1,lambda-2]}
\end{equation}
which satisfies a suitable version of the hypercube relations.

Similar to our proof of Lemma~\ref{lem:homology-action-squares}, we need to describe how to interpret the holomorphic polygon counts where we formally have both $\lambda_1$ and $\lambda_2$ as an input.

We have already defined the holomorphic polygon maps when there is a single input of $\lambda_1$ or $\lambda_2$. When there are two inputs, we make the following definitions. Suppose $\ds_1,\dots, \ds_n$ is a collection of attaching curves on $(\Sigma,\ws,\zs,\ps)$.  If $j<k$, $s,t\in \{1,2\}$, and $s\neq t$, we define 
\[
f_{\dt_1,\dots, \dt_j,\dt_j,\dots, \dt_k, \dt_k,\dots, \dt_n}(\Theta_{1,2},\dots, \lambda_s,\dots, \lambda_t,\dots, \Theta_{n-1,n})
\]
 to count polygons weighted by 
 $\#(\d_{\dt_j}(\psi)\cap \lambda_s)\#(\d_{\dt_k}(\psi)\cap \lambda_t)$. Next,  we define 
 \[
 f_{\dt_1,\dots, \dt_j, \dt_j, \dt_j,\dots, \g_n}(\Theta_{1,2},\dots, \lambda_2,\lambda_1,\dots, \Theta_{n-1,n})
 \]
  to also count curves weighted by
  $\#(\d_{\dt_j}(\psi)\cap \lambda_2)\#(\d_{\dt_j}(\psi)\cap \lambda_1)$. Finally, we define 
  \[ f_{\dt_1,\dots, \dt_j, \dt_j, \dt_j,\dots, \dt_n}(\Theta_{1,2},\dots, \lambda_1,\lambda_2,\dots, \Theta_{n-1,n})=0
  \]

We claim that these satisfy a slightly restricted version of the $A_\infty$-associativity relation. Instead of showing an $A_\infty$-associativity relation on all tuples, we will show that associativity is satisfied on only some generators. We will show that associativity holds on tuples of the form
\begin{equation}
(\Theta_{1,2},\dots,\Theta_{j-1,j}, \lambda_t,\dots, \lambda_s,\Theta_{k,k+1},\dots, \Theta_{n-1,n})
\label{eq:associativity-cubified-1}
\end{equation}
whenever $j<k$ and $t\neq s$. Additionally, we will show that the associativity relations also hold on
\begin{equation}
\begin{split}
&(\Theta_{1,2},\dots,\Theta_{j-1,j},\lambda_1, \lambda_2,\Theta_{j,j+1},\dots, \Theta_{n-1,n})\\
+&(\Theta_{1,2},\dots,\Theta_{j-1,j}, \lambda_2, \lambda_1,\Theta_{j,j+1},\dots, \Theta_{n-1,n})
\end{split}
\label{eq:associativity-cubified-2}
\end{equation}
as long as we make the following conventions: We assume the polygon maps are strictly unital with the $\lambda$-inputs; $\d \lambda_i=0$; and $[\lambda_1,\lambda_2]=\sum_{p_i\in \d \lambda_1\cap \d\lambda_2} U_i$. This weaker notion of associativity is sufficient for the purposes of constructing $h_{[\lambda_1,\lambda_2]}$ and proving Equation~\eqref{eq:[l_1,l_2]-hypercube-formal}. Note that instead of the above approach, we could have instead attempt to prove associativity for general tuples, but this would require defining the polygon maps when there are inputs such as $\lambda_1\cdot\lambda_2$, which is an unnecessary detour.

The proof of associativity, as described above, is broken into several cases. The first case is when $n>2$ and $j<k$. In this case, the generic degenerations of a 1-dimensional family of holomorphic $n$-gons will consist of pairs of an $p$-gon and a $q$-gon for $p+q=n+2$. Write $\phi$ and $\psi$ for the classes of the $p$ and $q$-gons respectively. If $s\in \{1,2\}$, write $a_{j}^\phi$ for $\#(\d_{\dt_j}(\phi)\cap \lambda_1)$ and $b_j^\phi$ for $\#(\d_{\dt_j}(\phi)\cap \lambda_2)$. In this case, the associativity relation applied to tuples as in Equation~\eqref{eq:associativity-cubified-1} follows from the additive relation
\[
a_{j}^\phi b_{k}^\psi+a_{j}^\psi b_{k}^\phi+a_{j}^\phi b_{k}^\phi+a_{j}^\psi b_{k}^\psi=a_{j}^{\phi*\psi} b_{k}^{\phi*\psi}
\]
Associativity for tuples as in Equation~\eqref{eq:associativity-cubified-2}, when $j=k$ and $n>2$, follows from the similar additive relation
\[
a_{j}^\phi b_{j}^\psi+a_{j}^\psi b_{j}^\phi+a_{j}^\phi b_{j}^\phi+a_{j}^\psi b_{j}^\psi=a_{j}^{\phi*\psi} b_{j}^{\phi*\psi}.
\]

Finally, in the case that $n=2$, there are additional ends corresponding to boundary degenerations. These ends contribute in the associativity relations for $(\Theta_{1,2},\lambda_1,\lambda_2)+(\Theta_{1,2},\lambda_2,\lambda_1)$. The relation $[\lambda_1,\lambda_2]=\sum_{p\in \d \lambda_1\cap \d\lambda_2} U_p$ correspond to the fact that $a_2^\phi b_2^\phi\equiv 1$ if $\phi$ is a nonnegative, index 2 class of $\ds_2$-boundary degenerations which covers a point of $\d\lambda_1\cap \d\lambda_2$. The associativity relations for $(\lambda_1,\lambda_2, \Theta_{1,2})+(\lambda_2,\lambda_1, \Theta_{1,2})$ are similar. Finally, we consider the relations for $(\lambda_1,\Theta_{1,2},\lambda_2)$. In this case, boundary degenerations do not make any contribution, since  $a_1^\phi b_2^\phi\equiv 0$ for any boundary degeneration because a boundary degeneration has boundary on only one set of attaching curves. 

Having described the necessary associativity relations, filling the higher length chains of $h_{[\lambda_1,\lambda_2]}$ and verifying Equation~\eqref{eq:[l_1,l_2]-hypercube-formal} follows the same line of reasoning as in the proof of Lemma~\ref{lem:homology-action-squares}. We leave the remaining details of the construction to the reader.
\end{proof}

We prove a final lemma:

\begin{lem}\label{lem:additivity-A-lambda} Suppose that $\lambda_1$ and $\lambda_2$ are two 1-chains on $(\Sigma,\ws,\zs,\ps)$, such that $\d \lambda_1=\{x_1,x_2\}$ and $\d \lambda_2=\{x_2,x_3\}$. Then
\[
\cA_{\lambda_1}+\cA_{\lambda_2}\simeq \cA_{\lambda_1*\lambda_2}.
\]
\end{lem}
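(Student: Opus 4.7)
The plan is to exhibit an explicit compatible choice of the filling data for the three diagrams $\cL_\a^{\lambda_1}$, $\cL_\a^{\lambda_2}$ and $\cL_\a^{\lambda_1+\lambda_2}$ which realizes the additivity on the nose, and then to invoke the well-definedness of $\cA_\lambda$ up to hypercube homotopy. The starting observation is that intersection numbers are additive in the 1-chain: if we set $a_j^\psi(\mu):=\#(\d_{\g_j}(\psi)\cap\mu)$ for a formal input $\mu$, then
\[
a_j^\psi(\lambda_1+\lambda_2)=a_j^\psi(\lambda_1)+a_j^\psi(\lambda_2).
\]
Consequently, if we extend the generalized holomorphic polygon counting convention of Section~\ref{sec:homology-actions-hypercubes} linearly in the 1-chain input, the naive polygon-counting maps satisfy $\cA_{\lambda_1+\lambda_2}^0=\cA_{\lambda_1}^0+\cA_{\lambda_2}^0$. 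The point is that this naive identity lifts to a strict equality at the hypercube-morphism level for a well-chosen filling.

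The main step is to construct such a filling. Choose, as in Lemma~\ref{lem:d-mor-A-lambda}, fillings of $\cL_{\a}^{\lambda_1}$ and $\cL_\a^{\lambda_2}$ with all higher-length chains in the submodule generated by $(\scU_w, \scU_{w'})$ ranging over $w,w'\in\d\lambda_i\subset\ws$, as arranged by Lemma~\ref{lem:higher-length-chains-in-submodule-A-lambda}. Denote the higher-length chains by $\eta^{(i)}_{\a_1,\a_0}, \omega^{(i)}_{\a_{11},\a_{00}},\ldots$ for $i\in\{1,2\}$. I will fill $\cL_\a^{\lambda_1+\lambda_2}$ by declaring its $\lambda$-arrow to be the formal input $\lambda_1+\lambda_2$ and taking
\[
\eta^{(1+2)}_{\a_1,\a_0}:=\eta^{(1)}_{\a_1,\a_0}+\eta^{(2)}_{\a_1,\a_0}, \qquad \omega^{(1+2)}_{\a_{11},\a_{00}}:=\omega^{(1)}_{\a_{11},\a_{00}}+\omega^{(2)}_{\a_{11},\a_{00}},
\]
and so on. The length-$n$ structure relations for $\cL_\a^{\lambda_1+\lambda_2}$ then split, by linearity of the generalized polygon counting and of the differential, into the sum of the length-$n$ structure relations for $\cL_\a^{\lambda_1}$ and $\cL_\a^{\lambda_2}$; both of these hold because $\cL_\a^{\lambda_1}$ and $\cL_\a^{\lambda_2}$ are valid fillings. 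Since $\d\lambda_1\cup\d\lambda_2$ consists only of free basepoints, the constants $E_w^\a$ from Lemma~\ref{lem:d-mor-A-lambda} reduce to $\scU_w$, and the additional term $\d_{\Mor}(\cA_{\lambda_1+\lambda_2})=\scU_w+\scU_{w'}$ contributed by boundary degenerations is also additive in $\lambda$, so nothing new appears.

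With this filling of $\cL_\a^{\lambda_1+\lambda_2}$ in place, pairing with $\cL_\b$ yields, by definition and the linearity of the polygon counting weights, the strict equality of hypercube morphisms
\[
\cA_{\lambda_1+\lambda_2}=\cA_{\lambda_1}+\cA_{\lambda_2}.
\]
Finally, because the map $\cA_{\lambda_1+\lambda_2}$ is well-defined up to homotopy of hypercube morphisms independent of the choice of filling (an acyclic-models type argument that builds the homotopy by the same obstruction-theoretic filling procedure used for $\cL_\a^\lambda$ itself, exactly as in Lemmas~\ref{lem:d-mor-A-lambda} and~\ref{lem:homology-action-squares}), the stated homotopy follows.

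The main technical obstacle will be the last well-definedness assertion: writing out cleanly the one-parameter family of fillings connecting our additive filling of $\cL_\a^{\lambda_1+\lambda_2}$ to an arbitrary filling. I expect this to be handled by the same inductive grading/algebraic argument used in Lemma~\ref{lem:d-mor-A-lambda}: the obstruction to building a higher-dimensional filling of the cube witnessing the homotopy lives in the kernel of the quotient map $\pi$ of Equation~\eqref{eq:pi-map-quotient-variables} in a bigrading above the top-degree generator, and hence vanishes.
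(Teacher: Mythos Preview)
Your proposal is correct and follows essentially the same approach as the paper: use additivity of the intersection number $\#(\d_{\a_i}(\phi)\cap\mu)$ in the 1-chain $\mu$ to construct $\cL_\a^{\lambda_1+\lambda_2}$ by summing the fillings of $\cL_\a^{\lambda_1}$ and $\cL_\a^{\lambda_2}$, obtain strict equality $\cA_{\lambda_1+\lambda_2}=\cA_{\lambda_1}+\cA_{\lambda_2}$ for this filling, and then invoke well-definedness up to homotopy. The paper's proof is more terse and does not bother invoking Lemma~\ref{lem:higher-length-chains-in-submodule-A-lambda} (which is not needed here), but otherwise the arguments coincide.
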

\begin{proof}We observe first that for any class of curves $\phi$ which has boundary on $\as_i$, we have
 \[
\# (\d_{\a_i}(\phi)\cap (\lambda_1+\lambda_2))= \# (\d_{\a_i}(\phi)\cap \lambda_1)+\# (\d_{\a_i}(\phi)\cap \lambda_2).
\]
In particular, we may take the formal endomorphism $F_{\lambda_2 *\lambda_1}$ to be $F_{\lambda_2}+F_{\lambda_1}$. With this definition, $\cA_{\lambda_1*\lambda_2}=\cA_{\lambda_1}+\cA_{\lambda_2}$. 
Since $F_{\lambda_1*\lambda_2}$ is well-defined up to chain homotopy by Remark~\ref{rem:well-defined-F_lambda}, the claim in the statement follows for an arbitrary construction of $F_{\lambda_2*\lambda_2}$. 
\end{proof}

We now prove a helpful formula for understanding basepoint moving maps on the link surgery hypercubes. See \cite{ZemGraphTQFT}*{Theorem~14.11} for an analogous result in the setting of the ordinary Heegaard Floer complexes.

\begin{lem} Let $\scH=(\Sigma,\cL_{\a},\cL_{\b},\ws,\zs,\ps)$ where $\cL_{\a}$ and $\cL_{\b}$ are hypercubes of handleslide equivalent attaching curves, $\ws\cup \zs$ are link basepoints and $\ps$ are free basepoints. Suppose that $p_1\in \ps$, and that $p_2\in \Sigma\setminus \ps$ is in the complement of the curves in $\cL_{\a}$ and $\cL_{\b}$. Suppose also that $(\Sigma,\cL_{\a},\cL_{\b},\ws,\zs,\ps)$ and $(\Sigma,\cL_{\a},\cL_{\b},\{p_2\}\cup \ps\setminus \{p_1\})$ are both weakly admissible. Let $\scH_{p_i}^+$ denote the diagram obtained by free-stabilizing $\scH$ at $p_i$.  If $\lambda$ is a path from $p_1$ to $p_2$, then
\[
\cS_{p_1}^- \Psi_{\scH_{p_2}^+\to\scH_{p_1}^+} \cA_\lambda \cS_{p_2}^+
\]
is chain homotopic to the diffeomorphism map which moves $p_1$ to $p_2$ along $\lambda$. In the above, we work on the complexes where we have identified $U_{p_1}$ and $U_{p_2}$ so that $\cA_{\lambda}$ is a chain map. Also $\Psi_{\scH_{p_2}^+\to \scH_{p_1}^+}$ denotes the map defined in Section~\ref{sec:basepoint-moving-maps-subsec}. 
\end{lem}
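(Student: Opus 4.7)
\smallskip

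\noindent\textbf{Proof proposal.} My plan is to mirror the proof of the analogous statement for ordinary Heegaard Floer complexes given in \cite{ZemGraphTQFT}*{Theorem~14.11}, upgraded to the hypercube setting using the structural results for $\cA_\lambda$, $\cS_w^\pm$, and $\Psi$ that have been established earlier in the section. The underlying principle is that all three operations appearing in the composition are hypercube-level versions of operations that already exist at the level of ordinary Floer complexes and which satisfy the same algebraic identities (Lemmas \ref{lem:basic-graph-TQFT-relations}, \ref{lem:commute-S-A-lam}, \ref{lem:commute-free-stabilizations}, and \ref{lem:additivity-A-lambda}); so the strategy is to reduce the claim to a local identification on each Heegaard diagram in the hypercubes, and then assemble the local identifications into a hypercube chain homotopy.

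First, I would decompose the arc $\lambda$ into a composition of sub-arcs $\lambda=\lambda_1*\cdots*\lambda_k$ which are transverse to all the attaching curves of $\cL_\a\cup \cL_\b$ and such that each $\lambda_i$ crosses at most one attaching curve. Choosing intermediate points $v_1,\dots,v_{k-1}$ between consecutive sub-arcs, I would insert the identity $\simeq \cS_{v_j}^-\cS_{v_j}^+$ (which is justified since $\cS_{v_j}^+\cS_{v_j}^-=\Phi_{v_j}$ is null-homotopic by Remark \ref{rem:Phi-nullhomotopic}, combined with Lemma \ref{lem:commute-free-stabilizations}) and apply additivity of $\cA_\lambda$ (Lemma \ref{lem:additivity-A-lambda}) to write the composition as a telescoping product of terms, each of the form $\cS_{v_j}^- \Psi \cA_{\lambda_i} \cS_{v_{j+1}}^+$. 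Simultaneously I would factor the diffeomorphism map $\phi_*$ corresponding to the basepoint slide as a composition of elementary basepoint moves, one for each sub-arc $\lambda_i$, so that it becomes enough to compare the two compositions one sub-arc at a time.

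Next, I would handle the two types of sub-arcs separately. For sub-arcs $\lambda_i$ that are disjoint from all attaching curves, the small translate theorem (Lemma \ref{lem:nearest-point-map}) identifies $\Psi$ with a nearest-point map, and Lemma \ref{lem:basic-graph-TQFT-relations}(3) then gives $\cS_{v_j}^- \Psi \cA_{\lambda_i} \cS_{v_{j+1}}^+\simeq \id$, which matches the fact that the corresponding piece of $\phi_*$ is a diffeomorphism supported in a disk disjoint from all attaching curves. For sub-arcs $\lambda_i$ that cross a single attaching curve, the elementary basepoint move corresponds to an isotopy that drags that attaching curve across the arc (equivalently, to a handleslide in the opposite direction), and the map $\Psi$ in this case is constructed in Remark \ref{rem:naturality-hypercubes} precisely to realize this handleslide via triangle maps. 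A direct comparison as in \cite{ZemGraphTQFT}*{Section~14} (a model computation on a doubly pointed torus) then identifies $\cS_{v_j}^- \Psi \cA_{\lambda_i}\cS_{v_{j+1}}^+$ with the diffeomorphism move on each individual Floer complex appearing at a vertex of the hypercubes.

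The main obstacle, as I see it, lies in promoting the vertex-wise identifications just described to a full hypercube-level chain homotopy between $\cS_{w_1}^-\Psi_{\scH_{w_2}^+\to \scH_{w_1}^+}\cA_\lambda \cS_{w_2}^+$ and $\phi_*$. All the operations at play are defined only up to hypercube chain homotopy, so one must construct the higher-length chains of the homotopy compatibly. My plan for this is to use the standard filling procedure of Manolescu--Ozsv\'{a}th \cite{MOIntegerSurgery}*{Lemma~8.6}: having constructed the low length chains of the homotopy from the vertex-wise identification, the obstruction to filling in higher length chains will lie in a top-graded homology group, where it vanishes by the same grading and rigidity arguments used in the proof of Lemma \ref{lem:d-mor-A-lambda} (namely, that the relevant cycles lie in Maslov bigrading strictly above the canonical generator and hence represent trivial homology classes). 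Once this filling is carried out, the result follows.
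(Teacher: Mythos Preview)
There is a genuine error in your telescoping step. You write that you will ``insert the identity $\simeq \cS_{v_j}^-\cS_{v_j}^+$'', but in fact $\cS_w^-\cS_w^+=0$ on the nose (since $\cS_w^+$ lands in $\theta^+$ and $\cS_w^-$ kills $\theta^+$). The relation $\cS_w^+\cS_w^-=\Phi_w$ from Lemma~\ref{lem:Phi-SS} goes the other way and does not help here. So your factorization into sub-arcs collapses to zero rather than telescoping. The functoriality of $\cS^-\cA_\lambda\cS^+$ under concatenation of arcs does hold, but its proof (which the paper gives) is more delicate: one commutes the $\cS^\pm$ maps past the $\cA$ maps using disjointness, and then uses $\cA_{\lambda_1+\lambda_2}\simeq\cA_{\lambda_1}+\cA_{\lambda_2}$ together with $\cA_{\lambda_i}^2\simeq\scU$ and $\cS_z^-\cA_{\lambda_1}\cS_z^+\simeq\id$ to collapse the cross-terms.

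More broadly, the paper's argument is quite different from yours and avoids both the local model computations and the obstruction-theoretic filling you propose. The key idea is to pick a point $z$ very close to $w_2$ and decompose $\lambda=\lambda_1*\lambda_2$ where $\lambda_2$ is the short arc from $z$ to $w_2$. Writing $\phi$ for the inverse point-pushing diffeomorphism (moving $w_2$ back to $w_1$), naturality of $\cS^\pm$ and $\cA$ under diffeomorphisms reduces the problem to computing $\cS_z^-\cA_{\phi(\lambda_2)}\cS_{w_1}^+\cS_{w_1}^-\cA_{\lambda_1}\cS_z^+$. Since $\lambda_2$ is small, $\phi(\lambda_2)$ is isotopic to $\lambda_1$ rel endpoints, so this becomes $\cS_z^-\cA_{\lambda_1}\Phi_{w_1}\cA_{\lambda_1}\cS_z^+$, which is shown to be $\simeq\id$ using only the algebraic relations $[\cA_{\lambda_1},\Phi_{w_1}]\simeq\id$, $\cA_{\lambda_1}^2\simeq\scU$, $\cS_z^-\cS_z^+=0$, and $\cS_z^-\cA_{\lambda_1}\cS_z^+\simeq\id$. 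All of these are already established as hypercube-level homotopies, so no additional filling is required. The paper remarks that this is in fact simpler than the original proof of the $3$-manifold case in \cite{ZemGraphTQFT}.
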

\begin{proof} We give a slightly simpler proof than appeared in the setting of 3-manifold invariants in \cite{ZemGraphTQFT}*{Theorem~14.11}, as follows.    Let $q$ be a point very near to $p_2$. We will view $\lambda$ as the concatenation of a path $\lambda_1$ from $p_1$ to $q$, and a path $\lambda_2$ from $q$ to $p_2$. In the following, we drop the transition maps $\Psi_{\scH_{p_2}^+\to \scH_{p_1}^+}$ from the notation.  We will write $U$ for  
\[
U=U_{p_1}=U_{p_2}.
\]

Let $\phi_*$ denote the point pushing diffeomorphism map which moves $p_2$ to $p_1$ along $\lambda$, composed with the naturality map. It is sufficient to show that
\begin{equation}
\id\simeq \phi_* \cS_{p_1}^- \cA_{\lambda} \cS_{p_2}^+.
\label{eq:basepoint-moving-map-rephrased}
\end{equation}

As a first step, we claim that map $\cS_{p_1}^- \cA_{\lambda} \cS_{p_2}^+$ is functorial under concatenation of arcs, i.e. that
\begin{equation}
\cS_{p_1}^- \cA_{\lambda} \cS_{p_2}^+\simeq \cS_{q}^- \cA_{\lambda_2} \cS_{p_2}^+\cS_{p_1}^- \cA_{\lambda_1} \cS_{q}^+.\label{eq:functoriality-graph-action}
\end{equation}
To establish Equation~\eqref{eq:functoriality-graph-action}, first note that we may commute free-stabilization maps with other terms by Lemmas~\ref{lem:commute-S-A-lam} and~\ref{lem:commute-free-stabilizations} to obtain
\[
\begin{split}
&\cS_{q}^- \cA_{\lambda_2} \cS_{p_2}^+\cS_{p_1}^- \cA_{\lambda_1} \cS^+_{q}\\
\simeq & \cS_{p_1}^-\cS_{q}^- \cA_{\lambda_2} \cA_{\lambda_1} \cS_q^+\cS_{p_2}^+ 
\end{split}
\]

Next, we use the relations   $\cA_{\lambda_i}^2\simeq U$ and $\cA_{\lambda_1+\lambda_2}=\cA_{\lambda_1}+\cA_{\lambda_2}$  from Lemmas~\ref{lem:homology-action-squares} and~\ref{lem:additivity-A-lambda} to see that
\[
 \cS_{p_1}^- \cS_{q}^-\cA_{\lambda_2} \cA_{\lambda_1} \cS_q^+\cS_{p_2}^+ \simeq \cS_{p_1}^- \cS_{q}^- (\cA_{\lambda_2+\lambda_1} \cA_{\lambda_1}+U) \cS_q^+\cS_{p_2}^+.
\]
We use the relation $\cS_{q}^-\cS_q^+=0$ (which is immediate from the formulas for $\cS_q^+$ and $\cS_q^-$) and the fact that $\lambda_2+\lambda_1=\lambda$  to see that the above is homotopic to
\[
 \cS_{p_1}^-\cS_{q}^- \cA_{\lambda} \cA_{\lambda_1} \cS_q^+\cS_{p_2}^+.
\]
Next, we use the fact that $\lambda$ is disjoint from $q$ to commute $\cS_q^{-}$ with $\cA_{\lambda}$ using Lemma~\ref{lem:commute-S-A-lam}, and then we use the relation $\cS_q^- \cA_{\lambda_1} \cS_q^+\simeq \id$ from Lemma~\ref{lem:basic-graph-TQFT-relations} to see that the above equation is homotopic to
\[
\cS_{p_1}^- \cA_{\lambda} \cS_{p_2}^+.
\]
This establishes Equation~\eqref{eq:functoriality-graph-action}.

To obtain the main statement in Equation~\eqref{eq:basepoint-moving-map-rephrased}, we combine Equation~\eqref{eq:functoriality-graph-action} with the naturality of all of the other maps with respect to diffeomorphisms, via the following manipulation:
\[
\begin{split}
\phi_*  \cS_{p_1}^- \cA_{\lambda} \cS_{p_2}^+&\simeq \phi_* \cS_{q}^- \cA_{\lambda_2} \cS_{p_2}^+ \cS_{p_1}^- \cA_{\lambda_1} \cS_{q}^+
\\
&\simeq \cS_{q}^- \cA_{\phi(\lambda_2)} \phi_*\cS_{p_2}^+\cS_{p_1}^- \cA_{\lambda_1} \cS_{q}^+\\
&\simeq \cS_{q}^- \cA_{\phi(\lambda_2)} \cS_{p_1}^+ \cS_{p_1}^- \cA_{\lambda_1} \cS_{q}^+.
\end{split}
\]
We may easily arrange that $\phi(\lambda_2)$ and $\lambda_1$ are isotopic on $\Sigma$  (rel. boundary). Hence $\cA_{\phi(\lambda_2)}\simeq \cA_{\lambda_1}$ by Lemma~\ref{lem:homology-action-homotopy-hypercube}. The expression is easily shown to be homotopic to the identity using the graph TQFT relations, as we briefly recall. We obtain from Lemma~\ref{lem:Phi-SS} that
\[
\cS_{q}^- \cA_{\lambda_1} \cS_{p_1}^+ \cS_{p_1}^- \cA_{\lambda_1} \cS_{q}^+\simeq \cS_{q}^- \cA_{\lambda_1} \Phi_{p_1} \cA_{\lambda_1} \cS_{q}^+.
\]
The above expression is homotopic to $\cS_{q}^- \cA_{\lambda_1}  \cA_{\lambda_1} \Phi_{p_1} \cS_{q}^++ \cS_{q}^- \cA_{\lambda_1}  \cS_{q}^+$ by Lemma~\ref{lem:basic-graph-TQFT-relations}. By Lemma~\ref{lem:homology-action-squares} we see that $\cS_q^- \cA_{\lambda_1}^2 \Phi_{p_1} \cS_q^+\simeq U \cS_q^- \cS_q^+ \Phi_{p_1}=0$ and by Lemma~\ref{lem:basic-graph-TQFT-relations} we have that $\cS_{q}^- \cA_{\lambda_1} \cS_q^+\simeq \id$.  This completes the proof.
\end{proof}

\begin{rem} The above argument also applies to the setting of ordinary Heegaard Floer complexes to give a shorter proof of the normalization axiom of the graph TQFT \cite{ZemGraphTQFT}*{Theorem~14.11}.
\end{rem}

We now prove the main basepoint moving map result:
\begin{proof}[Proof of Theorem~\ref{thm:basepoint-moving-hypercubes}]
The proof follows from the same manipulation as in the setting of the graph TQFT, see \cite{ZemGraphTQFT}*{Section~14.4}. We produce the argument for the reader. We decompose $\g$ into the concatenation of two arcs $\lambda_1,\lambda_2$, and view $\lambda_1$ as a path from $p$ to a new basepoint $q$, and $\lambda_2$ as a path from $q$ to $p$. In all of the Floer complexes we consider, we identify the variables for $p$ and $q$ with a single variable $U$. We compute 
\[
\begin{split}
\gamma_*&\simeq \cS_{q}^- \cA_{\lambda_2} \cS_p^+ \cS_p^- \cA_{\lambda_1} \cS_q^+\\
&\simeq \cS_{q}^- \cA_{\lambda_2} \Phi_p \cA_{\lambda_1} \cS_q^+\\
&\simeq \cS_{q}^- A_{\lambda_1} \cS_q^++\cS_q^-\Phi_p \cA_{\lambda_2} \cA_{\lambda_1} \cS_q^+\\
&\simeq \id+\Phi_p \cS_q^-(\cA_{\lambda_1}+\cA_\g) \cA_{\lambda_1} \cS_q^+\\
&\simeq \id+\Phi_p \cS_q^- U \cS_q^+ +\Phi_p A_\g \cS_q^-\cA_{\lambda_1} \cS_q^+\\
&\simeq \id+\Phi_p \cA_\g,
\end{split}
\]
completing the proof. 
\end{proof}

\subsection{Relating homology actions and basepoint actions}

Suppose that $(\Sigma,\as,\bs,\ws,\zs)$ is a Heegaard link diagram for $(Y,L)$ and that $K_i\subset L$ is a knot component which has basepoints $w_i\in \ws$ and $z_i\in \zs$. We may represent $K_i$ on $\Sigma$ as an immersed curve $[K_i]$ on $\Sigma$ which is the concatenation of two arcs $\lambda_{\a}$ and $\lambda_{\b}$ which both have boundary $\{w_i,z_i\}$. The arc $\lambda_{\a}$ intersects only alpha curves while $\lambda_{\b}$ intersects only beta curves. See Figure~\ref{fig:44}. If $\phi$ is a class of disks, then 
\begin{equation}
a(\phi,\g)=n_{w_i}(\phi)-n_{z_i}(\phi)\pmod 2.
\label{eq:a-phi-g=nw-nz}
\end{equation}
The map $A_{[K_i]}$ weights a holomorphic disk $\phi$ by $a(\phi,\g)$. On the other hand, we can define a map which counts holomorphic disks weighted by $n_{w_i}(\phi)$. This map is clearly equal to $\scU_i \Phi_{w_i}$. Similarly the map which counts disks weighted by $n_{z_i}(\phi)$ is equal to $\scV_i \Psi_{z_i}$. Therefore Equation~\eqref{eq:a-phi-g=nw-nz} translates into the equality
\[
A_{[K_i]}=\scU_{i} \Phi_{w_i}+\scV_{i}\Psi_{z_i},
\] 
as endomorphisms of $\cCFL(Y,L)$. Compare \cite{ZemCFLTQFT}*{Lemma~14.12}. In this section, we prove a hypercube version of the above statement.

\begin{figure}[h]
\begingroup%
  \makeatletter%
  \providecommand\color[2][]{%
    \errmessage{(Inkscape) Color is used for the text in Inkscape, but the package 'color.sty' is not loaded}%
    \renewcommand\color[2][]{}%
  }%
  \providecommand\transparent[1]{%
    \errmessage{(Inkscape) Transparency is used (non-zero) for the text in Inkscape, but the package 'transparent.sty' is not loaded}%
    \renewcommand\transparent[1]{}%
  }%
  \providecommand\rotatebox[2]{#2}%
  \newcommand*\fsize{\dimexpr\f@size pt\relax}%
  \newcommand*\lineheight[1]{\fontsize{\fsize}{#1\fsize}\selectfont}%
  \ifx\svgwidth\undefined%
    \setlength{\unitlength}{150.05734661bp}%
    \ifx\svgscale\undefined%
      \relax%
    \else%
      \setlength{\unitlength}{\unitlength * \real{\svgscale}}%
    \fi%
  \else%
    \setlength{\unitlength}{\svgwidth}%
  \fi%
  \global\let\svgwidth\undefined%
  \global\let\svgscale\undefined%
  \makeatother%
  \begin{picture}(1,0.75232704)%
    \lineheight{1}%
    \setlength\tabcolsep{0pt}%
    \put(0.12187336,0.29425106){\color[rgb]{0,0,0}\makebox(0,0)[t]{\lineheight{1.25}\smash{\begin{tabular}[t]{c}$w_i$\end{tabular}}}}%
    \put(0.86151893,0.45814998){\color[rgb]{0,0,0}\makebox(0,0)[t]{\lineheight{1.25}\smash{\begin{tabular}[t]{c}$z_i$\end{tabular}}}}%
    \put(0,0){\includegraphics[width=\unitlength,page=1]{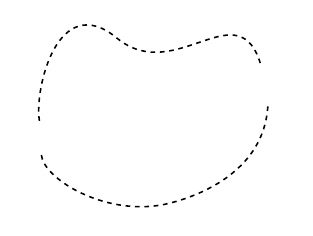}}%
    \put(0.82357124,0.21440287){\color[rgb]{0,0,0}\makebox(0,0)[lt]{\lineheight{1.25}\smash{\begin{tabular}[t]{l}$\lambda_\a$\end{tabular}}}}%
    \put(0.04347815,0.53589639){\color[rgb]{0,0,0}\makebox(0,0)[lt]{\lineheight{1.25}\smash{\begin{tabular}[t]{l}$\lambda_\b$\end{tabular}}}}%
    \put(0,0){\includegraphics[width=\unitlength,page=2]{fig44.pdf}}%
    \put(0.36562022,0.69139009){\color[rgb]{0,0,1}\makebox(0,0)[lt]{\lineheight{1.25}\smash{\begin{tabular}[t]{l}$\bs$\end{tabular}}}}%
    \put(0.47331432,0.01478265){\color[rgb]{1,0,0}\makebox(0,0)[lt]{\lineheight{1.25}\smash{\begin{tabular}[t]{l}$\as$\end{tabular}}}}%
  \end{picture}%
\endgroup%

\caption{A knot shadow $K_i$ (dashed) decomposed as a concatenation of two subarcs $\lambda_{\a}$ and $\lambda_{\b}$.}
\label{fig:44}
\end{figure}

We consider hypercubes of handleslide equivalent attaching curves $\cL_{\a}$ and $\cL_{\b}$ on $(\Sigma,\ws,\zs)$. We suppose that there is a pair of arcs $\lambda_{\a},\lambda_{\b}\subset \Sigma$ such that
\[
\d \lambda_{\a}=\d \lambda_{\b} =\{w_i,z_i\}, \qquad K_i=\lambda_\a*\lambda_\b,
\]
and such that $\lambda_{\a}$ intersects only curves from $\cL_{\a}$ while $\lambda_{\b}$ intersects only curves from $\cL_{\b}$. Write $K_i$ for the concatenation of $\lambda_{\a}$ and $\lambda_{\b}$. We recall from Section~\ref{sec:homology-actions-hypercubes} that there is an endomorphism
\[
\cA_{[K_i]}\colon \ve{\CF}^-(\cL_{\a},\cL_{\b})\to \ve{\CF}^-(\cL_{\a},\cL_{\b}).
\]
Since $K_i$ is a closed curve on $\Sigma$, the map $\cA_{[K_i]}$ counts holomorphic polygons (as would be counted by the ordinary hypercube differential) with a weight of $a(\phi,K_i)$, where $a(\phi,K_i)$ is the sum over $\#(\d_{\a_\veps} \phi \cap K)$ ranging over all $\as_{\veps}$ in $\cL_{\a}$. 

In Section~\ref{sec:homology-actions-hypercubes} we constructed basepoint actions $\Phi_{w_i}$ and $\Psi_{z_i}$ on the hypercube $\ve{\CF}^-(\cL_{\a},\cL_{\b})$. The map $\Phi_{w_i}$ was defined as the formal differential of the hypercube differential with respect to $\scU_i$. We note that in general powers of $\scU_i$ are contributed by both the holomorphic curves in the hypercube differential, and also the powers of $\scU_i$ present in the input chains of the polygon maps used to construct the hypercube. We defined a map $\Phi_{w_i}^0$ only took into account the multiplicities of holomorphic curves at the basepoint $w_i$. We proved in Lemma~\ref{lem:algebraically-rigid-Phiw0} that if $\cL_{\a}$ and $\cL_{\b}$ are algebraically rigid, then $\Phi^0_{w_i}=\Phi_{w_i}$ and $\Psi^0_{w_i}=\Psi_{w_i}$. In general, this is not the case. Nonetheless, we do have the following:

\begin{lem}
\label{lem:homology-action-versus-basepoint-action}
 Suppose that $\cL_{\a}$ and $\cL_{\b}$ are hypercubes of handleslide equivalent attaching curves on $(\Sigma,\ws,\zs)$, and $K_i\subset \Sigma$ is a concatenation of two arcs $\lambda_{\a}$ and $\lambda_{\b}$, as above, where $\lambda_\a$ intersects only the curves in $\cL_{\a}$, and $\cL_{\b}$ intersects only the curves in $\cL_{\b}$. Then $\scU_i \Phi^0_{w_i}+\scV_i \Phi_{z_i}^0$ is a chain map (regardless of whether $\cL_{\a}$ and $\cL_{\b}$ are algebraically rigid) and
\[
\cA_{[K_i]}= \scU_i \Phi_{w_i}^0+\scV_i \Psi_{z_i}^0=\scU_i \Phi_{w_i}+\scV_i\Psi_{z_i},
\]
as hypercube morphisms.
\end{lem}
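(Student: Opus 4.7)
The plan is to reduce both equalities to the mod~$2$ geometric identity
\[
\#(\d_\a(\psi)\cap K)\equiv n_w(\psi)+n_z(\psi)\pmod 2,
\]
valid for any homology class $\psi$ of Whitney polygons on the diagram. First I would establish this identity. Since $\lambda_\b$ is disjoint from $\as$ and $K=\lambda_\a*\lambda_\b$, only $\lambda_\a$ contributes to the intersection with $\d_\a(\psi)$; and as one traverses $\lambda_\a$ from $w$ to $z$, the local multiplicity of the domain $D(\psi)$ changes by $\pm 1$ at each crossing with $\as$, so the signed intersection equals $n_z(\psi)-n_w(\psi)$, which is the claimed identity mod~$2$.

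For the first equality $\cA_{[K]}=\scU_w\Phi_w^0+\scV_z\Psi_z^0$, I would compare both sides component by component. For each pair of increasing sequences $\veps_1<\dots<\veps_j$ in $\cL_\a$ and $\nu_1<\dots<\nu_k$ in $\cL_\b$, both maps sum over the same polygon classes $\psi$ of Maslov index $3-j-k$ with identical $\Theta$-chain inputs, recording each contribution with the same monomial $\scU_w^{n_w(\psi)}\scV_z^{n_z(\psi)}$ (times contributions from the other variables and from the $\Theta$ inputs). Only the combinatorial coefficient differs: $a(\psi,K)$ for $\cA_{[K]}$ versus $n_w(\psi)+n_z(\psi)$ for the right-hand side. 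The bridge identity makes these equal mod~$2$. Since Lemma~\ref{lem:homology-action-chain-homotopy} shows that $\cA_{[K]}$ is a hypercube cycle, this equality also proves that $\scU_w\Phi_w^0+\scV_z\Psi_z^0$ is a chain map without any algebraic rigidity hypothesis.

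For the second equality $\scU_w\Phi_w^0+\scV_z\Psi_z^0=\scU_w\Phi_w+\scV_z\Psi_z$, I would note that $\Phi_w$ is the formal derivative with respect to $\scU_w$ of the entire differential, so it decomposes as $\Phi_w^0$ plus a correction $\Phi_w^\Theta$ coming from differentiating the $\scU_w$ powers sitting in the $\Theta$-chain inputs; symmetrically $\Psi_z=\Psi_z^0+\Psi_z^\Theta$. When $\cL_\a,\cL_\b$ are algebraically rigid, the $\Theta$-chains are canonical top-degree generators with no $\scU_w$ or $\scV_z$ powers, so $\Phi_w^\Theta=\Psi_z^\Theta=0$ and the equality holds on the nose. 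For general hypercubes, I would use hypercube naturality from Remark~\ref{rem:naturality-hypercubes} to compare $(\cL_\a,\cL_\b)$ with the hypercube of small Hamiltonian translates of all curves, which is algebraically rigid, reducing to the rigid case up to hypercube morphism homotopy.

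The main obstacle is the second equality, specifically controlling the $\Theta$-chain correction $\scU_w\Phi_w^\Theta+\scV_z\Psi_z^\Theta$ in the non-rigid case. The cleaner route is to set up the naturality square relating $(\cL_\a,\cL_\b)$ to an algebraically rigid approximation and to verify that the transition chain maps intertwine the formally defined operators $\Phi_w$ and $\Psi_z$ up to homotopy, not merely the differential; alternatively, one argues directly that the $\Theta$-chain correction is a coboundary for the hypercube morphism differential, using that each $\Theta$ is a cycle representing the top-dimensional class and that representatives free of $\scU_w$ and $\scV_z$ powers exist in the relevant graded components (by an argument analogous to Lemma~\ref{lem:higher-length-chains-in-submodule-A-lambda}). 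Either way, once the reduction to the rigid case is in place, the chain of equalities completes the proof.
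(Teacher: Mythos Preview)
Your argument for the first equality $\cA_{[K]}=\scU_w\Phi_w^0+\scV_z\Psi_z^0$ is correct and matches the paper's: both sides count exactly the same polygons with the same monomial weights, differing only by the mod~$2$ coefficient $a(\psi,K)$ versus $n_w(\psi)+n_z(\psi)$, which your bridge identity equates. The chain-map claim then follows from Lemma~\ref{lem:homology-action-chain-homotopy}, as you say.

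For the second equality your approach has a genuine gap. The lemma asserts an \emph{exact} equality of hypercube morphisms, not merely a homotopy; both of your proposed routes in the non-rigid case (naturality to a rigid approximation, or showing the $\Theta$-correction is a coboundary) would at best yield $\simeq$. The paper instead shows the correction $\scU_w\Phi_w^\Theta+\scV_z\Psi_z^\Theta$ vanishes identically, by exploiting the specific geometry of $K=\lambda_\a*\lambda_\b$. The key point you missed is this: because $\lambda_\b$ is disjoint from all curves of $\cL_\a$, on any subdiagram $(\Sigma,\as',\as,\ws,\zs)$ the basepoints $w$ and $z$ lie in the same component of $\Sigma\setminus(\as'\cup\as)$, so every intersection point $\xs\in\bT_{\a'}\cap\bT_\a$ has $K$-component Alexander grading zero. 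Since the hypercube chains $\Theta_{\a',\a}$ are constructed to lie in Alexander multi-grading $(0,\dots,0)$, each such chain is a sum of terms $\scU_w^n\scV_z^n\cdot\ys$ with equal exponents, and on such monomials $(\scU_w\d_{\scU_w}+\scV_z\d_{\scV_z})(\scU_w^n\scV_z^n\cdot\ys)=2n\,\scU_w^n\scV_z^n\cdot\ys=0$ over $\bF$. The symmetric argument using $\lambda_\a$ handles $\cL_\b$. This gives the equality on the nose without any rigidity or naturality hypothesis.
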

\begin{proof} Since $a(\phi,K_i)\equiv n_{w_i}(\phi)-n_{z_i}(\phi)\pmod 2$ for any holomorphic polygon counted by the hypercube differential of $\ve{\CF}^-(\cL_{\a},\cL_{\b})$ we conclude that
\[
\cA_{[K_i]}=\scU_{i} \Phi_{w_i}^0+\scV_{i}\Psi_{z_i}^0,
\]
since the two sides count the same holomorphic curves, with the same weights. Since the left-hand side is a chain map by Lemma~\ref{lem:homology-action-chain-homotopy}, the right hand side must also be a chain map.

It remains to show that
\[
\scU_i \Phi_{w_i}^0+\scV_{i} \Psi_{z_i}^0= \scU_i \Phi_{w_i}+\scV_i\Psi_{z_i}.
\]
To see this, we observe that $\scU_i\Phi_{w_i}+\scV_i \Psi_{z_i}$ may be described as summing over sequences $\veps_1<\cdots<\veps_n$ and $\nu_1<\cdots<\nu_m$  the maps 
\begin{equation}
\scU_i\cdot (\phi_{w_i})^{\nu_1<\dots< \nu_m}_{\veps_1<\dots< \veps_n}+\scV_i\cdot  (\psi_{z_i})^{\nu_1<\dots< \nu_m}_{\veps_1<\dots< \veps_n}\label{eq:maps-phi_w-psi_z-rigid}
\end{equation}
(which count holomorphic curves weighted by their multiplicities over $w_i$ and $z_i$, respectively, as in Equation~\eqref{eq:phiw-veps-nu-def}) and also summing expressions of the form
\begin{equation}
\xs\mapsto f_{\a_{\veps_n},\dots, \a_{\nu_1},\b_{\nu_1},\dots, \b_{\nu_m}}(\Theta^{\a}_{n,n-1},\dots, (\Theta^\a_{j+1,j})',\dots,\Theta_{2,1}^\a ,\xs, \Theta^\b_{1,2},\dots, \Theta^\b_{m-1,m}),
\label{eq:non-rigid-terms-phiw-psiz}
\end{equation}
as well as terms where the primed term is a beta-chain. In the above, we write $\Theta_{j+1,j}^{\a}$ for $\Theta_{\a_{\veps_{j+1}},\a_{\veps_j}}$ and $(\Theta^\a_{j+1,j})'$ denotes $(\scU_i \d_{\scU_i}+\scV_i\d_{\scV_i})(\Theta^\a_{j+1,j})$, and similarly for the beta-terms.
That is, we sum over the holomorphic polygon maps where we apply $(\scU_i \d_{\scU_i}+\scV_i\d_{\scV_i})$ to exactly one of the inputs from $\cL_{\a}$ or $\cL_{\b}$.
The sum of maps in Equation~\eqref{eq:maps-phi_w-psi_z-rigid} is exactly the map $\scU_i\Phi_{w_i}^0+\scV_i\Psi_{z_i}^0$, so it suffices to show that the terms in Equation~\eqref{eq:non-rigid-terms-phiw-psiz} vanish. To see this, we will show that if $\Theta$ is any chain in $\cL_{\a}$ or $\cL_{\b}$, then $(\scU_i\d_{\scU_i}+\scV_i \d_{\scV_i})(\Theta)=0$. We argue as follows, focusing on $\cL_{\a}$. We observe that on the diagram $(\Sigma,\cL_{\a},\ws,\zs)$, the basepoints $w_i$ and $z_i$ are immediately adjacent (by virtue of the existence of $\lambda_{\b}$, which is disjoint from the alpha curves). We note that each subdiagram $(\Sigma,\as',\as,\ws,\zs)$ represents an unlink $\bU$ in a connected sum of several copies of $S^1\times S^2$. There is a $|\bU|$-component Alexander grading on $\cCFL(\bU)$, and the top Maslov bidegree generators lie in Alexander grading $(0,\dots, 0)$. By construction, all of the chains of $\cL_{\a}$ also lie in Alexander grading $(0,\dots, 0)$. Write $A_i$ for the component of the Alexander grading corresponding to $K_i$. Since $\lambda_\b$ connects $w_i$ to $z_i$ and is disjoint from all curves of $\cL_{\a}$, if $\xs\in \bT_{\a'}\cap \bT_{\a}$, we have $A_i(\xs)=0$. In particular, any chain in $\cL_{\a}$ must have equal powers of $\scU_i$ and $\scV_i$, i.e., it must be of the form $\scU_i^N \scV_i^N\cdot \ys$, where $\ys$ has no powers of $\scU_i$ or $\scV_i$. Clearly
\[
(\scU_i \d_{\scU_i}+\scV_i \d_{\scV_i})(\scU_i^N \scV_i^N\cdot \ve{y})=0,
\]
as claimed. The same argument works for $\cL_{\b}$, so the proof is complete.
\end{proof}

\subsection{Properties of the $U$-action on link surgery complexes}

We now explore some of the complexities of the surgery theorem when we allow arbitrary arcs for our $\sigma$-basic systems.

\begin{lem}\label{lem:U-actions-homotopic} Suppose that $\scA$ is a system of arcs for $L\subset S^3$, and suppose that the arcs for knot components $K_1,K_2\subset L$ are alpha-parallel or beta-parallel (the case that one is alpha-parallel and the other is beta-parallel is allowed). Then the actions of $U_1$ and $U_2$ are homotopic on $\cX_{\Lambda}(L,\scA)^{\cL}$ in the sense that there is a type-$D$ module morphism $H^1\colon \cX_{\Lambda}(L,\scA)^{\cL}\to \cX_{\Lambda}(L,\scA)^{\cL}$ such that
\[
\d_{\Mor}(H^1)=\id\otimes (U_1+U_2).
\]
In the above, $U_i$ denotes $\scU_i\scV_i$ and $\id\otimes (U_1+U_2)$ means the map which sends $\xs$ to $\xs\otimes (U_1+U_2)$ in all idempotents.
\end{lem}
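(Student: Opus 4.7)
The plan is to realize the null-homotopy $H^1$ as a hypercube basepoint action. Let $\scH$ be a basic system of Heegaard diagrams for $(L,\scA)$ with Heegaard surface $\Sigma$. First I would choose an embedded arc $\lambda \subset \Sigma$ with $\partial \lambda = \{w_1, w_2\}$, chosen to be otherwise disjoint from each arc of $\scA$ (which is possible since $\scA$ is 1-dimensional and the endpoints $w_i$ already lie on $\lambda_i \in \scA$). The hypercube basepoint action $\cA_{\lambda}$ from Section~\ref{sec:homology-actions-hypercubes} is defined on each hypercube of attaching curves appearing in the construction of $\cC_\Lambda(L,\scA)$, and by Lemma~\ref{lem:d-mor-A-lambda} one has $\d_{\Mor}(\cA_\lambda) = (E^\a_{w_1} + E^\a_{w_2})\cdot \id$. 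Because the diagram is link-minimal, the basepoints $w_i$ and $z_i$ lie in the same component of $\Sigma \setminus \as$ (they are joined on $\Sigma$ by the alpha-handlebody subarc of $K_i$), so $E^\a_{w_i} = \scU_{w_i}\scV_{z_i} = U_i$, giving the key identity
\[
\d_{\Mor}(\cA_\lambda) = (U_1 + U_2)\cdot \id.
\]

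Next I would assemble $\cA_\lambda$ into an endomorphism of the full link surgery hypercube $\cC_\Lambda(L,\scA)$, and ultimately into a type-$D$ endomorphism $H^1$ of $\cX_\Lambda(L,\scA)^{\cL}$. Since $\cA_\lambda$ is defined by counting holomorphic polygons weighted by $\#(\d_\a(\psi) \cap \lambda) \in \bF$, it is strictly equivariant for multiplication by every $\scU_k$ and $\scV_k$. This means $\cA_\lambda$ strictly commutes with the inclusion (i.e. $\sigma$-type) components of the surgery maps $\Phi^{\vec N}$, which arise from localizing at $\scV_k$ variables. For the basepoint-moving (i.e. $\tau$-type) components of $\Phi^{-K_i}$ — which are defined using the arc $\lambda_i \in \scA$ — Lemma~\ref{lem:commute-S-A-lam} provides the required commutation up to homotopy, since $\lambda$ is disjoint from each $\lambda_i$. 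The hypothesis that the arcs for $K_1,K_2$ are alpha- or beta-parallel is precisely what makes this argument go through cleanly: for such arcs, the basepoint-moving data has a simple cylindrical model near the corresponding special attaching circle, and the Lemma~\ref{lem:commute-S-A-lam} style commutation is straightforward to arrange uniformly across all subcubes.

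The hard part will be the last step: carefully translating the hypercube-level identity $\d_{\Mor}(\cA_\lambda) = (U_1+U_2)\cdot\id$ into the desired type-$D$ identity $\d_{\Mor}(H^1) = \id \otimes (U_1+U_2)$. Because the $\cL$-module structure on $\cX_\Lambda(L,\scA)^{\cL}$ encodes both localization and basepoint-moving in the twisted manner of Lemma~\ref{lem:module-structure-surgery-hypercube-morphisms}, the components of $\cA_\lambda$ on the various sub-hypercubes will contribute to higher morphism components $H^1_{\veps,\veps'}$ of $H^1$ paired with appropriate algebra elements of $\cL$, and one must keep track of the homotopies arranging commutation with $\tau$ as part of the type-$D$ morphism data. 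Once this bookkeeping is in place, the morphism differential identity $\d_{\Mor}(H^1) = \id \otimes (U_1 + U_2)$ follows from the hypercube-level identity together with the equivalence of categories between hypercubes and type-$D$ modules over the cube algebra (Lemma~\ref{lem:cube-algere=hypercubes}), extended to carry the twisted $\cL$-equivariance.
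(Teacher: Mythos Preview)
Your approach is in the right spirit: the paper also builds $H^1$ from a hypercube arc action $\cA_\lambda$. But there are two genuine gaps.

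First, the citation of Lemma~\ref{lem:commute-S-A-lam} for commutation with the $\tau$-type surgery maps is incorrect. That lemma concerns the free-stabilization maps $\cS_w^{\pm}$, not the basepoint-moving diffeomorphism maps that define $\Phi^{-K_i}$ in a basic system. The paper instead takes $\d\lambda = \{z_1,z_2\}$ (not $\{w_1,w_2\}$), so that under the point-pushing diffeomorphism $\phi_i$ moving $z_i$ to $w_i$ the arc becomes $\phi_i(\lambda) = \lambda * \lambda_i$. The commutation then reduces to the concrete equality $A_{\phi_i(\lambda)} = A_\lambda$, which holds precisely when $\lambda_i$ crosses only beta curves (since $A_\lambda$ counts intersections with $\d_\a$). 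This is exactly the same-parallelity hypothesis at work, and the paper verifies it by building the explicit diagram in Equation~\eqref{eq:diagram-homology-action} and checking that the various $A_{\lambda_\veps}$ and the higher maps $h_p^{-K_i}$, $j_p^{-K_i}$ coincide. Your proposal asserts this commutation but does not supply a mechanism for it.

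Second, you do not address the mixed case, and it genuinely requires a different homotopy. When $K_1$ is alpha-parallel and $K_2$ is beta-parallel, the arc $\lambda_2$ crosses an alpha curve, so $A_{\phi_2(\lambda)} = A_\lambda + A_{[K_2]}$ and $\cA_\lambda$ alone no longer commutes with $F^{-K_2}$. The paper repairs this by replacing $\cA_\lambda$ with $\cA_\lambda + \scV_2\Psi_{z_2}$ and invoking Lemma~\ref{lem:homology-action-versus-basepoint-action} (the identity $\cA_{[K_2]} \simeq \scU_2\Phi_{w_2} + \scV_2\Psi_{z_2}$) together with the relation $F^{-K_2}\scV_2\Psi_{z_2} \simeq \scU_2\Phi_{w_2}F^{-K_2}$ to restore commutation. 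Without this correction the argument fails in the mixed case.
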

\begin{proof} Let $\scH$ be a $\sigma$-basic system of Heegaard diagrams for $(L,\scA)$. We may assume that $\scH$ is meridional for $K_1$ and $K_2$, in the sense of Section~\ref{def:algebraically-rigid}.  We begin with the case that $K_1$ and $K_2$ are both alpha-parallel or both beta-parallel. For concreteness, assume that $K_1$ and $K_2$ are both alpha-parallel, so that the arcs of $\scA$ are both disjoint from the alpha curves. We let $\lambda\colon [0,1]\to \Sigma$ denote a path from $z_1$ to $z_2$ which is disjoint from all of the arcs of $\scA$.

Given hypercubes of alpha and beta Lagrangians $\cL_{\a}$ and $\cL_{\b}$ on a pointed surface $(\Sigma,\ws,\zs)$, as well as an arc $\lambda$ connecting two points $x_1,x_2\in \Sigma$, we obtain a map
\begin{equation}
\cA_{\lambda}\colon \ve{\CF}^-(\cL_{\a},\cL_{\b})\to \ve{\CF}^-(\cL_{\a},\cL_{\b})
\label{eq:A-lambda-ref}
\end{equation}
as in Section~\ref{sec:homology-actions-hypercubes}, satisfying
\[
\d_{\Mor}(\cA_{\lambda})=E_1^\a+E_2^\a,
\]
where $E_i^\a$ denotes the product of the variables for the basepoints in the unique alpha degeneration containing $x_i$. Compare Lemma~\ref{lem:d-mor-A-lambda}. We view the link surgery hypercube as a 2-dimensional hypercube by internalizing all axis directions except those of $K_1$ and $K_2$. With this perspective, write
\[
\cC_{\Lambda}(L)=
\begin{tikzcd}[labels=description, row sep=2cm, column sep=3cm]
 \cC_{(0,0)}
  	\ar[r, "F^{K_1}+F^{-K_1}"] 
  	\ar[d, "F^{K_2}+F^{-K_2}"]
  	\ar[dr,dashed, "F^{-K_1,-K_2}"]
  	 & 
  	 \cC_{(1,0)}\ar[d, "F^{K_2}+F^{-K_2}"]
  	 \\
\cC_{(0,1)}\ar[r, "F^{K_1}+F^{-K_1}"]
& \cC_{(1,1)}
\end{tikzcd}
\]

Write $\phi_{1}$ for the point-pushing diffeomorphism which moves $z_1$ to $w_1$, and write $\phi_{2}$ for the map which moves $z_2$ to $w_2$. Write
\[
\lambda_{0,0}=\lambda, \quad \lambda_{1,0}=\phi_1(\lambda),\quad \lambda_{0,1}=\phi_2(\lambda),\quad \lambda_{1,1}=\phi_1(\phi_2(\lambda)).
\]

We now adapt the above construction of $\cA_{\lambda}$ to the setting of the surgery hypercube, which has maps corresponding to basepoint moving maps. We focus on a $\sigma$-basic system of Heegaard diagrams for $L$. We apply the aforementioned construction of the hypercube morphism $\cA_{\lambda}$ to the hypercube consisting of all of the link surgery edge maps with negatively oriented components. We obtain a diagram of the following form, which is a hypercube except for the length 1 relations along arrows labeled $\cA_{\lambda_{\veps}}$ (which instead satisfy $[\d, \cA_{\lambda_{\veps}}]=U_1+U_2$).
\begin{equation}
\begin{tikzcd}[
column sep={3.5cm,between origins},
row sep=.8cm,labels=description
]
\cC_{(0,0)}
	\ar[dd, "F^{-K_2}"]
	\ar[dr,  "\cA_{\lambda_{0,0}}"]
	\ar[rr, "F^{-K_1}"]
	\ar[ddrr,dashed, "F^{-K_1,-K_2}"]
	\ar[dddrrr,dotted,"\omega^{-K_1,-K_2}", sloped]
&&[-1.5cm]
\cC_{(1,0)}
	\ar[dd, "F^{-K_2}"]
	\ar[dr,"\cA_{\lambda_{1,0}}"]
	\ar[dddr,dashed, "j^{-K_2}"]
&
\\
&\cC_{(0,0)}
	\ar[rr,crossing over, "F^{-K_1}"]
&&
\cC_{(1,0)}
	\ar[dd, "F^{-K_2}"]
	\ar[from=ulll,dashed,crossing over, "h^{-K_1}"]
\\[1.8cm]
\cC_{(0,1)}
	\ar[rr, "F^{-K_1}"]
	\ar[dr,"\cA_{\lambda_{0,1}}"]
	\ar[drrr,dashed, "j^{-K_1}",pos=.6]
&&
\cC_{(1,1)}
	\ar[dr, "\cA_{\lambda_{1,1}}"]	
&
\\
&
\cC_{(0,1)}
	\ar[rr, "F^{-K_1}"]
	\ar[from =uu, crossing over, "F^{-K_2}"]
	\ar[from=uuul,dashed, crossing over, "h^{-K_2}"]
	&&
\cC_{(1,1)}
	\ar[from=uull,crossing over, dashed, "F^{-K_1,-K_2}"]
\end{tikzcd}
\label{eq:diagram-homology-action}
\end{equation}

We first claim that the maps labeled $\cA_{\lambda_{\veps}}$ in Equation~\eqref{eq:diagram-homology-action} are all equal. To see this, note that $\cA_{\lambda_{1,0}}$ is obtained by concatenating $\lambda$ with the arc from $z_1$ to $w_1$ which is disjoint from the alpha curves. Therefore $\#\d_{\a}(\psi) \cap \lambda=\# \d_{\a}(\psi)\cap \lambda_{1,0}$ for all classes of polygons $\psi$ used to define the complexes $\cC_{(1,0)}$. Similar arguments apply to the other length 1 maps $\cA_{\lambda_{\veps}}$ so we conclude that the length 1 maps satisfy
\begin{equation}
\cA_{\lambda_{\veps}}=\cA_{\lambda_{0,0}}
\label{eq:A-lambda-veps-equal}
\end{equation}
for all $\veps\in \bE_2$.

 We now consider the length 2 maps, we claim that 
 \begin{equation}
 h^{-K_i}=j^{-K_i} 	\quad \text{for} \quad i=\{1,2\}.
 \label{eq:maps-opposite-faces-equal}
 \end{equation}
  The argument is similar to the argument for the length 1 maps. Namely the maps $h^{-K_2}$ and $j^{-K_2}$ count the same holomorphic curves. Some of the curves contributing to $h^{-K_2}$ are counted with a weight of $a(\psi,\lambda)$ whereas the same curves are counted by $a(\psi,\phi_1(\lambda))$ for $j^{-K_2}$. Since $\phi_1(\lambda)$ is obtained by concatenating $\lambda$ with an arc which passes through only beta curves, we see that $a(\psi,\lambda)=a(\psi,\phi_1(\lambda))$, so these expressions are equal. We conclude that $h^{-K_i}=j^{-K_i}$.
 
 We now define an endomorphism $\cA^{\bX}_{\lambda}$ of the link surgery hypercube $\cC_{\Lambda}(L)$ to be the endomorphism induced by the components of the diagram in Figure~\ref{eq:diagram-homology-action} which come out of the page (i.e. the $\cA_{\lambda_\veps}$ direction). By construction, the maps labeled $h^{-K_i}$ have the same equivariance properties as the link surgery maps $F^{-K_i}$, so the morphism $\cA^{\bX}_{\lambda}$ is induced by a morphism of type-$D$ modules
 \[
\cA^{\cX,1}_{\lambda}\colon \cX_{\Lambda}(L,\scA)^{\cL}\to \cX_{\Lambda}(L,\scA)^{\cL}.
 \]
 (Note that outside of this lemma, we will write just $\cA_{\lambda}$ or $\cA_{\lambda}^1$ for the maps labeled $\cA_{\lambda}^{\bX}$ and $\cA_{\lambda}^{\cX,1}$ above).
 
 The components labeled $\cA_{\lambda_{\veps}}$ preserve the idempotents for $K_1$ and $K_2$. The arrows labeled $-K_1$ are weighted in $H^1$ by  multiples of  the algebra element $\tau_1$. The arrows labeled $-K_2$ are weighted by multiples of $\tau_2$. The dotted arrow labeled $\omega^{-K_1,K_2}$ is weighted by a multiple of $\tau_1\tau_2$. None of the components of $\cA_{\lambda}^{\cX,1}$ are weighted by algebra elements containing a factor of $\sigma_1$ or $\sigma_2$.
 
 As a morphism from the back fact of of Figure~\ref{eq:diagram-homology-action} to the front face, the map $\cA_{\lambda}^{\bX}$ satisfies
 \[
 \d_{\Mor}(\cA_{\lambda}^{\bX})=(U_1+U_2)\cdot \id
 \]
 by Remark~\ref{rem:remark-dmor-A-lambda}. We claim that the same equation is satisfied if we view $\cA_{\lambda}^{\bX}$ as a morphism from $\cC_{\Lambda}(L)$ to itself, and furthermore that the type-$D$ endomorphism $\cA_{\lambda}^{\cX,1}$ satisfies $\d_{\Mor}(\cA_{\lambda}^{\cX,1})=\id\otimes (U_1+U_2)$ as an endomorphism of $\cX_{\Lambda}(L)^{\cL}$. Note that the hypercube differential in the diagram in Figure~\ref{eq:diagram-homology-action} does not include the differentials $F^{K_1}$ or $F^{K_2}$. Since we are using $\sigma$-basic systems, these maps are identified with the maps induced by localization at $\scV_1$ and $\scV_2$. Once we include $F^{K_1}$ and $F^{K_2}$ into the differentials on the front and back faces, the fact that $\d_{\Mor}(\cA_{\lambda}^{\bX})$ is still $(U_1+U_2)\cdot \id$ follows immediately from Equations~\eqref{eq:A-lambda-veps-equal} and~\eqref{eq:maps-opposite-faces-equal}. The same argument works for type-$D$ morphisms. This completes the proof when both components are alpha-parallel.
 
 We now consider the case that $K_1$ is alpha-parallel and $K_2$ is beta-parallel. In this case, we must modify the map $\cA_{\lambda}^{\cX,1}$. Recall that $\cA_{\lambda}^{\cX,1}$ was built from the map $\cA_{\lambda}$. We instead replace this by a map $\cA_{\lambda}^{\cX,1}$ built from $\cA_{\lambda}+\scV_2 \Psi_{z_2}$.

 If we view the complex as a 1-dimensional cone in the $K_2$-direction
  \[
 \cC_{\Lambda}(L,\scA)= \Cone\left(\begin{tikzcd}[column sep=2cm]\cC_{0}\ar[r, "F^{K_2}+F^{ -K_2}"] &\cC_1,
  \end{tikzcd}\right)
  \]
  then we can define endomorphisms $\cA_{\lambda}+\scV_2\Psi_{z_2}\colon \cC_i \to \cC_i$ similar to the case when $K_1$ and $K_2$ were both alpha-parallel. The map $\cA_{\lambda}+\scV_2\Psi_{z_2}\colon \cC_0\to \cC_0$ consists of the components labeled $\cA_{\lambda_{0,0}}$, $\cA_{\lambda_{1,0}}$ and $h^{-K_1}$. As an endomorphism of $\cC_1$, the maps is similar. Since $K_1$ is alpha-parallel, we see that 
 \[
 \cA_{\lambda_{0,0}}=\cA_{\lambda_{1,0}}=\cA_{\lambda},
 \]
 and so the same argument as before shows that
 \[
 \d_{\Mor}(\cA_{\lambda}+\scV_2 \Psi_{z_2})=(U_1+U_2)\cdot \id
 \]
 as endomorphisms of $\cC_i$.
 
 On the other hand, the point pushing map for $K_2$ moves $z_2$ to $w_2$ through exactly the alpha circles which intersect the trace of $K_2$ on the knot diagram. Hence
 \[
 \cA_{\lambda_{0,1}}=\cA_{\lambda_{1,1}}=\cA_{\lambda}+\cA_{[K_2]}.
 \]
 Here, $\cA_{[K_2]}$ denotes the hypercube homology action for the trace of $K_2$ on the Heegaard diagram.
 We conclude that
 \begin{equation}
 F^{ -K_2} \cA_{\lambda}\simeq (\cA_{\lambda}+\cA_{[K_2]}) F^{-K_2}, \label{eq:A-lambda-commutator-descent-map}
 \end{equation}
 as maps from $\cC_0$ to $\cC_1$.  On the other hand, since $F^{-K_2}$ moves $z_2$ to $w_2$, we have
 \begin{equation}
 F^{-K_2}\scV_2\Psi_{z_2}\simeq \scU_2 \Phi_{w_2}F^{-K_2}.
 \label{eq:commutator-Psi-K-descent}
 \end{equation}
 We recall from Lemma~\ref{lem:homology-action-versus-basepoint-action} that $\cA_{[K_2]}\simeq \scU_2 \Phi_{w_2}+\scV_2 \Psi_{z_2}$, as endomorphisms of $\cC_0$. Combining the relations in Equations~\ref{eq:A-lambda-commutator-descent-map}, ~\ref{eq:commutator-Psi-K-descent} with this equation for $\cA_{[K_2]}$, we obtain
 \[
F^{-K_2} (\cA_{\lambda}+\scV_2\Psi_{z_2})\simeq(\cA_{\lambda}+\scV_2 \Psi_{z_2})F^{-K_2},
 \]
 as maps from $\cC_0$ to $\cC_{1}$. Note that since $F^{K_2}$ is the map for localizing at $\scV_2$, we also have
 \[
F^{-K_2} (\cA_{\lambda}+\scV_2\Psi_{z_2})=(\cA_{\lambda}+\scV_2 \Psi_{z_2})F^{-K_2}.
 \]
 Arranging the maps $\cA_{\lambda}+\scV_2 \Psi_{z_2}$ (as maps from $\cC_i$ to $\cC_i$) as well as the above homotopies (which map $\cC_0$ to $\cC_1$) into an endomorphism $H$ of $\cC_{\Lambda}(L)$, we see that $\d_{\Mor}(H)=(U_1+U_2)\cdot \id$. The same logic gives a type-$D$ homotopy, so the proof is complete.
 \end{proof}

\section{The alpha-beta transformer}
\label{sec:alpha-to-beta-transformer}

In this section, we describe a bimodule ${}_{\cK} \cT^{\cK}$, which we call the \emph{alpha-beta transformer bimodule}. We will prove the following:

\begin{thm}
\label{thm:transformer-bimodule} Suppose that $L\subset S^3$ is a framed link with a distinguished component $K$. Let $\scA$ be a system of arcs for $L$ such that the arc for $K$ is alpha-parallel. Let $\scA'$ be a system of arcs where $K$ is beta-parallel, but otherwise the arcs are the same as $\scA$. Then
\[
\cX_{\Lambda}(L,\scA)^{\cL}\simeq \cX_{\Lambda}(L,\scA')^{\cL}\hatbox {}_{\cK} \cT^{\cK}.
\]
The same formula holds if we switch the roles of $\scA$ and $\scA'$.
\end{thm}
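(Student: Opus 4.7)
The plan is to realize the bimodule $\cT$ as the algebraic shadow of the change-of-arcs formula from Corollary~\ref{cor:different-path-formula}, then verify the identification directly from a comparison of basic systems of Heegaard diagrams.

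First I would choose basic systems $\scH$ and $\scH'$ for $(L,\scA)$ and $(L,\scA')$ that agree outside a neighborhood of $K$. Since the arcs differ only at $K$, and the alpha-parallel and beta-parallel arcs from $w$ to $z$ differ by the full knot $K$, we may view $\scA$ as obtained from $\scA'$ by concatenating the arc for $K$ with a loop isotopic to $K$ itself. By Corollary~\ref{cor:different-path-formula} this replaces the map $F^{-K}$ in the $K$-direction of $\cC_\Lambda(L,\scA')$ by $F^{-K}+\scV^{-1}\cA_{[K]}\circ\Phi_w\circ F^{-K}$ (and leaves $F^K$ alone). Using Lemma~\ref{lem:homology-action-versus-basepoint-action}, which gives $\cA_{[K]}\simeq \scU_w\Phi_w+\scV_z\Psi_z$ as hypercube endomorphisms, together with $\Phi_w^2\simeq 0$ from Lemma~\ref{lem:basic-graph-TQFT-relations}, the change reduces to an $\scV^{-1}\scV_z\Psi_z\circ \Phi_w$ correction on the $F^{-K}$ summand. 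This is an algebraic modification which only involves the $K$-component data, and hence should factor through the $\cK$-algebra factor corresponding to $K$.

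Next I would define ${}_{\cK}\cT^{\cK}$ concretely. Its underlying $(\ve{I},\ve{I})$-bimodule should have two generators in each idempotent, modelling the two ``copies'' of the alpha-parallel complex needed to absorb the mapping cone $\id+\scV^{-1}\cA_{[K]}\circ\Phi_w$ into a tensor product. The structure map $\delta_2^1$ acts by the identity on polynomial generators of $\cK$, while the maps $\delta_2^1$ applied to $\sigma$ recover the canonical inclusion for $v$-type maps and $\delta_2^1$ applied to $\tau$ recovers the shifted version of $h$-type maps that encodes the extra $\Psi_z\Phi_w$ correction. The higher $\delta_{j+1}^1$ terms, in particular a $\delta_3^1$ input involving a polynomial and $\tau$, are forced by the $DA$-bimodule relations and by the $\tau a=T(a)\tau$ commutation rules defining $\cK$. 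I would verify the $DA$-bimodule structure relations by a case analysis on idempotents of the inputs, the nontrivial cases reducing to the identities $T\circ T=\id$, $\sigma\scV=\scV\sigma$, $\tau \scV=\scU\scV^2\tau$, and $\Phi_w^2\simeq 0$.

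The main step is then to compare $\cX_\Lambda(L,\scA')^{\cL}\hatbox {}_{\cK}\cT^{\cK}$ with $\cX_\Lambda(L,\scA)^{\cL}$. I would fix a meridional basic system $\scH'$ for $(L,\scA')$ as in Section~\ref{sec:meridional}, compute the box tensor product explicitly on generators, and match the resulting structure maps, term by term, against the algebraic expression for $\cX_\Lambda(L,\scA)^{\cL}$ that comes from applying Corollary~\ref{cor:different-path-formula} with $\g\simeq K$. The compatibility with Alexander coboundedness follows from Proposition~\ref{prop:split-Alexander-cobounded} once the bimodule $\cT$ is checked to be Alexander cobounded, which is a direct verification from the definition of its structure maps. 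The symmetric statement (tensoring on the other side, or swapping $\scA$ and $\scA'$) follows either by constructing an inverse bimodule $\cT^{-1}$ via the same procedure in reverse, or by showing ${}_{\cK}\cT^{\cK}\hatbox {}_{\cK}\cT^{\cK}\simeq {}_{\cK}[\bI]^{\cK}$ using the analog of $\Phi_w^2\simeq 0$ at the algebraic level.

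The hard part will be the explicit match between the box tensor product and the geometrically-modified surgery hypercube: the $\scV^{-1}$ correction and the asymmetric role of $\sigma$ versus $\tau$ in $\cK$ mean the identification is not a straightforward identity map on generators, and one has to use the homological perturbation lemma of Lemma~\ref{lem:homological-perturbation-DA-modules} to produce the explicit homotopy equivalence. A secondary subtlety is that Corollary~\ref{cor:different-path-formula} requires the loop $\g$ to be disjoint from the arc system $\scA$ on the Heegaard surface, so the degenerate situation where the alpha- and beta-parallel arcs abut the same basepoints needs to be handled by first stabilizing along a small auxiliary loop and then undoing the stabilization; Lemmas~\ref{lem:Sw-chain-maps} and~\ref{lem:Phi-SS} will be the tools for this bookkeeping.
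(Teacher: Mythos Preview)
Your overall strategy is essentially the paper's: use Corollary~\ref{cor:different-path-formula} with $\gamma\simeq K$, apply Lemma~\ref{lem:homology-action-versus-basepoint-action} to rewrite $\cA_{[K]}\simeq \scU\Phi_w+\scV\Psi_z$, and use $\Phi_w^2\simeq 0$ to reduce the correction to $\id+\Phi_w\Psi_z$ (up to reordering, which is handled by Lemma~\ref{lem:Phi-Psi-commute}). That is the heart of the proof.

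However, your description of $\cT$ is wrong, and this reflects a genuine missing idea. The transformer bimodule has rank \emph{one} over $\ve{I}$, not two generators per idempotent, and its only higher action is a $\delta_4^1$, not a $\delta_3^1$. The point you are missing is the content of Section~\ref{sec:basepoint-actions-A_infty-morphisms}: the basepoint actions $\Phi_w$ and $\Psi_z$ are not just hypercube endomorphisms---they are realized as box tensor products with $DA$-bimodule \emph{endomorphisms} $\phi_*^1,\psi_*^1$ of the rank-one identity bimodule ${}_{\cK}[\bI]^{\cK}$ (Lemma~\ref{lem:Phi-box-tensor-product}). Once this is in place, decompose ${}_{\cK}[\bI]^{\cK}=\Cone(s_*^1+t_*^1\colon [\ve{i}_0]\to[\ve{i}_1])$ and define $\cT$ simply by replacing $t_*^1$ with $(\id+\phi_*^1\circ\psi_*^1)\circ t_*^1$; the composite $\phi_*^1\circ\psi_*^1\circ t_*^1$ of three $\delta_2^1$-type morphisms is exactly the $\delta_4^1$ in Section~\ref{sec:transformer-def}. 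With this description, the identification $\cX_{\Lambda}(L,\scA')^{\cL}\hatbox\cT^{\cK}\simeq \cX_{\Lambda}(L,\scA)^{\cL}$ is direct---no homological perturbation needed---because boxing with $\cT$ literally replaces $F^{-K}$ by $(\id+\Phi_w\Psi_z)F^{-K}$ at the level of type-$D$ structure maps.

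Two smaller points: the disjointness issue between $\gamma$ and the other arcs of $\scA$ is handled in the paper by stabilizing the \emph{Heegaard diagram} (as in Lemma~\ref{lem:algebraic-rigid-general-basic-system}), not via the free-stabilization maps $\cS_w^\pm$; and the symmetric statement follows cleanly from $\cT\hatbox\cT\simeq[\bI]$, which is immediate from $\phi_*^1\circ\psi_*^1\circ\phi_*^1\circ\psi_*^1\simeq 0$ (Lemmas~\ref{lem:phi^2=0-bimodules} and~\ref{lem:Phi-Psi-commute}).
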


\subsection{The bimodule}

\label{sec:transformer-def}

The bimodule $\cT$ has rank 1 over $\ve{I}$, and has $\delta_1^1=0$. The map $\delta_2^1$ is the unique $\ve{I}$-linear map which satisfies
\[
\delta_2^1(a,1)=1\otimes a,
\]
for $a\in \cK$.
(This coincides with the identity bimodule).

Additionally, there is a $\delta_4^1$ term, which is non-vanishing on tuples of the form $(a,b,f\tau ,i_0)$ where $f$, $a$ and $b$ are concentrated in a single idempotent, and which vanishes on other elementary tensors. On such elements, we have the formula
\[
\delta_4^1(a, b , f \tau , i_0)=i_1\otimes \big( \d_{\scU} (a)\cdot  \d_{\scV}(b)\cdot f\tau  \big).
\]

\begin{lem}
\label{lem:transformer-bimodule}
The structure maps on ${}_{\cK} \cT^{\cK}$ satisfy the $DA$-bimodule relations.
\end{lem}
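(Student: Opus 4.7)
The plan is to verify the $DA$-bimodule structure relation
\[
\sum_{j=0}^n ((\id_M\otimes \mu_2)\circ (\delta^1_{n-j+1}\otimes \id_{\cK}))(a_n,\dots, \delta^1_{j+1}(\dots,\xs))+\sum_{k=1}^{n-1} \delta^1_n(\dots, a_{k+1}a_k,\dots, \xs)=0
\]
length by length, exploiting that $\delta^1_j=0$ for $j\notin\{2,4\}$. For $n\le 3$ and $n=5$, every nested composition contains a factor from $\{\delta^1_1,\delta^1_3,\delta^1_5\}$ and hence vanishes; the low-length relations reduce to associativity of the identity bimodule structure $\delta^1_2$. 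For $n\ge 6$ the only surviving candidate is $\delta^1_4\circ \delta^1_4$, which vanishes because $\delta^1_4$ outputs in the $\ve{I}_1$-idempotent while it accepts only $i_0$ as its module input.

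The content is at $n=4$. A term $\delta^1_4(b_3,b_2,b_1,\ve{y})$ is nonzero only if $\ve{y}=i_0$ and the module-adjacent input $b_1$ has the form $f\tau g$. Because $\tau$ is the unique idempotent-switching generator of $\cK$ (sending $\ve{I}_0$ to $\ve{I}_1$) and $\ve{I}_0\cdot \cK\cdot \ve{I}_1=\{0\}$, at most one of $a_1,\dots,a_4$ can carry a $\tau$-factor. I would work through the four subcases indexed by which $a_i$ carries the $\tau$; the all-polynomial case is trivially zero. The core subcase is $a_1=f\tau g$, so that $a_2,a_3,a_4\in \ve{I}_1\cdot\cK\cdot\ve{I}_1=\bF[\scU,\scV,\scV^{-1}]$. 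Here $\delta^1_4\circ \delta^1_2=0$, since $\delta^1_2(a_1,i_0)=i_1\otimes a_1$ while $\delta^1_4$ rejects module input $i_1$; the other four candidate terms contribute
\[
a_4\d_{\scU}(a_3)\d_{\scV}(a_2)f\tau g+\d_{\scU}(a_4)\d_{\scV}(a_3)a_2 f\tau g+\d_{\scU}(a_4)\d_{\scV}(a_3 a_2)f\tau g+\d_{\scU}(a_4 a_3)\d_{\scV}(a_2)f\tau g.
\]
Expanding $\d_{\scV}(a_3 a_2)$ and $\d_{\scU}(a_4 a_3)$ by the Leibniz rule in the commutative ring $\bF[\scU,\scV,\scV^{-1}]$ produces six monomials that pair up and cancel in characteristic $2$.

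In each of the three remaining subcases (where $a_i$ for $i\in\{2,3,4\}$ carries the $\tau$), idempotent constraints force all but two of the five candidate terms to vanish, and those two survivors coincide via the absorption identity $(f\tau g)\cdot h=f\tau(gh)$ (or its left analogue), hence cancel in $\bF_2$. The argument is fundamentally bookkeeping; the only substantive step is the Leibniz-rule cancellation in the core case, and the main obstacle is carefully tracking idempotents across the nested compositions so as not to overlook a surviving term.
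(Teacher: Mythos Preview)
Your approach is essentially the same as the paper's: both reduce to the $n=4$ relation and verify it via the Leibniz rule, and your four-term expansion for the case $a_1=f\tau g$ is exactly the identity the paper checks.

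Two small corrections that do not affect the argument. First, $\tau$ is \emph{not} the unique idempotent-switching generator of $\cK$; there is also $\sigma$. Sequences where the switching element is of $\sigma$-type contribute no $\delta^1_4$ terms (the module-adjacent slot of $\delta^1_4$ must be of the form $f\tau g$), so those relations reduce to the identity-bimodule case and are trivial. Second, in your ``three remaining subcases'' the count is off: when $a_3$ or $a_4$ carries the $\tau$, every term involving $\delta^1_4$ already vanishes on idempotent grounds (the module-adjacent slot is then a polynomial in $\ve{I}_0\cdot\cK\cdot\ve{I}_0$), so there are zero survivors rather than two. The paper accordingly lists only the two nonvacuous configurations $(a,b,c,\tau,i_0)$ and $(a,b,\tau,c,i_0)$.
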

\begin{proof} The only interesting structure relations to be verified are the ones with 5 inputs. There are only two non-vacuous relations to be verified, which are those for sequences of the form $(a,b,c,\tau,i_0)$ and those of the form $(a,b,\tau,c,i_0)$. 

The structure relations for sequences of the form $(a,b,\tau,c,i_0)$ are straightforward, so we consider the tuple $(a,b,c,\tau,i_0)$. The structure diagrams with non-trivial contribution are the following:
\[
\begin{tikzcd}[column sep=0cm,row sep=.75cm]
a
	\ar[drr]
&
b
	\ar[dr]
&
c
	\ar[ddrr]
&
\tau
	\ar[ddr]
&
i_0
	\ar[dd]
\\
&&\mu_2
\ar[drr]
&&\,
\\
&&&&
\delta_4^1
	\ar[d]
	\ar[dr]
\\
&&&&\,&\,
\end{tikzcd}
+
\begin{tikzcd}[column sep=0cm,row sep=.75cm]
a
	\ar[ddrrrr]
&
b
	\ar[drr]
&
c
	\ar[dr]
&
\tau
	\ar[ddr]
&
i_0
	\ar[dd]
\\
&&&\mu_2
\ar[dr]
&\,
\\
&&&&
\delta_4^1
	\ar[d]
	\ar[dr]
\\
&&&&\,&\,
\end{tikzcd}
+\begin{tikzcd}[column sep=0cm,row sep=.75cm]
a
	\ar[ddrrrr]
&
b
	\ar[ddrrr]
&
c
	\ar[dr]
&
\tau
	\ar[d]
&
i_0
	\ar[dd]
\\
&&&\mu_2
	\ar[dr]
&\,
\\
&&&&
\delta_4^1
	\ar[d]
	\ar[dr]
\\
&&&&\,&\,
\end{tikzcd}
+\begin{tikzcd}[column sep=0cm,row sep=.65cm]
a
	\ar[ddrrrr]
&
b
	\ar[drrr]
&
c
	\ar[drr]
&
\tau
	\ar[dr]
&
i_0
	\ar[d]
\\
&&&&\delta_4^1
	\ar[d]
	\ar[ddr]
\\
&&&&
\delta_2^1
	\ar[d]
	\ar[dr]
\\
&&&&\,&\mu_2
\end{tikzcd}
\]
The vanishing of the above equation follows from the algebraic relation
\[
\d_{\scU} (ab)\d_{\scV}(c)+\d_\scU (a) \d_{\scV}(bc)+\d_{\scU}(a)\d_{\scV}(b) c+a \d_{\scU}(b)\d_{\scV}(c)=0,
\]
which is straightforward to verify.
\end{proof}

In Section~\ref{sec:mapping-cone-bimodules} we describe another perspective which makes the $DA$-bimodule relations more straightforward to verify.

\subsection{Basepoint actions as $A_\infty$-morphisms}
\label{sec:basepoint-actions-A_infty-morphisms}

We now give a helpful reformulation of algebraic basepoint actions in terms of $A_\infty$-morphisms. This framework helps us understand the structure maps appearing in the alpha-to-beta transformer bimodule.

If $C$ is a free $\bF[U]$-module, then there is natural basepoint action $\Phi_U\colon C\to C$, defined as follows. We pick a free $\bF[U]$ basis $B$ of $C$. If $\xs\in B$, then we write $\d(\xs)=\sum_{\ys\in B} c_{\ys,\xs}U^{n_{\ys,\xs}}\cdot  \ys$ where $c_{\ys,\xs}\in \bF$ and $n_{\ys,\xs}\in \N$. The map $\Phi_U$ is defined via the formula
\begin{equation}
\Phi_U(\xs)=\sum_{\ys\in B} n_{\ys,\xs}c_{\ys,\xs}U^{n_{\ys,\xs}-1} \cdot \ys.
\label{eq:Phi_U_def}
\end{equation}
The induced map $(\Phi_U)_*$ on $H_*(C)$ is trivial (cf. Lemma~\ref{rem:Phi-nullhomotopic}), though the map $\Phi_U$ is often not null-homotopic through an $\bF[U]$-equivariant homotopy.

Since $C$ is free and the differential is $\bF[U]$-equivariant, we may view $C$ as corresponding to a type-$D$ module $C^{\bF[U]}$. Of course, 
\[
C^{\bF[U]}\iso C^{\bF[U]}\boxtimes {}_{\bF[U]} [\bI]^{\bF[U]}.
\]
There is a $DA$-bimodule endomorphism $\phi_*^1$ of ${}_{\bF[U]} [\bI]^{\bF[U]}$ defined as follows. We set $\phi_1^1=0$, and we set 
\[
\phi_2^1(\a, 1)=1\otimes \d_U(\a).
\]
We set $\phi_j^1=0$ if $j>2$. The $DA$-bimodule relations are straightforward to verify, and easily translated into the Leibniz rule for derivatives.

\begin{lem}\label{lem:Phi-box-tensor-product} The map $\Phi_U$ in Equation~\eqref{eq:Phi_U_def}, viewed as a type-$D$ morphism
\[
\Phi_U\colon C^{\bF[U]}\to C^{\bF[U]},
\]
satisfies
\[
\Phi_U:=\bI_C\boxtimes \phi_*^1.
\]
\end{lem}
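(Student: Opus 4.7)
The plan is to verify the identity by unpacking both sides on a fixed basis element $\xs \in B$ and matching terms. Since $\bI_C\boxtimes\phi_*^1$ is a type-$D$ morphism obtained from a $DA$-bimodule endomorphism of the identity bimodule, it has no algebra inputs and, up to reordering tensor factors, is computed by the $D\boxtimes AD$ schematic shown earlier. Explicitly, if $\xs \otimes 1 \in C \otimes_{\bF[U]} [\bI]$ is a generator, then
\[
(\bI_C\boxtimes\phi_*^1)(\xs\otimes 1) \;=\; \sum_{j\ge 0}(\id_C\otimes \mu_2)\bigl((\id_C\otimes \phi_{j+1}^1)\bigr)\bigl(\delta^j(\xs),1\bigr),
\]
where $\delta^j$ denotes the iterated structure map of $C^{\bF[U]}$.

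First I would observe that, by definition, $\phi_{j+1}^1$ vanishes unless $j=1$, so only the $j=1$ term contributes. Translating the free $\bF[U]$-basis $B$ of $C$ into the structure map of $C^{\bF[U]}$, the differential $\d(\xs)=\sum_{\ys\in B} c_{\ys,\xs}U^{n_{\ys,\xs}}\ys$ becomes $\delta^1(\xs)=\sum_{\ys\in B} c_{\ys,\xs}\,\ys \otimes U^{n_{\ys,\xs}}$ (the convention being that the output algebra element sits to the right of the output generator).

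Second, I would apply $\phi_2^1$ to each summand: $\phi_2^1(U^{n_{\ys,\xs}}, 1) = 1\otimes \d_U(U^{n_{\ys,\xs}}) = 1\otimes n_{\ys,\xs}U^{n_{\ys,\xs}-1}$, after which the outer $\mu_2$ is trivial since the left-hand factor is $1\in \bF[U]$. Collecting terms yields
\[
(\bI_C\boxtimes\phi_*^1)(\xs\otimes 1) = \sum_{\ys\in B} n_{\ys,\xs}\,c_{\ys,\xs}\,U^{n_{\ys,\xs}-1}\,\ys \otimes 1,
\]
which, under the canonical identification $C\otimes_{\bF[U]} [\bI] \cong C$, is exactly the definition of $\Phi_U(\xs)$ given in Equation~\eqref{eq:Phi_U_def}.

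Since both sides are $\ve{k}$-linear (for $\ve{k}=\bF$ here) and are determined by their values on basis elements of $C$, this completes the identification. There is no substantive obstacle here: the proof is essentially a bookkeeping check that the only non-vanishing tree in the box tensor product is the one with a single application of $\delta^1$ feeding into $\phi_2^1$, and that the resulting weight $\d_U$ applied to $U^{n_{\ys,\xs}}$ reproduces the combinatorial factor $n_{\ys,\xs}$ in the definition of $\Phi_U$.
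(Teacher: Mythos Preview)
Your proof is correct and is exactly the unwinding that the paper leaves implicit (the paper's proof reads in full: ``The proof is immediate''). One cosmetic remark: the outer $\mu_2$ you include in your displayed formula is not actually present in the $\bI\boxtimes g_*^1$ schematic for a single $DA$-morphism, since $\phi_*^1$ outputs only one algebra element; your observation that it is ``trivial'' is harmless but could simply be omitted.
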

The proof is immediate.

We recall that the map $\Phi_U$ satisfies $\Phi_U^2\simeq 0$. See Lemma~\ref{lem:basic-graph-TQFT-relations}. It is helpful to note that this relation may be translated into a fact about the $DA$-bimodule endomorphism $\phi_*^1$:

\begin{lem}\label{lem:phi^2=0-bimodules} The $DA$-bimodule morphism $\phi_*^1$ satisfies
\[
\phi_*^1\circ \phi_*^1\simeq 0.
\]
\end{lem}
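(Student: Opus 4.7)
The plan is to construct an explicit null-homotopy $H_*^1\colon {}_{\bF[U]}[\bI]^{\bF[U]}\to {}_{\bF[U]}[\bI]^{\bF[U]}$ satisfying $\d_{\Mor}(H_*^1)=\phi_*^1\circ\phi_*^1$, and to verify this equation arity-by-arity using divided-power derivatives on $\bF[U]$. First I would compute $\phi_*^1\circ\phi_*^1$. Since $\phi_*^1$ is concentrated in arity $2$ (one algebra input), the composition formula \eqref{eq:compose-morphisms} forces a unique non-vanishing structure map at arity $3$: splitting $(a_2,a_1)$ via $\Delta$ into $(a_2)\otimes(a_1)$, applying $\phi_2^1$ to $(a_1,1)$ to get $1\otimes\d_U(a_1)$, then $\phi_2^1$ to $(a_2,1)$ to get $1\otimes\d_U(a_2)$, and multiplying the outputs, yields
\[
(\phi_*^1\circ\phi_*^1)_3^1(a_2,a_1,1)=1\otimes\d_U(a_2)\d_U(a_1),
\]
with all other $(\phi_*^1\circ\phi_*^1)_{j+1}^1=0$.

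Next I would define the candidate homotopy. Let $\d_U^{(2)}\colon \bF[U]\to\bF[U]$ denote the second divided-power derivative, characterized on monomials by $\d_U^{(2)}(U^n)=\binom{n}{2}U^{n-2}$ (reduced mod $2$). Set $H_2^1(a,1):=1\otimes \d_U^{(2)}(a)$ and $H_j^1:=0$ for $j\neq 2$. Since $\bI$ has only $\delta_2^1(a,1)=1\otimes a$ nonzero, the only non-trivial configurations contributing to $\d_{\Mor}(H_*^1)_3^1(a_2,a_1,1)$ are (i) pre-composing with $\delta_2^1(a_1,1)$ to feed into $H_2^1(a_2,-)$, (ii) post-composing $H_2^1(a_1,1)$ with $\delta_2^1(a_2,-)$ on the output side, and (iii) contracting with $\mu_2(a_2,a_1)=a_2a_1$ and then applying $H_2^1$. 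These yield respectively $1\otimes \d_U^{(2)}(a_2)\cdot a_1$, $1\otimes a_2\cdot \d_U^{(2)}(a_1)$, and $1\otimes \d_U^{(2)}(a_2a_1)$; the parallel and simpler analysis at other arities gives $\d_{\Mor}(H_*^1)_{j+1}^1=0$ for $j\neq 2$, since $H_1^1=0$ and the only available node is $H_2^1$.

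The key identity then reduces, by $\bF$-bilinearity, to a monomial check: for $a_2=U^m$ and $a_1=U^n$,
\[
\d_U^{(2)}(a_2)a_1+a_2\d_U^{(2)}(a_1)+\d_U^{(2)}(a_2a_1)=\left[\tbinom{m}{2}+\tbinom{n}{2}+\tbinom{m+n}{2}\right]U^{m+n-2}=mn\cdot U^{m+n-2},
\]
using Vandermonde's identity $\binom{m+n}{2}=\binom{m}{2}+\binom{n}{2}+mn$ and reducing mod~$2$. This matches $\d_U(a_2)\d_U(a_1)=mn\,U^{m+n-2}$, so $\d_{\Mor}(H_*^1)=\phi_*^1\circ\phi_*^1$ as required.

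The main obstacle I anticipate is bookkeeping rather than a conceptual hurdle: one has to carefully enumerate all composition diagrams of $H_2^1$ with $\delta_2^1$ and $\mu_2$ and confirm that the spurious terms at arities $\neq 3$ actually cancel or vanish. This is streamlined by the observation that both $\phi_*^1$ and $H_*^1$ are concentrated in a single arity and that the identity bimodule has trivial $\delta_1^1$ and no higher $\delta_{j+1}^1$, so the only trees that can possibly contribute are the three listed above. Once those are pinned down, the proof reduces to the characteristic-$2$ identity above, which is elementary. As a sanity check, the statement is consistent with Lemma~\ref{lem:Phi-box-tensor-product} and the classical fact $\Phi_U^2\simeq 0$, since boxing with any $C^{\bF[U]}$ sends $H_*^1$ to an $\bF[U]$-linear chain homotopy between $\Phi_U^2$ and $0$.
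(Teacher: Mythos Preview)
Your proposal is correct and essentially identical to the paper's proof: both define the homotopy via the second divided-power derivative $H_2^1(U^n,1)=1\otimes\binom{n}{2}U^{n-2}$ and reduce the verification to the identity $\binom{m}{2}+\binom{n}{2}+\binom{m+n}{2}\equiv mn\pmod 2$. Your enumeration of the three contributing diagrams matches the paper's exactly.
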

\begin{proof} We define a $DA$-bimodule endomorphism 
\[
H_*^1\colon {}_{\bF[U]} [\bI]^{\bF[U]}\to {}_{\bF[U]} [\bI]^{\bF[U]}
\]
 via the formula $H_1^1=0$ and 
\[
H_2^1(U^n,1)= 1\otimes \frac{n(n-1)}{2} U^{n-2}.
\]
The map $\d_{\Mor}(H_*^1)$ has three summands, shown as graphs below.
\[
\d_{\Mor}(H_*^1)=
\hspace{-.6cm}
\begin{tikzcd}[column sep=.1cm, row sep=.4cm]
\,\ar[dr]&\,\ar[d]&[-.4cm] \,\ar[dd]\\
&\mu_2\ar[dr]&
\\[-.2cm]
&&H_2^1
	\ar[d]
	\ar[dr]
\\
&&\,&\mu_2
\end{tikzcd}
+
\begin{tikzcd}[column sep=.1cm, row sep=.4cm]
\,\ar[ddrr]&[-.2cm]\,\ar[dr]& \,\ar[d]\\
&&H_2^1 \ar[d] \ar[ddr]
\\[-.2cm]
&&\delta_2^1
	\ar[d]
	\ar[dr]
	\\
&&\,&\mu_2
\end{tikzcd}
+
\begin{tikzcd}[column sep=.1cm, row sep=.4cm]
\,\ar[ddrr]&[-.2cm]\,\ar[dr]& \,\ar[d]\\
&&\delta_2^1 \ar[d] \ar[ddr]
\\[-.2cm]
&&H_2^1\ar[d]\ar[dr]\\
&&\,&\mu_2
\end{tikzcd}
\]
Hence
\[
\begin{split}
&\d_{\Mor}(H_*^1)(U^n,U^m,1)\\
=&1\otimes \frac{n(n-1)+m(m-1)+(n+m)(n+m-1)}{2} U^{n+m-2}\\
\equiv&1\otimes nm U^{n+m-2}\pmod 2\\
=&(\phi_*\circ \phi_*)_2^1 (U^n,U^m,1)
\end{split}
\]
completing the proof.
\end{proof}

We will also need to consider analogous basepoint actions on complexes over the 2-variable polynomial ring $\bF[\scU,\scV]$. In this case, we may define two $DA$-bimodule endomorphisms of ${}_{\bF[\scU,\scV]} [\bI]^{\bF[\scU,\scV]}$, denoted $\phi_*^1$ and $\psi_*^1$, which differentiate with respect to $\scU$ and $\scV$, respectively. Lemma~\ref{lem:phi^2=0-bimodules} adapts to show that $\phi_*^1\circ \phi_*^1\simeq 0$ and $\psi_*^1\circ \psi_*^1\simeq 0$. Additionally, we have the following:

\begin{lem}
\label{lem:Phi-Psi-commute}
 As endomorphisms of ${}_{\bF[\scU,\scV]}[\bI]^{\bF[\scU,\scV]}$, we have
\[
\phi_*^1\circ \psi_*^1\simeq \psi_*^1\circ \phi_*^1.
\]
\end{lem}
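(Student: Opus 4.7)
The plan is to construct an explicit $DA$-bimodule homotopy, in direct analogy with the proof of Lemma~\ref{lem:phi^2=0-bimodules}. Define a bimodule endomorphism $H_*^1 \colon {}_{\bF[\scU,\scV]}[\bI]^{\bF[\scU,\scV]} \to {}_{\bF[\scU,\scV]}[\bI]^{\bF[\scU,\scV]}$ by $H_1^1 = 0$, $H_j^1 = 0$ for $j > 2$, and by the formula
\[
H_2^1(\scU^n\scV^m, 1) = 1 \otimes nm \, \scU^{n-1}\scV^{m-1},
\]
extended $\ve{k}$-linearly. One can think of $H_2^1$ informally as $\d_\scU \d_\scV$.

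The goal will then be to verify the identity $\d_{\Mor}(H_*^1) = \phi_*^1 \circ \psi_*^1 + \psi_*^1 \circ \phi_*^1$. Just as in Lemma~\ref{lem:phi^2=0-bimodules}, $\d_{\Mor}(H_*^1)$ unpacks into three tree summands, which on a pair of inputs $(a_2, a_1, 1)$ evaluate respectively to
\[
1 \otimes \d_\scU \d_\scV(a_2 a_1), \qquad 1 \otimes \d_\scU \d_\scV(a_2)\cdot a_1, \qquad 1 \otimes a_2 \cdot \d_\scU \d_\scV(a_1).
\]
On the other side of the equation, an application of the composition formula~\eqref{eq:compose-morphisms} for $DA$-morphisms (noting that the comultiplication $\Delta$ produces only one nontrivial splitting on two inputs, because $\phi_*^1$ and $\psi_*^1$ have only $\phi_2^1$, $\psi_2^1$ nonzero) shows that $(\phi_*^1 \circ \psi_*^1 + \psi_*^1 \circ \phi_*^1)_3^1(a_2, a_1, 1) = 1 \otimes \bigl(\d_\scU(a_2)\d_\scV(a_1) + \d_\scV(a_2)\d_\scU(a_1)\bigr)$.

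The main computation is then a direct check on monomials $a_i = \scU^{n_i}\scV^{m_i}$: the three terms on the left-hand side expand to $(n_1+n_2)(m_1+m_2) + n_2 m_2 + n_1 m_1$ times $\scU^{n_1+n_2-1}\scV^{m_1+m_2-1}$, which collapses mod $2$ to $n_1 m_2 + n_2 m_1$, matching the right-hand side exactly. I expect this to be the only substantive step; all other components of the relation (e.g.\ those with more than two algebra inputs) vanish identically, since $H_j^1$, $\phi_j^1$, $\psi_j^1$, and all structure maps on the identity bimodule are zero for $j > 2$.

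The hard part, if any, is bookkeeping the signs and combinatorics of trees in the composition formula to confirm that no other trees contribute to $(\phi_*^1 \circ \psi_*^1 + \psi_*^1 \circ \phi_*^1)_*^1$ or to $\d_{\Mor}(H_*^1)$. In characteristic $2$ and with the vanishing of $\mu_1$ on $\bF[\scU,\scV]$ this reduces to a short enumeration that parallels Lemma~\ref{lem:phi^2=0-bimodules} almost verbatim, so no new technical ingredient is needed beyond the monomial computation above.
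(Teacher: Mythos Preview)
Your proposal is correct and is essentially identical to the paper's proof: the paper defines the same homotopy $H_2^1(\a,1)=1\otimes \d_\scU\d_\scV(\a)$ and reduces the claim to the identity $\d_\scU\d_\scV(ab)+a\,\d_\scU\d_\scV(b)+\d_\scU\d_\scV(a)\,b=\d_\scU(a)\d_\scV(b)+\d_\scV(a)\d_\scU(b)$, which is exactly your monomial computation written invariantly.
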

\begin{proof}
We define a $DA$-bimodule endomorphism $H_*^1$ of ${}_{\bF[\scU,\scV]}[\bI]^{\bF[\scU,\scV]}$ whose only non-vanishing action is given by the formula
\[
H_2(\a,1)=1\otimes \d^2_{\scU,\scV}(\a),
\]
where $\d^2_{\scU,\scV}(a)=\d_{\scU}(\d_{\scV}(a))$.
We observe the equality
\[
\d_{\Mor}(H_*^1)=\phi_*^1\circ \psi_*^1+\psi_*^1\circ \phi_*^1
\]
which follows from the easily verified relation
\[
\d_{\scU,\scV}^2(a\cdot b)+a\cdot  \d^2_{\scU,\scV}(b)+\d_{\scU,\scV}^2(a)\cdot b=\d_{\scU}(a)\cdot \d_{\scV}(b)+\d_{\scV}(a)\cdot \d_{\scU}(b).
\]
\end{proof}

\subsection{The transformer as a mapping cone bimodule}
\label{sec:mapping-cone-bimodules}

In order to efficiently prove statements about the transformer bimodule, it is helpful to discuss some generalities about constructing bimodules. If $f_*^1\colon {}_{\cA} M^{\cB}\to {}_{\cA} N^{\cB}$ is a morphism of $DA$-bimodules such that $\d_{\Mor}(f_*^1)=0$, then we may construct another bimodule $\Cone(f_*^1)$ which is also a $DA$-bimodule. If $f_*^1$ and $g_*^1$ are two homotopic bimodule morphisms, then $\Cone(f_*^1)$ and $\Cone(g_*^1)$ are homotopy equivalent.

It is helpful to understand that the identity and transformer bimodules are mapping cone bimodules. For $\veps\in \{0,1\}$, write  ${}_{\cK}[\ve{i}_\veps]^{\cK}$ denote $\ve{I}_\veps\cdot {}_{\cK}[\bI]^{\cK}\cdot \ve{I}_{\veps}$, with the restriction of the structure map from the identity bimodule ${}_{\cK} [\bI]^{\cK}$. There are two morphisms
\[
s_*^1,t_*^1\colon {}_{\cK}[\ve{i}_0]^{\cK}\to {}_{\cK}[\ve{i}_1]^{\cK}
\]
as follows. The map $t_j^1=0$ for $j\neq 2$, and $t_2^1(f\tau,i_0)=i_1\otimes f\tau$ and $t_2^1(f\sigma,i_0)=0$. The map $s_*^1$ is similar, but with the roles of $\sigma$ and $\tau$ reversed. It is not hard to see that the $DA$-bimodule morphisms $s_*^1$ and $t_*^1$ are cycles. Furthermore
\[
{}_{\cK}[\bI]^{\cK}=\Cone(s_*^1+t_*^1\colon {}_{\cK} [\ve{i}_0]^{\cK}\to {}_{\cK} [\ve{i}_1]^{\cK}).
\]
The $A_\infty$ basepoint actions $\phi_*^1$ and $\psi_*^1$ from Section~\ref{sec:basepoint-actions-A_infty-morphisms} also induce endomorphisms of ${}_{\cK} [\ve{i}_\veps]^{\cK}$ for $\veps\in \{0,1\}$. In particular, we observe that the transformer bimodule may be described as
\[
{}_{\cK}\cT^{\cK}=\Cone(s_*^1+(\id+\phi_*^1\circ \psi_*^1)\circ t_*^1\colon {}_{\cK} [\ve{i}_0]^{\cK}\to {}_{\cK} [\ve{i}_1]^{\cK}).
\]
\begin{rem} The above perspective gives a more streamlined proof that ${}_{\cK} \cT^{\cK}$ satisfies the $DA$-bimodule structure relations. Indeed, it is sufficient to verify that $\phi_*^1$, $\psi_*^1$, $s_*^1$ and $t_*^1$ are cycles, which is easy to verify.
\end{rem}

\subsection{Properties of the transformer bimodule}

In this section, we prove some important properties of the transformer bimodule.

We begin by showing that in analogy to Sarkar's map for a Dehn twist \cite{SarkarMovingBasepoints}, the transformer bimodule is involutive:

\begin{lem}
\label{lem:transformer-involutive}The transformer bimodule satisfies
\[
{}_{\cK} \cT^{\cK}\hatbox{}_{\cK} \cT^{\cK}\simeq {}_{\cK} [\bI]^{\cK}.
\]
\end{lem}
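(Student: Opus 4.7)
The plan is to use the mapping cone description $\cT = \Cone(s_*^1 + (\id + \phi_*^1 \circ \psi_*^1) \circ t_*^1 \colon [\ve{i}_0] \to [\ve{i}_1])$ from Section~\ref{sec:mapping-cone-bimodules}. I would first identify $\cT \hatbox \cT$ with a mapping cone $\Cone(F_*^1 \colon [\ve{i}_0] \to [\ve{i}_1])$ for some $DA$-bimodule morphism $F_*^1$. This identification uses that $[\ve{i}_\epsilon] \hatbox \cT \simeq [\ve{i}_\epsilon]$: the $\delta_4^1$ action of $\cT$ cannot fire when the inputs come from $[\ve{i}_\epsilon]$, because the third (innermost) input to $\delta_4^1$ must lie in $\ve{I}_1 \cdot \cK \cdot \ve{I}_0$ (it is of the form $f\tau g$), whereas $[\ve{i}_\epsilon]$ only transmits algebra elements in $\ve{I}_\epsilon \cdot \cK \cdot \ve{I}_\epsilon$. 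Hence $\cT \hatbox \cT$ has underlying generators exactly $(i_0, i_0)$ and $(i_1, i_1)$ and carries a mapping cone structure.

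The next step is to compute each component $F^1_n$ using the box tensor product formula. The crucial observation is that once first $\cT$'s $\delta_4^1$ is applied, the module state advances from $i_0$ to $i_1$, and no further $\delta_4^1$ can fire because this would require the module to return to $i_0$, which is impossible since $\ve{I}_0 \cdot \cK \cdot \ve{I}_1 = 0$. Together with the fact that second $\cT$ applies its $\delta^1_k$ exactly once (with $k \in \{2,4\}$), and a routine count of inputs, this forces $F^1_n = 0$ for $n \notin \{2, 4, 6\}$. A short case analysis shows that $F^1_2 = s_*^1 + t_*^1$ (reproducing the identity bimodule structure), that $F^1_4$ receives two contributions with identical input restrictions---namely first's single $\delta_4^1$ composed with second's $\delta_2^1$, versus first's three iterated $\delta_2^1$'s followed by second's $\delta_4^1$---which yield identical formulas and therefore cancel in characteristic $2$, and that $F^1_6$ is exactly the composition $(\phi_*^1 \circ \psi_*^1) \circ (\phi_*^1 \circ \psi_*^1) \circ t_*^1$.

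The final step is to show $F^1_6 \simeq 0$. Using Lemma~\ref{lem:Phi-Psi-commute} to commute the middle $\psi_*^1 \circ \phi_*^1$ to $\phi_*^1 \circ \psi_*^1$, we rewrite $F^1_6$ up to homotopy as $\phi_*^1 \circ \phi_*^1 \circ \psi_*^1 \circ \psi_*^1 \circ t_*^1$; by Lemma~\ref{lem:phi^2=0-bimodules} both $\phi_*^1 \circ \phi_*^1$ and $\psi_*^1 \circ \psi_*^1$ are null-homotopic, and since $\psi_*^1$, $\phi_*^1$, and $t_*^1$ are all cycles, pre- and post-composition preserves null-homotopy, giving $F^1_6 \simeq 0$. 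Hence $F_*^1 \simeq s_*^1 + t_*^1$, and consequently
\[
\cT \hatbox \cT = \Cone(F_*^1) \simeq \Cone(s_*^1 + t_*^1) = [\bI].
\]
The main obstacle I expect will be the careful combinatorial bookkeeping in the middle step: tracking idempotent constraints and module states across the iterated box tensor structure to rule out all but the stated contributions, and constructing the explicit bimodule null-homotopies of $F^1_6$ by splicing together the homotopies for $\phi_*^1 \circ \phi_*^1$, $\psi_*^1 \circ \psi_*^1$, and $[\phi_*^1, \psi_*^1]$ from the previous lemmas.
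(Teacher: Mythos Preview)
Your proposal is correct and follows essentially the same approach as the paper. The paper states more tersely that ${}_{\cK}\cT^{\cK}\hatbox {}_{\cK}\cT^{\cK}$ equals $\Cone\bigl(s_*^1+(\id+\phi_*^1\circ\psi_*^1\circ\phi_*^1\circ\psi_*^1)\circ t_*^1\bigr)$ and then invokes Lemmas~\ref{lem:phi^2=0-bimodules} and~\ref{lem:Phi-Psi-commute} to conclude $\phi_*^1\psi_*^1\phi_*^1\psi_*^1\simeq 0$; your argument unpacks the box tensor product computation (in particular making the $F_4^1$ cancellation explicit) but arrives at the same cone morphism and uses the same lemmas to finish.
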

\begin{proof} We compute that ${}_{\cK}\cT^{\cK}\hatbox {}_{\cK} \cT^{\cK}$ is equal to the mapping cone bimodule
\[
\Cone\left(s_*^1+(\id+\phi_*^1\circ \psi_*^1\circ \phi_*^1\circ \psi_*^1)\circ t_*^1\colon {}_{\cK} [\ve{i}_0]^{\cK}\to {}_{\cK} [\ve{i}_1]^{\cK}\right).
\]
On the other hand, $\phi_*^1\circ \psi_*^1\circ \phi_*^1\circ \psi_*^1\simeq 0$ by Lemmas~\ref{lem:phi^2=0-bimodules} and~\ref{lem:Phi-Psi-commute}. Hence,  ${}_{\cK} \cT^{\cK}\hatbox {}_{\cK} \cT^{\cK}$ is homotopy equivalent to $\Cone(s_*^1+t_*^1)$, which is ${}_{\cK} [\bI]^{\cK}$.
\end{proof}

Next, we consider the tensor product of the transformer with the modules for a solid torus:
\begin{lem}
\label{lem:tensor-transformer-solid-torus} The transformer bimodule satisfies
\[
\cD_\lambda^{\cK}\hatbox {}_{\cK} \cT^{\cK}\simeq \cD_\lambda^{\cK}\quad\text{and} \quad {}_{\cK} \cT^{\cK}\hatbox {}_{\cK} \cD_\lambda^{\bF[U]}\simeq {}_{\cK} \cD_\lambda^{\bF[U]}.
\]
\end{lem}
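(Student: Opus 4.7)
The first equivalence holds on the nose. The type-$D$ module $\cD_\lambda^{\cK}$ has exactly two generators, $\xs^0$ in idempotent $\ve{I}_0$ and $\xs^1$ in idempotent $\ve{I}_1$, with its sole non-trivial structure map being $\delta^1(\xs^0) = \xs^1 \otimes (\sigma + \scV^\lambda \tau)$. Consequently, iterating $\delta^1$ on $\cD_\lambda^{\cK}$ produces algebra sequences of length at most one. The non-trivial structure maps of $\cT$ are $\delta_2^1$ (which coincides with the identity bimodule action on $\cK$) and $\delta_4^1$ (which requires three algebra inputs). Only the $\delta_2^1$ term of $\cT$ can therefore be activated in the box tensor product, so $\cD_\lambda^{\cK} \hatbox {}_{\cK}\cT^{\cK} = \cD_\lambda^{\cK} \hatbox {}_{\cK}[\bI]^{\cK} = \cD_\lambda^{\cK}$.

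For the second equivalence, the plan is to use the mapping cone description of $\cT$ from Section~\ref{sec:mapping-cone-bimodules}. Writing
\[
{}_{\cK}\cT^{\cK} = \Cone\bigl(s_*^1 + (\id + \phi_*^1 \circ \psi_*^1) \circ t_*^1\bigr)
\quad\text{and}\quad
{}_{\cK}[\bI]^{\cK} = \Cone(s_*^1 + t_*^1)
\]
as cones between ${}_{\cK}[\ve{i}_0]^{\cK}$ and ${}_{\cK}[\ve{i}_1]^{\cK}$, and using that the box tensor product commutes with mapping cones, the second equivalence reduces to showing that the induced morphism
\[
(\phi_*^1 \circ \psi_*^1 \circ t_*^1) \boxtimes \bI \colon \cD_{\lambda, 0}^{\bF[U]} \longrightarrow \cD_{\lambda, 1}^{\bF[U]}
\]
is null-homotopic as a morphism of $DA$-bimodules, where $\cD_{\lambda, \veps}^{\bF[U]} := {}_{\cK}[\ve{i}_\veps]^{\cK} \hatbox {}_{\cK}\cD_\lambda^{\bF[U]}$ denotes the idempotent-$\veps$ restriction.

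To exhibit this null-homotopy, I will exploit the structure of the boxed basepoint actions $\phi_*^1 \boxtimes \bI$ and $\psi_*^1 \boxtimes \bI$ on $\cD_\lambda^{\bF[U]}$. Each formally differentiates the $DA$-structure maps of $\cD_\lambda^{\bF[U]}$ with respect to $\scU$ or $\scV$ (compare Lemma~\ref{lem:Phi-box-tensor-product}), and each is individually null-homotopic through the non-$\cK$-equivariant derivations $\d_\scU$ and $\d_\scV$. The explicit null-homotopy $H_*^1$ will combine the $\tau$-extraction action of $t_*^1 \boxtimes \bI$ with a mixed-second-derivative analog of the homotopy constructed in the proof of Lemma~\ref{lem:Phi-Psi-commute}. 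Verifying $\d_{\Mor}(H_*^1) = (\phi_*^1 \circ \psi_*^1 \circ t_*^1) \boxtimes \bI$ then reduces, via the Leibniz rule for $\d_\scU$ and $\d_\scV$, to the algebraic identities already exploited in the proofs of Lemmas~\ref{lem:phi^2=0-bimodules} and~\ref{lem:Phi-Psi-commute}.

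The main obstacle is the explicit combinatorial verification of the morphism differential relation. The difficulty lies in tracking the idempotent constraints on the three algebra inputs of $\delta_4^{1,\cT}$---one input must carry the factor $\tau \in \ve{I}_1 \cdot \cK \cdot \ve{I}_0$ while the other two lie in $\ve{I}_1 \cdot \cK \cdot \ve{I}_1$---together with how these interact with the $\scU,\scV$-actions of $\cD_\lambda^{\bF[U]}$ in each of its two idempotents. Once this bookkeeping is carried out, the homotopy equivalence of the two mapping cones follows by standard arguments in the homotopy category of $DA$-bimodules.
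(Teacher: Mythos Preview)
Your treatment of the first equivalence is correct and matches the paper: since $\cD_\lambda^{\cK}$ has $\delta^2=0$, only the $\delta_2^1$ term of $\cT$ can be activated.

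For the second equivalence, your reduction via the mapping cone description is exactly the paper's framework, and you correctly isolate the problem as showing that the $\phi_*\psi_*$-correction to the $\tau$-part is null-homotopic on the idempotent-$1$ piece of $\cD_\lambda$. However, your proposed null-homotopy does not work as stated. The homotopy from Lemma~\ref{lem:Phi-Psi-commute} shows $\phi_*\psi_*\simeq \psi_*\phi_*$, not $\phi_*\psi_*\simeq 0$, so a ``mixed-second-derivative analog'' of that argument does not produce a null-homotopy. And while $\d_\scU$ and $\d_\scV$ individually null-homotope $\phi_*$ and $\psi_*$, neither commutes with the right $\bF[U]$-action, so they cannot be used directly on ${}_{\cK}[\cD_\lambda]_{\bF[U]}$.

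The paper's argument supplies the missing idea. First rewrite
\[
\phi_*\psi_*\;\simeq\;\scV^{-1}\phi_*\circ(\scU\phi_*+\scV\psi_*)
\]
on $[\ve d_1]$, using $\phi_*^2\simeq 0$ (Lemma~\ref{lem:phi^2=0-bimodules}). Then define the Euler-type operator $J_{0,1,0}(\xs)=(\scU\d_\scU+\scV\d_\scV)(\xs)$ on $[\ve d_1]$. The point is that this specific combination \emph{does} commute with the right $U=\scU\scV$ action (the cross terms in the Leibniz rule contribute $2\scU\scV\cdot\xs=0$ in characteristic $2$), and a direct check gives $\d_{\Mor}(J_*)=\scU\phi_*+\scV\psi_*$. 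Hence $\phi_*\psi_*\simeq 0$ as endomorphisms of ${}_{\cK}[\ve d_1]_{\bF[U]}$, and the two mapping cones are homotopy equivalent. This $U$-equivariance of the Euler operator is the key step your proposal is missing.
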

\begin{proof} The relation $\cD_\lambda^{\cK}\hatbox {}_{\cK} \cT^{\cK}\simeq \cD_\lambda^{\cK}$ is trivial, so we focus on the relation ${}_{\cK} \cT^{\cK}\hatbox {}_{\cK} \cD_\lambda^{\bF[U]}\simeq {}_{\cK} \cD_\lambda^{\bF[U]}$.  To simplify the notation, it is easier to construct a homotopy equivalence
\[
{}_{\cK} \cT^{\cK}\hatbox {}_{\cK} [\cD_\lambda]_{\bF[U]}\simeq {}_{\cK} [\cD_\lambda]_{\bF[U]},
\]
though the maps we write down can be easily adapted to give the statement about $DA$-bimodules, since they are strictly $\bF[U]$-equivariant.

We will view both bimodules as mapping cone bimodules, in the sense of Section~\ref{sec:mapping-cone-bimodules}. Define
\[
{}_{\cK}[\ve{d}_{\veps}]_{\bF[U]}:=\ve{I}_\veps\cdot {}_{\cK}[\cD_{\lambda}]_{\bF[U]},
\]
for $\veps\in \{0,1\}$. We observe that both ${}_{\cK} \cT^{\cK}\hatbox {}_{\cK} [\cD_{\lambda}]_{\bF[U]}$ and ${}_{\cK} [\cD_{\lambda}]_{\bF[U]}$ are the mapping cones of type-$AA$ module maps from $\ve{d}_0$ to $\ve{d}_1$. Indeed, there are type-$AA$ module maps
\[
\Sigma_*, T_*\colon {}_{\cK}[\ve{d}_0]_{\bF[U]}\to {}_{\cK}[\ve{d}_1]_{\bF[U]},
\]
where $\Sigma_{1,1,0}(f \sigma,\ve{x})=m_2(f\sigma,\ve{x})$ (where $m_2$ is the action from $\cD_\lambda$) and $T_{1,1,0}(f\tau,\ve{x})=m_2(f\tau,\ve{x})$. Furthermore, $\Sigma_{1,1,0}(f\tau,\ve{x})=0$ and $T_{1,1,0}(f\sigma,\ve{x})=0$. Also $\Sigma_*$ and $T_*$ vanish on other configurations of inputs. It is straightforward to verify that $\Sigma_*$ and $T_*$ are 
cycles. Additionally,
\begin{equation}
{}_{\cK}[\cD_{\lambda}]_{\bF[U]}=\Cone\left( \Sigma_*+T_*\colon {}_{\cK} [\ve{d}_0]_{\bF[U]}\to {}_{\cK} [\ve{d}_1]_{\bF[U]}\right)
\label{eq:D-as-AA}
\end{equation}
and 
\begin{equation}
{}_{\cK} \cT^{\cK}\hatbox {}_{\cK}[\cD_{\lambda}]_{\bF[U]}=\Cone\left( \Sigma_*+(\id+\phi_*\circ\psi_*)\circ T_*\colon {}_{\cK} [\ve{d}_0]_{\bF[U]}\to {}_{\cK} [\ve{d}_1]_{\bF[U]}\right).
\label{eq:TD-as-AA}
\end{equation}
Here, we are writing $\phi_*$ and $\psi_*$ for $\phi_*^1\boxtimes \bI_{\cD_\lambda}$ and $\psi_*^1\boxtimes \bI_{\cD_\lambda}$, where $\phi_*^1$ and $\psi_*^1$ are the endomorphisms of ${}_{\cK}[\ve{i}_1]^{\cK}$ described above.

We observe that
\begin{equation}
\phi_*\circ \psi_*\simeq\scV^{-1} \phi_*\circ(\scU \phi_*+\scV \psi_*),\label{eq:phi-psi=phi(UphiVpsi)}
\end{equation}
as endomorphisms of $[\ve{d}_1]$, since $\phi_*^2\simeq 0$ by Lemma~\ref{lem:phi^2=0-bimodules}.

We write $\d_{\scW}=\scU \d_{\scU}+\scV \d_{\scV}$. We define an endomorphism $J_*$ of ${}_{\cK} [\ve{d}_1]_{\bF[U]}$ via the formula
\[
J_{0,1,0}(\ve{x})=\d_{\scW}(\ve{x}).
\]
We set $J_{i,1,j}=0$ unless $i=j=0$. Observe that $J_{0,1,0}$ commutes with the right action of $U$.  We claim that
\[
\d_{\Mor}(J_*)=\scU \phi_*+\scV \psi_*.
\]
To see that, we note that there are exactly two structure trees which contribute to $\d_{\Mor}(J_*)$. These are shown below:
\[
\d_{\Mor}(J_*)_{1,1,0}(a,\xs)=
\begin{tikzcd}[row sep=.3cm]
a \ar[dr] & \xs\ar[d]\\
& \delta_2^1\ar[d]\\
& J_{0,1,0}\ar[d]\\
&\,
\end{tikzcd}
+
\begin{tikzcd}[row sep=.3cm]
a\ar[ddr] & \xs\ar[d]\\
& J_{0,1,0}\ar[d]\\
& \delta_2^1\ar[d]\\
&\,
\end{tikzcd}
\]
Hence
\[
\d_{\Mor}(J_*)_{1,1,0}(a,\xs)= \d_{\scW}(a \cdot \xs)+a\cdot \d_{\scW}(\xs).
\]
By the Leibniz rule we have that the above is $\d_{\scW}(a)\cdot \xs.$
Therefore
\[
\d_{\Mor}(J_*)=\scU \phi_*+\scV \psi_*,
\]
as endomorphisms of $[\ve{d}_1]$. We conclude that $\phi_*\circ \psi_*\simeq 0$ as endomorphisms of $[\ve{d}_1]$ by Equation~\eqref{eq:phi-psi=phi(UphiVpsi)}. Hence the two mapping cones in Equations~\eqref{eq:D-as-AA} and~\eqref{eq:TD-as-AA} are homotopy equivalent. The proof is complete.
\end{proof}

\subsection{Proof of Theorem~\ref{thm:transformer-bimodule}}

We now prove Theorem~\ref{thm:transformer-bimodule}, which states that changing from an alpha-parallel arc to a beta-parallel arc may be achieved by tensoring with the transformer bimodule ${}_{\cK} \cT^{\cK}$. 

\begin{proof}[Proof of Theorem~\ref{thm:transformer-bimodule}]
We use our basepoint moving map formula from Corollary~\ref{cor:different-path-formula}.

Let $\scA$ be an arc system for $L\subset S^3$, and assume that $K_i\subset L$ is a component whose arc $\scA_i$ is beta-parallel. Let $\scA_i'$ denote an arc for $K_i$ which is alpha-parallel, and let $\scA'$ denote the arc system which has $\scA_i'$ in place of $\scA$.

We pick a Heegaard link diagram $\cH=(\Sigma,\as,\bs,\ws,\zs)$ for $(S^3,L)$ so that all of the arcs of $\scA$ are embedded in $\Sigma$ and pairwise disjoint. We assume the arc $\scA_i$ is disjoint from $\bs$. The arc $\scA_i'$ may also be viewed as being embedded on $\Sigma$. With respect to this embedding, $\scA_i'$ is disjoint from $\as$. In principle $\scA_i'$ might intersect some of the other arcs of $\scA$. However, by stabilizing the Heegaard diagram $(\Sigma,\as,\bs,\ws,\zs)$, we may assume that $\scA_i'$ is disjoint from all of the other arcs of $\scA$. 

We build a hyperbox of Heegaard diagrams using the diagram $\cH$ as the initial diagram. If $K_i\subset L$, then the $K_i$ direction of this hypercube involves the composition of a tautological diffeomorphism map to move $z_i$ along $\scA_i$, followed by a holomorphic polygon counting maps which change the alpha and beta curves back to their original position. We do this in each axis direction to build a $\sigma$-basic system used for the link surgery formula.

For this $\sigma$-basic system, we may change $\scA_i$ to $\scA_i'$ by composing the $K_i$ direction with a final basepoint moving hypercube which moves $w_i$ in a loop along the curve $[K_i]=\scA_i'*\scA_i^{-1}$. Corollary~\ref{cor:different-path-formula} implies that the effect on the link surgery hypercube is to replace $F^{-K_i}$ (the sum of the hypercube maps ranging over all sublinks of $L$ containing $-K_i$) with the map $(\id+\scV_i^{-1}\cA_{[K_i]}\circ \Phi_{w_i}) \circ F^{-K_i}$.

The map $(\id+\scV_i^{-1}\cA_{[K_i]} \circ \Phi_{w_i})$ is viewed as an endomorphism of the $(\ell-1)$-dimensional subcube of $\cC_{\Lambda}^1(L,\scA)\subset \cC_{\Lambda}(L,\scA)$ which has $K_i$-component $1$. On this subcube, the arc $\scA_i'$ intersects only beta curves, and $\scA_i$ intersects only alpha curves.  By Lemma~\ref{lem:homology-action-versus-basepoint-action}, we have
\[
\cA_{[K_i]}\simeq \scU_i \Phi_{w_i}+\scV_i \Psi_{z_i},
\] 
as endomorphisms of $\cC_{\Lambda}^1(L,\scA)$. 
Hence
\[
\id+\scV_i^{-1}\Phi_{w_i}\cA_{[K_i]}=\id+\scV^{-1}_i\Phi_{w_i} (\scU_i \Phi_{w_i}+\scV_i \Psi_{z_i})\simeq \id+\Phi_{w_i} \Psi_{z_i},
\]
since $\Phi_{w_i}^2\simeq 0$. In particular, it follows that if we write
\[
\cC_{\Lambda}(L,\scA)\iso \Cone\left(
\begin{tikzcd}[column sep=2cm]
\cC^0_{\Lambda}(L,\scA)\ar[r, "F^{K_i}+F^{-K_i}"]&\cC^1_{\Lambda}(L,\scA)
\end{tikzcd}
\right)
\]
then $\cC_{\Lambda}(L,\scA')$ is homotopic to
\[
\Cone\left(
\begin{tikzcd}[column sep=3cm]
\cC^0_{\Lambda}(L,\scA)\ar[r, "F^{K_i}+(\id+\Phi_{w_i}\Psi_{z_i})F^{-K_i}"]&\cC^1_{\Lambda}(L,\scA)
\end{tikzcd}
\right).
\]
On the level of type-$D$ structures over $\cL$, we claim that the above complex corresponds to the tensor product of $\cX_{\Lambda}(L,\scA)^{\cL}$ with the transformer bimodule ${}_{\cK} \cT^{\cK}$. To see this, observe that the components of the box tensor differential of $\cX_{\Lambda}(L,\scA)^{\cL}\hatbox {}_{\cK} \cT^{\cK}$ which involve the map $\delta_2^1$ of ${}_\cK \cT^{\cK}$ contribute the internal differentials of $\cC^\veps_{\Lambda}(L,\scA)$ (for $\veps\in \{0,1\}$), as well as the maps $F^{K_i}$ and $F^{-K_i}$ to the mapping cone differential. The terms of the box tensor product differential which involve $\delta_4^1$ of ${}_{\cK} \cT^{\cK}$ contribute the term $\Phi_{w_i} \Psi_{z_i} F^{-K_i}$ to the above mapping cone differential (compare Lemma~\ref{lem:Phi-box-tensor-product}). This completes the proof. 
\end{proof}

\section{The pair-of-pants bimodules}
\label{sec:pair-of-pants}
In this section, we describe several bimodules ${}_{\cK|\cK} W_{\a\b, \b}^{\cK}$ and ${}_{\cK|\cK} W_{\a\b,\a}^{\cK}$. The bimodules are related by the transformer bimodule
\[
{}_{ \cK|\cK} W_{\a\b, \b}^{\cK} \hatbox {}_{\cK} \cT^{\cK}\simeq {}_{\cK|\cK} W_{\a\b, \b}^{\cK}.
\]
We call these the \emph{pair-of-pants} bimodules. Other choices of $\a$ and $\b$ in the subscripts can also be constructed by tensoring in copies ${}_{\cK} \cT^{\cK}$ to switch from $\a$ to $\b$ and vice-versa.

After defining these bimodules and proving several basic properties, we will prove the following:

\begin{thm}\label{thm:alt-pairing} Suppose that $L_1,L_2\subset S^3$ are framed links with distinguished components $K_1\subset L_1$ and $K_2\subset L_2$. Write $n=|L_1|$ and $m=|L_2|$. Let $\scA_{1,\a}$ and $\scA_{2,\b}$ be systems of arcs for $L_1$ and $L_2$. Suppose that the arc for $K_1$ is alpha parallel and the arc for $K_2$ is beta parallel. (The other arcs need not be alpha or beta parallel). Let $\scA_{\a}$ and $\scA_{\b}$ be the system of arcs for $L_1\# L_2$ which use an alpha-parallel (resp. beta-parallel) arc for $K_1$ (resp. $K_2$). Then
\[
\left(\cX_{\Lambda_1}(L_1,\scA_{1,\a})^{\cK\otimes \cL_{n-1}},  \cX_{\Lambda_1}(L_2,\scA_{2,b})^{\cK\otimes \cL_{m-1}}\right)\hatbox {}_{\cK|\cK} W_{\a\b,\a}^{\cK}\simeq \cX_{\Lambda_1+\Lambda_2}(L_1\# L_2, \scA_{\a})^{\cL_{n+m-1}}
\]
and
\[
\left(\cX_{\Lambda_1}(L_1,\scA_{1,\a})^{\cK\otimes \cL_{n-1}},  \cX_{\Lambda_1}(L_2,\scA_{2,\b})^{\cK\otimes \cL_{m-1}}\right)\hatbox {}_{\cK|\cK} W_{\a\b,\b}^{\cK}\simeq \cX_{\Lambda_1+\Lambda_2}(L_1\# L_2, \scA_{\b})^{\cL_{n+m-1}}.
\]
\end{thm}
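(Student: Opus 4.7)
The plan is to deduce the theorem from the main pairing formula of Theorem~\ref{thm:pairing-links} by composing with bimodules that change the arc decoration of the distinguished component $K_1\# K_2$.

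First, I would apply Theorem~\ref{thm:pairing-links} directly. Since the arc for $K_1$ in $\scA_1$ is alpha-parallel and the arc for $K_2$ in $\scA_2$ is beta-parallel, the hypothesis is satisfied, and the $\alpha\beta$ connected sum $\scA_1\#\scA_2$ of Section~\ref{sec:basic-systems-connected-sums} is defined, with the arc for $K_1\# K_2$ given by the co-core of the connected sum band (item~\ref{alpha-beta-connected-sums}). The theorem then yields
\[
\cX_{\Lambda_1+\Lambda_2}(L_1\# L_2, \scA_1\#\scA_2)^{\cL_{n+m-1}} \simeq \bigl(\cX_{\Lambda_1}(L_1,\scA_1) \otimes_{\bF} \cX_{\Lambda_2}(L_2,\scA_2)\bigr) \hatbox M_2^{\cK}.
\]

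Next, I would argue that the arc systems $\scA_\a$ and $\scA_\b$ differ from $\scA_1\#\scA_2$ only in the arc assigned to $K_1\# K_2$, so passing from $\scA_1\#\scA_2$ to either of these can be implemented at the level of type-$D$ modules over $\cL_{n+m-1}$ by tensoring on the outgoing $\cK$-factor (for $K_1\# K_2$) with an arc-change bimodule. The explicit form of such a bimodule follows from the moving-basepoint analysis of Corollary~\ref{cor:different-path-formula}, which identifies the effect of an arc change with an endomorphism of the link surgery hypercube built from the hypercube homology action $\cA_\gamma$ and the basepoint action $\Phi_{w}$ for a suitable loop $\gamma$ on the Heegaard diagram near the connected sum band. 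Package these modifications as $DA$-bimodules over $\cK$, and identify the composites of $M_2^{\cK}$ with these arc-change bimodules as $W_l^{\cK}$ and $W_r^{\cK}$ (whose definitions will be given in Section~\ref{sec:pair-of-pants}).

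The main obstacle is the explicit identification of these composites with the bimodules $W_l^{\cK}$ and $W_r^{\cK}$. Because the co-core arc is neither alpha-parallel nor beta-parallel, Theorem~\ref{thm:transformer-bimodule} does not apply directly; instead, one must identify the specific loop in the relevant Heegaard diagram (supported near the connected sum band) whose basepoint-moving action transforms the co-core arc into an alpha-parallel arc (for $W_l$) or a beta-parallel arc (for $W_r$), and then compute the resulting bimodule on the $K_1\# K_2$-factor. Once one of the two statements is established, the other should follow from the relation $W_l^{\cK} \hatbox {}_{\cK}\cT^{\cK} \simeq W_r^{\cK}$ (stated at the start of Section~\ref{sec:pair-of-pants}) together with Theorem~\ref{thm:transformer-bimodule}, reducing the verification to a single case.
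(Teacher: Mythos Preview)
Your overall strategy---apply Theorem~\ref{thm:pairing-links} to obtain the merge-module description for the arc system $\scA_1\#\scA_2$, and then change the arc for $K_1\#K_2$ using the basepoint-moving formula of Corollary~\ref{cor:different-path-formula}---is the same as the paper's. The paper also reduces to one case via the transformer relation you mention.

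There is, however, a genuine gap in how you propose to carry this out. You suggest packaging the arc change as a $DA$-bimodule ${}_{\cK}X^{\cK}$ ``on the $K_1\#K_2$-factor'' and then identifying ${}_{\cK\otimes\cK}M_2^{\cK}\hatbox{}_{\cK}X^{\cK}$ with $W_l$ or $W_r$. No such factorization exists. Inspecting the $\delta_5^1$ in Equation~\eqref{eq:delta-5-1-Wr}, the output depends on the pair $(a',b')$ asymmetrically through $a'(\scU\d_{\scU}+\scV\d_{\scV})(b')$, whereas any composite $M_2\hatbox X$ would see $a'|b'$ only through the product $a'b'$ (since $\delta_2^1$ of $M_2$ merges the two inputs before anything reaches $X$). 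The two input $\cK$-factors are genuinely mixed in $W_l$ and $W_r$ in a way that cannot be post-composed onto a single merged $\cK$.

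What the paper does instead is stay at the level of the surgery hypercube. After Theorem~\ref{thm:pairing-links} and the connected-sum formulas (Propositions~\ref{prop:disjoint-unions-hypercubes-main} and~\ref{prop:connected-sums}), one has $\cC^{1}(L_1\#L_2)\cong\cC^{1}(L_1)\otimes\cC^{1}(L_2)$ with the tensor-product differential. The basepoint-moving endomorphism $\id+\scV^{-1}\Phi_w\cA_{[K_2]}$ from Corollary~\ref{cor:different-path-formula} is then analyzed under this isomorphism: one checks that $\Phi_w=\Phi_{w_1}\otimes\id+\id\otimes\Phi_{w_2}$ (since the differential is tensorial) and that $\cA_{[K_2]}=\id\otimes\cA_{[K_2]}$, the latter being $\id\otimes(\scU\Phi_{w_2}+\scV\Psi_{z_2})$ by Lemma~\ref{lem:homology-action-versus-basepoint-action}. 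The cross term $\Phi_{w_1}\otimes(\scU\Phi_{w_2}+\scV\Psi_{z_2})$ is precisely what produces the two-sided $\delta_5^1$ defining $W_l$ and $W_r$ (via the dictionary of Section~\ref{sec:basepoint-actions-A_infty-morphisms}). This tensorial decomposition of the endomorphism---not a bimodule over a single copy of $\cK$---is the missing ingredient in your outline.
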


\subsection{The $W_{\a\b,\a}$ and $W_{\a\b,\b}$ bimodules}

We now define the $W_{\a\b,\a}$ and $W_{\a\b,\b}$ bimodules. We consider $W_{\a\b,\b}$ first. This bimodule has a $\delta_3^1$ which is identical to the merge module $M$. Additionally, there is a $\delta_5^1$, determined by the following formula:
\begin{equation}
\delta_5^1(a|b, a'|b', 1|\tau,\tau|1, i_0)=i_1\otimes \scV^{-1}\d_{\scU}(ab) a' \left(\scU \d_{\scU}+\scV \d_{\scV}\right)(b') \tau. 
\label{eq:delta-5-1-Wr}
\end{equation}
In the above, $a,b,a',b'\in \ve{I}_1\cdot \cK
\cdot \ve{I}_1$. 

The module $W_{\a\b,\a}$ is similar, except has
\[
\delta_5^1(a|b, a'|b', 1|\tau,\tau|1, i_0)=i_1\otimes \scV^{-1}\d_{\scU}(ab) b' \left(\scU \d_{\scU}+\scV \d_{\scV}\right)(a') \tau.
\]
\begin{lem}\label{lem:Wrl-bimodules}
 ${}_{ \cK|\cK} W_{\a\b,\a}^{\cK}$ and ${}_{\cK|\cK} W_{\a\b,\b}^{\cK}$ satisfy the $DA$-bimodule structure relations. Furthermore, both are split Alexander bimodules when we give the incoming algebra factors the discrete partition.
\end{lem}
\begin{proof}  We consider the structure relations first. We take the perspective of mapping cone bimodules from Section~\ref{sec:mapping-cone-bimodules}. We begin by considering the merge module ${}_{\cK|\cK} M^{\cK}$. We define bimodules ${}_{\cK|\cK} [\ve{m}_\veps]^{\cK}$ for $\veps\in \{0,1\}$ by
\[
(\ve{I}_{\veps}\otimes \ve{I}_{\veps})\cdot  {}_{\cK|\cK}M^{\cK} \cdot \ve{I}_{\veps}.
\]
The bimodule structure relations are easily seen to be satisfied. There are two $DA$-bimodule morphisms
\[
\scS_*^1,\scT_*^1\colon {}_{\cK|\cK}[\ve{m}_0]^{\cK}\to {}_{\cK|\cK} [\ve{m}_1]^{\cK}
\]
as follows.  We set $\scS_j^1=0$ if $j\neq 3$. We set $\scS_3^1(1|\sigma,\sigma|1,i_0)=i_1\otimes \sigma$. The map $\scT_*^1$ is similar, but with $\tau$ replacing $\sigma$. The $DA$ bimodule endomorphisms $\scS_*^1$ and $\scT_*^1$ are easily seen to be cycles. Furthermore,
\[
{}_{\cK|\cK} M^{\cK}=\Cone\left(\scS_*^1+\scT_*^1\colon {}_{\cK|\cK} [\ve{m}_0]^{\cK}\to {}_{\cK|\cK} [\ve{m}_1]^{\cK}\right).
\]
Note that we may form four $DA$-bimodule endomorphisms $\phi_*^l$, $\phi^r_*$, $\psi^l_*$ and $\psi_*^r$ of ${}_{\cK|\cK} [\ve{m}_1]^{\cK}$ by adapting the construction from Section~\ref{sec:basepoint-actions-A_infty-morphisms}. Here $\phi_*^l$ differentiates with respect to $\scU$ on the left factor of $\cK\otimes \cK$, while  $\phi_*^r$ differentiates with respect to $\scU$ on the right factor. We may also view these maps as endomorphisms of ${}_{\cK|\cK} [\ve{m}_1]^{\cK}$ by boxing with the identity map on $[\ve{m}_1]$.
With respect to the above notation, we observe that
\begin{equation}
{}_{\cK|\cK}W_{\a\b, \a}^{\cK}=\Cone\left(
\begin{tikzcd}[column sep=6cm] {[\ve{m}_0]}\ar[r, "\scS_*^1+(\id +\scV^{-1}(\phi_*^l+\phi_*^r)\circ (\scU \phi_*^l+\scV \psi_*^l))\circ \scT_*^1"] & {[\ve{m}_1]}
\end{tikzcd}
\right)
\label{eq:W_l-mapping-cone-description}
\end{equation}
The map 
\[
(\id +\scV^{-1}(\phi_*^l+\phi_*^r) \circ (\scU \phi_*^l+\scV \psi_*^r))\circ \scT_*^1
\]
 is a cycle since it is the composition of cycles. Similarly, $\scS_*^1$ is a cycle by direct computation. Hence the $DA$-bimodule relations are satisfied for ${}_{\cK|\cK}W_{\a\b, \a}^{\cK}$. The same argument works for ${}_{\cK|\cK}W_{\a\b, \b}^{\cK}$.
 
 Finally, the claim about split continuity is proven by an easy modification of the proof for the merge module, from Lemma~\ref{lem:merge-continuous}.
\end{proof}

\subsection{Proof of Theorem~\ref{thm:alt-pairing}}

We now give a proof of Theorem~\ref{thm:alt-pairing}:

\begin{proof}[Proof of Theorem~\ref{thm:alt-pairing}]
The proof is based on our formula for changing the path in the link surgery formula Corollary~\ref{cor:different-path-formula}. It is easier to analyze the surgery hypercubes, instead of their interpretation as type-$D$ modules, so we focus on the statement in terms of hypercube complexes first.

 Letting $K_1\subset L_1$ and $K_2\subset L_2$ be the special components, and suppose that $\scA_{1,\a}$ and $\scA_{2,\b}$ are systems of arcs such that $K_1$ has an alpha parallel arc and $K_2$ has a beta-parallel arc. We use the meridional $\sigma$-basic systems considered in Section~\ref{sec:meridional}, which we assume are algebraically rigid. These were constructed in Lemma~\ref{lem:algebraic-rigid-general-basic-system}. By Theorem~\ref{thm:pairing-links}, we know that
\[
\cC_{\Lambda_1+\Lambda_2}(L_1\# L_2,\scA_{1,\a}\# \scA_{2,\b})
\]
\[
\iso
 \Cone\left(
\begin{tikzcd}[column sep=3.5cm]
\cC^0(L_1)\otimes \cC^0(L_2)
\ar[r, "{F^{K_1}|F^{K_2}+F^{-K_1}|F^{-K_2}}"]
& \cC^1(L_1)\otimes \cC^1(L_2)
\end{tikzcd}\right)
\]
(Recall that we are writing $F^{\pm K_i}$ for the sum of all surgery maps for oriented sublinks of $L_i$ which contain $\pm K_i$).

The above surgery hypercube is for the system of arcs $\scA_{1,\a}\# \scA_{2,\b}$. The arc in $\scA_{1,\a}\# \scA_{2,\b}$ for $K_1\# K_2$ is the co-core of the connected sum band. The present statement instead involves $\sigma$-basic systems of Heegaard diagrams for $L_1\# L_2$ which use an alpha or beta parallel arc on $K_1\# K_2$. If we want to change the arc to be alpha parallel, we may stack the above hypercube for $\cC_{\Lambda_1+\Lambda_2}(L_1\# L_2,\scA_{1,\a}\# \scA_{2,\b})$ with the hypercube for moving the basepoint in a loop following $K_2$ in the subcube $\cC^{1}(L_1)\otimes \cC^{1}(L_2)$.  We may compute this cube using Corollary~\ref{cor:different-path-formula}. The effect on the resulting surgery hypercube is to replace $F^{-K_1}|F^{-K_2}$ with the expression
\[
(\id+\scV^{-1}\Phi_{w}\circ \cA_{[K_2]})(F^{-K_1}|F^{-K_2})\colon \cC^1(L_1\# L_2)\to \cC^1(L_1\# L_2).
\]

We are using Proposition~\ref{prop:connected-sums} to identify $\cC^\veps(L_1\# L_2)\iso \cC^\veps(L_1)\otimes \cC^\veps(L_2)$, for $\veps\in \{0,1\}$. The differential on each $\cC^{\veps}(L_1\#L_2)$ is the tensor product differential, i.e. $\d_1\otimes \id+\id\otimes \d_2$. In particular, the map $\Phi_{w}$ on $\cC^{\veps}(L_1\#L_2)$ is intertwined with the map
\[
\Phi_{w_1}\otimes \id+\id\otimes \Phi_{w_2}
\]
under the connected sum isomorphism $\cC^1(L_1\#L_2)\iso \cC^1(L_1)\otimes \cC^1(L_2)$. Similarly, using the connected sum formulas for hypercubes in Propositions~\ref{prop:disjoint-unions-hypercubes-main} and~\ref{prop:connected-sums}, it is straightforward to see that with respect to the isomorphism   $\cC^1(L_1\# L_2)\iso \cC^1(L_1)\otimes \cC^1(L_2)$ the map $\cA_{[K_2]}$ is intertwined with $\id\otimes \cA_{[K_2]}$.   By Lemma~\ref{lem:homology-action-versus-basepoint-action} we see that
\[
\id\otimes \cA_{[K_2]}=\id\otimes (\scU \Phi_{w_2}+\scV \Psi_{z_2}),
\]
as endomorphisms of $\cC^\veps(L_1)\otimes \cC^{\veps}(L_2)$.
We conclude that
\[
\Phi_w\circ \cA_{[K_2]}=(\Phi_{w_1}\otimes \id+\id\otimes \Phi_{w_2})\circ \left(\id\otimes (\scU \Phi_{w_2}+\scV \Psi_{z_2})\right).
\]
The addition of this term is encoded exactly by the extra $\delta_5^1$ term in Equation~\eqref{eq:delta-5-1-Wr} (cf. Lemma~\ref{lem:Phi-box-tensor-product}), so the proof is complete.
\end{proof}

\subsection{Relation with the $AA$-identity bimodule}
\label{sec:AAA-identity}
We recall that the $AA$-identity bimodule ${}_{\cK|\cK} [\bI^{\Supset}]$ from Section~\ref{sec:AAidentity} admits an extension to a bimodule
\[
{}_{\cK| \cK} [\bI^{\Supset}]_{\bF[U]}.
\]

It follows from Theorem~\ref{thm:pairing-links} and Proposition~\ref{prop:equivalence-type-D-change-basepoint}  that we may use ${}_{\cK|\cK} [\bI^{\Supset}]$ to obtain a valid model of the link surgery formula when taking the connected sum of two links  (i.e. the resulting hypercube has the correct module actions on the remaining knot components). It is not clear from this argument whether tensoring with ${}_{\cK|\cK} [\bI^{\Supset}]_{\bF[U]}$ gives a valid model. In this section, we show that indeed tensoring with ${}_{\cK|\cK} [\bI^{\Supset}]_{\bF[U]}$ produces a valid model, via the following algebraic result:

\begin{prop}
\label{prop:W-I-Supset-relation} There is a homotopy equivalence
\[
{}_{\cK|\cK} W_{\a\b, \a}^{\cK}\hatbox {}_{\cK}[\cD_0]_{\bF[U]}\simeq {}_{\cK|\cK} [\bI^{\Supset}]_{\bF[U]}.
\]
The same holds if we replace $W_{\a\b, \a}$ with $W_{\a\b, \b}$.
\end{prop}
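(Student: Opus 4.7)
\medskip

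\noindent\textbf{Proof proposal.} The plan is to exploit the mapping cone descriptions of $W_l$ and $M_2$ established in the proof of Lemma~\ref{lem:Wrl-bimodules}, together with a generalization to the current setting of the null-homotopy construction from Lemma~\ref{lem:tensor-transformer-solid-torus}. Recall that we may write
\[
{}_{\cK\otimes \cK}M_2^{\cK}=\Cone\!\Big(\scS_*^1+\scT_*^1\colon {}_{\cK\otimes \cK}[\ve{m}_0]^{\cK}\to {}_{\cK\otimes \cK}[\ve{m}_1]^{\cK}\Big)
\]
and
\[
{}_{\cK\otimes \cK}W_l^{\cK}=\Cone\!\Big(\scS_*^1+(\id+E_*^1)\circ \scT_*^1\colon {}_{\cK\otimes \cK}[\ve{m}_0]^{\cK}\to {}_{\cK\otimes \cK}[\ve{m}_1]^{\cK}\Big),
\]
where $E_*^1:=\scV^{-1}(\phi_*^l+\phi_*^r)\circ(\scU\phi_*^l+\scV\psi_*^l)$ is the $DA$-bimodule endomorphism of $[\ve{m}_1]$ encoding the extra $\delta_5^1$ term of $W_l$. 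Boxing with ${}_{\cK}[\cD_0]_{\bF[U]}$ yields two mapping cones of $AA$-bimodule morphisms over $(\cK\otimes \cK,\bF[U])$, differing only by the extra summand $E_*^1\circ \scT_*^1\hatbox\id_{\cD_0}$.

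The core of the argument is to show that $E_*^1\hatbox \id_{\cD_0}$ is null-homotopic as an endomorphism of ${}_{\cK\otimes \cK}[\ve{m}_1]^{\cK}\hatbox {}_{\cK}[\cD_0]_{\bF[U]}$. First, I would adapt the construction of $J_*$ from the proof of Lemma~\ref{lem:tensor-transformer-solid-torus}: on $[\ve{m}_1]\hatbox \cD_0$, every incoming algebra element from the left factor of $\cK\otimes \cK$ is eventually absorbed (after being merged with the right factor via $\mu_2$) into a $\cD_0$-element, so the formula $\ve{x}\mapsto\d_{\scW}(\ve{x})=\scU\d_{\scU}(\ve{x})+\scV\d_{\scV}(\ve{x})$ defines a $DA$-bimodule morphism $J_*^l$ with morphism differential $\d_{\Mor}(J_*^l)=\scU\phi_*^l+\scV\psi_*^l$. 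Consequently the right factor of $E$ is null-homotopic after boxing with $\cD_0$. To finish, one invokes that $\phi_*^l\circ\phi_*^l\simeq 0$ (Lemma~\ref{lem:phi^2=0-bimodules}) and that $\phi_*^l$ and $\phi_*^r$ commute on the nose (they act on distinct tensor factors of $\cK\otimes \cK$), so that $(\phi_*^l+\phi_*^r)(\scU\phi_*^l+\scV\psi_*^l)\simeq 0$ after boxing with $\cD_0$.

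Combining these observations, $E_*^1\circ \scT_*^1\hatbox \id_{\cD_0}$ is null-homotopic, so the two mapping cones $(M_2\hatbox \cD_0)$ and $(W_l\hatbox \cD_0)$ are related by a change of their connecting morphisms by a boundary in the $\Mor$-complex. A standard mapping cone argument (replace $\scS_*^1+(\id+E_*^1)\scT_*^1$ by $\scS_*^1+\scT_*^1$ via the diagonal homotopy supplied by the null-homotopy of $E_*^1\circ\scT_*^1$) then produces an explicit quasi-isomorphism of $(\cK\otimes \cK,\bF[U])$-bimodules
\[
{}_{\cK\otimes \cK}W_l^{\cK}\hatbox {}_{\cK}[\cD_0]_{\bF[U]}\simeq {}_{\cK\otimes \cK}M_2^{\cK}\hatbox {}_{\cK}[\cD_0]_{\bF[U]}={}_{\cK\otimes \cK}[\bI^{\Supset}]_{\bF[U]},
\]
with the statement for $W_r$ following by an identical argument (or by composing the case of $W_l$ with the transformer bimodule, using Lemma~\ref{lem:tensor-transformer-solid-torus}).

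The main obstacle I anticipate is bookkeeping: the map $J_*^l$ must be defined carefully as a morphism of $DA$-bimodules in a way that is compatible with $[\ve{m}_1]\hatbox \cD_0$ (in particular, respecting the idempotents and the fact that $\scV$ is invertible only in idempotent $1$), and the chain-level null-homotopy of $E_*^1\circ \scT_*^1$ must be assembled from $J_*^l$, $J_*^r$, and the bimodule homotopies of Lemmas~\ref{lem:phi^2=0-bimodules} and~\ref{lem:Phi-Psi-commute} in such a way that the split Alexander coboundedness established in Lemma~\ref{lem:Wrl-bimodules} is preserved.
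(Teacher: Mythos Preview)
There is a genuine gap in your argument, at the step where you claim that $J_*^l:=\d_{\scW}$ satisfies $\d_{\Mor}(J_*^l)=\scU\phi_*^l+\scV\psi_*^l$. This is not what $\d_{\scW}$ produces. On $[\ve{md}_1]=[\ve{m}_1]\hatbox\cD_0$ the left $\cK\otimes\cK$-action sends $(a|b,x)$ to $abx$; the module only sees the merged product $ab$ and cannot tell which tensor factor contributed what. A direct Leibniz computation gives
\[
\d_{\Mor}(\d_{\scW})_{1,1,0}(a|b,x)=\d_{\scW}(abx)+ab\,\d_{\scW}(x)=\d_{\scW}(ab)\,x,
\]
which is symmetric in $a$ and $b$, hence equals $\scU(\phi_*^l+\phi_*^r)+\scV(\psi_*^l+\psi_*^r)$ rather than $\scU\phi_*^l+\scV\psi_*^l$. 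In fact no null-homotopy of $\scU\phi_*^l+\scV\psi_*^l$ exists on $[\ve{md}_1]$: since only $m_2$ is nontrivial, $(\d_{\Mor}J)_2$ depends only on $J_1$, and setting the algebra input equal to $1|b$ forces $J_1(bx)=bJ_1(x)$, while setting it equal to $a|1$ then forces $0=\d_{\scW}(a)x$, a contradiction. So $E_*^1\circ\scT_*^1$ is not null-homotopic after boxing with $\cD_0$, and your mapping-cone comparison does not go through directly.

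The paper's route is genuinely more circuitous for this reason. Using the \emph{correct} null-homotopy $\d_{\scW}$ one shows only that $\id+E_*^1\simeq(\id+\phi_*^l\psi_*^l)(\id+\phi_*^r\psi_*^r)$, which identifies $W_l\hatbox\cD_0$ with $(\cT\otimes\cT)\hatbox\bI^{\Supset}$ rather than with $\bI^{\Supset}$ itself (Lemma~\ref{lem:tensor-W-D-lambda}). The removal of the two transformers then requires a separate and non-obvious step (Lemma~\ref{lem:Transformers-and-M}): one shows $(\cT\otimes\cT)\hatbox M_2\hatbox\cT\simeq M_2$ by conjugating the connecting map by the autoequivalence $\id+\phi_*^l\psi_*^r$ of $[\ve{m}_\veps]$, and then combines this with $\cT\hatbox\cD_0\simeq\cD_0$. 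The point is that the two cones are homotopy equivalent via a nontrivial change of basis on $[\ve{m}_\veps]$, not because their connecting maps are homotopic.
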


The above proposition will be verified in several steps.

\begin{lem}
\label{lem:basic-relations-s-t-phi-psi}
\begin{enumerate}
\item As morphisms from ${}_{\cK}[\ve{i}_0]^{\cK}$ to ${}_{\cK}[\ve{i}_1]^{\cK}$, we have
\[
\phi_*^1 \circ s_*^1\simeq s_*^1\circ \phi_*^1 \quad \text{and} \quad \psi_*^1 \circ s_*^1 \simeq s_*^1\circ \psi_*^1.
\]
Similarly
\[
t_*^1\circ \phi_*^1\simeq \scV^2\psi_*^1\circ t_*^1\quad \text{and} \quad t_*^1\circ \psi_*^1\simeq \scV^{-2} \phi_*^1 \circ t_*^1.
\]
\item As morphisms from ${}_{\cK|\cK}[\ve{m}_0]^{\cK}$ to ${}_{\cK|\cK}[\ve{m}_1]^{\cK}$ we have
\[
\phi_*^l \circ \scS_*^1\simeq \scS_*^1\circ \phi_*^l \quad \text{and} \quad \psi_*^l \circ \scS_*^1 \simeq \scS_*^1\circ \psi_*^l
\]
and
\[
\scT_*^1\circ \phi_*^l\simeq \scV^2\psi_*^l\circ \scT_*^1\quad \text{and} \quad \scT_*^1\circ \psi_*^l\simeq \scV^{-2} \phi_*^l \circ \scT_*^1.
\]
The same relations hold with $\phi^r_*$ and $\psi_*^r$ in place of $\phi_*^l$ and $\psi_*^l$. 
\end{enumerate}
\end{lem}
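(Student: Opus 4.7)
My plan is to prove each relation by constructing an explicit length-$2$ (for part (1)) or length-$3$ (for part (2)) homotopy $H$ whose morphism differential equals the difference of the two sides. The whole argument is driven by two algebraic identities that follow from direct computation on monomials using $T(\scU) = \scV^{-1}$ and $T(\scV)=\scU\scV^2$:
\[
T(\d_\scU a) = \scV^2\, \d_\scV T(a), \qquad T(\d_\scV a) = \scV^{-2}\, \d_\scU T(a), \qquad a\in \bF[\scU,\scV],
\]
together with the Leibniz rule for $\d_\scU,\d_\scV$ and the facts that $\sigma$ commutes with $\scU,\scV$ and that the canonical inclusion $I\colon \bF[\scU,\scV]\to \bF[\scU,\scV,\scV^{-1}]$ commutes with $\d_\scU,\d_\scV$.

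For part (1), I would take $H_*^1 \colon {}_\cK[\ve{i}_0]^\cK \to {}_\cK[\ve{i}_1]^\cK$ with only $H_2^1$ nonzero, given by
\[
H_2^1(f\sigma,i_0)=i_1\otimes \d_\scU(f)\sigma \qquad\text{for } \phi_*\circ s_*^1 \simeq s_*^1\circ \phi_*,
\]
\[
H_2^1(f\tau,i_0)=i_1\otimes \scV^2\,\d_\scV(f)\tau \qquad\text{for } t_*^1\circ \phi_* \simeq \scV^2\psi_*\circ t_*^1,
\]
and the two analogous formulas for $\psi_*\circ s_*^1$ and $t_*^1\circ \psi_*$. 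Since the structure maps of $[\ve{i}_0]$ and $[\ve{i}_1]$ reduce to $\delta_2^1(a,i)=i\otimes a$, the morphism differential collapses to
\[
(\d_{\Mor}H)_3(a_2,a_1,\xs) = H_2^1(a_2,\xs)\cdot a_1 + a_2\cdot H_2^1(a_1,\xs) + H_2^1(a_2 a_1,\xs).
\]
Plugging in the formula for $H_2^1$ and using Leibniz plus the key identity relating $T$ and $\d$, one checks case by case (depending on which of $a_2,a_1$ is the $\sigma$- or $\tau$-term) that this equals the claimed sum $\phi_*\circ s_*^1 + s_*^1\circ \phi_*$ (respectively $t_*^1\circ \phi_* + \scV^2\psi_*\circ t_*^1$, etc.). This is the same style of calculation used in the proofs of Lemmas~\ref{lem:phi^2=0-bimodules} and~\ref{lem:Phi-Psi-commute}.

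For part (2), the same structure applies to ${}_{\cK\otimes \cK}[\ve{m}_0]^\cK$ and ${}_{\cK\otimes \cK}[\ve{m}_1]^\cK$, but the relevant homotopies now live in length $3$ because $\scS_*^1$ and $\scT_*^1$ have their only nontrivial action in $\delta_3^1$. Concretely, for $\scT_*^1\circ \phi_*^l \simeq \scV^2\psi_*^l\circ \scT_*^1$ I would take $H_3^1$ to be nonzero only on inputs of the form $(f_2\hat{\tau}_2,\,f_1\hat{\tau}_1,\,i_0)$, where it differentiates the left tensor factor of $f_2$ with the same pattern as in part (1), scaled by $\scV^2$. The relations with $\phi_*^r,\psi_*^r$ are identical after swapping the roles of the left and right factors, and the $\scS_*^1$-relations follow by replacing $\tau$ by $\sigma$ (so no $T$ appears and no $\scV^{\pm 2}$ factor is needed).

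The main technical obstacle will be part (2): the merge module's $\delta_3^1$ couples the two algebra factors of $\cK\otimes\cK$ via the inductive formula involving $T$, so when one writes out $\d_{\Mor}(H_3^1)$ one has to carefully track how the derivative on the left factor interacts with $\scT$'s transformation $T$ that acts across both factors. Once the correct formula for $H_3^1$ is identified, the verification is a routine (if lengthy) bookkeeping exercise using the same two identities that drive part (1); the Leibniz rule absorbs all the cross terms and the $T$-identity converts the $\d_\scU$ on the output into $\scV^2\d_\scV$, matching the claimed relation.
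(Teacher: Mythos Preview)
Your approach to part (1) is correct and matches the paper's proof exactly: the paper defines $h_2^1(f\sigma,1)=1\otimes \d_\scU(f)\sigma$ and checks that $\d_{\Mor}(h_*^1)=\phi_*\circ s_*^1+s_*^1\circ \phi_*$ via the Leibniz rule, and cites the $T$-identity you wrote down to handle the $\tau$ cases.

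For part (2), the paper does \emph{not} proceed by a single $H_3^1$ as you propose. Instead it introduces an intermediate morphism $k_*^1$ whose only nontrivial action is
\[
k_4^1(\sigma|1,\,a|a',\,1|\sigma,\,i_0)=i_1\otimes a'\sigma\,\d_\scU(a),
\]
i.e.\ the differentiated element sits \emph{between} the two $\sigma$-carrying inputs. The paper then gives one length-$3$ homotopy from $f_*^1:=\scS_*^1\circ\phi_*^l$ to $k_*^1$, namely $h_3^1(a\sigma|a',\,b|b'\sigma,\,i_0)=i_1\otimes aa'b'\sigma\,\d_\scU(b)$, and constructs a second homotopy from $k_*^1$ to $g_*^1:=\phi_*^l\circ\scS_*^1$ by the symmetric formula. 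Each step is then a single Leibniz-rule identity, exactly as in part (1).

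Your single-homotopy approach is not obviously wrong, but it is underspecified: you have not written down what $H_3^1$ does on a general input $(c\hat\sigma_2,\,d\hat\sigma_1,\,i_0)$, and the constraints coming from the various $\mu_2$-terms in $\d_{\Mor}(H)_4^1$ force $H_3^1$ to satisfy conditions on \emph{both} algebra slots simultaneously (since $f_4^1$ differentiates the element appearing \emph{after} the $\sigma$-pair while $g_4^1$ differentiates the element \emph{before} it). Disentangling those constraints into a consistent formula is exactly what the intermediate-map trick sidesteps. If you want to carry out your version, you should expect to arrive at something equivalent to the sum of the paper's two homotopies; the two-step route is shorter to verify.
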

\begin{proof} We begin with two algebraic relations on $\cK$. If $f\in \ve{I}_1\cdot \cK \cdot \ve{I}_1$ and $g\in \ve{I}_0\cdot \cK\cdot \ve{I}_0$ are such that $\tau a=b \tau$ then
 \begin{equation}
 \tau\d_\scU(a)=\scV^2 \d_{\scV}(b) \tau \quad \text{and} \quad\tau  \d_{\scV}(a)=\scV^{-2} \d_{\scU}(b) \tau.
 \end{equation}
 We verify the above equations. If $a=\scU^i\scV^j$, then $b=\phi^\tau(a)=\scU^j \scV^{2j-i}$. Then
 \[
\tau \d_{\scU}(a)= \tau \d_{\scU}(\scU^i \scV^j)=\tau i \scU^{i-1} \scV^j=i\scU^j \scV^{2j-i+1}\tau= \scV^2 \d_{\scV}(b)\tau
 \]
 as claimed. Similarly
 \[
 \tau \d_{\scV}(a) =\tau j \scU^i \scV^{j-1}=j \scU^{j-1}\scV^{2j-2-i} \tau =\scV^{-2} \d_{\scU}(b) \tau.
 \]

 We now consider the equation $\phi_*^1\circ s_*^1\simeq s^1_* \circ \phi_*^1$. We define a morphism $h_*^1\colon {}_{\cK}[\ve{i}_0]^{\cK}\to {}_{\cK}[\ve{i}_1]^{\cK}$ via the formula 
 \[
 h_2^1(f \sigma,1)=1\otimes\d_{\scU}(f)\sigma.
 \]
 One checks easily that $\d_{\Mor}(h_*^1)=\phi_*^1\circ s_*^1+s_*^1\circ \phi_*^1$. The other relations with $t_*^1$ and $\phi_*^1$ and $\psi_*^1$ are proven similarly.
 
 We now consider the relations for $\scS_*^1\circ \phi_*^l\simeq  \phi_*^l\circ \scS_*^1$. Define $f_*^1:=\scS_*^1\circ \phi_*^l$ and note that $f_*^1$ has $f_4^1$ non-trivial, and this map satisfies
  \[
  f_4^1(\sigma|1, 1| \sigma, a|a', i_0)=i_1\otimes \sigma \d_{\scU}(a)a'.
  \] 
 Note that more generally,
 $f_4^1(a \sigma|a', b| b' \sigma, c|c', i_0)=i_1\otimes  a a' b b' \sigma \d_{\scU}(c) c'$, though we omit the extra possible coefficients on the $\sigma$ terms to simplify the notation.
Define $g_*^1:=\phi_*^l\circ \scS_*^1$ and note that $g_*^1$ has only $g_4^1$ non-trivial, and
  \[
  g_4^1(a|a', \sigma|1, 1| \sigma, i_0)=i_1\otimes   \d_{\scU}(a)a' \sigma.
  \]
  There is a third map of interest, $k_*^1$, which has
  \[
    k_4^1(\sigma|1,a|a', 1|  \sigma, i_0)=i_1\otimes   a' \sigma \d_{\scU}(a).
  \]
  A homotopy $h_*^1$ between $f_*^1$ and $k_*^1$ is given by
  \[
  h_3^1(a\sigma |a', b|b'\sigma,i_0)=i_1\otimes a a'  b' \sigma \d_{\scU}(b).
  \]
 A homotopy between $k_*^1$ and $g_*^1$ is constructed similarly. The other relations in the lemma statement are proven by minor modifications of the above argument.
\end{proof}

\begin{lem}
\label{lem:tensor-W-D-lambda}
There is a homotopy equivalence
\[
{}_{\cK|\cK} W_{\a\b, \a}^{\cK}\hatbox {}_{\cK}[\cD_0]_{\bF[U]}\simeq \left({}_{\cK}\cT^{\cK}, {}_{\cK} \cT^{\cK}\right) \hatbox{}_{\cK|\cK} [\bI^{\Supset}]_{\bF[U]}.
\]
 The same holds if we replace $W_{\a\b, \a}$ with $W_{\a\b, \b}$.
\end{lem}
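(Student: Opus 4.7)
The plan is to express both sides as mapping cones from the $(0,0)$-idempotent part to the $(1,1)$-idempotent part, and to identify the two mapping cone maps using the commutation relations of Lemma~\ref{lem:basic-relations-s-t-phi-psi} together with the null-homotopies on the solid torus module established in the proof of Lemma~\ref{lem:tensor-transformer-solid-torus}. I focus on the $W_l$ case; the $W_r$ case then follows from the relation ${}_{\cK\otimes\cK}W_l^{\cK}\hatbox{}_{\cK}\cT^{\cK}\simeq{}_{\cK\otimes\cK}W_r^{\cK}$ stated at the beginning of Section~\ref{sec:pair-of-pants}, combined with the equivalence ${}_{\cK}\cT^{\cK}\hatbox{}_{\cK}[\cD_0]_{\bF[U]}\simeq{}_{\cK}[\cD_0]_{\bF[U]}$ from Lemma~\ref{lem:tensor-transformer-solid-torus}.

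First, using Equation~\eqref{eq:W_l-mapping-cone-description}, I would write the left-hand side as $\Cone(G)$, where
\[
G=\scS_*^1\hatbox\bI+\bigl(\bI+\scV^{-1}(\phi_*^l+\phi_*^r)\circ(\scU\phi_*^l+\scV\psi_*^l)\bigr)\circ(\scT_*^1\hatbox\bI),
\]
viewed as a morphism from ${}_{\cK\otimes\cK}[\ve{m}_0]^{\cK}\hatbox{}_{\cK}[\cD_0]_{\bF[U]}$ to ${}_{\cK\otimes\cK}[\ve{m}_1]^{\cK}\hatbox{}_{\cK}[\cD_0]_{\bF[U]}$.

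Second, using the mapping cone description ${}_{\cK}\cT^{\cK}=\Cone\bigl(s_*^1+(\id+\phi_*\circ\psi_*)\circ t_*^1\bigr)$ from Section~\ref{sec:mapping-cone-bimodules}, the external tensor product $\cT\otimes\cT$ is a two-dimensional hypercube of bimodules over $\cK\otimes\cK$. Boxing with $M_2\hatbox[\cD_0]_{\bF[U]}=\Cone(\scS_*^1\hatbox\bI+\scT_*^1\hatbox\bI)$ and using the compatibility of external tensor product with box tensor product from Lemma~\ref{lem:extension-properties-1}, this produces a three-dimensional hypercube of which only the two diagonal idempotent corners survive the pairing with $M_2$. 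The resulting mapping cone map from $[\ve{m}_0]\hatbox[\cD_0]$ to $[\ve{m}_1]\hatbox[\cD_0]$ decomposes into an $\scS_*^1$-branch and a $\scT_*^1$-branch, each receiving contributions from an identity-identity vertex and three non-identity vertices ($\phi_*\psi_*\otimes\bI$, $\bI\otimes\phi_*\psi_*$, and $\phi_*\psi_*\otimes\phi_*\psi_*$) of $\cT\otimes\cT$.

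Third, I would simplify. On the $\scS_*^1$-branch, Lemma~\ref{lem:basic-relations-s-t-phi-psi} gives $\phi_*^{l/r}\circ\scS_*^1\simeq\scS_*^1\circ\phi_*^{l/r}$ and similarly for $\psi_*^{l/r}$, so all non-identity contributions are compositions involving $\phi_*\circ\psi_*$ which become null-homotopic after tensoring with $[\cD_0]$ by the argument in the proof of Lemma~\ref{lem:tensor-transformer-solid-torus}. On the $\scT_*^1$-branch, the conjugation relations $\scT_*^1\circ\phi_*^{l/r}\simeq\scV^2\psi_*^{l/r}\circ\scT_*^1$ and $\scT_*^1\circ\psi_*^{l/r}\simeq\scV^{-2}\phi_*^{l/r}\circ\scT_*^1$ push the two $\phi_*\circ\psi_*$ factors through $\scT_*^1$. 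Expanding and applying the relations $\phi_*^2\simeq 0$, $\psi_*^2\simeq 0$, and $[\phi_*,\psi_*]\simeq 0$ from Lemmas~\ref{lem:phi^2=0-bimodules} and~\ref{lem:Phi-Psi-commute}, the two-transformer vertex becomes null-homotopic after tensoring with $[\cD_0]$ (by the same four-fold composition argument that establishes the involutivity $\cT\hatbox\cT\simeq{}_{\cK}[\bI]^{\cK}$), and the two one-transformer vertices combine, after an application of the null-homotopy $\scU\phi_*+\scV\psi_*\simeq 0$ on $[\cD_0]$ from the proof of Lemma~\ref{lem:tensor-transformer-solid-torus}, into precisely the correction $\scV^{-1}(\phi_*^l+\phi_*^r)\circ(\scU\phi_*^l+\scV\psi_*^l)$ appearing in $G$.

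The main obstacle will be the detailed bookkeeping in step three: the four vertices of $\cT\otimes\cT$, each contributing several summands after the commutation relations, must be shown to reduce modulo $2$ to precisely the \emph{asymmetric} $W_l$ formula (with $\phi_*^l+\phi_*^r$ on the left and $\scU\phi_*^l+\scV\psi_*^l$ on the right) rather than the $W_r$ formula or some symmetric combination. This asymmetry reflects the ordering convention in the external tensor product and the choice of which $\cK$-factor of $M_2$ is paired with which copy of $\cT$, and verifying that the correct arrangement emerges after all simplifications is the most delicate part of the argument.
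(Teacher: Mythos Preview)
Your strategy—view both sides as mapping cones from the idempotent-$0$ piece to the idempotent-$1$ piece and compare the cone maps—is the paper's strategy, and the ingredients you cite (the commutation relations of Lemma~\ref{lem:basic-relations-s-t-phi-psi}, the $\d_{\scW}$ null-homotopy from the proof of Lemma~\ref{lem:tensor-transformer-solid-torus}, and $\phi_*^2\simeq 0$, $[\phi_*,\psi_*]\simeq 0$) are the same. The difference is in direction and organization, and your route takes an unnecessary detour.

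The paper works entirely from the $W_l$ side. Starting from the cone map
\[
\scS_*^1+\bigl(\id+\scV^{-1}(\phi_*^l+\phi_*^r)(\scU\phi_*^l+\scV\psi_*^l)\bigr)\scT_*^1,
\]
it uses the homotopy $H_{0,1,0}(\xs)=(\scU\d_{\scU}+\scV\d_{\scV})(\xs)$ on $[\ve{md}_1]$, which satisfies $\d_{\Mor}(H)=\scU\phi_*^l+\scU\phi_*^r+\scV\psi_*^l+\scV\psi_*^r$, hence $\scU\phi_*^l+\scV\psi_*^l\simeq\scU\phi_*^r+\scV\psi_*^r$. Substituting this into the summand $\scV^{-1}\phi_*^r(\scU\phi_*^l+\scV\psi_*^l)$ and using $(\phi_*^l)^2\simeq 0\simeq(\phi_*^r)^2$ collapses the correction to $\phi_*^l\psi_*^l+\phi_*^r\psi_*^r$. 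A short check that $\phi_*^l\psi_*^l\phi_*^r\psi_*^r\simeq 0$ (again via the $\d_{\scW}$ trick) then gives the product form $(\id+\phi_*^l\psi_*^l)(\id+\phi_*^r\psi_*^r)\scT_*^1$, which is recognized directly as the cone map of $(\cT\otimes\cT)\hatbox[\bI^{\Supset}]_{\bF[U]}$. No independent computation of the right-hand side is needed, and the asymmetry you flag as the main obstacle never arises: the single substitution above is what symmetrizes the expression.

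Your description of the right-hand side has a confusion worth noting. The $\scS_*^1$-branch of $(\cT\otimes\cT)\hatbox M_2\hatbox\cD_0$ receives \emph{no} non-identity contributions, because the $\delta_4^1$ of $\cT$ acts only on $\tau$-type inputs. There is nothing to cancel there; your commute-then-kill argument on that branch is vacuous. The ``three-dimensional hypercube'' picture is also misleading: the idempotent filtrations on $\cT\otimes\cT$ and on $M_2\hatbox\cD_0$ all encode the same $\ve{I}_0\to\ve{I}_1$ jump, so the box tensor is a single cone from $[\ve{md}_0]$ to $[\ve{md}_1]$, not a higher cube that collapses.
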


\begin{rem} The above statement does not hold if we replace $W_{\a\b,\a}$ with $W_{\a\a,\a}$, $W_{\a\a,\b}$, $W_{\b\b,\a}$ or $W_{\b\b,\b}$. 
\end{rem}

\begin{proof}[Proof of Theorem~\ref{lem:tensor-W-D-lambda}] The argument is similar to the proof of Lemma~\ref{lem:tensor-transformer-solid-torus}. The key is to observe that we may define a homotopy $\d_{\scW}=\scU \d_{\scU}+\scV \d_{\scV}$, and that  $\d_{\scW}$ commutes with the action of $U=\scU\scV$. 

In our present context, we want to view both modules in the statement as $AA$-bimodule mapping cones. For $\veps\in \{0,1\}$, write
\[
{}_{\cK|\cK}[\ve{md}_{\veps}]_{\bF[U]}:=(\ve{I}_{\veps}\otimes \ve{I}_{\veps})\cdot {}_{\cK|\cK} [\bI^{\Supset}]_{\bF[U]}.
\]
Both of the bimodules in the statement may be viewed as a mapping cone of an $AA$-bimodule morphism from $[\ve{md}_{0}]$ to $[\ve{md}_1]$. 

Similar to Equation~\eqref{eq:W_l-mapping-cone-description}, we may view $W_{\a\b, \a}\hatbox \cD_0$ as the mapping cone of
\begin{equation}
\scS_*+(\id +\scV^{-1}(\phi_*^l+\phi_*^r) (\scU \phi_*^l+\scV \psi_*^l))\circ \scT_*.
\label{eq:mapping-cone-W_l-D_0}
\end{equation}
We observe that the morphism $H_{0,1,0}\colon [\ve{md}_1]\to [\ve{md}_1]$ given by $H_{0,1,0}(\xs)=\d_{\scW}(\xs)$ satisfies
\[
\d_{\Mor}(H_{0,1,0})=\scU\phi_*^l+\scU \phi^r_*+\scV\psi_*^l+\scV \psi_*^r.
\]
In particular, we obtain that
\[
\begin{split}
&\id+\scV^{-1}(\phi_*^l+\phi_*^r)(\scU \phi_*^l+\scV \psi_*^l)
\\
=&\id+\scV^{-1}\phi_*^l(\scU \phi_*^l+\scV \psi_*^l)+ \scV^{-1}\phi^r_*(\scU \phi_*^l+\scV \psi_*^l)\\
\simeq& \id+ \scV^{-1}\phi_*^l(\scU \phi_*^l+\scV \psi_*^l)+\scV^{-1}\phi^r_*(\scU \phi^r_*+\scV \psi^r_*)\\
\simeq& \id +\phi^l_*\psi^l_*+\phi_*^r \psi_*^r\\
\simeq& (\id+\phi_*^l \psi_*^l)(\id+\phi_*^r\psi^r_*).
\end{split}
\]
In the last equivalence, we are using the fact that
\[
\phi_*^l \psi_*^l\phi^r_*\psi^r_*\simeq 0
\]
since $\phi_*^l \psi_*^l\simeq \scV^{-1} \phi_*^l(\scU \phi_*^l +\scV \psi_*^l)$ (and similarly for $\phi^r_*\psi_*^r$) so
\[
\phi^l_* \psi^l_* \phi^r_*\psi^r_*\simeq \scV^{-2} \phi_*^l \phi_*^r(\scU \phi_*^l+\scV \psi_*^l)(\scU \phi^r_*+\scV\psi^r_*)\simeq \scV^{-2} \phi_*^l \phi_*^r(\scU \phi_*^l+\scV \psi_*^l)^2\simeq 0.
\]
Hence, the map in Equation~\eqref{eq:mapping-cone-W_l-D_0} is homotopy equivalent to
\[
\scS_*+(\id+\phi_*^l \psi_*^l)(\id+\phi_*^r\psi^r_*) \scT_*.
\]
Arguing similarly to the proof of Lemma~\ref{lem:basic-relations-s-t-phi-psi}, one sees that the above map is homotopic to the map whose cone is
\[
\left({}_{\cK} \cT^{\cK}, {}_{\cK} \cT^{\cK}\right)\hatbox {}_{\cK|\cK}[ \bI^{\Supset}]_{\bF[U]},
\]
completing the proof.
\end{proof}

\begin{lem}
\label{lem:Transformers-and-M}
 There is a homotopy equivalence
\[
\left({}_{\cK}\cT^{\cK}, {}_{\cK} \cT^{\cK}\right) \hatbox {}_{\cK|\cK} M^{\cK} \hatbox {}_{\cK} \cT^{\cK}\simeq {}_{\cK|\cK} M^{\cK}.
\]
\end{lem}

\begin{proof} The proof is similar to the proof of Lemma~\ref{lem:tensor-W-D-lambda}. We observe first by computing algebraically using the techniques of Lemma~\ref{lem:basic-relations-s-t-phi-psi} that $({}_{\cK}\cT^{\cK}, {}_{\cK} \cT^{\cK}) \hatbox {}_{\cK|\cK} M^{\cK} \hatbox {}_{\cK} \cT^{\cK}$ is homotopy equivalent to the mapping cone
\begin{equation}
\begin{tikzcd}[column sep=10cm]
{[\ve{m}_0]}\ar[r, "\scS_*^1+(\id+(\phi_*^l+\phi^r_*)(\psi^l_*+\psi_*^r))(\id+\phi_*^l\psi_*^l)(\id+\phi^r_*\psi_*^r)\scT_*^1"] &{[\ve{m}_1]}.
\end{tikzcd}
\label{eq:mapping-cone-tensor-products-with-transformers}
\end{equation}
One observes by direct computation that
\[
(\id+(\phi_*^l+\phi_*^r)(\psi_*^l+\psi_*^r))(\id+\phi_*^l\psi_*^l)(\id+\phi_*^r\psi_*^r)\simeq (\id+\phi_*^l\psi_*^r)(\id+\psi_*^l\phi_*^r).
\]

We define the mapping cone complexes
\[
{}_{\cK|\cK}X:=\begin{tikzcd}[column sep=6cm]
{[\ve{m}_0]}\ar[r, "\scS_*^1+(\id+\phi_*^l\psi^r_*)(\id+\psi_*^l\phi_*^r)\scT_*^1"] &{[\ve{m}_1]},
\end{tikzcd}
\]
 and
\[
{}_{\cK|\cK}Y:=\begin{tikzcd}[column sep=6cm]
{[\ve{m}_0]}\ar[r, "\scS_*^1+(\id+\psi_*^l\phi_*^r)\scT_*^1(\id+\phi_*^l\psi^r_*)"]&{[\ve{m}_1]}.
\end{tikzcd}
\]
We now claim that ${}_{\cK|\cK}X$ and ${}_{\cK|\cK} Y$ are homotopy equivalent. We construct a homotopy equivalence $F$ by way of the diagram
\begin{equation}
\begin{tikzcd} {}_{\cK|\cK} X^{\cK}\ar[d, "F"]\\
{}_{\cK|\cK} Y^{\cK}
\end{tikzcd}=
\begin{tikzcd}[column sep=8cm,row sep=1cm,labels=description]
{[\ve{m}_0]}
\ar[d, "\id+\phi_*^l\psi^r_*"]
\ar[dr,dashed]
\ar[r, "\scS_*^1+(\id+\phi_*^l\psi_*^r)(\id+\psi_*^l\phi_*^r)\scT_*^1"] &{[\ve{m}_1]}
\ar[d, "\id+\phi_*^l\psi_*^r"]
\\
{[\ve{m}_0]}\ar[r, "\scS_*^1+(\id+\psi_*^l\phi_*^r)\scT_*^1(\id+\phi_*^l\psi_*^r)"] &{[\ve{m}_1]}
\end{tikzcd}.
\label{eq:homotopy-equivalence-phipsi}
\end{equation}
The length 2 map above is a morphism which realizes the chain homotopies
\[
(\id+\phi_*^l\psi^r_*)^2(\id+ \psi_*^l\phi^r_*) \scT_*^1\simeq (\id+ \psi_*^l\phi_*^r) \scT_*^1 (\id+\phi_*^l \psi_*^r)^2, \quad \text{and}
\]
\[
[\id+\phi_*^l\psi_*^r, \scS_*^1]\simeq 0.
\]
Note that by a similar construction we can construct a map in the opposite direction
\[
\begin{tikzcd} {}_{\cK|\cK} Y^{\cK}\ar[d, "G"]\\
{}_{\cK|\cK} X^{\cK}
\end{tikzcd}=
\begin{tikzcd}[column sep=8cm,row sep=1cm,labels=description]
{[\ve{m}_0]}
\ar[d, "\id+\phi_*^l\psi^r_*"]
\ar[dr,dashed]
\ar[r, "\scS_*^1+(\id+\psi_*^l\phi_*^r)\scT_*^1(\id+\phi_*^l\psi_*^r)"] &{[\ve{m}_1]}
\ar[d, "\id+\phi_*^l\psi_*^r"]
\\
{[\ve{m}_0]}\ar[r, "\scS_*^1+(\id+\phi_*^l\psi_*^r)(\id+\psi_*^l\phi_*^r)\scT_*^1"] &{[\ve{m}_1]}
\end{tikzcd}.
\]
Note that since $(\id+\phi_*^l \psi_*^r)^2\simeq \id$ as a map from $[\ve{m}_i]$ to $[\ve{m}_i]$, it follows that $G\circ F$ is homotopic to a map $J\colon {}_{\cK|\cK} X^{\cK}\to {}_{\cK|\cK} X^{\cK}$ which takes the form
\[
\begin{tikzcd} {}_{\cK|\cK} X^{\cK}\ar[d, "J"]\\
{}_{\cK|\cK} X^{\cK}
\end{tikzcd}=
\begin{tikzcd}[column sep=8cm,row sep=1cm,labels=description]
{[\ve{m}_0]}
\ar[d, "\id"]
\ar[dr,dashed]
\ar[r, "\scS_*^1+(\id+\phi_*^l\psi_*^r)(\id+\psi_*^l\phi_*^r)\scT_*^1"] &{[\ve{m}_1]}
\ar[d, "\id"]
\\
{[\ve{m}_0]}\ar[r, "\scS_*^1+(\id+\phi_*^l\psi_*^r)(\id+\psi_*^l\phi_*^r)\scT_*^1"] &{[\ve{m}_1]}
\end{tikzcd}.
\]
Note that $J^2=\id$. Therefore, the map $F$ has a left homotopy inverse $J\circ G$ since $J\circ G\circ F\simeq \id$. One may modify the construction above to construct a right inverse to $F$ by similar reasoning. By basic algebra, the left and right inverses of $F$ must be homotopic.  Hence $F$ is a homotopy equivalence.

Finally, we use Lemma~\ref{lem:basic-relations-s-t-phi-psi} to see that
\[
(\id+\psi_*^l\phi_*^r)\scT_*^1(\id+\phi_*^l\psi^r_*)\simeq (\id+\psi_*^l\phi_*^r)^2\scT_*^1\simeq \scT_*^1.
\]
It follows that the $({}_{\cK} \cT^{\cK}, {}_{\cK} \cT^{\cK})\hatbox {}_{\cK|\cK} M^{\cK}\boxtimes {}_{\cK}\cT^{\cK}$ (shown in Equation~\eqref{eq:mapping-cone-tensor-products-with-transformers}) is homotopy equivalent to the mapping cone
\[
\Cone(\scS_*^1+\scT_*^1\colon {}_{\cK|\cK}[\ve{m}_0]^{\cK}\to {}_{\cK|\cK}[\ve{m}_1]^{\cK})
\]
which is the definition of ${}_{\cK|\cK} M^{\cK}$.
\end{proof}

Proposition~\ref{prop:W-I-Supset-relation} is now proven as follows. Lemma~\ref{lem:tensor-W-D-lambda} implies that
\[
{}_{\cK|\cK} W_{\a\b, \a}^{\cK}\hatbox {}_{\cK} [\cD_0]_{\bF[U]}\simeq ({}_{\cK} \cT^{\cK}, {}_{\cK} \cT^{\cK}) \hatbox {}_{\cK|\cK} M^{\cK} \hatbox {}_{\cK} [\cD_0]_{\bF[U]}.
\]
Lemmas~\ref{lem:transformer-involutive} and~\ref{lem:Transformers-and-M} imply that
\[
({}_{\cK} \cT^{\cK}, {}_{\cK} \cT^{\cK}) \hatbox {}_{\cK|\cK} M^{\cK} \hatbox {}_{\cK} [\cD_0]_{\bF[U]}\simeq {}_{\cK|\cK} M^{\cK} \hatbox{}_{\cK} \cT^{\cK} \hatbox {}_{\cK} [\cD_0]_{\bF[U]}.
\]
Lemma~\ref{lem:tensor-W-D-lambda} implies that the above is homotopy equivalent to
\[
{}_{\cK|\cK} M^{\cK}  \hatbox {}_{\cK} [\cD_0]_{\bF[U]},
\]
which is the definition of ${}_{\cK|\cK} [\bI^{\Supset}]_{\bF[U]}.$

\subsection{Relation with the bordered invariant of $S^1\times P$}

\label{sec:bordered-pair-of-pants}

In this section, we sketch why the bimodules ${}_{\cK|\cK}W_{\a\b,\b}^{\cK}$ and ${}_{\cK|\cK} W_{\a\b,\a}^\cK$ can naturally be interpreted as the bordered invariants for $S^1\times P$, where where $P$ denotes a 2-dimensional disk with 2 subdisks removed. In fact, this is a consequence of the connected sum formula and a computation from \cite{CZZSatellites} of the invariant for the identity mapping cylinder over the 2-torus.

\begin{prop}\label{prop:W-module-interpretation} The bimodules ${}_{\cK|\cK}W_{\a\b,\b}^{\cK}$ and ${}_{\cK|\cK} W_{\a\b,\a}^\cK$ are the bordered invariants for $S^1\times P$, for various choices of arc systems on the boundary components. 
\end{prop}
\begin{rem} The arc systems on the boundary are compatible with the following gluing convention:
\begin{enumerate}
\item We glue alpha bordered boundary components to alpha bordered boundary components, and vice-versa.
\item ${}_{\cK|\cK} \bI^{\Supset}$ has one alpha-bordered boundary and one beta-bordered boundary component.
\end{enumerate}
Since we switch from type-$D$ to type-$A$ by tensoring with ${}_{\cK|\cK} \bI^{\Supset}$, this convention means that a type-$A$ component of $\cK$ is ``alpha bordered'' if the arc system is beta-parallel, whereas a type-$D$ component of $\cK$ is ``alpha bordered'' if the corresponding arc system is alpha-parallel.
\end{rem}

\begin{proof}[Proof of Proposition~\ref{prop:W-module-interpretation}]
A surgery description of $S^1\times P$ is given in Figure~\ref{fig:30}. We can view the five component link appearing in this surgery description as being obtained as the connected sum of two copies of a three component link $J$, as shown in Figure~\ref{fig:30}. (The link $J$ is a connected sum of two Hopf links).  Write $\scA_{\a,\b}$ for an arc system on $J$ where the top component is alpha-parallel, and the bottom component is beta-parallel. The middle component can have either alpha or beta parallel arc
.
Write ${}_{\cK}\cX_0(J,\scA_{\a,\b})^{\cK}$ for the $DA$-bimodule where the left action is top-most component of $J$, and the right action is for the bottom-most component. The middle component is surgered out. (I.e. we take the type-$D$ module for the three component link, tensor ${}_{\cK} \cD_0$ to the middle component, and tensor ${}_{\cK|\cK} \bI^{\Supset}$ to the top component).  It is proven in \cite{CZZSatellites}*{Theorem~1.7} that
\begin{equation}
{}_{\cK}\cX_0(J,\scA_{\a,\b})^{\cK}\simeq {}_{\cK}\cX_0(J,\scA_{\b,\a})^{\cK}\simeq {}_{\cK}[\bI]^{\cK}.\label{eq:identity-cobordism}
\end{equation}
(This computation is obtained by tensoring two copies of the Hopf link complex which we compute in Section~\ref{sec:Hopf-links-comp} of the present paper).
It follows from our connected sum formula that if ${}_{\cK|\cK}\cX(L)^{\cK}$ denotes the surgery module for the 5 component link describing $S^1\times P$, then
\[
{}_{\cK|\cK} \cX(L)^{\cK}\simeq \left({}_{\cK}\cX_0(J,\scA_{\a,\b})^{\cK},{}_{\cK}\cX_0(J,\scA_{\b,\a})^{\cK}\right)\hatbox {}_{\cK|\cK} W_{\b\a, \a}^{\cK}.
\]
By Equation~\eqref{eq:identity-cobordism}, the above is homotopy equivalent to ${}_{\cK|\cK} W_{\b\a, \a}^{\cK},$ completing the proof.
\end{proof}

\begin{figure}[ht]
\centering
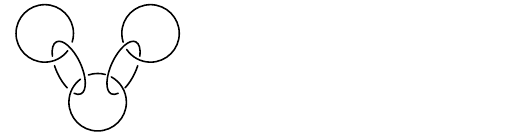
\caption{A Kirby diagram for $S^1\times P$. The components marked $0$ are surgered, while a tubular neighborhood of the unmarked components is removed. On the right, we describe the 5 component link as a connected sum of two copies of a 3-component link $J$.}
\label{fig:30}
\end{figure}

\section{The link surgery complex of the Hopf link}
\label{sec:Hopf-links-comp}

In this section, we compute the link surgery complex of the Hopf link. Our main result is Proposition~\ref{prop:Hopf-link-large-model}. 

\subsection{Computation of the surgery complex}
\label{sec:descent-maps-Hopf}

In this section, we compute the surgery hypercube for the Hopf link. We use the following model of the Hopf link:
 \begin{equation}
\cCFL(H)\iso \begin{tikzcd}[labels=description,row sep=1cm, column sep=1cm] \ve{a}& \ve{b}\ar[l, "\scU_2"] \ar[d, "\scU_1"]\\
\ve{c}\ar[r, "\scV_2"] \ar[u, "\scV_1"]& \ve{d}.
\end{tikzcd}
\label{eq:Hopf-def}
\end{equation}
This complex is realized by the diagram in Figure~\ref{fig:11}. This complex is for the \emph{negative} Hopf link, i.e., the two components have linking number $-1$. The complex of the positive Hopf link has a similar model. The techniques of this section may also be applied, essentially verbatim, to the positive Hopf link, though we focus on the negative one for concreteness.

The Alexander gradings $A=(A_1,A_2)$ and Maslov gradings $\gr=(\gr_{\ws},\gr_{\zs})$ of the generators are
\begin{equation}
\begin{split} A(\ve{a})&=(-\tfrac{1}{2}, \tfrac{1}{2}),\\
A(\ve{b})&=(-\tfrac{1}{2}, -\tfrac{1}{2}),\\
A(\ve{c})&=(\tfrac{1}{2}, \tfrac{1}{2}),\\
A(\ve{d})&= (\tfrac{1}{2}, -\tfrac{1}{2}),
\end{split}
\qquad \qquad\qquad 
\begin{split}
\gr(\ve{a})&=(0,0),\\
\gr(\ve{b})&=(-1,1),\\
\gr(\ve{c})&=(1,-1),\\
\gr(\ve{d})&=(0,0).
\end{split}
\label{eq:Alexander-grading-Hopf}
\end{equation}
Note that we are using the conventions where the top degree generator of $\HF^-(S^3,\ws)$ is in Maslov grading 0. See Remark~\ref{rem:grading-conventions}.

The main result of this section is the following result:
 \begin{prop}
 \label{prop:Hopf-link-large-model}
  Suppose that $\Lambda=(\lambda_1,\lambda_2)$ is an integral framing on the negative Hopf link $H$. Let $\scA_{\a\a}$ be a system of arcs for the Hopf link where both arcs are alpha-parallel. The link surgery hypercube $\cC_{\Lambda}(H,\scA_{\a\a})$ has the following maps (up to overall homotopy equivalence).
 \begin{enumerate}
  \item The maps $\Phi^{L_1}_{H,L_2}$ and $\Phi^{L_1}_{L_1,\emptyset}$ are the canonical inclusions of localization. The maps $\Phi^{-L_1}_{H,L_2}$ and $\Phi^{-L_1}_{L_1,\emptyset}$ are both given by the following formula:
  \[
 \Phi^{-L_1} =\begin{tikzcd}[row sep=0cm]
 \ve{a} \ar[r,mapsto]& \scV_1^{\lambda_1-1}\ve{d}\\
 \ve{b} \ar[r, mapsto]& 0\\
 \ve{c} \ar[r, mapsto]& \scV_1^{\lambda_1+1}\ve{b}+\scV_1^{\lambda_1}\scU_2\ve{c}\\
 \ve{d} \ar[r,mapsto]& \scV_1^{\lambda_1}\scU_2\ve{d}.
 \end{tikzcd}
  \]
 \item The maps $\Phi^{L_2}_{H,L_1}$ and $\Phi^{L_2}_{L_2,\emptyset}$ are the canonical inclusions of localization. The maps $\Phi^{-L_2}_{H,L_1}$ and $\Phi^{-L_2}_{L_2,\emptyset}$ are both given by the following formula:
\[
\Phi^{-L_2}=\begin{tikzcd}[row sep=0cm]
\ve{a}\ar[r, mapsto] &\scU_1\scV_2^{\lambda_2}\ve{a}\\
 \ve{b}\ar[r, mapsto] &0\\
 \ve{c}\ar[r, mapsto] &\scV_2^{\lambda_2+1}\ve{b}+\scU_1\scV_2^{\lambda_2} \ve{c}\\
 \ve{d}\ar[r, mapsto]&\scV_2^{\lambda_2-1}\ve{a}.
 \end{tikzcd}
\]
 \item The length 2 map $\Phi^{-H}_{H,\emptyset}$ is given by the following formula:
 \[
\Phi^{-H}_{H,\emptyset}=\begin{tikzcd}[row sep=0cm]
 \ve{a}\ar[r, mapsto] &\scV_1^{\lambda_1-2}\scV_2^{\lambda_2-1}\ve{c}\\
 \ve{b}\ar[r, mapsto] &0\\
  \ve{c}\ar[r, mapsto] &\scV_1^{\lambda_1-1} \scV_2^{\lambda_2}\ve{d}\\
 \ve{d}\ar[r, mapsto]&\scV_1^{\lambda_1-1} \scV^{\lambda_2-2}_2\ve{c}.
 \end{tikzcd}
 \]
 The length 2 maps for other orientations of the Hopf link vanish.
 \end{enumerate}
 \end{prop}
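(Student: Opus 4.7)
The plan is to compute each of the hypercube maps by working on an explicit meridional basic system of Heegaard diagrams for $(S^3,H)$ with both arcs alpha-parallel, which exists by Lemma \ref{lem:admissibility-meridional-system}. Concretely, I would use a genus-$0$ Heegaard diagram on $S^2$ whose four intersection points are the generators $\ve{a},\ve{b},\ve{c},\ve{d}$ of Equation \eqref{eq:Hopf-def}, and realize $\lambda_1,\lambda_2$ as small alpha-side pushoffs of the two link components. With this diagram fixed, I would translate the abstract description of each surgery hypercube map (as reviewed in Section \ref{sec:basic-systems} and Lemma \ref{lem:module-structure-surgery-hypercube-morphisms}) into an explicit formula on generators.

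First I would handle the length-$1$ maps. The positively oriented maps $\Phi^{L_i}$ are the canonical localization inclusions by definition, giving the stated formulas immediately. For the negatively oriented maps $\Phi^{-L_i}$, I would use the factorization
\[
\Phi^{-L_i} = \psi_i^{-1} \circ (\Psi_{z_i \to w_i} \otimes t^{\lambda_i}) \circ \phi_i \circ \cI_i
\]
from the proof of Lemma \ref{lem:module-structure-surgery-hypercube-morphisms}. Since $\lambda_i$ is alpha-parallel, the basepoint-moving map $\Psi_{z_i \to w_i}$ is realized by handlesliding a single special alpha curve across the bigon containing $z_i$, and is computable directly on the four generators. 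Composing the four factors, tracking Alexander grading shifts via Lemma \ref{lem:grading-changes}, and keeping careful bookkeeping of powers of $\scV_i$ (the framing shift $t^{\lambda_i}$ translates, after $\psi_i^{-1}$, to a power of $\scV_i$), yields the listed formulas. The $[T_i]$-equivariance from Lemma \ref{lem:module-structure-surgery-hypercube-morphisms} provides a useful consistency check, and in fact essentially determines the answer from its value on a single generator.

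For the length-$2$ map $\Phi^{-H}$, the cleanest approach is to use the hypercube relation $D\circ D=0$. Once all length-$1$ maps are in hand, the hypercube compatibility condition on each pair $(\xs,\ys)$ of generators over the diagonal of $\bE_2$ pins down $\Phi^{-H}$ up to a chain-homotopy ambiguity, and the claimed formulas can be verified by applying $\Phi^{-L_1}\Phi^{-L_2}+\Phi^{-L_2}\Phi^{-L_1}+\text{(internal differentials)}$ to each of $\ve{a},\ve{b},\ve{c},\ve{d}$. The main bookkeeping obstacle is tracking the interaction of the $[T_1]$- and $[T_2]$-equivariances with the framing powers $\scV_1^{\lambda_1}$ and $\scV_2^{\lambda_2}$, since a power of $\scV_i$ introduced by one length-$1$ map transforms nontrivially when acted on by the other.

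Finally, I would show that length-$2$ maps vanish for the remaining three orientations of $H$. For the fully positively oriented $H$ the map would be a fully $\bF[\scU_1,\scU_2,\scV_1,\scV_2]$-equivariant morphism from $\cCFL(H)$ to $\scV_1^{-1}\scV_2^{-1}\cCFL(H)$ of the correct bigrading, and an easy check of equivariance and grading shows no such non-trivial morphism exists (any candidate is a boundary). For the two mixed orientations, the same equivariance-and-grading argument applies, using the $T_i$-equivariance on the component that reverses orientation. This completes the identification of all non-vanishing hypercube maps and establishes the proposition.
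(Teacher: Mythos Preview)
Your approach to the length-$1$ maps is essentially in line with the paper's, though the paper argues more indirectly: it computes the maps on the quotient level (setting the relevant variable to $1$) and shows that grading constraints force a unique grading-preserving chain map between the quotient complexes, then fills in the $\scV_i$ powers by Alexander grading. Your direct computation via the factorization of $\Phi^{-L_i}$ would also work.

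However, your treatment of the length-$2$ map $\Phi^{-H}$ has a genuine gap. The hypercube relation $D\circ D=0$ does \emph{not} determine $\Phi^{-H}$ up to homotopy. As the paper shows explicitly (see Equation~\eqref{eq:two-homotopies-H1H2} and the surrounding discussion), there are exactly two graded null-homotopies $H_1$ and $H_2$ of the composite $\tilde\Phi^{-L_1}\tilde\Phi^{-L_2}+\tilde\Phi^{-L_2}\tilde\Phi^{-L_1}$, and they differ by the chain map $(\ve{c}\mapsto\ve{d})$, which is not null-homotopic over $\bF[U_1,U_2]$. Both satisfy the hypercube relations, so your verification that the claimed formula is consistent would not rule out $H_1$. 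The paper resolves this ambiguity by an entirely separate argument (Lemma~\ref{lem:length-2-map-Hopf}): it introduces the hypercube homology action $\cA_p$ for a point $p$ on an alpha curve, builds the $3$-dimensional cube in Equation~\eqref{eq:hypercube-A_p}, and shows that only $H_2$ makes the length-$3$ relation of that larger cube satisfiable. This extra structure is the missing ingredient in your proposal.

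A smaller point: the vanishing of the length-$2$ maps for orientations other than $-H$ is not a grading computation specific to the Hopf link, but a general feature of basic systems (only sublinks with all components oriented oppositely to $L$ have nontrivial length-$>1$ maps; cf.\ \cite{MOIntegerSurgery}*{Section~8.7}). Your grading argument might also succeed, but it is not the paper's reason.
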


We now compute the link surgery hypercube maps for the Hopf link. For the purposes of computation, it is easier to compute the induced maps after quotienting variables, instead of after localizing. Since the link surgery hypercube maps are defined by computing holomorphic polygons on diagrams where we have deleted certain basepoints, this is easily seen to contain equivalent information as the ordinary surgery hypercube. Compare Lemma~\ref{lem:module-structure-surgery-hypercube-groups}.

As an example, if $L_1\subset H$, then the descent map from surgery hypercube takes the form
\[
\Phi^{-L_1}_{H,H\setminus L_1}\colon \cCFL(H)\to \scV^{-1}_1 \cdot \cCFL(H).
\]
The map $\Phi^{-L_1}_{H,H\setminus L_1}$ is completely determined by the choice of framing on $H$, and the induced homotopy equivalence on the level of quotients
\[
\tilde{\Phi}^{-L_1}_{H,H\setminus  L_1}\colon \cCFL(H)/(\scU_1-1)\to \cCFL(H)/(\scV_1-1).
\]
In this section, we write $\tilde{\Phi}^{\vec{N}}$ for the descent map on the level of quotients of $\cCFL(H)$.

\begin{lem}\label{lem:Hopf-descent}
Write $L_1$ and $L_2$ for the two components of the negative Hopf link. Up to overall equivalence, the maps for the Hopf link  are as follows:
\begin{enumerate}
\item The map 
\[
\tilde{\Phi}_{H,L_2}^{-L_1}\colon \cCFL(H)/(\scU_1-1)\to \cCFL(H)/(\scV_1-1)
\]
 is given by the formula
\[
\tilde{\Phi}_{H,L_2}^{-L_1}=\begin{tikzcd}[row sep=0cm]
\ve{a} \ar[r,mapsto]& \ve{d}\\
\ve{b} \ar[r, mapsto]& 0\\
\ve{c} \ar[r, mapsto]& \ve{b}+\scU_2 \ve{c}\\
\ve{d} \ar[r,mapsto]& \scU_2 \ve{d}.
\end{tikzcd}
\]
extended $\bF[\scU_2,\scV_2]$-equivariantly, and sending $\scV_1$ to $\scU_1$.
\item  The map 
\[
\tilde{\Phi}^{-L_1}_{L_1,\emptyset}\colon \cCFL(H)/(\scU_1-1,\scV_2-1)\to \cCFL(H)/(\scV_1-1,\scV_2-1)
\]
 is given by the same formula as $\tilde{\Phi}_{H,L_2}^{-L_1}$, except with $\scV_2$ set to 1.
 \item The map
 \[
 \tilde{\Phi}^{-L_2}_{H,L_1}\colon \cCFL(H)/(\scU_2-1)\to \cCFL(H)/(\scV_2-1)
 \]
 is given by the formula
 \[
\tilde{\Phi}_{H,L_1}^{-L_2}=
 \begin{tikzcd}[row sep=0cm]
 \ve{a}\ar[r, mapsto] &\scU_1 \ve{a}\\
 \ve{b}\ar[r, mapsto] &0\\
 \ve{c}\ar[r, mapsto] &\ve{b}+\scU_1 \ve{c}\\
 \ve{d}\ar[r, mapsto]&\ve{a}
 \end{tikzcd},
 \]
 extended equivariantly over $\bF[\scU_1,\scV_1]$, and sending $\scV_2$ to $\scU_2$.
 \item The map
 \[
 \tilde{\Phi}^{-L_2}_{L_2,\emptyset}\colon \cCFL(H)/(\scV_1-1, \scU_2-1)\to \cCFL(H)/(\scV_1-1,\scV_2-1)
 \]
 is given by the same formula as $\tilde{\Phi}_{H,L_1}^{-L_2}$, except with $\scV_1$ set to 1.
 \item The length 2 map  
 \[
 \tilde{\Phi}_{H,\emptyset}^{-H}\colon \cCFL(H)/(\scU_1-1,\scU_2-1)\to \cCFL(H)/(\scV_1-1,\scV_2-1)
 \]
 is given by the formula
 \[
\tilde{\Phi}_{H,\emptyset}^{-H}=\begin{tikzcd}[row sep=0cm]
 \ve{a}\ar[r, mapsto] &\ve{c}\\
 \ve{b}\ar[r, mapsto] &0\\
 \ve{c}\ar[r, mapsto] &\ve{d}\\
 \ve{d}\ar[r, mapsto]&\ve{c}
\end{tikzcd},
 \]
 extended to send $\scV_1$ to $\scU_1$ and $\scV_2$ to $\scU_2$.
 \item The length 1 maps with positively oriented components, e.g. $\tilde{\Phi}^{L_1}_{H,L_2}$, are the canonical quotient maps. The length 2 maps vanish for any orientation other than $-H$.
\end{enumerate}
\end{lem}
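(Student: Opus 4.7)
\medskip

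The plan is to perform the computation on a small genus-zero meridional basic system of Heegaard diagrams for the Hopf link, built from Figure~\ref{fig:11}, where both arcs of $\scA_{\a\a}$ are realized as alpha-parallel arcs on the Heegaard surface. By Lemma~\ref{lem:admissibility-meridional-system}, such an algebraically rigid basic system exists. In this setup, each axis direction of the hyperbox for $\vec{M}\subseteq -H$ is constructed by first moving the basepoint $z_i$ to $w_i$ along the alpha-parallel arc $\lambda_i$ (a tautological diffeomorphism map) and then returning the special alpha curve back to its original position via a handleslide triangle map, as in the model of Ozsv\'{a}th--Szab\'{o} that underlies Lemma~\ref{lem:module-structure-surgery-hypercube-morphisms}. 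The positively oriented length-$1$ maps $\Phi^{L_i}$ are, by construction, the canonical localization/quotient maps, so the statements about them are immediate.

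For the negatively oriented length-$1$ maps $\Phi^{-L_i}$, I would use the explicit factorization $\Phi^{-L_i} \simeq \psi_i^{-1}\circ (\Psi_{z_i\to w_i}\otimes t^{\lambda_i})\circ \phi_i\circ \cI_i$ from the proof of Lemma~\ref{lem:module-structure-surgery-hypercube-morphisms}, and compute the basepoint-moving map $\Psi_{z_i\to w_i}$ on the quotient $\cCFL(H)/(\scU_i-1)$ using the genus-zero diagram. Concretely, after deleting $w_i$ and replacing it with $z_i$, one has a triangle map on the new Heegaard triple (with the special alpha curve handleslid over the relevant basepoint); the four intersection points $\ve{a},\ve{b},\ve{c},\ve{d}$ admit a finite count of small triangles that can be read directly off Figure~\ref{fig:11}. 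Tracking the $\scU_2,\scV_2$-equivariance and the grading shifts predicted by Lemma~\ref{lem:grading-changes} (in particular parts~(\ref{gradings-2})--(\ref{gradings-5})), one recovers the stated formulas after inserting the framing weight $t_{\vec{N}}^\Lambda$. The analogous computation for $\Phi^{-L_2}$ is symmetric, with the roles of the two components interchanged.

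The length-$2$ map $\tilde{\Phi}^{-H}_{H,\emptyset}$ is where the genuine work lies and where I expect the main obstacle. There are two natural ways to obtain it. The first is a direct count of Maslov index $-1$ rectangles on the compressed meridional hyperbox, where the ``diagonal'' polygon map for reversing both link components contributes. The second, which I would carry out in parallel as a consistency check, is to use the hypercube relation that the structure maps $\Phi^{\vec{N}}$ satisfy: since the already-computed length-$1$ maps $\Phi^{\pm L_i}$ and the internal differentials must square to zero, the length-$2$ diagonal is determined up to chain homotopy by requiring this cancellation on each of the four generators. Combining this with the Alexander and Maslov grading constraints from Lemma~\ref{lem:grading-changes} (which fix the homogeneous bidegree of each component of $\tilde{\Phi}^{-H}$ and hence restrict the allowable $\scU_i,\scV_i$-monomials) pins down the formula stated in the lemma up to overall equivalence. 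The hardest bookkeeping is keeping track of the powers of $\scV_i$ arising from the framing weights $t_{\vec{N}}^{\Lambda}$ and the shift $\Lambda_{L,\vec{N}}$, but once the length-$1$ formulas are in hand this reduces to verifying a small finite collection of algebraic identities in $\bF[\scU_1,\scV_1,\scU_2,\scV_2]$.

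Finally, the vanishing of the length-$2$ maps for orientations other than $-H$ follows from grading considerations. For any oriented sublink $\vec{N}$ of $H$ with at least one component oriented consistently with $H$, the relevant holomorphic polygon map would have to change an Alexander bigrading in a way inconsistent with the constraints provided by Lemma~\ref{lem:grading-changes} on the quotient complexes $\cCFL(H)/(\cdots)$; equivalently, the associated length-$2$ map can be realized, via compression of the basic system and homological perturbation, as an essentially tensorial map in which the $+K_i$ factors are inclusions and therefore contribute trivially to the diagonal. This reduces the verification to a direct grading check that I would carry out last, completing the proof.
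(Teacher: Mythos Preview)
Your approach to the length-1 maps is different from the paper's but would work. The paper does not count holomorphic triangles directly; instead it argues algebraically that after a change of basis, each quotient complex $\cC/(\scU_i-1)$ and $\cC/(\scV_i-1)$ is the knot Floer complex of an unknot with a free basepoint, computes the $(\gr_{\ws},\gr_{\zs})$-bigradings, and then observes that there is a \emph{unique} nonzero bigrading-preserving chain map with the required equivariance. Your direct-computation plan is more laborious but not wrong.

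The genuine gap is in the length-2 map. Your second method (hypercube relations plus grading constraints) does \emph{not} pin down $\tilde{\Phi}^{-H}_{H,\emptyset}$ up to overall equivalence. The paper shows explicitly that there are exactly two graded null-homotopies $H_1$ and $H_2$ of the commutator $E=\tilde{\Phi}^{-L_1}_{L_1,\emptyset}\circ \tilde{\Phi}^{-L_2}_{H,L_1}+\tilde{\Phi}^{-L_2}_{L_2,\emptyset}\circ \tilde{\Phi}^{-L_1}_{H,L_2}$, differing by the unique nontrivial $+1$-graded map $\ve{c}\mapsto \ve{d}$. Both satisfy the hypercube relations and both are bigrading-preserving, so neither the relations nor the gradings distinguish them. (They are homotopic over $\bF[U_1]$ or $\bF[U_2]$ separately, but not over $\bF[U_1,U_2]$.) To single out $H_2$, the paper introduces an additional piece of structure: the homology action $\cA_p$ for a point $p\in\a$ extends to a morphism of the surgery hypercube, and the length-3 relation in the resulting $3$-cube is satisfiable with $H_2$ but not with $H_1$ (Lemma~\ref{lem:length-2-map-Hopf}). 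Your first method (a direct polygon count on the compressed hyperbox) could in principle resolve the ambiguity, but you have not carried it out, and you present the indirect method as if it independently suffices---it does not.

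A smaller point: the vanishing of the length-2 maps for orientations other than $-H$ is not a grading argument in the paper. It follows immediately from the definition of a basic system of Heegaard diagrams (cf.\ the second paragraph of \cite{MOIntegerSurgery}*{Section~8.7}): for basic systems the higher-length maps are nonzero only when all components of the sublink are negatively oriented.
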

\begin{rem}
 As we will see in the proof of the above lemma, all of the maps in the surgery hypercube for $H$ are uniquely determined by the chain complex $\cCFL(H)$, except for the length 2 map $\tilde{\Phi}_{H,L_2}^{L_1}$, and up to homotopy there are two possible choices. We will use additional algebraic restrictions on the link surgery hypercube to determine which choice of length 2 map is correct.
\end{rem}
\begin{proof}[Proof of Lemma~\ref{lem:Hopf-descent}] We focus first on the length 1 maps.
Consider the complexes
\[
\cC/(\scU_1-1)=\begin{tikzcd}[labels=description,row sep=1cm, column sep=1cm] \ve{a}& \ve{b}\ar[l, "\scU_2"] \ar[d, "1"]\\
\ve{c}\ar[r, "\scV_2"] \ar[u, "\scV_1"]& \ve{d}.
\end{tikzcd}
\qquad \text{and} \qquad
\cC/(\scV_1-1)=\begin{tikzcd}[labels=description,row sep=1cm, column sep=1cm] \ve{a}& \ve{b}\ar[l, "\scU_2"] \ar[d, "\scU_1"]\\
\ve{c}\ar[r, "\scV_2"] \ar[u, "1"]& \ve{d}.
\end{tikzcd}
\]
The map $\tilde{\Phi}^{-L_1}_{H,L_2}$ is a homotopy equivalence between these two complexes, which is $\bF[\scU_2,\scV_2]$-equivariant, and sends $\scV_1$ to $\scU_1$. We perform the following changes of basis:
\begin{equation}
\cC/(\scU_1-1)\iso \begin{tikzcd}[labels=description,row sep=1cm, column sep=.3cm] 
\ve{a}& \ve{b} \ar[d, "1"]\\
\ve{c}+\scV_2\ve{b} \ar[u, "\scV_1+\scU_2\scV_2"]& \ve{d}+\scU_2 \ve{a}.
\end{tikzcd}
\quad \text{and} \quad 
\cC/(\scV_1-1)\iso \begin{tikzcd}[labels=description,row sep=1cm, column sep=.3cm] 
\ve{a}+\scV_2 \ve{d}& \ve{b}+\scU_2\ve{c} \ar[d, "\scU_1+\scU_2\scV_2"]\\
\ve{c}\ar[u, "1"]& \ve{d}
\end{tikzcd}
\label{eq:C^1*-basis-change}
\end{equation}
We may view each of the above complexes as the knot Floer complex for an unknot, with an extra free basepoint. In the following we write $\gr_{\ws}$ for the grading induced by the complete collection of basepoints consisting of the $\ws$-type link basepoints (i.e. the ones where the unknot intersects the Heegaard surface negatively), and all the free basepoints. We write $\gr_{\zs}$ for the grading induced by the complete collection consisting of the $\zs$-type basepoints and the free basepoints. We may easily compute the $(\gr_{\ws},\gr_{\zs})$-gradings of the all of the quotiented complexes, viewed in this manner:
\[
\cC/(\scU_1-1): \begin{tikzcd}[row sep=0cm, column sep=1.5cm]
\ve{a}\ar[r, mapsto, "{(\gr_{\ws},\gr_{\zs})}"] &(0,0)\\
\ve{b}\ar[r, mapsto] &(-1,1)\\
\ve{c}\ar[r, mapsto] &(-1,-1)\\
\ve{d}\ar[r, mapsto]&(-2,0)
\end{tikzcd}
\qquad \cC/(\scV_1-1):
 \begin{tikzcd}[row sep=0cm,column sep=1.5cm]
\ve{a}\ar[r, mapsto] &(0,-2)\\
\ve{b}\ar[r, mapsto] &(-1,-1)\\
\ve{c}\ar[r, mapsto] &(1,-1)\\
\ve{d}\ar[r, mapsto]&(0,0)
\end{tikzcd}
\]
\[
\cC/(\scU_2-1): \begin{tikzcd}[row sep=0cm,column sep=1.5cm]
\ve{a}\ar[r, mapsto] &(-2,0)\\
\ve{b}\ar[r, mapsto] &(-1,1)\\
\ve{c}\ar[r, mapsto] &(-1,-1)\\
\ve{d}\ar[r, mapsto]&(0,0)
\end{tikzcd}
\qquad \cC/(\scV_2-1): \begin{tikzcd}[row sep=0cm,column sep=1.5cm]
\ve{a}\ar[r, mapsto] &(0,0)\\
\ve{b}\ar[r, mapsto] &(-1,-1)\\
\ve{c}\ar[r, mapsto] &(1,-1)\\
\ve{d}\ar[r, mapsto]&(0,-2)
\end{tikzcd}
\]
The homotopy equivalence $\tilde{\Phi}^{-L_1}_{H,L_2}$ is grading preserving with respect to the above gradings, since it is the map for changing Heegaard diagrams. It is straightforward to see from Equation~\eqref{eq:C^1*-basis-change}
that there is a unique choice of non-zero chain map from $\cC/(\scU_1-1)$ to $\cC/(\scV_1-1)$ which is $(\gr_{\ws},\gr_{\zs})$-grading preserving, is $\bF[\scU_2,\scV_2]$-equivariant, and sends $\scV_1$ to $\scU_1$. (Note here we are using the $(\gr_{\ws},\gr_{\zs})$-bigrading obtained by viewing $\cC/(\scU_1-1)$ and $\cC/(\scV_1-1)$ as the complex of an unknot, shown above; we are not using the gradings naively inherited from the Hopf link). This map is the homotopy equivalence $\tilde{\Phi}_{H,L_2}^{-L_1}$ in the map in the statement.
 By a symmetric argument, one obtains the formula for $\tilde{\Phi}^{-L_2}_{H,L_1}$ in the statement.

Next, by the definition of a $\sigma$-basic system, the map $\tilde{\Phi}^{-L_1}_{L_1,\emptyset}$ is obtained by quotienting the domain and range of $\tilde{\Phi}^{-L_1}_{H,L_2}$ by $\scV_2-1$. The maps $\tilde{\Phi}^{-L_2}_{H,L_1}$ and $\tilde{\Phi}^{-L_2}_{L_2,\emptyset}$ are related by a similar relation.

We now consider the diagonal map of the cube. Firstly, since we are working with a basis system, the length 2 map vanishes except for the orientation $-H$ (i.e. all components oriented negatively). In our present notation, this is the map
\[
\tilde{\Phi}^{-H}_{H,\emptyset}\colon \cC/(\scU_1-1,\scU_2-1)\to \cC/(\scV_1-1,\scV_2-1).
\]

We define the map
\begin{equation}
E:=\tilde{\Phi}^{-L_1}_{L_1,\emptyset}\circ \tilde{\Phi}^{-L_2}_{H,L_1}+\tilde{\Phi}^{-L_2}_{L_2,\emptyset}\circ \tilde{\Phi}^{-L_1}_{H,L_2}. \label{eq:E-map}
\end{equation}
Using our computations of the length 1 maps of the cube, we compute $E$ to be
\[
E=\begin{tikzcd}[row sep=0cm]
\ve{a}\ar[r, mapsto] &\ve{a}+\ve{d}\\
\ve{b}\ar[r, mapsto] &0\\
\ve{c}\ar[r, mapsto] &(\scU_1+\scU_2)\ve{c}\\
\ve{d}\ar[r, mapsto]&\ve{a}+\ve{d},
\end{tikzcd}
\]
extended linearly by sending $\scV_1$ to $\scU_1$ and $\scV_2$ to $\scU_2$.

Up to an overall chain homotopy intertwining $\bF[\scV_1,\scV_2]$ and $\bF[\scU_1,\scU_2]$, there are two choices for a graded null-homotopy of the map $E$. These are the maps:
\begin{equation}
H_1:=\begin{tikzcd}[row sep=0cm]
\ve{a}\ar[r, mapsto] &\ve{c}\\
\ve{b}\ar[r, mapsto] &0\\
\ve{c}\ar[r, mapsto] &0\\
\ve{d}\ar[r, mapsto]&\ve{c},
\end{tikzcd}
\quad \text{and} \quad
H_2:=\begin{tikzcd}[row sep=0cm]
\ve{a}\ar[r, mapsto] &\ve{c}\\
\ve{b}\ar[r, mapsto] &0\\
\ve{c}\ar[r, mapsto] &\ve{d}\\
\ve{d}\ar[r, mapsto]&\ve{c},
\end{tikzcd}
\label{eq:two-homotopies-H1H2}
\end{equation}
To prove this, we first observe that $H_1$ and $H_2$ are easily checked to be valid homotopies. Any other valid homotopy will differ from $H_1$ by a $+1$ graded chain map from $\cC/(\scU_1-1,\scU_2-1)$ to $\cC/(\scV_1-1,\scV_2-1)$. However, it is easily checked that up to chain homotopy, there is exactly one non-zero $+1$ graded map from $\cC/(\scU_1-1,\scU_2-1)$ to $\cC/(\scV_1-1,\scV_2-1)$ which sends $\scV_i$ to $\scU_i$.  This is the map which sends $\ve{c}$ to $\ve{d}$, sends $\scV_i$ to $\scU_i$, and vanishes on other generators. Adding this map to $H_1$ yields $H_2$. Hence, $H_1$ and $H_2$ are the only possible choices, up to further homotopy. We prove in the subsequent Lemma~\ref{lem:length-2-map-Hopf} that $H_1$ is not a valid choice when we use the arc system $\scA_{\a\a}$, and hence the length 2 map of the surgery hypercube is $H_2$.
\end{proof}

We let $(S^2,\a,\b,\{w_1,w_2\}, \{z_1,z_2\})$ be the diagram shown in Figure~\ref{fig:11}, and we $p\in \a$ for the point shown therein. We use define a map $A_p$ by counting disks weighted by the difference in multiplicities on the two sides of the point $p$. Note that the map $A_p$ may be identified with the relative homology map considered in Section~\ref{sec:relative-homology-actions} if we pick a path $\lambda$ which connects a basepoint on $L_1$ to a basepoint on $L_2$ and satisfies $\lambda\cap \a=\{p\}$.

The map $A_p$ is defined on $\cCFL(H)$, and is easily computed by counting holomorphic bigons to be given by the formula
\[
A_p=
\begin{tikzcd}[row sep=1cm, column sep=1cm] 
\ve{a} \ar[d, "\scU_1"] \ar[r, "\scV_2"]& \ve{b}\\
\ve{c}\ar[r, "\scV_2", shift left] & \ve{d} \ar[l, "\scU_2", shift left] \ar[u, "\scV_1"].
\end{tikzcd}
=
 \begin{tikzcd}[row sep=0cm]
\ve{a}\ar[r, mapsto] &\scV_2 \ve{b}+\scU_1 \ve{c}\\
\ve{b}\ar[r, mapsto] &0\\
\ve{c}\ar[r, mapsto] &\scV_2\ve{d}\\
\ve{d}\ar[r, mapsto]&\scU_2 \ve{c}+\scV_1\ve{b}
\end{tikzcd}
\]

\begin{figure}[ht]
\centering
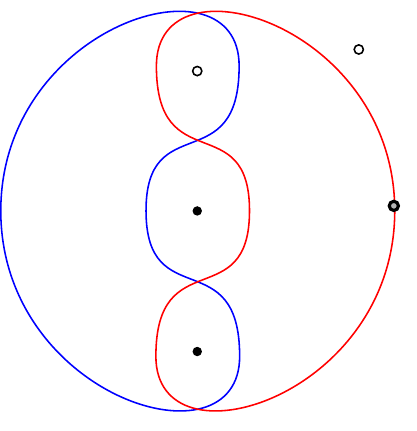
\caption{A diagram for the Hopf link with a special point $p\in \a$.}
\label{fig:11}
\end{figure}

The map $A_p$ has an extension to hypercubes, similar to the construction described in Section~\ref{sec:relative-homology-actions}. We can describe this by considering a formal endomorphism
\[
F_p\colon \a\to \a,
\]
and setting the hypercube action $\cA_p$ to be $\mu_2^{\Tw}(F_p,-).$

As in Lemma~\ref{lem:U-actions-homotopic} we can extend the above construction to give an endomorphism $\cA_p$ of the link surgery formula when we are using a $\sigma$-basic system of Heegaard diagrams for $H$ where the arc system consists of only alpha parallel arcs. In this case, the point-pushing diffeomorphisms never change $\a$. We consider this hypercube on 2-dimensional hypercube used to define all of the link surgery structure maps for negatively oriented sublinks of $H$. With these considerations in mind, we obtain a diagram of the following form:
\begin{equation}
\Cone(\cA_p)=\hspace{-.5cm}\begin{tikzcd}[
column sep={3.5cm,between origins},
row sep=.8cm,labels=description
]
\cC/(\scU_1-1,\scU_2-1)
	\ar[dd, "\tilde{\Phi}^{-L_2}"]
	\ar[dr,  "A_p"]
	\ar[rr, "\tilde{\Phi}^{-L_1}"]
	\ar[ddrr,dashed, "H"]
	\ar[dddrrr,dotted, "\omega_p"]
&&[-1.5cm]
\cC/(\scV_1-1,\scU_2-1)
	\ar[dd, "\tilde{\Phi}^{-L_2}"]
	\ar[dr,"A_p"]
	\ar[dddr,dashed, "h_p^{-L_2}"]
&
\\
&\cC/(\scU_1-1,\scU_2-1)
	\ar[rr,crossing over, "\tilde{\Phi}^{-L_1}"]
&&
\cC/(\scV_1-1,\scU_2-1)
	\ar[dd, "\tilde{\Phi}^{-L_2}"]
	\ar[from=ulll,dashed,crossing over, "h_p^{-L_1}"]
\\[1.8cm]
\cC/(\scU_1-1,\scV_2-1)
	\ar[rr, "\tilde{\Phi}^{-L_1}"]
	\ar[dr,"A_p"]
	\ar[drrr,dashed, "h_p^{-L_1}",,pos=.6]
&&
\cC/(\scV_1-1,\scV_2-1)
	\ar[dr, "A_p"]	
&
\\
&
\cC/(\scU_1-1,\scV_2-1)
	\ar[rr, "\tilde{\Phi}^{-L_1}"]
	\ar[from =uu, crossing over, "\tilde{\Phi}^{-L_2}"]
	\ar[from=uuul,dashed, crossing over, "h_p^{-L_2}"]
	&&
\cC/(\scV_1-1,\scV_2-1)
	\ar[from=uull,crossing over, dashed, "H"]
\end{tikzcd}
\label{eq:hypercube-A_p}
\end{equation}
In the above diagram, both maps labeled $h_p^{-L_2}$ are induced by a single map from $\cC/(\scU_2-1)$ to $\cC/(\scV_2-1)$. The map labeled $h_p^{-L_1}$ is similar.

The diagram in Equation~\eqref{eq:hypercube-A_p} does not satisfy the hypercube relations. Instead, if we view it as $\Cone(\cA_p)$, where $\cA_p$ is a morphism from the back face to the front face, then
\[
[\d, \cA_p]=(\scU_1\scV_1+\scU_2\scV_2)\cdot \id.
\]

We now analyze the maps labeled $h_p^{-L_i}$. Consider first $h_p^{-L_2}$. Both instances of this map is induced by a single map from $\cC/(\scU_2-1)$ to $\cC/(\scV_2-1)$, for which we also write $h_p^{-L_2}$. This maps preserves the Maslov bigradings, if we view $\cC/(\scU_2-1)$ and $\cC/(\scV_2-1)$ as the link Floer complexes for an unknot with an extra free basepoint. Extending the observation from the proof of Lemma~\ref{lem:Hopf-descent} that we used to compute the length 1 maps, there were exactly two grading preserving chain maps from $\cC/(\scU_2-1)$ to $\cC/(\scV_2-1)$ which are $\bF[\scU_1,\scV_1]$-equivariant, and send $\scV_2$ to $\scU_2$: the zero map, and the map labeled $\tilde{\Phi}^{-L_2}$. This observation constrains  $h_p^{-L_2}$ to be one of two maps, which differ by the map $\tilde{\Phi}^{-L_2}$. The same comment applies for $h_{p}^{-L_1}$.

We compute that directly
\[
[\tilde{\Phi}^{-L_1},A_p]= 0.
\]
Hence, by the above observation, we conclude that 
\begin{equation}
h_p^{-L_1}=s\cdot \tilde{\Phi}^{-L_1}\label{eq:h-maps-1}
\end{equation}
for some $s\in \bF$.

Similarly, we compute that
\[
[\tilde{\Phi}^{-L_2},A_p]= \begin{tikzcd}[row sep=0cm]
\ve{a}\ar[r, mapsto] &0\\
\ve{b}\ar[r, mapsto] &0\\
\ve{c}\ar[r, mapsto] &\scU_2 \ve{a}+\scU_1\ve{d}\\
\ve{d}\ar[r, mapsto]&0.
\end{tikzcd}
\]
Hence,
\begin{equation}
h_p^{-L_2}=(\ve{c}\mapsto \ve{b})+t\cdot \tilde{\Phi}^{-L_2},\label{eq:h-maps-2}
\end{equation}
for some $t\in \bF$.

Using the above observations, we are now able to compute the length 2 map of the link surgery hypercube of the Hopf link:

\begin{lem}
\label{lem:length-2-map-Hopf}
The length 3 hypercube relation in Equation~\eqref{eq:hypercube-A_p} is satisfiable with $H_2$, but not with $H_1$. Hence $H_2$ is the length 2 map of the Hopf link surgery hypercube.
\end{lem}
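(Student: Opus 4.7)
The plan is to interpret the length-3 hypercube relation in Equation~\eqref{eq:hypercube-A_p} as a homological obstruction that distinguishes $H_1$ from $H_2$. Viewing the collection $(\cA_p, h_p^{-L_1}, h_p^{-L_2}, \omega_p)$ as the components of a single morphism $\cA^{\mathrm{tot}}$ of the 2-dimensional Hopf-link hypercube to itself, the equation $\d_{\Mor}(\cA^{\mathrm{tot}})=(\scU_1\scV_1+\scU_2\scV_2)\cdot \id$ has length-0 component $[\d,\cA_p]=(\scU_1\scV_1+\scU_2\scV_2)\cdot \id$ and length-2 component
\[
\d_{\Mor}(\omega_p)=Z_H,\qquad Z_H:=H\circ \cA_p+\cA_p\circ H+\sum_{i=1}^2\bigl(\tilde{\Phi}^{-L_i}\circ h_p^{-L_{3-i}}+h_p^{-L_{3-i}}\circ \tilde{\Phi}^{-L_i}\bigr).
\]
The latter is exactly the hypercube relation in question, and so its satisfiability amounts to $Z_H$ being a $\d_{\Mor}$-boundary in $\Mor(\cC/(\scU_1{-}1,\scU_2{-}1),\cC/(\scV_1{-}1,\scV_2{-}1))$.

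Using Equations~\eqref{eq:h-maps-1} and~\eqref{eq:h-maps-2}, write $h_p^{-L_1}=s\tilde{\Phi}^{-L_1}$ and $h_p^{-L_2}=h+t\tilde{\Phi}^{-L_2}$, where $h$ is the map $\ve{c}\mapsto \ve{b}$ (zero on $\ve{a},\ve{b},\ve{d}$). The $s$- and $t$-contributions to $Z_H$ collapse into $(s{+}t)E=(s{+}t)\d_{\Mor}(H)$, which is automatically a boundary. Consequently the obstruction depends only on
\[
Y_H := H\circ \cA_p+\cA_p\circ H+h\circ \tilde{\Phi}^{-L_1}+\tilde{\Phi}^{-L_1}\circ h.
\]

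The next step is a direct calculation of $Y_H$ for $H \in \{H_1,H_2\}$ using the formulas from the proof of Lemma~\ref{lem:Hopf-descent} and the explicit description of $\cA_p$ above. For $H_2$, both summands $H_2\circ \cA_p+\cA_p\circ H_2$ and $h\circ \tilde{\Phi}^{-L_1}+\tilde{\Phi}^{-L_1}\circ h$ reduce to the single map $\ve{c}\mapsto \ve{b}$, and they cancel in characteristic 2, so $Y_{H_2}=0$. Hence $\omega_p=(s{+}t)H_2$ solves $\d_{\Mor}(\omega_p)=Z_{H_2}$ for all $s,t$, and $H_2$ is a valid choice. For $H_1$ the parallel computation gives
\[
Y_{H_1}:\quad \ve{a}\mapsto \ve{d},\quad \ve{c}\mapsto \ve{b}+\scU_2\ve{c},\quad \ve{d}\mapsto \ve{d}.
\]

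The main obstacle is to show that $Z_{H_1}$ is not a boundary for any choice of $s,t$. I would do this by evaluating the putative identity on the generator $\ve{a}\in \cC/(\scU_1{-}1,\scU_2{-}1)$. Since $\d\ve{a}=0$ and $E(\ve{a})=\ve{a}+\ve{d}$, the equation becomes
\[
\d\omega_p(\ve{a})=(s{+}t)\ve{a}+(s{+}t{+}1)\ve{d}.
\]
A short calculation shows that the homology of $\cC/(\scV_1{-}1,\scV_2{-}1)$ is $\bF[\scU_1,\scU_2]/(\scU_1{+}\scU_2)\cdot[\ve{a}]$ with $[\ve{d}]=[\ve{a}]$, so the class of the right-hand side in this homology is $[\ve{a}]\ne 0$ for every $s,t\in \bF$. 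This rules out $H_1$ and forces $H=H_2$, as claimed.
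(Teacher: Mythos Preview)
Your proof is correct and follows essentially the same approach as the paper's: both reduce the obstruction to the quantity $[H,A_p]+[h,\tilde\Phi^{-L_1}]$ (your $Y_H$, the paper's $C_i$ with $s=t=0$), compute it explicitly for $H_1$ and $H_2$, and find $Y_{H_2}=0$ while $Y_{H_1}$ is the nonzero map $\ve a\mapsto\ve d,\ \ve c\mapsto\ve b+\scU_2\ve c,\ \ve d\mapsto\ve d$. The paper simply asserts this last map is not null-homotopic, whereas you supply a clean homological justification by evaluating on the cycle $\ve a$ and checking that the resulting class $[\ve a]$ is nonzero in $H_*\bigl(\cC/(\scV_1{-}1,\scV_2{-}1)\bigr)\cong\bF[\scU_1,\scU_2]/(\scU_1{+}\scU_2)$; this extra step is a nice addition.
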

\begin{proof}For $i\in \{1,2\}$, let $C_i$ denote the sum of compositions of maps in~\eqref{eq:hypercube-A_p} which would appear in the length 3 relation, except for $[\d, \omega_p]$. Here, $C_i$ is computed with the map $H=H_i$. The hypercube relations are satisfiable with $H_i$ if and only if $C_i\simeq 0$.

We have determined the maps $h_p^{-L_2}$ and $h_{p}^{-L_1}$ up to a minor ambiguity, as shown in equations~\eqref{eq:h-maps-1} and~\eqref{eq:h-maps-2}. Note that changing the value of $s$ in $h_{p}^{-L_1}$  changes $C_i$ by addition of the map $E$ from~\eqref{eq:E-map}, which we already know to be null-homotopic. Changing $t$ by adding 1 has the same effect. Hence, it is sufficient to consider the case where $s=t=0$.

One easily computes
\[
C_1=[h^{-L_2}_p, \tilde{\Phi}^{-L_1}]+[H_1,A_p]= \begin{tikzcd}[row sep=0cm]
\ve{a}\ar[r, mapsto] &\ve{d}\\
\ve{b}\ar[r, mapsto] &0\\
\ve{c}\ar[r, mapsto] &\ve{b}+\scU_2 \ve{c}\\
\ve{d}\ar[r, mapsto]&\ve{d},
\end{tikzcd}
\]
which is not null-homotopic. Hence, $H_1$ is not a valid choice. It is easily checked that in fact
\[
[h^{-L_2}_p, \tilde{\Phi}^{-L_1}]+[H_2,A_p]=0.
\]
Hence, the diagonal map must be $H_2$.
\end{proof}

\begin{rem}
View the maps $H_1$ and $H_2$ as being $\bF[U_1,U_2]$-equivariant, where $U_i$ acts by $\scV_i$ on the domain, and $\scU_i$ on the codomain. The maps $H_1$ and $H_2$ are homotopic over $\bF[U_1]$ or $\bF[U_2]$, but not over $\bF[U_1,U_2]$. If we wanted to compute the Heegaard Floer homology of surgeries on the Hopf link, either $H_1$ or $H_2$ would give the same answer. Compare \cite{Liu2Bridge}*{Section~5.2}. We will see in Section~\ref{sec:alpha-beta-model} that using $H_1$ to build the link surgery hypercube gives $\cC_{\Lambda}(H,\scA_{\a\b})$ where $\scA_{\a\b}$ is a system of arcs where one arc is alpha-parallel and the other is beta-parallel.

 Since we will be taking tensor products, we need to know the length 2 map over the ring $\bF[U_1,U_2]$, and there are algebraically two distinct models.
\end{rem}

It remains to describe the proof of Proposition~\ref{prop:Hopf-link-large-model}:

\begin{proof}[Proof of Proposition~\ref{prop:Hopf-link-large-model}]
The proof is obtained by filling in powers of $\scV_i$ in the maps from Lemma~\ref{lem:Hopf-descent} according to Alexander grading changes. We leave this mostly to the reader, but we verify the claim for the map $\Phi_{H,L_2}^{-L_1}$. This map sends Alexander grading $\ve{s}$ to $\ve{s}+(\lambda_1,-1)$. Furthermore, it is $\bF[\scU_2,\scV_2]$ equivariant, and is $T$-equivariant with respect to $\bF[\scU_1,\scV_1]$. As an example, the equivariance properties and the computation of the quotient map in Lemma~\ref{lem:Hopf-descent} constrain $\Psi_{H,L_2}^{-L_1}(\ve{a})=\scV_1^N \ve{d}$ for some $N$. Since $A(\ve{a})=(-\tfrac{1}{2},\tfrac{1}{2})$ and $A(\ve{d})=(\tfrac{1}{2},-\tfrac{1}{2})$, we conclude that $N=\lambda_1-1$. Similar arguments hold for all of the other components.
\end{proof}

\subsection{The Hopf link complex as a type-$D$ module}

We now describe the type-$D$ module over $\cL_2=\cK\otimes \cK$ for the Hopf link complex. We write $\cH_{\Lambda}^{\cK\otimes \cK}$ for the type-$D$ module for the negative Hopf link with an alpha-parallel system of arcs. We recall that by definition, for each $\veps\in \bE_2$, we define $\cH\cdot \ve{I}_{\veps}$ to have the same generators over $\bF$ as a free $\bF[\scU_1,\scV_1,\scU_2,\scV_2]$-basis of $\cCFL(H)$. Hence, we write
\[
\cH_{\Lambda}^{\cK\otimes \cK}\cdot \ve{I}_{\veps}=\Span_\bF(\ve{a}_{\veps},\ve{b}_{\veps},\ve{c}_{\veps},\ve{d}_{\veps}).
\]
It is helpful to write $\cH_{\veps}$ for $\cH_{\Lambda}\cdot \ve{I}_{\veps}$. With this notation, we may decompose $\cH_{\Lambda}$ as 
\[
 \cH_{\Lambda}=
 \begin{tikzcd}[column sep=.3cm, row sep=.3cm]
\cH_{0,0} & \cH_{1,0}\\
 \cH_{0,1}& \cH_{1,1}
 \end{tikzcd}
 \]
We now describe the structure maps:

\begin{lem}
\label{lem:Hopf-type-D} Let $\cH_{\Lambda}^{\cK\otimes \cK}$ be the type-$D$ module associated to the negative Hopf link with an alpha-parallel system of arcs. The type-$D$ module $\cH_{\Lambda}^{\cK\otimes \cK}$ is as follows:
\[
\begin{tikzcd}[labels=description, row sep=2cm,column sep=3cm]
\cH_{0,0}
	\ar[loop left, "\d^1"] \ar[d,"{}^{L_2}\!F^1+{}^{-L_2}\! F^1"]
	\ar[r, "{}^{L_1}\! P^1+{}^{-L_1}\! P^1"]
	\ar[dr, dashed, "{}^{-H}\, \Omega^1"]
&
\cH_{1,0}
	\ar[loop right, "\d^1"]
	\ar[d, "{}^{L_2}\! G^1+{}^{-L_2}\! G^1"]\\
\cH_{0,1}
	\ar[loop left, "\d^1"]
	\ar[r, "{}^{L_1}\! Q^1+{}^{-L_1}\! Q^1"]
&
\cH_{1,1}
	\ar[loop right, "\d^1"]
  \end{tikzcd}
  \]
  where the maps are as follows:
  \begin{enumerate}
   \item The map $\d^1$ is the ordinary differential of the Hopf link complex. See Equation~\eqref{eq:Hopf-def}.
    \item The maps denoted ${}^{L_1} \!P^1$, ${}^{-L_1}\! P^1$, ${}^{L_1}\! Q^1$ and ${}^{-L_1}\! Q^1$ correspond to the length 1 surgery maps for $\pm L_1$. The maps ${}^{L_1}\!P^1$ and ${}^{L_1}\! Q^1$ share the same formula as each other, and the maps ${}^{-L_1} \! P^1$ and ${}^{-L_1}\! Q^1$ share the same formula as each other. They are given by the following formulas
    \begin{enumerate}
    \item\[
    {}^{L_1} \!P^1=\begin{tikzcd}[row sep=0cm]
    \ve{a} \ar[r,mapsto]& \ve{a}\otimes \sigma_1\\
    \ve{b} \ar[r, mapsto]& \ve{b}\otimes \sigma_1\\
    \ve{c} \ar[r, mapsto]& \ve{c}\otimes \sigma_1\\
    \ve{d} \ar[r,mapsto]& \ve{d}\otimes \sigma_1.
    \end{tikzcd}
    \]
   \item 
    \[
   {}^{-L_1} \!P^1 =\begin{tikzcd}[row sep=0cm]
   \ve{a} \ar[r,mapsto]& \ve{d}\otimes \scV_1^{\lambda_1-1}\tau_1\\
   \ve{b} \ar[r, mapsto]& 0\\
   \ve{c} \ar[r, mapsto]& \ve{b}\otimes \scV_1^{\lambda_1+1}\tau_1+\ve{c}\otimes \scU_2\scV_1^{\lambda_1}\tau_1\\
   \ve{d} \ar[r,mapsto]& \ve{d}\otimes \scU_2\scV_1^{\lambda_1}\tau_1.
   \end{tikzcd}
    \]
     \end{enumerate}
   \item The maps ${}^{L_2} \!F^1$ and ${}^{-L_2}\! F^1$ are the surgery maps for the component $L_2$,  with $L_1$ unreduced. They are given by the following formulas: 
   \begin{enumerate}
  \item 
  \[
  {}^{L_2}\!F^1=\begin{tikzcd}[row sep=0cm]
  \ve{a} \ar[r,mapsto]& \ve{a}\otimes \sigma_2\\
  \ve{b} \ar[r, mapsto]& \ve{b}\otimes \sigma_2\\
  \ve{c} \ar[r, mapsto]& \ve{c}\otimes \sigma_2\\
  \ve{d} \ar[r,mapsto]& \ve{d}\otimes \sigma_2.
  \end{tikzcd}
  \] 
   \item 
  \[
  {}^{-L_2}\! F^1=\begin{tikzcd}[row sep=0cm]
  \ve{a}\ar[r, mapsto] &\ve{a}\otimes \scU_1\scV_2^{\lambda_2}\tau_2\\
   \ve{b}\ar[r, mapsto] &0\\
   \ve{c}\ar[r, mapsto] &\ve{b}\otimes \scV_2^{\lambda_2+1}\tau_2+ \ve{c}\otimes \scU_1\scV_2^{\lambda_2}\tau_2\\
   \ve{d}\ar[r, mapsto]&\ve{a}\otimes \scV_2^{\lambda_2-1}\tau_2
   \end{tikzcd}
  \]
  The maps ${}^{L_2}\! G^1$ and ${}^{-L_2} \! G^1$ are the surgery maps $L_2$, with $L_1$ already reduced. They are given by the same formulas.
   \end{enumerate}
   \item The map ${}^{-H}\! \Omega^1$ is the length 2 surgery map for $-H=-L_1\cup -L_2$. This map takes the following form
   \[
   {}^{-H} \! \Omega^1=\begin{tikzcd}[row sep=0cm]
   \ve{a}\ar[r, mapsto] &\ve{c}\otimes \scV_1^{\lambda_1-2}\scV_2^{\lambda_2-1}\tau_1\tau_2\\
   \ve{b}\ar[r, mapsto] &0\\
    \ve{c}\ar[r, mapsto] &\ve{d}\otimes\scV_1^{\lambda_1-1} \scV_2^{\lambda_2} \tau_1\tau_2\\
   \ve{d}\ar[r, mapsto]&\ve{c}\otimes \scV_1^{\lambda_1-1}\scV_2^{\lambda_2-2}\tau_1\tau_2.
   \end{tikzcd}
   \]
   \end{enumerate}
\end{lem}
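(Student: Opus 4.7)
The plan is to derive this lemma as a direct translation of Proposition~\ref{prop:Hopf-link-large-model} into the type-$D$ module language described in Section~\ref{sec:link-surgery-A-infty}. The hypercube maps $\Phi^{\vec{N}}$ for the Hopf link were already computed in Proposition~\ref{prop:Hopf-link-large-model}, and the translation procedure from Section~\ref{sec:link-surgery-A-infty} says exactly the following: if $\xs$ is a basis element of $\cCFL(H)$, $M \subset H$, and $\Phi^{\vec{N}}(\xs^{\veps(M)})$ has summand $\ys^{\veps(M\cup N)}\cdot f$ for an oriented sublink $\vec{N}\subset H-M$, then $\delta^1(\xs^{\veps(M)})$ gets the corresponding summand $\ys^{\veps(M\cup N)}\otimes f\cdot t^{\vec{N}}_{\veps(M),\veps(M\cup N)}$, where $t^{\vec{N}}$ is the tensor product of $\sigma_i$ over components of $\vec{N}$ oriented with $H$, $\tau_i$ over components oriented oppositely, and $1$ for components of $H$ not in $\vec{N}$. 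Since the lemma is the statement of this translation, the proof consists entirely in matching each item in the statement against the corresponding item in Proposition~\ref{prop:Hopf-link-large-model}.

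Concretely, I would proceed one structure map at a time. First, $\d^1$ on each $\cH_{\veps}$ records the internal differential of $\cCFL(H)$, which is given in Equation~\eqref{eq:Hopf-def}. Next, the length-$1$ maps ${}^{L_i}P^1$, ${}^{L_i}Q^1$, ${}^{L_i}F^1$, ${}^{L_i}G^1$ correspond to the $\Phi^{L_i}$ inclusion maps from Proposition~\ref{prop:Hopf-link-large-model}; each $\Phi^{L_i}$ is the identity on generators (as the canonical inclusion of localization), and the associated algebra element is $\sigma_i$ in the appropriate tensor slot, producing the stated formulas. The maps ${}^{-L_i}P^1$, ${}^{-L_i}Q^1$, ${}^{-L_i}F^1$, ${}^{-L_i}G^1$ correspond to the $\Phi^{-L_i}$ maps: the generator-level formulas are read off directly from Proposition~\ref{prop:Hopf-link-large-model}, and the algebra element is $\tau_i$ in the appropriate slot. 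The reason the maps with the same orientation $\vec{N}$ agree across different starting idempotents (e.g.\ ${}^{L_2}F^1 = {}^{L_2}G^1$ and ${}^{-L_2}F^1 = {}^{-L_2}G^1$) is built into the definition of a basic system: the codimension-$1$ subcubes $\scH^{-L_2,(0)}$ and $\scH^{-L_2,(1)}$ share structure via the compatibility condition with sublink inclusion. Finally, the length-$2$ map ${}^{-H}\Omega^1$ corresponds to $\Phi^{-H}_{H,\emptyset}$; its formula comes from the last item of Proposition~\ref{prop:Hopf-link-large-model} with algebra decoration $\tau_1\tau_2$.

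Since $\cX_{\Lambda}(H,\scA_{\a\a})^{\cK\otimes \cK}$ is automatically a type-$D$ module by the general construction in Section~\ref{sec:link-surgery-A-infty}, no separate verification of the $\delta^1$ structure relation is required. The only genuinely nontrivial input is the determination of the diagonal map ${}^{-H}\Omega^1$, which boils down to the choice between the two grading-compatible homotopies $H_1$ and $H_2$ appearing in~\eqref{eq:two-homotopies-H1H2}; this was already resolved in Lemma~\ref{lem:length-2-map-Hopf} using the algebraic constraints coming from the homology action $\cA_p$ along the special point $p\in \a$. I expect this to be the only substantive check, and everything else will amount to bookkeeping of the translation procedure.
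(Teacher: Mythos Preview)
Your proposal is correct and takes essentially the same approach as the paper: the paper leaves the verification to the reader, stating it is ``mostly a restatement of [Proposition~\ref{prop:Hopf-link-large-model}] using the notation from Section~\ref{sec:link-surgery-A-infty}.'' Your write-up is in fact more detailed than the paper's own treatment, spelling out the translation procedure map by map.
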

We leave the verification of the above lemma to the reader as it is mostly a restatement of Lemma~\ref{prop:Hopf-link-large-model} using the notation from Section~\ref{sec:link-surgery-A-infty}.

\subsection{The Hopf link with arc system $\scA_{\a\b}$}
\label{sec:alpha-beta-model}

In this section, we compute the type-$D$ module of the Hopf link complex when we use an arc system such that $L_1$ is alpha-parallel, and $L_2$ is beta-parallel. We write $\bar\cH_{\Lambda}^{\cK\otimes \cK}$ for the associated type-$D$ module.

\begin{prop}
\label{prop:bar-H-computation-type-D}
 The type-$D$ module $\bar \cH_{\Lambda}^{\cK\otimes \cK}$ is identical to the module $\cH_{\Lambda}^{\cK\otimes \cK}$, except that we delete the term $\ve{c}\mapsto \ve{d}\otimes \scV_1^{\lambda_1-1}\scV_2^{\lambda_2}\tau_1\tau_2$ from $\delta^1$.
\end{prop}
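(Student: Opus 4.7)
The plan is to invoke Theorem~\ref{thm:transformer-bimodule} and then carry out an explicit box-tensor calculation. Taking $L_2$ to be the component whose arc is changed from alpha-parallel to beta-parallel, Theorem~\ref{thm:transformer-bimodule} together with the quasi-involutivity ${}_{\cK}\cT^{\cK}\hatbox {}_{\cK}\cT^{\cK}\simeq {}_{\cK}[\bI]^{\cK}$ yields a homotopy equivalence
\[
\bar\cH_\Lambda^{\cK\otimes \cK}\simeq \cH_\Lambda^{\cK\otimes \cK}\hatbox_{\cK_2}\cT^{\cK},
\]
so it suffices to show that the box tensor on the right differs from $\cH_\Lambda$ only by the cancellation of the single $\Omega^{-H}$-summand named in the statement.

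Since $\cT$ has $\delta_j^1=0$ for $j\neq 2,4$ and $\delta_2^1$ is the identity action (extended by the identity on $\cK_1$ via the construction of Section~\ref{sec:extension-of-scalars}), the structure map of $\cH_\Lambda\hatbox \cT^{\cK_2}$ is the original $\delta^1$ of $\cH_\Lambda$ plus corrections coming from three-step iterations of $\delta^1$ that feed into $\delta_4^1$. The defining formula $\delta_4^1(a_3,a_2,a_1,i_0)=i_1\otimes \d_{\scU}(a_3)\,\d_{\scV}(a_2)\cdot f\tau g$ (with $a_1=f\tau g$) forces a non-zero correction to have, on the $\cK_2$-factor, a first algebra output containing $\tau_2$, a second containing a non-trivial $\scV_2$-factor, and a third containing a non-trivial $\scU_2$-factor. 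Inspecting Lemma~\ref{lem:Hopf-type-D}, the only $\tau_2$-emitting structure maps are ${}^{-L_2}\!F^1$, ${}^{-L_2}\!G^1$ and ${}^{-H}\Omega^1$; after each of these the module sits in $\cH_{*,1}$, where the only sources of non-trivial $\cK_2$-polynomial outputs are the internal differential $\d^1$ and the factor $\scU_2$ appearing in ${}^{-L_1}\!Q^1$ applied to $\ve{c}_{0,1}$ or $\ve{d}_{0,1}$. A short case analysis isolates the unique surviving path
\[
\ve{c}_{0,0}\xrightarrow{{}^{-L_2}\!F^1}\ve{c}_{0,1}\otimes \scU_1\scV_2^{\lambda_2}\tau_2\xrightarrow{\d^1}\ve{d}_{0,1}\otimes \scV_2\xrightarrow{{}^{-L_1}\!Q^1}\ve{d}_{1,1}\otimes \scU_2\scV_1^{\lambda_1}\tau_1.
\]
Feeding the $\cK_2$-outputs $(\scV_2^{\lambda_2}\tau_2,\scV_2,\scU_2)$ through $\delta_4^1$ yields $\scV_2^{\lambda_2}\tau_2$, and multiplying the $\cK_1$-outputs in the order dictated by the extension-of-scalars formula, then using the algebra relation $\tau_1\scU_1=\scV_1^{-1}\tau_1$, yields $\scV_1^{\lambda_1-1}\tau_1$. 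The resulting correction, $\ve{c}\mapsto \ve{d}\otimes \scV_1^{\lambda_1-1}\scV_2^{\lambda_2}\tau_1\tau_2$, matches the corresponding summand of ${}^{-H}\Omega^1$ already present in $\cH_\Lambda$ and hence cancels it over $\bF$, producing the module described in the statement.

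The main obstacle is the completeness of the case analysis. It reduces to three short observations: (i) the internal differential of the Hopf link vanishes on the cycles $\ve{a}$ and $\ve{d}$, so $\d^1$ cannot be iterated twice after any $\tau_2$-emitting first step; (ii) the map ${}^{-H}\Omega^1$ lands directly in $\cH_{1,1}$, from which no two further non-trivial $\cK_2$-polynomial outputs are available; and (iii) every candidate sequence starting from a generator other than $\ve{c}_{0,0}$ either terminates at a cycle or produces the output $1$ in the middle slot, annihilating $\d_{\scV}$. A brief enumeration over starting generators in $\cH_{*,0}$ and first steps dispatches each case and completes the proof.
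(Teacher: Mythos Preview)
Your proof is correct and follows essentially the same approach as the paper: invoke Theorem~\ref{thm:transformer-bimodule} to identify $\bar\cH_\Lambda$ with $\cH_\Lambda\hatbox_{\cK_2}{}_{\cK_2}\cT^{\cK_2}$, then compute the box tensor product directly and observe that the unique non-trivial $\delta_4^1$-contribution exactly cancels the $\ve{c}\to\ve{d}$ summand of ${}^{-H}\Omega^1$. Your case analysis is in fact more explicit than the paper's (which simply asserts ``this is verified by a direct computation'' and displays one diagram); the path you trace, $\ve{c}_{0,0}\xrightarrow{{}^{-L_2}F^1}\ve{c}_{0,1}\xrightarrow{\d^1}\ve{d}_{0,1}\xrightarrow{{}^{-L_1}Q^1}\ve{d}_{1,1}$, is indeed the unique surviving contribution, and your verification that all other starting points and branches die is sound.
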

\begin{proof} We may use Theorem~\ref{thm:transformer-bimodule} and the transformer bimodule to change one of the arcs from alpha-parallel to beta-parallel. Accordingly, we have that
\[ \bar\cH_{\Lambda}^{\cK_1\otimes \cK_2}=\cH_{\Lambda}^{\cK_1\otimes \cK_2}\boxtimes {}_{\cK_2} \cT^{\cK_2}.
\]
We claim that the above tensor product description of $\bar \cH_{\Lambda}^{\cK_1\otimes \cK_2}$ gives exactly the module structure described in the statement (i.e. with structure maps as in Lemma~\ref{lem:Hopf-type-D}, except with the additional length 2 term $\ve{c}\mapsto \ve{d}\otimes\scV_1^{\lambda_1-1}\scV_2^{\lambda_2}\tau_1\tau_2$) . This is verified by a direct computation. We compute $\cH^{\cK_1\otimes \cK_2}_{\Lambda}\hatbox {}_{\cK_2} \cT^{\cK_2}$. The module ${}_{\cK_2} \cT^{\cK_2}$ has a structure map term of the form $\delta_2^1(a,1)=1\otimes a$, for all $a$. These contribute all of the differentials from $\cH^{\cK_1\otimes \cK_2}_{\Lambda}$ to $\bar{\cH}_{\Lambda}^{\cK_1\otimes \cK_2}$. Additionally, ${}_{\cK_2} \cT^{\cK_2}$ has a $\delta_4^1$ term, which is given by 
\[
\delta_4^1(a,b,c \tau,i_0)=i_1\otimes \d_{\scU}(a)\d_{\scV}(b) c \tau.
\] 
 The structure map diagram which adds the extra length 2 differential is the following:
\[
\begin{tikzcd}[labels=description, row sep=.5cm]
\ve{c}_{00}
\ar[d]
& i_0
\ar[dddd]\\
\Phi^{-L_2}
	\ar[dddr]
\ar[d]&\\
\d_{\cC}
	\ar[ddr]
	\ar[d]
&\\
\Phi^{-L_1}
	\ar[dr]
	\ar[dd]
&\\
\,&\delta_4^1\ar[dr]
	\ar[d]
\\
\ve{d}_{11}&i_1& \scV_1^{\lambda_1-1}\scV_2^{\lambda_2}\tau_1\tau_2.
\end{tikzcd}
\]
In the above, $\Phi^{-L_2}$, $\d_\cC$ and $\Phi^{-L_1}$ denote specific summands of the structure map $\delta^1$ on $\cH_{\Lambda}^{\cK_1\otimes \cK_2}$. The specific sequence of generators contributing to the above diagram is the following:
\[
\begin{tikzcd}
\ve{c}_{00} \ar[r, "\Phi^{-L_2}"]& \ve{c}_{01}\otimes \scU_1 \scV^{\lambda_2} \tau_2\ar[r, "\d_\cC"]&[-.3cm] \ve{d}_{01}\otimes \scV_2\otimes \scU_1 \scV_2^{\lambda_2}\tau_2\ar[r, "\Phi^{-L_1}"] & \ve{d}_{11} \otimes \scU_2 \scV_1^{\lambda_1}\tau_1\otimes\scV_2\otimes \scU_1 \scV_2^{\lambda_2}\tau_2.\end{tikzcd}
\]
(Here, we omit all terms which evaluate to zero under $\bI\otimes \delta_4^1$). Applying $\delta_4^1$, we obtain the extra term
\[
\ve{c}_{00}\mapsto \ve{d}_{11}\otimes \scV_1^{\lambda_1} \tau_1 \scU_1 \scV_2^{\lambda_2} \tau_2=\ve{d}_{11}\otimes  \scV_1^{\lambda_1-1} \scV_2^{\lambda_2} \tau_1\tau_2,
\]
as claimed in the statement. Note that when evaluating $\delta_2^1$ and $\delta_4^1$ in the tensor product, we are using the extension of scalars construction from Section~\ref{sec:extension-of-scalars} to view ${}_{\cK_1} \cT^{\cK_1}$ as a $DA$-bimodule over $(\cK_1\otimes \cK_2,\cK_1\otimes \cK_2)$. 
\end{proof}

\begin{rem} In our proof of Proposition~\ref{prop:Hopf-link-large-model}, we observed that all of the maps in the Hopf link complex were uniquely determined except for the length 2 map, and there were two choices, up to overall homotopy. We wrote $H_1$ and $H_2$ for these maps. We proved that $H_2$ was the correct choice when the system of arcs was alpha-parallel. We observe that the above model of $\bar \cH_{\Lambda}^{\cL_2}$ corresponds exactly to the other choice of homotopy.
\end{rem}

\section{Minimal models for the Hopf link surgery complex}

\label{sec:minimal-model-Hopf}

In this section, we consider the link surgery complex for the Hopf link as a $DA$-bimodule. We do this via a tensor product
\[
{}_{\cK} \cH_{\Lambda}^{\cK}:=\cH_{\Lambda}^{\cK\otimes \cK}\hatbox {}_{\cK| \cK}[\bI^{\Supset}].
\]
In the above, just one algebra output of $\cH_{\Lambda}^{\cK\otimes \cK}$ is input into ${}_{\cK| \cK}[\bI^{\Supset}]$. 

We note that the number of generators in the complex $\cH^{\cK\otimes \cK}_{\Lambda}$ can be reduced by homotopy equivalence. To this end, we define the maximal ideal $\frm\subset \cK$, to be the one which is generated by $\scU,\scV\in \ve{I}_0\cdot \cK \cdot \ve{I}_0$, $\sigma,\tau\in \ve{I}_1\cdot \cK\cdot \ve{I}_0$ and $\scU\in \ve{I}_1\cdot \cK\cdot \ve{I}_1$. We make the following definition (compare \cite{EisenbudCommutativeAlgebra}*{pg. 476}):

\begin{define} 
\label{def:minimal}
We say that a type-$DA$ bimodule ${}_{\cK} \cX^{\cK}$ is \emph{minimal} if $\delta_1^1\colon \cX\to \cX\otimes \cK/\frm$ is zero.
\end{define}

In the Heegaard Floer literature, one frequently uses the term \emph{reduced} instead of minimal. In this section, we will explore a minimal model of the Hopf link complex ${}_{\cK} \cH_{\Lambda}^{\cK}$, for which we will write ${}_{\cK} \cZ^{\cK}$. We will also compute a minimal model of the complex ${}_{\cK} \bar \cH_{\Lambda}^{\cK}$ obtained by using both alpha and beta parallel arcs on the Hopf link.

\subsection{$DA$-perspectives on the Hopf link complex}
\label{sec:perspectives}
In this section, we describe how to view the Hopf link complex as a type-$DA$ module, and we compute a minimal model.  This perspective was first explored in \cite{HHSZDuals}. 

Write $H=L_1\cup L_2$ for the Hopf link.  Write $P_1=\bF[\scU_1,\scV_1]$ and $P_2=\bF[\scU_2,\scV_2]$, where $P_i$ contains the variables for $L_i$.  Let us write $\cC$ for $\cCFL(H)$.

We may view $\cC$ is as a type-$D$ module over $P_2$. We do this by viewing $\cC^{P_2}$ as freely generated by $\ve{a}$, $\ve{b}$, $\ve{c}$ and $\ve{d}$ over $P_1$. The structure map $\delta^1$ is shown below:
 \begin{equation}
\cC^{P_2}\iso \begin{tikzcd}[labels=description,row sep=1.4cm, column sep=1.4cm] \ve{a}[\scU_1,\scV_1]& \ve{b}[\scU_1,\scV_1]\ar[l,dashed, "1|\scU_2"] \ar[d,dashed, "\scU_1|1"]\\
\ve{c}[\scU_1,\scV_1]\ar[r,dashed, "1|\scV_2"] \ar[u,dashed, "\scV_1|1"]& \ve{d}[\scU_1,\scV_1].
\end{tikzcd}
\label{eq:Hopf-type-D}
\end{equation}
 An arrow from $\ve{x}$ to $\ve{y}$ labeled by $a|b$ indicates that $\delta^1(\xs)$ has a summand of $(a\cdot \ys)\otimes b$. 

As a type-$D$ module, $\cC^{P_2}$ is not minimal. We now describe a homotopy equivalent complex which is minimal. It is helpful to realize that the above description of $\cC^{P_2}$ may be further decomposed as a box tensor product, as follows. Let $\cE_2$ denote the exterior algebra on two generators
\[
\cE_2:=\Lambda^*(\phi_2,\psi_2).
\]
We consider the following type-$DD$ bimodule ${}^{\cE_2}\scK^{P_2}$. As a vector space, $\scK\iso \bF$. The type-$DD$ structure map is given by the formula
\[
\delta^{1,1}(1)=\phi_2|1|\scU_2+\psi_2|1|\scV_2.
\]
We may define a type-$A$ module $\cC_{\cE_2}$, generated freely over $P_1$ by $\ve{a}$, $\ve{b}$, $\ve{c}$ and $\ve{d}$, with module structure given by the following diagram
\[
\cC_{\cE_2}=\begin{tikzcd}[labels=description,row sep=1.4cm, column sep=1.4cm] \ve{a}[\scU_1,\scV_1]& \ve{b}[\scU_1,\scV_1]\ar[l, "\phi_2"] \ar[d,dashed, "\scU_1"]\\
\ve{c}[\scU_1,\scV_1]\ar[r, "\psi_2"] \ar[u,dashed, "\scV_1"]& \ve{d}[\scU_1,\scV_1].
\end{tikzcd}
\]
In the above, solid arrows denote the $m_2$ actions of $\cE_2$, while the dashed arrows denote the $m_1$ actions (i.e. internal differentials). To illustrate the notation, if $f\in \bF[\scU_1,\scV_1]$, then 
\[
m_2(f\cdot \ve{b}, \phi_2)=f\cdot \ve{a}\quad \text{and} \quad m_1(f\cdot\ve{b})=\scU_1\cdot f\cdot \ve{d}.
\]
 The action of $\psi_2$ is similar.

Clearly, there is an isomorphism of type-$D$ modules
\[
\cC^{P_2}\iso \cC_{\cE_2}\boxtimes {}^{\cE_2}\scK^{P_2}.
\]

 In the above, we have forgotten about the natural action of $P_1$, given by ordinary multiplication. In fact, we may incorporate additionally this action to obtain bimodules ${}_{P_1}\cC^{P_2}$ and ${}_{P_1}\cC_{\cE_2}$, such that
\[
{}_{P_1}\cC^{P_2}\iso {}_{P_1}\cC_{\cE_2}\boxtimes {}^{\cE_2}\scK^{P_2}.
\]
The module ${}_{P_1} \cC^{P_2}$ has only $\delta_1^1$ and $\delta_2^1$ non-trivial, and ${}_{P_1}\cC_{\cE_2}$ has only $m_{0,1,0}$, $m_{1,1,0}$ and $m_{0,1,1}$ non-trivial. In the next section, we explore minimal models.

\subsection{Minimal models for the Hopf link Floer complex}
\label{sec:minimal-models-Hopf-link}

We apply the homological perturbation lemma to obtain minimal models of the Hopf link bimodules from the previous section. We will define a $DA$-bimodule ${}_{P_1} Z^{P_2}$ which is homotopy equivalent to ${}_{P_1} \cC^{P_2}$ and which has $\delta_1^1=0$. The techniques of this section formalize the construction in \cite{HHSZDuals}.

First define the chain complex $\cC_0$ by forgetting about the $\cE_2$-action action on $\cC_{\cE_2}$. Over $\bF_2$, $\cC_0$ is homotopy equivalent to its homology, which we view as the vector space
\[
Z:= \ve{a} [\scU_1]\oplus  \ve{d} [\scV_1].
\]
Here, $ \ve{a}[\scU_1]$ denotes a copy of $\bF[\scU_1]$, generated by $\ve{a}$, and similarly for $ \ve{d} [\scV_1]$. There are natural maps
\[
i\colon Z\to \cC_0\quad \pi\colon \cC_0\to Z, \quad \text{and} \quad h\colon \cC_0\to \cC_0
\]
such that $i$ and $\pi$ are chain maps, $i\circ \pi=\id+[m_1,h]$, $\pi\circ i=\id$, $h\circ i=0$, $\pi\circ h=0$ and $h\circ h=0$. The map $i$ is given by 
\[
i(\scU_1^n\ve{a})=\scU_1^n \ve{a}\quad \text{and} \quad i(\scV_1^n\ve{d})=\scV_1^n \ve{d}.
\]
The map $\pi$ is given by
\[
\pi(\scU_1^n \scV_1^m\ve{a})=\begin{cases}\scU_1^n \ve{a}& \text{ if } m=0\\
0& \text{ otherwise},
\end{cases}
\quad \text{and} \quad
\pi(\scU_1^n \scV_1^m\ve{d})=\begin{cases}\scV_1^m \ve{d}& \text{ if } n=0\\
0& \text{ otherwise}.
\end{cases}
\]
 The map $h$ is given by
\[
h(\scU_1^n\scV_1^m\ve{a})=\begin{cases} \scU_1^n \scV_1^{m-1}\ve{c}  &\text{if } m\ge 1\\
0 & \text{otherwise,}
\end{cases}
\quad \text{and} \quad h(\scU_1^n\scV_1^m\ve{d})=\begin{cases} \scU_1^{n-1} \scV_1^{m}\ve{b}  &\text{if } n\ge 1\\
0 & \text{otherwise.}
\end{cases}
\]
The homological perturbation lemma for $A_\infty$-modules, Lemma~\ref{lem:homological-perturbation-modules}, endows $Z$ with the structure of a right $A_\infty$-module over the exterior algebra $\cE_2$, for which we write $Z_{\cE_2}$, which is $A_\infty$-homotopy equivalent to $\cC_{\cE_2}$. In fact, the homotopy equivalence is given explicitly by the homological perturbation lemma. The maps $i$, $\pi$ and $h$ extend to $A_\infty$-module morphisms $i_*$, $\pi_*$ and $h_*$. We box these morphisms with the identity map on ${}^{\cE_2} \scK^{P_2}$ to obtain maps of type-$D$ modules
\[
\Pi^1\colon \cC^{P_2}\to Z^{P_2}, \quad I^1\colon Z^{P_2}\to \cC^{P_2}\quad \text{and} \quad H^1\colon \cC^{P_2}\to \cC^{P_2}.
\]

\begin{lem}
\label{lem:homological-perturbation-type-Z-D} The morphisms $I^1$ and $\Pi^1$ are type-$D$ homomorphisms (i.e. $\d_{\Mor}(I^1)=0$ and $\d_{\Mor}(\Pi^1)=0$). Furthermore
\begin{enumerate}
\item\label{lem:DA-comp-claim-1} $\Pi^1\circ I^1=\id$
\item\label{lem:DA-comp-claim-2} $H^1\circ H^1=0$.
\item\label{lem:DA-comp-claim-3} $H^1\circ I^1=0$.
\item\label{lem:DA-comp-claim-4} $\Pi^1\circ H^1=0$. 
\item\label{lem:DA-comp-claim-5} $I^1\circ \Pi^1=\id+\d_{\Mor}(H^1)$.
\end{enumerate}
\end{lem}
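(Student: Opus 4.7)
The plan is to reduce the entire lemma to the homological perturbation lemma for $A_\infty$-modules (Lemma~\ref{lem:homological-perturbation-modules}) applied on the $\Lambda_2$-side, and then transport the resulting identities across the box tensor product with ${}^{\Lambda_2}\scK^{P_2}$ using the functoriality of boxing. The key initial observation is that the isomorphism $\cC^{P_2} \iso \cC_{\Lambda_2} \boxtimes {}^{\Lambda_2}\scK^{P_2}$ from Section~\ref{sec:perspectives} allows one to separate the chain-level data (living over $P_1$) from the $P_2$-algebra data (encoded by $\scK$).

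First I would verify the input to the homological perturbation lemma. The triple $(i,\pi,h)$ is defined on the underlying $P_1$-chain complex $\cC_0$ obtained from $\cC_{\Lambda_2}$ by forgetting the $\Lambda_2$-action; the side conditions $\pi i = \id$, $i\pi = \id + [m_1,h]$, $h^2=0$, $hi=0$, $\pi h = 0$ are already given in the excerpt. Lemma~\ref{lem:homological-perturbation-modules} then produces an $A_\infty$-module $Z_{\Lambda_2}$ (extending $m_1^Z = 0$) together with $A_\infty$-module morphisms $i_*, \pi_*, h_*$ extending $i,\pi,h$ and satisfying the $A_\infty$ analogues of all five relations, with $\pi_*$ and $i_*$ being cycles in the $A_\infty$-morphism complex.

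Next I would invoke the general principle that the assignment $M_{\Lambda_2} \mapsto M_{\Lambda_2} \boxtimes {}^{\Lambda_2}\scK^{P_2}$ is a functor from (bounded enough) $A_\infty$-modules to type-$D$ modules which moreover intertwines the morphism differential with $\d_{\Mor}$ on type-$D$ morphisms. Concretely, for any $A_\infty$-module morphisms $f_*, g_*$ one has
\begin{equation*}
(f_* \circ g_*) \boxtimes \id \;=\; (f_* \boxtimes \id) \circ (g_* \boxtimes \id), \qquad \d_{\Mor}\bigl(f_* \boxtimes \id\bigr) \;=\; \d_{\Mor}(f_*)\,\boxtimes \id.
\end{equation*}
The DD-bimodule ${}^{\Lambda_2}\scK^{P_2}$ has one-dimensional underlying group and $\delta^{1,1}$ uses only algebra elements in the augmentation ideal, so it is bounded and the box tensor products as well as their compositions are honestly defined. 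Under this functor, $i_*,\pi_*,h_*$ go to $I^1,\Pi^1,H^1$; cycles go to cycles (giving $\d_{\Mor}(I^1)=\d_{\Mor}(\Pi^1)=0$); and the five $A_\infty$-level equalities $\pi_* i_* = \id$, $h_*^2 = 0$, $h_* i_* = 0$, $\pi_* h_* = 0$, $i_* \pi_* = \id + \d_{\Mor}(h_*)$ transport verbatim to claims~\eqref{lem:DA-comp-claim-1}--\eqref{lem:DA-comp-claim-5}.

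The only step that requires genuine care is the functoriality statement itself, since although it is a standard consequence of the definition of $\boxtimes$ in \cite{LOTBimodules}, the relations $h_*\circ h_* = 0$ and $i_*\circ \pi_* = \id+\d_{\Mor}(h_*)$ involve compositions of $A_\infty$-morphisms with themselves. I would spell out the chain-level definitions in one short paragraph: namely, the tree-sum defining composition of $A_\infty$-morphisms, after boxing with the DD identity of $\scK$, reassembles to precisely the tree-sum defining composition of type-$D$ morphisms, because every algebra output of the $A_\infty$-side is consumed by the DD structure map and then multiplied in $P_2$ by $\mu_2$, with no additional $\mu_1$ or $\mu_{\ge 3}$ contributions since $P_2$ is an associative algebra with trivial differential. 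This is the main (mild) technical point; once it is in place, the lemma is immediate.
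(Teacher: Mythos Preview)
Your proposal is correct and follows essentially the same approach as the paper: apply the homological perturbation lemma for $A_\infty$-modules to obtain the relations for $i_*,\pi_*,h_*$, then transport them via functoriality of boxing with $\bI_{\scK}$. The paper streamlines your final paragraph by citing \cite{LOTBimodules}*{Remark~2.2.28} for $(f_*\boxtimes\bI_{\scK})\circ(g_*\boxtimes\bI_{\scK})=(f_*\circ g_*)\boxtimes\bI_{\scK}$ (which holds because $\Lambda_2$ is associative) and \cite{LOTBimodules}*{Lemma~2.3.3(1)} for $\d_{\Mor}(f_*\boxtimes\bI_{\scK})=\d_{\Mor}(f_*)\boxtimes\bI_{\scK}$.
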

\begin{proof}Lemma~\ref{lem:homological-perturbation-modules} (the homological perturbation lemma) implies that the stated formulas hold for $i_*$, $h_*$ and $\pi_*$. Boxing with the identity map preserves these relations since the algebra is $\cE_2$ (which is an associative algebra), so
\[
(f_*\boxtimes \bI_{\scK})\circ (g_*\boxtimes \bI_{\scK})=((f_*\circ g_*)\boxtimes \bI_{\scK}),
\]
by \cite{LOTBimodules}*{Remark~2.2.28}. Furthermore boxing with the identity is a chain map:
\[
\d_{\Mor}(f_*\boxtimes \bI_{\scK})=(\d_{\Mor}(f_*)\boxtimes \bI_{\scK}).
\]
See \cite{LOTBimodules}*{Lemma~2.3.3 (1)}. 
\end{proof}

It is enlightening to compute concrete formulas for the morphisms $\Pi^1$, $H^1$ and $I^1$:
\begin{lem}\label{lem:type-D-comp-v1}
\,
\begin{enumerate}
\item $I^1$ is given by $I^1(\xs)=\xs\otimes 1$, for $\ve{x}\in Z$.
\item $\Pi^1$ vanishes on $\Span(\ve{b},\ve{c})$, and satisfies:
\[
\begin{split}
\Pi^1(\scU_1^i \scV_1^j \ve{a})&=\begin{cases}
\scU_1^{i-j} \ve{a}\otimes \scU_2^{j}\scV_2^{j}& \text{if }i\ge j\\
\scV_1^{j-i-1} \ve{d} \otimes \scU_2^{i} \scV_2^{i+1}& \text{if } i<j
\end{cases}
\\
\Pi^1(\scU_1^i \scV_1^j \ve{d})&=\begin{cases} \scU_1^{i-j-1}\ve{a}\otimes \scU_2^{j+1} \scV_2^{j}& \text{if } i>j\\
\scV_1^{j-i}\ve{d}\otimes \scU_2^{i}\scV_2^i& \text{if } i \le j.
\end{cases}
\end{split}
\]
\item $H^1$ vanishes on $\Span(\ve{b},\ve{c})$, and maps $\Span(\ve{a},\ve{d})$ to $\Span(\ve{b},\ve{c})\otimes P_2$. It is given by the formula
\[
H^1(\scU_1^i\scV_1^j \ve{a})=\begin{cases}\scU_1^i \scV_1^{j-1} \ve{c}\otimes 1+ \scU_1^{i-1} \scV_1^{j-1}\ve{b}\otimes \scV_2+\cdots+
\scU_1^{i-j} \ve{b} \otimes \scU_2^{j-1} \scV_2^j& \text{if } j\le i\\
\scU_1^i \scV_1^{j-1} \ve{c}\otimes 1+\scU_1^{i-1} \scV_1^{j-1}\ve{b}\otimes \scV_2+\cdots +\scV_1^{j-i-1} \ve{c}\otimes \scU_2^{i} \scV_2^{i}& \text{if } j>i.
\end{cases}
\]
and
\[
H^1(\scU_1^i \scV_1^j \ve{d})=
\begin{cases}
\scU_1^{i-1} \scV_1^j\ve{b}\otimes 1+\scU_1^{i-1} \scV_1^{j-1} \ve{c}\otimes \scU_2+\cdots + \scU_1^{i-j-1} \ve{b}\otimes \scU_2^j \scV_2^j& \text{ if }i> j\\
\scU_1^{i-1} \scV_1^j\ve{b}\otimes 1+\scU_1^{i-1} \scV_1^{j-1} \ve{c}\otimes \scU_2+\cdots +\scV_1^{j-i}\ve{c}\otimes \scU_2^{i}\scV_2^{i-1}& \text{ if } i\le j.
\end{cases}
\]
\end{enumerate}
\end{lem}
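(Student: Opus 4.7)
The plan is to invoke the homological perturbation lemma (Lemma~\ref{lem:homological-perturbation-modules}) on ${}_{P_1}\cC_{\Lambda_2}$ with transfer data $(i,\pi,h)$, producing $A_\infty$-morphisms $i_*,\pi_*,h_*$; the maps $I^1,\Pi^1,H^1$ in the statement are then exactly the box tensor products of these $A_\infty$-morphisms with the identity on ${}^{\Lambda_2}\scK^{P_2}$, as already set up in Lemma~\ref{lem:homological-perturbation-type-Z-D}. What remains is to expand those box tensor products explicitly on basis elements.

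The essential simplification I would use is that $\cC_{\Lambda_2}$ is an honest $dg$-module over $\Lambda_2$, so among the tree compositions of Figure~\ref{fig:homological-perturbation} contributing to $i_{j+1},\pi_{j+1},h_{j+1}$, every $m_{>1}$ vertex is forced to be $m_2$ and to consume exactly one algebra input. For $j\ge 1$ this gives
\begin{align*}
i_{j+1}(a_j,\dots,a_1,\xs)&=\bigl(h\circ m_2(a_j,-)\bigr)\circ\cdots\circ\bigl(h\circ m_2(a_1,-)\bigr)\circ i(\xs),\\
\pi_{j+1}(a_j,\dots,a_1,\xs)&=\pi\circ\bigl(m_2(a_j,-)\circ h\bigr)\circ\cdots\circ\bigl(m_2(a_1,-)\circ h\bigr)(\xs),\\
h_{j+1}(a_j,\dots,a_1,\xs)&=h\circ\bigl(m_2(a_j,-)\circ h\bigr)\circ\cdots\circ\bigl(m_2(a_1,-)\circ h\bigr)(\xs),
\end{align*}
with each $a_k\in\{\phi_2,\psi_2\}$. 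Boxing with ${}^{\Lambda_2}\scK^{P_2}$, whose sole structure map is $\delta^{1,1}(1)=\phi_2|1|\scU_2+\psi_2|1|\scV_2$, replaces each input $\phi_2$ (resp.\ $\psi_2$) by an outgoing $P_2$-factor $\scU_2$ (resp.\ $\scV_2$).

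Next I would record the combinatorial heart of the calculation: the composition $h\circ m_2(a,-)$ traces out a deterministic orbit on the $P_1$-basis of $\cC_0$. Using the explicit formulas for $h$, $m_2(\phi_2,-)$, and $m_2(\psi_2,-)$, the only non-vanishing choice of $a$ at each stage forces the cyclic pattern
\[
\scU_1^p\scV_1^q\ve{a}\xrightarrow{\,h\,}\scU_1^p\scV_1^{q-1}\ve{c}\xrightarrow{\psi_2}\scU_1^p\scV_1^{q-1}\ve{d}\xrightarrow{\,h\,}\scU_1^{p-1}\scV_1^{q-1}\ve{b}\xrightarrow{\phi_2}\scU_1^{p-1}\scV_1^{q-1}\ve{a},
\]
so that each full cycle decrements both $p$ and $q$ by one and contributes $\scU_2\scV_2$ of outgoing algebra; the orbit starting at $\ve{d}$ is this pattern shifted by two steps, decrementing $p$ first. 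The orbit halts the instant $h$ is applied to $\scU_1^p\ve{a}$ (no $\scV_1$) or $\scV_1^q\ve{d}$ (no $\scU_1$), since $h$ returns zero there, and the opposite choice of $a$ gives zero at every non-halting step.

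From here the three formulas will fall out mechanically. For $I^1$, the identity $h\circ i=0$ kills every $j\ge 1$ contribution, leaving $I^1(\xs)=i(\xs)\otimes 1=\xs\otimes 1$. Both $\Pi^1$ and $H^1$ vanish on the $P_1$-span of $\ve{b}$ and $\ve{c}$ because $h$ and $\pi$ both do. On $\scU_1^i\scV_1^j\ve{a}$ the orbit contributes to $\Pi^1$ only through its terminal $\pi$, which is non-zero precisely at the moment the orbit halts on $\scU_1^p\ve{a}$ or $\scV_1^q\ve{d}$; this occurs after $2\min(i,j)$ steps at $\scU_1^{i-j}\ve{a}$ when $i\ge j$ and after $2i+1$ steps at $\scV_1^{j-i-1}\ve{d}$ when $i<j$, with the accompanying $P_2$-monomial read straight off the algebra input sequence, and the analysis starting from $\ve{d}$ is parallel. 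For $H^1$ every intermediate $h$-application along the orbit contributes a term, giving precisely the telescoping sums in the statement. The main obstacle, such as it is, will be purely clerical: matching the orbit length to the stated indexing in each sub-case $i\gtreqless j$ and verifying that the last surviving term is the one specified by the formula.
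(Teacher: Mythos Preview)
Your approach is essentially identical to the paper's: both apply the homological perturbation lemma to $\cC_{\Lambda_2}$, box the resulting $A_\infty$-morphisms $i_*,\pi_*,h_*$ with $\bI_{\scK}$, and then trace out the forced orbit under alternating $h$ and $m_2$ to read off the explicit formulas.

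One small correction to your reasoning for $I^1$: you cite $h\circ i=0$ as the reason every $j\ge 1$ contribution vanishes, but in your own formula
\[
i_{j+1}(a_j,\dots,a_1,\xs)=\bigl(h\circ m_2(a_j,-)\bigr)\circ\cdots\circ\bigl(h\circ m_2(a_1,-)\bigr)\circ i(\xs)
\]
there is always an $m_2$ sitting between $i$ and the first $h$, so $h\circ i=0$ does not directly apply. The actual reason (and the one the paper gives) is that the $\Lambda_2$-action $m_2(\phi_2,-)$ and $m_2(\psi_2,-)$ is supported on $\ve{b}$ and $\ve{c}$, while $\im i\subset\Span(\ve{a},\ve{d})$; hence already $m_2(a_1,-)\circ i=0$. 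With this fix, the rest of your argument goes through exactly as written.
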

\begin{proof} Lemma~\ref{lem:homological-perturbation-modules} gives a concrete formula for the $A_\infty$-action on $Z_{\cE_2}$. We will prove the first statement, and also compute $\Pi^1(\scU_1^i\scV_1^j \ve{a})$ when $i\ge j$ in order to illustrate the technique. We will leave the remaining cases to the reader.

Firstly, we begin with $I^1$. We are boxing with $\bI_{\scK}$, so suppose that $a_1|\cdots| a_n|1|b_n|\cdots |b_1$ is the output of repeated applications of $\delta^{1,1}$ on ${}^{\cE_2}\scK^{P_2}$. The homological perturbation lemma gives a recipe for $m_{n+1}^Z(\ve{x},a_1,\dots, a_n)$. See Figure~\ref{fig:homological-perturbation}. The recipe is to include $\xs$ into $\cC$ via $i$. We then apply $m_2(-,a_1)$, then $h$, then $m_2(-,a_2)$, then $h$, and so forth, until one applies $m(-,a_n)$. Then we apply $\pi$. The algebra output is the product $b_n\cdots b_1$. The $m_2$ action of $\cE_2$ on $\cC$ vanishes on the image of $i$, so there are no contributions unless $n=0$. The formula follows.

We now compute $\Pi^1(\scU_1^i \scV_1^j \ve{a})$ for $i\ge j$. We begin at $\scU_1^i \scV_1^j \ve{a}\in \cC$. If $j=0$, then $h(\scU_1^i \scV_1^j\ve{a})=0$, so our only option is to apply $\pi$, which gives $\scU_1^i \ve{a}$ as claimed. If $j>0$, the only option is to apply $h$ to get $\scU_1^i \scV_1^{j-1}\ve{c}$. Then we apply $m_2(-,\psi_2)$ which gives $\scU_1^i \scV_1^{j-1}\ve{d}\otimes \scV_2$. We then apply $h$ and $m_2(-,\phi_2)$ and we get $\scU_1^{i-1} \scV_1^{j-1}\ve{a}\otimes \scU_2\scV_2$. We repeat this procedure until we cannot apply $h$ or $m_2$ any more. In the case that $i\ge j$, the final term in this sequence will be $\scU_1^{i-j} \ve{a}\otimes \scU_2^{j}\scV_2^{j}$. All of the remaining claims follow a similar analysis.
\end{proof}

Lemma~\ref{lem:homological-perturbation-type-Z-D} allows us to apply the homological perturbation lemma of $DA$-bimodules, Lemma~\ref{lem:homological-perturbation-DA-modules}, which equips $Z$ with a $DA$-bimodule structure ${}_{P_1} Z^{P_2}$, and supplies morphisms of $DA$-bimodules
\[
\Pi_*^1\colon {}_{P_1} \cC^{P_2}\to {}_{P_1} Z^{P_2}, \quad I_*^1 \colon {}_{P_1} Z^{P_2}\to {}_{P_1} \cC^{P_2} \quad \text{and} \quad H_*^1\colon {}_{P_1} \cC^{P_2}\to {}_{P_1} \cC^{P_2}
\] 
which satisfy relations identical to Lemma~\ref{lem:homological-perturbation-type-Z-D}.

\begin{lem}\, \label{lem:Hopf-reduction-properties}
\begin{enumerate}
\item The structure maps $\delta_j^1$ on ${}_{P_1} Z^{P_2}$ vanish if $j\neq 2$.
\item The maps $\Pi_j^1$ and $H_j^1$ vanish unless $j=1$.
\item The map $I_j^1$ vanishes if $j>2$.
\item The maps $\delta_*^1$ and $I_*^1$ are strictly unital i.e. they vanish if $1$ is an input, except for $\delta_2^1$, which satisfies $\delta_2^1(1\otimes x)=x\otimes 1$. (Note that $H_j^1$ and $\Pi_j^1$ are trivially unital, since they are only non-trivial if $j=1$).
\end{enumerate}
\end{lem}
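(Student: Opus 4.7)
The plan is to verify all four claims by a direct diagram chase, reading each perturbed map off the explicit formula from the $DA$-bimodule homological perturbation lemma (Figure~\ref{fig:structure-map-homological-perturbation-bimodules}) and using the concrete shapes of $i^1$, $\pi^1$, $h^1$ recorded in Lemma~\ref{lem:type-D-comp-v1}.

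First I will record two simplifications. Since ${}_{P_1}\cC^{P_2}$ has only $\delta_1^1$ and $\delta_2^1$ non-vanishing, and $\delta_2^1(p,\ve{x}) = p\cdot\ve{x}\otimes 1$ has trivial $P_2$-output, every ``$\delta_{>1}^1$'' slot in the perturbation diagrams is filled by $\delta_2^1$; each such slot merely multiplies the current module element by one algebra input in $P_1$, contributing nothing new to the $P_2$-side. Second, the decomposition $\cC = \bigl(P_1\cdot\Span(\ve{a},\ve{d})\bigr) \oplus \bigl(P_1\cdot\Span(\ve{b},\ve{c})\bigr)$ is preserved by $P_1$-multiplication, so this decomposition is compatible with every $\delta_2^1$-slot.

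Next I will read off from Lemma~\ref{lem:type-D-comp-v1} the following support/kernel properties: $i^1$ has image in $\Span(\ve{a},\ve{d})\otimes 1$; $h^1$ takes values in $\bigl(P_1\cdot \Span(\ve{b},\ve{c})\bigr)\otimes P_2$ and vanishes on $P_1\cdot\Span(\ve{b},\ve{c})$; and $\pi^1$ vanishes on $P_1\cdot\Span(\ve{b},\ve{c})$. Together with $h^1\circ i^1|_Z = 0$ from Lemma~\ref{lem:homological-perturbation-type-Z-D}, these observations turn each claim into a short check. For Claim (1), the case $j=1$ is immediate, since $I^1(Z)\subset \Span(\ve{a},\ve{d})\otimes 1$ and $\delta_1^1$ of $\cC^{P_2}$ vanishes on $\Span(\ve{a},\ve{d})$; and for $j\geq 3$, the diagram for $\delta_j^1$ contains at least one $h^1$ with another $h^1$ (or the closing $\pi^1$) after it, separated only by $\delta_2^1$'s, so the first such $h^1$ lands in $P_1\cdot\Span(\ve{b},\ve{c})$, subsequent $\delta_2^1$'s preserve this, and the next $h^1$ or the final $\pi^1$ kills it. Claim (2) for $j\geq 2$ is the same argument, except now the diagrams for $\Pi_j^1$ and $H_j^1$ begin with $h^1$ applied directly to the free module input of $\cC$, so landing in $P_1\cdot\Span(\ve{b},\ve{c})$ is the opening move. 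Claim (3) for $j\geq 3$ follows similarly, since the $I_j^1$-diagram ends with $h^1$: after the first $\delta_2^1$ and $h^1$ we are in $P_1\cdot\Span(\ve{b},\ve{c})$, and the final $h^1$ finishes the job. Claim (4) follows from the vanishing of the higher maps together with $\delta_2^1(1,\ve{x}) = \Pi^1(i^1(\ve{x})) = \ve{x}\otimes 1$ and $I_2^1(1,\ve{x}) = H^1(i^1(\ve{x})) = 0$ for $\ve{x}\in Z$, both immediate from $\pi^1\circ i^1 = \id$ and $h^1\circ i^1 = 0$ on $Z$.

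There is essentially no genuine obstacle: the proof is mechanical once the two-summand decomposition above is in hand. The only mildly delicate point is keeping track of the $P_2$-outputs accumulated along the chain; these decorate module elements without disturbing which summand of $\cC$ the module factor lives in, so they play no role in the vanishing arguments.
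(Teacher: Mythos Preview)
Your proof is correct and follows essentially the same approach as the paper: both arguments trace through the explicit perturbation formulas using the decomposition of $\cC$ into $P_1\cdot\Span(\ve{a},\ve{d})$ and $P_1\cdot\Span(\ve{b},\ve{c})$, together with the fact that $H^1$ maps the first summand into the second and both $H^1$ and $\Pi^1$ vanish on the second. Your treatment is slightly more explicit about the $j=1$ case of Claim~(1) and about strict unitality in Claim~(4), whereas the paper handles these more tersely (the vanishing of $\delta_1^1$ is essentially by construction of $Z^{P_2}$, and unitality is attributed directly to the perturbation lemma), but the substance is identical.
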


\begin{proof} The proofs of all statements are by explicit examination of the maps from the homological perturbation lemma.

We begin with the statements about $\delta_j^1$. Consider first a sequence of algebra elements $(a_n,\dots, a_1)$ in $P_1$. The rule for computing $\delta_j^1$ is to input via $I^1$, then apply $m_2$, then we apply pairs of $H^1$ followed by $m_2$ until we exhaust $(a_n,\dots, a_1)$, and then finally we apply $\Pi_1$. The algebra elements output are multiplied together by applying $\mu_2$ repeatedly. However applying $m_2$ does not move an elements position in $\cC$ (i.e. $\ve{a}$, $\ve{b}$, $\ve{c}$ or $\ve{d}$). The map $H^1$ does change the generator, and it maps $\Span(\ve{a},\ve{d})$ to $\Span(\ve{c},\ve{b})\otimes P_2$, and vanishes on $\Span(\ve{c},\ve{b})$. If we apply another $m_2$, we remain in $\Span(\ve{c},\ve{b})$. We note that $\Pi^1$ and $H^1$ both vanish on $\Span(\ve{c},\ve{b})$. In particular, the only terms making non-trivial contribution are those with one $\delta_2^1$, and no $H^1$ term. The same argument shows that $\Pi_k^1$ and $H_k^1$ vanish if $k>1$. 

To compute the map $I_{j+1}^1(a_j,\dots, a_1, \xs)$, the recipe is to first include $\xs$ into $\cC\otimes P_2$ via $I^1$. This includes $\xs$ into $\Span(\ve{a},\ve{d})\otimes P_2$. We then apply $\delta_2^1(a_1,-)$. This preserves $\Span(\ve{a},\ve{d})\otimes P_2$. Next we apply $H^1$, which maps $\Span(\ve{a},\ve{d})\otimes P_2$ to $\Span(\ve{b},\ve{c})\otimes P_2$. Any further applications of  $\delta_2^1(a_n,-)$ followed by $H^1$ would map to zero. Hence we may have $I_2^1$, but not $I_j^1$ for $j>2$.

The final statement about being strictly unital follows from the homological perturbation lemma. 
\end{proof}

It is helpful to explicitly compute $\delta_2^1$ on ${}_{P_1} Z^{P_2}$. Since there is no $\delta_j^1$ for $j>2$ by Lemma ~\ref{lem:Hopf-reduction-properties}, we compute only $\delta_2^1(\scU_1,-)$ and $\delta_2^1(\scV_1,-)$.

\begin{lem}
\label{lem:minimal-model-B-1} The map $\delta_2^1$ on ${}_{P_1} Z^{P_2}$ satisfies the following.
\begin{enumerate}
\item $\delta_2^1(\scU_1,\scU_1^n\ve{a})=\scU_1^{n+1}\ve{a}\otimes 1$.
\item $
\delta_2^1(\scV_1,\scU_1^n\ve{a})=\begin{cases}
\scU_1^{n-1}\ve{a}\otimes \scU_2\scV_2& \text{ if } n>0\\
\ve{d}\otimes \scV_2& \text{ if } n=0.
\end{cases}$
\item $\delta_2^1(\scV_1, \scV_1^m \ve{d})=\scV_1^{m+1} \ve{d}\otimes 1$.
\item $
\delta_2^1(\scU_1, \scV_1^m \ve{d})=\begin{cases}\scV_1^{m-1}\ve{d}\otimes \scU_2 \scV_2 &\text{ if } m>0\\
\ve{a}\otimes \scU_2& \text{ if } m=0.
\end{cases}$
\end{enumerate}
\end{lem}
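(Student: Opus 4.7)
The plan is to apply the homological perturbation lemma for $DA$-bimodules (Lemma~\ref{lem:homological-perturbation-DA-modules}), whose recipe for $\delta_{j+1}^1$ is encoded in Figure~\ref{fig:structure-map-homological-perturbation-bimodules}. With only one incoming algebra element, any tree contributing to $\delta_2^1(a,\xs)$ begins with $I^1$, passes through one or more alternating applications of $\delta_{>1}^1$ (from ${}_{P_1}\cC^{P_2}$) and $H^1$, and ends with $\Pi^1$, with outputs collected by $\mu_2$. Because the morphisms from Lemma~\ref{lem:homological-perturbation-type-Z-D} satisfy $H^1\circ I^1=0$ and $\Pi^1\circ H^1=0$, any tree using $H^1$ at all would have to involve at least two $\delta^1_{>1}$ insertions to separate $H^1$ from both $I^1$ and $\Pi^1$, which is impossible when only one algebra input is available. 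Hence the only surviving tree is
\[
\delta_2^1(a,\xs)=(\Pi^1\otimes\mu_2)\circ(\delta_2^1_{\cC}\otimes \id)\circ (\id\otimes I^1)(a,\xs).
\]
Since $I^1(\xs)=\xs\otimes 1$ by Lemma~\ref{lem:type-D-comp-v1} and $\delta_2^1$ for ${}_{P_1}\cC^{P_2}$ is given by polynomial multiplication, this reduces to $\delta_2^1(a,\xs)=\Pi^1(a\cdot \xs)$, and the problem becomes a direct evaluation of $\Pi^1$.

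Once this reduction is in place, each of the four cases follows by plugging into the explicit formula for $\Pi^1$ from Lemma~\ref{lem:type-D-comp-v1}. For (1), $a\cdot\xs=\scU_1^{n+1}\ve{a}$, so $(i,j)=(n+1,0)$ with $i\ge j$, giving $\Pi^1=\scU_1^{n+1}\ve{a}\otimes 1$. For (2), $a\cdot\xs=\scU_1^{n}\scV_1\ve{a}$: when $n\ge 1$ we have $i=n\ge j=1$, producing $\scU_1^{n-1}\ve{a}\otimes\scU_2\scV_2$, while when $n=0$ we have $i=0<1=j$, producing $\ve{d}\otimes\scV_2$. Cases (3) and (4) are parallel: for (3), $a\cdot\xs=\scV_1^{m+1}\ve{d}$ has $i=0\le m+1=j$, yielding $\scV_1^{m+1}\ve{d}\otimes 1$; for (4), $a\cdot\xs=\scU_1\scV_1^{m}\ve{d}$ splits into the subcases $m\ge 1$ (giving $\scV_1^{m-1}\ve{d}\otimes\scU_2\scV_2$) and $m=0$ (giving $\ve{a}\otimes\scU_2$).

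The main obstacle, such as it is, is conceptual rather than computational: one must be confident that the auxiliary trees containing $H^1$ contribute nothing, which is guaranteed by the homotopy identities collected in Lemma~\ref{lem:homological-perturbation-type-Z-D}. After this observation, the proof is entirely bookkeeping in the explicit formulas for $\Pi^1$ already established in Lemma~\ref{lem:type-D-comp-v1}.
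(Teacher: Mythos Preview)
Your proof is correct and follows exactly the approach the paper sets up (the paper itself leaves this lemma to the reader, having already computed $\Pi^1$ in Lemma~\ref{lem:type-D-comp-v1}). One small remark: the absence of $H^1$-terms in the single-input case is already forced by the shape of the trees in Figure~\ref{fig:structure-map-homological-perturbation-bimodules}---each $h^1$ sits between two instances of $\delta_{>1}^1$, each of which consumes at least one algebra input---so the side conditions $H^1\circ I^1=0$ and $\Pi^1\circ H^1=0$ are not actually what prunes the tree here.
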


Finally, the complex $\scV_1^{-1} \cC$ will also be important to understand. Similarly to $\cC$, this the complex $\scV_1^{-1} \cC$ can be reduced in size via a homotopy equivalence. We note that $\scV_1^{-1}\cC$ is the $DA$-bimodule
\[
\scV_1^{-1}\cC\iso
 \begin{tikzcd}[labels=description,row sep=1.4cm, column sep=1.4cm] 
 {\ve{a}[\scU_1,\scV_1,\scV_1^{-1}]}
& {\ve{b}[\scU_1,\scV_1,\scV_1^{-1}]}
	\ar[l, "1|\scU_2"]
	\ar[d, "\scU_1|1"]
 \\
{\ve{c}[\scU_1,\scV_1,\scV_1^{-1}]}
	 \ar[r,"1|\scV_2"]
	 \ar[u, "\scV_1|1"]
& 
{\ve{d}[\scU_1,\scV_1,\scV_1^{-1}]}.
\end{tikzcd}
\]
In the above, all arrows denote $\delta_1^1$. The actions of $\delta_2^1$ are given by ordinary polynomial multiplication. Since $\scV_1$ is invertible, the arrow labeled $\scV_1|1$ and the two generators $\ve{a}$ and $\ve{c}$ may be completely canceled from the complex. Hence the above is homotopy equivalent to a $DA$-bimodule whose underlying type-$D$ structure is shown below:
\[
\begin{tikzcd}\ve{b}[\scU_1,\scV_1,\scV_1^{-1}]\ar[r, "\scU_1|1"]&\ve{d}[\scU_1,\scV_1,\scV_1^{-1}] \end{tikzcd}.
\]
This complex above may be further reduced as a type-$D$ structure, giving a minimal model, for which we write ${}_{\scV_1^{-1}P_1}W^{P_2}$.  As a vector space $W\iso \ve{d}[\scV_1,\scV_1^{-1}]$.  The same homological perturbation argument as before equips $W$ with bimodule structure ${}_{\scV_1^{-1}P_1} W^{P_2}$ which has $\delta_{j}^1=0$ unless $j=2$. For completeness we record the actions:

\begin{lem}\label{lem:minimal-model-B-2} The bimodule ${}_{\scV_1^{-1} P_1} W^{P_2}$ has the following action:
\[
\delta_2^1(\scU_1^i \scV_1^j, \scV_1^n\ve{d})=\scV_1^{j+n-i}\ve{d}\otimes \scU_2^{i}\scV_2^{i}.
\]
\end{lem}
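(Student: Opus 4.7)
The proof follows the strategy of Section~\ref{sec:minimal-models-Hopf-link}, specialized to the localized bimodule ${}_{\scV_1^{-1}P_1}(\scV_1^{-1}\cC)^{P_2}$. First, the decomposition $\cC\iso \cC_{\Lambda_2}\boxtimes {}^{\Lambda_2}\scK^{P_2}$ from Section~\ref{sec:perspectives} extends to $\scV_1^{-1}\cC\iso (\scV_1^{-1}\cC_{\Lambda_2})\boxtimes {}^{\Lambda_2}\scK^{P_2}$, since only the left $P_1$-action is being localized. Forgetting the $\Lambda_2$-action produces a chain complex $\scV_1^{-1}\cC_0$ over $\scV_1^{-1}P_1$ whose differentials are $\d\ve{a}=\d\ve{d}=0$, $\d\ve{b}=\scU_1\ve{d}$, $\d\ve{c}=\scV_1\ve{a}$; a direct cycle-modulo-boundary calculation identifies $H_*(\scV_1^{-1}\cC_0)$ with $W$.

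The next step is to construct an explicit strong deformation retract $(i,\pi,h)$ of $\scV_1^{-1}\cC_0$ onto $W$. I would take $i(\scV_1^n\ve{d})=\scV_1^n\ve{d}$; let $\pi$ be the identity on $\Span(\scV_1^n\ve{d}:n\in\bZ)$ and vanish on all other basis elements (in particular on every $\scU_1^i\scV_1^j\ve{d}$ with $i\geq 1$); and define $h(\scU_1^i\scV_1^j\ve{a})=\scU_1^i\scV_1^{j-1}\ve{c}$ for all $i,j\in\bZ$ with $i\ge 0$, together with $h(\scU_1^i\scV_1^j\ve{d})=\scU_1^{i-1}\scV_1^j\ve{b}$ for $i\geq 1$, extended by zero on $\ve{b}$ and $\ve{c}$. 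The crucial difference from Section~\ref{sec:minimal-models-Hopf-link} is that the invertibility of $\scV_1$ allows $h$ to kill every $\scU_1^i\scV_1^j\ve{a}$ via $\scV_1^{-1}\d\ve{c}$, which is what removes the $\ve{a}$-summand in the minimal model. The identities $\pi i=\id$, $i\pi+[\d,h]=\id$, and $h\circ i=\pi\circ h=h\circ h=0$ are verified directly on each basis element.

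With the chain-level retract in place, Lemma~\ref{lem:homological-perturbation-modules} transfers the strict $\Lambda_2$-action on $\scV_1^{-1}\cC_0$ to an $A_\infty$-action on $W$ and produces $A_\infty$-module morphisms $i_*,\pi_*,h_*$. Boxing with the identity morphism of ${}^{\Lambda_2}\scK^{P_2}$ yields type-$D$ morphisms $I^1,\Pi^1,H^1$ between $(\scV_1^{-1}\cC)^{P_2}$ and $W^{P_2}$ satisfying the analog of Lemma~\ref{lem:homological-perturbation-type-Z-D}. Lemma~\ref{lem:homological-perturbation-DA-modules} then upgrades the left $\scV_1^{-1}P_1$-action to an $A_\infty$-action, producing the claimed bimodule ${}_{\scV_1^{-1}P_1}W^{P_2}$.

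Finally, the explicit formula is verified by tracing the recipe of Figure~\ref{fig:structure-map-homological-perturbation-bimodules}. For $\delta_2^1(\scU_1^i\scV_1^j,\scV_1^n\ve{d})$, one has $I^1(\scV_1^n\ve{d})=\scV_1^n\ve{d}\otimes 1$, one application of the internal $\delta_2^1$ of $(\scV_1^{-1}\cC)^{P_2}$ gives $\scU_1^i\scV_1^{j+n}\ve{d}\otimes 1$, and $\Pi^1$ then iterates the alternating pattern $h\to\phi_2\cdot\to h\to\psi_2\cdot\to h\to\cdots$ forced by the $\Lambda_2$-action on $\cC_{\Lambda_2}$. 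Each complete cycle walks the generator through $\ve{b}\to\ve{a}\to\ve{c}\to\ve{d}$, lowering both the $\scU_1$- and $\scV_1$-powers by one and, after boxing with $\scK$, emitting one $\scU_2\scV_2$ factor; after exactly $i$ cycles the $\scU_1$-power is exhausted and $\pi$ extracts $\scV_1^{j+n-i}\ve{d}\otimes\scU_2^i\scV_2^i$. Any other interleaving of $\phi_2,\psi_2$ inputs is killed by $\phi_2^2=\psi_2^2=0$ in $\Lambda_2$ together with the strict $\Lambda_2$-action on $\cC_{\Lambda_2}$. For $\delta_{j+1}^1$ with $j\geq 2$, the output of $H^1$ always lies in $\Span(\ve{b},\ve{c})\otimes P_2$, where both $h$ and $\pi$ vanish, so any subsequent internal $\delta_2^1$ followed by $\Pi^1$ contributes zero. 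The main bookkeeping obstacle will be matching the iterated $\Lambda_2^{\otimes k}$-outputs from $\delta$ on $\scK$ with the inputs of $\pi_{k+1}$; however, the forced alternation $(\psi_2,\phi_2,\psi_2,\phi_2,\dots)$ singles out a unique surviving ``spiral'' pattern that produces the stated formula.
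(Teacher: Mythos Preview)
Your proposal is correct and follows precisely the approach the paper intends: the paper leaves this lemma to the reader, and your argument carries out the same homological perturbation recipe used for Lemma~\ref{lem:minimal-model-B-1}, with the key simplification that invertibility of $\scV_1$ lets $h$ kill every $\ve{a}$-generator so that the minimal model collapses to $W$. Your retract data $(i,\pi,h)$ and the spiral computation of $\Pi^1$ are exactly the localized analogs of Lemma~\ref{lem:type-D-comp-v1}, and your vanishing argument for $\delta_{j+1}^1$ with $j\geq 2$ mirrors Lemma~\ref{lem:Hopf-reduction-properties}.
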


We leave the proofs of Lemma~\ref{lem:minimal-model-B-1} and ~\ref{lem:minimal-model-B-2} to the reader.

\subsection{Minimal models over $\cK$}
\label{sec:more-minimal-Hopf}
In this section, we describe a minimal model of the $DA$-bimodule ${}_{\cK_1} \cH_{(\lambda_1,0)}^{\cK_2}$. We will denote the minimal model by
\[
{}_{\cK_1}\cZ_{(\lambda_1,0)}^{\cK_2}.
\]

The existence of the minimal model follows from the homological perturbation lemma for hypercubes of $DA$-bimodules. From the description of ${}_{\cK_1}\cH_{(\lambda_1,0)}^{\cK_2}$ it is clear that there is a filtration by the cube $\bE_2$. In Section~\ref{sec:minimal-models-Hopf-link}, we described minimal models of $\cCFL(H)^{\bF[\scU_2,\scV_2]}$ and $\scV_1^{-1} \cCFL(H)^{\bF[\scU_2,\scV_2]}$. Note that we may identify the underlying type-$D$ module $\cH_{\veps}^{\cK_2}$ with one of these type-$D$ modules, for $\veps\in \bE_2$. Hence, the construction from Section~\ref{sec:minimal-models-Hopf-link} may be viewed as giving morphisms of type-$D$ structures
\[
\Pi_{\veps}^1\colon \cH_{\veps}^{\cK_2}\to \cZ_{\veps}^{\cK_2}, \quad I_{\veps}^1 \colon \cZ_{\veps}^{\cK_2}\to \cH_{\veps}^{\cK_2}\quad \text{and} \quad H_{\veps}^1\colon \cH_{\veps}^{\cK_2}\to \cH_{\veps}^{\cK_2}
\]
which induce a homotopy equivalence of type-$D$ structures, such that furthermore the algebraic assumptions of the homological perturbation lemma are satisfied. We obtain a homotopy equivalent hypercube of $DA$-bimodules ${}_{\cK_1} \cZ_{(\lambda_1,0)}^{\cK_2}$ by applying the homological perturbation lemma for hypercubes of $DA$-bimodules, Lemma~\ref{lem:homological-perturbation-DA-hypercube}. We now schematically sketch the induced type-$D$ structure map:
\[
\delta_1^1=\begin{tikzcd}[labels=description, row sep=1.1cm,column sep=.8cm]
 \cZ_{0,0} \ar[d,"{}^{L_2}\! f_1^1+{}^{-L_2}\! f_1^1"] & \cZ_{1,0}\ar[d, "{}^{L_2}\! g_1^1+{}^{-L_2}\! g_1^1"]\\
  \cZ_{0,1}  &\cZ_{1,1} 
  \end{tikzcd}\qquad \qquad 
\delta_2^1=
\begin{tikzcd}[labels=description, row sep=1.1cm,column sep=2.3cm]
 \cZ_{0,0} \ar[loop left, "m_2^1"] \ar[r,"{}^{L_1}\! p_2^1+{}^{-L_1}\! p_2^1"] & \cZ_{1,0} \ar[loop right, "m_2^1"]
 \\
  \cZ_{0,1} \ar[r, "{}^{L_1}\! q_2^1+{}^{-L_1}\! q_2^1"]  \ar[loop left, "m_2^1"] &\cZ_{1,1} \ar[loop right, "m_2^1"] 
  \end{tikzcd} 
\]
\[
\delta_3^1=\begin{tikzcd}[labels=description, row sep=1.1cm,column sep=1cm]
 \cZ_{0,0} \ar[dr, "{}^{-H}\! \omega_3^1"]  & \cZ_{1,0}\\
  \cZ_{0,1}   &\cZ_{1,1} 
  \end{tikzcd} 
\]

\begin{prop}\label{prop:Z_lambda-bimodule} Give the negative Hopf link framing $(\lambda_1,0)$. The structure maps of the minimal model ${}_{\cK_1} \cZ_{(\lambda_1,0)}^{\cK_2}$ are as follows:
\begin{enumerate}
\item The maps $m_2^1$ are the same as the $\delta_2^1$ maps in Lemmas~\ref{lem:minimal-model-B-1} and~\ref{lem:minimal-model-B-2}.
\item \label{num:DA-module-Hopf-formulas-2}The maps ${}^{L_1}\! p_2^1$ and ${}^{L_1}\! q_2^1$ are given by the same formulas as each other, as are ${}^{-L_1}\! p_2^1$ and ${}^{-L_1}\! q_2^1$. They are determined by the following formulas:
\begin{enumerate}
\item ${}^{L_1}\!p_2^1(\sigma_1,\scU_1^i \ve{a})=\scV_1^{-i-1} \ve{d} \otimes \scU_2^i \scV_2^{i+1} $ and ${}^{L_1}\! p_2^1(\sigma_1,\scV_1^j\ve{d})=\scV_1^j \ve{d}\otimes 1$.
\item ${}^{-L_1}\! p_2^1(\tau_1,\scU_1^i\ve{a})=\scV_1^{-i-1+\lambda_1} \ve{d}\otimes 1$ and ${}^{-L_1}\! p_2^1(\tau_1,\scV_1^j \ve{d})=\scV_1^{j+\lambda_1} \ve{d}\otimes \scU_2^{j+1} \scV_2^j.$
\item ${}^{L_1} \!p_2^1(\tau_1,-)=0$ and ${}^{-L_1} \! p_2^1(\sigma_1,-)=0$.
\end{enumerate}
\item The maps for $\pm L_2$ are as follows:
\begin{enumerate}
\item ${}^{L_2}\! f_1^1(\scU_1^i\ve{a})=\scU_1^i \ve{a}\otimes \sigma_2$ and ${}^{L_2}\! f_1^1(\scV_1^j \ve{d})=\scV_1^j \ve{d}\otimes \sigma_2$.
\item ${}^{-L_2} \!f_1^1(\scU_1^i \ve{a})=\scU_1^{i+1} \ve{a} \otimes \tau_2$ and
\[
{}^{-L_2}\! f_1^1(\scV_1^j\ve{d})=
\begin{cases}
\ve{a}\otimes \scV_2^{-1}\tau_2 &\text{ if } j=0\\
\scV_1^{j-1} \ve{d}\otimes \tau_2& \text{ if } j>0.
\end{cases}
\]
\item  ${}^{L_2}\!g_1^1(\scV_1^i \ve{d})=\scV_1^i \ve{d}\otimes \sigma_2$.
\item ${}^{-L_2}\!g_1^1(\scV_1^i \ve{d})=\scV_1^{i-1}\ve{d}\otimes \tau_2.$
\end{enumerate}
\item The map ${}^{-H}\!\omega_3^1$ is determined by the relations
\[
\begin{split}
{}^{-H}\! \omega_3^1(\tau_1,\scV_1^m, \scU^i_1 \ve{a})&=\min(i+1,m)\scV_1^{\lambda_1+m-i-2}\ve{d}\otimes \scU_2^{m-1}\scV_2^{m-1}\tau_2\\
{}^{-H}\! \omega_3^1(\tau_1,\scU_1^n,\scU_1^i\ve{a})&=0
\\
{}^{-H}\! \omega_3^1(\tau_1, \scU_1^n, \scV_1^j\ve{d})&=\min(n,j)\scV_1^{j-n+\lambda_1-1}\ve{d}\otimes \scU_2^{j-1}\scV_2^{j-2}\tau_2\\
{}^{-H}\! \omega_3^1(\tau_1,\scV_1^m,\scV_1^j\ve{d})&=0,
\end{split}
\]
and that ${}^{-H}\!\omega_3^1$ vanishes if an algebra input is a multiple of $\sigma_1$. The map ${}^{-H}\! \omega_3^1$ also vanishes on pairs of algebra elements with other configurations of idempotents.
\end{enumerate}
\end{prop}

\begin{rem}
We have not enumerated a complete list of the structure maps. Additional powers of $\scU_1$ and $\scV_1$ may be added to arguments in the above maps. There are no terms of $\delta_{j+1}^1$ for $j>2$ however. A more minimal list could also have been made by specifying the length 1 maps on only $\ve{a}$ and $\ve{d}$. As an example of several relations which are forced by the $DA$ bimodule relations, we have
\[
\begin{split}
{}^{-H}\! \omega_{3}^1(\tau_1, \scU_1^{i} \scV_1^j, \scU_1^n \ve{a})&={}^{-H} \!\omega_3^1(\tau, \scV_1^j, \scU_1^{i+n}\ve{a})\quad \text{and}\\
{}^{-H}\! \omega_{3}^1(\tau_1, \scU_1^{i} \scV_1^j, \scV_1^n \ve{d})&={}^{-H}\! \omega_3^1(\tau, \scU_1^i, \scV_1^{j+n}\ve{d}) 
\end{split}
\]
\end{rem}

\begin{proof}
All of these computations are performed algorithmically using the homological perturbation lemma for hypercubes of $DA$-bimodules, Lemma~\ref{lem:homological-perturbation-DA-hypercube}. For the first two sets of equations the computation is essentially straightforward. We first apply the map $I^1_{\veps}$, then we apply a length 1 map of the cube (either with no algebra input, as for the maps labeled $f_1^1$ and $g_1^1$, or with an algebra input of $\sigma_1$ or $\tau_1$, as for the maps $p_2^1$ and $q_2^1$).  We leave these computations to the reader, as they are straightforward.

The map ${}^{-H}\! \omega_3^1$ is more interesting. There is exactly one configuration of morphism graph which gives a non-trivial evaluation. This occur for elements $\xs_{0,0}$ in idempotent $\veps=(0,0)$. The structure graph is shown below:
\begin{equation}
\begin{tikzcd}[row sep=.4cm] b\tau_1 \ar[ddddrr,bend left=4] & a \ar[ddr, bend left=8]& \ve{x}_{0,0} \ar[d]&\,\\
&&I^1\ar[d] \ar[dddddr, bend left=10]\\
&&\delta_2^1\ar[d] \ar[rdddd, bend left=10]\\
&&H^1\ar[d] \ar[rddd,bend left=11]&\,\\
&&{}^{-H} \Omega_2^1\ar[d] \ar[ddr,bend left=12]\\
&&\Pi^1\ar[dd] \ar[dr,bend left=14]\\
&&\, &\,\mu_2\ar[d]\\
&&\,&\,
\end{tikzcd}
\label{eq:homological-perturbation-omega-31}
\end{equation}
In the above, the map ${}^{-H} \Omega_2^1$ denotes the component of $\delta_2^1$ on $\cH_{\Lambda}^{\cK_1\otimes \cK_2}\boxtimes {}_{\cK_1|\cK_1} [\bI^{\Supset}]$ contributed by the map component ${}^{-H} \Omega^1$ of $\delta^1$ of $\cH_{\Lambda}^{\cK_1\otimes \cK_2}$. Concretely, it is given by the formula
\[
\Omega_2^1(b \tau_1,-)=\begin{tikzcd}[row sep=0cm]
   \scU^i_1 \scV^j_1 \ve{a}_{00}\ar[r, mapsto] &b\scV_1^{\lambda_1-2} \phi^\tau(\scU_1^i \scV_1^j)\ve{c}_{11}\otimes \scV_2^{-1}\tau_2\\
   \scU^i_1 \scV^j_1 \ve{b}_{00}\ar[r, mapsto] &0\\
    \scU^i_1 \scV^j_1 \ve{c}_{00}\ar[r, mapsto] &b\scV_1^{\lambda_1-1}\phi^\tau(\scU_1^i \scV_1^j) \ve{d}_{11}\otimes \tau_2\\
   \scU^i_1\scV^j_1\ve{d}_{00}\ar[r, mapsto]&b\scV_1^{\lambda_1-1}\phi^\tau(\scU^i_1 \scV^j_1)\ve{c}_{11}\otimes \scV_2^{-2}\tau_2.
   \end{tikzcd}
\]

We focus on the case that $b=i_1$. Consider first the case that $\ve{x}=\scU_1^i\ve{a}$. If $a=\scU_1^n$, then $\delta_2^1(\scU_1^n,\scU_1^i\ve{a})=\scU_1^{i+n}\ve{a}\otimes 1$, however $H^1$ vanishes on this element, so we conclude that
\[
{}^{-H}\!\omega_3^1(\tau_1,\scU_1^n,\scU_1^i\ve{a})=0.
\]
 The same argument implies that ${}^{-H}\!\omega_3^1(\tau_1,\scV_1^m,\scV_1^j\ve{d})=0$.
 
We now consider the case that $\ve{x}=\scU_1^i \ve{a}$ and $a=\scV_1^m$. In this case $H^1(\scU_1^i\scV_1^m\ve{a})$ is a sum involving both $\ve{c}$ and $\ve{b}$. The next term in  Equation~\eqref{eq:homological-perturbation-omega-31} is an application of ${}^{-H} \Omega_2^1$, which vanishes on multiples of $\ve{b}$. In particular, we need only consider the terms of $H^1(\scU_1^i\scV_1^m\ve{a})$ involving $\ve{c}$. There are $\min(i+1,m)$ such terms. They are
\[
\scU_1^i \scV_1^{m-1}\ve{c}\otimes 1+ \scU_1^{i-1}\scV_1^{m-2}\ve{c}\otimes \scU_2\scV_2+\scU_1^{i-2}\scV_1^{m-3}\ve{c}\otimes \scU_2^2\scV_2^2+\cdots.
\]
(The sum is over all terms in the sequence above where $\scU_1$ and  $\scV_1$ both have nonnegative exponent). We then apply ${}^{-H} \Omega_2^1$, $\Pi^1$, and then multiply the outgoing algebra elements by repeatedly applying $\mu_2$. It is easy check that that the application of ${}^{-H} \Omega_2^1$, $\Pi^1$, and $\mu_2$ on each of the above summands coincide, so we will only consider their evaluation on $\scU_1^i \scV_1^{m-1} \ve{c}\otimes 1$. Applying ${}^{-H} \Omega_2^1$ gives
\[
 \scV_1^{\lambda_1-1}\phi^\tau(\scU_1^i \scV_1^{m-1}) \ve{d}\otimes \tau_2\otimes 1=\scU_1^{m-1} \scV_1^{2m+\lambda_1-3-i}\ve{d}\otimes \tau_2\otimes 1.
\]
Applying $\Pi^1$ to the above generator, and then multiplying the algebra elements and the coefficient $\min(i+1,m)$ gives
\[
\omega_3^1( \tau_1, \scV_1^m, \scU_1^i \ve{a})=\min(i+1,m)\scV_1^{m-i-2+\lambda_1}\ve{d}\otimes \scU_2^{m-1}\scV_2^{m-1}\tau_2,
\]
which is the stated formula.

We now consider ${}^{-H}\!\omega_3^1(\tau_1,\scU_1^n,\scV_1^j \ve{d})$. The terms of $H^1(\scU_1^n \scV_1^j \ve{d})$ which involve $\ve{c}$ are
\[
\scU_1^{n-1} \scV_1^{j-1}\ve{c}\otimes \scU_2+\scU_1^{n-2}\scV_1^{j-2}\ve{c}\otimes \scU_2^2 \scV_2+\cdots.
\]
As before, the sum contains all such elements in the sequence which have nonnegative powers of both $\scU_1$ and $\scV_1$.
There are $\min(n,j)$ terms in this sum. As before, it is sufficient to evaluate ${}^{-H} \Omega_2^1$ and $\Pi^1$ only on the first term, and then multiply the result by $\min(n,j)$. Applying ${}^{-H}\Omega_2^1$, we obtain
\[
\scV_1^{\lambda_1-1}\phi^\tau(\scU_1^{n-1}\scV_1^{j-1})  \ve{d}\otimes \tau_2\otimes \scU_2=\scU_1^{j-1} \scV_1^{2j-2+\lambda_1-n}\ve{d}\otimes \tau_2\otimes \scU_2.
\]
Applying $\Pi^1$ and $\id\otimes \mu_2$, and multiplying by the coefficient $\min (n,j)$ gives
\[
\omega_3^1(\tau_1, \scV_1^m, \scU_1^i \ve{a})=\min (n,j)\scV_1^{j-1-n+\lambda_1}\ve{d}\otimes \scU_2^{j-1}\scV_2^{j-2}\tau_2,
\]
which proves the statement.
\end{proof}

\begin{rem} Recall that we write ${}_{\cK_1} \bar \cH_{\Lambda}^{\cK_2}$ for the Hopf link complex obtained by using one alpha-parallel arc and one beta-parallel arc. The associated type-$D$ module over $\cK\otimes \cK$ is computed in Proposition~\ref{prop:bar-H-computation-type-D}. Write ${}_{\cK_1} \bar \cZ_{\Lambda}^{\cK_2}$ for the minimal model of the associated type-$DA$ bimodule. we leave it to the reader to verify that ${}_{\cK_1} \bar \cZ_{(\lambda_1,0)}^{\cK_2}$ has an identical description to ${}_{\cK_1} \cZ_{(\lambda_1,0)}^{\cK_2}$ except that we omit the $\delta_3^1$ term.
\end{rem}

We now prove that ${}_{\cK_1}\cH_{(\lambda_1,0)}^{\cK_2}$ and ${}_{\cK_1}\cZ_{(\lambda_1,0)}^{\cK_2}$ are both Alexander modules, and are furthermore homotopy equivalent.

\begin{lem}\label{lem:Hopf-minimal-continuity}\,
\begin{enumerate}
\item The $DA$-bimodule structure maps on ${}_{\cK_1} \cH_{(\lambda_1,0)}^{\cK_2}$ and ${}_{\cK_1} \cZ_{(\lambda_1,0)}^{\cK_2}$ are continuous.
\item All of the maps involved in the homotopy equivalences described in Proposition~\ref{prop:Z_lambda-bimodule} are continuous. 
\end{enumerate}
\end{lem}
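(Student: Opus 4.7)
The plan is to handle both bimodules and the transfer morphisms in a unified way. I first establish Alexander coboundedness of ${}_{\cK_1}\cH^{\cK_2}$ using its tensor product definition, then apply the homological perturbation lemma for hypercubes of $DA$-bimodules to transfer Alexander coboundedness to ${}_{\cK_1}\cZ^{\cK_2}$, simultaneously producing Alexander cobounded transfer morphisms $\Pi_*^1, I_*^1, H_*^1$.

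First, ${}_{\cK_1}\cH_{(\lambda_1,0)}^{\cK_2} := \cH_{(\lambda_1,0)}^{\cK\otimes \cK} \hatbox {}_{\cK\otimes \cK}[\bI^{\Supset}]$. The type-$D$ module $\cH^{\cK\otimes \cK}$ has only sixteen $\bF$-basis elements, so its structure map $\delta^1$ is automatically Alexander cobounded. The $AA$-identity bimodule $[\bI^{\Supset}] := {}_{\cK\otimes \cK} M_2^{\cK} \hatbox {}_{\cK}\cD_0$ is a box tensor product of the merge module (split Alexander cobounded by Lemma~\ref{lem:merge-continuous}) and the solid torus module (Alexander cobounded by Lemma~\ref{lem:continuity-framing}). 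Two applications of Proposition~\ref{prop:split-Alexander-cobounded}~\eqref{prop:split-4} then show that ${}_{\cK_1}\cH^{\cK_2}$ is Alexander cobounded.

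Next, we construct ${}_{\cK_1}\cZ^{\cK_2}$ via the homological perturbation lemma for hypercubes of $DA$-bimodules (Lemma~\ref{lem:homological-perturbation-DA-hypercube}), with perturbation data $(I_\veps^1, \Pi_\veps^1, H_\veps^1)$ given by the explicit formulas of Lemma~\ref{lem:type-D-comp-v1}. Each of these $\cK_2$-type-$D$ morphisms is Alexander cobounded by direct inspection: for any fixed basis element $y$ of the output, the formulas show that only finitely many basis elements of the input map non-trivially to $\langle y \rangle \otimes \cK_2$, giving continuity in the cofinite basis topology. The homological perturbation lemma then produces the structure maps $\delta_{j+1}^1$ on $\cZ$ and the transfer morphisms $\Pi_*^1, I_*^1, H_*^1$ as iterated compositions of these data with the internal $\delta_{>1}^1$-maps of each $\cH_\veps^{\cK_2}$ and the length-one and length-two hypercube maps $D_{*,\veps,\veps'}^1$ of $\cH$. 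Each such building block is Alexander cobounded, and Lemma~\ref{lem:composition-AC-type-D} together with Proposition~\ref{prop:split-Alexander-cobounded}~\eqref{prop:split-1},~\eqref{prop:split-3} ensures compositions preserve this property.

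The hard part will be that the homological perturbation formulas a priori involve infinite sums indexed by trees alternating $H^1$ with $\delta_{>1}^1$, and I would show these truncate in our setting. Along the hypercube direction, $\dim \bE_2 = 2$ bounds the number of composable length-one hypercube arrows at two. Within each $\veps$-summand, $H_\veps^1$ maps $\Span(\ve{a}, \ve{d})$ into $\Span(\ve{b}, \ve{c})$, while both $H_\veps^1$ and $\Pi_\veps^1$ vanish on $\Span(\ve{b}, \ve{c})$, exactly as in the proof of Lemma~\ref{lem:Hopf-reduction-properties}. Consequently the surviving compositions are finite, no convergence issue arises, and the resulting structure maps and morphisms on $\cZ$ are all Alexander cobounded. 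The remaining bookkeeping --- tracking which tree configurations survive after truncation and verifying that the algebra weights stay within the Alexander-cobounded regime --- is consistent with and easily cross-checked against the explicit formulas computed in Proposition~\ref{prop:Z_lambda-bimodule}.
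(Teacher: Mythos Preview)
Your overall strategy matches the paper's: establish Alexander coboundedness of ${}_{\cK_1}\cH^{\cK_2}$ via its tensor-product definition, then observe that the perturbation data and resulting structure maps on $\cZ$ are finite compositions of $I^1_\veps$, $\Pi^1_\veps$, $H^1_\veps$, and the internal maps of $\cH$, so it suffices to verify each building block separately. Your truncation argument using the $\bE_2$-filtration and the $\Span(\ve{a},\ve{d}) \to \Span(\ve{b},\ve{c})$ behavior of $H^1$ is also correct and mirrors Lemma~\ref{lem:Hopf-reduction-properties}.

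However, your verification that $\Pi^1_\veps$ and $H^1_\veps$ are Alexander cobounded is wrong. You claim that ``for any fixed basis element $y$ of the output, \dots\ only finitely many basis elements of the input map non-trivially to $\langle y\rangle \otimes \cK_2$.'' This is false. From Lemma~\ref{lem:type-D-comp-v1}, for instance, $\Pi^1(\scU_1^{k+j}\scV_1^j\,\ve{a}) = \scU_1^k\,\ve{a} \otimes \scU_2^j\scV_2^j$ for every $j\ge 0$, so infinitely many inputs hit the single output generator $\scU_1^k\,\ve{a}$. What makes $\Pi^1$ Alexander cobounded is not finite support but rather that the algebra weights $\scU_2^j\scV_2^j$ escape to infinity in the filtration: given a finite set $S$ of output generators and an $n\in\N$, all but finitely many inputs land in $\cY_{\co(S)}\otimes\cK_2 + \cY\otimes J_n$. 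The paper checks exactly this, case by case, from the explicit formulas. The same issue applies to $H^1_\veps$, whose outputs are sums of arbitrarily many terms with growing $\scU_2,\scV_2$-powers. You need to replace your ``finite preimage'' claim with the correct continuity check against the filtration on $\cK_2$.
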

\begin{proof} The $DA$-bimodule structure maps on ${}_{\cK_1} \cH_{(\lambda_1,0)}^{\cK_2}$ are continuous by virtue of the facts that the map $\delta^1$ on $\cH_{(\lambda_1,0)}^{\cK_1\otimes \cK_2}$ being continuous, since $\cH_{(\lambda_1,0)}$ is a finitely generated vector space, and that ${}_{\cK_1} \cH_{(\lambda_1,0)}^{\cK_2}$ is obtained by a tensor product of $\cH_{(\lambda_1,0)}^{\cK_1\otimes \cK_2}$ with ${}_{\cK_1| \cK_1}[\bI^{\Supset}]$.

The structure maps on ${}_{\cK_1} \cZ_{(\lambda_1,0)}^{\cK_2}$ and also the equivalence with ${}_{\cK_1}\cH_{(\lambda_1,0)}^{\cK_2}$ are given by the homological perturbation lemma. These maps are finite compositions of the maps $\Pi^1$, $H^1$, $I^1$, as well as the map $\delta_2^1$ of ${}_{\cK_1} \cH_{(\lambda_1,0)}^{\cK_2}$. Hence, it is sufficient to show that each of these maps is continuous with respect to the appropriate topology. The map $I^1$ is obviously continuous, since it is given by $\ve{x}\mapsto \ve{x}\otimes 1$.

Consider the map $\Pi^1$ applied to elements in  $\ve{I}_0\cdot\cH_{(\lambda_1,0)}\cdot \ve{I}_0$. This map is computed in Lemma~\ref{lem:type-D-comp-v1}. As an example, consider $\Pi^1(x)$ when $x= \scU^i_1 \scV^j_1\ve{a}$ for $i\ge j\ge 0$. In this case,
\[
\Pi^1(\scU^i_1 \scV^j_1\ve{a} )=\scU_1^{i-j}\ve{a} \otimes \scU_2^j \scV_2^j.
\]
Given a finite set of $S\subset \N$ and some $n\in \N$, we wish to show that all but finitely many $\scU_1^i\scV_1^j\ve{a}$ are mapped into $\Span(\scU_1^s \ve{a}\otimes \scU_2^i\scV_2^j: s\in \N\setminus S \text{ or } j\ge n\}$. This is the case, since of course there are only finitely many $i,j\ge 0$ such that $0\le j\le n$ and $i-j\in S$. A similar computation holds for the rest of $\ve{I}_0\cdot \cH_{(\lambda_1,0)}\cdot \ve{I}_0$, so $\Pi^1$ is continuous in these idempotents.  Essentially the same argument applies for the other idempotents of $\cH$.
 
 The map $H^1$ is verified to be continuous by a very similar argument.
\end{proof}

\begin{figure}[h]
\adjustbox{scale=.9}{
\begin{tikzcd}[labels=description, column sep=1.3cm, row sep=0cm]
\cdots
	\ar[r, bend left, "\scV|U"]
&\scU^2 \ve{a}
	\ar[r, bend left, "\scV|U"]
	\ar[l, bend left, "\scU|1"]
& \scU \ve{a}
	\ar[r, bend left, "\scV|U"]
	\ar[l, bend left, "\scU|1"]
&\ve{a}
	\ar[r, "\scV|\scV", bend left]
	\ar[l, "\scU|1", bend left]
&\ve{d}
	\ar[r, bend left, "\scV|1"]
	\ar[l, bend left,"\scU|\scU"]
&\scV \ve{d}
	\ar[r, bend left, "\scV|1"]
	\ar[l, bend left,"\scU|U"]
& \cdots \,\,\, \ve{E}_{01}
	\ar[l, bend left,"\scU|U"]
\\[2cm]
\cdots
	\ar[r, bend left, "\scV|U"]
&\scU^2 \ve{a}
	\ar[r, bend left, "\scV|U"]
	\ar[l, bend left, "\scU|1"]
	\ar[d, "\substack{\sigma|U^2\scV \\ \tau|1}"]
	\ar[u, gray,"\sigma"]
	\ar[ul, gray, "\tau"]
& \scU \ve{a}
	\ar[r, bend left, "\scV|U"]
	\ar[l, bend left, "\scU|1"]
	\ar[d, "\substack{\sigma|U\scV \\ \tau|1}"]
	\ar[u, gray," \sigma"]
	\ar[ul, gray, "\tau"]
&\ve{a}
	\ar[r, "\scV|\scV", bend left]
	\ar[l, "\scU|1", bend left]
	\ar[d, "\substack{\sigma|\scV\\ \tau|1}"]
	\ar[u, gray," \sigma"]
	\ar[ul, gray, "\tau"]
& \ve{d} 
	\ar[r, bend left, "\scV |1"]
	\ar[l, bend left, "\scU|\scU"]
	\ar[d, "\substack{\sigma|1 \\ \tau|\scU}"]
	\ar[u,gray,"\sigma"]
	\ar[ul, gray, "\scV^{-1}\tau"]
&\scV \ve{d}
	\ar[r, bend left, "\scV|1"]
	\ar[l, bend left,"\scU|U"]
	\ar[d, "\substack{\sigma|1 \\ \tau|U\scU}"]
	\ar[u,gray,"\sigma"]
	\ar[ul, gray, "\tau"]
&\cdots \,\,\, \ve{E}_{00}
	\ar[l, bend left,"\scU|U"]
	\ar[ul, gray, "\tau"]
\\[2cm]
\cdots
	\ar[r, bend left, "\scV|1"]
&\scV^{-3}\ve{d}
	\ar[r, bend left, "\scV|1"]
	\ar[l, bend left, "\scV^{-1}|1"]
	\ar[loop below,looseness=20, "U|U"]
& \scV^{-2}\ve{d}
	\ar[r, bend left, "\scV|1"]
	\ar[l, bend left, "\scV^{-1}|1"]
	\ar[loop below,looseness=20, "U|U"]
&\scV^{-1}\ve{d}
	\ar[r, "\scV|1", bend left]
	\ar[l, "\scV^{-1}|1", bend left]
	\ar[loop below,looseness=20, "U|U"]
&\scV^0\ve{d}
	\ar[r, bend left, "\scV|1"]
	\ar[l, bend left,"\scV^{-1}|1"]
	\ar[loop below,looseness=20, "U|U"]
&\scV^1\ve{d}
	\ar[r, bend left, "\scV|1"]
	\ar[l, bend left,"\scV^{-1}|1"]
	\ar[loop below,looseness=20, "U|U"]
& \cdots \,\,\, \ve{E}_{10}
	\ar[l, bend left,"\scV^{-1}|1"]
\end{tikzcd}
}

\vspace{1cm}

\adjustbox{scale=.9}{
\begin{tikzcd}[labels=description, column sep=1.2cm, row sep=0cm]
\cdots
	\ar[r, bend left, "\scV|U"]
&\scU^2\ve{a}
	\ar[r, bend left, "\scV|U"]
	\ar[l, bend left, "\scU|1"]
	\ar[d, "\substack{\sigma|U^2\scV\\ \tau|1 }"]
&\scU\ve{a}
	\ar[r, bend left, "\scV|U"]
	\ar[l, bend left, "\scU|1"]
	\ar[d, "\substack{\sigma|U\scV\\ \tau|1 }"]
&\ve{a}
	\ar[r, "\scV|\scV", bend left]
	\ar[l, "\scU|1", bend left]
	\ar[d, "\substack{\sigma|\scV\\ \tau|1 }"]
&\ve{d}
	\ar[r, bend left, "\scV|1"]
	\ar[l, bend left,"\scU|\scU"]
	\ar[d, "\substack{\sigma|1\\ \tau|\scU }"]
&\scV \ve{d}
	\ar[r, bend left, "\scV|1"]
	\ar[l, bend left,"\scU|U"]
	\ar[d, "\substack{\sigma|1\\ \tau|U\scU }"]
& \cdots\,\,\, \ve{E}_{01}
	\ar[l, bend left,"\scU|U"]
\\[2cm]
\cdots
	\ar[r, bend left, "\scV|1"]
&\scV^{-3}\ve{d}
	\ar[r, bend left, "\scV|1"]
	\ar[l, bend left, "\scV^{-1}|1"]
	\ar[loop below,looseness=20, "U|U"]
& \scV^{-2}\ve{d}
	\ar[r, bend left, "\scV|1"]
	\ar[l, bend left, "\scV^{-1}|1"]
	\ar[loop below,looseness=20, "U|U"]
&\scV^{-1}\ve{d}
	\ar[r, "\scV|1", bend left]
	\ar[l, "\scV^{-1}|1", bend left]
	\ar[loop below,looseness=20, "U|U"]
&\scV^0\ve{d}
	\ar[r, bend left, "\scV|1"]
	\ar[l, bend left,"\scV^{-1}|1"]
	\ar[loop below,looseness=20, "U|U"]
&\scV^{1}\ve{d}
	\ar[r, bend left, "\scV|1"]
	\ar[l, bend left,"\scV^{-1}|1"]
	\ar[loop below,looseness=20, "U|U"]
& \cdots \,\,\, \ve{E}_{11}
\ar[l, bend left,"\scV^{-1}|1"]
\\[2cm]
 \cdots
	\ar[r, bend left, "\scV|1"]
&\scV^{-3}\ve{d}
	\ar[r, bend left, "\scV|1"]
	\ar[l, bend left, "\scV^{-1}|1"]
	\ar[loop below,looseness=20, "U|U"]
	\ar[u,gray, bend right=35, "\sigma"]
	\ar[ul, "\tau",gray]
& \scV^{-2}\ve{d}
	\ar[r, bend left, "\scV|1"]
	\ar[l, bend left, "\scV^{-1}|1"]
	\ar[loop below,looseness=20, "U|U"]
	\ar[u,gray, bend right=35, "\sigma"]
	\ar[ul, "\tau",gray]
&\scV^{-1}\ve{d}
	\ar[r, "\scV|1", bend left]
	\ar[l, "\scV^{-1}|1", bend left]
	\ar[loop below,looseness=20, "U|U"]
	\ar[u,gray, bend right=35, "\sigma"]
	\ar[ul, "\tau",gray]
&\scV^{0}\ve{d}
	\ar[r, bend left, "\scV|1"]
	\ar[l, bend left,"\scV^{-1}|1"]
	\ar[loop below,looseness=20, "U|U"]
	\ar[u,gray, bend right=35, "\sigma"]
	\ar[ul, "\tau",gray]
&\scV^{1}\ve{d}
	\ar[r, bend left, "\scV|1"]
	\ar[l, bend left,"\scV^{-1}|1"]
	\ar[loop below,looseness=20, "U|U"]
	\ar[u,gray, bend right=35, "\sigma"]
	\ar[ul, "\tau",gray]
& \cdots \,\,\, \ve{E}_{10}
	\ar[l, bend left,"\scV^{-1}|1"]
	\ar[ul, "\tau",gray]
\end{tikzcd}
}
\caption{The $DA$-bimodule of the negative Hopf link ${}_{\cK} \bar{\cZ}_{(0,0)}^{\cK}$. The gray arrows denote $\delta_1^1$. Subscripts on algebra elements indicating link components are omitted. This coincides with the bimodule ${}_{\cK} \cZ_{(0,0)}^{\cK}$ except for the lack of the $\omega_3^1$ differential.}
\label{fig:Hopf-link-diagram-big}
\end{figure}

\subsection{Comparison with the Eftekhary-Hedden-Levine model}
\label{sec:comparison-EHL}

We now compare our Hopf link complex with the dual knot formulas of Hedden-Levine \cite{HeddenLevineSurgery} and Eftekhary \cite{EftekharyDuals}.

If $K\subset S^3$, we will write $\HLE_n(K)^{\bF[\scU,\scV]}$ for the complex described in the introduction of \cite{HeddenLevineSurgery}, which is a model for $\cCFL(S^3_n(K), \mu)^{\bF[\scU,\scV]}$ where $\mu$ is a dual of $K$ inside of the Dehn surgery.
\begin{prop} If $K$ is a knot in $S^3$, then there is a canonical isomorphism
\[
\HLE_n(K)^{\bF[\scU,\scV]}\iso\cX_n(K)^{\cK}\hatbox {}_{\cK} \cZ^{\bF[\scU,\scV]}.
\]
\end{prop}

 \begin{proof}[Proof sketch] Since we do not need this result for any later results, we will not spell out all details. Instead, we will sketch several important details from which the interested reader can easily work out the rest of the argument.
 
  Both complexes are mapping cone complexes with similar structures. We will abbreviate our complex by $\bX^\mu_n(K)$. We write
 \[
 \HLE_n(K)^{\bF[\scU,\scV]}\iso \Cone(v'+h_n'\colon \bA^-_{\HLE}(K)\to \bB^-_{\HLE}(K))\qquad \text{and}
 \]\[
  \bX_n^{\mu}(K)^{\bF[\scU,\scV]}\iso \Cone(v^\mu+h_n^\mu\colon \bA^\mu(K)\to \bB^{\mu}(K)).
 \]
 We will only consider the claim when $n=1$. Furthermore, we will only show that
 \begin{equation}
 \bA_{\HLE}^-(K)\iso \bA^\mu(K)
\label{eq:HLE-isomorphic-ours} \end{equation}
 as (infinitely generated) type-$D$ modules over $\bF[\scU,\scV]$. Most of the remaining details are straightforward extensions of the ideas we present.

  We follow the description of $\HLE_{+1}(K)$  given by Hedden and Levine \cite{HeddenLevineSurgery}. They focus on a version of the knot Floer complex which is denoted $\CFK^\infty(K)$. This takes the form of a free chain complex over $\bF[U,U^{-1}]$ which is filtered by $\Z\oplus \Z$. The generators are of the form $[\xs, i,j]$ where $\xs\in \bT_{\a}\cap \bT_\b$ and $A(\xs)=j-i$. The variable $U$ acts by $U\cdot [\xs,i,j]= [\xs,i-1,j-1]$. The components $i$ and $j$ are the two components of the $\Z\oplus \Z$-filtration. There is additionally a Maslov grading $\gr_{\ws}([\xs,i,j])=\gr_{\ws}(\xs)+2i$. Given such a filtered chain complex, we can recover the chain complex $\cCFK(K)$ by replacing each $\bF[U,U^{-1}]$ basis element $[\xs, i,j]$ with a single generator $\xs$, viewed as having Alexander grading $j-i$ and $\gr_{\ws}$-grading $\gr_{\ws}([\xs,i,j])-2i$.

 We will write $\bA_{\HLE}^\infty(K)$ for the infinity version of the Hedden-Levine model. (We will later reformulate this to get the version which is a type-$D$ module over $\bF[\scU,\scV]$). By definition, 
\[
\bA^\infty_{\HLE}(K)\iso \prod_{s\in \Z} A_s^\infty (K),
\]
for some finitely generated $\Z\oplus \Z$-filtered complexes $A_s^\infty(K)$, as follows. For each $\xs$ (an intersection point for a Heegaard diagram of $K$), the generators are of the form $[\xs,i,j]$
where $A(\xs)=j-i$, where $A(\xs)$ is Alexander grading from $\cCFK(K)$. We will write $\xs_{i,j}$ for $[\xs,i,j]$. They describe two filtrations $\cI$ and $\cJ$ on these generators given by the formulas
\[
\cI(\xs_{i,j})=\max(i,j-s)\qquad \cJ(\xs_{i,j})=\max(i-1,j-s)+s.
\]
Then the $\Z\oplus \Z$-filtration of $\xs_{i,j}$ is $(\cI(\xs_{i,j}), \cJ(\xs_{i,j}))$. They also define a Maslov grading 
\[
\gr_{\ws}(\xs_{i,j})=\gr_{\ws}(\xs)+2i+\frac{(2s-1)^2-1}{4}.
\]
See \cite{HeddenLevineSurgery}*{Equations~(1.5)--(1.10)}, noting that we are setting $s=s_l$, $k=1$ and $d=1$ in their formulas.

We now rewrite $\cI$ and $\cJ$ as follows:
\begin{equation}
\cI(\xs_{i,j})=i+\max(0,j-i-s)\quad \text{and} \quad \cJ(\xs_{i,j})=\max(i-j+s-1,0)+j.
\label{eq:HL-Alexander-grading}
\end{equation}
 It is helpful to consider the two cases $A(\xs)\ge s$ and $A(\xs)<s$ separately. Recalling that $A(\xs)=j-i$,  we compute using Equation~\eqref{eq:HL-Alexander-grading} that
\[
(\cI,\cJ)(\xs_{i,j})=
\begin{cases}
(j-s,j)& \text{ if } A(\xs)\ge s\\
(i,i+s-1)& \text{ if } A(\xs)<s.
\end{cases}
\]

We now modify the above algebraic generators to get a type-$D$ module over $\bF[\scU,\scV]$. We do this using the same procedure as we described to go from $\CFK^\infty(K)$ to $\cCFK^-(K)$. We write $\bA^-_{\HLE}(K):=\prod_{s\in \Z} A_s^-(K)$ for the type-$D$ module constructed from $\bA^\infty_{\HLE}(K)$ in this manner. For each generator $\xs$ in $\cCFK^-(K)$, we get a generator $\xs'_s$ of $A_s^-(K)$ with Alexander grading
\begin{equation}
A(\xs'_s)=\begin{cases} s& \text{ if } A(\xs)\ge s,\\
s-1 & \text{ if } A(\xs)<s.
\end{cases}
\label{eq:Alexander-HLE-model}
\end{equation}
We have, additionally, that
\begin{equation}
\gr_{\ws}(\xs_s')=\cG_{\ws}(\xs_{i,j})-2 \cI(\xs_{i,j})=\gr_{\ws}(\xs)+2\min(0,s-A(\xs))+\frac{(2s-1)^2-1}{2}. \label{eq:grw-grading}
\end{equation}

We now consider the powers of $\scU$ and $\scV$ which appear in the differential. Recall that in general, if $\xs$ and $\ys$ are generators of a knot Floer complex $\cCFK(K)$ (for a knot $K$ in an integer homology 3-sphere) and there is a differential from $\xs$ to $\ys$ which is weighted by $\scU^i\scV^j$, then
\begin{equation}
j-i=A(\xs)-A(\ys)\quad \text{and} \quad \gr_{\ws}(\ys)=\gr_{\ws}(\xs)-1+2i.
\label{eq:weights-differential-general}
\end{equation}

We now compare this with the type-$D$ structure $\bA^\mu(K)$.  By construction, this type-$D$ structure is the tensor product of $\cCFK(K)^{\bF[\scU,\scV]}$ with the $DA$-bimodule shown below:
 \[
 \begin{tikzcd}[labels=description,column sep=1.5cm, row sep=1.5cm]
 \cdots
 &[-1.5cm]
 \scU^2 \ve{a}
 	\ar[r, bend left=20, "\scV|U"]
 	\ar[from=r, bend left=20, "\scU|1"]
 &
 \scU \ve{a}
 	\ar[r, bend left=20, "\scV|U"]
 	\ar[from=r, bend left=20, "\scU|1"]
 &
 \ve{a}
 	\ar[r, bend left=20, "\scV|\scV"]
 	\ar[from=r, bend left=20, "\scU|\scU"]
 &
 \ve{d}
 	\ar[r, bend left=20, "\scV|1"]
 	\ar[from=r, bend left=20, "\scU|U"]
 &
  \scV \ve{d}
 	\ar[r, bend left=20, "\scV|1"]
 	\ar[from=r, bend left=20, "\scU|U"]
 &
 \scV^2 \ve{d}
 &[-1.5cm]
 \cdots
 \end{tikzcd}
 \]
 We can give $\bA^\mu(K)$ a similar description to $\bA^-(K)$. The generators of $\bA^\mu(K)$ are of the form $\xs\otimes \ve{y}$, where $\xs$ is a generator of $\cCFK^-(K)$, and $\ys$ is of the form $\scU^i \ve{a}$ or $\scV^i \ve{d}$. We define an Alexander grading on generators via the formula 
 \[
 A'(\scU^i\ve{a})=-i\quad \text{and} \quad A'(\scV^i \ve{d})=i+1.
 \]
 We then define $A^\mu_s(K)\subset \bA^\mu(K)$ to be the $\bF$-span of pairs $\xs\otimes \ys$ where
 \[
 A(\xs)+A'(\ys)=s.
 \]
 It is straightforward to see that $A^\mu_s(K)$ is a subcomplex of $\bA^\mu$. We claim that there is an isomorphism of type-$D$ modules
 \[
 A_s^\mu(K)^{\bF[\scU,\scV]}\iso A^-_s(K)^{\bF[\scU,\scV]}.
 \]
 Note that there is a canonical bijective correspondence between $\bF[\scU,\scV]$-generators of these complexes, since the generators of both complexes are bijectively identified with generators of $\cCFK(K)$. Therefore we have a canonical isomorphism of vector spaces between these modules. We claim that this isomorphism intertwines the differential. To see this, note that the differentials on both complexes are also identified with the the ordinary differential of $\cCFK(K)$, except with powers of $\scU$ and $\scV$ changed. Therefore it suffices to show that the powers of $\scU$ and $\scV$ appearing in the two differentials coincide.
 
 We consider a differential in $\cCFK(K)$ from $\xs$ to $\ys$, weighted by $a\in \bF[\scU,\scV]$. We consider its induced weight in $A_s^\mu(K)$ and $A_s'(K)$. Suppose that in $A_s^\mu(K)$, it is weighted by an algebra element $a^\mu$, and suppose that in $A_s'(K)$, it is weighted by an algebra element $a'$. Our goal is to show that $a^\mu=a'$.

 Using Equations~\eqref{eq:Alexander-HLE-model} and ~\eqref{eq:weights-differential-general}, we observe that
 \begin{equation}
 A(a')=\begin{cases} 0& \text{ if } A(\xs),A(\ys) \ge s,\\
 0& \text{ if } A(\xs),A(\ys)<s,\\
 1& \text{ if } A(\xs)\ge s \text{ and } A(\ys)<s,\\
 -1& \text{ if } A(\xs)<s \text{ and } A(\ys)\ge s.
 \end{cases}
 \label{eq:cases-A(a')}
 \end{equation}
 Next, we compute using Equations~\eqref{eq:grw-grading} and~\eqref{eq:weights-differential-general} that
  \begin{equation}
  \gr_{\ws}(a')=\gr_{\ws}(\xs)-\gr_{\ws}(\ys)-1+2\min(0,s-A(\xs))-2\min(0,s-A(\ys))\label{eq:gr_w-grading-a'}
  \end{equation}
  Note that $\gr_{\ws}(a')$ and $A(a')$ uniquely determine $a'$.

  Next, we compute $a^\mu$ and show that it is equal to $a'$. It is helpful to break the argument into four cases, parallel to Equation~\eqref{eq:cases-A(a')}. We will consider two of the four cases, and leave the rest to the reader.
  
   We consider first the case that $A(\xs), A(\ys)\ge 0$. The corresponding generators of $A_s^\mu(K)$ are of the form
   \[
   \xs\otimes \scU^{A(\xs)-s}\ve{a}\quad \text{and} \quad \ys\otimes \scU^{A(\ys)-s}\ve{a}.
   \]
    We can write
   \[
   a=U^{-\gr_{\ws}(a)/2} \scV^{A(a)}=U^{-(\gr_{\ws}(\xs)-\gr_{\ws}(\ys)-1)/2} \scV^{A(\xs)-A(\ys)}.
   \]
   Note that 
   \[
   \delta_2^1(a,\scU^{A(\xs)-s}\ve{a})=\scU^{A(\ys)-s}\ve{a}\otimes U^{-(\gr_{\ws}(\xs)-\gr_{\ws}(\ys)-1)/2+A(\xs)-A(\ys)}. 
   \]
   (Some care must be taken to verify the above formula when $A(\xs)-A(\ys)$ is negative, but the formula holds regardless of the sign of $A(\xs)-A(\ys)$). Hence
   \[
   a^\mu=U^{-(\gr_{\ws}(\xs)-\gr_{\ws}(\ys)-1)/2+A(\xs)-A(\ys)}.
   \]
   We observe that $A(a^\mu)=0=A(a')$. Also
   \[
   \gr_{\ws}(a')=\gr_{\ws}(\xs)-\gr_{\ws}(\ys)-1+2(A(\ys)-A(\xs)),
   \]
   which is the same as $\gr_{\ws}(a^\mu).$
   
   We now consider the third case in Equation~\eqref{eq:cases-A(a')}, where $A(\xs)\ge s$ and $A(\ys)<s$. Since $A(\xs)\ge s$, the corresponding generator of $A_s^\mu(K)$ is of the form $\xs\otimes \scU^{A(\xs)-s}\ve{a}$. Similarly the generator of $A_s^\mu(K)$ corresponding to $\ys$ will be of the form $\ys\otimes \scV^{s-A(\ys)-1}\ve{d}$. We write
   \[
   a=U^{-(\gr_{\ws}(\xs)-\gr_{\ws}(\ys)-1)/2} \scV^{A(\xs)-A(\ys)}.
   \]
   In this case, we have $A(\xs)-A(\ys)>0$. Therefore the we can read $a^\mu$ as follows. Firstly, we have a factor of $U^{-(\gr_{\ws}(\xs)-\gr_{\ws}(\ys)-1)/2}$. Then, we have an additional factor which is obtained by composing all of the right moving arrows from $\scU^{A(\xs)-s}\ve{a}$ to $\scV^{s-A(\ys)-1}\ve{d}$. This will give us factors of $U^{A(\xs)-s} \scV$. Therefore
   \[
   a^\mu=U^{-(\gr_{\ws}(\xs)-\gr_{\ws}(\ys)-1)/2+A(\xs)-s}\scV.
   \]
   This must coincide with $a'$, since it has the same $(\gr_{\ws},A)$-bigrading by Equations~\eqref{eq:cases-A(a')} and~\eqref{eq:gr_w-grading-a'}.
   
   It remains to verify that $a'=a^\mu$ in the cases that $A(\xs),A(\ys)<s$ and when $A(\xs)<s$ and $A(\ys)\ge s$. The argument follows from the same line of reasoning as the two previously analyzed cases. We leave the details to the reader.
   
   The analysis of $\bB^\mu(K)$ and the maps $v^\mu$ and $h_n^\mu$ follows from similar, albeit somewhat tedious, reasoning, so we leave the details to the interested reader.
    \end{proof}

 \section{Examples and basic properties}
 In this section, we perform several example computations. In Section~\ref{sec:blow-ups} we compute the effect of adding a $\pm 1$ framed meridian to a link component. We show that it corresponds to a simple bimodule. In Section~\ref{sec:solid-tori}, we compute the type-$D$ module for a $p/q$-framed solid torus. We show that the type-$D$ module for a $p/q$-framed solid torus recovers the rational surgeries formula of Ozsv\'{a}th and Szab\'{o} \cite{OSRationalSurgeries}. Additionally, we show that the type-$D$ modules for solid tori recovers the surgery exact triangle by exhibiting a homotopy equivalence
 \[
 \cD^\cK_{\infty}\simeq \Cone(f^1\colon \cD^{\cK}_{n}\to \cD^{\cK}_{n+1})
 \]
 for all $n\in \Z$. 
 
 \subsection{Meridional Dehn twists}
 \label{sec:blow-ups}
 
 In this section, we consider the effect of adding a $\pm 1$ framed meridian to a link component. Adding a meridian may be encoded by taking the connected sum with a Hopf link, which has a predictable algebraic effect.
 
 We first define our algebraic candidate, denoted ${}_{\cK}\cB_{\pm 1}^{\cK}$. These are the bimodules for two very simply algebra endomorphisms of $\cK$.   As an $(\ve{I},\ve{I})$-module, $\cB_{\pm 1}\iso \ve{I}$ with the natural $\ve{I}$-action. The structure map $\delta_1^1$ vanishes. If $a\in \ve{I}_j\cdot \cK\cdot \ve{I}_j$ for either $j=0$ or $j=1$, then we set $\delta_2^1(a, i)=i\otimes a$ for $i\in \ve{I}$.
   Additionally, we set
  \[
  \delta_2^1(\sigma,i_0)=i_1\otimes \sigma\quad \text{and} \quad \delta_2^1(\tau,i_0)=i_1\otimes \scV^{\pm 1} \tau.
  \]

  We write ${}_{\cK}\frM_{\pm 1}^{\cK}$ for the type-$D$ module
  \begin{equation}
  {}_{\cK}\frM_{\pm 1}^{\cK}=\cH_{\Lambda(\pm 1)}^{\cK_0\otimes \cK_1}\hatbox {}_{\cK_0}\cD_0 \hatbox {}_{\cK_1|\cK}W_{\a\b, \a}^{\cK},\label{eq:meridian-bimodule}
  \end{equation}
  where $W_{\a\b, \a}$ is the pair-of-pants module from Section~\ref{sec:pair-of-pants} and
  \[
 \Lambda(\pm 1)=(\pm 1,0).
  \] In the above, $\cK_0$ and $\cK_1$ denote the algebras associated to different components of the Hopf link. We write $\cK_0$ for the algebra associated to the component of the Hopf link which becomes the new meridian (and is framed by $\pm 1$).

  By the pairing theorem and Theorem~\ref{thm:alt-pairing}, adding a $\pm 1$ framed meridian to a component $K\subset L$ and tensoring ${}_{\cK}\cD_0$ into the factor of the meridian has the effect of tensoring with the bimodule ${}_{\cK} \frM_{\pm 1}^{\cK}$. 
  
  \begin{prop}\label{prop:meridional-blow-up} Let ${}_{\cK}\frM_{\pm 1}^{\cK}$ denote the $DA$-bimodule in Equation~\eqref{eq:meridian-bimodule} for adding a $\pm 1$ framed meridian. Then
  \[
  {}_{\cK}\frM_{\pm 1}^{\cK}\simeq {}_{\cK}\cB_{\mp 1}^{\cK}.
  \]
  \end{prop}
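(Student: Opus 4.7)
My plan is to prove ${}_{\cK} \frM_{\pm 1}^{\cK} \simeq {}_{\cK} \cB_{\mp 1}^{\cK}$ by directly computing the defining tensor product and matching it with the explicit algebraic bimodule $\cB_{\mp 1}$. The computation is motivated by a clear topological picture: adding a $\pm 1$-framed unknotted meridian to a knot $K$ and then filling in the resulting solid torus is, via a Rolfsen twist / blow-down, the same as changing the framing of $K$ by $\mp 1$. Algebraically, $\cB_{\mp 1}$ encodes exactly this shift: the relation $\delta_2^1(\tau, i_0) = i_1 \otimes \scV^{\mp 1}\tau$ transforms the $\tau$-output of $\cX_\lambda(K)^{\cK}$, which encodes the Ozsv\'{a}th--Szab\'{o} map $h_\lambda$, into the map $h_{\lambda \mp 1}$, thereby shifting the framing by $\mp 1$.

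The computation splits naturally into two steps. First, I would identify the type-$D$ module
\[
\cH_{(\pm 1,0)}^{\cK_0 \otimes \cK_1} \hatbox {}_{\cK_0}\cD_0
\]
with the $\mp 1$-framed solid torus module $\cD_{\mp 1}^{\cK_1}$, up to homotopy equivalence. This is the algebraic counterpart of the Rolfsen twist. To carry this out efficiently I would apply the homological perturbation lemma for hypercubes of $DA$-bimodules (Lemma~\ref{lem:homological-perturbation-DA-hypercube}) to this box tensor product, guided by the minimal model techniques of Section~\ref{sec:minimal-model-Hopf}. The explicit structure maps for $\cH_{(\pm 1, 0)}$ from Lemma~\ref{lem:Hopf-type-D} give enough control to carry out the reduction, and the resulting minimal model should have precisely two generators with $\delta^1(\xs^0) = \xs^1 \otimes (\sigma + \scV^{\mp 1}\tau)$, which is the defining structure of $\cD_{\mp 1}^{\cK_1}$.

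Second, I would compute
\[
\cD_{\mp 1}^{\cK_1} \hatbox {}_{\cK_1 \otimes \cK} W_l^{\cK}
\]
and identify it with $\cB_{\mp 1}$. Since $\cD_{\mp 1}$ has only two generators, this tensor product is already small. The merge-type $\delta_3^1$ action of $W_l$ contributes the ``identity-like'' actions $\delta_2^1(\cdot, i_\veps) = i_\veps \otimes \cdot$ on monomials preserving idempotents, while the length-$5$ action $\delta_5^1$ of $W_l$, when composed with the $\scV^{\mp 1}\tau$ output of $\cD_{\mp 1}$, should reproduce the twisted relation $\delta_2^1(\tau, i_0) = i_1 \otimes \scV^{\mp 1}\tau$ of $\cB_{\mp 1}$. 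A further reduction via the homological perturbation lemma for $DA$-bimodules (Lemma~\ref{lem:homological-perturbation-DA-modules}) should then yield $\cB_{\mp 1}$ on the nose.

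The main obstacle is bookkeeping: although the final bimodule $\cB_{\mp 1}$ is minimal, the intermediate complexes appearing in the tensor product are large, and one must track both the standard merge-type and the higher $\delta_5^1$ of $W_l$ carefully. It is also important to verify that all intermediate tensor products and homotopy equivalences respect the Alexander-cobounded framework of Section~\ref{sec:Algebra-K}, analogous to Lemma~\ref{lem:Hopf-minimal-continuity}. As a sanity check, Theorem~\ref{thm:alt-pairing} and Theorem~\ref{thm:pairing-links} together imply that tensoring $\cX_\lambda(K)^{\cK}$ with either $\frM_{\pm 1}$ or $\cB_{\mp 1}$ yields $\cX_{\lambda \mp 1}(K)^{\cK}$, confirming that the equivalence holds on the level of outputs and strongly suggesting the bimodule-level equivalence.
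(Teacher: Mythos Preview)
Your two-step plan matches the paper's proof: first reduce $\cH_{(\pm 1,0)}^{\cK_0\otimes\cK_1}\hatbox {}_{\cK_0}\cD_0$ to $\cD_{\mp 1}^{\cK_1}$, then check $\cD_{\mp 1}^{\cK_1}\hatbox {}_{\cK_1\otimes\cK}W_l^{\cK}\simeq {}_{\cK}\cB_{\mp 1}^{\cK}$. Two points of comparison are worth noting.

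For Step~1, the paper inserts one organizational trick you do not mention: rather than attacking the (infinitely generated) tensor product $\cH\hatbox {}_{\cK_0}\cD_0$ head-on, it rewrites ${}_{\cK_0}\cD_0$ as $\cD_0^{\cK_2}\hatbox M_2\hatbox {}_{\cK_3}\cD_0$ and reassociates so that the computation becomes $\cD_0^{\cK_2}\hatbox {}_{\cK_2}\cZ_{(\pm 1,0)}^{\cK_1}$, with the solid torus now on the type-$D$ side of the \emph{minimal} Hopf bimodule $\cZ$. This lets the homological perturbation (Lemmas~\ref{lem:Q-1} and~\ref{lem:Q+1}) be carried out against an explicit staircase-shaped complex. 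Your direct route would work in principle, but the swap is what makes the reduction tractable in practice.

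For Step~2, your attribution of the twisted relation $\delta_2^1(\tau,i_0)=i_1\otimes\scV^{\mp 1}\tau$ to the $\delta_5^1$ of $W_l$ is not right. Since $\cD_{\mp 1}$ has $\delta^1$ iterating at most once, $\delta_5^1$ (which needs two $\cK_1$-inputs of the form $\tau|1$ and $a'|b'$) cannot fire; it contributes nothing here. The twist comes entirely from the merge-type $\delta_3^1$: feeding $\scV^{\mp 1}\tau$ from $\cD_{\mp 1}$ into the $\cK_1$-slot and $\tau$ from the external $\cK$-input into the other slot yields $\delta_3^1(\scV^{\mp 1}\hat\tau_2,\hat\tau_1,i_0)=i_1\otimes\scV^{\mp 1}\tau$. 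The paper records this as the direct computation in Equation~\eqref{eq:meridional-dehn-twist-manipulation-2}. No further perturbation is needed in Step~2; the tensor product is $\cB_{\mp 1}$ on the nose.
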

 
 The proof is a direct computation, which we do in several steps.   Firstly, we observe the very basic isomorphism
  \[
  {}_{\cK_0} \cD_0\iso \cD_0^{\cK_2}\hatbox {}_{\cK_0| \cK_2} [\bI^{\Supset}].
  \]
 (The subscripts on different copies of $\cK$ are meant to indicate only how the tensor product is formed).
 Hence, we may manipulate several terms in Equation~\eqref{eq:meridian-bimodule} as follows:
 \begin{equation}
 \begin{split}
 \cH_{\Lambda(\pm 1)}^{\cK_0\otimes \cK_1}\hatbox {}_{\cK_0}\cD_0\simeq &\cD_0^{\cK_2} \hatbox (\cH_{\Lambda(\pm 1)}^{\cK_0\otimes \cK}\hatbox {}_{\cK_0| \cK_2} [\bI^{\Supset}])\\
 :=& \cD_0^{\cK_2}\hatbox {}_{\cK_2} \cH_{\Lambda(\pm 1)}^{\cK_1}\\
 \simeq& \cD_0^{\cK_2} \hatbox {}_{\cK_2} \cZ_{\Lambda(\pm 1)}^{\cK_1}.
 \end{split}
 \label{eq:meridional-dehn-twist-manipulation-1}
 \end{equation}
 Therefore, combining Equations~\eqref{eq:meridian-bimodule} and~\eqref{eq:meridional-dehn-twist-manipulation-1}, we see that
 \begin{equation}
 \begin{split}
 {}_{\cK} \frM_{\pm 1}^{\cK}&=\cH_{\Lambda(\pm 1)}^{\cK_0\otimes \cK_1}\hatbox {}_{\cK_0}\cD_0 \hatbox {}_{\cK_1|\cK}W_{\a\b, \a}^{\cK}\\
 &\simeq  \cD_0^{\cK_2} \hatbox {}_{\cK_2} \cZ_{\Lambda(\pm 1)}^{\cK_1} \hatbox {}_{\cK_1|\cK}W_{\a\b, \a}^{\cK}
 \end{split}
 \label{eq:meridional-dehn-twist-manipulation-3}
 \end{equation}
  On the other hand, we observe by direct computation that
 \begin{equation}
 {}_{\cK} \cB_{\mp 1}^{\cK} \simeq \cD^{\cK_0}_{\mp 1} \hatbox {}_{\cK_0| \cK} W_{\a\b, \a}^{\cK}.
 \label{eq:meridional-dehn-twist-manipulation-2}
 \end{equation}
 
Comparing Equations~\eqref{eq:meridional-dehn-twist-manipulation-3} and~\eqref{eq:meridional-dehn-twist-manipulation-2}, we observe that to prove Proposition~\ref{prop:meridional-blow-up}, it suffices to show
 \begin{equation}
 \cD_0^{\cK_1} \hatbox {}_{\cK_1} \cZ_{\Lambda(\pm 1)}^{\cK}\simeq \cD_{\mp 1}^{\cK}. 
 \label{eq:Hopf-link-related-to-D-meridional}
 \end{equation}

Let us write
 \[
 \cZ_{\Lambda}^{\cK}:= \cD_0^{\cK_1}\hatbox{}_{\cK_1} \cZ_{\Lambda(\pm 1)}^{\cK}.
 \]
 
 We note that the type-$D$ module $\cZ_{\Lambda(\pm 1)}^{\cK}$ is not minimal, in the sense of Definition~\ref{def:minimal}. Using the notation of Proposition~\ref{prop:Z_lambda-bimodule}, the complex $\cZ_{\Lambda(\pm 1)}^{\cK}$ may be written as
 \begin{equation}
 \cZ_{\Lambda(\pm 1)}^{\cK}=
 \begin{tikzcd}[row sep=1.3cm, column sep=3cm, labels=description]
 \cZ_{0,0}^{\cK}\ar[r,"{p_2^1(\sigma_1+\tau_1,-)}"] \ar[d,"f_1^1"]&\cZ_{1,0}^{\cK}\ar[d," g_1^1" ]
 \\
 \cZ_{0,1}^{\cK} \ar[r,"{q_2^1(\sigma_1+\tau_1,-)}"]& \cZ_{1,1}^{\cK}
 \end{tikzcd}
 \label{eq:Z_Lambda^K-diagram}
 \end{equation}
 In the above, we are writing $f_1^1$ for the map ${}^{L_2}\! f_1^1+{}^{-L_2}\! f_1^1$, and similarly for all of the other maps. Note that ${}^{-H}\!\omega_3^1$ does not make any contribution.

Our argument will go by way of viewing the above diagram as a mapping cone from the top line to the bottom line, and then applying the homological perturbation lemma. 
Let us write
\[
\cQ^{\cK}_{\Lambda(\pm 1),\veps}=\Cone(
\begin{tikzcd}[column sep=2.5cm] \cZ^{\cK}_{0,\veps} \ar[r, "{p_2^1(\sigma_1+\tau_1,-)}"]& \cZ^{\cK}_{1,\veps}
\end{tikzcd}).
\]
 We will write 
 \[
 J^1_{\Lambda(\pm 1)}\colon \cQ^{\cK}_{\Lambda(\pm 1);0}\to \cQ^{\cK}_{\Lambda(\pm 1);1}
 \]
  for the total morphism from the top line to the bottom line of Equation~\eqref{eq:Z_Lambda^K-diagram} (i.e. $f_1^1\oplus g_1^1$), so that $\cZ_{(\pm 1,0)}^{\cK}$ is $\Cone(J_{\Lambda(\pm 1)}^{1})$. 

For $\veps\in \{0,1\}$, write $\ve{i}_{\veps}^\cK$ for the type-$D$ module module which is concentrated in idempotent $\ve{I}_\veps$, has a single generator, and has vanishing $\delta^1$. Equation~\eqref{eq:Hopf-link-related-to-D-meridional}, and consequently Proposition~\ref{prop:meridional-blow-up}, follows from the subsequent two lemmas:

\begin{lem}\label{lem:Q-1} For $\veps\in \{0,1\}$, there are maps
\[
I^1_\veps\colon \ve{i}^{\cK}_{\veps}\to \cQ_{\Lambda(-1);\veps}^{\cK},\quad \Pi^1_\veps\colon  \cQ^{\cK}_{\Lambda(-1);\veps}\to \ve{i}_{\veps}^{\cK},\quad \text{and} \quad H^1_\veps\colon \cQ^{\cK}_{\Lambda(-1);\veps}\to \cQ^{\cK}_{\Lambda(-1);\veps}
\]
such that $I^1_\veps$ and $\Pi^1_\veps$ are cycles, and $I^1_\veps \circ \Pi^1_\veps=\id+\d_{\Mor}(H^1_\veps)$ and $\Pi^1_\veps\circ I^1_\veps=\id$. Furthermore, 
\begin{equation}
\Pi^1_1\circ J_{\Lambda(-1)}^1\circ I^1_0=(i_0\mapsto i_1\otimes (\sigma+\scV\tau)). \label{eq:Lambda(-1)-key-equation}
\end{equation}
Here $i_\veps$ denotes the generator of $\ve{i}_{\veps}^{\cK}$. In particular, $\cZ_{\Lambda(-1)}^{\cK}$ is homotopy equivalent to the solid torus module $\cD_{+ 1}^{\cK}$ defined in Section~\ref{sec:solid-torus-module}.
\end{lem} 
\begin{proof} As a first step we record the mapping cone of $\cQ^{\cK}_{\Lambda(-1);\veps}$. This is the type-$D$ module
\[
\begin{tikzcd}[labels=description, column sep={1.5 cm,between origins}, row sep=1cm]
\cdots&\scU_1^{3} \ve{a} \ar[d,"\scU_2^3\scV_2^4"]\ar[dl]
& \scU_1^2 \ve{a} \ar[d,"\scU_2^2 \scV_2^3"] \ar[dl,"1"]
& \scU_1^1 \ve{a} \ar[dl,  "1"] \ar[d,"\scU_2\scV_2^2"]
& \ve{a} \ar[d,"\scV_2"]\ar[dl, "1"] 
& \ve{d} \ar[d,"1"] \ar[dl,"\scU_2"]
& \scV_1\ve{d} \ar[d,"1"]\ar[dl, "\scU_2^2\scV_2"]
& \scV_1^2 \ve{d} \ar[d,"1"]\ar[dl,"\scU_2^3 \scV_2^2"]
& \scV_1^3\ve{d} \ar[d,"1"]\ar[dl,"\scU_2^4 \scV_2^3"]
&\cdots \ar[dl]
\\
\cdots
&\scV^{-4}_1\ve{d}
& \scV^{-3}_1\ve{d}
& \scV^{-2}_1\ve{d}
& \scV^{-1}_1\ve{d}
& \ve{d}
& \scV_1\ve{d}
& \scV_1^2 \ve{d}
& \scV_1^3\ve{d}
&\cdots
\end{tikzcd}
\]
We define the type-$D$ module $N^\cE$, where $\cE$ is the exterior algebra on one generator, $\theta$. The module $N$ has a single generator, and differential $\delta^1(1)=1\otimes \theta$. Then $\cQ^{\cK}_{\Lambda(-1),\veps}$ is obtained by boxing $N^{\cE}$ with the $DA$ bimodule over $(\cE,\cK)$ shown below:
\[
\begin{tikzcd}[labels=description, column sep={1.5 cm,between origins}, row sep=1.3cm]
\cdots&\scU_1^{3} \ve{a} \ar[d,dashed,"\theta|\scU_2^3\scV_2^4"] \ar[dl]
& \scU_1^2 \ve{a} \ar[d,dashed,"\theta|\scU_2^2 \scV_2^3"] \ar[dl,"1"]
& \scU_1^1 \ve{a} \ar[dl, "1"] \ar[d,dashed,"\theta|\scU_2\scV_2^2"]
& \ve{a} \ar[d,dashed,"\theta|\scV_2"]\ar[dl,  "1"] 
& \ve{d} \ar[d,"1"] \ar[dl,dashed,"\theta|\scU_2"]
& \scV_1\ve{d} \ar[d,"1"]\ar[dl,dashed, "\theta|\scU_2^2\scV_2"]
& \scV_1^2 \ve{d} \ar[d,"1"]\ar[dl,dashed,"\theta|\scU_2^3 \scV_2^2"]
& \scV_1^3\ve{d} \ar[d,"1"]\ar[dl,dashed,"\theta|\scU_2^4 \scV_2^3"]
&\cdots
\ar[dl]
\\
\cdots &\scV^{-4}_1\ve{d}
& \scV^{-3}_1\ve{d}
& \scV^{-2}_1\ve{d}
& \scV^{-1}_1\ve{d}
& \ve{d}
& \scV_1\ve{d}
& \scV_1^2 \ve{d}
& \scV_1^3\ve{d}
&\cdots
\end{tikzcd}
\]
In the above diagram, the solid lines denote the $\delta^1$ action, while the dashed lines denote $\delta_2^1(\theta,-)$. A solid arrow from $\xs$ to $\ys$ with a $1$ means that $\delta^1(\xs)$ has a summand of $\ys\otimes 1$. If we ignore the type-$A$ action, then the type-$D$ module is equivalent to the one spanned by $\scV_1^{-1} \ve{d}$ with vanishing $\delta^1$. There is a canonical inclusion map $i^1$ and projection map $\pi^1$. A homotopy $h^1$ is defined by moving backwards along the solid arrows. Clearly $i^1$ and $\pi^1$ are type-$D$ homomorphisms, and
\[
i^1\circ \pi^1=\id+\d_{\Mor}(h^1),\quad \pi^1\circ i^1=\id.
\]
Similarly, $\pi^1\circ h^1=0$, $h^1\circ h^1=0$, and $h^1\circ i^1=0$. By the homological perturbation lemma, Lemma~\ref{lem:homological-perturbation-DA-modules}, $i^1$, $\pi^1$ and $h^1$ extend to a homotopy equivalence of $DA$ bimodules, which we denote by $i_*^1$, $\pi_*^1$ and $h_*^1$. We define the type-$D$ module morphisms in the statement by
\[
I^1_{\veps}=\bI_{N}\boxtimes i_*^1,\quad \Pi^1_{\veps}=\bI_{N}\boxtimes \pi^1_*, \quad \text{and} \quad H^1_{\veps}=\bI_{N}\boxtimes h_*^1. 
\]
We leave it to the reader to check that the above expressions (which involve infinite sums on the completion) determine continuous morphisms.

All of the remaining claims in the statement are clear, except Equation~\eqref{eq:Lambda(-1)-key-equation}.  We compute that $I^1_0$ maps $i_0$ to $\scV^{-1}_1\ve{d}\otimes 1$. By the computation of Proposition~\ref{prop:Z_lambda-bimodule}, the map $J_{\Lambda(-1)}^1$ sends this to
\[
\scV_1^{-2}\ve{d}\otimes \tau_2+\scV_1^{-1}\ve{d}\otimes \sigma_2.
\]
Then $\Pi^1_1$ maps this to
\[
i_1\otimes \scV_2\tau_2+i_1\otimes \sigma_2.
\]
Note that the application of $\Pi^1_1$ to $\scV_1^{-2}\ve{d}\otimes \tau_2$ is slightly subtle. Indeed the recipe from the homological perturbation lemma is to move backwards along the arrow from $\ve{a}$ to $\scV_1^{-2}\ve{d}$ and move forward along the arrow from $\ve{a}$ to $\scV_1^{-1}\ve{d}$, while picking up a factor of $\scV_2$. The proof is complete.
\end{proof}

We now consider the claim for the framing $\Lambda(+1)$:

\begin{lem}\label{lem:Q+1} There are maps
\[
I^1_\veps\colon \ve{i}_\veps^{\cK}\to \cQ_{\Lambda(+1);\veps}^{\cK},\quad \Pi^1_\veps\colon  \cQ^{\cK}_{\Lambda(+1);\veps}\to  \ve{i}_{\veps}^{\cK},\quad \text{and} \quad H^1_\veps\colon \cQ^{\cK}_{\Lambda(+1);\veps}\to \cQ^{\cK}_{\Lambda(+1);\veps}
\]
such that $I^1_\veps$ and $\Pi^1_\veps$ are type-$D$ homomorphisms, and $I^1_\veps \circ \Pi^1_\veps=\id+\d_{\Mor}(H^1_\veps)$ and $\Pi^1_\veps\circ I^1_\veps=\id$. Furthermore, 
\[
\Pi^1_1\circ J_{\Lambda(+1)}^1\circ I^1_0=(i_0\mapsto i_1\otimes (\sigma+\scV^{-1}\tau)).
\]
Here $i_\veps$ denotes the generator of $\cQ_{+1,\veps}^{\cK}$. In other words, $\cZ_{+1}^{\cK}\simeq \cD_{-1}^{\cK}.$
\end{lem}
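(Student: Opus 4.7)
The plan is to follow the strategy of Lemma~\ref{lem:Q-1} with a crucial modification in the homological perturbation. Using the formulas of Proposition~\ref{prop:Z_lambda-bimodule} specialized to $\lambda_1=+1$, I would first present $\cQ^{\cK}_{+1,\veps}$ as a box product $\cE^{\Lambda}\boxtimes {}_{\Lambda}X^{\cK}_{+1,\veps}$, where $X^{\cK}_{+1,\veps}$ is a $(\Lambda,\cK)$-bimodule whose internal differential $\delta^1_1$ consists of the $1$-coefficient arrows of $p_2^1(\sigma_1+\tau_1,-)$, namely $\scU_1^i\ve{a}\mapsto\scV_1^{-i}\ve{d}$ for $i\ge 0$ and $\scV_1^j\ve{d}\mapsto\scV_1^j\ve{d}$ for $j\ge 0$, and whose $\delta^1_2(\theta,-)$ action carries the remaining algebra-valued arrows.

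The substantive difference from the $\lambda_1=-1$ case lies in the homology of the $1$-differential: previously the $1$-arrows left a single unpaired element $\scV_1^{-1}\ve{d}$ in the bottom, which became the reduced generator; for $\lambda_1=+1$, the $1$-arrows from $\ve{a}$ and $\ve{d}$ in the top now both terminate at $\ve{d}$ in the bottom, so the $1$-differential is surjective with one-dimensional kernel spanned by $\ve{a}+\ve{d}$ at the top. I would accordingly choose the splitting
\[
C_{0,\veps}=\langle \ve{a}+\ve{d}\rangle \oplus B_{0,\veps},\quad B_{0,\veps}=\langle \ve{d},\ \scU_1^i\ve{a}\ (i\ge 1),\ \scV_1^j\ve{d}\ (j\ge 1)\rangle,
\]
on which the $1$-differential restricts to an isomorphism $B_{0,\veps}\to C_{1,\veps}$, and define $i^1(1)=\ve{a}+\ve{d}$, the projection $\pi^1$ killing $B_{0,\veps}\oplus C_{1,\veps}$ and sending $\ve{a}+\ve{d}\mapsto 1$, and the homotopy $h^1$ given by the inverse of the restricted $1$-differential (zero elsewhere). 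After verifying the strong deformation retract identities, I would invoke Lemma~\ref{lem:homological-perturbation-DA-modules} and box with $\bI_{\cE^{\Lambda}}$ to obtain the desired $I^1,\Pi^1,H^1$, with Alexander-coboundedness handled by the argument of Lemma~\ref{lem:Hopf-minimal-continuity}.

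The final step is a direct computation of $\Pi^1\circ J^1_{+1}\circ I^1$. Starting from $I^1(i_0)=\ve{a}+\ve{d}$, the formulas for ${}^{L_2}f_1^1$ and ${}^{-L_2}f_1^1$ give
\[
J^1_{+1}(\ve{a}+\ve{d})=(\ve{a}+\ve{d})\otimes\sigma_2+\scU_1\ve{a}\otimes\tau_2+\ve{a}\otimes\scV_2^{-1}\tau_2,
\]
and applying $\Pi^1$ recovers exactly $i_1\otimes(\sigma+\scV^{-1}\tau)$: the first summand projects directly to $i_1\otimes\sigma$; since $\ve{a}=(\ve{a}+\ve{d})+\ve{d}$ with $\ve{d}\in B_{0,1}$, we have $\pi^1(\ve{a})=1$, so the third summand contributes $i_1\otimes\scV^{-1}\tau$; and the middle summand is killed because $\scU_1\ve{a}\in B_{0,1}$ has $\pi^1=0$. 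The main subtlety, and the key place where the analysis diverges from Lemma~\ref{lem:Q-1}, is that here the nontrivial $\scV^{-1}$ factor in the $\tau$-term appears already at the first-order level (built into ${}^{-L_2}f_1^1(\ve{d})$), rather than arising from a backwards $h^1$-step followed by a $\theta$-arrow as in the $-1$ case; the check that no further $h^1$-corrections contribute is immediate because $h^1$ is identically zero on the top row, so $\pi^1_2(\theta,\xs)=\pi^1(\delta^1_2(\theta,h^1(\xs)))=0$ on every summand above.
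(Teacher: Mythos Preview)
Your overall strategy matches the paper's, but there is a genuine gap in the computation of $I^1(i_0)$. You correctly argue that $\Pi^1$ has no higher corrections on top-row elements because $h^1$ vanishes there, but the same reasoning does \emph{not} apply to $I^1$. The homological perturbation formula for $i_{j+1}^1$ is $(h^1\circ\delta_2^1)^j\circ i^1$, and here $i^1(i_0)=\ve{a}+\ve{d}$ sits in the top row, $\delta_2^1(\theta,-)$ sends top to bottom, and $h^1$ sends bottom back to top. The first correction is already nonzero:
\[
i_2^1(\theta,i_0)=h^1\bigl(\delta_2^1(\theta,\ve{a}+\ve{d})\bigr)=h^1\bigl(\scV_1^{-1}\ve{d}\otimes\scV_2+\scV_1\ve{d}\otimes\scU_2\bigr)=\scU_1\ve{a}\otimes\scV_2+\scV_1\ve{d}\otimes\scU_2,
\]
and so on; after boxing with $\cE^\Lambda$ one obtains $I^1(i_0)=(\ve{a}+\ve{d})\otimes 1+\scU_1\ve{a}\otimes\scV_2+\scV_1\ve{d}\otimes\scU_2+\cdots$, exactly as the paper records. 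Equivalently, your $i^1$ alone is not a type-$D$ cycle: $\delta^1(\ve{a}+\ve{d})=\scV_1^{-1}\ve{d}\otimes\scV_2+\scV_1\ve{d}\otimes\scU_2\neq 0$ in $\cQ_{+1,\veps}^{\cK}$.

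The good news is that your final answer is still correct, because all the higher correction terms $\scU_1^i\ve{a}$ and $\scV_1^j\ve{d}$ with $i,j\ge 1$, after applying $J_{+1}^1=f_1^1$, remain in $B_{0,1}$ (the map ${}^{-L_2}\!f_1^1$ shifts $\scU_1^i\ve{a}\mapsto\scU_1^{i+1}\ve{a}$ and $\scV_1^j\ve{d}\mapsto\scV_1^{j-1}\ve{d}$, the latter landing on $\ve{d}\in B_{0,1}$ when $j=1$) and hence are annihilated by $\pi^1$. You need to supply this check; once done, the argument is complete and agrees with the paper's.
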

\begin{proof} The proof is very much the same as the proof with framing $-1$.  We write down the complex $\cQ_{\Lambda(+1);\veps}^{\cK}$ below:
\[
\begin{tikzcd}[labels=description, column sep={1.5 cm,between origins}, row sep=1cm]
\cdots \ar[dr]&\scU_1^{3} \ve{a} \ar[d,"\scU_2^3\scV_2^4"] \ar[dr, "1"]
& \scU_1^2 \ve{a} \ar[d,"\scU_2^2 \scV_2^3"] \ar[dr,"1"]
& \scU_1^1 \ve{a} \ar[dr,  "1"] \ar[d,"\scU_2\scV_2^2"]
& \ve{a} \ar[d,"\scV_2"]\ar[dr, "1"] 
& \ve{d} \ar[d,"1"] \ar[dr,"\scU_2"]
& \scV_1\ve{d} \ar[d,"1"]\ar[dr, "\scU_2^2\scV_2"]
& \scV_1^2 \ve{d} \ar[d,"1"]\ar[dr,"\scU_2^3 \scV_2^2"]
& \scV_1^3\ve{d} \ar[d,"1"]\ar[dr,"\scU_2^4 \scV_2^3"]
&\cdots
\\
\cdots
&\scV^{-4}_1\ve{d}
& \scV^{-3}_1\ve{d}
& \scV^{-2}_1\ve{d}
& \scV^{-1}_1\ve{d}
& \ve{d}
& \scV_1\ve{d}
& \scV_1^2 \ve{d}
& \scV_1^3\ve{d}
&\cdots
\end{tikzcd}
\]
Similar to the framing $-1$ case, we realize the above diagram as the box tensor product of $N^{\cE}$ with the following $DA$-bimodule over $(\cE,\cK)$:
\[
\begin{tikzcd}[labels=description, column sep={1.5 cm,between origins}, row sep=1.3cm]
\cdots\ar[dr]&\scU_1^{3} \ve{a} \ar[d,dashed,"\theta|\scU_2^3\scV_2^4"] \ar[dr, "1"]
& \scU_1^2 \ve{a} \ar[d,dashed,"\theta|\scU_2^2 \scV_2^3"] \ar[dr,"1"]
& \scU_1^1 \ve{a} \ar[dr,  "1"] \ar[d,dashed,"\theta|\scU_2\scV_2^2"]
& \ve{a} \ar[d,dashed,"\theta|\scV_2"]\ar[dr, "1"] 
& \ve{d} \ar[d,"1"] \ar[dr,dashed,"\theta|\scU_2"]
& \scV_1\ve{d} \ar[d,"1"]\ar[dr,dashed, "\theta|\scU_2^2\scV_2"]
& \scV_1^2 \ve{d} \ar[d,"1"]\ar[dr,dashed,"\theta|\scU_2^3 \scV_2^2"]
& \scV_1^3\ve{d} \ar[d,"1"]\ar[dr,dashed, "\theta|\scU_2^4 \scV_2^3"]
&\cdots
\\
\cdots
&\scV^{-4}_1\ve{d}
& \scV^{-3}_1\ve{d}
& \scV^{-2}_1\ve{d}
& \scV^{-1}_1\ve{d}
& \ve{d}
& \scV_1\ve{d}
& \scV_1^2 \ve{d}
& \scV_1^3\ve{d}
&\cdots
\end{tikzcd}
\]
We forget about the left $\cE$-action and simplify the type-$D$ structure. We build a homotopy equivalence with the module $\cW_{\veps}^{\cK}$, by picking type-$D$ module maps $i^1$, $\pi^1$ and $h^1$. The map $i^1$ sends $i_\veps$ to $(\ve{a}+\ve{d})\otimes 1$. The map $\pi^1$ requires a choice. It is either $(\ve{a}\mapsto i_\veps\otimes 1, \ve{d}\mapsto 0)$, or $(\ve{a}\mapsto 0, \ve{d}\mapsto i_{\veps}\otimes 1)$. The choice of $h^1$ depends on our choice of $\pi^1$. If $\pi^1$ sends $\ve{a}$ to $i_{\veps}$, then $h^1$ maps $\ve{d}$ (on the bottom row) to $\ve{d}\otimes 1$ (on the top row). If $\pi^1$ maps $\ve{d}$ to $i_{\veps}\otimes 1$, then $h^1$ maps $\ve{d}$ (bottom row) to $\ve{a}\otimes 1$ (top row). Arbitrarily, pick $\pi^1$ to map $\ve{d}$ to $i_\veps\otimes 1$. Away from the central region, the map $h^1$ just maps backwards along the arrows marked with a 1. One easily verifies that $\pi^1$, $i^1$ and $h^1$ satisfy the assumptions of the homological perturbation lemma.

The homological perturbation lemma now gives us extensions $\pi^1_*$, $i^1_*$ and $h^1_*$. Boxing these with $\bI_{N}$ gives homotopy equivalences between $\cQ^{\cK}_{\Lambda(+1);\veps}$ and $\ve{i}_{\veps}^{\cK}$ described in the statement. Note that the maps $I^1_\veps$, $\Pi^1_\veps$ and $H^1_\veps$ sometimes involve infinite sums, however it is straightforward to verify that the induced maps are continuous.

It remains to perform the computation involving $J_{\Lambda(+1)}^1$. We easily compute
\[
I^1_0(i_0)=\cdots+\scU_1 \ve{a}\otimes \scV_2+\ve{a}\otimes 1+\ve{d}\otimes 1+\scV_1\ve{d}\otimes \scU_2+\cdots
\]
 Using Part~\eqref{num:DA-module-Hopf-formulas-2} of Proposition~\ref{prop:Z_lambda-bimodule}, we compute that the map $J_{\Lambda(+1)}^1$ sends the above to
\[
\begin{split}
\cdots+\scU_1 \ve{a}\otimes \sigma_2\scV_2+\ve{a}\otimes \sigma_2 +\ve{d}\otimes \sigma_2+\scV_1\ve{d}\otimes \sigma_2 \scU_2+\cdots\\
\cdots+\scU_1^2 \ve{a}\otimes \tau_2\scV_2+\scU_1\ve{a}\otimes \tau_2+ \ve{a}\otimes \scV_2^{-1}\tau_2+\ve{d}\otimes \tau_2\scU_2+\cdots
\end{split}
\]
The map $\Pi^1_1$ evaluates the above $i_1\otimes (\sigma_2+\scV_2^{-1} \tau_2)$ (regardless of which choice we made in the construction of $\pi^1$). Note, we are using here that $\tau_2 \scU_2=\scV_2^{-1} \tau_2$. The proof is complete
\end{proof}

\subsection{The type-$D$ invariants of solid tori}

\label{sec:solid-tori}

We now describe the type-$D$ module for a solid torus with $p/q\in \Q\cup \{\infty\}$ framing. The most convenient description is to take a standard unknot complement (with standard meridian and 0-framed longitude) and perform $-q/p$ surgery to a meridian of $U$. See Figure~\ref{fig:27}. Additionally, we will consider the $\infty$-framed solid torus, which we view as being obtained by performing 0-surgery to a meridian of $U$.

\begin{figure}[ht]
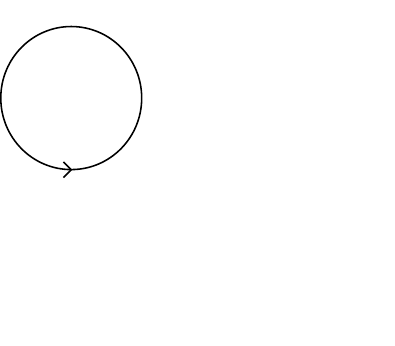
\caption{A slam dunk on a $p/q$-framed solid torus (top row), and an $\infty$-framed solid torus (bottom). The component $U$ with an arrow on it denotes the component which we associate to  $\cK$. }
\label{fig:27}
\end{figure}

We now define our candidate type-$D$ modules. We begin by recalling that by definition the type-$D$ module for an $n$-framed solid torus $\cD_n^{\cK}$ is the same as the surgery complex for an $n$-framed unknot, and hence has two generators $\xs^0$ and $\xs^1$, which live in idempotent $0$ and $1$, respectively. The structure map is given by
\[
\delta^1(\xs^0)=\xs^1\otimes (\sigma+\scV^n \tau).
\]

Generalizing this, we now define the candidate $p/q$-framed solid torus module $\cD_{p/q}^{\cK}$, where $p,q$ are coprime and $q>0$.
We define $\cD^{\cK}_{p/q}\cdot \ve{I}_\veps$ to be spanned by generators $\ve{x}_{0}^\veps,\dots, \ve{x}_{q-1}^\veps$, for $\veps\in \{0,1\}$. The structure map is given by the following formula
\[
\delta^1(\xs^0_i)=\xs^1_{(i+p) \Mod q }\otimes \scV^{\lfloor (i+p)/q \rfloor} \tau+\xs^1_i\otimes \sigma.
\]
The examples $\cD_{\pm 1/3}^{\cK}$ are shown in Figure~\ref{fig:D13}.
\begin{figure}[ht]
\[
\begin{tikzcd}[labels=description, column sep=1cm, row sep=2cm]
\ve{x}_0^0
	\ar[d, "\sigma"]
&
\ve{x}_1^0
&
\ve{x}_2^0
	\ar[d, "\sigma"]
	\ar[dll, "\scV\tau", pos=.15]
\\
\ve{x}_0^1
&
\ve{x}_1^1
	\ar[from=ul, "\tau",crossing over]
	\ar[from=u, "\sigma", crossing over, pos=.6]
&
\ve{x}_2^1
	\ar[from=ul, "\tau", crossing over]
\end{tikzcd}
\qquad\qquad
\begin{tikzcd}[labels=description, column sep=1cm, row sep=2cm]
\ve{x}_0^0
	\ar[d, "\sigma"]
	\ar[drr, "\scV^{-1}\tau", pos=.15]
&
\ve{x}_1^0
&
\ve{x}_2^0
	\ar[d, "\sigma"]
\\
\ve{x}_0^1
	\ar[from=ur, "\tau",crossing over]
&
\ve{x}_1^1
	\ar[from=ur, "\tau",crossing over]
	\ar[from=u, "\sigma", crossing over, pos=.6]
&
\ve{x}_2^1
\end{tikzcd}
\]
\caption{The modules $\cD_{1/3}^\cK$ (left) and $\cD_{-1/3}^{\cK}$ (right).}
\label{fig:D13}
\end{figure}

\begin{rem}
 The bimodule $\cD_{p/q}^{\cK}$ recovers the rational surgery mapping cone complex $\bX_{p/q}(K)$ of Ozsv\'{a}th and Szab\'{o} \cite{OSRationalSurgeries} in the sense that if $K\subset S^3$ is a knot, then
 \[
 \cD_{p/q}^{\cK}\hatbox {}_{\cK}\cX_{0}(K)\iso \bX_{p/q}(K).
 \]
\end{rem}

Finally, we define the type-$D$ module $\cD^\cK_\infty$ for the $\infty$-framed solid torus. This module $\cD_\infty^\cK\cdot \ve{I}_0=\{0\}$ and $\cD_\infty^{\cK}\cdot \ve{I}_1=\Span_{\bF}( \ve{x},\ve{y})$. The structure map is given by
\[
\delta^1(\xs)=\ys\otimes (1+\scV).
\]

The link surgery formula naturally produces a type-$D$ module $\cX_{p/q}^{\cK}$ for the $p/q$-framed solid torus, as follows.  We may write $p/q$ as a continued fraction expansion 
 \[
p/q= [a_n,\dots, a_1]^-=a_n-\frac{1}{a_{n-1}-\frac{1}{a_{n-2}-\cdots }}.
\]
We may define $\cX_{p/q}^{\cK}$ as the link surgery complex of a linear plumbing with weights $a_n,\dots, a_1$. This produces a type-$D$ module over $\cL_n$. We tensor 0-framed solid tori to the components labeled $a_{n-1},\dots, a_1$. The remaining component, weighted $a_n$, corresponds to the type-$D$ algebra action of $\cX_{p/q}^{\cK}$.

\begin{prop} The type-$D$ module for a $p/q$-framed solid torus $\cX_{p/q}^{\cK}$ is homotopy equivalent to $\cD_{p/q}^{\cK}$. 
\end{prop}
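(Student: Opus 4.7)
The proof will proceed by induction on the length $n$ of the negative continued fraction $p/q = [a_n, \dots, a_1]^-$, with the case $p/q = \infty$ treated separately. The base case $n = 1$ is immediate: here $p/q = a_1 \in \Z$ with $q = 1$, so both $\cX_{a_1}^{\cK}$ and $\cD_{a_1}^{\cK}$ literally coincide with the two-generator surgery complex for an $a_1$-framed unknot by construction.

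For the inductive step with $n \geq 2$, set $p'/q' := [a_{n-1}, \dots, a_1]^-$, so that $p = a_n p' - q'$ and $q = p'$. Topologically, the $p/q$-framed solid torus is obtained from the $p'/q'$-framed solid torus by attaching an $a_n$-framed meridian at its distinguished boundary component. Algebraically, this operation should be realized as a box tensor product of $\cX_{p'/q'}^{\cK'}$ with a suitable Hopf-link-derived $DA$-bimodule obtained from $\cH_{(a_n, 0)}$ by first converting one algebra factor from type-$D$ to type-$A$ via $[\bI^{\Supset}]$ (equivalently, via the pair-of-pants/$\cD_0$ construction of Proposition~\ref{prop:W-I-Supset-relation}), and then merging across the remaining factor using Theorem~\ref{thm:alt-pairing}. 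Using the minimal model of the Hopf link bimodule established in Proposition~\ref{prop:Z_lambda-bimodule}, this reduces the claim to the explicit algebraic identity
\begin{equation*}
\cD_{p'/q'}^{\cK'} \hatbox {}_{\cK'}\cZ_{(a_n, 0)}^{\cK} \simeq \cD_{p/q}^{\cK}.
\end{equation*}

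Verifying this algebraic identity is the main technical step and, I expect, the principal obstacle. The box tensor product on the left is generated by pairs consisting of a generator $\ve{x}_i^\veps$ of $\cD_{p'/q'}^{\cK'}$ and one of the infinite family of generators $\{\scU_1^n \ve{a}, \scV_1^m \ve{d}\}$ of $\cZ_{(a_n,0)}$; the resulting module is infinitely generated but Alexander cobounded by Lemma~\ref{lem:Hopf-minimal-continuity}, so the homological perturbation lemma for hypercubes of $DA$-bimodules (Lemma~\ref{lem:homological-perturbation-DA-hypercube}) can be applied. The key combinatorial task is to show that the cyclic structure of $\cD_{p/q}^{\cK}$---in particular the appearance of $(i + p) \bmod q$ in the index and of $\lfloor (i + p)/q \rfloor$ in the $\tau$-coefficient---emerges from iterated compositions of the structure maps in Proposition~\ref{prop:Z_lambda-bimodule}. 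Concretely, each application of ${}^{-L_1} p_2^1(\tau_1, -)$ shifts the index by $a_n$ along the $\ve{a}/\ve{d}$-tower, and each wrap-around between the $\ve{a}$-region and the $\ve{d}$-region contributes an extra factor of $\scV$ through the $\delta_3^1$-term $\omega_3^1$; matching these accumulated $\scV$-powers with the floor function requires careful indexing but is finite bookkeeping once the generator correspondences are fixed.

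The case $p/q = \infty$ is handled by an analogous but simpler computation. Here $\cX_\infty^{\cK}$ is the surgery complex for the Hopf link with framings $(0, 0)$ with one of its two $\cK$-factors tensored with $\cD_0$. Passing to the minimal model $\cZ_{(0, 0)}$ and applying the homological perturbation lemma collapses the entire $\ve{I}_0$-idempotent sector---the relevant structure maps become isomorphisms on those generators---and leaves precisely the two-generator type-$D$ module in idempotent $\ve{I}_1$ with $\delta^1(\ve{x}) = \ve{y} \otimes (1 + \scV)$. This matches the definition of $\cD_\infty^{\cK}$ and is the algebraic shadow of the topological fact that the $\infty$-framed surgery realizes the core of the glued-in solid torus as a nontrivial $H_1$-class, forcing the idempotent-$\ve{I}_0$ sector (which records the standard Alexander filtration) to vanish in the minimal model.
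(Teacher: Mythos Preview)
Your overall strategy—induction on continued-fraction length, tensoring with the minimal Hopf link bimodule, and applying homological perturbation—matches the paper's. But there is a concrete error in the mechanism you identify for the $\scV$-powers.

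You write that ``each wrap-around between the $\ve{a}$-region and the $\ve{d}$-region contributes an extra factor of $\scV$ through the $\delta_3^1$-term $\omega_3^1$.'' This is wrong: $\omega_3^1$ makes no contribution at all. The map $\delta_3^1$ on $\cZ_{(\lambda_1,0)}$ requires two algebra inputs, so in the box tensor product it is fed by $\delta^2$ on $\cD_{p'/q'}$. But $\delta^1$ on $\cD_{p'/q'}$ sends idempotent $0$ to idempotent $1$ and vanishes on idempotent $1$, so $\delta^2 \equiv 0$. The paper notes this explicitly. The powers of $\scV$ and the floor-function combinatorics instead emerge from the \emph{projection} step of homological perturbation: after including a generator and applying $\delta^1$ (which shifts position in the tower), projecting back to the surviving generators requires zig-zagging through the staircase, and each zig-zag contributes one $\scV$. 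The count of zig-zags is the floor term. Your ``finite bookkeeping'' is the entire content of the proof, and you have misidentified where it happens.

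A secondary difference: the paper decouples the inductive step, first proving $\cD_{p/q}\hatbox\cZ_{(0,0)}\simeq \cD_{-q/p}$ and then handling the integer shift by $m$ via the elementary bimodule $\cB_{\pm 1}$ (so the zig-zag computation is done once, at framing zero). You roll the framing $a_n$ into the Hopf bimodule $\cZ_{(a_n,0)}$. This should also work, but it entangles the $a_n$-shift with the zig-zag count and makes the indexing messier; the paper's separation is cleaner.

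Your treatment of the $\infty$ case is correct in outline; the paper makes the acyclicity in idempotent $0$ explicit by observing that $1+\scU$ (and $1+\scU^k\scV^{k-1}$ etc.) is a unit in the completion of $\ve{I}_0\cdot\cK\cdot\ve{I}_0$.
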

\begin{proof}
 The proof is by induction. By considering the continued fraction decomposition above, we will assume that the formula holds for $p/q$, and use this to show that it also holds for 
\[
m-\frac{1}{p/q}=m-\frac{q}{p},
\]
when $m\in \Z$.

Note that increasing the framing by $m$ may be performed by tensoring with $m$ meridional Dehn twist bimodules ${}_{\cK} \cB_{\pm 1}^{\cK}$ from Section~\ref{sec:blow-ups}. This tensor product is easy to understand, and indeed it is easy to check that
\[
\cD_{p/q}^{\cK}\hatbox {}_{\cK}\cB_{+1}^{\cK}\iso\cD_{p/q+1}^{\cK}
\]
In particular, it suffices to show that if the claim is true for some $p/q$, then it is true for $-q/p$. 

 On the algebraic level, it is sufficient to show that
\[
\cD_{-q/p}^{\cK}\simeq \cD_{p/q}^{\cK}\hatbox {}_{\cK} \cZ_{(0,0)}^{\cK},
\]
where $ {}_{\cK} \cZ_{(0,0)}^{\cK}$ is the Hopf link bimodule.

 In this case, we compute the type-$D$ module in a similar manner to our computation of the meridional Dehn twist module from Section~\ref{sec:blow-ups}. We will only consider the case that $p<0$ and $q>0$. The case that $p>0$ and $q>0$ follows from a straightforward modification of this argument.
 
   We compute $\cD^{\cK}_{p/q}\hatbox {}_{\cK} \cZ^{\cK}_{(0,0)}\cdot \ve{I}_0$ first.  The tensor product has generators 
 \[
 x_j^i:=\ve{x}_j^0\scU_1^i \ve{a}\quad \text{and} \quad y_j^i:=\ve{x}_j^0\scV_1^i \ve{d},
 \]
 ranging over $i\in \N$ and $j\in \Z/q$, and generators
  \[
 z_j^i:=\ve{x}_j^1\scV^i_1 \ve{d}
 \]
  ranging over $i\in \Z$ and $j\in \Z/q$. 
 We display the complex in Figure~\ref{fig:complex-rational-handlebody}.
 
\begin{figure}[ht]
\[
\begin{tikzcd}[column sep={.7cm,between origins}, row  sep=3cm,labels=description]
&
\cdots
&x_{2}^1
	\ar[d, "\scU\scV^2"]
&x_{3}^1
	\ar[d, "\scU\scV^2"]
	\ar[dl,dashed]
&x_{4}^1
	\ar[d, "\scU\scV^2"]
	\ar[dl,dashed]
&\,
&x_0^0
	\ar[d, "\scV"]
	\ar[dll,dashed]	
&
x_1^0
	\ar[dl,dashed]
	\ar[d,"\scV"]
&x_{2}^0
	\ar[d, "\scV"]
	\ar[dl,dashed]
&x_{3}^0
	\ar[d, "\scV"]
	\ar[dl,dashed]
&x_{4}^0
	\ar[d, "\scV"]
	\ar[dl,dashed]
&\,
&y_0^0
	\ar[d,dashed]
	\ar[dll, "\scU"]
&
y_1^0
	\ar[dl, "\scU"]
	\ar[d,dashed]
&y_{2}^0
	\ar[d,dashed]
	\ar[dl, "\scU"]
&y_{3}^0
	\ar[d,dashed]
	\ar[dl, "\scU"]
&y_{4}^0
	\ar[d,dashed]
	\ar[dl, "\scU"]
&\,
&y_0^1
	\ar[d,dashed]
	\ar[dll, "\scU^2\scV"]
&
y_1^1
	\ar[dl, "\scU^2\scV"]
	\ar[d,dashed]
&y_{2}^1
	\ar[d,dashed]
	\ar[dl, "\scU^2\scV"]
&\cdots
\\
&\cdots
&z_{2}^{-2}
&z_{3}^{-2}
&z_{4}^{-2}
&\,
&z_0^{-1}&
z_1^{-1}
&z_{2}^{-1}
&z_{3}^{-1}
&\boxed{z_{4}^{-1}} \ar[llllll, dotted, "\tau", bend left]
&\,
&z_0^{0}&
z_1^{0}
&z_{2}^0
&z_{3}^0
&z_{4}^0
&\,
&z_0^1&
z_1^1
&z_{2}^1
&\cdots
\end{tikzcd}
\]
\caption{The tensor product $\cD_{-1/5}^{\cK}\hatbox {}_{\cK} \cZ_{(0,0)}^{\cK} \cdot \ve{I}_{\veps}$, for either $\veps\in \{0,1\}$. The boxed generators are the generators of the minimal model of $\cD_{p/q}^{\cK}\hatbox {}_{\cK} \cZ_{(0,0)}^{\cK}\cdot \ve{I}_\veps$. The dashed arrows are weighted by $1$. The dotted arrow indicates a term of $\delta^1$ which maps from idempotent $0$ to idempotent 1. (Note that technically this arrow goes from the copy of this complex in idempotent 0 to the copy in idempotent 1).} 
\label{fig:complex-rational-handlebody} 
\end{figure}

Let us write $|p|=a\cdot q+r$, where $a\ge 0$ and $0\le r\le q-1$. The minimal model of $\cD^{\cK}_{p/q}\hatbox {}_{\cK} \cZ^{\cK}_{(0,0)}\cdot \ve{I}_0$  is generated by the $|p|$ generators
\[
\ws_0^0= z_{q-1}^{-1},\ws_1^0=z_{q-2}^{-1}\cdots ,\ws_{|p|-1}^0=z_{q-r}^{-a-1}.
\]
We think of the above generators as all of the generators between $z_{q-r}^{-a-1}$ and $z_{q-1}^{-1}$ if we arrange the generators in a row as in the bottom row of Figure~\ref{fig:complex-rational-handlebody}. (Recall here we are assuming that $p<0$ and $q>0$).  The same argument works for $\cD_{p/q}^{\cK}\hatbox {}_{\cK} \cZ_{(0,0)}^{\cK}\cdot \ve{I}_1$, and we write $\ws_0^1,\dots, \ws_{|p|-1}^1$ for these generators.

This gives the stated identification of the generators of the module. It remains now to understand $\delta^1$. This is obtained via homological perturbation (cf. Lemma~\ref{lem:homological-perturbation-DA-hypercube}) by first applying the inclusion map from $\cD_{-q/p}^{\cK}\cdot \ve{I}_0$ into $\cD_{p/q}^{\cK}\hatbox {}_{\cK} \cZ_{(0,0)}^{\cK}\cdot \ve{I}_0$, applying $\delta^1$ of $\cD_{p/q}^{\cK}\hatbox {}_{\cK} \cZ_{(0,0)}^{\cK}$ and then projecting  $\cD_{p/q}^{\cK}\hatbox {}_{\cK} \cZ_{(0,0)}^{\cK}\cdot \ve{I}_1$ to $\cD_{-q/p}^{\cK}\cdot \ve{I}_1$. 

The is clearly a summand of $\delta^1$ which sends $\ws_i^0$ to $\ws_i^1\otimes \sigma$, for each $i\in \Z/q$. There is a remaining term of $\delta^1$ terms which contributes $\tau$. We call that via homological perturbation theory, these terms are obtained by first including, applying $\delta^1$, and then projecting. After including, the map $\delta^1$ moves a generator of $\cD_{p/q}^{\cK}\hatbox {}_{\cK} \cZ_{(0,0)}^{\cK}$ to the left by $q$ places. The outgoing algebra element is 1. We then apply the projection map. If the application of $\delta^1$ sent $\ws_i^0$ to $\ws_j^1\otimes \tau$ for $\ws_j^1\in \{\ws_{0}^1,\dots, \ws_{p-1}^1\}$, then the projection map sends $\ws_j^1$ to $\ws_j^1\otimes 1$. In general, the projection map will involve traveling to the generators $\ws_0^1,\dots, \ws_{p-1}^1$ via a zig-zag of arrows in the larger complex $\cD_{p/q}^{\cK}\hatbox {}_{\cK} \cZ_{(0,0)}^{\cK}$ (more precisely, such a zig-zag may be encoded by the homological perturbation lemma, similar to our argument from Section~\ref{sec:blow-ups}). We pick up one power of $\scV$ for each zig-zag we have to do to get back to $\ve{w}_{(i+q)\Mod p}^1$. The number of zig-zags that we have to do to map $\delta^1(\ws_{i}^1)$ to $\ws_{(i+q)\Mod p}^1$ is exactly ${\lfloor (i+q)/(-p) \rfloor}$. This coincides with $\cD_{q/(-p)}^{\cK}$, completing the proof.
\end{proof}

\begin{prop}
 The type-$D$ module for an $\infty$-framed solid torus is $\cD_\infty^{\cK}$.
\end{prop}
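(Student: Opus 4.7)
Proof proposal: By the construction in Section~\ref{sec:solid-tori}, the type-$D$ module $\cX_\infty^{\cK}$ is obtained from the Hopf link surgery complex with framing $(0,0)$ by contracting one algebra factor against $\cD_0$; writing $\cK_1$ for the algebra of the $0$-framed meridian and $\cK$ for the algebra of the remaining component $U$, we have
\[
\cX_\infty^{\cK} \;=\; \cH_{(0,0)}^{\cK_1\otimes \cK}\hatbox {}_{\cK_1}\cD_0.
\]
The plan is to rewrite this as a box product with the minimal model of Section~\ref{sec:minimal-model-Hopf} and then simplify by homological perturbation inside the appropriate completions. Using associativity and the fact that capping one input of ${}_{\cK_1\otimes \cK_1'}[\bI^\Supset] = M_2\hatbox {}_{\cK_1}\cD_0$ with $\cD_0^{\cK_1}$ returns a single-sided ${}_{\cK_1'}\cD_0$, together with the minimal-model homotopy equivalence ${}_{\cK_1}\cH_{(0,0)}^{\cK}\simeq{}_{\cK_1}\cZ_{(0,0)}^{\cK}$ from Proposition~\ref{prop:Z_lambda-bimodule} and Lemma~\ref{lem:Hopf-minimal-continuity}, this yields
\[
\cX_\infty^{\cK} \;\simeq\; \cD_0^{\cK_1}\hatbox {}_{\cK_1}\cZ_{(0,0)}^{\cK}.
\]

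The generators of $\cD_0^{\cK_1}\hatbox {}_{\cK_1}\cZ_{(0,0)}^{\cK}$ fall into three families, indexed by the $\cK$-idempotent $\veps\in\{0,1\}$:
\[
A_i^\veps = \xs^0\otimes \scU_1^i\ve{a},\quad D_j^\veps = \xs^0\otimes \scV_1^j\ve{d}\quad (i,j\ge 0),\qquad E_m^\veps = \xs^1\otimes \scV_1^m\ve{d}\quad (m\in\Z).
\]
Reading off the structure maps from Proposition~\ref{prop:Z_lambda-bimodule}, the type-$D$ differential has the schematic form
\[
\delta^1(A_i^\veps) = E_{-i-1}^\veps\otimes (1+\scU^i\scV^{i+1}) + \text{(arrows to }\veps{=}1\text{ with }\sigma,\tau\text{ coefficients)},
\]
\[
\delta^1(D_j^\veps) = E_j^\veps\otimes (1+\scU^{j+1}\scV^{j}) + \text{(arrows to }\veps{=}1\text{ with }\sigma,\tau\text{ coefficients)},
\]
with every $E_m^\veps$ a cycle.

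I then apply the homological perturbation lemma in two stages. In $\cK$-idempotent $0$ the completion $\bF[[\scU,\scV]]$ makes every $1+\scU^a\scV^b$ with $(a,b)\ne (0,0)$ a unit, so the pairs $(A_i^0,E_{-i-1}^0)$ and $(D_j^0,E_j^0)$ all cancel; since no other generator carries an arrow into any $E_m^0$, no new arrows are induced and idempotent $0$ is emptied. In $\cK$-idempotent $1$ the relevant completion is $\bF[\scV,\scV^{-1}][[\scU]]$, whose maximal ideal is only $(\scU)$, so $1+\scU^a\scV^b$ is a unit precisely when $a\ge 1$; this cancels $(A_i^1,E_{-i-1}^1)$ for $i\ge 1$ and $(D_j^1,E_j^1)$ for $j\ge 0$, but leaves the single uncancellable pair $(A_0^1,E_{-1}^1)$, on which the induced differential is $\delta^1(A_0^1)=E_{-1}^1\otimes (1+\scV)$. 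Identifying $A_0^1\leftrightarrow \xs$ and $E_{-1}^1\leftrightarrow \ys$ matches the resulting type-$D$ module with $\cD_\infty^{\cK}$.

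The main technical obstacle is making these cancellations rigorous in the Alexander cobounded category: the inverses of the coefficients $1+\scU^a\scV^b$ are infinite power series, so one must verify that the resulting homotopy equivalences are represented by Cauchy sequences of continuous maps as in Section~\ref{sec:Algebra-K}. Conceptually, the asymmetry between the two idempotents of $\cK$ — that $\scV$ lies outside the maximal ideal of the completed $\ve{I}_1\cdot \ve{\cK}\cdot \ve{I}_1$ but inside that of $\ve{I}_0\cdot \ve{\cK}\cdot \ve{I}_0$ — is exactly what forces the single surviving pair and produces the $(1+\scV)$ differential characteristic of $\cD_\infty^{\cK}$.
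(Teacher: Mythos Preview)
Your proposal is correct and follows essentially the same approach as the paper: both compute $\cD_0^{\cK_1}\hatbox {}_{\cK_1}\cZ_{(0,0)}^{\cK}$, observe that each idempotent gives the same staircase of two-step complexes with coefficients $1+\scU^a\scV^b$, and cancel all pairs except $\Cone(1+\scV)$ in idempotent~$1$. Two small points: your description of $\bF[\scV,\scV^{-1}][[\scU]]$ as having ``maximal ideal $(\scU)$'' is imprecise (the ring is not local), though your unit criterion is correct; and you should note explicitly that the $\delta_3^1$ term ${}^{-H}\!\omega_3^1$ makes no contribution in this box product, since $\cD_0^{\cK_1}$ outputs at most one algebra element.
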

\begin{proof}
We compute the tensor product 
\[
\cD_\infty^\cK:=\cD_0^{\cK}\hatbox {}_{\cK} \cZ_{(0,0)}^{\cK}.
\]
We note that for both $\veps=0,1$, we that $\cD_0\hatbox {}_{\cK} \cZ_{(0,0)}^{\cK}\cdot \ve{I}_{\veps}$ is isomorphic to the complex
\[
\begin{tikzcd}[labels=description]
 \cdots
&\scU_1 \ve{a}
	\ar[d, "1+\scU\scV^2"]
& \ve{a} 
	\ar[d, "1+\scV"]
& \ve{d}
	\ar[d, "1+\scU"]
& \scV_1\ve{d}
	\ar[d, "1+\scU^2\scV"]
&
\cdots
\\
\cdots
& \scV_1^{-2} \ve{d}
& \scV_1^{-1} \ve{d}
& \ve{d}
& \scV_1\ve{d}
&\cdots.
\end{tikzcd}
\]
In idempotent $\ve{I}_0$, this complex is acyclic. For example $\Cone(1+\scU: \ve{d}\to \ve{d})$ is acyclic because $1+\scU$ is a unit in the completion of $\ve{I}_0\cdot \cK\cdot \ve{I}_0$. Its inverse is given by $\sum_{i=0}^\infty \scU^i$. In idempotent $\ve{I}_1$, each summand is acyclic except for $\Cone(1+\scV: \ve{a}\to \scV_1^{-1} \ve{d})$. Deleting each acyclic subcomplex yields  $\cD_\infty^\cK$, completing the proof. 
\end{proof}

There is a related bimodule which is useful for applications, defined by
\[
{}_{\cK} \cD_\infty^{\bF[U]}:= \cD_\infty^{\cK} \hatbox {}_{\cK| \cK}[\bI^{\Supset}]^{\bF[U]}.
\]
The following lemma is helpful:
\begin{lem}
\label{lem:infinity-framed-type-A}
The $DA$-bimodule ${}_{\cK} \cD_\infty^{\bF[U]}$ is homotopy equivalent to the $DA$-bimodule with a single generator $\ve{z}^1$, concentrated in idempotent $\ve{I}_1$, such that $\delta_2^1(\scU^i\scV^j,\ve{z}^1)=\ve{z}^1\otimes U^i$.
\end{lem}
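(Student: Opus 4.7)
The plan is to unwind the definition ${}_{\cK}\cD_\infty^{\bF[U]} = \cD_\infty^{\cK}\hatbox {}_{\cK\otimes\cK}M_2^{\cK}\hatbox{}_{\cK}[\cD_0]^{\bF[U]}$, compute the triple tensor product explicitly, and then exhibit an Alexander cobounded $DA$-bimodule homotopy equivalence with the claimed minimal model.

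First I would simplify the intermediate tensor product $\cD_\infty^{\cK}\hatbox {}_{\cK\otimes\cK}M_2^{\cK}$. Since $\cD_\infty^{\cK}$ is concentrated in right idempotent $\ve{I}_1$ and the idempotent structure of $M_2$ forces both $\cK$-inputs to share an idempotent with the output, only the $i_1$ generator of $M_2$ enters the pairing, and $\delta_3^1$ of $M_2$ (which requires module-side $i_0$) does not contribute. The result is a $DA$-bimodule with generators $\tilde\xs,\tilde\ys$ in left and right idempotent $\ve{I}_1$, satisfying $\delta_1^1(\tilde\xs) = \tilde\ys\otimes(1+\scV)$ and $\delta_2^1(a,-)=(-)\otimes a$ for $a\in \ve{I}_1\cdot\cK\cdot\ve{I}_1$. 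Tensoring further with ${}_\cK[\cD_0]^{\bF[U]}$ on the $\ve{I}_1$-generators $\scV^r$ ($r\in\Z$) of $\cD_0$ gives a bimodule with generators $X_r := \tilde\xs\otimes\scV^r$ and $Y_r := \tilde\ys\otimes\scV^r$ for $r\in\Z$, differential $\delta_1^1(X_r)=Y_r+Y_{r+1}$, and $\delta_2^1(\scU^i\scV^j,X_r)=X_{r+j-i}\otimes U^i$ (analogously for $Y_r$). This idempotent analysis also forces the generator of the minimal model to lie in $\ve{I}_1$, so I read the $\ve{I}_0$ in the statement as a typo for $\ve{I}_1$.

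Next I would define the candidate equivalence $I^1\colon\cZ\to{}_\cK\cD_\infty^{\bF[U]}$ by $I^1(\ve{z}) := \sum_{r\in\Z}X_r$, a convergent element of the completion in the cofinite basis topology. Direct calculation shows $I^1$ is Alexander cobounded (as $\cZ$ is finite-dimensional) and is a $DA$-bimodule homomorphism: $\delta_1^1(I^1(\ve{z})) = \sum_r(Y_r+Y_{r+1})=0$ by telescoping in characteristic two, and $\delta_2^1(\scU^i\scV^j,I^1(\ve{z})) = \sum_{r}X_{r+j-i}\otimes U^i = I^1(\ve{z})\otimes U^i$ after reindexing.

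The hard part will be constructing an Alexander cobounded inverse $\Pi^1_*$ with $\Pi^1_*\circ I^1 = \id$, together with an Alexander cobounded null-homotopy $H^1_*$ of $\id + I^1\circ\Pi^1_*$, so that the homological perturbation lemma for $DA$-bimodules (Lemma~\ref{lem:homological-perturbation-DA-modules}) applies. The naive choice $\Pi^1_1(X_0)=\ve{z}$ with $\Pi^1_1$ zero elsewhere is continuous but fails to commute with $\delta_2^1$, so nontrivial higher components $\Pi^1_{j+1}$ are required. Solving the $DA$-bimodule morphism relations yields $\Pi^1_2(\scU^i\scV^j, Y_r)$ supported on the bounded interval of $r$-values between $0$ and $i-j$, and an analogous recursion produces $H^1_*$; Alexander coboundedness of these higher components should follow from a filtration argument tracking the $\scV$-weight of algebra inputs. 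Once these auxiliary morphisms are verified in the Alexander cobounded category, the induced $DA$-bimodule structure on $\cZ$ matches the stated formula by the $I^1$ computation above.
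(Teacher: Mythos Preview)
Your setup, your $I^1$, and your reindexing computation all match the paper, and you are right that the generator lives in idempotent $\ve{I}_1$: the paper's own proof works entirely with generators $\xs^1,\ys^1,\zs^1$, so the $\ve{I}_0$ in the statement is a typo.

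The misunderstanding is in how you plan to invoke Lemma~\ref{lem:homological-perturbation-DA-modules}. That lemma takes as input only \emph{type-$D$} level maps $i^1,\pi^1,h^1$ between the underlying type-$D$ modules over $\bF[U]$ (after forgetting the left $\cK$-action), satisfying the deformation-retract identities with respect to $\delta_1^1$ alone. It then \emph{produces} the full $DA$-bimodule morphisms $\Pi^1_*,H^1_*$ and the induced structure on the small model automatically. Your ``naive'' choice $\pi^1(X_0)=\zs^1$, zero elsewhere, is already a valid type-$D$ homomorphism: it need not commute with $\delta_2^1$, only with $\delta_1^1$, and it does since it annihilates every $Y_r$. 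So there is no need to solve by hand for higher $\Pi^1_{j+1}$; that is exactly the work HPL does for you, and if you had already constructed $\Pi^1_*,H^1_*$ as $DA$-morphisms you would not need HPL at all.

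What your outline is actually missing is the explicit type-$D$ homotopy $h^1$ and the verification of the five HPL identities. The paper takes $h^1(Y_r)=\sum_{s<r}X_s\otimes 1$ for $r\le 0$ and $h^1(Y_r)=\sum_{s\ge r}X_s\otimes 1$ for $r>0$, with $h^1=0$ on each $X_r$; one checks these sums are Alexander cobounded and that the retract identities hold. The HPL-induced $\delta_2^1$ on $\cZ$ then collapses to the single term $\pi^1\circ\delta_2^1\circ i^1$, because $\delta_2^1$ on the big model preserves the span of the $X_r$ while $h^1$ vanishes there, so every zig-zag through $h^1$ dies. That single term is precisely the reindexing computation you already carried out for $I^1$.
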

\begin{proof} Write ${}_{\cK}\cZ_\infty^{\bF[U]}$ for the small model in the statement. We view the completion of ${}_{\cK} \cD_\infty^{\bF[U]}$ as being the vector space
\[
\bF\llsquare \scV,\scV^{-1}\rrsquare \langle \xs^1\rangle\oplus \bF\llsquare \scV,\scV^{-1}\rrsquare \langle \ys^1\rangle.
\]
We have
\[
\delta_1^1(\scV^t \xs^1)=(1+\scV)\scV^t \ys^1\otimes 1.\]
Furthermore
\[
\delta_2^1(i_1\scV^n U^m i_1, \scV^t\xs^1)=\scV^{n+t}\xs^1\otimes U^m,
\]
and similarly for $\ys^1$. Apply the forgetful functor to the type-$A$ actions on ${}_{\cK} \cD_\infty^{\bF[U]}$ and ${}_{\cK} \cZ_{\infty}^{\bF[U]}$. Define type-$D$ morphisms 
\[
i^1\colon \cZ_\infty^{\bF[U]}\to \cD_\infty^{\bF[U]},\quad \pi^1\colon \cD_{\infty}^{\bF[U]}\to  \cZ_\infty^{\bF[U]}\quad \text{and} \quad h^1 \colon \cD_\infty^{\bF[U]}\to  \cD_\infty^{\bF[U]}
\]
via the following formulas. We set
\[
i^1(\zs^1)=\left(\sum_{i\in \Z} \scV^i\right)\xs^1\otimes 1, \quad \pi^1(\scV^i \xs^1)=\begin{cases} \zs^1\otimes 1& \text{ if } i=0\\
0& \text{ otherwise}.
\end{cases}
\]
Additionally, we set
\[
\pi^1(\scV^i \ys^1)=0 \quad \text{and} \quad h^1(\scV^i\ys^1)=
\begin{cases}\sum_{j<i} \scV^j \xs^1\otimes 1& \text{ if } i\le 0\\
\sum_{j\ge i} \scV^j\xs^1\otimes 1& \text{ if } i>0.
\end{cases}
\]
We leave it to the reader to check that the above maps are continuous, and furthermore
\[
\pi^1\circ i^1=\id,\quad i^1\circ \pi^1=\d_{\Mor}(h^1),\quad h^1\circ h^1=0,\quad \pi^1\circ h^1=0, \quad \text{and} \quad h^1\circ i^1=0.
\]
Applying the homological perturbation lemma from Lemma~\ref{lem:homological-perturbation-DA-modules} induces a type-$DA$ Alexander bimodule structure on $\cZ_\infty^{\bF[U]}$ which is homotopy equivalent to ${}_{\cK} \cD_{\infty}^{\bF[U]}$. This bimodule structure is easily checked to coincide with our definition of ${}_{\cK} \cZ_{\infty}^{\bF[U]}$, completing the proof. 
\end{proof}

\subsection{The surgery exact triangle}

\label{sec:exact-triangle}
We now show that our bordered invariants recover the surgery exact triangle, in the following sense:

\begin{prop}
\label{prop:exact-triangle} There is a type-$D$ morphism $f^1\colon \cD_n^{\cK}\to \cD_{n+1}^{\cK}$ such that
\[
\Cone(f^1\colon \cD_n^{\cK}\to \cD_{n+1}^{\cK})\simeq \cD_\infty^{\cK},
\]
for any $n\in \Z$.
\end{prop}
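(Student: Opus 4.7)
The plan is to derive the proposition from the mapping-cone structure of the Hopf link minimal model $\cZ_{(0,0)}^{\cK\otimes\cK}$ of Section~\ref{sec:more-minimal-Hopf}, building on the identification $\cD_\infty^{\cK}\simeq \cD_0^{\cK}\hatbox {}_{\cK}\cZ_{(0,0)}^{\cK}$ established in Section~\ref{sec:solid-tori}. Geometrically this realizes the classical surgery exact triangle relating framings $n$, $n+1$, and $\infty$ on the core of a solid torus, which should be visible as a one-dimensional mapping cone in one of the cube directions of the Hopf link surgery hypercube.

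The first step is to view ${}_{\cK_1}\cZ_{(0,0)}^{\cK_2}$ as a one-dimensional mapping cone along its $L_1$-cube direction,
\[
{}_{\cK_1}\cZ_{(0,0)}^{\cK_2}\simeq \Cone(P^1_*\colon {}_{\cK_1}\cZ^{0,\bullet}\to {}_{\cK_1}\cZ^{1,\bullet}),
\]
where $\cZ^{\veps,\bullet}$ is the codimension-one sub-hyperbox with $L_1$-coordinate $\veps$, and $P^1_*$ is assembled from the $\cK_1$-type-$A$ actions ${}^{\pm L_1}\!p^1_2$, ${}^{\pm L_1}\!q^1_2$, together with the length-three term ${}^{-H}\!\omega^1_3$ recorded in Proposition~\ref{prop:Z_lambda-bimodule}. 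Tensoring on the left with $\cD_0^{\cK_1}$ then yields
\[
\cD_\infty^{\cK_2}\simeq \Cone(f^1\colon \cD_0^{\cK_1}\hatbox \cZ^{0,\bullet}\to \cD_0^{\cK_1}\hatbox \cZ^{1,\bullet}),
\]
so the task reduces to identifying the two halves of this cone with framed solid torus modules.

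I would carry out these identifications by direct homological perturbation applied to the explicit structure maps recorded in Proposition~\ref{prop:Z_lambda-bimodule}. Informally, the $L_1$-coord-$0$ subcube carries no extra $\scV_1$-weight on its outgoing $\tau_1$-actions, so the box tensor product with $\cD_0^{\cK_1}$ collapses to the standard model $\cD_0^{\cK_2}$; the $L_1$-coord-$1$ subcube picks up a single $\scV_1$-weight and collapses to $\cD_1^{\cK_2}$. This would establish the proposition in the base case $n=0$.

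To extend to arbitrary $n\in\Z$, I would tensor the mapping-cone identity on the right with $|k|$ iterated meridional Dehn twist bimodules ${}_{\cK}\cB_{\pm 1}^{\cK}$ from Section~\ref{sec:blow-ups}. Since $\cD_n^{\cK}\hatbox \cB_{+1}^{\hatbox k}\simeq \cD_{n+k}^{\cK}$ by construction of $\cB_{+1}$, and $\cD_\infty^{\cK}\hatbox \cB_{+1}^{\hatbox k}\simeq \cD_\infty^{\cK}$ (because $\cD_\infty$ is concentrated in idempotent $1$, where $\cB_{+1}$ acts as the identity bimodule), this shifts the three framings uniformly while preserving the mapping cone structure. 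The main obstacle will be the explicit identifications $\cD_0^{\cK_1}\hatbox \cZ^{\veps,\bullet}\simeq \cD_\veps^{\cK_2}$, which are essentially mechanical homological-perturbation computations but require careful bookkeeping of the idempotents, the $\scV_1$-weights, and the propagation of the length-three term ${}^{-H}\!\omega_3^1$ through the tensor product.
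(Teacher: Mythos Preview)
Your approach is genuinely different from the paper's, and it has a real gap at the identification step.

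The paper's proof is a short direct algebraic argument with no Hopf-link input: it writes down the explicit morphism $f^1(\xs_\veps) = \ys_\veps\otimes(1+\alpha)$ with
\[
\alpha = \sum_{s\geq 0}(\scU^s+\scV^s)\,U^{s(s-1)/2},
\]
checks $\d_{\Mor}(f^1)=0$, and reduces $\Cone(f^1)$ in two steps. First, $1+\alpha$ is a unit in the completion of $\ve{I}_0\cdot\cK\cdot\ve{I}_0$, so the idempotent-$0$ portion of the cone is acyclic and cancels. Second, rewriting $\scU = U\scV^{-1}$ in idempotent $1$ and regrouping by powers of $U$ gives $1+\alpha = (1+\scV)\cdot u$ for a unit $u$ in the completion of $\ve{I}_1\cdot\cK\cdot\ve{I}_1$, so what remains is exactly $\Cone(1+\scV) = \cD_\infty^{\cK}$. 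This handles all $n$ simultaneously.

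The gap in your plan is the claim that, after slicing $\cD_0^{\cK_1}\hatbox{}_{\cK_1}\cZ_{(0,0)}^{\cK_2}$ along the $L_1$-idempotent, the halves are $\cD_0^{\cK_2}$ and $\cD_1^{\cK_2}$. They are not. The $L_1$-idempotent-$1$ half is the type-$D$ module over $\cK_2$ with generators $\scV_1^i\ve{d}$ ($i\in\Z$) in each $\cK_2$-idempotent and structure map $\delta^1(\scV_1^i\ve{d}) = \scV_1^i\ve{d}\otimes\sigma_2 + \scV_1^{i-1}\ve{d}\otimes\tau_2$ (the $g_1^1$ map of Proposition~\ref{prop:Z_lambda-bimodule}); the other half is similarly an infinite staircase built from the $f_1^1$ maps. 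Neither is homotopy equivalent to any $\cD_n^{\cK_2}$. For the ladder, observe that $\ve{I}_0\cdot\cK\cdot\ve{I}_1 = 0$ and that $\delta^1$ vanishes on idempotent-$1$ generators; it follows that for any type-$D$ morphisms $\phi^1,\psi^1$ and homotopy $H^1$, the idempotent-$0$-to-idempotent-$0$ component of $\psi^1\circ\phi^1$ must equal the identity on the nose, while factoring through the rank-one space $\cD_n\cdot\ve{I}_0$ tensored with the integral domain $\bF[[\scU_2,\scV_2]]$. A short argument with that integral domain shows this is impossible for infinitely many independent generators. Your heuristic about ``$\scV_1$-weight on outgoing $\tau_1$-actions'' concerns the cone map $P^1_*$ between the halves, not their internal structure, which is governed entirely by the $\sigma_2,\tau_2$-weighted maps $f_1^1,g_1^1$.
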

\begin{rem} This is unsurprising since the knot and link surgery formulas are proven using surgery exact triangles, though it highlights the similarities with the original bordered theory, cf. \cite{LOTBordered}*{Section~11.2}.
\end{rem}
\begin{rem}\label{rem:cyclic-reorder}
 It is straightforward to adapt our proof to construct morphisms and homotopy equivalences $\cD^{\cK}_{n+1}\simeq \Cone(g^1\colon \cD_\infty^\cK\to \cD_n^{\cK})$ and $\cD^{\cK}_{n}\simeq \Cone(h^1\colon \cD_{n+1}^{\cK}\to \cD_{\infty}^{\cK})$.
\end{rem}
\begin{proof}
Write $\xs_{0}, \xs_1$ for the generators of $\cD_n^{\cK}$ and $\ys_{0}$, $\ys_1$ for the generators of $\cD_{n+1}^{\cK}$. We define $f^1$ via the formula 
\[
f^1(\xs_{\veps})= \ys_{\veps}\otimes (1+\a)
\]
for $\veps\in \{0,1\}$, where
\[
\a=\sum_{s\ge 1} (\scU^s+\scV^s) U^{s(s-1)/2}=(\scU+\scV)+(\scU^2+\scV^2)U+(\scU^3+\scV^3)U^3+\cdots.
\]
(The element $\a$ is discovered by writing $\b=1+\a$ as $\b=\sum_{i\in \Z} \b_i$ where $\b_i$ has Alexander grading $i$, and then recursively solving the equation $\scV \phi^\tau(\b_i)=\b_{i+1}$ starting with $\b_0=1$).

It is straightforward to see that $\d_{\Mor}(f^1)=0$. Write
\[
\Cone(f^1)=
\begin{tikzcd}[column sep=2cm, row sep=1.5cm,labels=description]
\ve{x}_0\ar[r, "1+\a"]
\ar[d, "\sigma+\scV^n \tau"] & \ve{y}_0 \ar[d, "\sigma+\scV^{n+1}\tau"]\\
\ve{x}_1 \ar[r, "1+\a"]& \ve{y}_1
\end{tikzcd}.
\]
Since $1+\a$ is a unit in the completion of $\ve{I}_0\cdot \cK\cdot \ve{I}_0$, the above complex is homotopy equivalent to the bottom row, $\Cone(1+\a: \ve{x}_1\to \ve{y}_1)$. To construct a homotopy equivalence between $\Cone(1+\a: \ve{x}_1\to \ve{y}_1)$ and $\cD_{\infty}^{\cK}$, it is sufficient to show that $1+\a=(1+\scV)u$ for some unit $u$ in the completion of $\ve{I}_1\cdot\cK\cdot \ve{I}_1$. To see this, we recall that $\scU=U \scV^{-1}$. We use this expression for $\scU$ in our formula for $\a$ and then regroup terms based over the powers of $U$, and we obtain the following expression:
\[
1+\a=\sum_{s\ge 1} (\scV^s+\scV^{-s+1}) U^{s(s-1)/2}.
\]
We observe that 
\[
(\scV^s+\scV^{-s+1})U^{s(s-1)/2}=(1+\scV^{2s-1})\scV^{-s+1} U^{s(s-1)/2},
\]
which clearly has a factor of $(1+\scV)$, for all $s\ge 0$. Hence, $1+\a=(1+\scV) (1+Uf)$ for some series $f$, and $(1+U f)$ is a unit in the completion of $\ve{I}_1\cdot \cK \cdot\ve{I}_1$, completing the proof.
\end{proof}

\subsection{Finite generation}
\label{sec:finite-generation}

In this section we prove a basic property of the surgery modules that is useful in applications.

\begin{prop}
\label{prop:finite-generation}
 Suppose that $L\subset S^3$ is a link with framing $\Lambda$, and $L=K_1\cup \cdots \cup K_\ell$. Suppose that $0<j<\ell$ and write $L_{1,\dots, j}=K_1\cup \cdots \cup K_j$ and $L_{j+1,\dots, \ell}=K_{j+1}\cup \cdots \cup K_\ell$. Let $\cX_{\Lambda}(L_{1,\dots, j},L_{j+1,\dots, \ell})^{\cL_{\ell-j}}$ denote the type-$D$ module obtained by tensoring ${}_{\cK} \cD_0$ into each algebra component for $L_{1,\dots, j}$. Then $\cX_{\Lambda}(L_{1,\dots, j},L_{j+1,\dots, \ell})^{\cL_{\ell-j}}$ is homotopy equivalent to a finitely generated type-$D$ module. 
\end{prop}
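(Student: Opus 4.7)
I would proceed by induction on $j$. The base case $j = 0$ is immediate from the construction of Section~\ref{sec:link-surgery-A-infty}: the underlying based $\ve{E}_n$-module of $\cX_\Lambda(L)^{\cL_n}$ has an $\bF$-basis indexed by the intersection points $\bT_\a \cap \bT_\b$ of a Heegaard diagram for $L$ together with the idempotents of each $\cK$-factor, and is therefore finite-dimensional. For the inductive step, write $\cY^{\cK \otimes \cL_m}$ for the type-$D$ module obtained after tensoring the first $j-1$ copies of ${}_{\cK}\cD_0$, which is finitely generated by the inductive hypothesis. It suffices to show that $\cY \hatbox {}_{\cK}\cD_0$ is homotopy equivalent to a finitely generated type-$D$ module over $\cL_m$.

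The main tool is the surgery exact triangle of Proposition~\ref{prop:exact-triangle}, together with the cyclic reorderings of Remark~\ref{rem:cyclic-reorder}, which give the equivalence
\[
\cD_0^{\cK} \;\simeq\; \Cone\bigl(\cD_1^{\cK} \longrightarrow \cD_\infty^{\cK}\bigr)
\]
of type-$D$ modules over $\cK$. Tensoring with the $AA$-identity bimodule ${}_{\cK \otimes \cK}[\bI^{\Supset}]_{\bF[U]}$ of Section~\ref{sec:AAidentity} (and using Proposition~\ref{prop:W-I-Supset-relation} to ensure that the tensor product is well-defined as a bimodule) yields a corresponding equivalence ${}_{\cK}\cD_0 \simeq \Cone({}_{\cK}\cD_1 \to {}_{\cK}\cD_\infty)$ of Alexander cobounded type-$A$ modules. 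Boxing with $\cY$ then gives
\[
\cY \hatbox {}_{\cK}\cD_0 \;\simeq\; \Cone\bigl(\cY \hatbox {}_{\cK}\cD_1 \longrightarrow \cY \hatbox {}_{\cK}\cD_\infty\bigr).
\]
The second factor is controlled by Lemma~\ref{lem:infinity-framed-type-A}, which provides a single-generator minimal model of ${}_{\cK}\cD_\infty^{\bF[U]}$; combined with the homological perturbation lemma for $DA$-bimodules (Lemma~\ref{lem:homological-perturbation-DA-modules}) and the fact that $\cY$ is finite-dimensional, this yields a finitely generated small model of $\cY \hatbox {}_{\cK}\cD_\infty$.

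For the first factor $\cY \hatbox {}_{\cK}\cD_1$, I would iterate the equivalence $\cD_n^{\cK} \simeq \Cone(\cD_{n+1}^{\cK} \to \cD_\infty^{\cK})$ for $n \geq 1$ to express ${}_{\cK}\cD_1$ as an infinite telescope of copies of ${}_{\cK}\cD_\infty$. The crucial observation is that the structure maps in the explicit description of $\cD_n^{\cK}$ from Section~\ref{sec:solid-tori} involve powers of $\scV^n\tau$, which shift Alexander gradings in an unbounded way as $n \to \infty$. Since $\cY$ is Alexander cobounded and finitely generated, the filtration $\{J_n\}$ on $\cK$ from Section~\ref{sec:filtration-K}, combined with the $\cI_n^r$-filtrations on $\cL_m^{\otimes r}$, forces all but finitely many terms of the telescope to act trivially on any fixed finite collection of generators of $\cY$ after applying the homological perturbation lemma.

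The main obstacle, and the step requiring the most delicate analysis, will be verifying that this truncation argument produces a genuine homotopy equivalence in the completed setting: one must show that the infinite iterated mapping cone converges (in the sense of Cauchy sequences of Alexander cobounded morphisms, Section~\ref{sec:Algebra-K}) to a finitely generated type-$D$ module, rather than merely a direct product. A technically cleaner alternative, which I would pursue in parallel, is to use the pair-of-pants bimodule ${}_{\cK \otimes \cK}W_r^{\cK}$ of Section~\ref{sec:pair-of-pants} together with the meridional Dehn twist formula of Proposition~\ref{prop:meridional-blow-up} to reinterpret $\cY \hatbox {}_{\cK}\cD_0$ as the bordered link surgery type-$D$ module of a link in a closed 3-manifold presented as surgery on a link in $S^3$; such a module is finitely generated by the base case applied to the enlarged surgery diagram.
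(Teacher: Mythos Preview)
Your argument has a genuine gap at the inductive step. Reducing $\cY \hatbox {}_{\cK}\cD_0$ to a cone of $\cY \hatbox {}_{\cK}\cD_1$ and $\cY \hatbox {}_{\cK}\cD_\infty$ does not decrease $j$: the factor $\cY \hatbox {}_{\cK}\cD_1$ is the same kind of object you started with, just with framing $1$ on $K_j$ instead of $0$. Your proposed infinite iteration $\cD_n \simeq \Cone(\cD_{n+1} \to \cD_\infty)$ produces a tower which, even if it converges, yields an infinite direct sum of copies of $\cY \hatbox {}_{\cK}\cD_\infty$, not a finite one. The heuristic that ``$\scV^n\tau$ shifts Alexander gradings unboundedly, so only finitely many terms survive'' is not correct: Alexander coboundedness of $\cY$ controls the differential, not the size of the generating set of the tensor product, and the maps $f^1$ in Proposition~\ref{prop:exact-triangle} involve infinite series with nonzero constant term. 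Your alternative via the pair-of-pants bimodule is circular for the same reason: it rewrites the module as surgery on a larger link, but you still must tensor in solid tori, which is exactly the statement being proved.

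The missing ingredient is an independent base case for the exact triangle. The paper supplies this via a Manolescu--Ozsv\'{a}th style truncation: when the restricted framing matrix $\Lambda_{(j)}$ is positive definite, one introduces a shifted Alexander grading $A_j^\omega$ so that the complex decomposes into finitely many staircases, and for extreme $s$ the maps $F^{\pm K_j}$ are homotopy equivalences, allowing each staircase to be truncated to finite length. Combined with a homological-algebra reduction of the extraneous $U$-variables, this gives finite generation in the positive definite case. Only then does the exact triangle enter, and in the opposite direction from yours: since positive definite framings and $+\infty$ framings both give finitely generated modules, the cone formula $\cD_n \simeq \Cone(\cD_{n+1}\to \cD_\infty)$ lets one \emph{decrease} framings from a known-good starting point to reach an arbitrary $\Lambda$.
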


Our argument is similar to Manolescu and Ozsv\'{a}th's truncation procedure \cite{MOIntegerSurgery}*{Section~10}. We will prove the claim in several steps. If $j<\ell$, we write $\Lambda_{(j)}$ for the induced framing matrix on $L_{1,\dots, j}$. We will first verify the claim when $\Lambda_{(j)}$ is positive definite. We will then use the exact triangle from Section~\ref{sec:exact-triangle} to verify the claim for arbitrary framings. 

If $0<j<\ell$ and we decompose $L$ as $L=L_{1,\dots, j}\cup L_{j+1,\dots, \ell}$, then we may consider the subcube
\[
\cX_{\Lambda}^{(*,0)}(L_{1,\dots, j}, L_{j+1,\dots, \ell})^{\cL_{\ell-j}}
\]
generated by points of the cube $\bE_\ell$ which have $L_{j+1,\dots, \ell}$ components 0. We will view this as a type-$D$ module over the algebra $\bF[\scU_{j+1},\scV_{j+1},\dots, \scU_\ell,\scV_\ell]$ which is continuous with respect to the topology induced by \[
\ve{E}_0\cdot \cL_{\ell-j}\cdot \ve{E}_0\iso \bF[\scU_{j+1},\scV_{j+1},\dots, \scU_\ell,\scV_\ell].
\]

Recall from Section~\ref{sec:framing} that the solid torus module ${}_{\cK} \cD_0$ may be decomposed as a tensor product
\[
{}_{\cK} [\cD_0]^{\bF[U]}\hatbox{}_{\bF[U]} \bF[U].
\]
Hence there is a type-$D$ module $\cX_{\Lambda}(L_{1,\dots, j}, L_{j+1,\dots, \ell})^{\bF[U_1,\dots, U_j]\otimes \cL_{n-j}}$, from which we get $\cX_{\Lambda}(L_{1,\dots, j},L_{j+1,\dots, \ell})^{\cL_{n-j}}$ by tensoring with ${}_{\bF[U_1,\dots, U_j]}\bF[U_1,\dots, U_j]$.

\begin{lem}
\label{lem:finite-generation-positive-definite} Suppose $L=K_1\cup\cdots \cup K_\ell$ is a link in $S^3$, and $0\le j<\ell$. Suppose that $\Lambda_{(j)}$ is positive definite. Then
\[
\cX_{\Lambda}^{(*,0)}(L_{1,\dots, j}, L_{j+1,\dots, \ell})^{\bF[U_1,\dots, U_j,\scU_{j+1},\scV_{j+1},\dots, \scU_\ell,\scV_\ell]}
\]
is homotopy equivalent to a finitely generated type-$D$ module. Furthermore, the maps appearing in the homotopy equivalence may be taken to be continuous if we equip $\cX_{\Lambda}^{(*,0)}(L_{1,\dots, j}, L_{j+1,\dots, \ell})$ with the product topology over a monomial basis, and we equip \[
\bF[U_1,\dots, U_j,\scU_{j+1},\scV_{j+1},\dots, \scU_\ell,\scV_\ell]\]
 with the $(U_1,\dots, U_\ell)$-adic topology.
\end{lem}

\begin{rem} The fact that the morphisms in the homotopy equivalence may be taken to be continuous with respect to the $(U_1,\dots, U_\ell)$-adic topology, as opposed to being continuous with respect to the $(U_1,\dots, U_{j},\scU_{j+1},\scV_{j+1},\dots, \scU_\ell,\scV_\ell)$-adic topology, allows us to use the homotopy equivalence constructed in the lemma for the complex at each subcube $\bE_{j}\times \{\veps\}$, for all $\veps\in \bE_{n-j}$.  
\end{rem}
\begin{proof} 
Our proof will be by induction on $j$. The case that $j=0$ is automatic since $\cX_{\Lambda}^{(*,0)}(\emptyset,L_{1,\dots, \ell})\iso \cCFL(L)$, which is a finitely generated type-$D$ module over $\bF[\scU_1,\scV_1,\dots, \scU_\ell,\scV_\ell]$. 

 We define a shifted Alexander $A_j$-grading on $\cX_{\Lambda}^{(*,0)}(L_{1,\dots, j}, L_{j,\dots, \ell})$, as follows. We set
\begin{equation}
\omega(\ve{s})=-(\lk(K_1,K_j),\dots, \lk(K_{j-1},K_j)) \Lambda_{(j-1)}^{-1} (s_1,\dots, s_{j-1})^T.
\label{eq:def:omega}
\end{equation}
In the above, $\Lambda_{(j-1)}^{-1}$ is the inverse matrix (over $\Q$) and $T$ denotes transpose. Also, if $\ve{s}\in \bH(L)$ we are writing $\ve{s}=(s_1,\dots, s_\ell)$. On the link surgery formula, we define the $\omega$-shifted Alexander grading 
\[
A_j^\omega(\ve{s}):=s_j+\omega(\ve{s}).
\]

We observe that if $i\neq j$, then
\begin{equation}
\omega(\ve{s}+\Lambda_{L,-K_i})-\omega(\ve{s})=-\lk(K_i,K_j), \label{eq:omega-formula-i-neq-j}
\end{equation}
where $\Lambda_{L,-K_i}$ (as defined in Section~\ref{sec:statement-of-link-surgery}) consists of the vector whose $j$-th component is $\lk(K_i,K_j)$ if $i\neq j$, and whose $i$-th component is $\lambda_i$, the framing of $K_i$. Equation~\eqref{eq:omega-formula-i-neq-j} follows from the fact that the $i$-th column of $\Lambda_{(j-1)}$ is obtained by deleting the last component of $\Lambda_{L,-K_i}$. Equation~\eqref{eq:omega-formula-i-neq-j} verifies that if $\vec{M}\subset L_{1,\dots, j}$ is a sublink which does not contain $-K_j$, then $\Phi^{\vec{M}}$ preserves $A_j^\omega$. 

 On the other hand we compute directly from Equation~\eqref{eq:def:omega} and the definition of $\Lambda_{L,-K_j}$ that
\begin{equation}
\begin{split}
&\lambda_j+\omega(\ve{s}+\Lambda_{L,-K_j})-\omega(\ve{s})\\
=&\lambda_j-(\lk(K_1,K_j),\dots, \lk(K_{j-1},K_j)) \Lambda_{(j-1)}^{-1} (\lk(K_1,K_j),\dots, \lk(K_{j-1},K_j))^T.
\end{split}
\label{eq:shift-lambda-j'}
\end{equation}
Note that the above is quantity, which we denote by $\lambda'_j$, is the Schur complement of $\Lambda_{(j-1)}$ inside of $\Lambda_{(j)}$. In particular, $\Lambda_{(j)}$ is positive definite if and only $\Lambda_{(j-1)}$ is positive definite and $\lambda'_j>0$. It follows from Equation~\eqref{eq:shift-lambda-j'} that  if $-K_j\subset \vec{M}\subset L_{1,\dots,j}$, then $\Phi^{\vec{M}}$ shifts $A_j^\omega$ by $ \lambda'_j$.

 We may thus decompose the complex $\cX^{(*,0)}_{\Lambda}(L_{1,\dots, j}, L_{j+1,\dots, \ell})$ as a sum of staircases
\[
\begin{tikzcd}[row sep=1.5cm, column sep=1cm, labels=description]
\cdots
	\ar[dr, "F^{-K_j}"]
 &
\cA_{s-\lambda'_j}^\omega
	\ar[d, "F^{K_j}"]
	\ar[dr, "F^{-K_j}"]
&
\cA_s^\omega
	\ar[d, "F^{K_j}"]
	\ar[dr, "F^{-K_j}"]
&
\cA_{s+\lambda'_j}^\omega
	\ar[d, "F^{K_j}"]
	\ar[dr, "F^{-K_j}"]
&\cdots
\\
\cdots &
\cB_{s-\lambda'_j}^\omega
&
\cB_{s}^\omega&
\cB_{s+\lambda'_j}^\omega
&\cdots
\end{tikzcd}
\]
The vertical direction is the direction of $K_j$. Here we are writing $F^{-K_j}$ for the hypercube maps for all oriented sublinks $\vec{M}\subset L_{1,\dots, j}$ which contain $-K_j$, and we define $F^{K_j}$ similarly. In the above, we are writing $\cA^{\omega}_{s}\subset \cC_{\Lambda}^{(*,0)}(L_{1,\dots, j}, L_{j+1,\dots, \ell})$ for the subspace of $\bF[U_1,\dots, U_j,\scU_{j+1},\scV_{j+1},\dots, \scU_\ell,\scV_\ell]$-module generators with $K_j$-component 0 and Alexander $A_j^\omega$-grading $s\in \Q$. The subspace $\cB^\omega_{s}$ is similar, but has $K_j$-component 1.
Since $\bH(L_{1,\dots, j})/\Lambda_{(j)}$ is finite, there are only finitely many such staircases. Here, the subscript $s$ of $\cA_s^\omega$ and $\cB_s^\omega$ denotes their shifted Alexander grading.

Standard arguments (see \cite{MOIntegerSurgery}*{Lemma~10.1}) imply the following:
\begin{enumerate}[ref=$h$-\arabic*, label=($h$-\arabic*)]
\item\label{h-staircase-1} If $s$ is sufficiently large, the map $F^{K_j}$ is a homotopy equivalence between $\cA_{s}^\omega$ and $\cB_{s}^\omega$.
\item\label{h-staircase-2} If $s$ is sufficiently negative, the map $F^{-K_j}$ is a homotopy equivalence between $\cA^\omega_{s}$ and $\cB_{s+\lambda_j'}^\omega$.
\end{enumerate}

Note that we may view $\cA_s^{\omega}\otimes \bF[U_j]$ as being a subspace of $\cX_{\Lambda}^{(*,0)}(L_{1,\dots, j-1},L_{j+1,\dots, \ell})\otimes \bF[\scU_{j},\scV_j]$ in $A^\omega_j$ Alexander grading $s$. Similarly we may view $\cB_s^{\omega}\otimes \bF[U_j]$ as being the subspace of $\cX_{\Lambda}^{(*,0)}(L_{1,\dots, j-1},L_{j+1,\dots, \ell})\otimes \bF[\scU_{j},\scV_j,\scV_j^{-1}]$ in $A_j^\omega$-Alexander grading $s$. By induction, $\cX_{\Lambda}^{(*,0)}(L_{1,\dots, j-1}, L_{j+1,\dots, \ell})$ is homotopy equivalent to a finitely generated type-$D$ module over $\bF[U_1,\dots, U_{j-1},\scU_{j},\scV_j,\dots, \scU_\ell,\scV_\ell]$ with respect to the topologies in the statement of the lemma.

In particular, we may replace $\cX_{\Lambda}^{(*,0)}(L_{1,\dots, j-1},L_{j,\dots, \ell})$ with a finitely generated model $\cC'$, and also replace the subcomplexes $\cA_s^{\omega}$ and $\cB_s^{\omega}$ with the finite dimensional subspaces of $\cC'\otimes \bF[\scU_{j},\scV_{j}]$ and $\cC'\otimes \bF[\scU_j,\scV_{j},\scV_{j}^{-1}]$ which form an $\bF[U_j]$ basis in Alexander grading $s$. Properties \eqref{h-staircase-1} and~\eqref{h-staircase-2} are preserved by homotopy equivalence, and hence still persist on the finitely generated models. Abusing notation, we write $\cA_s^\omega$ and $\cB_s^{\omega}$ also for the finitely generated models.

Using the same logic as for the ordinary knot surgery mapping cone formula \cite{OSIntegerSurgeries}, each such staircase above admits a finitely generated truncation. We need additionally to show that the maps appearing in a homotopy equivalence with the finite truncation are continuous with respect to the topologies in the main statement. These maps may be described easily using homological perturbation theory. Compare the proof of Lemma~\ref{lem:Q-1}. Since $\cC'$ is finitely generated, for $s\gg 0$, each generator of $\cA_s^\omega$ must be a multiple of $\scV_j^{r(s)}$ for some function $r\colon \Q\to \N$ such that $r(s)\to \infty$ as $s\to \infty$. Similarly for $s\ll 0$, each generator of $\cA_s^\omega$ must be a multiple of $\scU_j^{l(s)}$ for some $l\colon \Q\to \N$ such that $l(s)\to \infty$ as $s\to -\infty$. Using the equivariance of the maps $F^{K_j}$ and $F^{-K_j}$ from Lemma~\ref{lem:module-structure-surgery-hypercube-morphisms}, we see that for $s\ll 0$, the map $F^{K_j}$ sends $\cA_s^{\omega}$ into the ideal $(U_j^{l(s)})$. Similarly, for $s \gg 0$, $F^{-K_j}$ maps $\cA_s^\omega$ into the ideal $(U_j^{r(s)})$. This is sufficient to show that the maps appearing in the homotopy equivalence are continuous as stated.

Since $\cX_{\Lambda}^{(*,0)}(L_{1,\dots,j}, L_{j+1,\dots, j})$ decomposes as a sum of finitely many staircases, each of which is homotopy equivalent to a finitely generated truncation, as above, the direct sum is homotopy equivalent to a finitely generated complex, completing the proof.
\end{proof}

We now prove a basic lemma concerning homological algebra.

\begin{lem}
\label{lem:reduce-homotopic-U-actions} Write $\cR=\bF[U_1,\dots, U_{j-1}, \scU_{j+1},\scV_{j+1},\dots, \scU_\ell,\scV_\ell]$, where $0<j<\ell$. Write $\cR[U_j]$ for $\cR\otimes \bF[U_j]$. Suppose that $\cX^{\cR[U_j]}$
is a finitely generated type-$D$ module which admits a relative $\Q\times \Q$-valued $(\gr_{\ws},\gr_{\zs})$-bigrading. Suppose that $U_j\simeq \scU_\ell\scV_\ell$ as type-$D$ morphisms (i.e. there is a morphism $J^1$ satisfying $\d_{\Mor}(J^1)=\id\otimes (U_j+\scU_\ell\scV_\ell)$). Then
\[
\left(\cX^{\cR[U_j]}\boxtimes {}_{\bF[U_j]} \bF[U_j]\right){}^{\cR}
\]
 is homotopy equivalent to a finitely generated type-$D$ module over $\cR$. 
\end{lem}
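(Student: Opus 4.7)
The plan is to use the null-homotopy $J^1$ to construct, via the homological perturbation lemma, a deformation retract of $\cY := \cX^{\cR[U_j]}\boxtimes {}_{\bF[U_j]}\bF[U_j]$ onto $\cX$ equipped with a new Alexander cobounded type-$D$ structure over $\cR$; since $\cX$ is finitely generated over $\bF$, this yields the desired finite model.

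First I would unpack $\cY$. Its underlying $\bF$-vector space is $\cX\otimes_\bF\bF[U_j]$ (infinite-dimensional), and its type-$D$ structure map $\delta^1_\cY\colon \cY\to \cY\otimes \cR$ is obtained from $\delta^1_\cX\colon \cX\to \cX\otimes \cR[U_j]$ by moving $U_j$-factors from the $\cR[U_j]$-output into the $\bF[U_j]$-tensor factor. Boxing the morphism $J^1$ against the identity on $\bF[U_j]$ yields an Alexander cobounded type-$D$ endomorphism $\tilde J^1\colon \cY\to \cY\otimes \cR$ satisfying
\[
\d_{\Mor}(\tilde J^1)=\mu_{U_j}+\mu_{\scU_n\scV_n},
\]
where $\mu_{U_j}$ shifts the $\bF[U_j]$-factor by $U_j$ (with trivial $\cR$-output) and $\mu_{\scU_n\scV_n}$ is multiplication by $\scU_n\scV_n\in \cR$. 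Informally, inside $\cY$ the action of $U_j$ is homotopic to the action of $\scU_n\scV_n$, and $\tilde J^1$ realizes this homotopy.

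Next I would construct the data $(\iota^1,\pi^1,h^1)$ for the homological perturbation lemma. Equip $\cX$ with the type-$D$ structure $\delta^1_0\colon \cX\to \cX\otimes \cR$ obtained as the $U_j^0$-component of $\delta^1_\cX$; that this squares to zero follows from expanding the type-$D$ structure equation for $\delta^1_\cX$ in powers of $U_j$. The naive inclusion $x\mapsto x\otimes 1$ fails to be a type-$D$ morphism because of higher $U_j^k$-components of $\delta^1_\cX$, so I would correct it by defining $\iota^1\colon \cX\to \cY$ as $x\mapsto x\otimes 1$ plus iterated $\tilde J^1$-corrections that absorb each $U_j^k$-output into a $(\scU_n\scV_n)^k$-output in $\cR$. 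I would similarly build $\pi^1$ and $h^1$ by iterating $\tilde J^1$. After verifying $\pi^1\circ \iota^1=\id$, $\iota^1\circ\pi^1=\id+\d_{\Mor}(h^1)$, $h^1\circ\iota^1=0$, $\pi^1\circ h^1=0$, and $h^1\circ h^1=0$, Lemma~\ref{lem:homological-perturbation-DA-modules} (or its type-$D$ analogue) produces a perturbed Alexander cobounded type-$D$ structure on $\cX$ over $\cR$ homotopy equivalent to $\cY$.

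The principal obstacle is convergence of the iterative sums defining $\iota^1$, $\pi^1$, $h^1$, and the perturbed $\delta^1$ on $\cX$. This is precisely where the $(\gr_w,\gr_z)$-bigrading hypothesis enters: since $U_j$ and $\scU_n\scV_n$ both have bigrading $(-2,-2)$, the morphism $\tilde J^1$ shifts the bigrading by a fixed amount, and each iteration in the construction strictly lowers the bigrading. Because $\cX$ is finite-dimensional as a bigraded vector space over $\bF$, any fixed bigraded component of the output receives contributions from only finitely many iterations. Consequently the relevant sums are convergent Cauchy sequences in the cofinite basis topology, all morphisms are Alexander cobounded in the sense of Section~\ref{sec:AC-overview}, and the perturbed type-$D$ module has underlying vector space $\cX$, hence is finitely generated.
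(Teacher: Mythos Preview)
Your approach has a genuine gap: in general there is no deformation retract of $\cY$ onto $\cX$ carrying \emph{any} type-$D$ structure over $\cR$. Take $\cX=\langle x,y\rangle$ with $\delta^1_\cX(x)=y\otimes(U_j+\scU_n\scV_n)$ and $\delta^1_\cX(y)=0$; the morphism $J^1(y)=x\otimes 1$, $J^1(x)=0$ satisfies the hypothesis, and the bigrading condition is met. A direct cancellation shows $\cY=\cX\otimes\bF[U_j]$ is homotopy equivalent over $\cR$ to a single generator with vanishing $\delta^1$. Since $\cR$ is an integral domain, no type-$D$ structure on the $2$-dimensional space $\cX$ is homotopy equivalent to that rank-one module: box with ${}_{\cR}\cR$ and compare homology. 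So neither your $\delta^1_0$ nor any perturbation of it can be the target. Relatedly, your appeal to Lemma~\ref{lem:homological-perturbation-DA-modules} is misplaced: that lemma perturbs using the higher actions $\delta^1_{>1}$ of a $DA$-bimodule, but $\cY$ is a pure type-$D$ module over $\cR$ with no type-$A$ side, so the lemma produces nothing new.

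The paper's proof supplies exactly the two ingredients you are missing. First, it manufactures a $DA$-structure by writing $\cX^{\cR[U_j]}=\cE^\Lambda\boxtimes{}_\Lambda\cW^{\cR[U_j]}$ for $\Lambda$ an exterior algebra on one generator, with $\delta^1_{k+1}$ on $\cW$ recording the total-$\cR$-degree-$k$ part of $\delta^1_\cX$; this creates a genuine $DA$-bimodule on which the perturbation lemma acts. Second, and this is the heart of the argument, it specializes $J^1$ to a null-homotopy of $U_j$ on the underlying $\bF[U_j]$-complex (all $\cR$-variables set to zero) and invokes the classification of finitely generated chain complexes over the PID $\bF[U_j]$: the null-homotopy forces this complex to split as a direct sum of two-step complexes $\xs_t\mapsto\ys_t\otimes U_j$. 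After boxing with $\bF[U_j]$ each such piece retracts explicitly onto $\ys_t\otimes 1$, yielding a finite model $\cQ$ of dimension $\tfrac12\dim_\bF\cX$. Your iterated-$\tilde J^1$ scheme does not access this PID decomposition, and without it you have no natural finite target to retract onto.
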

\begin{rem} The condition on the $(\gr_{\ws},\gr_{\zs})$-bigrading is likely not absolutely necessary, though it simplifies the discussion on completions.
\end{rem}
\begin{proof} Our proof uses homological perturbation theory. We view $\cX^{\cR[U_j]}$ as a being obtained from a $DA$-bimodule ${}_{\cE}\cW^{\cR[U_j]}$ where $\cE$ denotes the exterior algebra on one generator. We define $\delta_{k+1}^1(\theta,\dots, \theta, \xs)$ on $\cW$ to consist of the components of $\delta^1$ of $\cX$ which are weighted by an algebra element of total $\cR$-degree equal to $k$. Here, the $\cR$ degree of a monomial denotes the sum of powers of variables from $\cR$ appearing (but not of powers of $U_j$). It is straightforward to see that $\cW$ satisfies the $DA$-bimodule structure relations. 

Note that $\delta_1^1$ on $\cW$ consists of exactly the terms of the differential of $\cX$ which are weighted by elements of $\bF[U_j]$.

Let $N^\cE$ be the rank 1 type-$D$ module with $\delta^1(1)=1\otimes \theta$, so that 
\[
N^\cE\boxtimes {}_{\cE}\cW^{\cR[U_j]}=\cX^{\cR[U_j]}.
\]

We consider the type-$D$ module $\cY^{\cR[U_j]}$ obtained by applying the forgetful functor to
\[
{}_{\cE} \cW^{\cR[U_j]}.
\]
The type-$D$ module $\cY^{\cR[U_j]}$ has $\delta^1$ with algebra elements in $\bF[U_j]$, i.e., we may view it as a finitely generated free chain complex over $\bF[U_j]$. Setting all variables except $U_j$ equal to 0, the map $J^1$ induces a map $h^1$ on $\cY^{\cR[U_j]}$ which satisfies $\d_{\Mor}(j^1)=\id\otimes U_j$. The classification theorem for finitely generated chain complexes over a PID implies that we may find a basis of $\cY$ so that $\cY^{\cR[U_j]}$ decomposes as a direct sum of 1-step complexes (i.e. generators with vanishing $\delta^1$), and 2-step complexes (i.e. summands generated by two generators with $\delta^1(\ys)=0$ and $\delta^1(\xs)=\ys\otimes \a$). Our assumptions about the existence of a relative bigrading imply that $\a$ may always be taken to be a power of $U_j$. The fact that $\d_{\Mor}(j^1)=\id \otimes U_j$ implies that there are no 1-step summands, and each 2-step summand is of the form $\delta^1(\xs)=\ys\otimes U_j$.

We define the finite dimensional subspace $\cQ\subset \cW\boxtimes \bF[U_j]$ as follows. Enumerate the generators of the $U_j^1$-weighted 2-step complexes as $\xs_t$ and $\ys_t$, so that $\delta^1(\xs_t)=\ys_t\otimes U_j$, for $t$ in some finite set.
We define $\cQ$ to be the $\bF$ span of $\ys_t\otimes 1$, ranging over all $t$. We view $\cQ$ as being a finitely generated type-$D$ module over $\cR$ with vanishing differential.  There are maps forming a homotopy equivalence of type-$D$ modules
\[
\pi^1\colon \left(\cW\boxtimes \bF[U_j]\right)^{\cR}\to \cQ^\cR,\qquad  i^1\colon \cQ^\cR\to\left(\cW\boxtimes \bF[U_j]\right)^{\cR},
\]
\[
\text{and} \quad h^1\colon \left(\cW\boxtimes \bF[U_j]\right)^{\cR}\to \left(\cW\boxtimes \bF[U_j]\right)^{\cR},
\]
which satisfy the assumptions of the homological perturbation lemma stated in Lemma~\ref{lem:homological-perturbation-DA-modules}. 
The map $\pi^1$ sends $\ys_t\otimes U_j^s$ to $\ys_t\otimes 1$ if $s=0$, and is zero on all generators. The map $i^1$ sends $\ys_t$ to $(\ys_t\otimes 1)\otimes 1$. The map $h^1$ sends $\ys_t\otimes U_j^s$ to $(\ys_t\otimes U_j^{s-1})\otimes 1$ if $s>0$, and $h^1$ vanishes on all other generators. Applying the homological perturbation lemma endows $\cQ^{\cR}$ with a left action of $\cE$. And also yields a homotopy equivalence ${}_{\cE} \cQ^{\cR}\simeq {}_{\cE} \cW^{\cR[U_j]}\boxtimes {}_{\bF[U_j]}\bF[U_j]$.
 Note that the induced $A_\infty$-module structure is operationally bounded by grading considerations. Finally, boxing with the identity morphism of $N^{\cE}$ gives a homotopy equivalence as in the statement.
\end{proof}

We are now able to prove finite generation in general:

\begin{proof}[Proof of Proposition~\ref{prop:finite-generation}]
Suppose that $L=K_1\cup \cdots \cup K_\ell$ is a link in $S^3$, and $\Lambda$ is an integral framing on $L$. Let $0\le j<\ell$. It is sufficient to show that
\[
\cX^{(*,0)}_{\Lambda}(L_{1,\dots, j}, L_{j+1,\dots, \ell})^{\bF[\scU_{j+1},\scV_{j+1},\dots, \scU_{\ell}, \scV_\ell]}
\]
is homotopy equivalent to a finitely generated type-$D$ module over $\bF[\scU_{j+1},\scV_{j+1},\dots, \scU_{\ell}, \scV_\ell]$. This is sufficient since we may use this model at each point of the cube $\bE_{n-j}$ to obtain a finitely generated model of $\cX_{\Lambda}(L_{1,\dots, j}, L_{j+1,\dots, \ell})^{\cL_{n-j}}$.

By Proposition~\ref{prop:equivalence-type-D-change-basepoint}, changing the arc system on the components $K_1,\dots,  K_j$ does not change the homotopy type of $\cX_{\Lambda}(L_{1,\dots, j}, L_{j+1,\dots, \ell})^{\cL_{n-j}}$. Hence, we assume all of the arcs for $K_1,\dots, K_j$ are alpha-parallel.

Lemma~\ref{lem:finite-generation-positive-definite} verifies that if $\Lambda_{(j)}$ is positive definite, then
\[
\cX^{(*,0)}_{\Lambda}(L_{1,\dots, j}, L_{j+1,\dots, \ell})^{\bF[U_1,\dots, U_j,\scU_{j+1},\scV_{j+1},\dots, \scU_{\ell}, \scV_\ell]}
\]
is homotopy equivalent to a finitely generated type-$D$ module. Furthermore, since $\Lambda_{(j)}$ is positive definite (in particular non-singular), it is straightforward to see that this complex admits a relative $\Q\times \Q$-valued $(\gr_{\ws},\gr_{\zs})$-bigrading (compare \cite{MOIntegerSurgery}*{Section~9.3}).
Lemma~\ref{lem:U-actions-homotopic} implies that each $U_1,\dots, U_j$ is chain homotopic to $\scU_{\ell}\scV_\ell$ as a type-$D$ endomorphism on this subcube. (Note that on this subcube, the $K_\ell$-direction is never incremented, so the arc for $K_\ell$ is irrelevant). Lemma~\ref{lem:reduce-homotopic-U-actions} now implies that $\cX_{\Lambda}^{(*,0)}(L_{1,\dots, j}, L_{j+1,\dots, \ell})^{\bF[\scU_{j+1},\scV_{j+1},\dots, \scU_{\ell},\scV_\ell]}$ is homotopy equivalent to a finitely generated type-$D$ module.

Since ${}_{\cK} \cD_\infty^{\bF[U]}$ is homotopy equivalent to a finitely generated type-$DA$ module by Lemma~\ref{lem:infinity-framed-type-A}, we observe that the claim still holds if we replace any number of framings in a positive definite $\Lambda_{(j)}$ with $+\infty$. The mapping cone of two finitely generated complexes is finitely generated, so the surgery exact triangle from Proposition~\ref{prop:exact-triangle} (cf. Remark~\ref{rem:cyclic-reorder}), implies that $\cX_{\Lambda}(L_{1,\dots, j}, L_{j+1,\dots, \ell})^{\cL_{n-j}}$ is homotopy equivalent to finitely generated type-$D$ module for any framing, concluding the proof.
 \end{proof}

\appendix
\section{Splicing operations}

\label{appendix}
In this section, we describe some topology related to splicing knot complements. Everything in this section is well-known, and we only include it as a reference.

\subsection{Splices and connected sums}

Suppose that $K_1$ and $K_2$ are knots in $S^3$. The complements of $K_1$ and $K_2$ are manifolds with torus boundary. The following is well known, though we give a proof for the convenience of the reader. Compare \cite{GordonSatellite}*{Section~7}.

\begin{lem} Suppose that $\lambda_1$ and $\lambda_2$ are integral framings on knots $K_1$ and $K_2$ in $S^3$, and let $[\lambda_1]$ and $[\lambda_2]$ denote the induced elements of $H_1(\d S^3\setminus N(K_1))$ and $H_1(\d S^3\setminus N(K_2))$. Then $S^3_{\lambda_1+\lambda_2}(K_1\# K_2)$ is diffeomorphic to the 3-manifold obtained by gluing $S^3\setminus N(K_1)$ to $S^3\setminus N(K_2)$ using the diffeomorphism which sends
\[
[\lambda_1]\mapsto -[\lambda_2]\quad \text{and} \quad [\mu_1]\mapsto [\mu_2].
\]
\end{lem}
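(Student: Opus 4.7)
I would realize both sides of the claimed diffeomorphism through a common torus decomposition. First, I would identify the essential ``swallow--follow'' torus in $S^3_{\lambda_1+\lambda_2}(K_1\# K_2)$ whose complement is $E(K_1)\sqcup E(K_2)$. For this, invoke the connected-sum sphere $S\subset S^3$ meeting $K_1\# K_2$ in two points, and set $A := S\setminus N(K_1\# K_2)$, an annulus in $E(K_1\# K_2)$ with $\partial A$ two parallel meridians of $K_1\# K_2$. After Dehn filling $V$ along $\partial E(K_1\# K_2)$, the boundary $\partial A$ cobounds a properly embedded annulus $A^V\subset V$, since parallel isotopic curves on the boundary of a solid torus cobound an annulus in its interior; choosing $A^V$ to separate $V$ into two solid tori $V_1', V_2'$ (e.g., by taking a chord-times-$S^1$ representative in $V\cong D^2\times S^1$), the torus $T := A\cup A^V$ separates $S^3_{\lambda_1+\lambda_2}(K_1\# K_2)$ into two pieces $N_i = E_i\cup V_i'$, where $E_i := B_i\setminus N(\alpha_i)$ is the knotted-arc complement arising from the sphere decomposition $S^3 = B_1\cup_S B_2$.

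Second, I would identify each piece $N_i$ with $E(K_i)$. The classical decomposition $E(K_i) = E_i\cup_{A_i} U_i$, with $U_i$ the complement of an unknotted complementary arc in the opposite ball, presents $E(K_i)$ as $E_i$ with a solid torus $U_i$ attached along the annulus $A_i\subset \partial E_i$. The piece $N_i$ similarly attaches a solid torus $V_i'$ to $E_i$, but along the other annulus $T_i\subset \partial E_i$. Invoking a diffeomorphism of $E_i$ that interchanges its two annular boundary components $T_i$ and $A_i$ (which exists since both annuli have meridional cores and $E_i$ is the complement of a properly embedded arc in a $3$-ball), this gives the identification $N_i\cong E(K_i)$, provided the meridian of $V_i'$ matches that of $U_i$ (both determined by identifying the cores with $\mu_i$).

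Third, I would compute the induced gluing on $T = \partial E(K_1) = \partial E(K_2)$. The meridian $\mu$ of $K_1\# K_2$ is the core of $A$, identified by the preceding step with both $\mu_1$ and $\mu_2$; hence $\phi(\mu_1) = \mu_2$. Writing the Seifert longitude of the connected sum as $\lambda^S = \lambda_1^S + \lambda_2^S$ (concatenation across $A$), the meridian of the surgery solid torus $V$ is $\mu + (\lambda_1+\lambda_2)\lambda^S$; unpacking this in $(\mu_i, \lambda_i^S)$-coordinates on each side of $T$ yields $\phi(\lambda_1^S) = -\lambda_2^S - (\lambda_1+\lambda_2)\mu_2$. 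Via the relation $\lambda_i = \lambda_i^S + \lambda_i\mu_i$ between the framing curve and the Seifert longitude (with $\lambda_i$ on the right denoting the integer framing), this is equivalent to $\phi(\lambda_1) = -\lambda_2$, as required.

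The main difficulty I expect lies in the second step: carefully constructing $A^V$ and verifying the identification $N_i\cong E(K_i)$, including the ``annulus-swap'' diffeomorphism of $E_i$. A cleaner alternative would be to define $A^V$ directly from the gluing side --- that is, push the union of the boundary annuli $A_i'\subset \partial E(K_i)$ into the surgery solid torus --- so that $V_i'$ becomes $U_i$ by construction and the identification reduces to a routine check of the meridian gluing formula.
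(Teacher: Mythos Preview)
Your approach via the swallow--follow torus is sound and genuinely different from the paper's argument. The paper works in the opposite direction: starting from the glued manifold $Z = E(K_1) \cup_\phi E(K_2)$, it rebuilds the torus gluing as a $1$-handle followed by two $2$-handles and a $3$-handle. The $2$-handle along $\mu_1 * \bar\mu_2$ produces $E(K_1\#K_2)$, and the remaining $2$-handle along $\lambda_1 * \lambda_2$ together with the $3$-handle is exactly $(\lambda_1+\lambda_2)$-Dehn filling. This is a two-line argument and sidesteps any identification of the pieces $N_i$; what your approach buys instead is an explicit geometric picture of the essential torus in the surgered manifold.

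Two points in your execution deserve comment. In step~2, the annulus-swap diffeomorphism of $E_i$ does exist, but your stated reason---that both annuli have meridional cores---is not by itself a proof; the map you want is the half-rotation of the boundary torus in the longitudinal direction, extended over a boundary collar. A cleaner route that bypasses the swap entirely: observe that the core $\mu$ of the gluing annulus is a \emph{longitude} of each of the solid tori $U_i$ and $V_i'$ (for $V_i'$ this is because the meridian of $V$ is $n\mu + \lambda^S$, so $\mu$ intersects it once). Hence each is attached as a boundary collar, giving $N_i \cong E_i \cong E(K_i)$ directly. In step~3, your surgery slope is transposed: the meridian of $V$ is $(\lambda_1+\lambda_2)\mu + \lambda^S$, not $\mu + (\lambda_1+\lambda_2)\lambda^S$. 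Your final formula $\phi(\lambda_1^S) = -\lambda_2^S - (\lambda_1+\lambda_2)\mu_2$ is nevertheless correct, so this appears to be a slip in the write-up rather than in the computation.
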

\begin{proof} Consider the manifold $Z$ obtained by gluing the complements of $K_1$ and $K_2$ as described above. We may describe $Z$ alternatively by first attaching a 3-dimensional 1-handle to connect $S^3\setminus \nu(K_1)$ and $S^3\setminus \nu(K_2)$, and then attaching two 2-handles and one 3-handle. The first 2-handle is attached along $\mu_1*\bar{\mu}_2$ (where $\bar{\mu}_2$ denotes $\mu_2$ with orientation reverses), and the second 2-handle is attached along $\lambda_1* \lambda_2$. In both cases, the curves are concatenated across the 1-handle.

We observe that joining $S^3\setminus \nu(K_1)$ and $S^2\setminus \nu(K_2)$ with the 1-handle and the 2-handle tracing $\mu_1* \bar \mu_2$ is the same as gluing $S^3\setminus \nu(K_1)$ and $S^3\setminus \nu(K_2)$ along meridians of $K_1$ and $K_2$. This gives $S^3\setminus \nu(K_1\# K_2)$. Gluing the second 2-handle and then the 3-handle is the same as performing Dehn surgery on $K_1\# K_2$ with framing $\lambda_1+\lambda_2$. 
\end{proof}

\begin{rem}
 The argument above also holds for arbitrary knots in 3-manifolds, as long as the framings are Morse.
\end{rem}

\begin{rem}\label{rem:connected-sum-links}
 The same argument works for links. Suppose that $L_1$ and $L_2$ are links in $S^3$ with framings $\Lambda_1$ and $\Lambda_2$. Suppose that $K_1$ and $K_2$ are two chosen components of $L_1$ and $L_2$, respectively. Then
\[
S^3_{\Lambda_1+\Lambda_2}(L_1\# L_2)\iso M_{\Lambda_1}(L_1,K_1)\cup_{\phi} M_{\Lambda_2}(L_2,K_2),
\]
where $M_{\Lambda_i}(L_i,K_i)$ is the manifold obtained by surgering along $L_i- K_i$, and removing a neighborhood of $K_i$. The diffeomorphism $\phi$ which identifies $\d N(K_1)$ with $\d N(K_2)$ sends $\lambda_1$ to $-\lambda_2$, and $\mu_1$ to $\mu_2$. The framing $\Lambda_1+\Lambda_2$ is obtained by summing the framings for $K_1$ and $K_2$, and leaving the remaining framings unchanged.
\end{rem}

\subsection{The Hopf link and changes of parametrization}

We may naturally view the Hopf link as a link which changes the boundary parametrization. There are two ways to do this:

\begin{enumerate}
\item (Blow up) If $K$ is the special component, and we connect sum with a Hopf link, and leave $K$ the special component. If we give the other component of the Hopf link framing $\pm 1$, then we change the framing on $\d N(K)$ by a Dehn twist along the meridian.
\item (Rotating layer) Noting that the complement of the Hopf link is $\bT^2\times [0,1]$, we may use the description from the previous remark to see the effect of connect summing the Hopf link to the special component, and using the new Hopf link component as the special component. The effect is to change the boundary parametrization by composing with one of the two maps:
\[
\mu\mapsto \lambda\quad \text{and} \quad \lambda\mapsto -\mu, \quad \text{ or }
\]
\[
\mu \mapsto -\lambda \quad \text{and} \quad \lambda\mapsto \mu.
\]
These two cases are distinguished by the sign of the Hopf link which is used in the connected sum.
\end{enumerate}

\bibliographystyle{custom}
\def\MR#1{}
\bibliography{biblio}
\end{document}